\newtheorem{theorem}{Theorem}[section]
\newtheorem{prop}[theorem]{Proposition}
\newtheorem{corollary}[theorem]{Corollary}
\newtheorem{definition}[theorem]{Definition}
\newtheorem{example}[theorem]{Example}
\newtheorem{lemma}[theorem]{Lemma}
\newtheorem{remark}[theorem]{Remark}
\newtheorem{assum}[theorem]{Assumption}
\newtheorem{condition}[theorem]{Condition}
\newtheorem{notation}[theorem]{Notation}
\numberwithin{equation}{section}
\DeclareMathOperator{\Hom}{Hom}
\DeclareMathOperator{\trace}{tr}
\newcommand{\real}{\mathbb{R}}
\newcommand{\comp}{\mathbb{C}}
\newcommand{\inte}{\mathbb{Z}}
\newcommand{\pdb}{\bar{\partial}}
\newcommand{\dd}[1]{\frac{\partial}{\partial #1}}
\newcommand{\half}{\frac{1}{2}}
\newcommand{\reallywidehat}[1]{%
	\savestack{\tmpbox}{\stretchto{%
			\scaleto{%
				\scalerel*[\widthof{\ensuremath{#1}}]{\kern-.6pt\bigwedge\kern-.6pt}%
				{\rule[-\textheight/2]{1ex}{\textheight}}
			}{\textheight}%
		}{0.5ex}}%
	\stackon[1pt]{#1}{\tmpbox}%
}
\newcommand{\reallywidetilde}[1]{%
	\savestack{\tmpbox}{\stretchto{%
			\scaleto{%
				\scalerel*[\widthof{\ensuremath{#1}}]{\kern-.6pt\sim\kern-.6pt}%
				{\rule[-\textheight/2]{1ex}{\textheight}}
			}{\textheight}%
		}{0.5ex}}%
	\stackon[1pt]{#1}{\tmpbox}%
}
\newcommand{\bchprod}{\odot}
\newcommand{\blat}{\mathbf{K}}
\newcommand{\cfr}{R}
\newcommand{\cfrk}[1]{\prescript{#1}{}{\cfr}}
\newcommand{\logs}{S^{\dagger}}
\newcommand{\logsk}[1]{\prescript{#1}{}{S}^{\dagger}}
\newcommand{\logsf}{\hat{S}^{\dagger}}
\newcommand{\logsdrk}[2]{\prescript{#1}{}{\Omega}^{#2}_{S^{\dagger}}}
\newcommand{\logsvfk}[1]{\prescript{#1}{}{\Theta}_{S^{\dagger}}}
\newcommand{\logsdrf}[1]{\hat{\Omega}^{#1}_{S^{\dagger}}}
\newcommand{\logsvff}{\hat{\Theta}_{S^{\dagger}}}
\newcommand{\bva}[1]{\prescript{#1}{}{\mathcal{G}}}
\newcommand{\tbva}[2]{\prescript{#1}{#2}{\mathcal{K}}}
\newcommand{\gmiso}[2]{\prescript{#1}{#2}{\sigma}}
\newcommand{\dpartial}[1]{\prescript{#1}{}{\partial}}
\newcommand{\volf}[1]{\prescript{#1}{}{\omega}}
\newcommand{\rdr}[2]{\prescript{#1}{#2 \parallel}{\mathcal{K}}}
\newcommand{\bvd}[1]{\prescript{#1}{}{\Delta}}
\newcommand{\gmc}[1]{\prescript{#1}{}{\nabla}}
\newcommand{\rest}[1]{\prescript{#1}{}{\flat}}
\newcommand{\patch}[1]{\prescript{#1}{}{\psi}}
\newcommand{\hpatch}[1]{\prescript{#1}{}{\hat{\psi}}}
\newcommand{\resta}[1]{\prescript{#1}{}{\mathfrak{b}}}
\newcommand{\patchij}[1]{\prescript{#1}{}{\mathfrak{p}}}
\newcommand{\cocyobs}[1]{\prescript{#1}{}{\mathfrak{o}}}
\newcommand{\bvdobs}[1]{\prescript{#1}{}{\mathfrak{w}}}
\newcommand{\lscript}[3]{\prescript{#1}{#2}{#3}}
\newcommand{\simplex}{\blacktriangle}
\newcommand{\simplexbdy}{\vartriangle}
\newcommand{\hsimplex}{\blacksquare}
\newcommand{\hsimplexbdy}{\square}
\newcommand{\twc}[1]{\prescript{#1}{}{TW}}
\newcommand{\ktwc}[2]{\prescript{#1}{#2}{\mathbf{A}}}
\newcommand{\restmap}{\mathfrak{r}}
\newcommand{\glue}[1]{\prescript{#1}{}{g}}
\newcommand{\iauto}[1]{\prescript{#1}{}{\mathfrak{a}}}
\newcommand{\sauto}[1]{\prescript{#1}{}{\vartheta}}
\newcommand{\autoij}[1]{\prescript{#1}{}{\phi}}
\newcommand{\gluehom}[1]{\prescript{#1}{}{h}}
\newcommand{\thmcocyo}[1]{\prescript{#1}{}{\mathsf{O}}}
\newcommand{\hsauto}[1]{\prescript{#1}{}{\varkappa}}
\newcommand{\hiauto}[1]{\prescript{#1}{}{\mathsf{a}}}
\newcommand{\hautoij}[1]{\prescript{#1}{}{\varrho}}
\newcommand{\autozeroone}[1]{\prescript{#1}{}{\mathtt{p}}}
\newcommand{\cech}[1]{\prescript{#1}{}{\check{\mathcal{C}}}}
\newcommand{\cechd}[1]{\prescript{#1}{}{\delta}}
\newcommand{\dbtwist}[1]{\prescript{#1}{}{\mathfrak{d}}}
\newcommand{\volftwist}[1]{\prescript{#1}{}{\mathfrak{f}}}
\newcommand{\drglue}[1]{\prescript{#1}{}{\hat{g}}}
\newcommand{\polyv}[1]{\prescript{#1}{}{PV}}
\newcommand{\totaldr}[2]{\prescript{#1}{#2}{\mathcal{A}}}
\newcommand{\dbobs}[1]{\prescript{#1}{}{\mathfrak{l}}}
\newcommand{\dvolfobs}[1]{\prescript{#1}{}{\mathfrak{y}}}
\newcommand{\drd}[2]{\prescript{#1}{#2}{\mathbf{d}}}
\newcommand{\ptd}[1]{\prescript{#1}{}{\breve{\mathbf{d}}}}
\newcommand{\ecfr}[1]{\prescript{#1}{}{T}}
\newcommand{\logsc}{S_{T}^{\dagger}}
\newcommand{\logsck}[1]{\prescript{#1}{}{S}_{T}^{\dagger}}
\newcommand{\logscf}{\hat{S}_{T}^{\dagger}}
\newcommand{\logscdrk}[2]{\prescript{#1}{}{\Omega}^{#2}_{S_T^{\dagger}}}
\newcommand{\logscvfk}[1]{\prescript{#1}{}{\Theta}_{S_T^{\dagger}}}
\newcommand{\logscdrf}[1]{\hat{\Omega}^{#1}_{S_T^{\dagger}}}
\newcommand{\logscvff}{\hat{\Theta}_{S_T^{\dagger}}}
\begin{document}

\title[Geometry of the MC equation near degenerate CY]{Geometry of the Maurer-Cartan equation near degenerate Calabi-Yau varieties}

\author[Chan]{Kwokwai Chan}
\address{Department of Mathematics\\ The Chinese University of Hong Kong\\ Shatin\\ Hong Kong\\}
\email{kwchan@math.cuhk.edu.hk}

\author[Leung]{Naichung Conan Leung}
\address{The Institute of Mathematical Sciences and Department of Mathematics\\ The Chinese University of Hong Kong\\ Shatin\\ Hong Kong\\}
\email{leung@math.cuhk.edu.hk}

\author[Ma]{Ziming Nikolas Ma}
\address{Department of Mathematics\\ Southern University of Science and Technology\\ Nanshan District \\ Shenzhen\\ China\\}
\email{nikolasming@outlook.com}

\begin{abstract}
	Given a degenerate Calabi-Yau variety $X$ equipped with local deformation data, we construct an {\em almost} differential graded Batalin-Vilkovisky algebra $\polyv{}^{*,*}(X)$, producing a singular version of the extended Kodaira-Spencer differential graded Lie algebra in the Calabi-Yau setting.
	Assuming Hodge-to-de Rham degeneracy and a local condition that guarantees freeness of the Hodge bundle, we prove a Bogomolov-Tian-Todorov--type unobstructedness theorem for smoothing of singular Calabi-Yau varieties. In particular, this provides a unified proof for the existence of smoothing of both $d$-semistable log smooth Calabi-Yau varieties (as studied by Friedman \cite{Friedman83} and Kawamata-Namikawa \cite{kawamata1994logarithmic}) and maximally degenerate Calabi-Yau varieties (as studied by Kontsevich-Soibelman \cite{kontsevich-soibelman04} and Gross-Siebert \cite{gross2011real}).
	We also demonstrate how our construction yields a logarithmic Frobenius manifold structure on a formal neighborhood of $X$ in the extended moduli space by applying the technique of Barannikov-Kontsevich \cite{barannikov-kontsevich98, Barannikov99}.
\end{abstract}

\maketitle

\section{Introduction}\label{sec:introduction}

\subsection{Background}\label{sec:background_for_intro}

Deformation theory plays an indispensable role in algebraic geometry, mirror symmetry and mathematical physics. Two major approaches are the \v{C}ech approach \cite{Sernesi_book} and the Kodaira-Spencer differential-geometric approach \cite{Morrow-Kodaira_book}. The latter is particularly powerful in Calabi-Yau geometry \cite{Calabi, Yau77, Yau78}.
Given a Calabi-Yau manifold $X$, the Kodaira-Spencer differential graded Lie algebra (abbrev. dgLa) $(\Omega^{0,*}(X,T_{X}^{1,0}), \bar{\partial}, [\cdot,\cdot])$ can be upgraded to a {\em differential graded Batalin-Vilkovisky (abbrev. dgBV) algebra} $(\Omega^{0,*}(X,\wedge^*T^{1,0}),\bar{\partial},\Delta,\wedge)$. The famous Bogomolov-Tian-Todorov Theorem \cite{bogomolov1978hamiltonian, tian1987smoothness, todorov1989weil} then yields unobstructedness of deformations, or equivalently, local smoothness of the moduli space (see also the recent works \cite{Liu-Rao-Yang15, Liu-Rao-Wan19}), as well as a local {\em Frobenius manifold} structure on the {\em extended} moduli space by the work of Barannikov-Kontsevich \cite{barannikov-kontsevich98, Barannikov99}.

A degeneration of Calabi-Yau manifolds $\{X_q\}$ gives a singular Calabi-Yau variety equipped with a natural {\em log structure} in the sense of Fontaine-Illusie and Kato \cite{kato1989logarithmic}. So, conversely, one should study smoothing of singular Calabi-Yau varieties via log geometry. This is essential for understanding compactifications of moduli spaces of Calabi-Yau manifolds.
Two fundamental results in this direction are the existence of smoothing of $d$-semistable log smooth Calabi-Yau varieties due to Friedman \cite{Friedman83} and Kawamata-Namikawa \cite{kawamata1994logarithmic} and that of maximally degenerate Calabi-Yau varieties due to Kontsevich-Soibelman \cite{kontsevich-soibelman04} and Gross-Siebert \cite{gross2011real}. In both cases, existence of smoothing was proved by an order-by-order construction of thickenings, but the methods were entirely different: in the log smooth case, it was done by proving Hodge-to-de Rham degeneracy and then applying the $T^1$-lifting technique, while in the maximally degenerate case, it was done using consistent scattering diagrams (which actually yields {\em explicit} thickenings).

From Quillen, Deligne and Drinfeld (see e.g. \cite{KS_deformation_theory}), we learned the general philosophy that any deformation problem should be governed by a dgLa (or more generally, an $L_{\infty}$-algebra); see e.g. \cite{fiorenza2007structures, fiorenza2012differential,  fiorenza2012formality, fiorenza2012cosimplicial, hinich2001dg, iacono2007differential, lurie2011derived, manetti2005differential, manetti2007lie, manetti2015some, pridham2010unifying}. Employing this framework, Bogomolov-Tian-Todorov--type theorems have been formulated and proven for smooth (log) Calabi-Yau manifolds, e.g. in \cite{kontsevichgeneralized, KKP08, katzarkov2017bogomolov, iacono2010algebraic, iacono2015, iacono2017abstract, Liu-Rao-Wan19}.
We therefore want to know if there exists a dgLa (or better, a dgBV algebra) controlling the smoothing of singular Calabi-Yau varieties, which in particular can lead to a unified proof of the above smoothing results.

A hint was given in our earlier work \cite{kwchan-leung-ma, kwchan-ma-p2}, where we implemented (part of) the proposal by Kontsevich-Soibelman \cite{kontsevich00} and Fukaya \cite{fukaya05} and demonstrated how asymptotic expansions of the Maurer-Cartan elements of the Kodaira-Spencer dgLa $\Omega^{0,*}(X,\wedge^{*} T_{X}^{1,0})$ (where $X$ is a torus bundle over a base $B$) gave rise to consistent scattering diagrams and tropical disk counts as the torus fibers shrink. Besides deepening our understanding of the Strominger-Yau-Zaslow proposal \cite{syz96}, this shows that consistent scattering diagrams and tropical counts are encoded in dgLa's and their Maurer-Cartan elements.

To construct the desired algebraic structure that governs the smoothing of singular Calabi-Yau varieties, one major difficulty arises from {\em non-trivial topology change} dictated by the residue of the Gauss-Manin connection in a degeneration of Calabi-Yau manifolds. In particular, the trivial deformation {\em cannot} be a smoothing -- this is in sharp contrast with deformations in the smooth case.
Hence one cannot expect to get an ordinary dgLa or dgBV algebra, because otherwise $\varphi = 0$, which corresponds to the trivial deformation, would be a Maurer-Cartan solution.
What we discovered here is that, instead, a so-called {\em almost} dgBV algebra $\polyv{}^{*,*}(X)$ can naturally be constructed from a singular Calabi-Yau variety $X$ equipped with suitable local smoothing models. This gives a singular analogue of the Kodaira-Spencer dgBV algebra, (the classical part of) whose associated Maurer-Cartan equation indeed governs geometric smoothings of such an $X$.

More precisely, partially motivated by the {\em divisorial log deformation theory} of Gross-Siebert \cite{Gross-Siebert-logI, Gross-Siebert-logII}, we develop an abstract algebraic framework to construct an almost dgBV algebra $\polyv{}^{*,*}(X)$ from suitable local deformation (or thickening) data attached to $X$ via a local-to-global \v{C}ech-de Rham--type gluing procedure (see Theorem \ref{thm:main_thm_1_intro}).
Such a simplicial construction of $\polyv{}^{*,*}(X)$ can capture the aforementioned nontrivial topology change because the local thickenings are not locally trivial. It also allows us to directly link the smoothing of singular Calabi-Yau varieties with the Hodge theory developed in \cite{barannikov-kontsevich98, Barannikov99, kontsevichgeneralized, KKP08, li2013variation}. This will play an important role in future study of Calabi-Yau moduli and higher genus $B$-models \cite{costello2012quantum}.

In this paper, we give two immediate applications of Theorem \ref{thm:main_thm_1_intro}. First, assuming Hodge-to-de Rham degeneracy and a local condition that guarantees freeness of the Hodge bundle, we prove a Bogomolov-Tian-Todorov--type unobstructedness theorem for the smoothing of the singular Calabi-Yau variety $X$ (see Theorem \ref{thm:theorem_1_intro}). Let us emphasize that, once we obtain the correct algebraic structure, the proof is simply by applying standard techniques in BV algebras \cite{kontsevichgeneralized, KKP08, terilla2008smoothness}. This is in line with the framework developed by Katzarkov-Kontsevich-Pantev \cite{kontsevichgeneralized, KKP08}. In particular, this produces a unified proof for existence of smoothing in both the log smooth and maximally degenerate cases.
Second, under two further assumptions, we are able to construct a logarithmic Frobenius manifold structure on a formal neighborhood of $X$ in the extended moduli space by applying the technique of Barannikov-Kontsevich \cite{barannikov-kontsevich98, Barannikov99} (see Theorem \ref{thm:intro_thm_2}).

Very recently, Felten, Filip and Ruddat \cite{Felten-Filip-Ruddat} proved that the Hodge-to-de Rham degeneracy assumption and the local condition in Theorem \ref{thm:theorem_1_intro} both hold for so-called {\em toroidal crossing spaces} -- a very general class of spaces that includes both log smooth and maximally degenerate Calabi-Yau varieties. So Theorems \ref{thm:main_thm_1_intro} and \ref{thm:theorem_1_intro} imply the existence of smoothing for all these spaces (see \cite{Felten-Filip-Ruddat} for more details). In particular, the compactified moduli space of Calabi-Yau manifolds should include such spaces, over which the moduli space is also smooth.
In \cite{Felten}, Felten further showed that the almost dgLa (called, perhaps more appropriately, a {\em pre-dgLa} in \cite{Felten}) we constructed here indeed governs the log smooth deformation functor. He also gave a nice explanation why ordinary dgLa's are not sufficient in controlling deformations of a log smooth morphism. See also \cite{Sano21, Barrott-Doran21} for even more recent applications of our results.

\subsection{Main results}\label{sec:main_result_intro}

To describe our main results, we need to fix a monoid $Q$. Also let $\comp[Q]$ be the universal coefficient ring equipped with the monomial ideal $\mathbf{m} = \langle Q\setminus \{0\}\rangle$. 

\subsubsection{A singular analogue of the Kodaira-Spencer dgBV algebra (\S \ref{sec:abstract_framework} \& \S \ref{sec:abstract_construction})}

Consider a complex analytic space $(X,\mathcal{O}_X)$ equipped with a covering $\mathcal{V} = \{V_\alpha\}_{\alpha}$ by {\em Stein} open subsets, together with {\em local deformation or thickening data} attached to each $V_{\alpha}$, which consist of a $k^{\text{th}}$-order coherent sheaf of BV algebras $(\bva{k}_{\alpha}^*, \wedge, \bvd{k}_\alpha)$ over $\cfrk{k} := \comp[Q]/\mathbf{m}^{k+1}$ that acts on a $k^{\text{th}}$-order coherent sheaf of de Rham modules $(\tbva{k}{}_{\alpha}^*,\wedge,\dpartial{k}_{\alpha})$ (see Definitions \ref{def:higher_order_data} \& \ref{def:higher_order_data_module} in \S \ref{sec:abstract_framework}) for each $k \in \inte_{\geq 0}$. 

Then fix another Stein covering $\mathcal{U} = \{U_i\}_{i \in \mathbb{N}}$ of $X$ equipped with {\em higher order local patching data}, namely, isomorphisms $\patch{k}_{\alpha\beta,i} : \bva{k}_{\alpha}|_{U_i} \rightarrow \bva{k}_{\beta}|_{U_i}$ of sheaves satisfying certain conditions (see Definition \ref{def:higher_order_patching}). In geometric situations, these patching isomorphisms always come from {\em local uniqueness} of the local thickening data, but they are {\em not compatible} directly. Fortunately, the differences between these data are captured by Lie bracket with local sections from $\bva{k}^{*}_{\alpha}$ (see Definition \ref{def:higher_order_patching}).

The ordinary \v{C}ech approach to deformation theory is done by solving for compatible gluings $\glue{k}_{\alpha\beta} : \bva{k}^*_{\alpha} \rightarrow \bva{k}^*_\beta$ and understanding the obstructions in doing so. In our situation, instead of gluing directly, we first take a dg resolution of the sheaf $\bva{k}^*_\alpha$, given as a sheaf of dgBV algebras $\polyv{k}^{*,*}_{\alpha}$
determined by the Thom-Whitney construction \cite{whitney2012geometric, dupont1976simplicial}. Then we solve for gluing morphisms $\glue{k}_{\alpha\beta}: \polyv{k}^{*,*}_\alpha \rightarrow \polyv{k}^{*,*}_\beta$ which satisfy the cocycle condition and are compatible for different orders $k$. Morally speaking, this works because the local sheaves $\polyv{k}^{*,*}_{\alpha}$'s are ``softer'' than the $\bva{k}^*_\alpha$'s.\footnote{In the maximally degenerate case, the local thickenings do not glue (which was why Gross-Siebert \cite{gross2011real} needed consistent scattering diagrams to correct the gluings) but here we observe that the local BV algebras, after taking dg resolutions, do glue.} Now the upshot is that this gives rise to an {\em almost} dgBV algebra (meaning that the derivation is a differential only in the $0$-th order), instead of a genuine dgBV algebra, and it is sufficient for deducing unobstructedness.
Our first main result is the following:

\begin{theorem}[=Theorem \ref{lem:existence_compatible_gluing_data} + Proposition \ref{lem:homotopy_compatible_gluing_morphism} + Theorem \ref{thm:construction_of_differentials}]\label{thm:main_thm_1_intro}
There exists an almost differential graded Batalin-Vilkovisky (abbrev. dgBV) algebra of polyvector fields\footnote{We are using the Thom-Whitney resolution but the operator is written as $\pdb$ because it plays the role of the Dolbeault operator in the differential-geometric approach, and similarly, we write $\polyv{}^{*,*}(X)$ because it plays the role of the complex of polyvector fields.}
$$(\polyv{}^{*,*}(X) , \pdb, \bvd{}, \wedge)$$
over $\comp[[Q]]$ meaning that it satisfies all the required identities for a dgBV algebra except that $\pdb^2 = \pdb \Delta + \Delta \pdb = 0$
hold only in the $0$-th order, i.e. when restricted to $\polyv{0}^{*,*}(X) = \polyv{}^{*,*}(X) \otimes_{\comp[[Q]]} (\comp[[Q]]/\langle Q\setminus \{0\}\rangle )$.
\end{theorem}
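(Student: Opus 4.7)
The approach I would take is a local-to-global procedure built on the Thom-Whitney-Dupont construction. First, apply Thom-Whitney to each sheaf $\bva{k}_\alpha^*$ on $V_\alpha$, using polynomial differential forms on simplices indexed by the Stein refinement $\mathcal{U} = \{U_i\}$. This yields complexes $\polyv{k}^{*,*}_\alpha$ that soft-resolve $\bva{k}_\alpha^*$ via a Thom-Whitney differential $\pdb_{\mathrm{TW}}$, and which inherit the wedge product and the BV operator $\bvd{k}_\alpha$ extended form-wise. The flexibility built into Thom-Whitney forms is precisely what accommodates the Lie-bracket discrepancies between the given patchings $\patch{k}_{\alpha\beta, i}$.

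The central step, corresponding to Theorem \ref{lem:existence_compatible_gluing_data}, is to build cocycle-compatible gluings
\[
\glue{k}_{\alpha\beta} : \polyv{k}^{*,*}_\alpha|_{V_\alpha \cap V_\beta} \longrightarrow \polyv{k}^{*,*}_\beta|_{V_\alpha \cap V_\beta}
\]
satisfying $\glue{k}_{\beta\gamma} \circ \glue{k}_{\alpha\beta} = \glue{k}_{\alpha\gamma}$ on triple overlaps and compatible under reduction along $\cfrk{k+1} \to \cfrk{k}$, by induction on the order $k$. At order $0$ one uses $\patch{0}_{\alpha\beta, i}$, which already glue. At order $k+1$, assuming compatible gluings through order $k$, the extension obstruction is a \v{C}ech--Thom-Whitney cocycle measuring both the Lie-bracket discrepancy between $\patch{k+1}_{\alpha\beta, i}$ and $\patch{k+1}_{\alpha\beta, j}$ on $U_i \cap U_j$ and the triple-overlap defect. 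Because the $V_\alpha$ and $U_i$ are Stein and Thom-Whitney is an acyclic resolution, this obstruction class vanishes by Cartan's Theorem B combined with simplicial acyclicity of polynomial forms. Uniqueness up to homotopy, stated in Proposition \ref{lem:homotopy_compatible_gluing_morphism}, follows from the same vanishing one cohomological degree lower. One then defines $\polyv{k}^{*,*}(X)$ as the equalizer of local sections under $\glue{k}_{\alpha\beta}$ and takes the inverse limit over $k$ to obtain $\polyv{}^{*,*}(X)$ over $\comp[[Q]]$; the wedge and $\bvd{}$ descend directly because each $\glue{k}_{\alpha\beta}$ is a BV algebra morphism.

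For the global differential $\pdb$, corresponding to Theorem \ref{thm:construction_of_differentials}, the naive $\pdb_{\mathrm{TW}}$ does not commute with the $\glue{k}_{\alpha\beta}$ beyond order $0$, since the patching data are only compatible modulo Lie brackets. I would define $\pdb$ as $\pdb_{\mathrm{TW}}$ plus an inductively constructed inner-derivation correction of positive $\mathbf{m}$-order, chosen so that the corrected differential descends to the global complex. That $\pdb$ remains a derivation for $\wedge$ and for $\bvd{}$ is then a local check, since inner derivations automatically preserve these structures.

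The hard part, and the reason only an \emph{almost} dgBV algebra can be obtained, lies in the squaring identities $\pdb^2 = 0$ and $\pdb \Delta + \Delta \pdb = 0$. The nested commutators produced by the inner-derivation correction---a Baker-Campbell-Hausdorff residual appearing when the corrected differential is squared---cannot in general be removed at all orders, because the local thickening data encode nontrivial higher-order topology change coming from the Gauss-Manin residue. These residuals do vanish modulo $\mathbf{m}$, recovering the smooth-limit dgBV identities, which is exactly what the notion of almost dgBV algebra records. The final task is therefore to identify these residual error terms as the image of well-defined $\mathbf{m}$-adic cocycles, confirm that they affect the relevant identities only in positive order, and hence verify that $\pdb^2$ and $\pdb \Delta + \Delta \pdb$ vanish precisely in the $0$-th order.
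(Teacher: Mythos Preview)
Your overall architecture matches the paper's: Thom-Whitney resolutions of the local sheaves $\bva{k}^*_\alpha$, inductive construction of cocycle-compatible gluings $\glue{k}_{\alpha\beta}$ order-by-order in $k$ (with the obstruction at each stage killed by a \v{C}ech argument over the Stein cover), passage to the equalizer and inverse limit, and finally correcting the naive Thom-Whitney differential by an inner derivation so that it descends globally.

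There is, however, one genuine gap. You assert that ``the wedge and $\bvd{}$ descend directly because each $\glue{k}_{\alpha\beta}$ is a BV algebra morphism.'' This is false: the input patching isomorphisms $\patch{k}_{\alpha\beta,i}$ are only required to preserve $\wedge$ and $[\cdot,\cdot]$, not the BV operator (see Definition~\ref{def:higher_order_patching}, where equation~\eqref{eqn:higher_order_bvd_different} records the discrepancy $\patch{k}_{\beta\alpha,i}\circ\bvd{k}_\beta\circ\patch{k}_{\alpha\beta,i}-\bvd{k}_\alpha=[\bvdobs{k}_{\alpha\beta,i},\cdot]$). The local BV operators $\bvd{k}_\alpha$ come from local volume elements $\volf{k}_\alpha$, and these do not agree under the patchings. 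Consequently the gluings $\glue{k}_{\alpha\beta}$ you build are Gerstenhaber algebra morphisms but \emph{not} BV morphisms, and the paper (Lemma~\ref{lem:gluing_morphism_BV_operator_comparison}) shows $\glue{k}_{\beta\alpha}\circ\bvd{k}_\beta\circ\glue{k}_{\alpha\beta}-\bvd{k}_\alpha=[\prescript{k}{}{\mathtt{f}}_{\alpha;\alpha\beta},\cdot]$ for a \v{C}ech $1$-cocycle $(\prescript{k}{}{\mathtt{f}}_{\alpha\beta})$. The global BV operator must therefore be defined as $\bvd{}=\bvd{}_\alpha+[\volftwist{}_\alpha,\cdot]$ for a \v{C}ech primitive $\volftwist{}$ of this cocycle, exactly parallel to your treatment of $\pdb$. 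This correction turns out to satisfy $\bvd{}^2=0$ on the nose (for degree reasons), while only $\pdb^2$ and $\pdb\bvd{}+\bvd{}\pdb$ fail to vanish beyond order zero. Once you recognise that $\bvd{}$ needs the same inner-derivation twist as $\pdb$, your outline goes through as in the paper.
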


In geometric situations such as the log smooth case \cite{Friedman83, kawamata1994logarithmic} or the maximally degenerate case \cite{kontsevich-soibelman04, Gross-Siebert-logII}, this theorem provides the correct analogue of the Kodaira-Spencer dgBV algebra which controls smoothings of the singular Calabi-Yau variety $X$.

\subsubsection{Unobstructedness (\S \ref{sec:differentials} \& \S \ref{sec:abstract_theorem})}

Now we consider the extended Maurer-Cartan equation
\begin{equation}\label{eqn:maurer_cartan_introduction}
(\bar{\partial} + t \Delta + [\varphi,\cdot])^2 = 0
\end{equation}
for $\varphi \in \polyv{}^{*,*}(X)[[t]]$, where $t$ is the descendant parameter as in \cite{Barannikov99}.
Using standard techniques in the theory of BV algebras \cite{kontsevichgeneralized, KKP08, terilla2008smoothness}, we prove an unobstructedness theorem under two assumptions:
\begin{itemize}
\item
the {\em Hodge-to-de Rham degeneracy assumption} which says that the cohomology $H^*(\polyv{0}(X)[[t]],\pdb+t\bvd{})$ is a free $\comp[[t]]$ module (Assumption \ref{assum:Hodge_de_rham_degeneracy}), and
\item
a local condition which guarantees {\em freeness of the Hodge bundle} (Assumption \ref{assum:local_assumption_for_triviality_of_hodge_bundle}).
\end{itemize}

\begin{theorem}[=Theorem \ref{thm:unobstructedness_of_MC_equation} + Lemma \ref{lem:from_descendant_maurer_cartan_to_classical_maurer_cartan} + Proposition \ref{prop:Maurer_Cartan_give_consistent_gluing}]\label{thm:theorem_1_intro}
	Under Assumptions \ref{assum:local_assumption_for_triviality_of_hodge_bundle} and \ref{assum:Hodge_de_rham_degeneracy}, the extended Maurer-Cartan equation \eqref{eqn:maurer_cartan_introduction} can be solved order by order for $\varphi \in \polyv{}^{*,*}(X) \otimes \comp[[t]]$. In particular, geometric smoothings of $X$ over the formal scheme $\text{Spf}(\comp[[Q]])$ are unobstructed.
\end{theorem}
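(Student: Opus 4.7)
The plan is to prove Theorem \ref{thm:theorem_1_intro} by the standard order-by-order induction in the theory of BV algebras, following the strategy of \cite{kontsevichgeneralized, KKP08, terilla2008smoothness} adapted to the almost-dgBV setting provided by Theorem \ref{thm:main_thm_1_intro}. First, I solve the extended Maurer-Cartan equation \eqref{eqn:maurer_cartan_introduction} order by order in the $\mathbf{m}$-adic filtration on $\comp[[Q]]$, producing $\varphi \in \polyv{}^{*,*}(X)\otimes \comp[[t]]$. Then I use Lemma \ref{lem:from_descendant_maurer_cartan_to_classical_maurer_cartan} to extract a classical Maurer-Cartan element (dropping the $t$-descendant parameter), and finally invoke Proposition \ref{prop:Maurer_Cartan_give_consistent_gluing} to convert that element into consistent \v{C}ech-type gluing data producing the geometric smoothing of $X$ over $\text{Spf}(\comp[[Q]])$.

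For the inductive step, suppose $\varphi_k$ solves \eqref{eqn:maurer_cartan_introduction} modulo $\mathbf{m}^{k+1}$. Write $\varphi_{k+1} = \varphi_k + \eta$ with $\eta$ of weight $k+1$. Using the Jacobi identity together with the defining BV identity expressing $[\cdot,\cdot]$ as the deviation of $\Delta$ from being a derivation of $\wedge$, one checks that the obstruction $\mathfrak{o}_{k+1} := (\bar{\partial}+t\Delta+[\varphi_k,\cdot])^2 \bmod \mathbf{m}^{k+2}$ is $(\bar{\partial}+t\Delta)$-closed at order $0$. The almost-dgBV failure of $\bar{\partial}^2$ and $\bar{\partial}\Delta+\Delta\bar{\partial}$ from Theorem \ref{thm:main_thm_1_intro} only contributes in positive order of $\mathbf{m}$, and such contributions are absorbed by $[\varphi_k,\cdot]$ at each inductive stage, so the obstruction computation reduces to the classical dgBV case at order $0$. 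Assumption \ref{assum:Hodge_de_rham_degeneracy} then supplies a $\partial\bar{\partial}$-type lemma for $(\polyv{0}^{*,*}(X)[[t]],\bar{\partial},\Delta)$ while Assumption \ref{assum:local_assumption_for_triviality_of_hodge_bundle} ensures the Hodge bundle remains free over $\comp[[Q]][[t]]$; together these produce a chain-level lift of the classical solution, so $\eta$ can be chosen to kill $\mathfrak{o}_{k+1}$.

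The main obstacle is to install a compatible Hodge-theoretic splitting (propagator) $H$ on $\polyv{0}^{*,*}(X)[[t]]$ so that the inductive procedure yields a well-defined, $\mathbf{m}$-adically convergent sequence $\{\varphi_k\}$ rather than merely non-canonical lifts at each stage. This is precisely where Hodge-to-de Rham degeneracy is essential: it forces the spectral sequence computing $H^*(\polyv{0}^{*,*}(X)[[t]], \bar{\partial}+t\Delta)$ to degenerate at $E_1$, hence the cohomology is free over $\comp[[t]]$ and one can fix a harmonic-type complement; Assumption \ref{assum:local_assumption_for_triviality_of_hodge_bundle} then propagates this freeness to higher orders in $\mathbf{m}$. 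With such a splitting fixed once and for all, the Kuranishi-style recursion $\varphi_{k+1} = \varphi_k - H\cdot \mathfrak{o}_{k+1}$ closes up inductively and produces the desired formal solution $\varphi \in \polyv{}^{*,*}(X)\otimes\comp[[t]]$ of \eqref{eqn:maurer_cartan_introduction}.

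Finally, extracting a classical Maurer-Cartan element from the descendant solution $\varphi$ is Lemma \ref{lem:from_descendant_maurer_cartan_to_classical_maurer_cartan}: comparing coefficients of powers of $t$ in \eqref{eqn:maurer_cartan_introduction} forces $\varphi|_{t=0}$ to satisfy the classical equation $\bar{\partial}\varphi + \tfrac{1}{2}[\varphi,\varphi]=0$ together with $\Delta\varphi=0$. Feeding the resulting classical Maurer-Cartan element into Proposition \ref{prop:Maurer_Cartan_give_consistent_gluing} assembles consistent $k^{\text{th}}$-order gluings of the local thickening data attached to the cover $\mathcal{V}$, producing the geometric smoothing of $X$ over $\text{Spf}(\comp[[Q]])$ and completing the proof.
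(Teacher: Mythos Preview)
Your overall architecture matches the paper's, but there are two genuine gaps that would cause the argument to fail as written.

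First, you have not properly accounted for the inhomogeneous terms $\dbobs{}$ and $\dvolfobs{}$ that encode the failure of the almost-dgBV structure. Recall from Theorem \ref{thm:construction_of_differentials} and Lemma \ref{lem:proving_globalness_for_construction_of_total_de_rham_differential} that $\pdb^2=[\dbobs{},\cdot]$ and $\pdb\bvd{}+\bvd{}\pdb=[\dvolfobs{},\cdot]$, so the Maurer--Cartan equation (Definition \ref{def:Maurer_Cartan_equation_unobstructedness}) actually reads $(\pdb+t\bvd{})\varphi+\tfrac{1}{2}[\varphi,\varphi]+(\dbobs{}+t\dvolfobs{})=0$. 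At the very first inductive step one must show that the class $[\dbobs{1}+t\dvolfobs{1}]$ vanishes, and this cannot be obtained from a $\partial\bar\partial$-type lemma on $\polyv{0}$ alone: the term lives in $\polyv{0}\otimes(\mathbf{m}/\mathbf{m}^2)$ and has no a priori reason to be $(\pdb+t\bvd{})$-exact. The paper's mechanism is different: one works with the operator $\ptd{}=\pdb+\bvd{}+(\dbobs{}+\dvolfobs{})\wedge$, which \emph{genuinely} squares to zero (Proposition \ref{prop:checking_d_square_equal_zero}), and uses the identity $\ptd{}(1)=\dbobs{}+\dvolfobs{}$ (and more generally $\ptd{k}(e^{\mathsf{l}_t(\tilde\varphi)})$ for the higher steps). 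This exhibits the obstruction as $\ptd{}$-exact in $\polyv{k+1}$; freeness of the Hodge bundle (Assumption \ref{assum:local_assumption_for_triviality_of_hodge_bundle} via Lemma \ref{lem:triviality_of_hodge_bundle}) then yields a short exact sequence on cohomology forcing the class to vanish in $\polyv{0}\otimes(\mathbf{m}^{k}/\mathbf{m}^{k+1})$. Only \emph{after} that does Assumption \ref{assum:Hodge_de_rham_degeneracy} enter, via Terilla's trick, to remove negative powers of $t$ from the primitive. Your Kuranishi-style propagator $H$ does not supply this exactness step.

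Second, your description of Lemma \ref{lem:from_descendant_maurer_cartan_to_classical_maurer_cartan} is incorrect. Setting $t=0$ in \eqref{eqn:Maurer_Cartan_equation_unobstructedness} gives the \emph{extended} equation $\pdb\phi+\tfrac{1}{2}[\phi,\phi]+\dbobs{}=0$ (the $\dbobs{}$ term persists), not $\bar\partial\phi+\tfrac{1}{2}[\phi,\phi]=0$, and there is no claim that $\Delta\phi=0$. The actual content of the lemma is a bidegree argument: writing $\phi=\psi_0+\psi_1+\cdots$ with $\psi_i\in\polyv{}^{-i,i}$, one shows that the induction in Theorem \ref{thm:unobstructedness_of_MC_equation} can be refined so that $\psi_0=0$ at every order; then by degree reasons the component $\psi_1\in\polyv{}^{-1,1}$ satisfies the classical Maurer--Cartan equation \eqref{eqn:Maurer_Cartan_equation_without_formal_variable_t} (still including $\dbobs{}$), and it is this $\psi_1$ that is fed into Proposition \ref{prop:Maurer_Cartan_give_consistent_gluing} to produce the geometric gluing.
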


This can be viewed as a singular version of the famous Bogomolov-Tian-Todorov (BTT) theorem \cite{bogomolov1978hamiltonian, tian1987smoothness, todorov1989weil}. It also extends the framework put forth by Katzarkov-Kontsevich-Pantev \cite{kontsevichgeneralized, KKP08} to the singular case.

We remark that Assumption \ref{assum:local_assumption_for_triviality_of_hodge_bundle} depends on how good the local smoothing models are. If they are good enough, Theorem \ref{thm:theorem_1_intro} essentially reduces smoothability of $X$ to validity of the Hodge-to-de Rham degeneracy (i.e. Assumption \ref{assum:Hodge_de_rham_degeneracy}).

\subsubsection{Log Frobenius manifold structure (\S \ref{sec:abstract_semi_infinite_VHS})}

From the almost dgBV algebra $\polyv{}^{*,*}(X)$, we can construct a {\em logarithmic Frobenius manifold structure} on a formal neighborhood of $X$ in the extended moduli space by directly adapting the techniques of Barannikov-Kontsevich \cite{barannikov-kontsevich98, Barannikov99}. For this purpose, we need to suitably enlarge our coefficient ring $\comp[Q]$ to include all the extended moduli parameters.

Firstly, from the log structure on $X$, we have the complex of log de Rham differential forms $\Omega^*$ and we can construct the residue action $N_{\nu}$ of the Gauss-Manin connection $\gmc{}$ acting on the cohomology $\mathbb{H}^*(X,\Omega^*)$ for each constant vector field on $\text{Spec}(\comp[Q])$ given by $\nu \in (Q^{gp})^{\vee} \otimes _{\inte} \real$. We assume the following:
\begin{itemize}
	\item
	the existence of a {\em weight filtration} of the form
	$$
	\{0\} \subset \mathcal{W}_{\leq 0 } \subset \cdots  \subset \mathcal{W}_{\leq r} \subset \cdots \subset \mathcal{W}_{d} = \mathbb{H}^*(X,\Omega^*)
	$$
	indexed by half-integer weights $r \in \half \inte$, which is {\em opposite} to the Hodge filtration $\mathcal{F}^{\bullet} (\mathbb{H}^*(X,\Omega^*))$ (Assumption \ref{assum:weighted_filtration_assumption}), and
	\item
	the existence of a compatible trace map $\trace: \mathbb{H}^*(X,\Omega^*) \rightarrow \comp$ such that the associated pairing $\prescript{0}{}{\mathsf{p}}(\alpha,\beta):= \trace(\alpha \wedge \beta)$ is non-degenerate (Assumption \ref{assum:poincare_pairing_assumption}).
\end{itemize}

Given a versal solution $\varphi$ of the Maurer-Cartan equation \eqref{eqn:maurer_cartan_introduction}, we consider the $\reallywidehat{\comp[Q][[t]]}$ module
$$\mathcal{H}_+ :=\varprojlim_k H^*(\polyv{k}^{*,*}(X)[[t]], \pdb + t \bvd{} + [\varphi,\cdot]),$$
which is equipped with the Gauss-Manin connection $\gmc{}$ together with a $\gmc{}$-flat pairing $\langle \cdot, \cdot \rangle$ constructed from $\prescript{0}{}{\mathsf{p}}$. 
	
\begin{theorem}[=Theorem \ref{thm:construction_of_Frobenius_manifold}]\label{thm:intro_thm_2}
The triple $(\mathcal{H}_+,\gmc{},\langle \cdot,\cdot \rangle)$ is a semi-infinite log variation of Hodge structures in the sense of Definition \ref{def:log_semi_infinite_VHS}.
Under Assumption \ref{assum:weighted_filtration_assumption}, we can construct an opposite filtration $\mathcal{H}_-$ to the Hodge bundle $\mathcal{H}_+$ in the sense of Definition \ref{def:opposite_filtration}. Furthermore, there exists a versal solution to the Maurer-Cartan equation \eqref{eqn:maurer_cartan_introduction} such that $ e^{\varphi /t}$ gives a miniversal section of the Hodge bundle in the sense of Definition \ref{def:miniversal_section}.
Finally, under Assumption \ref{assum:poincare_pairing_assumption}, there exists a structure of logarithmic Frobenius manifold on the formal neighborhood $\text{Spf}(\comp[[Q]])$ of $X$ in the extended moduli space constructed from these data.
\end{theorem}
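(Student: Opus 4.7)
The plan is to adapt the Barannikov-Kontsevich formalism \cite{barannikov-kontsevich98, Barannikov99} to the present logarithmic and \emph{almost}-dgBV setting, verifying each of the four assertions in turn. For the semi-infinite log VHS property of $(\mathcal{H}_+,\gmc{},\langle\cdot,\cdot\rangle)$, I would check the axioms of Definition \ref{def:log_semi_infinite_VHS} one by one: freeness of $\mathcal{H}_+$ as a $\reallywidehat{\comp[Q][[t]]}$-module reduces at the $0$-th order to the Hodge-to-de Rham degeneracy (Assumption \ref{assum:Hodge_de_rham_degeneracy}), and propagates to all higher orders by induction using Assumption \ref{assum:local_assumption_for_triviality_of_hodge_bundle} together with the Maurer-Cartan element $\varphi$ provided by Theorem \ref{thm:theorem_1_intro}. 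The log Gauss-Manin connection $\gmc{}$ is built by differentiating $\varphi$ along constant vector fields $\nu$ on $\text{Spec}(\comp[Q])$, giving automatically the required order-one pole at $t=0$ coming from the $t\bvd{}$-term; Griffiths transversality and flatness then fall out of the square-zero property of the deformed differential $\pdb + t\bvd{} + [\varphi,\cdot]$, while flatness and non-degeneracy of $\langle\cdot,\cdot\rangle$ are inherited from $\trace$ on $\mathbb{H}^*(X,\Omega^*)$ via standard adjunction identities between $\trace$, $\bvd{}$ and $\bvd{}$.

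Next, for the opposite filtration, I would start from the weight filtration $\mathcal{W}_{\leq r}$ of Assumption \ref{assum:weighted_filtration_assumption} and its opposedness to $\mathcal{F}^\bullet$: choose a bigrading splitting of the pair $(\mathcal{F}^\bullet,\mathcal{W}_{\leq\bullet})$ on the $0$-th order cohomology $\mathbb{H}^*(X,\Omega^*)$, define $\mathcal{H}_-$ at order $0$ as the appropriate $\comp[t^{-1}]$-span of negative $t$-powers times the weight pieces, and lift to all orders by propagating flatly along $\gmc{}$; the lifts exist at each order thanks to the freeness of $\mathcal{H}_+$ already established. The decomposition $\mathcal{H}=\mathcal{H}_+\oplus\mathcal{H}_-$ at order $0$ is then exactly the opposedness of the weight and Hodge filtrations, and the required scaling behavior under the natural grading operator is enforced by the half-integer weight indexing of $\mathcal{W}_{\leq\bullet}$.

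Third, to exhibit a versal $\varphi$ for which $e^{\varphi/t}$ is a miniversal section of $\mathcal{H}_+$ in the sense of Definition \ref{def:miniversal_section}, I would inductively construct $\varphi$ on the $\mathbf{m}$-adic filtration: at each order Theorem \ref{thm:theorem_1_intro} supplies some MC solution, and the set of such solutions is a torsor under the gauge action coming from $\polyv{}^{*,*}(X)[[t]]$ in appropriate degree. As in \cite{Barannikov99}, this gauge freedom can be used to uniquely normalize $\varphi$ order by order so that the projection of $[e^{\varphi/t}]$ onto $\mathcal{H}_+/(\mathcal{H}_+\cap t\mathcal{H}_-)$ hits a chosen basis of the formal tangent space of the extended moduli; since $\comp[[Q]]$ was enlarged precisely to index the extended moduli parameters, miniversality of the resulting map is automatic.

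Finally, under the non-degeneracy Assumption \ref{assum:poincare_pairing_assumption}, the data $(\mathcal{H}_+,\mathcal{H}_-,e^{\varphi/t},\langle\cdot,\cdot\rangle)$ satisfies the hypotheses of the Barannikov-Kontsevich reconstruction in its logarithmic form: the miniversal section transports the BV product on $\mathcal{H}_+/t\mathcal{H}_+$ to a commutative, associative multiplication on the formal tangent bundle of $\text{Spf}(\comp[[Q]])$, the splitting $\mathcal{H}_+\oplus\mathcal{H}_-$ provides flat coordinates, the pairing produces the Frobenius metric, and the residue operators $N_\nu$ of $\gmc{}$ encode the logarithmic Euler field. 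Associativity, potentiality and compatibility of the product with the metric follow from the BV identities together with flatness of $\gmc{}$. The main obstacle I expect is the \emph{almost} (rather than genuine) dgBV nature of $\polyv{}^{*,*}(X)$: because $\pdb^2$ and $\pdb\bvd{}+\bvd{}\pdb$ vanish only modulo $\mathbf{m}$, each inductive step must be arranged so that the classical dgBV identities are invoked only at the leading order, with all higher-order correction terms systematically absorbed into $\varphi$ and its gauge normalization; the same care is needed in verifying flatness of $\gmc{}$ and in constructing $\mathcal{H}_-$ to all orders.
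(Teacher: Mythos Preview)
Your overall strategy---follow Barannikov--Kontsevich, verify the axioms of a $\frac{\infty}{2}$-LVHS, build $\mathcal{H}_-$ from the weight filtration, normalize the MC element to get a miniversal section, then invoke the reconstruction theorem (here in Reichelt's logarithmic form)---matches the paper's. But two points deserve correction.

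\textbf{The opposite filtration.} Your plan to ``lift to all orders by propagating flatly along $\gmc{}$'' has a genuine gap. The Gauss--Manin connection here is a \emph{log} connection over $\text{Spf}(\hat{\cfr}_{\ecfr{}})$: it has poles along the toric boundary, so there is no parallel-transport ODE to solve, and freeness of $\mathcal{H}_+$ alone does not produce flat lifts. What makes the lift possible is precisely that the residues $N_\nu$ are \emph{nilpotent} (forced by Assumption~\ref{assum:weighted_filtration_assumption}). The paper exploits this in Lemma~\ref{lem:existence_of_deligne_basis}: given a filtered basis $\{e_{r;i}\}$ at order~$0$, one constructs ``elementary sections'' $\mathfrak{e}_{r;i}$ by an iterative correction scheme in which each obstruction is killed by a commutator $[c_m,N_{\nu_m}]$, terminating after finitely many steps by nilpotency. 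These elementary sections simultaneously give the trivialization $\varkappa$ identifying $\gmc{}$ with $N$, the definition of $\mathcal{H}_-$ to all orders (Lemma~\ref{lem:uniqueness_of_opposite_filtration}, which also gives uniqueness), and the extension of $\trace$ and the pairing (Definition~\ref{def:construction_of_higher_residue_pairing}). Your outline does not supply this mechanism, and ``standard adjunction identities'' will not by themselves prove flatness of $\langle\cdot,\cdot\rangle$; the paper's Lemma~\ref{lem:flatness_of_higher_residue_pairing} relies on showing that products $\mathfrak{e}_{r;i}\wedge\mathfrak{e}_{l;j}$ have \emph{constant} coefficients in the elementary basis, again by an induction using nilpotency.

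\textbf{The ``almost'' issue.} You are right that the failure of $\pdb^2=0$ beyond order~$0$ is the main novelty, but you locate the difficulty in the wrong place. Flatness of $\gmc{}$ does \emph{not} require order-by-order absorption of corrections: the paper builds the total de~Rham differential $\drd{}{}=\pdb+\dpartial{}+\dbobs{}\lrcorner$ (Definition~\ref{def:total_de_rham_differential}) and checks $\drd{}{}^2=0$ \emph{exactly} (Proposition~\ref{prop:checking_d_square_equal_zero}); the GM connection is then defined via the short exact sequence~\eqref{eqn:short_exact_sequence_for_GM_connection} and is genuinely flat (Proposition~\ref{prop:flatness_of_gauss_manin}). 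The place where the ``almost'' nature bites is earlier, in solving the Maurer--Cartan equation itself (Theorem~\ref{thm:unobstructedness_of_MC_equation}), where the obstruction terms $\dbobs{},\dvolfobs{}$ appear and must be absorbed into $\varphi$. Once a MC element exists, the construction of $(\mathcal{H}_+,\gmc{},\langle\cdot,\cdot\rangle,\mathcal{H}_-)$ and the miniversal section proceeds without further interference from the almost structure.
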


\subsubsection{Geometric applications (Examples \& \S \ref{sec:application_to_gross_siebert_program})}
In a running example throughout the paper and the last section, we explain how to apply our results to the geometric settings studied by Friedman \cite{Friedman83} and Kawamata-Namikawa \cite{kawamata1994logarithmic} (the $d$-semistable log smooth case in a running example starting from Example \ref{ex:log-smooth-I}) and Kontsevich-Soibelman \cite{kontsevich-soibelman04} and Gross-Siebert \cite{gross2011real} (the maximally degenerate case in \S \ref{sec:application_to_gross_siebert_program}). In both cases, there is a Stein cover $(V_\alpha)_\alpha$ of $X$ together with a local {\em toric} thickening $\mathbf{V}_\alpha$ of each $V_\alpha$ over $\text{Spec}(\comp[Q])$:
$$
\xymatrix@1{ V_{\alpha} \  \ar@{^{(}->}[rr] \ar[d] & & \mathbf{V}_{\alpha} \ar[d]^{\pi}\\
\text{Spec}(\comp) \  \ar@{^{(}->}[rr] & & \text{Spec}(\comp[Q])
}
$$
These serve as local models for the smoothing of $X$.

Let $Z \subset X$ be the codimension $2$ singular locus of the log-structure of $X$ and write the inclusion of the smooth locus as $j: X\setminus Z \rightarrow X$. 

We take $\bva{k}^*_{\alpha}$ and $\tbva{k}{}^*_{\alpha}$ as the push-forwards by $j$ of the sheaf of relative log polyvector fields and the sheaf of total log holomorphic de Rham complex respectively. The higher order patching data $\patch{k}_{\alpha\beta,i}$ come from uniqueness of the local models near a point in $X$. These data fit into our algebraic framework. Also, both freeness of the Hodge bundle (Assumption \ref{assum:local_assumption_for_triviality_of_hodge_bundle}) and Hodge-to-de Rham degeneracy (Assumption \ref{assum:Hodge_de_rham_degeneracy}) hold: see \cite[Lemma 4.1]{kawamata1994logarithmic} for the log smooth case and \cite[Theorems 3.26 \& 4.1]{Gross-Siebert-logII} for the maximally degenerate case.\footnote{\label{fn:gap-gross-siebert} There was a gap in the proof of \cite[Theorem 4.1]{Gross-Siebert-logII} discovered by Felten-Filip-Ruddat, but it was filled by them in \cite{Felten-Filip-Ruddat}; see \cite[Theorem 1.10]{Felten-Filip-Ruddat} for details.} Therefore, we obtain the following corollary.
	
\begin{corollary}[see Corollaries \ref{cor:Kawamata-Namikawa} and \ref{prop:gs_unobstructedness}]
In both the log smooth and maximally degenerate cases, the complex analytic space $(X,\mathcal{O}_X)$ is smoothable, i.e. there exists a $k^{\text{th}}$-order thickening $(\prescript{k}{}{X},\prescript{k}{}{\mathcal{O}})$ over $\logsk{k}$ locally modeled on $\prescript{k}{}{\mathbf{V}}_{\alpha}$ for each $k \in \inte_{\geq 0}$, and these thickenings are compatible.
\end{corollary}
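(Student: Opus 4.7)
The plan is to specialize the abstract framework of Theorems \ref{thm:main_thm_1_intro} and \ref{thm:theorem_1_intro} to each of the two geometric situations and then translate a solution of the extended Maurer-Cartan equation into genuine compatible thickenings. The first step is to exhibit, for each case, the local thickening data fitting Definitions \ref{def:higher_order_data}, \ref{def:higher_order_data_module} and \ref{def:higher_order_patching}. In both settings one has a Stein cover $\mathcal{V} = \{V_\alpha\}$ of $X$, and over each $V_\alpha$ a toric model $\mathbf{V}_\alpha \to \Spec(\comp[Q])$; setting $\bva{k}^*_\alpha := j_* \wedge^* \Theta_{\prescript{k}{}{\mathbf{V}}_\alpha/\logsck{k}}$ and $\tbva{k}{}^*_\alpha := j_* \Omega^*_{\prescript{k}{}{\mathbf{V}}_\alpha}$ produces coherent sheaves of BV algebras and de Rham modules of the required form (with $\bvd{k}_\alpha$ the BV operator coming from the local relative volume form and $\dpartial{k}_\alpha$ the log de Rham differential). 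Refining to a Stein cover $\mathcal{U} = \{U_i\}$ subordinate to $\mathcal{V}$, the local uniqueness of toric smoothings at each point of $X$ (as established in \cite{Friedman83, kawamata1994logarithmic} in the log smooth case and in \cite{Gross-Siebert-logI, Gross-Siebert-logII} in the maximally degenerate case) produces the isomorphisms $\patch{k}_{\alpha\beta,i}$. Verifying that the discrepancy between these local patchings is a Hamiltonian-type vector field on $\bva{k}^*_\alpha$, so that Definition \ref{def:higher_order_patching} is satisfied, is a direct computation using the toric local models.

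Once this data is in place, Theorem \ref{thm:main_thm_1_intro} produces the almost dgBV algebra $(\polyv{}^{*,*}(X), \pdb, \bvd{}, \wedge)$ together with consistent gluing morphisms $\glue{k}_{\alpha\beta}$. To apply Theorem \ref{thm:theorem_1_intro} I would invoke the results already cited in the excerpt: Assumption \ref{assum:local_assumption_for_triviality_of_hodge_bundle} (freeness of the Hodge bundle) follows from \cite[Lemma 4.1]{kawamata1994logarithmic} in the log smooth case and from \cite[Theorem 3.26]{Gross-Siebert-logII} in the maximally degenerate case, while Assumption \ref{assum:Hodge_de_rham_degeneracy} (Hodge-to-de Rham degeneration) is again \cite[Lemma 4.1]{kawamata1994logarithmic} respectively \cite[Theorem 4.1]{Gross-Siebert-logII}. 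With these in hand, Theorem \ref{thm:theorem_1_intro} produces a Maurer-Cartan element $\varphi \in \polyv{}^{*,*}(X) \otimes \comp[[t]]$ solving \eqref{eqn:maurer_cartan_introduction}; Lemma \ref{lem:from_descendant_maurer_cartan_to_classical_maurer_cartan} then yields a classical Maurer-Cartan element in $\polyv{}^{*,*}(X)$ itself.

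From this classical solution, Proposition \ref{prop:Maurer_Cartan_give_consistent_gluing} converts $\varphi$ into genuinely consistent cocycles of gluing automorphisms for the local models $\prescript{k}{}{\mathbf{V}}_\alpha$ over $\logsck{k}$. Concretely, truncating $\varphi$ modulo $\mathbf{m}^{k+1}$ twists the formal gluing data $\glue{k}_{\alpha\beta}$ from Theorem \ref{thm:main_thm_1_intro} into honest isomorphisms $\prescript{k}{}{\mathbf{V}}_\alpha|_{U_{\alpha\beta}} \cong \prescript{k}{}{\mathbf{V}}_\beta|_{U_{\alpha\beta}}$ satisfying the cocycle identity on triple overlaps, and the compatibility with the patchings $\patch{k}_{\alpha\beta,i}$ ensures that we recover the correct local models. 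Gluing produces the $k$-th order thickening $(\prescript{k}{}{X}, \prescript{k}{}{\mathcal{O}})$, and the $k$-compatibility of $\varphi$ guarantees that these thickenings fit into an inverse system, which is precisely the desired smoothing.

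The main technical obstacle will be step one: checking, case by case, that the toric local deformation models really satisfy the compatibility axioms of Definitions \ref{def:higher_order_data_module} and \ref{def:higher_order_patching}, in particular that the failure of strict compatibility of the $\patch{k}_{\alpha\beta,i}$ is governed by a section of $\bva{k}^0_\alpha$ via the Lie bracket. Once this is established, the rest of the argument is a mechanical assembly of the abstract theorems with the Hodge-theoretic input already available in the literature.
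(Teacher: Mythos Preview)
Your proposal is correct and follows essentially the same route as the paper: set up the abstract data from the toric local models (\S\ref{sec:application_to_kawamata_namikawa} and \S\ref{sec:application_to_gross_siebert_program}), verify the two assumptions from the cited literature, and then assemble Theorem \ref{thm:unobstructedness_of_MC_equation}, Lemma \ref{lem:from_descendant_maurer_cartan_to_classical_maurer_cartan}, and Proposition \ref{prop:Maurer_Cartan_give_consistent_gluing} to obtain compatible geometric gluings. One minor correction: in the maximally degenerate case you have the two Gross--Siebert citations swapped --- \cite[Theorem 3.26]{Gross-Siebert-logII} gives the Hodge-to-de Rham degeneracy (Assumption \ref{assum:Hodge_de_rham_degeneracy}), while \cite[proof of Theorem 4.1]{Gross-Siebert-logII} supplies the holomorphic Poincar\'e lemma needed for Assumption \ref{assum:local_assumption_for_triviality_of_hodge_bundle}; also the base should be $\logsk{k}$ rather than $\logsck{k}$.
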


For the construction of log Frobenius manifold structures on the extended moduli spaces in these cases, see Corollaries \ref{cor:Kawamata-Namikawa-2} and \ref{cor:Gross-Siebert_log_Frobenius}.


As mentioned above, the recent work \cite{Felten-Filip-Ruddat} by Felten-Filip-Ruddat has shown that our results are applicable to the much more general class of toriodal crossing spaces. See also \cite{Chan-Ma19, Chan-Ma-Suen20} for extension of this algebraic framework to smoothing of pairs.




\section*{Acknowledgement}
We would like to thank Mark Gross, Donatella Iacono, Si Li, Grigory Mikhalkin, Helge Ruddat, Taro Sano, Mao Sheng, Chin-Lung Wang and Jeng-Daw Yu and the anonymous referees for very useful discussions, comments and suggestions. In particular, we thank Mark Gross for informing us the recent work \cite{Felten-Filip-Ruddat} and connecting us to the authors of that paper. We are also grateful to Simon Felten, Matej Filip and Helge Ruddat for supportive communications, and for very carefully reading an earlier draft of this paper and providing us a list of constructive suggestions and comments.

K. Chan was supported by a grant of the Hong Kong Research Grants Council (Project No. CUHK14302617) and direct grants from CUHK.
N. C. Leung was supported by grants of the Hong Kong Research Grants Council (Project No. CUHK14303516 \& CUHK14301117) and direct grants from CUHK.
Z. N. Ma was partially supported by the Institute of Mathematical Sciences (IMS) and the Department of Mathematics at The Chinese University of Hong Kong.
\section*{Notation Summary}

\begin{notation}\label{not:universal_monoid}
	We fix a rank $s$ lattice $\blat$ together with a strictly convex $s$-dimensional rational polyhedral cone $Q_\real \subset \blat_\real:= \blat\otimes_\inte \real$. We let $Q := Q_\real \cap \blat$ and call it the {\em universal monoid}. We consider the ring $\cfr:=\comp[Q]$ and write a monomial element as $q^m \in \cfr$ for $m \in Q$, and consider the maximal ideal given by $\mathbf{m}:= \comp[Q\setminus \{0\}]$. 
	We consider the Artinian ring $\cfrk{k}:= \cfr / \mathbf{m}^{k+1}$ and the completion $\hat{\cfr}:= \varprojlim_{k} \cfrk{k}$ of $\cfr$. We further equip $\cfr$, $\cfrk{k}$ and $\hat{\cfr}$ with the natural monoid homomorphism $Q \rightarrow \cfr$, $m \mapsto q^m$, giving them the structure of a {\em log ring} (see \cite[Definition 2.11]{gross2011real}); the corresponding log spaces will be denoted as $\logs$, $\logsk{k}$ and $\logsf$ respectively.
	
	Furthermore, we let $\logsdrk{}{*} := \cfr \otimes_{\comp} \bigwedge^*\blat_{\comp}$, $\logsdrk{k}{*}:=   \cfrk{k} \otimes_{\comp} \bigwedge^*\blat_{\comp}$ and $\logsdrf{*} := \hat{\cfr} \otimes_{\comp} \bigwedge^*\blat_{\comp}$ (here $\blat_\comp = \blat \otimes_\inte \comp$) be the spaces of log de Rham differentials on $\logs$, $\logsk{k}$ and $\logsf$ respectively. We write $1 \otimes m$ as $d \log q^m$ for $m \in \blat$, and these spaces are equipped with the de Rham differential $\partial$ satisfying $\partial(q^m) = q^m d\log q^m$. We also denote by $\logsvfk{}:= \cfr \otimes_{\comp} \blat_{\comp}^{\vee}$, $\logsvfk{}$ and $\logsvff$, respectively, the spaces of log derivations, which are equipped with a natural Lie bracket $[\cdot,\cdot]$. We write $1\otimes n$ as $\partial_n$ with action $\partial_n (q^m) = (m,n) q^m$, where $(m,n)$ is the natural pairing between $\blat_\comp$ and $\blat^{\vee}_\comp$.
\end{notation}

For a $\inte^2$-graded vector space $V^{*,*} = \bigoplus_{p,q}V^{p,q}$, we write $V^{k} = \bigoplus_{p+q= k} V^{p,q}$, and $V^{*} = \bigoplus_{k} V^{k}$ if we only care about the total degree. We also simply write $V$ if we do not need the grading.

Throughout this paper, we are dealing with two \v{C}ech covers $\mathcal{V} = (V_\alpha)_{\alpha}$,  $\mathcal{U} = (U_i)_{i \in \inte_+}$ and also $k^{\text{th}}$-order thickenings at the same time, so we will adapt the following (rather unusual) notational convention:
The top left corner in $\prescript{k}{}{\spadesuit}$ refers to the order of $\spadesuit$. 
The bottom left corner in $\prescript{}{\bullet}{\spadesuit}$ stands for something constructed from the Koszul filtration on $\tbva{}{\bullet}_{\alpha}$'s (as in Definitions \ref{def:0_order_data} and \ref{def:higher_order_data_module}), where $\bullet$ can be $r$, $r_1:r_2$ or $\parallel$ (meaning relative forms). 
The bottom right corner is reserved for the \v{C}ech indices; we write $\spadesuit_{\alpha_0 \cdots \alpha_{\ell}}$ for the \v{C}ech indices of $\mathcal{V}$ and $\spadesuit_{i_0 \cdots i_l}$ for the \v{C}ech indices of $\mathcal{U}$, and if they appear at the same time, we write $\spadesuit_{\alpha_0 \cdots \alpha_{\ell},i_0 \cdots i_l}$. 
\section{The abstract algebraic setup}\label{sec:abstract_framework}

\subsection{BV algebras and modules}\label{sec:abstract_bv_algebra_1}
\begin{definition}\label{def:abstract_BV_algebra}
	A {\em graded Batalin-Vilkovisky (abbrev. BV) algebra} is a unital $\inte$-graded commutative $\comp$-algebra $(V^{*},\wedge)$ together with a degree $1$ operator $\Delta$ such that $\Delta(1)=0$, $\Delta^2=0$ and the operator $\delta_v : V^{*} \rightarrow V^{*+|v|+1}$ defined by
	$
	\delta_v(w) : = \Delta(v\wedge w ) - \Delta(v) \wedge w -(-1)^{|v|} v\wedge \Delta(w)
	$
	is a derivation of degree $|v|+1$ for any homogeneous element $v \in V^*$ (here $|v|$ denotes the degree of $v$).
\end{definition}

\begin{definition}\label{def:dgBV}
	A {\em differential graded Batalin-Vilkovisky (abbrev. dgBV) algebra} is a graded BV algebra $(V^*,\wedge,\Delta)$ together with a degree $1$ operator $\pdb$ satisfying
	$$
	\pdb(\alpha \wedge \beta) = (\pdb \alpha) \wedge \beta + (-1)^{|\alpha|} \alpha \wedge (\pdb \beta) ,\quad
	\pdb^2 = \pdb \bvd{} + \bvd{} \pdb = 0. 
	$$
\end{definition}

\begin{definition}\label{def:dgla}
	
	A {\em differential graded Lie algebra (abbrev. dgLa)} is a triple $(L^* , d , [\cdot,\cdot])$, where $L = \bigoplus_{i \in \inte} L^i$, $[\cdot,\cdot] : L^{*} \otimes L^* \rightarrow L^*$ is a graded skew-symmetric pairing satisfying the Jacobi identity $[a,[b,c]] + (-1)^{|a||b|+|a||c|} [b,[c,a]] + (-1)^{|a||c| + |b| |c|}[ c,[a,b]] = 0$ for homogeneous elements $a,b,c \in L^*$, and $d: L^{*} \rightarrow L^{*+1}$ is a degree $1$ differential satisfying $d^2 =0$ and the Leibniz rule
	$
	d[a,b] = [da,b] + (-1)^{|a|} [a,db]
	$
	for homogeneous elements $a, b \in L^*$.
\end{definition}

Given a BV algebra $(V^*,\wedge,\Delta)$, the map $[\cdot,\cdot] : V \otimes V \rightarrow V$ defined by $[v,w] = (-1)^{|v|}\delta_v(w)$ is called the {\em associated Lie bracket}.\footnote{For polyvector fields on a Calabi-Yau manifold, we have $[\cdot,\cdot] = -[\cdot,\cdot]_{sn}$, where $[\cdot,\cdot]_{sn}$ is the Schouten-Nijenhuis bracket; see e.g. \cite[\S 6.A]{huybrechts2006complex}.} Using this bracket, the triple $(V^*[-1], \Delta, [\cdot,\cdot])$ forms a dgLa.

\begin{notation}\label{not:exponential_group}
	Given a {\em nilpotent} graded Lie algebra $L^*$, we define a product $\bchprod$ by the Baker-Campbell-Hausdorff formula:
	$
	v \bchprod w := v  + w + \half[v,w] + \cdots
	$
	for $v,w \in V^*$. The pair $(L^*, \bchprod)$ is called the {\em  exponential group of $L^*$} and is denoted by $\exp(L^*)$.
\end{notation}

\begin{lemma}[see e.g. \S 1 in \cite{manetti2005differential}]\label{lem:gauge_action_on_differential}
	For a dgLa $(L^*,d , [\cdot,\cdot])$, we consider the endomorphism $\text{ad}_{\vartheta}:= [\vartheta,\cdot]$ for an element $\vartheta \in L^0$ such that $\text{ad}_{\vartheta}$ is nilpotent. Then we have the formula 
	$$
	e^{\text{ad}_{\vartheta}} (d + [\xi,-]) e^{- \text{ad}_{\vartheta}} = d  + [e^{ \text{ad}_{\vartheta}}(\xi), \cdot]- [\frac{e^{\text{ad}_\vartheta} -1}{\text{ad}_{\vartheta}} (d\vartheta), \cdot]
	$$
	for $\xi \in L^*$.
	For a nilpotent element $\vartheta \in L^0$, we define the gauge action
	$$
	\exp(\vartheta) \star \xi:=  e^{\text{ad}_{\vartheta}}(\xi)- \frac{e^{\text{ad}_\vartheta} -1}{\text{ad}_{\vartheta}} (d\vartheta)
	$$
	for $\xi \in L^*$.
	Then we have $\exp(\vartheta_1)  \star \left( \exp(\vartheta_2) \star \xi \right) = \exp(\vartheta_1 \bchprod \vartheta_2) \star \xi$, where $\bchprod$ is the Baker-Campbell-Hausdorff product as in Notation \ref{not:exponential_group}.
\end{lemma}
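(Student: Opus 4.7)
The plan is to prove the conjugation identity first and then deduce the group-action property by iterating it.

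First I would note that since $\vartheta \in L^0$, the operator $\mathrm{ad}_\vartheta = [\vartheta, \cdot]$ is a degree-$0$ derivation of the graded Lie algebra $L^*$, and by the assumed nilpotency both $e^{\pm \mathrm{ad}_\vartheta}$ are well-defined as endomorphisms of $L^*$. The Leibniz rule together with $|\vartheta|=0$ gives
\begin{equation*}
[\mathrm{ad}_\vartheta, d](\xi) = [\vartheta, d\xi] - d[\vartheta,\xi] = [\vartheta, d\xi] - [d\vartheta,\xi] - [\vartheta, d\xi] = -\mathrm{ad}_{d\vartheta}(\xi),
\end{equation*}
so, as operators on $L^*$, one has the clean identity $[\mathrm{ad}_\vartheta, d] = -\mathrm{ad}_{d\vartheta}$. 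The Jacobi identity upgrades this to $[\mathrm{ad}_\vartheta, \mathrm{ad}_\eta] = \mathrm{ad}_{[\vartheta,\eta]}$ for any $\eta$, so iteratively
\begin{equation*}
\big[\underbrace{\mathrm{ad}_\vartheta, \big[\mathrm{ad}_\vartheta, \ldots [\mathrm{ad}_\vartheta}_{n\text{ brackets}}, d] \ldots \big]\big] = -\mathrm{ad}_{\mathrm{ad}_\vartheta^{\,n-1}(d\vartheta)} \quad (n \geq 1).
\end{equation*}

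Next I would apply the standard expansion $e^{A} B e^{-A} = \sum_{n\geq 0}\tfrac{1}{n!}[A,[A,\ldots [A,B]\ldots]]$ (valid here because $\mathrm{ad}_{\mathrm{ad}_\vartheta}$ is nilpotent on $\mathrm{End}(L^*)$). Summing the computation above yields
\begin{equation*}
e^{\mathrm{ad}_\vartheta}\, d\, e^{-\mathrm{ad}_\vartheta} = d - \sum_{n\geq 1}\tfrac{1}{n!}\mathrm{ad}_{\mathrm{ad}_\vartheta^{\,n-1}(d\vartheta)} = d - \mathrm{ad}_{\frac{e^{\mathrm{ad}_\vartheta} - 1}{\mathrm{ad}_\vartheta}(d\vartheta)}.
\end{equation*}
Similarly, $e^{\mathrm{ad}_\vartheta}\,\mathrm{ad}_\xi\,e^{-\mathrm{ad}_\vartheta} = \mathrm{ad}_{e^{\mathrm{ad}_\vartheta}(\xi)}$ by the same exponentiation of $[\mathrm{ad}_\vartheta, \mathrm{ad}_\xi] = \mathrm{ad}_{\mathrm{ad}_\vartheta(\xi)}$. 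Adding these two identities gives exactly the first claimed formula, and hence the definition $\exp(\vartheta)\star \xi = e^{\mathrm{ad}_\vartheta}(\xi) - \tfrac{e^{\mathrm{ad}_\vartheta}-1}{\mathrm{ad}_\vartheta}(d\vartheta)$ is characterized by
\begin{equation*}
d + [\exp(\vartheta)\star\xi,\,\cdot\,] = e^{\mathrm{ad}_\vartheta}\,\bigl(d + [\xi,\cdot]\bigr)\,e^{-\mathrm{ad}_\vartheta}.
\end{equation*}

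For the composition property, I would use this characterization as follows. Conjugating twice one gets
\begin{equation*}
d + [\exp(\vartheta_1)\star(\exp(\vartheta_2)\star\xi),\,\cdot\,] = e^{\mathrm{ad}_{\vartheta_1}} e^{\mathrm{ad}_{\vartheta_2}} \bigl(d + [\xi,\cdot]\bigr) e^{-\mathrm{ad}_{\vartheta_2}} e^{-\mathrm{ad}_{\vartheta_1}}.
\end{equation*}
Because $\vartheta \mapsto \mathrm{ad}_\vartheta$ is a homomorphism of Lie algebras, the Baker--Campbell--Hausdorff formula (applied in the nilpotent subalgebra of $\mathrm{End}(L^*)$ generated by $\mathrm{ad}_{\vartheta_1}, \mathrm{ad}_{\vartheta_2}$) yields $e^{\mathrm{ad}_{\vartheta_1}}e^{\mathrm{ad}_{\vartheta_2}} = e^{\mathrm{ad}_{\vartheta_1\bchprod\vartheta_2}}$, so the right-hand side equals $d + [\exp(\vartheta_1\bchprod\vartheta_2)\star\xi, \cdot]$. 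Since $\mathrm{ad}\colon L^1 \to \mathrm{End}(L^*)$ is injective on the relevant nilpotent elements (the bracket with $1$ recovers the element), comparing both sides gives the desired equality.

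The only mildly delicate point is book-keeping the signs and the interchange of the two formal series (in $\mathrm{ad}_\vartheta$ acting on $L^*$, versus $\mathrm{ad}_{\mathrm{ad}_\vartheta}$ acting on $\mathrm{End}(L^*)$); this is safe thanks to nilpotency of $\mathrm{ad}_\vartheta$, which makes every sum finite. Otherwise the proof is purely formal, and essentially the same as in \cite{manetti2005differential}.
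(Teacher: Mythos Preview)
The paper does not prove this lemma; it is simply cited from \cite{manetti2005differential} as a known result, so there is nothing to compare your argument against in the paper itself. Your derivation of the conjugation formula is correct and is the standard one.

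There is, however, a genuine gap in your last step. You conclude the composition law from the equality
\[
d + [\exp(\vartheta_1)\star(\exp(\vartheta_2)\star\xi),\,\cdot\,] \;=\; d + [\exp(\vartheta_1\bchprod\vartheta_2)\star\xi,\,\cdot\,]
\]
by invoking injectivity of $\mathrm{ad}\colon L^1 \to \mathrm{End}(L^*)$, justified by ``the bracket with $1$ recovers the element''. A general dgLa has no unit, and $\mathrm{ad}$ need not be injective (it kills the center). So as written the deduction fails.

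The fix is easy: the identity you need is a \emph{universal} formal identity in the free (complete) Lie algebra on two generators with a derivation, and in the free Lie algebra the center is trivial, so $\mathrm{ad}$ is injective there; the identity then specializes to any dgLa. Equivalently, you can verify directly (via the differential of the BCH series, or by passing to the universal enveloping algebra where group-like elements act) that
\[
e^{\mathrm{ad}_{\vartheta_1}}\!\Bigl(\tfrac{e^{\mathrm{ad}_{\vartheta_2}}-1}{\mathrm{ad}_{\vartheta_2}}(d\vartheta_2)\Bigr) + \tfrac{e^{\mathrm{ad}_{\vartheta_1}}-1}{\mathrm{ad}_{\vartheta_1}}(d\vartheta_1) \;=\; \tfrac{e^{\mathrm{ad}_{\vartheta_1\bchprod\vartheta_2}}-1}{\mathrm{ad}_{\vartheta_1\bchprod\vartheta_2}}\bigl(d(\vartheta_1\bchprod\vartheta_2)\bigr),
\]
which together with $e^{\mathrm{ad}_{\vartheta_1}}e^{\mathrm{ad}_{\vartheta_2}}=e^{\mathrm{ad}_{\vartheta_1\bchprod\vartheta_2}}$ gives the group law at the level of elements, not just of adjoint actions.
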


\begin{definition}[see e.g. \cite{kowalzig2015gerstenhaber}]\label{def:abstract_BV_module}
	A {BV module} $(M^*,\partial)$ over a BV algebra $(V^{*},\wedge, \Delta)$ is a complex of $\comp$-vector spaces equipped with a degree $1$ differential $\partial$ and a graded action by $(V^*,\wedge)$, denoted as $\iota_v=v \lrcorner : M^* \rightarrow M^{*+|v|}$ (for a homogeneous element $v\in V^*$) and called the {\em interior multiplication or contraction by $v$}, 
		such that if we let 
		$
		(-1)^{|v|}\mathcal{L}_v := [\partial, v \lrcorner] := \partial \circ (v\lrcorner )- (-1)^{|v|} (v\lrcorner) \circ \partial
		$, 
		where $[\cdot,\cdot]$ is the graded commutator for operators, then $[\mathcal{L}_{v_1}, v_2 \lrcorner] = [v_1,v_2] \lrcorner$. 
\end{definition}

Given a BV module $(M^*, \partial)$ over $(V^{*},\wedge, \Delta)$, we have $[\partial, \mathcal{L}_v]  = 0$ and $\mathcal{L}_{[v_1,v_2]} = \{ \mathcal{L}_{v_1}, \mathcal{L}_{v_2}\}$, where $\{\cdot,\cdot\}$ stands for graded commutator for operators, and 
$
\mathcal{L}_{v_1 \wedge v_2}  = (-1)^{|v_2|} \mathcal{L}_{v_1} \circ (v_2 \lrcorner)+(v_1 \lrcorner) \circ \mathcal{L}_{v_2}.
$


\begin{definition}\label{def:abstract_derham_module}
	A BV module $(M^*, \partial)$ over $(V^{*},\wedge, \Delta)$ is called a {\em de Rham module} if there is a unital differential graded algebra (abbrev. dga) structure $(M^*, \wedge, \partial)$ such that
	$v \lrcorner (w_1 \wedge w_2) = (v \lrcorner w_1) \wedge w_2 + (-1)^{|w_1|} w_1 \wedge (v \lrcorner w_2)$ for $v \in V^{-1}$ (i.e. $v \lrcorner $ acts as a derivation)
	and $v \lrcorner (w_1 \wedge w_2) = (v \lrcorner w_1) \wedge w_2 = w_1 \wedge (v \lrcorner w_2)$ for $v \in V^{0}$.
	If in addition there is a finite decreasing filtration of BV submodules $ \{0\} = \lscript{}{N}{M}^*\subset  \cdots \subset \lscript{}{r}{M}^* \subset \cdots\subset \lscript{}{0}{M}^* = M^*$, then we call it a {\em filtered de Rham module}.\footnote{This is motivated by the de Rham complex equipped with the Koszul filtration associated to a family of varieties; see e.g. \cite[Chapter 10.4]{peters2008mixed}.}
\end{definition}

Given a de Rham module $(M^*, \partial)$ over $(V^{*},\wedge, \Delta)$, it is easy to check that for $v \in V^{-1}$, $\mathcal{L}_v$ acts as a derivation, i.e.
$\mathcal{L}_v (w_1 \wedge w_2) = (\mathcal{L}_v w_1) \wedge w_2 + w_1 \wedge (\mathcal{L}_v w_2)$.

\begin{lemma}\label{lem:BV_gauge_action_computation}
	Given a BV algebra $(V^*, \wedge, \Delta)$ acting on a BV module $(M^*,\partial)$, both with bounded degree, together with 
	an element $v \in V^{-1}$ such that the operator $v \wedge$ is nilpotent and 
	an element $\volf{}$ such that $\partial \volf{}= 0$ and satisfying $\bvd{}(\alpha) \lrcorner \volf{} = \partial(\alpha \lrcorner \volf{})$, we have the following identities
	\begin{align*}
	\exp([\Delta, v\wedge]) (1) &= \exp\Big(\sum_{k=0}^{\infty} \frac{\delta_v^k }{(k+1)!}(\Delta v)\Big),\\
	\exp([\partial, v \lrcorner]) \volf{}  &= \exp\Big(\sum_{k=0}^{\infty} \frac{\delta_v^k }{(k+1)!}(\Delta v)\Big) \lrcorner \volf{},
	\end{align*}
	where $\delta_v$ is the operator defined in Definition \ref{def:abstract_BV_algebra}. 
	\end{lemma}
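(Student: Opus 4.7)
The plan is to prove each identity by introducing a formal interpolation parameter $s$ and showing that both sides satisfy the same first-order ODE in $s$; ODE uniqueness in the pro-nilpotent setting (guaranteed by nilpotence of $v\wedge$ together with boundedness of degree) then identifies them at $s=1$. As preparation I would first rewrite $[\Delta,v\wedge]$ in terms of $\delta_v$: since $|v|=-1$ is odd, the graded commutator is an anticommutator, so $[\Delta,v\wedge](w)=\Delta(v\wedge w)+v\wedge\Delta w$; combining with the defining formula $\delta_v(w)=\Delta(v\wedge w)-\Delta v\wedge w+v\wedge\Delta w$ immediately yields
$$
[\Delta,v\wedge]\;=\;\delta_v+(\Delta v)\wedge .
$$

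For the first identity, introduce $\phi(s):=\sum_{k\ge 0}\tfrac{s^{k+1}}{(k+1)!}\,\delta_v^k(\Delta v)$, which lies in total degree $0$, vanishes at $s=0$, and satisfies $\phi'(s)=e^{s\delta_v}(\Delta v)$ and $\delta_v(\phi(s))=\phi'(s)-\Delta v$. Set $g(s):=\exp(\phi(s))$. Since $\phi(s)$ has even degree in the graded commutative algebra $(V^*,\wedge)$, the standard formal calculus gives $g'(s)=\phi'(s)\wedge g(s)$, and since $\delta_v$ is an even derivation of $\wedge$ (by Definition~\ref{def:abstract_BV_algebra} with $|v|=-1$), we also have $\delta_v(g(s))=\delta_v(\phi(s))\wedge g(s)$. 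Combining with the rewriting of $[\Delta,v\wedge]$ above gives
$$
[\Delta,v\wedge]\,g(s)\;=\;\delta_v(\phi(s))\wedge g(s)+(\Delta v)\wedge g(s)\;=\;\phi'(s)\wedge g(s)\;=\;g'(s).
$$
Since $f(s):=\exp(s[\Delta,v\wedge])(1)$ satisfies the same ODE with $f(0)=g(0)=1$, uniqueness forces $f(1)=g(1)$, which is the first identity.

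For the second identity, the key observation is the module-level intertwining
$$
[\partial,v\lrcorner]\bigl(h\lrcorner\volf{}\bigr)\;=\;\bigl([\Delta,v\wedge](h)\bigr)\lrcorner\volf{}, \qquad h\in V^*,
$$
which I would prove by expanding $[\partial,v\lrcorner](h\lrcorner\volf{})=\partial(v\lrcorner h\lrcorner\volf{})+v\lrcorner\partial(h\lrcorner\volf{})$, using the module axiom $(v\wedge h)\lrcorner\volf{}=v\lrcorner(h\lrcorner\volf{})$ together with the hypothesis $\Delta(\alpha)\lrcorner\volf{}=\partial(\alpha\lrcorner\volf{})$ applied twice. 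Applying this intertwining to $h=g(s)$ and using $g'(s)=[\Delta,v\wedge]g(s)$ from the first part, the curve $G(s):=g(s)\lrcorner\volf{}$ satisfies $G'(s)=[\partial,v\lrcorner]G(s)$ with $G(0)=\volf{}$. Since $F(s):=\exp(s[\partial,v\lrcorner])(\volf{})$ solves the same ODE with the same initial condition, uniqueness again gives $F(1)=G(1)$, which is the second identity.

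The main obstacle is essentially keeping the odd-degree signs straight: one must verify carefully that $[\Delta,v\wedge]$ is indeed an anticommutator, that $\delta_v$ is an \emph{even} derivation (so that $\tfrac{d}{ds}e^{\phi(s)}=\phi'(s)\wedge e^{\phi(s)}$ and the commutation of $\delta_v$ past $\wedge$-powers of $\phi(s)$ are both legitimate), and that all exponential series make sense — this last point uses nilpotence of $v\wedge$ and boundedness of degree to ensure the ODEs live in a pronilpotent completion where formal uniqueness applies. The nontrivial guess that drives the whole argument is the closed form $\phi(s)=\tfrac{e^{s\delta_v}-1}{\delta_v}(\Delta v)$, which one could arrive at by matching the first two Taylor coefficients of $\exp(s[\Delta,v\wedge])(1)$ against $\exp(\phi(s))$.
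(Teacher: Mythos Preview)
Your argument is correct and takes a genuinely different route from the paper. The paper first rewrites $[\Delta,v\wedge]=\delta_v+(\Delta v)\wedge$ (as you do) but then proceeds by a direct combinatorial induction on $L$, proving the closed-form identity
\[
\frac{(\delta_v+(\Delta v)\wedge)^L}{L!}(1)\;=\;\sum_{\substack{0\le k_1<\cdots<k_m,\ s_i>0\\ \sum(k_i+1)s_i=L}}\prod_i\frac{1}{s_i!}\Bigl(\tfrac{\delta_v^{k_i}}{(k_i+1)!}(\Delta v)\Bigr)^{s_i}
\]
term by term, and then remarks that ``essentially the same proof gives the second identity.'' Your ODE argument with the interpolation $\phi(s)=\tfrac{e^{s\delta_v}-1}{\delta_v}(\Delta v)$ is the generating-function packaging of that same induction: differentiating your equation $g'(s)=[\Delta,v\wedge]g(s)$ and reading off the coefficient of $s^L$ reproduces exactly the paper's inductive step. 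What your approach buys is a cleaner, sign-free presentation once the degree bookkeeping is done, together with a transparent reduction of the second identity to the first via the intertwining $[\partial,v\lrcorner](h\lrcorner\volf{})=([\Delta,v\wedge]h)\lrcorner\volf{}$, which is more informative than the paper's ``same proof'' remark. The paper's approach, on the other hand, is slightly more self-contained in that it yields the graded identity directly without invoking formal ODE uniqueness, and makes the finiteness issue (why both exponentials terminate under the nilpotence hypothesis) marginally more explicit.
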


\begin{proof}
	To prove the first identity, notice that $[\Delta, v\wedge ] = \delta_v + (\Delta v )\wedge $ and
	\begin{multline*}
	\exp\left(\sum_{k=0}^{\infty} \frac{\delta_v^k }{(k+1)!}(\Delta v)\right) 
	=\\
	1 +\sum_{m  \geq 1} \sum_{\substack{0\leq k_1 < \cdots < k_m,\\ s_1, \cdots, s_m > 0}} \frac{(\frac{\delta_v^{k_1} }{(k_1+1)!} (\Delta v))^{s_1} \cdots (\frac{\delta_v^{k_m} }{(k_m+1)!} (\Delta v))^{s_m} }{(s_1!) \cdots (s_m !)}
	\end{multline*}
	So it suffices to establish the equality
	\begin{multline*}
	\frac{(\delta_v + (\Delta v )\wedge)^L}{L!} (1) =\\
	 \sum_{\substack{0\leq k_1 < \cdots < k_m,\ s_1, \cdots, s_m > 0: \\ (k_1+1)s_1 + \cdots (k_m +1) s_m = L}} \frac{\left(\frac{\delta_v^{k_1} }{(k_1+1)!} (\Delta v)\right)^{s_1} \cdots \left(\frac{\delta_v^{k_m} }{(k_m+1)!} (\Delta v)\right)^{s_m} }{(s_1!) \cdots (s_m!)},
	\end{multline*}
	which can be proven by induction on $L$. Essentially the same proof gives the second identity.
\end{proof}

\subsection{The $0^{\text{th}}$-order data}\label{sec:0_order_data}

Let $(X,\mathcal{O}_X)$ be a $d$-dimensional compact complex analytic space. 


\begin{definition}\label{def:0_order_data}
	A {\em $0^{\text{th}}$-order datum over $X$} consists of:
	\begin{itemize}
		\item  a coherent sheaf of graded BV algebras $(\bva{0}^{*},[\cdot,\cdot],\wedge, \bvd{0})$ over $X$ (with $ -d \leq * \leq 0$), called the {\em $0^{\text{th}}$-order complex of polyvector fields}, such that $\bva{0}^0  = \mathcal{O}_X$ and the natural Lie algebra morphism $\bva{0}^{-1} \rightarrow \text{Der}(\mathcal{O}_X)$, $v \mapsto [v,\cdot]$ is injective,
		
		 \item  a coherent sheaf of dga's $(\tbva{0}{}^*, \wedge, \dpartial{0})$ over $X$ (with $0 \leq * \leq d+s$) endowed with a dg module structure over the dga $\logsdrk{0}{*}$, called the {\em $0^{\text{th}}$-order de Rham complex}, and equipped with the natural filtration $\tbva{0}{\bullet}^*$ defined by $\tbva{0}{s}^*:= \logsdrk{0}{\geq s} \wedge \tbva{0}{}^*$ (here $\wedge$ denotes the dga action),
		 
		 
		 \item a de Rham module structure on $\tbva{0}{}^*$ over $\bva{0}^*$ such that $[\varphi \lrcorner, \alpha\wedge] = 0$ for any $\varphi \in \bva{0}^*$ and $\alpha \in \logsdrk{0}{*}$, and 
		 
		 \item an element $\volf{0} \in \Gamma(X,\tbva{0}{0}^d/ \tbva{0}{1}^d)$ with $\dpartial{0}(\volf{0}) = 0$, called the {\em $0^{\text{th}}$-order volume element}
	\end{itemize}
	such that
	\begin{enumerate}
		\item the map $ \lrcorner \volf{0}: (\bva{0}^{*}[-d], \bvd{0}) \rightarrow (\tbva{0}{0}^*/ \tbva{0}{1}^*, \dpartial{0})$ is an isomorphism, and
		\item the map $\gmiso{0}{r}^{-1} :\logsdrk{0}{r} \otimes_\comp (\tbva{0}{0}^*/ \tbva{0}{1}^*[-r]) \rightarrow \tbva{0}{r}^*/ \tbva{0}{r+1}^*$, given by taking wedge product by $\logsdrk{0}{r}$ (here $[-r]$ is the upshift of the complex by degree $r$), is also an isomorphism.
	\end{enumerate}
\end{definition}

Note that $(\tbva{0}{}^*, \wedge, \dpartial{0})$ is a filtered de Rham module over $\bva{0}^*$ using the filtration $\tbva{0}{\bullet}^*$, and the map $\gmiso{0}{r}^{-1}$ is an isomorphism of BV modules. We write $(\rdr{0}{}^* , \dpartial{0}) := (\tbva{0}{0}^*/ \tbva{0}{1}^*, \dpartial{0})$ and $\gmiso{0}{r} := \left(\gmiso{0}{r}^{-1} \right)^{-1}$.

\begin{example}\label{ex:log-smooth-I}
	We will use the $d$-semistable log smooth case \cite{kawamata1994logarithmic} (or simply, the log smooth case) as the running example throughout this paper.
	Following \cite[\S 2]{kawamata1994logarithmic}, we take a projective $d$-dimensional simple normal crossing variety $(X,\mathcal{O}_X)$.\footnote{In a discussion with T. Sano, we realised that the projectivity assumption was not necessary for studying smoothing of $X$, as indicated in \cite{felten2020logarithmic, Sano21}.} Let $Q = \mathbb{N}^s$ and write $\cfr = \comp[[t_1,\dots,t_s]]$, where $s$ is the number of connected components of $D = \cup_{i=1}^s D_i := \text{Sing}(X)$. There exists a log structure on $X$ over the $Q$-log point $\logsk{0}$, making it a $d$-semistable log variety $X^\dagger$ over $\logsk{0}$; we further require it to be log Calabi-Yau. 
	In this case, 
	\begin{itemize}
		\item
		the $0^{\text{th}}$-order complex of polyvector fields is given by the analytic sheaf of relative log polyvector fields $\bva{0}^* := \bigwedge^{-*} \Theta_{X^{\dagger}/\logsk{0}}$ equipped with the natural product structure;\footnote{In \cite{kawamata1994logarithmic}, the sheaf of relative log derivations was denoted as $T_{\mathscr{X}/\mathscr{A}}(\log)$.}
		
		\item 
		the $0^{\text{th}}$-order de Rham complex is given by the analytic sheaf of total log differential forms $\tbva{0}{}^* := \Omega^*_{X^{\dagger}/\comp}$,\footnote{In \cite{kawamata1994logarithmic}, the sheaf of total log differential forms was denoted as $\Omega^*_{\mathscr{X}/\mathbb{C}}(\log)$.} which is a locally free sheaf (in particular coherent) of dga's, and equipped with a natural dga structure over $\logsdrk{0}{*}$ inducing the filtration in Definition \ref{def:0_order_data};
		
		\item
		the volume element $\volf{0}$ is given via the trivialization $\Omega^d_{X^{\dagger}/\logsk{0}} \cong \mathcal{O}_X$ coming from the Calabi-Yau condition, and then $\bva{0}^*$ is equipped with the BV operator defined by $\bvd{0}(\varphi) \lrcorner \volf{0} := \dpartial{0} (\varphi \lrcorner \volf{0})$.
	\end{itemize}
	These data satisfies all the conditions in Definition \ref{def:0_order_data}. For example, the map $\gmiso{0}{r} : \logsdrk{0}{r} \otimes_\comp (\tbva{0}{0}^*/ \tbva{0}{1}^*[-r] ) \rightarrow  \tbva{0}{r}^*/ \tbva{0}{r+1}^*$ given by taking wedge product in $\Omega^*_{X^{\dagger}/\comp}$ is an isomorphism of sheaves of BV modules.
\end{example}

Definition \ref{def:0_order_data} is actually an extraction of the abstract properties of the sheaves in Example \ref{ex:log-smooth-I} that are necessary for our construction and such that they hold even beyond the case of log smooth Calabi-Yau varieties; see \S \ref{sec:application_to_gross_siebert_program} for definition of the $0^{\text{th}}$-order datum in the maximally degenerate case.

Also note that our filtration $\tbva{0}{\bullet}^*$ is motivated by the Koszul filtration from the variation of Hodge structures (see e.g. \cite{peters2008mixed}) so that $\tbva{0}{\parallel}^*$ is the sheaf $\tbva{0}{}^* := \Omega^*_{X^{\dagger}/\logsk{0}}$ of relative log differential forms over $\logsk{0}$.

Now we consider the hypercohomology $\mathbb{H}^*(\rdr{0}{}^*,\dpartial{0})$ of the complex of sheaves $(\rdr{0}{}^*,\dpartial{0})$.

\begin{definition}\label{def:hodge_filtration}
	For each $r \in \frac{1}{2}\mathbb{Z}$, let $\mathcal{F}^{\geq r}\mathbb{H}^l$ be the image of the linear map $ \mathbb{H}^l(\rdr{0}{}^{\geq p},\dpartial{0}) \rightarrow \mathbb{H}^l(\rdr{0}{}^*,\dpartial{0})$, where $p$ is the smallest integer such that $2p \geq 2r + l - d$.\footnote{We follow Barannikov \cite{Barannikov99} for the convention on the index $r$ of the Hodge filtration, which differs from the usual one by a shift. Also, we usually write $\mathcal{F}^{\geq r}$, instead of $\mathcal{F}^{\geq r} \mathbb{H}^*$, when there is no confusion.}
	Then
	$$
	0 \subset \mathcal{F}^{\geq d} \subset \mathcal{F}^{\geq d-\half} \subset \cdots \subset \mathcal{F}^{\geq r} \subset \cdots \subset \mathcal{F}^{\geq 0} = \mathbb{H}^*(\rdr{0}{}^*,\dpartial{0})
	$$
	is called the {\em Hodge filtration}.
\end{definition}

We have the following exact sequence of sheaves from Definition \ref{def:0_order_data}
\begin{equation}\label{eqn:0_order_exact_sequence}
0 \rightarrow \logsdrk{0}{1} \otimes \rdr{0}{}^*[-1] \cong \tbva{0}{1}^*/\tbva{0}{2}^* \rightarrow \tbva{0}{0}^*/\tbva{0}{2}^* \rightarrow \rdr{0}{}^* \cong   \tbva{0}{0}^*/\tbva{0}{1}^*  \rightarrow 0
\end{equation}

\begin{definition}\label{def:0_th_order_GM_connection}
Take the long exact sequence associated to the hypercohomology of \eqref{eqn:0_order_exact_sequence}, we obtain the {\em $0^{\text{th}}$-order Gauss-Manin (abbrev. GM) connection}:
\begin{equation}\label{eqn:GM_connecting_homomorphism}
\gmc{0}: \mathbb{H}^*(\rdr{0}{}^*,\dpartial{0}) \rightarrow \logsdrk{0}{1} \otimes \mathbb{H}^*(\rdr{0}{}^*,\dpartial{0}).
\end{equation}
\end{definition}
Note that the $0^{\text{th}}$-order GM connection is actually the residue of the usual GM connection.

\begin{prop}\label{prop:0_th_order_griffith_transversality}
	Griffith's transversality holds for $\gmc{0}$, i.e. 
	$$\gmc{0} (\mathcal{F}^{\geq r}) \subset \logsdrk{0}{1} \otimes \mathcal{F}^{\geq r-1}.$$ 
	\end{prop}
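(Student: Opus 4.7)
The plan is to refine the short exact sequence \eqref{eqn:0_order_exact_sequence} by passing to the stupid (brutal) truncation in complex degree $\geq p$ and to exploit the compatibility of the connecting homomorphism with the truncation.

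More precisely, fix an integer $p$ and consider the stupid truncation $(\tbva{0}{0}^*/\tbva{0}{2}^*)^{\geq p}$, i.e.\ the subcomplex which is $\tbva{0}{0}^q/\tbva{0}{2}^q$ for $q \geq p$ and $0$ otherwise. This is a subcomplex because the differential $\dpartial{0}$ only increases complex degree. Intersecting with the subcomplex $\tbva{0}{1}^*/\tbva{0}{2}^* \cong \logsdrk{0}{1} \otimes \rdr{0}{}^*[-1]$, we get the subcomplex which in degree $q \geq p$ equals $\logsdrk{0}{1} \otimes \rdr{0}{}^{q-1}$ and vanishes in degrees $q < p$; under the degree shift $[-1]$ this is exactly $\logsdrk{0}{1} \otimes \rdr{0}{}^{\geq p-1}[-1]$. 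The corresponding quotient is the stupid truncation $\rdr{0}{}^{\geq p}$. Thus we obtain the refined short exact sequence of complexes of sheaves
\begin{equation*}
0 \to \logsdrk{0}{1} \otimes \rdr{0}{}^{\geq p-1}[-1] \to (\tbva{0}{0}^*/\tbva{0}{2}^*)^{\geq p} \to \rdr{0}{}^{\geq p} \to 0,
\end{equation*}
which maps termwise into \eqref{eqn:0_order_exact_sequence} via the natural inclusions.

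Taking hypercohomology and using naturality of the connecting homomorphism in both sequences yields a commutative diagram
\begin{equation*}
\begin{CD}
\mathbb{H}^l(\rdr{0}{}^{\geq p}) @>{\delta^{\geq p}}>> \logsdrk{0}{1} \otimes \mathbb{H}^l(\rdr{0}{}^{\geq p-1}) \\
@VVV @VVV \\
\mathbb{H}^l(\rdr{0}{}^*) @>{\gmc{0}}>> \logsdrk{0}{1} \otimes \mathbb{H}^l(\rdr{0}{}^*),
\end{CD}
\end{equation*}
where the vertical maps are the canonical maps induced by the inclusions of truncations. By Definition \ref{def:hodge_filtration}, the image of the left vertical map is precisely $\mathcal{F}^{\geq r}\mathbb{H}^l$ whenever $p$ is chosen minimal with $2p \geq 2r+l-d$; for such $p$, the integer $p-1$ is the smallest integer satisfying $2(p-1) \geq 2(r-1)+l-d$, so the image of the right vertical map equals $\logsdrk{0}{1} \otimes \mathcal{F}^{\geq r-1}\mathbb{H}^l$. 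Commutativity of the diagram then gives $\gmc{0}(\mathcal{F}^{\geq r}) \subset \logsdrk{0}{1} \otimes \mathcal{F}^{\geq r-1}$, as desired.

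The main point requiring care is only to verify that the stupid truncation $(\tbva{0}{0}^*/\tbva{0}{2}^*)^{\geq p}$ is genuinely a subcomplex and that its intersection with the Koszul piece $\tbva{0}{1}^*/\tbva{0}{2}^*$ matches $\logsdrk{0}{1} \otimes \rdr{0}{}^{\geq p-1}[-1]$ under the isomorphism $\gmiso{0}{1}^{-1}$ from Definition \ref{def:0_order_data}; both facts are immediate from the definitions and from the degree shift in $\gmiso{0}{1}^{-1}$.
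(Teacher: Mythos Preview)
Your proof is correct and follows essentially the same approach as the paper: the paper only cites the standard argument (e.g.\ \cite[Corollary 10.31]{peters2008mixed}), and the underlying idea there is exactly the one you wrote out --- truncate the short exact sequence \eqref{eqn:0_order_exact_sequence} at complex degree $\geq p$, obtain a map of short exact sequences into the untruncated one, and invoke naturality of the connecting homomorphism. Your verification of the index shift (that the minimal $p$ for $\mathcal{F}^{\geq r}$ yields $p-1$ minimal for $\mathcal{F}^{\geq r-1}$) is a nice extra detail that makes the argument self-contained.
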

The proof of this is standard; see e.g., \cite[Corollary 10.31]{peters2008mixed}. 
With $[\volf{0}] \in \mathcal{F}^{\geq d}\mathbb{H}^0$ and Griffith's transversality, we obtain the {\em $0^\text{th}$-order Kodaira-Spencer map} $\gmc{0}([\volf{0}]): \logsdrk{0}{1} \rightarrow \mathcal{F}^{\geq d-1}\mathbb{H}^0$.

\subsection{The higher order data}\label{sec:higher_order_data}

We fix an open cover $\mathcal{V}$ of $X$ which consists of Stein open subsets $V_{\alpha} \subset X$. We consider the following local thickening datum in terms of local thickening of the sheaf $\bva{0}^*$ as $\bva{k}^*_{\alpha}$ on $V_{\alpha}$. 

\begin{definition}\label{def:higher_order_data}
	A {\em local thickening datum of the complex of polyvector fields} (with respect to $\mathcal{V}$) consists of, for each $k \in \inte_{\geq 0}$ and $V_\alpha \in \mathcal{V}$,
	\begin{itemize}
		\item a coherent sheaf of BV algebras $(\bva{k}_{\alpha}^*,[\cdot,\cdot], \wedge, \bvd{k}_{\alpha})$
		over $V_\alpha$ such that $\bva{k}_{\alpha}^*$ is also a sheaf of algebras over $\cfrk{k}$ so that $[\cdot,\cdot], \wedge$ are $\cfrk{k}$-bilinear and $\bvd{k}_{\alpha}$ is $\cfrk{k}$-linear, and
		
		\item a surjective morphism of sheaves of BV algebras $\rest{k+1,k}_{\alpha}:\bva{k+1}_{\alpha}^* \rightarrow \bva{k}_{\alpha}^*$ which is $\cfrk{k+1}$-linear and induces a sheaf isomorphism upon tensoring with $\cfrk{k}$
	\end{itemize}
	satisfying the following conditions
	\begin{enumerate}
		\item $(\bva{0}_{\alpha}^*,[\cdot,\cdot], \wedge, \bvd{0}_{\alpha}) = (\bva{0}^*,[\cdot,\cdot], \wedge, \bvd{0})|_{V_\alpha}$,
		
		\item $\bva{k}_{\alpha}^*$ is flat over $\cfrk{k}$, i.e. the stalk $(\bva{k}_{\alpha}^*)_x$ is flat over $\cfrk{k}$ for any $x \in V_\alpha$, and
		
		\item the natural Lie algebra morphism $\bva{k}_{\alpha}^{-1} \rightarrow \text{Der}(\bva{k}_{\alpha}^0)$ is injective.
	\end{enumerate} 
\end{definition}

We write $\rest{k,l}_{\alpha}:= \rest{l+1,l}_{\alpha}\circ\cdots\circ\rest{k,k-1}_{\alpha} : \bva{k}_{\alpha}^* \rightarrow \bva{l}_{\alpha}^*$ for every $k>l$, and $\rest{k,k}_{\alpha} \equiv \text{id}$. We also introduce the following notation: Given two elements $\mathfrak{a} \in \bva{k_1}_{\alpha}^*$, $\mathfrak{b} \in \bva{k_2}_{\alpha}^*$ and $l \leq \text{min}\{k_1,k_2\}$, we say that $\mathfrak{a} = \mathfrak{b} \ \text{(mod $\mathbf{m}^{l+1}$)}$ if and only if $\rest{k_1,l}_{\alpha}(\mathfrak{a}) = \rest{k_2,l}_{\alpha}(\mathfrak{b})$.

\begin{example}\label{ex:log-smooth-II}
	Continuing the log smooth Example \ref{ex:log-smooth-I}, every point $\bar{x} \in X^{\dagger}$ is covered by a log chart $V$ biholomorphic to an open neighborhood of $(0,\dots,0)$ in  $\{z_0 \cdots z_r = 0 \mid (z_0 ,\dots ,z_d) \in \comp^{d+1} \}$ \cite[\S 1]{kawamata1994logarithmic}. From log deformation theory \cite[\S 2]{kawamata1994logarithmic}, we obtain a smoothing $\mathbf{V}^{\dagger}$ of $V$ given by a neighborhood of $(0,\dots,0)$ in $\{z_0 \cdots z_r = s_i \mid (z_0 ,\dots ,z_d) \in \comp^{d+1} \}$ if $V \cap D_i \neq \emptyset$. We choose a Stein cover $\mathcal{V} =\{V_\alpha\}_{\alpha}$ of $X$ by such log charts together with a local smoothing $\mathbf{V}_{\alpha}^{\dagger}$ of each $V_\alpha$, and denote by $\prescript{k}{}{\mathbf{V}}_\alpha^{\dagger}$ the $k^{\text{th}}$ order thickening of the local model $V_{\alpha}$. Then the sheaf of $k^{\text{th}}$-order polyvector fields in Definition \ref{def:higher_order_data} is given by $\bva{k}^*_{\alpha} := \bigwedge^{-*} \Theta_{\prescript{k}{}{\mathbf{V}}_\alpha^{\dagger}/\logsk{k}}$ equipped with the natural product structure.
\end{example}

\begin{notation}\label{not:open_stein_covers}
	We fix, once and for all, another cover $\mathcal{U}$ of $X$ which consists of a countable collection of Stein open subsets $\mathcal{U} = \{U_i\}_{i \in \inte_+}$ forming a basis of topology (we refer readers to \cite[Chapter IX Theorem 2.13]{demailly1997complex} for the existence of such a cover). Note that an arbitrary finite intersection of Stein open subsets remains Stein. 
\end{notation}

We require two different local thickenings on $V_{\alpha}$ and $V_{\beta}$ isomorphic on some small enough $U_i \subset V_{\alpha} \cap V_{\beta}$ via a non-unique isomorphism $\patch{k}_{\alpha\beta,i}$ as follows. 

\begin{definition}\label{def:higher_order_patching}
	A {\em patching datum of the complex of polyvector fields} (with respect to $\mathcal{U}, \mathcal{V}$) consists of, for each $k \in \inte_{\geq 0}$ and triple $(U_i; V_\alpha, V_\beta)$ with $U_i \subset V_{\alpha\beta} := V_{\alpha} \cap V_{\beta}$,
	a sheaf isomorphism $\patch{k}_{\alpha\beta,i}:\bva{k}_{\alpha}^*|_{U_i} \to \bva{k}_{\beta}^*|_{U_i}$ over $\cfrk{k}$ preserving the structures $[\cdot,\cdot], \wedge$ and fitting into the diagram
	$$
	\xymatrix@1{ \bva{k}_{\alpha}^*|_{U_i} \ar[r]^{\patch{k}_{\alpha\beta,i}} \ar[d]^{\rest{k,0}_{\alpha}} &   \bva{k}_{\beta}^*|_{U_i} \ar[d]^{\rest{k,0}_{\beta}}\\
		\bva{0}^*|_{U_i} \ar@{=}[r] &  \bva{0}^*|_{U_i}, 
	}
	$$
	and an element $\bvdobs{k}_{\alpha\beta,i} \in \bva{k}_{\alpha}^0(U_i)$ with $\bvdobs{k}_{\alpha\beta,i} = 0 \ \text{(mod $\mathbf{m}$)}$ such that
	\begin{equation}\label{eqn:higher_order_bvd_different}
	\patch{k}_{\beta\alpha,i} \circ \bvd{k}_{\beta} \circ \patch{k}_{\alpha\beta,i} - \bvd{k}_{\alpha} = [\bvdobs{k}_{\alpha\beta,i}, \cdot]
	\end{equation}
	satisfying the following conditions: 
	\begin{enumerate}
		\item $\patch{k}_{\beta\alpha,i} = \patch{k}_{\alpha \beta,i}^{-1}$, $\patch{0}_{\alpha \beta,i} \equiv \text{id}$;
		\item for $k>l$ and $U_i \subset V_{\alpha\beta}$, there exists $\resta{k,l}_{\alpha\beta,i} \in \bva{l}_{\alpha}^{-1}(U_i)$  with $\resta{k,l}_{\alpha\beta,i} = 0 \ \text{(mod $\mathbf{m}$)}$ such that 
		\begin{equation}\label{eqn:bva_different_order_comparison}
		\patch{l}_{\beta\alpha,i} \circ \rest{k,l}_{\beta} \circ \patch{k}_{\alpha\beta,i} = \exp\left([\resta{k,l}_{\alpha\beta,i},\cdot]\right) \circ \rest{k,l}_{\alpha};
		\end{equation}
		
		\item for $k\in \inte_{\geq 0}$ and $U_i, U_j \subset V_{\alpha \beta}$, there exists $\patchij{k}_{\alpha\beta,ij} \in \bva{k}_{\alpha}^{-1}(U_i\cap U_j)$ with $\patchij{k}_{\alpha\beta,ij} = 0 \ \text{(mod $\mathbf{m}$)}$ such that
		\begin{equation}\label{eqn:higher_order_U_i_U_j_different}
		\left(\patch{k}_{ \beta \alpha,j}|_{U_i \cap U_j}\right)\circ \left(\patch{k}_{\alpha \beta,i}|_{U_i \cap U_j}\right) = \exp\left([\patchij{k}_{\alpha\beta,ij}, \cdot]\right); and
		\end{equation}
		
		\item for $k\in \inte_{\geq 0}$ and $U_i \subset V_{\alpha\beta\gamma}:=V_\alpha \cap V_\beta \cap V_\gamma$, there exists $\cocyobs{k}_{\alpha\beta\gamma,i} \in \bva{k}_{\alpha}^{-1}(U_i)$  with $\cocyobs{k}_{\alpha\beta\gamma,i} = 0 \ \text{(mod $\mathbf{m}$)}$ such that 
		\begin{equation}\label{eqn:higher_order_cocycle_different}
		\left(\patch{k}_{\gamma \alpha,i}|_{U_i}\right) \circ \left(\patch{k}_{\beta \gamma,i}|_{U_i}\right) \circ \left(\patch{k}_{\alpha \beta ,i}|_{U_i}\right) = \exp\left([\cocyobs{k}_{\alpha\beta\gamma,i},\cdot]\right).
		\end{equation}
	\end{enumerate}
	
\end{definition}

\begin{example}\label{ex:log-smooth-III}
	In geometric situations, the above patching datum is always induced from the local uniqueness of local thickening $\prescript{k}{}{\mathbf{V}}_{\alpha}$'s of $X^{\dagger}$.
	For example, in the log smooth case (continuing Examples \ref{ex:log-smooth-I} and \ref{ex:log-smooth-II}), \cite[Theorem 2.2]{kawamata1994logarithmic} says that for each $k \in \inte_{\geq 0}$ and triple $(U_i; V_\alpha, V_\beta)$ with $U_i \subset V_{\alpha\beta} := V_{\alpha} \cap V_{\beta}$, the two log deformations $\prescript{k}{}{\mathbf{V}}_{\alpha}$ and $\prescript{k}{}{\mathbf{V}}_{\beta}$ are isomorphic over $U_i$ via $\prescript{k}{}{\Psi}_{\alpha\beta,i}$. This induces the patching isomorphisms $\patch{k}_{\alpha\beta,i}:\bva{k}_{\alpha}^*|_{U_i} \to \bva{k}_{\beta}^*|_{U_i}$ in Definition \ref{def:higher_order_patching}.
	The existence of the log vector fields $\patchij{k}_{\alpha\beta,i}$'s, $\cocyobs{k}_{\alpha\beta\gamma,i}$'s and $\resta{k,l}_{\alpha\beta,i}$'s, which measure the incompatibilities of the patching isomorphisms $\patch{k}_{\alpha\beta,i}$'s, follows from the fact that any automorphism of a log deformation over $U_{i}$ or $U_{ij}$ comes from exponential action of vector fields. See \S \ref{sec:higher_order_patching_data_from_gross_siebert} for the maximally degenerate case.
\end{example}

\begin{lemma}\label{lem:patching_uniqueness_lemma}
	The elements $\resta{k,l}_{\alpha\beta,i}$'s, $\patchij{k}_{\alpha\beta,ij}$'s and $\cocyobs{k}_{\alpha\beta\gamma,i}$'s are uniquely determined by the patching isomorphisms $\patch{k}_{\alpha\beta,i}$'s.
\end{lemma}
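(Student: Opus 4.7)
The proof rests on two features of the setup: each of the elements $\resta{k,l}_{\alpha\beta,i}$, $\patchij{k}_{\alpha\beta,ij}$, $\cocyobs{k}_{\alpha\beta\gamma,i}$ lies in $\mathbf{m}\cdot\bva{\bullet}^{-1}$, so the adjoint operator $[\vartheta,\cdot]$ on $\bva{\bullet}^*$ is nilpotent (using $\mathbf{m}^{k+1}=0$ in $\cfrk{k}$); and condition (3) of Definition \ref{def:higher_order_data} says that the map $\bva{k}^{-1}\to\mathrm{Der}(\bva{k}^0)$, $v\mapsto[v,\cdot]$, is injective. Together these should let me invert $\exp$ and then pass from $[\vartheta,\cdot]$ back to $\vartheta$ itself.

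The core observation I would establish first is: if $\vartheta,\vartheta'\in\mathbf{m}\cdot\bva{k}^{-1}$ satisfy $\exp([\vartheta,\cdot])=\exp([\vartheta',\cdot])$ as operators on $\bva{k}^*$, then $\vartheta=\vartheta'$. Indeed, the difference $\exp([\vartheta,\cdot])-\mathrm{id}$ is a nilpotent operator, so the terminating formal series $\log(1+A)=A-\tfrac{1}{2}A^2+\cdots$ recovers $[\vartheta,\cdot]$ from $\exp([\vartheta,\cdot])$; symmetrically for $\vartheta'$, giving $[\vartheta-\vartheta',\cdot]=0$ as a derivation on $\bva{k}^*$. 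Restricting this derivation to $\bva{k}^0$ and invoking injectivity forces $\vartheta=\vartheta'$.

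With this observation in hand, I would apply it to each defining relation. For $\patchij{k}_{\alpha\beta,ij}$, equation \eqref{eqn:higher_order_U_i_U_j_different} exhibits $\exp([\patchij{k}_{\alpha\beta,ij},\cdot])$ as the composition $\patch{k}_{\beta\alpha,j}\circ\patch{k}_{\alpha\beta,i}$, which is determined by the patching data alone, so uniqueness is immediate. The same argument handles $\cocyobs{k}_{\alpha\beta\gamma,i}$ via \eqref{eqn:higher_order_cocycle_different}. For $\resta{k,l}_{\alpha\beta,i}$, equation \eqref{eqn:bva_different_order_comparison} is an equality of morphisms $\bva{k}^*_\alpha|_{U_i}\to\bva{l}^*_\alpha|_{U_i}$; the surjectivity of $\rest{k,l}_\alpha$ built into Definition \ref{def:higher_order_data} allows right-cancellation, yielding a uniquely determined endomorphism $\exp([\resta{k,l}_{\alpha\beta,i},\cdot])$ of $\bva{l}^*_\alpha|_{U_i}$, after which the core observation applies. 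No substantive obstacle is expected: the nilpotency guaranteed by $\mathbf{m}^{k+1}=0$ reduces the whole statement to the formal inversion of $\exp$ together with the injectivity already assumed.
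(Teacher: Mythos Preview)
Your proposal is correct and follows essentially the same approach as the paper: both rely on nilpotency of $[\vartheta,\cdot]$ (from $\vartheta\in\mathbf{m}$ and $\mathbf{m}^{k+1}=0$) together with the injectivity $\bva{k}^{-1}\hookrightarrow\mathrm{Der}(\bva{k}^{0})$ from Definition~\ref{def:higher_order_data}. The only cosmetic difference is that you invert the exponential in one stroke via the terminating formal logarithm, whereas the paper phrases the same inversion as an order-by-order induction on powers of $\mathbf{m}$; these are equivalent arguments, and your handling of the $\resta{k,l}_{\alpha\beta,i}$ case via surjectivity of $\rest{k,l}_{\alpha}$ is exactly right.
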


\begin{proof}
	We just prove the statement for the elements $\patchij{k}_{\alpha\beta,ij}$'s as the other cases are similar. Suppose we have another set of elements $\reallywidetilde{\patchij{k}_{\alpha\beta,ij}}$'s satisfying \eqref{eqn:higher_order_U_i_U_j_different}, then we have
	$\exp([\patchij{k}_{\alpha\beta,ij} - \reallywidetilde{\patchij{k}_{\alpha\beta,ij}},\cdot]) \equiv \text{id}$
	as actions on $\bva{k}^0_\alpha(U_{ij})$ where $U_{ij} = U_i \cap U_j$. The result then follows from an order-by-order argument using the assumptions that $\rest{k,0}_\alpha(\patchij{k}_{\alpha\beta,ij} - \reallywidetilde{\patchij{k}_{\alpha\beta,ij}}) = 0$ and that the map $\bva{k}^{-1}_\alpha \to \text{Der}(\bva{k}^0_\alpha)$ is injective. 
\end{proof}

\begin{definition}\label{def:higher_order_data_module}
	A {\em local thickening datum of the de Rham complex} (with respect to $\mathcal{V}$) consists of, for each $k \in \inte_{\geq 0}$ and $V_\alpha \in \mathcal{V}$,
		\begin{itemize}
			\item a coherent sheaf of dgas $(\tbva{k}{}_{\alpha}^*,\wedge , \dpartial{k}_{\alpha})$ with a dg module structure over $\logsdrk{k}{*}$ equipped with the natural filtration  $\tbva{k}{\bullet}_{\alpha}^*$ defined by $\tbva{k}{s}_\alpha^*:= \logsdrk{k}{\geq s} \wedge \tbva{k}{}_\alpha^*[s]$,
			
			\item a de Rham module structure on $\tbva{k}{}_{\alpha}^*$ over $\bva{k}_{\alpha}^*$ such that $[\varphi \lrcorner , \alpha \wedge] = 0$ for any $\varphi \in \bva{k}^*_\alpha$ and $\alpha \in \logsdrk{k}{*}$,
			
			\item a surjective $\cfrk{k+1}$-linear morphism $\rest{k+1,k}_{\alpha}:\tbva{k+1}{}_{\alpha}^* \rightarrow \tbva{k}{}_{\alpha}^*$ inducing an isomorphism upon tensoring with $\cfrk{k}$ which is compatible with both $\rest{k+1,k}: \bva{k+1}_\alpha^* \rightarrow \bva{k}^*_\alpha$ and $\logsdrk{k+1}{*} \rightarrow \logsdrk{k}{*}$ under the contraction and dg actions respectively,\footnote{Here we abuse notations and use $\rest{k+1,k}_{\alpha}$ for both $\tbva{k+1}{}_{\alpha}^*$ and $\bva{k+1}_{\alpha}^*$.} and
			
			\item an element $\volf{k}_\alpha \in \Gamma(V_{\alpha},\tbva{k}{0}_{\alpha}^d/\tbva{k}{1}_{\alpha}^d)$ satisfying $\dpartial{k}_{\alpha}(\volf{k}_{\alpha}) = 0$ called {\em the local $k^{\text{th}}$-order volume element}
		\end{itemize}
	such that
		\begin{enumerate}
			\item $\tbva{k}{r}_{\alpha}^*$ is flat over $\cfrk{k}$ for $0\leq r\leq s$; 
			
			\item $\rest{k+1,k}(\volf{k+1}_\alpha) = \volf{k}_\alpha$;
			
			\item $(\tbva{0}{}_{\alpha}^*, \tbva{0}{\bullet}_\alpha^*,\wedge, \dpartial{0}_{\alpha}) = (\tbva{0}{}^*,\tbva{0}{\bullet}^*, \wedge, \dpartial{0})|_{V_\alpha}$ and $\volf{0}_\alpha = \volf{0}|_{V_\alpha}$;
			
			\item the map $\lrcorner \volf{k}_\alpha: (\bva{k}^{*}_\alpha[-d], \bvd{k}_\alpha) \rightarrow (\tbva{k}{0}^*_\alpha/ \tbva{k}{1}^*_\alpha , \dpartial{k}_\alpha)$ is an isomorphism, and
			
			\item the map 
			$\gmiso{k}{r}_\alpha^{-1} : \logsdrk{k}{r} \otimes_{\cfrk{k}} (\tbva{k}{0}^*_\alpha/ \tbva{k}{1}^*_\alpha[-r]) \rightarrow \tbva{k}{r}^*_\alpha/ \tbva{k}{r+1}^*_\alpha$, given by taking wedge product by $\logsdrk{k}{r}$, is also an isomorphism.
			
		\end{enumerate}
\end{definition}

Note that $\tbva{k}{}_{\alpha}^*$ is a filtered de Rham module over $\bva{k}_{\alpha}^*$ using the filtration $\tbva{k}{\bullet}_{\alpha}^*$. We write $\rdr{k}{}^*_\alpha :=  \tbva{k}{0}^*_\alpha/ \tbva{k}{1}^*_\alpha$ and $\gmiso{k}{r}_\alpha = (\gmiso{k}{r}_\alpha^{-1})^{-1}$.

We also write $\rest{k,l}_{\alpha}:= \rest{l+1,l}_{\alpha}\circ\cdots\circ\rest{k,k-1}_{\alpha}$ for every $k>l$ and $\rest{k,k}_{\alpha} \equiv \text{id}$, and introduce the following notation: 
Given two elements $\mathfrak{a} \in \tbva{k_1}{\bullet}_{\alpha}^*$, $\mathfrak{b} \in \tbva{k_2}{\bullet}_{\alpha}^*$ and $l \leq \text{min}\{k_1,k_2\}$, we say that $\mathfrak{a} = \mathfrak{b} \ \text{(mod $\mathbf{m}^{l+1}$)}$ if and only if $\rest{k_1,l}_{\alpha}(\mathfrak{a}) = \rest{k_2,l}_{\alpha}(\mathfrak{b})$.

From Definition \ref{def:higher_order_data_module}, we have the following diagram of BV modules
\begin{equation}
\xymatrix@1{ 
	0 \ar[r]& \blat \otimes_\inte \rdr{k+1}{}^*_\alpha[-1] \ar[r] \ar[d]^{\text{id}\otimes \rest{k+1,k}_\alpha } & \tbva{k+1}{0}^*_\alpha / \tbva{k+1}{2}^*_\alpha \ar[r] \ar[d]^{\rest{k+1,k}_\alpha}& \rdr{k+1}{}^*_\alpha \ar[r]  \ar[d]^{\rest{k+1,k}_\alpha}& 0\\
	0 \ar[r]& \blat \otimes_\inte \rdr{k}{}^*_\alpha[-1] \ar[r] & \tbva{k}{0}^*_\alpha / \tbva{k}{2}^*_\alpha \ar[r]& \rdr{k+1}{}^*_\alpha \ar[r]& 0.
}
\end{equation}

In general geometric situations, the sheaves $\tbva{k}{}^*_{\alpha}$'s are taken to be suitable sheaves of total logarithmic differential forms on $\prescript{k}{}{\mathbf{V}}_{\alpha}^*$ with a natural action by $\bva{k}^*_{\alpha}$ via contraction, and $\volf{k}_{\alpha}$ is taken to be a local lifting of the relative volume form $\volf{0}$ over $\logsk{k}$.
The local sheaves $\tbva{k}{}^*_{\alpha}$'s of differential forms are locally identified via the isomorphisms $\hpatch{k}_{\alpha\beta,i}$'s induced by the corresponding local uniqueness of local thickening $\prescript{k}{}{\mathbf{V}}_{\alpha}$'s of $V_{\alpha}$'s as in Example \ref{ex:log-smooth-III}. Therefore it is natural to require the compatibility between $\hpatch{k}_{\alpha\beta,i}$'s and the data $\patch{k}_{\alpha\beta,i}$'s, $\bvdobs{k}_{\alpha\beta,i}$'s, $\resta{k,l}_{\alpha\beta,i}$'s, $\patchij{k}_{\alpha\beta,ij}$'s and $\cocyobs{k}_{\alpha\beta\gamma,i}$'s as in the following Definition \ref{def:higher_order_module_patching}.

\begin{example}\label{ex:log-smooth-IV}
	In the log smooth case (continuing Examples \ref{ex:log-smooth-I}, \ref{ex:log-smooth-II} and \ref{ex:log-smooth-III}), the datum in Definition \ref{def:higher_order_data_module} can be chosen as follows: for each $k \in \inte_{\geq 0}$,
	\begin{itemize}
		\item
		the $k^{\text{th}}$-order de Rham complex is given by $\tbva{k}{}^*_{\alpha} := \Omega^*_{\prescript{k}{}{\mathbf{V}}_\alpha^{\dagger}/\comp}$;
		
		\item 
		the local $k^{\text{th}}$-order volume element is given by a lifting $\volf{}_{\alpha}$ of $\volf{0}$ as an element in $\Omega^*_{\mathbf{V}_\alpha^{\dagger}/\logs}$ and taking $\volf{k}_{\alpha} = \volf{}_{\alpha} \ (\text{mod $\mathbf{m}^{k+1}$})$, and then the BV operator $\bvd{k}_{\alpha}$ on $\bva{k}^*_{\alpha}$ is induced by the volume form $\volf{k}_{\alpha}$;
		
		\item
		the isomorphism $\gmiso{k}{r}$ of sheaves of BV modules is induced by taking wedge product as in Example \ref{ex:log-smooth-I}.
	\end{itemize}
	See \S \ref{sec:higher_order_deformation_data_from_Gross_siebert} for the maximally degenerate case.
\end{example}

\begin{definition}\label{def:higher_order_module_patching}
	A {\em patching datum of the de Rham complex} (with respect to $\mathcal{U}, \mathcal{V}$) consists of, for each $k \in \inte_{\geq 0}$ and triple $(U_i; V_\alpha, V_\beta)$ with $U_i \subset V_{\alpha\beta}$, a sheaf isomorphism $\hpatch{k}_{\alpha\beta,i}$ of dg modules over $\logsdrk{k}{*}$ such that it fits into the diagram
	$$
	\xymatrix@1{ \tbva{k}{\bullet}_{\alpha}^*|_{U_i} \ar[rr]^{\hpatch{k}_{\alpha\beta,i}} \ar[d]^{\rest{k,0}_{\alpha}}  & &   \tbva{k}{\bullet}_{\beta}^*|_{U_i} \ar[d]^{\rest{k,0}_{\beta}}\\
		\tbva{0}{\bullet}^*|_{U_i} \ar@{=}[rr] & &  \tbva{0}{\bullet}^*|_{U_i}, 
	}
	$$
	and satisfying the following conditions:
	\begin{enumerate}
		\item $\hpatch{k}_{\alpha\beta,i}$ is an isomorphism of de Rham modules meaning that the diagram
		\begin{equation}
		\xymatrix@1{ \bva{k}_\alpha^*|_{U_i} \ar@/^1pc/[r]^{\lrcorner} \ar[d]_{\patch{k}_{\alpha\beta,i}} &     \tbva{k}{\bullet}_{\alpha}^*|_{U_i} \ar[d]^{\hpatch{k}_{\alpha\beta,i}}\\
			\bva{k}_\beta^*|_{U_i}\ar@/^1pc/[r]^{\lrcorner}  &  \tbva{k}{\bullet}_{\beta}^*|_{U_i},
		}
		\end{equation}
		is commutative;
		
		\item $\hpatch{k}_{\beta\alpha,i} = \hpatch{k}_{\alpha \beta,i}^{-1}$, $\hpatch{0}_{\alpha \beta,i} \equiv \text{id}$;
		
		\item we have
		\begin{equation}\label{eqn:higher_order_volume_form_different}
		 \hpatch{k}_{\beta\alpha,i} (\volf{k}_\beta |_{U_i}) = \exp(\bvdobs{k}_{\alpha\beta,i} \lrcorner)(\volf{k}_\alpha|_{U_i}),
		\end{equation}
		where the elements $\bvdobs{k}_{\alpha\beta,i}$'s are as in Definition \ref{def:higher_order_patching};
		
		\item for $k>l$ and $U_i \subset V_{\alpha\beta}$, we have
		\begin{equation}
		\hpatch{l}_{\beta\alpha,i} \circ \rest{k,l}_{\beta} \circ \hpatch{k}_{\alpha\beta,i} = \exp\left(\mathcal{L}_{\resta{k,l}_{\alpha\beta,i}}\right) \circ \rest{k,l}_{\alpha},
		\end{equation}
		 where the elements $\resta{k,l}_{\alpha\beta,i}$'s are as in \eqref{eqn:bva_different_order_comparison};
		
		\item for $k\in \inte_{\geq 0}$ and $U_i, U_j \subset V_{\alpha \beta}$, we have
		\begin{equation}\label{eqn:module_higher_order_U_i_U_j_difference}
		\left(\hpatch{k}_{ \beta \alpha ,j}|_{U_i \cap U_j}\right)\circ \left(\hpatch{k}_{\alpha \beta,i}|_{U_i \cap U_j}\right) = \exp\left(\mathcal{L}_{\patchij{k}_{\alpha\beta,ij}}\right),
		\end{equation}
		where the elements $\patchij{k}_{\alpha\beta,ij}$'s are as in \eqref{eqn:higher_order_U_i_U_j_different}; and
		
		\item for $k\in \inte_{\geq 0}$ and $U_i \subset V_{\alpha\beta\gamma}$, we have
		\begin{equation}
		\left(\hpatch{k}_{\gamma \alpha,i}|_{U_i}\right) \circ \left(\hpatch{k}_{\beta \gamma,i}|_{U_i}\right) \circ \left(\hpatch{k}_{\alpha \beta,i }|_{U_i}\right) = \exp\left(\mathcal{L}_{\cocyobs{k}_{\alpha\beta\gamma,i}}\right),
		\end{equation}
		where the elements $\cocyobs{k}_{\alpha\beta\gamma,i}$'s are as in \eqref{eqn:higher_order_cocycle_different}.
	\end{enumerate}
	
\end{definition}

Since both $\volf{k}_{\alpha}$ and $\volf{k}_{\beta}$ are nowhere-vanishing, $\hpatch{k}_{\alpha\beta,i}$ actually determines $\bvdobs{k}_{\alpha\beta,i}$.
Also observe that for every $k\in \inte_{\geq 0}$ and any $U_i \subset V_{\alpha\beta}$, we have a commutative diagram
$$
\xymatrix@1{ 
	0 \ar[r]& \blat \otimes_\inte \left(\rdr{k}{}_{\alpha}^{*}[-1]\right)|_{U_i} \ar[r] \ar[d]^{\text{id}\otimes \hpatch{k}_{\alpha\beta,i} } & \left(\tbva{k}{0}_{\alpha}^*/\tbva{k}{2}_{\alpha}^*\right)|_{U_i} \ar[r] \ar[d]^{\hpatch{k}_{\alpha\beta,i}}& \rdr{k}{}_{\alpha}^{*}|_{U_i} \ar[r] \ar[d]^{\hpatch{k}_{\alpha\beta,i}}& 0\\
	0 \ar[r]& \blat \otimes_\inte \left(\rdr{k}{}_{\beta}^{*}[-1]\right)|_{U_i} \ar[r] & \left(\tbva{k}{0}_{\beta}^*/\tbva{k}{2}_{\beta}^*\right)|_{U_i} \ar[r]& \rdr{k}{}_{\beta}^{*}|_{U_i} \ar[r]& 0.
}
$$

\begin{example}\label{ex:log-smooth-V}
	In the log smooth case (continuing Examples \ref{ex:log-smooth-I}, \ref{ex:log-smooth-II}, \ref{ex:log-smooth-III} and \ref{ex:log-smooth-IV}), the isomorphism between two log deformations $\prescript{k}{}{\mathbf{V}}_{\alpha}$ and $\prescript{k}{}{\mathbf{V}}_{\beta}$ over $U_i$ via $\prescript{k}{}{\Psi}_{\alpha\beta,i}$ induces the patching isomorphisms $\hpatch{k}_{\alpha\beta,i}: \tbva{k}{\bullet}_{\alpha}^*|_{U_i} \to \tbva{k}{\bullet}_{\beta}^*|_{U_i}$ in Definition \ref{def:higher_order_module_patching}.
	The difference between volume elements is compared by $\prescript{k}{}{\Psi}_{\alpha\beta,i}^*(\volf{k}_{\beta})= \exp(\bvdobs{k}_{\alpha\beta,i} \lrcorner )\volf{k}_{\alpha}$ for some holomorphic function $\bvdobs{k}_{\alpha\beta,i}$.
	See \S \ref{sec:higher_order_deformation_data_from_Gross_siebert} for the maximally degenerate case.
\end{example}

\begin{remark}
We can deduce \eqref{eqn:higher_order_bvd_different} from \eqref{eqn:higher_order_volume_form_different} as follows: From \eqref{eqn:higher_order_volume_form_different}, we have
$(\hpatch{k}_{\beta\alpha,i} \circ ( \lrcorner \volf{k}_{\beta}|_{U_i}) \circ \patch{k}_{\alpha\beta,i})(\gamma) =   ( \gamma \wedge \exp(\bvdobs{k}_{\alpha\beta,i}) ) \lrcorner (\volf{k}_{\alpha}|_{U_i}),$
so
\begin{multline*}
(\patch{k}_{\beta\alpha,i} \circ \bvd{k}_{\beta} \circ \patch{k}_{\alpha\beta,i}) (\gamma) \wedge \exp(\bvdobs{k}_{\alpha\beta, i })\stackrel{(\ref{eqn:higher_order_volume_form_different})}{=}\\
\bvd{k}_{\alpha} ( \gamma \wedge \exp(\bvdobs{k}_{\alpha\beta,i}) ) =
 (\bvd{k}_\alpha (\gamma) + [\bvdobs{k}_{\alpha\beta,i}, \gamma]) \wedge \exp(\bvdobs{k}_{\alpha\beta, i }) 
\end{multline*}
for any $\gamma \in \bva{k}^*_\alpha(U_i)$, which gives $\bvd{k}_{\alpha}(\gamma) + [\bvdobs{k}_{\alpha\beta,i},\gamma] = (\patch{k}_{\beta\alpha,i} \circ \bvd{k}_{\beta} \circ \patch{k}_{\alpha\beta,i}) (\gamma)$. 
\end{remark}

\section{Abstract construction of the \v{C}ech-Thom-Whitney complex}\label{sec:abstract_construction}

\subsection{The simplicial set $\mathcal{A}^*(\simplex_{\bullet})$}\label{sec:simplicial_set_general}

In this subsection, we recall some notations and facts about the simplicial sets $\mathcal{A}_\Bbbk^*(\simplex_{\bullet})$ of polynomial differential forms with coefficient $\Bbbk = \mathbb{Q}, \real,\comp$; we will simply write $\mathcal{A}^*(\simplex_\bullet)$ when $\Bbbk = \comp$, which will be the case for all other parts of this paper.

\begin{notation}\label{not:simplicial_set}
	We let $\text{Mon}$ (resp. $\text{sMon}$) be the category of finite ordinals $[n] = \{0,1,\dots,n\}$ in which morphisms are non-decreasing maps (resp. strictly increasing maps).
	We denote by $\mathtt{d}_{i,n} : [n-1] \rightarrow [n]$ the unique strictly increasing map which skips the $i$-th element, and by $\mathtt{e}_{i,n} : [n+1] \rightarrow [n]$ the unique non-decreasing map sending both $i$ and $i+1$ to the same element $i$. 
\end{notation}

Note that every morphism in $\text{Mon}$ can be decomposed as a composition of the maps $\mathtt{d}_{i,n}$'s and $\mathtt{e}_{i,n}$'s, and any morphism in $\text{sMon}$ can be decomposed as a composition of the maps $\mathtt{d}_{i,n}$'s. 

\begin{definition}[\cite{weibel1995introduction}]\label{def:simplicial_and_cosimplicial_set}
	Let $\mathtt{C}$ be a category. A {\em (semi-)simplicial object in $\mathtt{C}$} is a contravariant functor $\mathtt{A}(\bullet) : \text{Mon} \rightarrow  \mathtt{C}$ (resp. $\mathtt{A}(\bullet) : \text{sMon} \rightarrow  \mathtt{C}$), and a {\em (semi-)cosimplicial object in $\mathtt{C}$} is a covariant function $\mathtt{A}(\bullet) : \text{Mon} \rightarrow \mathtt{C}$ (resp. $\mathtt{A}(\bullet) : \text{sMon} \rightarrow \mathtt{C}$).
\end{definition}


\begin{definition}[\cite{griffiths1981rational}]\label{def:simplicial_de_rham}
	Let $\Bbbk$ be a field which is either $\mathbb{Q}$, $\real$ or $\comp$. Consider the dga
	$$
	\mathcal{A}_{\Bbbk}^*(\simplex_n) := \frac{\Bbbk[x_0,\dots,x_n,dx_0,\dots,dx_n]}{(\sum_{i=0}^n x_i -1, \sum_{i=0}^n dx_i)},
	$$
	with $\deg(x_i) = 0$, $\deg(dx_i) =1$, and equipped with the degree $1$ differential $d$ defined by $d(x_i) = dx_i$ and the Leibniz rule.\footnote{In the case $\Bbbk = \real $, this can be thought of as the space of polynomial differential forms on $\real^{n+1}$ restricted to the $n$-simplex $\simplex_n$.}
	Given $a : [n] \rightarrow [m]$ in $\text{Mon}$, we let $a^*:=\mathcal{A}_\Bbbk(a) : \mathcal{A}_{\Bbbk}^*(\simplex_m) \rightarrow \mathcal{A}_{\Bbbk}^*(\simplex_n)$ be the unique dga morphism satisfying $a^*(x_j) = \sum_{i \in [n]: a(i) = j} x_i$ and $a^*(x_{j}) = 0$ if $j \neq a(i)$ for any $i \in [n]$. From this we obtain a simplicial object in the category of dga's, which we denote as $\mathcal{A}_\Bbbk^*(\simplex_\bullet)$.
\end{definition}

\begin{notation}\label{not:de_rham_form_on_simplex_boundary}
	We denote by $\simplexbdy_n$ the boundary of $\simplex_n$, and let 
	\begin{multline}
	\mathcal{A}_{\Bbbk}^*(\simplexbdy_n) : =Big\{(\alpha_0,\dots,\alpha_n) \mid \alpha_{i}  \in \mathcal{A}_{\Bbbk}^*(\simplex_{n-1}), \\ \mathtt{d}_{i,n-1}^* (\alpha_j) =\mathtt{d}_{j-1,n-1}^*(\alpha_i) \ \text{for $0 \leq i<j \leq n$} \Big\}
	\end{multline}
	be the space of polynomial differential forms on $\simplexbdy_n$. There is a natural restriction map defined by $\beta|_{ \simplexbdy_n} := (\mathtt{d}_{0,n}^*(\beta),\dots,\mathtt{d}^*_{n,n}(\beta))$ for $\beta \in \mathcal{A}^*_\Bbbk(\simplex_n)$. 
\end{notation}

The following extension lemma will be frequently used in subsequent constructions:

\begin{lemma}[Lemma 9.4 in \cite{griffiths1981rational}]\label{lem:simplicial_de_rham_extension}
	For any $\vec{\alpha} = (\alpha_0,\dots,\alpha_n) \in \mathcal{A}_{\Bbbk}^*(\simplexbdy_n)$, there exists $\beta \in \mathcal{A}^*_\Bbbk(\simplex_n)$ such that $\beta|_{\simplexbdy_n} = \vec{\alpha}$. 
\end{lemma}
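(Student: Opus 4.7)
The plan is to prove the extension property by induction on $n$, the base case $n=0$ being vacuous since $\simplexbdy_0 = \emptyset$. For the inductive step, the strategy is to match one face at a time: starting from $\beta^{(-1)} := 0$, we inductively construct a sequence $\beta^{(0)},\beta^{(1)},\dots,\beta^{(n)} \in \mathcal{A}_{\Bbbk}^{*}(\simplex_n)$ with the property that $\mathtt{d}_{i,n}^*(\beta^{(k)}) = \alpha_i$ for every $0 \leq i \leq k$. The final $\beta := \beta^{(n)}$ is then the sought-after extension.

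The heart of the argument is a \emph{single-face extension sublemma}: if $\gamma \in \mathcal{A}_{\Bbbk}^{*}(\simplex_{n-1})$ satisfies $\mathtt{d}_{i,n-1}^*(\gamma) = 0$ for all $i = 0,\dots,k-1$, then there exists $\widetilde{\gamma} \in \mathcal{A}_{\Bbbk}^{*}(\simplex_n)$ such that $\mathtt{d}_{k,n}^*(\widetilde{\gamma}) = \gamma$ and $\mathtt{d}_{i,n}^*(\widetilde{\gamma}) = 0$ for $i = 0,\dots,k-1$. This sublemma can be proven by an explicit algebraic construction on polynomial forms: working in the independent coordinates $x_0,\dots,x_{n-2}$ on $\simplex_{n-1}$ (using $\sum x_i = 1$ to eliminate $x_{n-1}$), the hypothesis means that every monomial of $\gamma$ contains a factor of $x_j$ or $dx_j$ for some $j < k$. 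Choosing any polynomial representative and transporting it to $\simplex_n$ via the substitution corresponding to the splitting $\simplex_n \cong \simplex_{n-1} \times [0,1]$ along the $k$-th face (or equivalently pulling back along an appropriate degeneracy map $\mathtt{e}_{k,n-1}$ and then correcting by a multiple of $x_k$), one obtains $\widetilde{\gamma}$ inheriting the required vanishing.

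Granting the sublemma, the passage from $\beta^{(k-1)}$ to $\beta^{(k)}$ proceeds as follows. Set the error term
$$\eta_k := \alpha_k - \mathtt{d}_{k,n}^*(\beta^{(k-1)}) \in \mathcal{A}_{\Bbbk}^{*}(\simplex_{n-1}).$$
Using the cosimplicial identity $\mathtt{d}_{k,n} \circ \mathtt{d}_{i,n-1} = \mathtt{d}_{i,n} \circ \mathtt{d}_{k-1,n-1}$ for $i < k$ together with the compatibility relation $\mathtt{d}_{i,n-1}^*(\alpha_k) = \mathtt{d}_{k-1,n-1}^*(\alpha_i)$ built into the definition of $\mathcal{A}_{\Bbbk}^{*}(\simplexbdy_n)$ and the inductive hypothesis $\mathtt{d}_{i,n}^*(\beta^{(k-1)}) = \alpha_i$, a direct computation shows $\mathtt{d}_{i,n-1}^*(\eta_k) = 0$ for all $i < k$. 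Applying the sublemma to $\eta_k$ yields an $\widetilde{\eta}_k \in \mathcal{A}_{\Bbbk}^{*}(\simplex_n)$ and we set $\beta^{(k)} := \beta^{(k-1)} + \widetilde{\eta}_k$; by construction $\mathtt{d}_{k,n}^*(\beta^{(k)}) = \alpha_k$ while the matchings on faces $0,\dots,k-1$ are preserved.

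The main obstacle is of course the sublemma itself, which requires a genuinely polynomial (as opposed to smooth) construction: a smooth partition-of-unity argument is not available here, so one must exploit the algebraic structure of the Sullivan algebra $\mathcal{A}_{\Bbbk}^{*}(\simplex_n)$. Once the sublemma is established, the remainder is a clean two-layer induction (on $n$ and on $k$) driven by the cosimplicial identities and the definition of compatibility on $\simplexbdy_n$.
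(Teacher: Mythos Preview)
The paper does not give its own proof of this lemma; it simply cites Lemma~9.4 of Griffiths--Morgan. Your proposal is exactly the standard argument found there: face-by-face extension via an inner induction on $k$, reducing to a single-face extension sublemma.

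One small correction to your sublemma sketch: the claim that ``every monomial of $\gamma$ contains a factor of $x_j$ or $dx_j$ for some $j<k$'' is not what the vanishing hypotheses say (vanishing on each face $\{x_i=0\}$ separately is a stronger condition than that), and no ``correction by a multiple of $x_k$'' is needed. The clean way to prove the sublemma is the one you allude to parenthetically: set $\widetilde\gamma := \mathtt{e}_{k,n-1}^*(\gamma)$, the pullback along the degeneracy. The simplicial identity $\mathtt{e}_{k,n-1}\circ\mathtt{d}_{k,n}=\mathrm{id}$ gives $\mathtt{d}_{k,n}^*(\widetilde\gamma)=\gamma$, and for $i<k$ the identity $\mathtt{e}_{k,n-1}\circ\mathtt{d}_{i,n}=\mathtt{d}_{i,n-1}\circ\mathtt{e}_{k-1,n-2}$ yields $\mathtt{d}_{i,n}^*(\widetilde\gamma)=\mathtt{e}_{k-1,n-2}^*(\mathtt{d}_{i,n-1}^*\gamma)=0$. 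With this in hand, your two-layer induction goes through verbatim.
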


\begin{notation}\label{not:homotopy_simplex_notation}
	We let $\hsimplex_n := \simplex_1 \times \simplex_n$, where $\simplex_1 := \{ (\mathtt{t}_0,\mathtt{t}_1) \mid 0\leq \mathtt{t}_i \leq 1, \ \mathtt{t}_0 + \mathtt{t}_1 = 1 \}$, and 
	\begin{multline}\label{eqn:homotopy_simplex_de_rham}
	\mathcal{A}^*_\Bbbk(\hsimplex_n) := \mathcal{A}^*_\Bbbk(\simplex_1) \otimes_{\Bbbk} \mathcal{A}^*_\Bbbk(\simplex_n) =\\
	 \frac{\Bbbk[x_0,\dots,x_n,dx_0,\dots,dx_n;\mathtt{t}_0,\mathtt{t}_1,d\mathtt{t}_0,d\mathtt{t}_1]}{(\sum_{i=0}^n x_i -1, \sum_{i=0}^n dx_i, \mathtt{t}_0 +\mathtt{t}_1 -1, d\mathtt{t}_0 + d\mathtt{t}_1)}.
	\end{multline}
	Besides the restriction maps $\mathtt{d}_{j,n}^* : \mathcal{A}^*_{\Bbbk}(\hsimplex_n) \rightarrow \mathcal{A}^*_\Bbbk(\hsimplex_{n-1})$ induced from that on $\simplex_n$, we also have the maps $\mathtt{r}_j^* : \mathcal{A}^*_\Bbbk(\hsimplex_n)  \rightarrow \mathcal{A}^*_\Bbbk(\simplex_n) $ defined by putting $\mathtt{t}_j = 1$ (and $t_{1-j} = 0$).  
\end{notation}

\begin{notation}\label{not:de_rham_form_on_homotopy_simplex_boundary}
	We denote by $\hsimplexbdy_n$ the boundary of $\hsimplex_n$, and let
	\begin{equation*}
	\mathcal{A}_{\Bbbk}^*(\hsimplexbdy_n) := \left\{(\alpha_0,\dots,\alpha_n, \beta_0 ,\beta_1) \ | \ \displaystyle{\substack{\alpha_{i} \in \mathcal{A}_{\Bbbk}^*(\hsimplex_{n-1}),\ \beta_{i} \in \mathcal{A}^*(\simplex_n),\\ \mathtt{d}_{i,n-1}^* (\alpha_j) = \mathtt{d}_{j-1,n-1}^*(\alpha_i) \text{ for $0 \leq i<j \leq n$} \\
	\mathtt{r}_i^* (\alpha_j) = \mathtt{d}_{j,n}^*(\beta_i) \text{ for $i=0,1$ and $0\leq j \leq n$}}}\right\}
	\end{equation*}
	be the space of polynomial differential forms on $\hsimplexbdy_n$. There is a natural restriction map defined by $\gamma|_{ \hsimplexbdy_n} := (\mathtt{d}_{0,n}^*(\gamma),\dots,\mathtt{d}^*_{n,n}(\gamma), \mathtt{r}_0^*(\gamma),\mathtt{r}_1^*(\gamma))$ for $\gamma \in \mathcal{A}^*_{\Bbbk}(\hsimplex_n)$. 
\end{notation}
\begin{lemma}\label{lem:homotopy_simplicial_de_rham_extension}
	For any $(\alpha_0,\dots,\alpha_n, \beta_0,\beta_1) \in \mathcal{A}_{\Bbbk}^*(\hsimplexbdy_n)$, there exists $\gamma \in \mathcal{A}^*_\Bbbk(\hsimplex_n)$ such that $\gamma|_{\hsimplexbdy_n} = (\alpha_0,\dots,\alpha_n, \beta_0,\beta_1)$. 
\end{lemma}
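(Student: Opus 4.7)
The plan is to exploit the product structure $\hsimplex_n = \simplex_1 \times \simplex_n$ to reduce everything to Lemma \ref{lem:simplicial_de_rham_extension} applied in the $\simplex_n$-direction, followed by a simple linear interpolation in the $\simplex_1$-direction. Using the relations $\mathtt{t}_1 = 1 - \mathtt{t}_0$ and $d\mathtt{t}_1 = -d\mathtt{t}_0$, every form in $\mathcal{A}^*_\Bbbk(\hsimplex_n)$ has a unique presentation $\sum_{a, b} \mathtt{t}_0^a (d\mathtt{t}_0)^b \otimes \omega^{(a,b)}$ with $b \in \{0,1\}$ and $\omega^{(a,b)} \in \mathcal{A}^*_\Bbbk(\simplex_n)$. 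The key observation is that each $\mathtt{d}_{j,n}^*$ acts only on the $\simplex_n$-factor while each $\mathtt{r}_i^*$ acts only on the $\simplex_1$-factor (killing $d\mathtt{t}_0$), so the two families of boundary maps decouple completely in this decomposition.

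First I would take care of the $\alpha_j$ data. Expanding $\alpha_j = \sum_{a,b} \mathtt{t}_0^a (d\mathtt{t}_0)^b \otimes \alpha_j^{(a,b)}$ with $\alpha_j^{(a,b)} \in \mathcal{A}^*_\Bbbk(\simplex_{n-1})$, the compatibility $\mathtt{d}_{i,n-1}^* \alpha_j = \mathtt{d}_{j-1,n-1}^* \alpha_i$ holds coefficient-wise, so each tuple $(\alpha_j^{(a,b)})_{j=0}^{n}$ lies in $\mathcal{A}^*_\Bbbk(\simplexbdy_n)$. Lemma \ref{lem:simplicial_de_rham_extension} then yields $\gamma^{(a,b)} \in \mathcal{A}^*_\Bbbk(\simplex_n)$ extending it, and assembling
$$\gamma' := \sum_{a,b} \mathtt{t}_0^a (d\mathtt{t}_0)^b \otimes \gamma^{(a,b)} \in \mathcal{A}^*_\Bbbk(\hsimplex_n)$$
produces a form with $\mathtt{d}_{j,n}^* \gamma' = \alpha_j$ for every $j$.

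Next I would correct $\gamma'$ on the top and bottom faces. Set $\delta_i := \beta_i - \mathtt{r}_i^* \gamma' \in \mathcal{A}^*_\Bbbk(\simplex_n)$. Combining the given compatibilities $\mathtt{r}_i^* \alpha_j = \mathtt{d}_{j,n}^* \beta_i$ with the commutation $\mathtt{r}_i^* \mathtt{d}_{j,n}^* = \mathtt{d}_{j,n}^* \mathtt{r}_i^*$ (immediate from the tensor decoupling) forces $\mathtt{d}_{j,n}^* \delta_i = 0$ for all $j$, so each $\delta_i$ already vanishes on $\simplexbdy_n$. The linear interpolation
$$\gamma'' := \mathtt{t}_0 \cdot \delta_0 + \mathtt{t}_1 \cdot \delta_1$$
then satisfies $\mathtt{r}_i^* \gamma'' = \delta_i$ and $\mathtt{d}_{j,n}^* \gamma'' = \mathtt{t}_0 \mathtt{d}_{j,n}^* \delta_0 + \mathtt{t}_1 \mathtt{d}_{j,n}^* \delta_1 = 0$, so $\gamma := \gamma' + \gamma''$ is the sought extension.

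Given Lemma \ref{lem:simplicial_de_rham_extension}, the argument is essentially routine; the only point requiring care is verifying that the two families of face maps commute under the tensor decomposition, which is what makes the two extension subproblems genuinely independent and allows the explicit interpolation in the second step to work without disturbing the first.
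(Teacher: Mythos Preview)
Your argument is correct. The paper does not supply a detailed proof of this lemma; it only remarks that the same technique as in \cite[Lemma~9.4]{griffiths1981rational} applies. Your route is slightly different in spirit: rather than rerunning that inductive extension technique on the larger boundary $\hsimplexbdy_n$, you exploit the product structure $\hsimplex_n = \simplex_1 \times \simplex_n$ to invoke Lemma~\ref{lem:simplicial_de_rham_extension} as a black box in the $\simplex_n$-direction and then finish with an explicit linear interpolation in the $\simplex_1$-direction. This is clean and arguably more transparent than repeating the Griffiths--Morgan argument, since the decoupling of the face maps $\mathtt{d}_{j,n}^*$ and $\mathtt{r}_i^*$ under the tensor decomposition makes the two extension problems genuinely independent. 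The only minor point worth making explicit is that finiteness of the sum defining $\gamma'$ is automatic because only finitely many coefficients $\alpha_j^{(a,b)}$ are nonzero across the finitely many $j$.
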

This variation of Lemma \ref{lem:simplicial_de_rham_extension} can be proven by the same technique as in \cite[Lemma 9.4]{griffiths1981rational}.

\subsection{Local Thom-Whitney complexes}\label{sec:local_thom_whitney_complex}


Consider a sheaf of BV algebras $(\bva{}^* , \wedge ,\bvd{})$ on a topological space $V$,\footnote{Readers may assume that $\bva{}^*$ is a bounded complex for the purpose of this paper.} together with an acyclic cover $\mathcal{U} = \{U_i\}_{i \in \inte_+}$ of $V$ such that $H^{>0}(U_{i_0\cdots i_l},\bva{}^j) = 0$ for all $j$ and all finite intersections $U_{i_0 \cdots i_l}:= U_{i_0}\cap \cdots \cap U_{i_l}$. In particular, this allows us to compute the sheaf cohomology $H^*(V,\bva{}^j)$ and the hypercohomology $\mathbb{H}^*(V,\bva{}^*)$ using the \v{C}ech complex $\check{\mathcal{C}}^*(\mathcal{U},\bva{}^j)$ and the total complex of $\check{\mathcal{C}}^*(\mathcal{U},\bva{}^*)$ respectively. 

Let $\mathcal{I} = \{ (i_0,\dots,i_l) \mid i_j \in \inte_+, \ i_0 < i_1 < \cdots < i_l\}$ be the index set.
Let $\simplex_l$ be the standard $l$-simplex in $\real^{l+1}$ and $\mathcal{A}^q(\simplex_l)$ be the space of $\comp$-valued polynomial differential $q$-forms on $\simplex_l$. Also let $\mathtt{d}_{j,l} : \simplex_{l-1} \rightarrow \simplex_l $ be the inclusion of the $j^{\text{th}}$-facet in $\simplex_l$ and let $\mathtt{d}_{j,l}^*$ be the pullback map. See Definition \ref{def:simplicial_de_rham} and Notation \ref{not:simplicial_set} in \S \ref{sec:simplicial_set_general} for details.

\begin{definition}[see e.g. \cite{whitney2012geometric, dupont1976simplicial, fiorenza2012differential}]\label{def:thom_whitney_general}
	The {\em Thom-Whitney complex} is defined as $\twc{}^{*,*}(\bva{}) := \bigoplus_{p,q} \twc{}^{p,q}(\bva{})$ where
	\begin{multline*}
	\twc{}^{p,q}(\bva{}):= \Big\{ (\varphi_{i_0 \cdots i_l})_{(i_0,\dots,  i_l) \in \mathcal{I}} \mid \varphi_{i_0 \cdots i_l} \in \mathcal{A}^q(\simplex_l) \otimes_\comp \bva{}^p(U_{i_0\cdots i_l}), \\
	\mathtt{d}_{j,l}^* (\varphi_{i_0 \cdots i_l}) = \varphi_{i_0 \cdots \hat{i}_j \cdots i_l} |_{U_{i_0 \cdots i_l}} \Big\}.
	\end{multline*}
	It is equipped with the structures $(\wedge,\pdb, \bvd{})$ defined componentwise by
	\begin{align*}
	(\alpha_{I} \otimes v_I) \wedge ( \beta_I \otimes w_I) &:= (-1)^{|v_I||\beta_I|} (\alpha_I \wedge \beta_I) \otimes (v_I \wedge w_I),\\
	\pdb (\alpha_I \otimes v_I) := (d\alpha_I) \otimes v_I ,&\quad 
	\bvd{}(\alpha_I \otimes v_I) := (-1)^{|\alpha_I|} \alpha_I \otimes (\bvd{} v_I) ,
	\end{align*}
	for $\alpha_I, \beta_I \in \mathcal{A}^*(\simplex_l)$ and $v_I, w_I \in \bva{}^*(U_I),$ where $I = (i_0,\dots,i_l) \in \mathcal{I}$ and $l = |I| - 1$.
\end{definition}

\begin{remark}
	We use the notation $\pdb$ since it plays the role of the Dolbeault operator in the classical deformation theory of smooth Calabi-Yau manifolds.
\end{remark}

$(\twc{}^{*,*}(\bva{}), \bar{\partial}, \bvd{}, \wedge)$ forms a dgBV algebra 
in the sense of Definition \ref{def:dgBV}. 
From Definitions \ref{def:dgBV} and \ref{def:thom_whitney_general}, the Lie bracket on the Thom-Whitney complex is determined componentwise by the formula
\begin{equation}\label{eqn:Thom_whitney_Lie_bracket_formula}
[\alpha_I \otimes v_I , \beta_I \otimes w_I ] := (-1)^{(|v_I|+1) |\beta_I|} (\alpha_I \wedge \beta_I) \otimes [v_I,w_I],
\end{equation}
for $\alpha_I, \beta_I \in \mathcal{A}^*(\simplex_l)$ and $v_I, w_I \in \bva{}^*(U_I)$ where $l = |I| - 1$.

We consider the integration map $ \mathtt{I} : \twc{}^{p,q}(\bva{}) \rightarrow \check{\mathcal{C}}^q(\mathcal{U},\bva{}^p)$ defined by
\begin{equation*}
\mathtt{I}(\alpha_{i_0\dots i_l}) := \left(\int_{\simplex_q} \otimes \text{id}\right) \left(\alpha_{i_0 \dots i_l} \right)
\end{equation*}
for each component $\alpha_{i_0\dots i_l} \in \mathcal{A}^q(\simplex_l) \otimes \bva{}^p(U_{i_0 \dots i_l})$ of $(\alpha_{i_0 \dots i_l})_{(i_0 \dots i_l) \in \mathcal{I}} \in \twc{}^{p,q}(\bva{})$.
Notice that $\mathtt{I}$ is a chain morphism from $(\twc{}^{p,*}(\bva{}), \pdb)$ to $(\check{\mathcal{C}}^*(\mathcal{U},\bva{}^p), \delta)$, where $\delta$ is the \v{C}ech differential. Taking the total complexes gives a chain morphism from $(\twc{}^{*,*}(\bva{}), \pdb \pm  \bvd{})$ to $\check{\mathcal{C}}^*(\mathcal{U},\bva{}^*)$, which is equipped with the total \v{C}ech differential $\delta \pm \bvd{}$. 

\begin{lemma}[\cite{whitney2012geometric}]\label{lem:thom_whitney_construction_quasi_iso}
	The maps $\mathtt{I}:\twc{}^{p,*}(\bva{}) \to \check{\mathcal{C}}^*(\mathcal{U},\bva{}^p)$ and $I:\twc{}^{*,*}(\bva{}) \to \check{\mathcal{C}}^*(\mathcal{U},\bva{}^*)$ are quasi-isomorphisms.
\end{lemma}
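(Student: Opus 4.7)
The plan is to prove both statements simultaneously by first establishing the fixed-$p$ quasi-isomorphism via the classical Dupont--Sullivan comparison between the polynomial de Rham complex on simplices and the combinatorial cochain complex, and then upgrading to the totalized statement via a spectral sequence argument.

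For fixed $\bva{}$-degree $p$, I would construct an explicit Whitney embedding $W\colon \check{\mathcal{C}}^*(\mathcal{U},\bva{}^p)\to \twc{}^{p,*}(\bva{})$ as a candidate inverse to $\mathtt{I}$. Given a \v{C}ech cochain $c=(c_{i_0\cdots i_l})\in \check{\mathcal{C}}^l(\mathcal{U},\bva{}^p)$, set
$$
W(c)_{i_0\cdots i_m} \;=\; \sum_{0\le j_0<\cdots<j_l\le m} \omega_{j_0\cdots j_l}\otimes c_{i_{j_0}\cdots i_{j_l}}\big|_{U_{i_0\cdots i_m}},
$$
where $\omega_{j_0\cdots j_l} = l!\sum_{k=0}^l (-1)^k x_{j_k}\,dx_{j_0}\wedge\cdots\wedge \widehat{dx_{j_k}}\wedge\cdots\wedge dx_{j_l}$ is the standard Whitney elementary form on $\simplex_m$. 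A direct calculation shows that $W(c)$ satisfies the simplicial compatibility condition defining $\twc{}^{p,l}(\bva{})$, that $W$ is a chain map for $(\delta,\pdb)$, and that $\mathtt{I}\circ W=\mathrm{id}$ because $\int_{\simplex_l}\omega_{0\cdots l}=1$. The remaining ingredient is a chain homotopy $h$ on $\twc{}^{p,*}(\bva{})$ satisfying $\mathrm{id} - W\circ \mathtt{I} = \pdb h + h\,\pdb$, which is Dupont's classical construction coming from a fiberwise Poincar\'e lemma on each simplex. Since $\mathtt{I}$ is evidently $\bvd{}$-equivariant and $W$ commutes with $\bvd{}$ by construction, both maps are chain maps for the full dgBV structure.

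For the totalized statement, I would use the decreasing filtration by $\bva{}$-degree $p$ on both sides. In the applications of interest $\bva{}^*$ is concentrated in degrees $-d\le *\le 0$, so the filtrations are bounded, yielding convergent spectral sequences whose $E_1$ pages are $(H^*(\twc{}^{p,*}(\bva{}),\pdb),\bvd{})$ and $(H^*(\check{\mathcal{C}}^*(\mathcal{U},\bva{}^p),\delta),\bvd{})$, respectively. The fixed-$p$ result together with $\mathtt{I}\,\bvd{}=\bvd{}\,\mathtt{I}$ gives an isomorphism on $E_1$, hence by the comparison theorem an isomorphism on $E_\infty$, and thus on total cohomology.

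The main delicate point will be ensuring that Dupont's homotopy $h$ and the Whitney embedding $W$ interact correctly with the simplicial face maps $\mathtt{d}_{j,l}^*$, so that they descend from the raw product $\prod_l \mathcal{A}^*(\simplex_l)\otimes \bva{}^p(U_{i_0\cdots i_l})$ to the end $\twc{}^{p,*}(\bva{})$ carved out by the compatibility conditions. This is precisely the feature built into Dupont's formulas, which are defined by standard simplex geometry and manifestly commute with pullback along face maps; the verification is a sign-and-index bookkeeping exercise that we would spell out before invoking the spectral sequence comparison.
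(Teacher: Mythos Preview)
Your proposal is correct and is precisely the classical Dupont--Whitney argument. Note, however, that the paper does not supply its own proof of this lemma: it is stated as a citation to \cite{whitney2012geometric}, and the subsequent remark simply asserts that ``the original proof of Lemma~\ref{lem:thom_whitney_construction_quasi_iso} works in exactly the same way for this bigger complex.'' So there is nothing to compare against beyond the classical literature, and your reconstruction via the Whitney embedding $W$, Dupont's explicit homotopy, and the filtration spectral sequence is exactly that classical argument. The only point to flag is the one the paper's remark raises: their $\twc{}^{p,*}(\bva{})$ is slightly larger than the standard Thom--Whitney end, so you should note (as they do) that Dupont's formulas, being purely simplicial and defined levelwise, apply verbatim to this variant.
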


\begin{remark}
	Comparing to the standard construction of the Thom-Whitney complex in e.g. \cite{fiorenza2012differential} where one considers $(\varphi_{i_0\cdots i_l})_{(i_0, \ldots, i_l) \in \mathcal{I}} \in \prod_{l \geq 0} \left( \mathcal{A}^*(\simplex_l) \otimes_{\comp} \prod_{i_0< \cdots <i_l} \bva{}^{p}(U_{i_0\cdots i_l}) \right)$, we are taking a bigger complex in Definition \ref{def:thom_whitney_general} for the purpose of later constructions. However, the original proof of Lemma \ref{lem:thom_whitney_construction_quasi_iso} works in exactly the same way for this bigger complex, and hence $\twc{}^{p,*}(\bva{})$ also serves as a resolution of $\bva{}^p$. 
\end{remark}

\begin{definition}\label{def:thom_whitney_0th_order_higher_order}
	Given the $0^{\text{th}}$-order complex of polyvector fields $(\bva{0}^*,\wedge, \bvd{0})$ over $X$ (Definition \ref{def:0_order_data}), we then use the Stein cover $\mathcal{U} = \{U_i\}_{i \in \inte_+}$ in Notation \ref{not:open_stein_covers} to define the {\em $0^{\text{th}}$-order Thom-Whitney complex} $(\twc{}^{*,*}(\bva{0}), \pdb,\bvd{0},  \wedge)$.
	To simplify notations, we write $\twc{0}^{*,*}$ for $\twc{}^{*,*}(\bva{0})$.
	
	Given a finite intersection of open subsets $V_{\alpha_0 \cdots \alpha_\ell} := V_{\alpha_0} \cap \cdots \cap V_{\alpha_\ell}$ of the cover $\mathcal{V}$,
	and local thickenings of the complex of polyvector fields $(\bva{k}^*_{\alpha_i},\wedge,\bvd{k}_{\alpha_i})$ over $V_{\alpha_i}$ for each $k \in \inte_{\geq 0}$ (Definition \ref{def:higher_order_data}), we use the cover $\mathcal{U}_{\alpha_0 \cdots \alpha_\ell} := \{U \in \mathcal{U} \mid U \subset V_{\alpha_0 \cdots \alpha_\ell} \}$ to define the {\em local Thom-Whitney complex} $(\twc{}^{*,*}(\bva{k}_{\alpha_i}|_{V_{\alpha_0 \cdots \alpha_\ell}}), \pdb, \bvd{k}_{\alpha_i}, \wedge)$ over $V_{\alpha_0 \cdots \alpha_\ell}$. To simplify notations, we write $\twc{k}^{*,*}_{\alpha_i;\alpha_0 \cdots \alpha_\ell}$ for $\twc{}^{*,*}(\bva{k}_{\alpha_i}|_{V_{\alpha_0 \cdots \alpha_\ell}})$.
\end{definition}

The covers $\mathcal{U}$ and $\mathcal{U}_{\alpha_0 \cdots \alpha_\ell}$ satisfy the acyclic assumption at the beginning of this section because  $\bva{0}^*$ and $\bva{k}^*_\alpha$ are coherent sheaves and all the open sets in these covers are Stein:

\begin{theorem}[Cartan's Theorem B \cite{cartan1957varietes}; see e.g. Chapter IX Corollary 4.11 in \cite{demailly1997complex}]\label{thm:cartan_theorem_B}
	For a coherent sheaf $\mathcal{F}$ over a Stein space $U$, we have
	$
	H^{>0}(U,\mathcal{F}) = 0.
	$
\end{theorem}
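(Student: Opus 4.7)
The plan is to establish the vanishing by reducing first to the case of a polydisc in $\comp^n$ and then bootstrapping to a general Stein space via an exhaustion argument. This is Cartan's classical strategy, built on two pillars: Oka's coherence theorem together with the $\pdb$-Poincar\'e lemma on polydiscs, and Cartan's splitting lemma for invertible holomorphic matrices on overlapping cuboids. I will not be reinventing the theorem, but rather laying out the logical skeleton I would follow.

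First, I would prove $H^{>0}(P,\mathcal{F})=0$ for a polydisc $P \subset \comp^n$ and a coherent sheaf $\mathcal{F}$ on a neighborhood of $\overline{P}$. The starting point is the Dolbeault isomorphism $H^q(P,\mathcal{O}) \cong H^q(\mathcal{A}^{0,*}(P),\pdb)$, which vanishes for $q>0$ by Grothendieck's $\pdb$-Poincar\'e lemma on polydiscs. Using Oka's coherence theorem, on any slightly smaller relatively compact polydisc $\mathcal{F}$ admits a finite free resolution $\mathcal{O}^{n_k} \to \cdots \to \mathcal{O}^{n_0} \to \mathcal{F} \to 0$; a standard hypercohomology / truncation argument (breaking the resolution into short exact sequences and running the long exact sequences) then promotes the vanishing for $\mathcal{O}$ to one for $\mathcal{F}$.

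Second, I would pass to a general Stein space $U$ by exhausting it by an increasing family of compact analytic polyhedra $K_1 \subset K_2 \subset \cdots$ with $\bigcup_n K_n = U$, each of which embeds as an analytic subset of a polydisc $P_n$. Pushing $\mathcal{F}|_{K_n}$ forward to $P_n$ preserves coherence (by Grauert/Remmert) and the push-forward has proper support in $P_n$, so the previous step gives $H^{>0}(K_n,\mathcal{F})=0$. The passage from the $K_n$ to $U$ uses the natural spectral sequence with $E_2^{p,q} = \varprojlim^{p}_n H^q(K_n,\mathcal{F})$ converging to $H^{p+q}(U,\mathcal{F})$, so the target reduces to vanishing of $\varprojlim^1_n H^0(K_n,\mathcal{F})$; this in turn follows once one has the Runge-type approximation statement that sections over $K_n$ can be uniformly approximated by sections over $K_{n+1}$, giving the Mittag-Leffler condition.

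The main obstacle is precisely this Runge-type approximation at the heart of the exhaustion. It is not formal: it is essentially equivalent to Cartan's splitting lemma, which asserts that every invertible holomorphic matrix on an overlap of two cuboids can be written as $A = A_+ A_-^{-1}$ with $A_\pm$ holomorphic and invertible on the individual pieces. The proof of this splitting is the truly analytic ingredient, based on iterating an integral (Cauchy-type) operator and a Banach fixed-point argument in a suitable space of holomorphic matrix-valued functions. Once the splitting is established, approximation of sections follows by the standard trick of expressing coboundary data in \v Cech cocycles through the splitting, and the whole theorem assembles; without it, one is stuck at the level of single compact polyhedra and cannot promote the vanishing to an arbitrary Stein space.
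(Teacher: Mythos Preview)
The paper does not prove this statement at all: Theorem~\ref{thm:cartan_theorem_B} is simply quoted as Cartan's classical result, with references to \cite{cartan1957varietes} and \cite{demailly1997complex}, and is then used as a black box to justify that the Stein covers $\mathcal{U}$ and $\mathcal{U}_{\alpha_0\cdots\alpha_\ell}$ are acyclic for the coherent sheaves $\bva{0}^*$ and $\bva{k}^*_\alpha$. So there is no ``paper's own proof'' to compare against.

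Your sketch is a reasonable outline of the classical argument, and the two pillars you identify (vanishing on polydiscs via Oka coherence and the $\pdb$-lemma, then globalization via exhaustion and the Cartan matrix-splitting lemma) are indeed the correct load-bearing ingredients. A couple of minor points: when you embed the analytic polyhedra into polydiscs, the pushforward along a closed embedding preserves coherence for elementary reasons, so invoking Grauert--Remmert is unnecessary; and you should be a bit careful whether your $K_n$ are open Stein neighborhoods or genuinely compact sets, since cohomology is computed on open sets and the passage between the two requires exactly the approximation statement you flag at the end. But these are presentational issues, not gaps in the strategy.
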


\subsection{The gluing morphisms}\label{sec:solving_for_gluing}

\subsubsection{Existence of a set of compatible gluing morphisms} \label{sec:proof_of_compatible_gluing_data}

The aim of this subsection is to construct, for each $k \in \inte_{\geq 0}$ and any pair $V_\alpha, V_\beta \in \mathcal{V}$, an {\em isomorphism} 
\begin{equation}\label{eqn:g_alpha_beta_map}
\glue{k}_{\alpha \beta } : \twc{k}^{*,*}_{\alpha;\alpha\beta } \rightarrow \twc{k}^{*,*}_{\beta ; \alpha \beta},
\end{equation}
as a collection of maps $(\glue{k}_{\alpha\beta, I})_{I\in \mathcal{I}}$ so that for each $\varphi =(\varphi_I)_{I\in \mathcal{I}} \in \twc{k}^{*,*}_{\alpha;\alpha \beta}$ with $\varphi_I \in \mathcal{A}^*(\simplex_l) \otimes \bva{k}^*_\alpha(U_I)$ we have $\left( \glue{k}_{\alpha\beta}(\varphi) \right)_I = \glue{k}_{\alpha\beta, I}(\varphi_I)$, which preserves the algebraic structures $[\cdot,\cdot], \wedge$ and satisfies the following condition:
\begin{condition}\label{assum:induction_hypothesis}
	\begin{enumerate}
		\item for $U_i \subset V_\alpha \cap V_\beta$, we have 
		\begin{equation}\label{eqn:alpha_beta_gluing_explicit_form_at_point}
		\glue{k}_{\alpha\beta, i} = \exp([ \iauto{k}_{\alpha\beta,i}, \cdot ]) \circ \patch{k}_{\alpha\beta,i}
		\end{equation}
		for some element $\iauto{k}_{\alpha\beta,i} \in \bva{k}^{-1}_{\beta}(U_i)$ with $\iauto{k}_{\alpha\beta,i} = 0 \ \text{(mod $\mathbf{m}$)}$;
		
		\item for $U_{i_0}, \dots, U_{i_l} \subset V_\alpha \cap V_{\beta}$, we have 
		\begin{equation}\label{eqn:alpha_beta_gluing_explicit_form_on_simplex}
		\glue{k}_{\alpha\beta,i_0\cdots i_l} = \exp([\sauto{k}_{\alpha\beta, i_0 \cdots i_l},\cdot]) \circ \left(\glue{k}_{\alpha\beta,i_0}|_{U_{i_0 \cdots i_l}}\right),
		\end{equation}
		for some element $\sauto{k}_{\alpha\beta, i_0 \cdots i_l} \in \mathcal{A}^0(\simplex_l) \otimes \bva{k}^{-1}_\beta(U_{i_0 \cdots i_l})$ with $\sauto{k}_{\alpha\beta, i_0 \cdots i_l} = 0 \ \text{(mod $\mathbf{m}$)}$; and
		
		\item the elements $\sauto{k}_{\alpha\beta, i_0 \cdots i_l}$'s satisfy the relation:\footnote{Here $\mathtt{d}_{j,l}^*$ is induced by the corresponding map $\mathtt{d}_{j,l}^* : \mathcal{A}^*(\simplex_l) \rightarrow \mathcal{A}^*(\simplex_{l-1})$ on the simplicial set $\mathcal{A}^*(\simplex_\bullet)$ introduced in Definition \ref{def:simplicial_de_rham} and $\bchprod$ is the Baker-Campbell-Hausdorff product in Notation \ref{not:exponential_group}.}
		\begin{equation}\label{eqn:alpha_beta_gluing_explicit_form_on_simplex_boundary_relation}
		\mathtt{d}_{j,l}^*( \sauto{k}_{\alpha\beta, i_0 \cdots i_l}) =
		\begin{dcases}
		\sauto{k}_{\alpha\beta, i_0 \cdots \widehat{i_j} \cdots i_l} & \text{for $j>0$},\\
		\sauto{k}_{\alpha\beta, \widehat{i_0}  \cdots i_l} \bchprod \autoij{k}_{\alpha\beta, i_0 i_1}  & \text{for $j=0$}, 
		\end{dcases}
		\end{equation}
		where $\autoij{k}_{\alpha\beta, i_0 i_1}\in \bva{k}_{\beta}^{-1}(U_{i_0 i_1})$ is the unique element such that
		\begin{equation}\label{eqn:alpha_beta_gluing_relation_between_i_0_and_i_1}
		\exp([\autoij{k}_{\alpha\beta,i_0i_1}, \cdot]) \glue{k}_{\alpha \beta, i_0} = \glue{k}_{\alpha \beta, i_1}.
		\end{equation}
	\end{enumerate}
\end{condition}

\begin{lemma}\label{lem:preserving_thom_whitney_condition}
	Suppose that the morphisms $\glue{k}_{\alpha\beta}$'s, each of which is a collection of maps $(\glue{k}_{\alpha\beta, I})_{I\in \mathcal{I}}$, all satisfy Condition \ref{assum:induction_hypothesis}. For any $\varphi =(\varphi_I)_{I\in \mathcal{I}} \in \twc{k}^{*,*}_{\alpha;\alpha \beta}$, we have $\left( \glue{k}_{\alpha\beta,I}(\varphi_I) \right)_{I\in \mathcal{I}} \in \twc{k}^{*,*}_{\beta;\alpha\beta}$.
\end{lemma}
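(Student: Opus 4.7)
The plan is to verify directly the simplicial compatibility condition $\mathtt{d}_{j,l}^*(\glue{k}_{\alpha\beta, i_0 \cdots i_l}(\varphi_{i_0 \cdots i_l})) = \glue{k}_{\alpha\beta, i_0 \cdots \widehat{i_j} \cdots i_l}(\varphi_{i_0 \cdots \widehat{i_j} \cdots i_l})|_{U_{i_0\cdots i_l}}$ componentwise using the explicit form \eqref{eqn:alpha_beta_gluing_explicit_form_on_simplex} of $\glue{k}_{\alpha\beta,i_0 \cdots i_l}$ together with the boundary relations \eqref{eqn:alpha_beta_gluing_explicit_form_on_simplex_boundary_relation}. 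The key observation is that $\glue{k}_{\alpha\beta,i_0}$ acts purely on the $\bva{k}_\alpha^*$ factor, so it commutes with $\mathtt{d}_{j,l}^*$, which only acts on $\mathcal{A}^*(\simplex_l)$; and the Thom-Whitney hypothesis on $\varphi$ gives $\mathtt{d}_{j,l}^*(\varphi_{i_0\cdots i_l}) = \varphi_{i_0\cdots \widehat{i_j}\cdots i_l}|_{U_{i_0\cdots i_l}}$.

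First I would handle the case $j > 0$. Writing $\glue{k}_{\alpha\beta,i_0\cdots i_l}(\varphi_{i_0\cdots i_l}) = \exp([\sauto{k}_{\alpha\beta,i_0\cdots i_l},\cdot])(\glue{k}_{\alpha\beta,i_0}(\varphi_{i_0\cdots i_l}))$, applying $\mathtt{d}_{j,l}^*$ and using $\mathtt{d}_{j,l}^*\sauto{k}_{\alpha\beta,i_0\cdots i_l} = \sauto{k}_{\alpha\beta,i_0\cdots \widehat{i_j}\cdots i_l}$ from \eqref{eqn:alpha_beta_gluing_explicit_form_on_simplex_boundary_relation} yields $\exp([\sauto{k}_{\alpha\beta,i_0\cdots \widehat{i_j}\cdots i_l},\cdot])(\glue{k}_{\alpha\beta,i_0}(\varphi_{i_0\cdots \widehat{i_j}\cdots i_l}|_{U_{i_0\cdots i_l}}))$, which is exactly the desired right-hand side by \eqref{eqn:alpha_beta_gluing_explicit_form_on_simplex} (note that the base index $i_0$ still appears in $i_0\cdots \widehat{i_j}\cdots i_l$ when $j>0$).

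The case $j=0$ is the more delicate one since the base index $i_0$ itself is removed, and this is where the $\autoij{k}_{\alpha\beta,i_0i_1}$ term in \eqref{eqn:alpha_beta_gluing_explicit_form_on_simplex_boundary_relation} plays its role. Applying $\mathtt{d}_{0,l}^*$ and using $\mathtt{d}_{0,l}^*\sauto{k}_{\alpha\beta,i_0\cdots i_l} = \sauto{k}_{\alpha\beta,\widehat{i_0}\cdots i_l}\bchprod \autoij{k}_{\alpha\beta,i_0 i_1}$, the result becomes $\exp([\sauto{k}_{\alpha\beta,\widehat{i_0}\cdots i_l},\cdot])\circ \exp([\autoij{k}_{\alpha\beta,i_0 i_1},\cdot])(\glue{k}_{\alpha\beta,i_0}(\varphi_{\widehat{i_0}\cdots i_l}|_{U_{i_0\cdots i_l}}))$. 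By the defining relation $\exp([\autoij{k}_{\alpha\beta,i_0 i_1},\cdot])\circ \glue{k}_{\alpha\beta,i_0} = \glue{k}_{\alpha\beta,i_1}$ (this is where the definition of $\autoij{k}_{\alpha\beta,i_0i_1}$ in Condition \ref{assum:induction_hypothesis}(3) is used), this equals $\exp([\sauto{k}_{\alpha\beta,\widehat{i_0}\cdots i_l},\cdot])(\glue{k}_{\alpha\beta,i_1}(\varphi_{\widehat{i_0}\cdots i_l}|_{U_{i_0\cdots i_l}}))$, which is precisely $\glue{k}_{\alpha\beta,\widehat{i_0}\cdots i_l}(\varphi_{\widehat{i_0}\cdots i_l})|_{U_{i_0\cdots i_l}}$ by \eqref{eqn:alpha_beta_gluing_explicit_form_on_simplex} applied with base index $i_1$.

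There is no real obstacle here — the statement is essentially a bookkeeping check — but the only subtlety is making sure the base index convention in \eqref{eqn:alpha_beta_gluing_explicit_form_on_simplex} is used consistently: for the multi-index $\widehat{i_0}\cdots i_l$ the base index is $i_1$, not $i_0$, which is exactly what the Baker–Campbell–Hausdorff correction $\autoij{k}_{\alpha\beta,i_0 i_1}$ is designed to absorb. Once both cases are verified, each component $\glue{k}_{\alpha\beta,I}(\varphi_I)$ lies in $\mathcal{A}^*(\simplex_{|I|-1})\otimes \bva{k}^*_\beta(U_I)$ and the family satisfies the cosimplicial compatibility, so $(\glue{k}_{\alpha\beta,I}(\varphi_I))_{I\in\mathcal{I}}\in \twc{k}^{*,*}_{\beta;\alpha\beta}$ as claimed.
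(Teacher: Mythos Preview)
Your proof is correct and follows essentially the same approach as the paper's: both split into the cases $j>0$ and $j=0$, apply the explicit formula \eqref{eqn:alpha_beta_gluing_explicit_form_on_simplex} together with the boundary relations \eqref{eqn:alpha_beta_gluing_explicit_form_on_simplex_boundary_relation}, and in the $j=0$ case use the defining relation $\exp([\autoij{k}_{\alpha\beta,i_0 i_1},\cdot])\circ \glue{k}_{\alpha\beta,i_0} = \glue{k}_{\alpha\beta,i_1}$ to change the base index from $i_0$ to $i_1$. The only cosmetic difference is that the paper computes $(\glue{k}_{\alpha\beta}\varphi)_{i_0\cdots\widehat{i_j}\cdots i_l}$ and shows it equals $\mathtt{d}_{j,l}^*$ of the full expression, while you apply $\mathtt{d}_{j,l}^*$ first and simplify; these are two directions of the same equality.
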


\begin{proof}
	Suppose that we have $(\varphi_I)_{I\in \mathcal{I}} \in \twc{k}^{*,q}_{\alpha;\alpha\beta}$ such that $\varphi_{i_0 \cdots i_l } \in \mathcal{A}^q(\simplex_l) \otimes \bva{k}^*_\alpha(U_{i_0\cdots i_l})$ and $\varphi_{i_0 \cdots \widehat{i_j} \cdots i_l} = \mathtt{d}_{j,l}^*(\varphi_{i_0\cdots i_l})$. Letting
	$
	(\glue{k}_{\alpha\beta}\varphi)_{i_0\cdots i_l} := (\exp([\sauto{k}_{\alpha\beta, i_0 \cdots i_l},\cdot]) \circ \glue{k}_{\alpha\beta,i_0})(\varphi_{i_0\cdots i_l}),
	$
	we have 
	\begin{align*}
	(\glue{k}_{\alpha\beta}\varphi)_{i_0\cdots \widehat{i_j} \cdots i_l} 
	& = (\exp([\sauto{k}_{\alpha\beta, i_0 \cdots \widehat{i_j} \cdots  i_l},\cdot]) \circ \glue{k}_{\alpha\beta,i_0})(\varphi_{i_0\cdots \widehat{i_j} \cdots i_l})\\
	& = (\exp([\mathtt{d}_{j,n}^*(\sauto{k}_{\alpha\beta, i_0  \cdots  i_l}),\cdot]) \circ \glue{k}_{\alpha\beta,i_0})(\mathtt{d}_{j,n}^*(\varphi_{i_0 \cdots i_l}))\\
	& = \mathtt{d}_{j,n}^*((\exp([\sauto{k}_{\alpha\beta, i_0  \cdots  i_l},\cdot]) \circ \glue{k}_{\alpha\beta,i_0})(\varphi_{i_0 \cdots i_l})),
	\end{align*}
	and 
	\begin{align*}
	(\glue{k}_{\alpha\beta}\varphi)_{\widehat{i_0} \cdots i_l} 
	& = (\exp([\sauto{k}_{\alpha\beta, \widehat{i_0} \cdots  i_l},\cdot]) \circ \glue{k}_{\alpha\beta,i_1})(\varphi_{ \widehat{i_0} \cdots i_l})\\
	& = (\exp([\sauto{k}_{\alpha\beta, \widehat{i_0} \cdots  i_l},\cdot]) \circ \exp([\autoij{k}_{\alpha\beta,i_0 i_1},\cdot]) \circ \glue{k}_{\alpha\beta,i_0})(\varphi_{ \widehat{i_0} \cdots i_l})\\
	& = (\exp([\mathtt{d}_{0,n}^*(\sauto{k}_{\alpha\beta, i_0  \cdots  i_l}),\cdot]) \circ \glue{k}_{\alpha\beta,i_0})(\mathtt{d}_{0,n}^*(\varphi_{i_0 \cdots i_l}))\\
	& = \mathtt{d}_{0,n}^*((\exp([\sauto{k}_{\alpha\beta, i_0  \cdots  i_l},\cdot]) \circ \glue{k}_{\alpha\beta,i_0})(\varphi_{i_0 \cdots i_l})),
	\end{align*}
	which are the required conditions for $(\glue{k}_{\alpha\beta}\varphi)_{I} \in \twc{k}^{*,*}_{\beta;\alpha\beta}$. 
\end{proof}

Given a multi-index $(\alpha_0 \cdots \alpha_\ell)$, we have, for each $j=0,\dots,\ell$, a natural restriction map
\begin{equation}\label{eqn:thom_whitney_restriction_map}
\restmap_{\alpha_j} : \twc{k}^{*,*}_{\alpha_i; \alpha_0 \cdots \widehat{\alpha_j} \cdots \alpha_\ell} \rightarrow  \twc{k}^{*,*}_{\alpha_i; \alpha_0 \cdots \alpha_\ell} ,
\end{equation}
defined componentwise by
$$
\restmap_{\alpha_j} \Big( (\varphi_{I})_{I \in \mathcal{I}} \Big) = (\varphi_I)_{I \in \mathcal{I}'}
$$
for $(\varphi_I)_{I\in \mathcal{I}} \in \twc{k}^{*,*}_{\alpha_i; \alpha_0 \cdots \widehat{\alpha_j} \cdots \alpha_\ell}$, where $\mathcal{I}' = \{ (i_0,\dots,i_l) \in \mathcal{I} \mid U_{i_j} \subset V_{\alpha_0\cdots \alpha_{\ell}} \}$. The map $\restmap_{\alpha_j}$ is a morphism of dgBV algebras.

Now for a triple $V_\alpha, V_\beta, V_\gamma \in \mathcal{V}$, we define the {\em restriction of $\glue{k}_{\alpha \beta}$ to $\twc{k}^{*,*}_{\alpha;\alpha\beta\gamma}$} as the unique map $\glue{k}_{\alpha \beta}:\twc{k}^{*,*}_{\alpha;\alpha\beta\gamma} \to \twc{k}^{*,*}_{\beta;\alpha\beta\gamma}$ that fits into the diagram
$$
\xymatrix@1{ \twc{k}^{*,*}_{\alpha;\alpha\beta} \ar[r]^{\restmap_\gamma} \ar[d]^{\glue{k}_{\alpha \beta}} & \twc{k}^{*,*}_{\alpha;\alpha\beta\gamma} \ar[d]^{\glue{k}_{\alpha \beta}}\\
	\twc{k}^{*,*}_{\beta;\alpha\beta} \ar[r]^{\restmap_\gamma} & \twc{k}^{*,*}_{\beta;\alpha\beta\gamma}.}
$$

\begin{definition}\label{def:compatible_gluing_morphism}
	The morphisms $\{\glue{k}_{\alpha \beta}\}$ satisfying Condition \ref{assum:induction_hypothesis} are said to form {\em a set of compatible gluing morphisms} if in addition the following conditions are satisfied:
	\begin{enumerate}
		\item $\glue{0}_{\alpha\beta} = \text{id}$ for all $\alpha, \beta$;
		
		\item (compatibility between different orders) for each $k \in \inte_{\geq 0}$ and any pair $V_\alpha, V_\beta \in \mathcal{V}$,
		\begin{equation}\label{eqn:g_alpha_beta_order_compatibility}
		\glue{k}_{\alpha \beta} \circ \rest{k+1,k}_{\alpha} = \rest{k+1,k}_{\beta} \circ \glue{k+1}_{\alpha \beta};
		\end{equation}
		
		\item (cocycle condition) for each $k \in \inte_{\geq 0}$ and any triple $V_\alpha, V_\beta, V_\gamma \in \mathcal{V}$,
		\begin{equation}\label{eqn:g_alpha_beta_cocycle}
		\glue{k}_{\gamma \alpha} \circ \glue{k}_{\beta \gamma} \circ \glue{k}_{\alpha \beta} = \text{id}
		\end{equation}
		when $\glue{k}_{\alpha \beta}$, $\glue{k}_{\beta \gamma}$ and $\glue{k}_{\gamma \alpha}$ are restricted to $\twc{k}^{*,*}_{\alpha;\alpha \beta \gamma}$, $\twc{k}^{*,*}_{\beta;\alpha \beta \gamma}$ and $\twc{k}^{*,*}_{\gamma;\alpha \beta \gamma}$ respectively. 
	\end{enumerate}
\end{definition}

\begin{theorem}\label{lem:existence_compatible_gluing_data}
	There exists a set of compatible gluing morphisms $\{\glue{k}_{\alpha \beta}\}$.
\end{theorem}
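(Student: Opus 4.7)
The proof proceeds by induction on the order $k$, with the base case $k=0$ being trivial: setting $\glue{0}_{\alpha\beta} \equiv \mathrm{id}$ works because $\patch{0}_{\alpha\beta,i} \equiv \mathrm{id}$. For the inductive step, assume compatible gluings $\glue{k}_{\alpha\beta}$ satisfying Condition \ref{assum:induction_hypothesis} have been constructed; one must produce the data $\iauto{k+1}_{\alpha\beta,i}$ at simplex dimension $l=0$ and $\sauto{k+1}_{\alpha\beta, i_0 \cdots i_l}$ for $l \geq 1$ defining $\glue{k+1}_{\alpha\beta}$, each reducing modulo $\mathbf{m}^{k+1}$ to the order-$k$ data. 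The order-compatibility \eqref{eqn:g_alpha_beta_order_compatibility} is then automatic, so the task reduces to arranging the cocycle \eqref{eqn:g_alpha_beta_cocycle}.

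At the vertex level $l=0$, for each $U_i \subset V_{\alpha\beta}$ I would first choose an arbitrary lift $\reallywidetilde{\iauto{k+1}_{\alpha\beta,i}} \in \bva{k+1}^{-1}_\beta(U_i)$ of $\iauto{k}_{\alpha\beta,i}$, which exists because $\rest{k+1,k}_\beta$ is surjective and $U_i$ is Stein (so sections extend by Cartan's Theorem B applied to the kernel sheaf). Set provisionally $\reallywidetilde{\glue{k+1}_{\alpha\beta,i}} := \exp([\reallywidetilde{\iauto{k+1}_{\alpha\beta,i}}, \cdot]) \circ \patch{k+1}_{\alpha\beta,i}$. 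Combining \eqref{eqn:higher_order_cocycle_different} with the order-$k$ cocycle, the failure of this provisional cocycle on $U_i \subset V_{\alpha\beta\gamma}$ takes the form $\exp([\thmcocyo{k+1}_{\alpha\beta\gamma,i},\cdot])$ for some element $\thmcocyo{k+1}_{\alpha\beta\gamma,i} \in \bva{k+1}^{-1}_\alpha(U_i)$ with $\thmcocyo{k+1}_{\alpha\beta\gamma,i} \equiv 0 \pmod{\mathbf{m}^{k+1}}$. Modulo $\mathbf{m}^{k+2}$ the Baker-Campbell-Hausdorff product linearises, so $(\thmcocyo{k+1}_{\alpha\beta\gamma,i})_{\alpha\beta\gamma}$ becomes a \v{C}ech $2$-cocycle valued in the coherent sheaf $\bva{0}^{-1} \otimes (\mathbf{m}^{k+1}/\mathbf{m}^{k+2})$ over the Stein open $U_i$. \v{C}ech acyclicity (Theorem \ref{thm:cartan_theorem_B}) then provides a $1$-coboundary, whose components are absorbed into the lifts to yield $\iauto{k+1}_{\alpha\beta,i}$ satisfying \eqref{eqn:g_alpha_beta_cocycle}.

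For $l \geq 1$, proceed by a sub-induction on the simplex dimension. The boundary relations \eqref{eqn:alpha_beta_gluing_explicit_form_on_simplex_boundary_relation} and the lower-dimensional data prescribe a boundary value in $\mathcal{A}^0(\simplexbdy_l) \otimes \bva{k+1}^{-1}_\beta(U_{i_0 \cdots i_l})$; its simplicial consistency is verified using the face relations at level $l-1$ together with the $\autoij{k+1}$-data from Condition \ref{assum:induction_hypothesis}(3). Lemma \ref{lem:simplicial_de_rham_extension} furnishes an extension $\reallywidetilde{\sauto{k+1}_{\alpha\beta, i_0 \cdots i_l}}$ across $\simplex_l$, which we lift to order $k+1$ over the Stein intersection $U_{i_0 \cdots i_l}$ using surjectivity of $\rest{k+1,k}_\beta$. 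The cocycle obstruction at dimension $l$ restricts to zero on $\simplexbdy_l$ by the inductive hypothesis, so it lies in the subspace $\mathcal{A}^0_0(\simplex_l) \otimes \bva{0}^{-1}(U_{i_0 \cdots i_l}) \otimes (\mathbf{m}^{k+1}/\mathbf{m}^{k+2})$ of forms vanishing on the boundary; Cartan B on the Stein open $U_{i_0 \cdots i_l}$ kills this obstruction exactly as in the vertex case.

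The main obstacle is the bookkeeping: verifying that every obstruction really descends to the $0^{\text{th}}$-order coherent sheaf $\bva{0}^{-1}$ (so that Cartan B applies) and that the various BCH rearrangements preserve the $\mathbf{m}$-adic filtration. Both ultimately rest on the injectivity of $\bva{k+1}^{-1}_\alpha \hookrightarrow \mathrm{Der}(\bva{k+1}^0_\alpha)$ from Definition \ref{def:higher_order_data}(3), which guarantees that an exponentiated Lie action equals the identity if and only if its generator vanishes; this is what lets one identify the order-$(k+1)$ obstruction with a well-defined element of the $0^{\text{th}}$-order sheaf and thereby bring the acyclicity machinery to bear.
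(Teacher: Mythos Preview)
Your architecture matches the paper's --- induction on $k$, sub-induction on the simplex dimension $l$, obstruction landing in $\bva{0}^{-1}\otimes(\mathbf{m}^{k+1}/\mathbf{m}^{k+2})$, then killed by a \v{C}ech argument --- but there is a real gap at the $l=0$ step. You assert that once your lifts reduce modulo $\mathbf{m}^{k+1}$ to the order-$k$ data, the compatibility \eqref{eqn:g_alpha_beta_order_compatibility} is automatic. It is not: by \eqref{eqn:bva_different_order_comparison} the patching isomorphisms themselves fail to commute with restriction, namely $\rest{k+1,k}_\beta \circ \patch{k+1}_{\alpha\beta,i} = \patch{k}_{\alpha\beta,i}\circ \exp([\resta{k+1,k}_{\alpha\beta,i},\cdot])\circ \rest{k+1,k}_\alpha$. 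With your naive lift one computes
\[
\rest{k+1,k}_\beta \circ \reallywidetilde{\glue{k+1}_{\alpha\beta,i}}
= \glue{k}_{\alpha\beta,i}\circ \exp([\resta{k+1,k}_{\alpha\beta,i},\cdot])\circ \rest{k+1,k}_\alpha,
\]
off from \eqref{eqn:g_alpha_beta_order_compatibility} by a nontrivial automorphism. Consequently the triple product $\reallywidetilde{\glue{k+1}_{\gamma\alpha,i}}\circ\reallywidetilde{\glue{k+1}_{\beta\gamma,i}}\circ\reallywidetilde{\glue{k+1}_{\alpha\beta,i}}$ does not reduce to the identity mod $\mathbf{m}^{k+1}$, so your obstruction $\thmcocyo{k+1}_{\alpha\beta\gamma,i}$ is only $\equiv 0$ mod $\mathbf{m}$, not mod $\mathbf{m}^{k+1}$, and the BCH linearisation collapses. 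The paper's Key Lemma \ref{lem:compatible_gluing_data} fixes this by lifting instead $\iauto{k}_{\alpha\beta,i}\bchprod\patch{k}_{\alpha\beta,i}(-\resta{k+1,k}_{\alpha\beta,i})$, which absorbs the twist and guarantees both \eqref{eqn:g_alpha_beta_order_compatibility} and $\thmcocyo{k+1}\equiv 0$ mod $\mathbf{m}^{k+1}$.

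A secondary point: the acyclicity you use is not Cartan's Theorem~B. The cocycle $(\thmcocyo{k+1}_{\alpha\beta\gamma,i})_{\alpha\beta\gamma}$ lives in the \v{C}ech complex for the cover $\{V_\alpha\cap U_{i_0\cdots i_l}\}_{\alpha\in I_{i_0\cdots i_l}}$, every member of which equals $U_{i_0\cdots i_l}$ itself; the relevant vanishing is Lemma~\ref{lem:cech_complex_of_a_point} (the \v{C}ech complex of a point with constant coefficients), not coherent acyclicity on a Stein open. Cartan~B enters only to make the sheaf surjections $\rest{k+1,k}$ surjective on sections.
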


We will construct the gluing morphisms $\glue{k}_{\alpha \beta}$'s inductively. To do so, we need a couple of lemmas.

\begin{lemma}\label{lem:cech_complex_of_a_point}
	Fixing $U_{i_0},\dots,U_{i_l} \in \mathcal{U}$ and $-d \leq j \leq 0$, we consider the index set $I_{i_0\cdots i_l }:= \{ \alpha \mid U_{i_r} \subset V_\alpha\ \text{for all $0 \leq r \leq l$} \}$ and the following trivial \v{C}ech complex $\check{\mathcal{C}}^*(I_{i_0 \cdots i_l},\bva{0}^j)$ associated to the vector space $\bva{0}^j(U_{i_0 \cdots i_l})$ given by
	$$
	\prod_{\alpha \in I_{i_0 \cdots i_l}} \bva{0}^j(U_{i_0 \cdots i_l}) 
	\rightarrow \prod_{\alpha, \beta \in I_{i_0 \cdots i_l}} \bva{0}^j(U_{i_0 \cdots i_l}) \rightarrow \prod_{\alpha,\beta,\gamma \in I_{i_0 \cdots i_l}}\bva{0}^j(U_{i_0 \cdots i_l})  \cdots ,
	$$
	where each arrow is the \v{C}ech differential associated to the index set $I_{i_0 \cdots i_l}$. We have 
	$$H^{>0}(\check{\mathcal{C}}(I_{i_0 \cdots i_l},\bva{0}^j)) = 0\text{ and } H^{0}(\check{\mathcal{C}}(I_{i_0 \cdots i_l},\bva{0}^j)) = \bva{0}^{j}(U_{i_0 \cdots i_l});$$
	the same holds for $\bva{0}^j \otimes \mathbb{V}$ for any vector space $\mathbb{V}$.
\end{lemma}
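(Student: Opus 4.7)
The key observation is that for every multi-index $(\alpha_0, \ldots, \alpha_k)$ with each $\alpha_r \in I_{i_0\cdots i_l}$, the definition of $I_{i_0\cdots i_l}$ forces $U_{i_0\cdots i_l}\subset V_{\alpha_r}$ for all $r$, hence $U_{i_0\cdots i_l}\cap V_{\alpha_0\cdots \alpha_k}= U_{i_0\cdots i_l}$. Therefore the complex in question is just the ``constant coefficient'' Čech-type complex
$$
0 \longrightarrow \prod_{\alpha_0\in I} M \longrightarrow \prod_{\alpha_0,\alpha_1\in I} M \longrightarrow \prod_{\alpha_0,\alpha_1,\alpha_2\in I} M \longrightarrow \cdots,
$$
with $M := \bva{0}^j(U_{i_0\cdots i_l})$ and $I := I_{i_0\cdots i_l}$, whose differential is the usual alternating sum of omitting indices. (The statement is of course only non-trivial when $I$ is non-empty, which is automatic in every application: the multi-index $(i_0, \ldots, i_l)$ only arises after we have already fixed some $V_\alpha \supset U_{i_0\cdots i_l}$.)

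The plan is then the standard simplex-contractibility argument. I would fix once and for all an element $\alpha_* \in I$ and define a degree $-1$ operator $h : \check{\mathcal{C}}^k(I, \bva{0}^j) \to \check{\mathcal{C}}^{k-1}(I,\bva{0}^j)$ by
$$
h(\phi)_{\alpha_1,\ldots,\alpha_k} := \phi_{\alpha_*,\alpha_1,\ldots,\alpha_k}.
$$
A routine index bookkeeping then gives $\delta h + h \delta = \mathrm{id}$ on $\check{\mathcal{C}}^k$ for $k \geq 1$, and $h\delta = \mathrm{id} - \mathrm{ev}_{\alpha_*}$ on $\check{\mathcal{C}}^0$, where $\mathrm{ev}_{\alpha_*}$ is evaluation at $\alpha_*$. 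Hence $H^{>0}=0$, and $H^0$ consists of $\phi = (\phi_\alpha)_\alpha$ satisfying $\phi_\alpha = \phi_\beta$ for all $\alpha,\beta\in I$, which is canonically identified with $M = \bva{0}^j(U_{i_0\cdots i_l})$ via evaluation at $\alpha_*$ (independently of the choice).

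The extension to coefficients $\bva{0}^j \otimes \mathbb{V}$ is immediate: the entire complex is one of $\comp$-vector spaces and the contraction $h$ is $\comp$-linear and independent of the sheaf, so tensoring everything with $\mathbb{V}$ over $\comp$ preserves exactness and the identification in degree $0$.

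There is no serious obstacle here; the only mild subtlety is to recognize that the appearance of the intersections $V_{\alpha_0\cdots\alpha_k}$ in the statement is a red herring, since every such intersection is forced to contain $U_{i_0\cdots i_l}$ by the very definition of the index set $I_{i_0\cdots i_l}$. Once that is noted, the lemma reduces to the contractibility of a full simplex with constant coefficients.
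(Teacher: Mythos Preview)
Your proof is correct and takes essentially the same approach as the paper: both hinge on the observation that $U_{i_0\cdots i_l}\cap V_{\alpha_0\cdots\alpha_k}=U_{i_0\cdots i_l}$, reducing the complex to a constant-coefficient \v{C}ech complex. The paper phrases the conclusion as ``\v{C}ech cohomology of a point with a constant sheaf,'' while you unfold this by writing down the standard contracting homotopy; these are the same argument at slightly different levels of abstraction.
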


\begin{proof}
	We consider the topological space $\text{pt}$ consisting of a single point and an indexed cover $(\hat{V}_\alpha)_{\alpha \in I_{i_0 \cdots i_l}}$ such that $\hat{V}_\alpha = \text{pt}$ for each $\alpha$. Then we take a constant sheaf $\digamma$ over $\text{pt}$ with $\digamma(\text{pt}) = \bva{0}^j(U_{i_0 \cdots i_l})$. Since $ \bva{0}^j(U_{i_0 \cdots i_l}) = \digamma(V_{\alpha_0 \cdots \alpha_\ell})$ for any $\alpha_0 ,\dots ,\alpha_\ell \in I_{i_0 \cdots i_l}$, we have a natural isomorphism $\check{\mathcal{C}}^*(I_{i_0 \cdots i_l},\bva{0}^j) \cong \check{\mathcal{C}}^*(I_{i_0 \cdots i_l},\digamma)$. The result then follows by considering the \v{C}ech cohomology of $\text{pt}$.
\end{proof}

\begin{lemma}[Lifting Lemma]\label{lem:simplex_lifting_lemma}
	Let $\rest{} : \mathcal{F} \rightarrow \mathcal{H}$ be a surjective morphism of sheaves over $V := V_{\alpha_0 \cdots \alpha_{\ell}}$. For a Stein open subset $U := U_{i_0 \cdots i_l} \subset V$, let $\mathtt{w} \in \mathcal{A}^{q}(\simplex_l) \otimes \mathcal{H}(U)$ and $\partial (\mathtt{v}) \in \mathcal{A}^{q}(\simplexbdy_{l}) \otimes \mathcal{F}(U)$ such that $\rest{}(\partial (\mathtt{v})) = \mathtt{w}|_{\simplexbdy_{l}}$. Then there exists $\mathtt{v} \in \mathcal{A}^{q}(\simplex_l) \otimes \mathcal{F}(U)$  such that 
	$\mathtt{v}|_{\simplexbdy_{l}} =\partial( \mathtt{v})$ and $ \rest{}(\mathtt{v}) = \mathtt{w}$. The same holds if $\mathcal{A}^{q}(\simplex_l)$ and $\mathcal{A}^q(\simplexbdy_l)$ are replaced by $\mathcal{A}^q(\hsimplex_l)$ and $\mathcal{A}^q(\hsimplexbdy_l)$ respectively.
	\end{lemma}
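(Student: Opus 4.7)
The statement is a straightforward combination of (i) surjectivity of sections of a surjection of sheaves over a Stein base and (ii) the extension property of polynomial forms on the simplex (Lemma \ref{lem:simplicial_de_rham_extension}, or Lemma \ref{lem:homotopy_simplicial_de_rham_extension} in the homotopy case). My plan is therefore a two-step lift-then-correct argument.

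\textbf{Step 1: Lift $\mathtt{w}$ to $\simplex_l$ without worrying about the boundary.} Writing $\mathtt{w} = \sum_{\mu} \omega_{\mu} \otimes h_{\mu}$ with $\omega_\mu \in \mathcal{A}^q(\simplex_l)$ and $h_\mu \in \mathcal{H}(U)$ (a finite sum, after choosing a basis of $\mathcal{A}^q(\simplex_l)$), I use the fact that $U$ is Stein and the kernel $\mathcal{K} := \ker(\rest{})$ is coherent (which is the situation in which the lemma is invoked throughout \S \ref{sec:proof_of_compatible_gluing_data}) to apply Cartan's Theorem B (Theorem \ref{thm:cartan_theorem_B}): $H^{>0}(U,\mathcal{K}) = 0$, hence the map on sections $\mathcal{F}(U) \to \mathcal{H}(U)$ is surjective. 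Pick preimages $f_\mu \in \mathcal{F}(U)$ of $h_\mu$ and set $\tilde{\mathtt{v}} := \sum_\mu \omega_\mu \otimes f_\mu \in \mathcal{A}^q(\simplex_l) \otimes \mathcal{F}(U)$, so that $\rest{}(\tilde{\mathtt{v}}) = \mathtt{w}$.

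\textbf{Step 2: Correct on the boundary using a section of the restriction map.} Consider
\[
\eta \;:=\; \partial(\mathtt{v}) \;-\; \tilde{\mathtt{v}}\big|_{\simplexbdy_l} \;\in\; \mathcal{A}^q(\simplexbdy_l) \otimes \mathcal{F}(U).
\]
Applying $\rest{}$ kills $\eta$ because both terms restrict to $\mathtt{w}|_{\simplexbdy_l}$, so in fact $\eta \in \mathcal{A}^q(\simplexbdy_l) \otimes \mathcal{K}(U)$. By Lemma \ref{lem:simplicial_de_rham_extension}, the restriction map $\mathcal{A}^q(\simplex_l) \to \mathcal{A}^q(\simplexbdy_l)$ is a $\comp$-linear surjection, and since we are working over a field, it admits a $\comp$-linear section $\sigma$. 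Tensoring with $\mathcal{K}(U)$ produces $\sigma \otimes \mathrm{id} : \mathcal{A}^q(\simplexbdy_l) \otimes \mathcal{K}(U) \to \mathcal{A}^q(\simplex_l) \otimes \mathcal{K}(U)$, and I set $\mathtt{v}' := (\sigma \otimes \mathrm{id})(\eta)$. Then $\mathtt{v}'|_{\simplexbdy_l} = \eta$ and $\rest{}(\mathtt{v}') = 0$ because $\mathtt{v}'$ takes values in $\mathcal{K}(U)$. Finally put $\mathtt{v} := \tilde{\mathtt{v}} + \mathtt{v}'$; then $\rest{}(\mathtt{v}) = \mathtt{w}$ and $\mathtt{v}|_{\simplexbdy_l} = \tilde{\mathtt{v}}|_{\simplexbdy_l} + \eta = \partial(\mathtt{v})$, as required.

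\textbf{Homotopy simplex case and the main obstacle.} For the statement with $(\hsimplex_l, \hsimplexbdy_l)$ in place of $(\simplex_l, \simplexbdy_l)$, the argument is identical once Lemma \ref{lem:simplicial_de_rham_extension} is replaced by Lemma \ref{lem:homotopy_simplicial_de_rham_extension}. The only genuinely nontrivial ingredient in either case is the surjectivity of $\mathcal{F}(U) \to \mathcal{H}(U)$ on Stein opens; this is where the hypothesis that the sheaves we deal with are coherent is essential (Cartan's Theorem B applied to $\ker(\rest{})$). Everything else is formal linear algebra over $\comp$. I therefore do not expect any serious obstacle; the lemma is really a packaging of Cartan B together with the simplicial extension lemmas, set up so that it can be invoked uniformly in the inductive constructions of \S \ref{sec:proof_of_compatible_gluing_data} and the homotopy argument of Proposition \ref{lem:homotopy_compatible_gluing_morphism}.
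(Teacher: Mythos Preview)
Your proof is correct and uses the same two ingredients as the paper: surjectivity of $\mathcal{F}(U)\to\mathcal{H}(U)$ over a Stein open (Cartan B) and the simplicial extension Lemma~\ref{lem:simplicial_de_rham_extension}. The only difference is the order: the paper first extends $\partial(\mathtt{v})$ from $\simplexbdy_l$ to $\simplex_l$ in $\mathcal{F}(U)$ and then lifts the discrepancy (which vanishes on the boundary) through $\rest{}$, whereas you first lift $\mathtt{w}$ through $\rest{}$ and then extend the boundary discrepancy (which lies in $\ker\rest{}$); these are mirror versions of the same lift-and-correct argument.
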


\begin{proof}
	By Lemma \ref{lem:simplicial_de_rham_extension}, there is a lifting $\widehat{\mathtt{v}} \in \mathcal{A}^q(\simplex_l) \otimes \mathcal{F}(U)$ such that $\widehat{\mathtt{v}}|_{\simplexbdy_l} = \partial(\mathtt{v})$. Let
	$\mathsf{u} := \mathtt{w} - \rest{}(\widehat{\mathtt{v}}) \in \mathcal{A}^q_0(\simplex_l) \otimes \mathcal{H}(U)$, where $\mathcal{A}^q_0(\simplex_l)$ is the space of differential $q$-forms {\em whose restriction to $\simplexbdy_l$ is $0$}. Since $U$ is Stein, the map $\rest{}: \mathcal{F}(U) \rightarrow \mathcal{H}(U)$ is surjective. So we have a lifting $\widetilde{\mathsf{u}}$ of $\mathsf{u}$ to $\mathcal{A}^q_0(\simplex_l) \otimes \mathcal{F}(U)$. Now the element $\mathtt{v} := \widehat{\mathtt{v}}+ \widetilde{\mathsf{u}}$ satisfies the desired properties. The same proof applies to the case involving $\hsimplex_l$ and $\hsimplexbdy_l$.
\end{proof}

\begin{lemma}[Key Lemma]\label{lem:compatible_gluing_data}
	Suppose we are given a set of gluing morphisms $\{\glue{k}_{\alpha \beta}\}$ for some $k\geq 0$ satisfying Condition \ref{assum:induction_hypothesis} and the cocycle condition \eqref{eqn:g_alpha_beta_cocycle}. Then there exists a set of $\{\glue{k+1}_{\alpha \beta}\}$ satisfying Condition \ref{assum:induction_hypothesis} , the compatibility condition \eqref{eqn:g_alpha_beta_order_compatibility} as well as the cocycle condition \eqref{eqn:g_alpha_beta_cocycle}.
\end{lemma}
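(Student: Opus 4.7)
The plan is to construct each $\glue{k+1}_{\alpha\beta}$ in the form prescribed by Condition \ref{assum:induction_hypothesis}, by induction on the simplex dimension $l$, and then apply a \v{C}ech correction to enforce the cocycle condition \eqref{eqn:g_alpha_beta_cocycle}. The key ingredients will be $\cfrk{k+1}$-flatness of $\bva{k+1}^{-1}_\beta$, Stein-ness of the $U_{i_0\cdots i_l}$, the Lifting Lemma \ref{lem:simplex_lifting_lemma}, and the vanishing of \v{C}ech cohomology in Lemma \ref{lem:cech_complex_of_a_point}.

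For the base case $l=0$, on each Stein $U_i\subset V_{\alpha\beta}$, I would lift $\iauto{k}_{\alpha\beta,i}$ to an element $\iauto{k+1}_{\alpha\beta,i}\in\bva{k+1}^{-1}_\beta(U_i)$ using flatness (equivalently, local freeness of the coherent sheaf $\bva{k+1}^{-1}_\beta$ over $\cfrk{k+1}$) together with Cartan's Theorem B, and set $\glue{k+1}_{\alpha\beta,i} := \exp([\iauto{k+1}_{\alpha\beta,i},\cdot])\circ\patch{k+1}_{\alpha\beta,i}$. The connecting element $\autoij{k+1}_{\alpha\beta,i_0i_1}$ on $U_{i_0 i_1}$ exists because the composition $\glue{k+1}_{\alpha\beta,i_1}\circ(\glue{k+1}_{\alpha\beta,i_0})^{-1}$ has the form $\exp([\cdot,\cdot])$ after combining the liftings with $\patchij{k+1}_{\alpha\beta,i_0 i_1}$ from \eqref{eqn:higher_order_U_i_U_j_different} via BCH, and uniqueness follows from the injectivity $\bva{k+1}^{-1}\hookrightarrow\text{Der}(\bva{k+1}^0)$ as in Lemma \ref{lem:patching_uniqueness_lemma}. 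For the inductive step on $l$, I would assemble the required boundary datum on $\simplexbdy_l$ from the already-constructed $\sauto{k+1}_{\alpha\beta,\cdot}$ of lower simplex dimension via the prescriptions in \eqref{eqn:alpha_beta_gluing_explicit_form_on_simplex_boundary_relation}; consistency of this boundary datum across codimension-two faces of $\simplexbdy_l$ is the simplicial identity, which follows inductively. Then apply Lemma \ref{lem:simplex_lifting_lemma} with the surjection $\rest{k+1,k}_\beta$ to produce an intermediate lift $\reallywidetilde{\sauto{k+1}_{\alpha\beta,i_0\cdots i_l}}\in\mathcal{A}^0(\simplex_l)\otimes\bva{k+1}^{-1}_\beta(U_{i_0\cdots i_l})$ restricting to the prescribed boundary and reducing to $\sauto{k}_{\alpha\beta,i_0\cdots i_l}$ modulo $\mathbf{m}^{k+1}$.

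Let $\reallywidetilde{\glue{k+1}_{\alpha\beta}}$ denote the candidate morphism assembled from these intermediate lifts; by construction it satisfies Condition \ref{assum:induction_hypothesis} and the compatibility \eqref{eqn:g_alpha_beta_order_compatibility}, but it will generally fail the cocycle condition by an element
\[
\thmcocyo{k+1}_{\alpha\beta\gamma,i_0\cdots i_l}\in\mathcal{A}^0(\simplex_l)\otimes\bva{k+1}^{-1}_\alpha(U_{i_0\cdots i_l})
\]
defined by $\exp([\thmcocyo{k+1}_{\alpha\beta\gamma,i_0\cdots i_l},\cdot]) = \reallywidetilde{\glue{k+1}_{\gamma\alpha}}\circ\reallywidetilde{\glue{k+1}_{\beta\gamma}}\circ\reallywidetilde{\glue{k+1}_{\alpha\beta}}$ (restricted to $\twc{k+1}^{*,*}_{\alpha;\alpha\beta\gamma}$). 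By the inductive cocycle hypothesis at order $k$, $\thmcocyo{k+1}_{\alpha\beta\gamma,i_0\cdots i_l}\equiv 0$ mod $\mathbf{m}^{k+1}$; and by the boundary identities enforced on $\reallywidetilde{\sauto{k+1}}$, it vanishes along $\simplexbdy_l$. Viewed as $(\alpha,\beta,\gamma)$ varies, it is therefore a \v{C}ech $2$-cocycle in $\check{\mathcal{C}}^*(I_{i_0\cdots i_l},\bva{0}^{-1})\otimes(\mathbf{m}^{k+1}/\mathbf{m}^{k+2})\otimes\mathcal{A}^0_0(\simplex_l)$. Lemma \ref{lem:cech_complex_of_a_point} then furnishes a \v{C}ech primitive $\prescript{k+1}{}{\mathtt{c}}_{\alpha\beta,i_0\cdots i_l}$, and replacing $\reallywidetilde{\sauto{k+1}_{\alpha\beta,i_0\cdots i_l}}$ by $\prescript{k+1}{}{\mathtt{c}}_{\alpha\beta,i_0\cdots i_l}\bchprod\reallywidetilde{\sauto{k+1}_{\alpha\beta,i_0\cdots i_l}}$ restores the cocycle condition without disturbing the other constraints, since the correction vanishes on $\simplexbdy_l$ and modulo $\mathbf{m}^{k+1}$.

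The main obstacle will be the delicate bookkeeping required to verify that $\thmcocyo{k+1}_{\alpha\beta\gamma,i_0\cdots i_l}$ is genuinely a \v{C}ech cocycle, i.e.\ alternating in $\alpha,\beta,\gamma$ and $\delta$-closed; this reduces, via the injectivity of the adjoint representation, to explicit BCH identities among the $\sauto{}$, $\autoij{}$, and $\iauto{}$ elements together with the input relations \eqref{eqn:higher_order_U_i_U_j_different} and \eqref{eqn:higher_order_cocycle_different} and the uniqueness statement of Lemma \ref{lem:patching_uniqueness_lemma}. Choosing the primitives $\prescript{k+1}{}{\mathtt{c}}_{\alpha\beta,i_0\cdots i_l}$ compatibly across simplex dimensions requires a secondary induction on $l$ invoking Lemma \ref{lem:simplex_lifting_lemma} once more to extend the chosen \v{C}ech primitive from $\simplexbdy_l$ to $\simplex_l$; beyond this, all steps are standard obstruction-theoretic manipulations.
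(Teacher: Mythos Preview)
Your overall strategy matches the paper's: induct on the simplex dimension $l$, lift to order $k{+}1$ via the Lifting Lemma, measure the cocycle defect as a \v{C}ech $2$-cocycle with coefficients in $\bva{0}^{-1}\otimes(\mathbf{m}^{k+1}/\mathbf{m}^{k+2})$, and kill it using Lemma~\ref{lem:cech_complex_of_a_point}. The induction step $l>0$ is essentially what the paper does.

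There is, however, a genuine gap in your base case $l=0$. You propose to lift $\iauto{k}_{\alpha\beta,i}$ to some $\iauto{k+1}_{\alpha\beta,i}\in\bva{k+1}^{-1}_\beta(U_i)$ and set $\glue{k+1}_{\alpha\beta,i}:=\exp([\iauto{k+1}_{\alpha\beta,i},\cdot])\circ\patch{k+1}_{\alpha\beta,i}$. But this does \emph{not} satisfy the order-compatibility \eqref{eqn:g_alpha_beta_order_compatibility}. Indeed, by \eqref{eqn:bva_different_order_comparison} the reductions $\patch{k+1}_{\alpha\beta,i}$ and $\patch{k}_{\alpha\beta,i}$ are intertwined by $\rest{k+1,k}$ only up to the automorphism $\exp([\resta{k+1,k}_{\alpha\beta,i},\cdot])$, so your candidate yields
\[
\rest{k+1,k}_\beta\circ\glue{k+1}_{\alpha\beta,i}
=\glue{k}_{\alpha\beta,i}\circ\exp\big([\resta{k+1,k}_{\alpha\beta,i},\cdot]\big)\circ\rest{k+1,k}_\alpha,
\]
which differs from $\glue{k}_{\alpha\beta,i}\circ\rest{k+1,k}_\alpha$ whenever $\resta{k+1,k}_{\alpha\beta,i}\neq 0$. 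The paper absorbs this twist by lifting not $\iauto{k}_{\alpha\beta,i}$ but the BCH product $\iauto{k}_{\alpha\beta,i}\bchprod\big(\patch{k}_{\alpha\beta,i}(-\resta{k+1,k}_{\alpha\beta,i})\big)$; you need to do something equivalent.

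A second, more structural point: you assert that the obstruction $\thmcocyo{k+1}_{\alpha\beta\gamma,i_0\cdots i_l}$ ``vanishes along $\simplexbdy_l$'' and then speak of a ``secondary induction on $l$'' to choose \v{C}ech primitives compatibly. These are in tension. The restriction of the obstruction to a face of $\simplex_l$ equals the obstruction built from the lower-dimensional $\reallywidetilde{\sauto{k+1}}$'s, which is zero only if those have \emph{already} been cocycle-corrected. The paper therefore interleaves the lift and the \v{C}ech correction within the single induction on $l$: at each step one corrects first, so the boundary data fed into the next step already satisfies the cocycle condition and the new obstruction genuinely lies in $\mathcal{A}^0_0(\simplex_l)\otimes\bva{0}^{-1}\otimes(\mathbf{m}^{k+1}/\mathbf{m}^{k+2})$. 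Your two-pass scheme can be made to work, but not with the boundary-vanishing claim as stated; it is cleaner to follow the paper's simultaneous induction.
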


\begin{proof}
	We will prove by induction on $l$ where $l = |I| - 1$ for a multi-index $I = (i_0, \ldots, i_l) \in \mathcal{I}$.
	
	For $l = 0$, we fix $i = i_0$. From \eqref{eqn:alpha_beta_gluing_explicit_form_at_point} in Condition \ref{assum:induction_hypothesis}, we have $\glue{k}_{\alpha \beta, i} = \exp([ \iauto{k}_{\alpha\beta,i}, \cdot ]) \circ \patch{k}_{\alpha\beta,i}$ for some $\iauto{k}_{\alpha\beta,i}  \in \bva{k}^{-1}_{\beta}(U_i)$. Also, from \eqref{eqn:bva_different_order_comparison} in Definition \ref{def:higher_order_patching}, there exist elements $\resta{k+1,k}_{\alpha\beta,i} \in \bva{k}_{\alpha}^{-1}(U_i)$ such that
	\begin{multline*}
	\rest{k+1,k}_{\beta} \circ \patch{k+1}_{\alpha\beta,i} \stackrel{(\ref{eqn:bva_different_order_comparison})}{=} \patch{k}_{\alpha \beta,i} \circ \exp([\resta{k+1,k}_{\alpha \beta,i},\cdot]) \circ \rest{k+1,k}_{\alpha}\\
	= \exp([\patch{k}_{\alpha \beta,i}(\resta{k+1,k}_{\alpha \beta,i}),\cdot] ) \circ \patch{k}_{\alpha \beta,i} \circ \rest{k+1,k}_{\alpha},
	\end{multline*}
	where we use the fact that $\patch{k}_{\alpha \beta,i}$ is an isomorphism preserving the Lie bracket $[\cdot,\cdot]$.
	Therefore we have
	$
	\glue{k}_{\alpha\beta,i} \circ \rest{k+1,k}_{\alpha} = \exp([ \iauto{k}_{\alpha\beta,i}, \cdot ]) \circ \exp(-[(\patch{k}_{\alpha \beta,i})(\resta{k+1,k}_{\alpha \beta,i}),\cdot]) \circ  \rest{k+1,k}_{\beta} \circ \patch{k+1}_{\alpha\beta,i}.
	$
	Taking a lifting $\Upsilon_{\alpha \beta , i}$ of the term $\iauto{k}_{\alpha\beta,i} \bchprod \left(\patch{k}_{\alpha \beta,i}(-\resta{k+1,k}_{\alpha \beta,i})\right)$ from $\bva{k}^{-1}_\beta(U_i)$ to $\bva{k+1}^{-1}_\beta(U_i)$ in the above equation (using the surjectivity of the map $\rest{k+1,k}_{\alpha}:\bva{k+1}_{\alpha}^* \rightarrow \bva{k}_{\alpha}^*$), we define a lifting of $\glue{k}_{\alpha \beta,i}$:
	$$\widetilde{\glue{k+1}_{\alpha \beta, i}} := \exp([\Upsilon_{\alpha \beta,i} ,\cdot ]) \circ \patch{k+1}_{\alpha\beta,i} : \bva{k+1}^*_{\alpha}|_{U_i} \rightarrow \bva{k+1}^*_{\beta}|_{U_i}.$$

	As endomorphisms of $\bva{k+1}^*_\alpha$, we have
	$
	\widetilde{\glue{k+1}_{\gamma \alpha, i}} \circ \widetilde{\glue{k+1}_{ \beta \gamma, i}} \circ \widetilde{\glue{k+1}_{\alpha \beta, i}} = \exp\left([\thmcocyo{k+1}_{\alpha \beta\gamma,i}, \cdot ]\right) 
	$
	for some $\thmcocyo{k+1}_{\alpha \beta\gamma,i} \in \bva{k+1}^{-1}_{\alpha}(U_i)$. Now $\widetilde{\glue{k+1}_{\gamma \alpha, i}} \circ \widetilde{\glue{k+1}_{ \beta \gamma, i}} \circ \widetilde{\glue{k+1}_{\alpha \beta, i}} = \text{id} \  \text{(mod $\mathbf{m}^{k+1}$)}$, so we have $\exp([\thmcocyo{k+1}_{\alpha \beta\gamma,i}, \cdot ])(v) = v \ \text{(mod $\mathbf{m}^{k+1}$)}$ for all $v \in \bva{k}^*_{\alpha}(U_i)$. Therefore $\thmcocyo{k+1}_{\alpha \beta\gamma,i} = 0 \ \text{(mod $\mathbf{m}^{k+1}$)}$ as the map $\bva{k}^{-1}_\alpha \rightarrow \text{Der}(\bva{k}^0_\alpha)$ is injective (see Definition \ref{def:higher_order_data}). 
	Since every stalk $(\bva{k+1}^*_\alpha)_x$ is a free $\cfrk{k+1}$-module and $\rest{k+1,0}_{\alpha}$ induces a sheaf isomorphism upon tensoring with the residue field $\cfrk{0} \cong \comp$ (over $\cfrk{k+1}$), we have a sheaf isomorphism of $\cfrk{k+1}$ modules
	\begin{equation}\label{eqn:locally_free_identification}
	\bva{k+1}^*_\alpha \cong  \bva{0}^* \oplus \bigoplus_{j=1}^{k+1} \left((\mathbf{m}^{j}/\mathbf{m}^{j+1}) \otimes_{\comp} \bva{0}^*\right).
	\end{equation}
	Hence we have $\thmcocyo{k+1}_{\alpha \beta\gamma,i}  \in (\mathbf{m}^{k+1}/\mathbf{m}^{k+2}) \otimes_{\comp} \bva{0}^{-1}(U_i)$. 
	
	Now we consider the \v{C}ech complex $\check{C}^*(I_{i},\bva{0}^{-1}) \otimes_{\comp} (\mathbf{m}^{k+1}/\mathbf{m}^{k+2})$ as in Lemma \ref{lem:cech_complex_of_a_point}. The collection $(\thmcocyo{k+1}_{\alpha \beta\gamma,i})_{\alpha, \beta , \gamma \in I_{i}}$ is a $2$-cocycle in $\check{C}^2(I_{i},\bva{0}^{-1}) \otimes_{\comp} (\mathbf{m}^{k+1}/\mathbf{m}^{k+2})$. By Lemma \ref{lem:cech_complex_of_a_point} for the case $l=0$, there exists $(\prescript{k+1}{}{\mathsf{c}}_{\alpha \beta,i})_{\alpha \beta} \in \check{\mathcal{C}}^1(I_{i},\bva{0}^{-1}) \otimes_{\comp} (\mathbf{m}^{k+1}/\mathbf{m}^{k+2})$ whose image under the \v{C}ech differential is precisely $(\thmcocyo{k+1}_{\alpha \beta\gamma,i})_{\alpha\beta\gamma}$. By the identification \eqref{eqn:locally_free_identification}, we can regard $\prescript{k+1}{}{\mathsf{c}}_{\alpha \beta,i}$ as an element in $\bva{k+1}^{-1}_{\beta}(U_i)$ such that $\prescript{k+1}{}{\mathsf{c}}_{\alpha \beta,i} = 0 \ \text{(mod $\mathbf{m}^{k+1}$)}$. Therefore letting
	$
	\glue{k+1}_{\alpha\beta, i } : = \exp([\prescript{k+1}{}{\mathsf{c}}_{\alpha \beta,i},\cdot]) \circ \widetilde{\glue{k+1}_{\alpha \beta, i}},
	$
	we have the cocycle condition $\glue{k+1}_{\gamma\alpha,i} \circ \glue{k+1}_{\beta\gamma,i} \circ \glue{k+1}_{\alpha \beta,i} = \text{id}$.
	
	For the induction step, we assume that maps $\glue{k+1}_{\alpha\beta,i_0\cdots i_j}$ satisfying all the required conditions have been constructed for each multi-index $(i_0 \cdots i_j)$ with $j \leq l-1$. We shall construct $\glue{k+1}_{\alpha\beta,i_0\cdots i_l}$ for any multi-index $(i_0, \ldots, i_l)$. In view of Condition \ref{assum:induction_hypothesis}, what we need are elements $\sauto{k+1}_{\alpha\beta,i_0\cdots i_l} \in \mathcal{A}^0(\simplex_l) \otimes \bva{k+1}^{-1}_\beta(U_{i_0 \cdots i_l})$ satisfying \eqref{eqn:alpha_beta_gluing_explicit_form_on_simplex_boundary_relation} and the cocycle condition \eqref{eqn:g_alpha_beta_cocycle}, the latter of which can be written explicitly as
	\begin{multline*}
	\exp([\sauto{k+1}_{\gamma\alpha,i_0 \cdots i_l}, \cdot]) \circ  \glue{k+1}_{\gamma \alpha, i_0} \circ \exp([\sauto{k+1}_{\beta \gamma,i_0 \cdots i_l},\cdot]) 
	\circ\\ \glue{k+1}_{\beta \gamma, i_0} \circ   \exp([ \sauto{k+1}_{\alpha \beta,i_0 \cdots i_l},\cdot])   \circ \glue{k+1}_{\beta \alpha, i_0}\\
	= \exp([\sauto{k+1}_{\gamma\alpha,i_0 \cdots i_l},\cdot]) \circ \exp([\glue{k+1}_{\gamma \alpha, i_0}(\sauto{k+1}_{\beta \gamma,i_0 \cdots i_l}), \cdot])
	\\ \circ \exp([\glue{k+1}_{\beta \alpha, i_0}(\sauto{k+1}_{\alpha \beta,i_0 \cdots i_l}), \cdot]) = \text{id}.
	\end{multline*}

	Using the $\sauto{k+1}_{\alpha \beta,i_0 \cdots i_{l-1}}$'s that were defined previously, we want to define a lifting $\reallywidetilde{\sauto{k+1}_{\alpha \beta, i_0 \cdots i_l}}$ of the element $\sauto{k}_{\alpha\beta,i_0 \cdots i_l}$. Before that, we first define its restriction to the boundry $\simplexbdy_l$ as 
	\begin{multline*}
	\partial (\reallywidetilde{\sauto{k+1}_{\alpha\beta, i_0 \cdots  i_l}})  :=\\ (\sauto{k+1}_{\alpha\beta,\widehat{ i_0} \cdots i_l} \bchprod \autoij{k+1}_{\alpha\beta, i_0 i_1}, \sauto{k+1}_{\alpha\beta,i_0 \widehat{ i_1} \cdots i_l}, \dots, \sauto{k+1}_{\alpha\beta,i_0\cdots  \widehat{ i_l} } ),
	\end{multline*}
	where $\autoij{k+1}_{\alpha\beta, i_0 i_1}$ is defined in Condition \ref{assum:induction_hypothesis}. For $ 0 \leq r_1<r_2 \leq l$, we have
	\begin{multline}
	\mathtt{d}_{r_1,l-1}^*(\sauto{k+1}_{\alpha\beta,i_0 \cdots \widehat{ i_{r_2}} \cdots i_l})  \\
	=\begin{dcases}
	\sauto{k+1}_{\alpha\beta,i_0 \cdots \widehat{i_{r_1}}\cdots \widehat{ i_{r_2}} \cdots i_l}    =\mathtt{d}_{r_2-1,l-1}^*(\sauto{k+1}_{\alpha\beta,i_0 \cdots \widehat{ i_{r_1}} \cdots i_l})\\
	 \text{\hspace{8cm} if $r_1 \neq 0$,}\\
	\sauto{k+1}_{\alpha\beta,\widehat{i_0} \cdots \widehat{ i_{r_2}} \cdots i_l} \bchprod \autoij{k+1}_{\alpha\beta, i_0 i_1}  =\mathtt{d}_{r_2-1,l-1}^*(\sauto{k+1}_{\alpha\beta,\widehat{i_0}  \cdots i_l} \bchprod \autoij{k+1}_{\alpha\beta, i_0 i_1})\\  
	\text{\hspace{8cm} if $r_1 = 0, r_2 \neq 1$,}\\
	\sauto{k+1}_{\alpha\beta,\widehat{i_0} \widehat{i_1} \cdots  i_l} \bchprod \autoij{k+1}_{\alpha\beta, i_0 i_2}  =\mathtt{d}_{0,l-1}^*(\sauto{k+1}_{\alpha\beta,\widehat{i_0}  \cdots i_l} \bchprod \autoij{k+1}_{\alpha\beta, i_0 i_1})\\
	\text{\hspace{8cm} if $r_1 = 0, r_2 = 1$,}
	\end{dcases}
	\end{multline}
	upon restricting to $U_{i_0\dots i_l}$, where the last case follows from the identity $\autoij{k+1}_{\alpha\beta, i_1 i_2} \bchprod \autoij{k+1}_{\alpha\beta, i_0 i_1}  = \autoij{k+1}_{\alpha\beta, i_0 i_2}$, which in turn follows from the definition of $\autoij{k+1}_{\alpha\beta, i_0 i_1}$ in Condition \ref{assum:induction_hypothesis}. Therefore we have checked that 
	$$\partial (\reallywidetilde{\sauto{k+1}_{\alpha\beta, i_0 \cdots  i_l}}) \in \mathcal{A}^0(\simplexbdy_l) \otimes \bva{k+1}^{-1}_\beta(U_{i_0 \cdots i_l}).$$ By Lemma \ref{lem:simplex_lifting_lemma}, we obtain $\reallywidetilde{\sauto{k+1}_{\alpha \beta, i_0 \cdots i_l}} \in \mathcal{A}^0(\simplex_l) \otimes \bva{k+1}^{-1}_{\beta}(U_{i_0 \cdots i_l})$ satisfying
	\begin{align*}(\reallywidetilde{\sauto{k+1}_{\alpha \beta , i_0 \cdots i_l}})|_{\simplexbdy_l}  &= \partial (\reallywidetilde{\sauto{k+1}_{\alpha\beta, i_0 \cdots  i_l}}),\\
	\reallywidetilde{\sauto{k+1}_{\alpha \beta, i_0 \cdots i_l}} &= \sauto{k}_{\alpha\beta,i_0 \cdots i_l} \ \  \text{(mod $\mathbf{m}^{k+1}$)}.
	\end{align*}

	Therefore, we have an obstruction term $\thmcocyo{k+1}_{\alpha\beta\gamma,i_0\cdots i_l} \in \mathcal{A}^0(\simplex_l) \otimes \bva{k+1}^{-1}_\alpha(U_{i_0 \cdots i_l})$ given by
	\begin{multline*}
	\thmcocyo{k+1}_{\alpha\beta\gamma,i_0\cdots i_l} = \\
	\reallywidetilde{\sauto{k+1}_{\gamma\alpha,i_0 \cdots i_l}}\bchprod \glue{k+1}_{\gamma \alpha, i_0}(\reallywidetilde{\sauto{k+1}_{\beta \gamma,i_0 \cdots i_l}}) \bchprod \glue{k+1}_{\beta \alpha, i_0}(\reallywidetilde{\sauto{k+1}_{\alpha \beta,i_0 \cdots i_l}}) ,
	\end{multline*}
	which satisfies $\thmcocyo{k+1}_{\alpha\beta\gamma,i_0\cdots i_l} = 0 \ \text{(mod $\mathbf{m}^{k+1}$)}$. Direct computation gives
	$
	\exp([\mathtt{d}_{r,l}^*(\thmcocyo{k+1}_{\alpha\beta\gamma,i_0\cdots i_l}), \cdot] ) = \text{id}
	$
	for all $r = 0,\dots , l$. Using injectivity of $\bva{k}^{-1}_\alpha \rightarrow \text{Der}(\bva{k}^0_\alpha)$ we deduce $(\thmcocyo{k+1}_{\alpha\beta\gamma,i_0\cdots i_l}) |_{\simplexbdy_l} = 0$.
	
	Via \eqref{eqn:locally_free_identification} again, we may regard the term $\thmcocyo{k+1}_{\alpha\beta\gamma,i_0\cdots i_l}$ as lying in $\mathcal{A}^0_0(\simplex_l) \otimes \bva{0}^{-1}(U_{i_0 \cdots i_l}) \otimes (\mathbf{m}^{k+1}/\mathbf{m}^{k+2})$. By a similar argument as in the $l = 0$ case, we obtain an element $(\prescript{k+1}{}{\mathsf{c}}_{\alpha \beta,i_0\cdots i_l})_{\alpha \beta}$ whose image under the \v{C}ech differential is precisely $(\thmcocyo{k+1}_{\alpha\beta\gamma,i_0\cdots i_l})_{\alpha\beta\gamma}$ and such that $(\prescript{k+1}{}{\mathsf{c}}_{\alpha \beta,i_0\cdots i_l})|_{\simplexbdy_l} = 0$. Therefore setting
	$
	\sauto{k+1}_{\alpha\beta,i_0 \cdots i_l} := \prescript{k+1}{}{\mathsf{c}}_{\alpha \beta,i_0\cdots i_l} \bchprod \reallywidetilde{\sauto{k+1}_{\alpha\beta,i_0 \cdots i_l}}
	$
	solves the required cocycle condition \eqref{eqn:g_alpha_beta_cocycle}. We also have $\sauto{k+1}_{\alpha\beta,i_0 \cdots i_l} = \sauto{k}_{\alpha \beta, i_0 \cdots i_l} \ \text{(mod $\mathbf{m}^{k+1}$)}$ since $\prescript{k+1}{}{\mathsf{c}}_{\alpha \beta,i_0\cdots i_l} = 0 \ \text{(mod $\mathbf{m}^{k+1}$)}$ by our construction, and $(\sauto{k+1}_{\alpha\beta,i_0 \cdots i_l})|_{\simplexbdy_l} = \partial(\reallywidetilde{\sauto{k+1}_{\alpha\beta ,i_0 \cdots i_l}})$ which is the required compatibility condition \eqref{eqn:alpha_beta_gluing_explicit_form_on_simplex_boundary_relation}. This completes the proof of the lemma.
\end{proof}

\begin{proof}[Proof of Theorem \ref{lem:existence_compatible_gluing_data}]
	We prove by induction on the order $k$. For the initial case $k=0$, as $\bva{0}^*$ is globally defined on $X$ with $\bva{0}^*_\alpha = \bva{0}^*|_{V_\alpha}$ (see Definition \ref{def:higher_order_data}), we can (and have to) set $\glue{0}_{\alpha \beta , i } = \patch{0}_{\alpha\beta,i} = \text{id}$ and $\sauto{0}_{\alpha \beta , i_0 \cdots i_l} = 0$. The induction step is proven in the Key Lemma (Lemma \ref{lem:compatible_gluing_data}). 
\end{proof}

\subsubsection{Homotopy between two sets of gluing morphisms}\label{sec:homotopy_between_gluing_morphism}
The set of compatible gluing morphisms $\{\glue{k}_{\alpha\beta}\}$ constructed in Theorem \ref{lem:existence_compatible_gluing_data} is not unique (except for $k=0$). To understand the relation between two sets of such data, say, $\{\glue{k}_{\alpha\beta}(0)\}$ and $\{\glue{k}_{\alpha\beta}(1)\}$, we need, for each $k \in \inte_{\geq 0}$ and any pair $V_\alpha, V_\beta \in \mathcal{V}$, an isomorphism 
\begin{equation}\label{eqn:homotopy_alpha_beta_map}
\gluehom{k}_{\alpha \beta} :  \twc{k}^{*,*}_{\alpha;\alpha\beta }(\simplex_1) \rightarrow  \twc{k}^{*,*}_{\beta ; \alpha \beta}(\simplex_1),
\end{equation}
as a collection of maps $(\gluehom{k}_{\alpha\beta, I})_{I\in \mathcal{I}}$, such that
\begin{itemize}
\item for each $\varphi =(\varphi_I)_{I \in \mathcal{I}} \in  \twc{k}^{*,*}_{\alpha;\alpha \beta}(\simplex_1)$ with $\varphi_I \in \mathcal{A}^*(\simplex_1) \otimes \mathcal{A}^*(\simplex_l) \otimes \bva{k}^*_\alpha(U_I)$, we have $\left( \gluehom{k}_{\alpha\beta}(\varphi) \right)_I = (\gluehom{k}_{\alpha\beta, I} )(\varphi_I)$,

\item it preserves the algebraic structures $[\cdot,\cdot], \wedge$ obtained via tensoring with the dga $\mathcal{A}^*(\simplex_1)$, and

\item fits into the following commutative diagram
\begin{equation}\label{eqn:homotopy_alpha_beta_meaning}
	\xymatrix@1{\twc{k}^{*,*}_{\alpha;\alpha\beta } \ar[d]^{\glue{k}_{\alpha \beta}(0)} & &\ar[ll]_{\mathtt{r}_0^*}   \twc{k}^{*,*}_{\alpha;\alpha\beta }(\simplex_1) \ar[d]^{\gluehom{k}_{\alpha\beta}} \ar[rr]^{\mathtt{r}_1^*} & &  \twc{k}^{*,*}_{\alpha;\alpha\beta } \ar[d]^{\glue{k}_{\alpha\beta}(1)}\\
	\twc{k}^{*,*}_{\beta;\alpha\beta } &	&\ar[ll]_{\mathtt{r}_0^*}  \twc{k}^{*,*}_{\beta;\alpha\beta }(\simplex_1) \ar[rr]^{\mathtt{r}_1^*}& & \twc{k}^{*,*}_{\beta;\alpha\beta }}
\end{equation}
where $\twc{k}^{p,*}_{\alpha;\alpha\beta }(\simplex_1)$ is the Thom-Whitney complex constructed from the sheaf $\mathcal{A}^*(\simplex_1) \otimes \bva{k}^p_{\alpha}|_{V_{\alpha\beta}}$;
\end{itemize}
here the degree $*$ in $\twc{k}^{p,*}_{\alpha;\alpha\beta }(\simplex_1)$ refers to the total degree on $\mathcal{A}^*(\hsimplex_l)=\mathcal{A}^*(\simplex_1) \otimes \mathcal{A}^*(\simplex_l)$,
and $\mathtt{r}_j^*:\mathcal{A}^*(\hsimplex_l) \to \mathcal{A}^*(\simplex_l)$ is induced by the evaluation $\mathcal{A}^*(\simplex_1) \rightarrow \comp$ at $\mathtt{t}_j = 1$ for $j=0,1$ as in Notation \ref{not:de_rham_form_on_homotopy_simplex_boundary}. 
The isomorphisms $\gluehom{k}_{\alpha \beta}$'s are said to constitute a {\em homotopy from $\{\glue{k}_{\alpha\beta}(0)\}$ to $\{\glue{k}_{\alpha\beta}(1)\}$} if they further satisfy the following condition (cf. Condition \ref{assum:induction_hypothesis}):

\begin{condition}\label{cond:induction_hypothesis_for_homotopy}
	\begin{enumerate}
		\item for $U_i \subset V_\alpha \cap V_\beta$, we have 
		\begin{equation}\label{eqn:homotopy_alpha_beta_gluing_explicit_form_at_point}
		\gluehom{k}_{\alpha\beta, i} = \exp([ \hiauto{k}_{\alpha\beta,i}, \cdot ]) \circ \patch{k}_{\alpha\beta,i}
		\end{equation}
		for some $\hiauto{k}_{\alpha\beta,i} \in \mathcal{A}^0(\simplex_1) \otimes \bva{k}^{-1}_{\beta}(U_i)$ with $\hiauto{k}_{\alpha\beta,i} = 0 \ \text{(mod $\mathbf{m}$)}$;

		\item for $U_{i_0}, \dots, U_{i_l} \subset V_\alpha \cap V_{\beta}$, we have 
		\begin{equation}\label{eqn:homotopy_alpha_beta_gluing_explicit_form_on_simplex}
		\gluehom{k}_{\alpha\beta,i_0\cdots i_l} = \exp([\hsauto{k}_{\alpha\beta, i_0 \cdots i_l},\cdot]) \circ \left(\gluehom{k}_{\alpha\beta,i_0}|_{U_{i_0 \cdots i_l}}\right),
		\end{equation}
		for some element $\hsauto{k}_{\alpha\beta, i_0 \cdots i_l} \in\mathcal{A}^0(\simplex_1) \otimes \mathcal{A}^0(\simplex_l) \otimes \bva{k}^{-1}_\beta(U_{i_0 \cdots i_l})$ with $\hsauto{k}_{\alpha\beta, i_0 \cdots i_l} = 0 \ \text{(mod $\mathbf{m}$)}$;
		
		\item the elements $\hiauto{k}_{\alpha\beta,i}$'s satisfy the relation 
		\begin{equation}
		\mathtt{r}_j^*(\hiauto{k}_{\alpha\beta,i}) = \begin{dcases}
		\iauto{k}_{\alpha\beta,i}(0)& \text{for $j=0$,} \\
		\iauto{k}_{\alpha \beta ,i}(1)& \text{for $j=1$,} \\
		\end{dcases}
		\end{equation}
		where $\iauto{k}_{\alpha\beta,i}(j)$ is the element associated to $\glue{k}_{\alpha\beta,i}(j)$ as in  \eqref{eqn:alpha_beta_gluing_explicit_form_at_point};
		
		\item the elements $\hsauto{k}_{\alpha\beta,i_0 \cdots i_l}$'s satisfy the relation (cf. \eqref{eqn:alpha_beta_gluing_explicit_form_on_simplex_boundary_relation}):
		\begin{equation}\label{eqn:homotopy_alpha_beta_gluing_explicit_form_on_simplex_boundary_relation}
		\mathtt{d}_{j,l}^*( \hsauto{k}_{\alpha\beta, i_0 \cdots i_l})  =	\begin{dcases}
		\hsauto{k}_{\alpha\beta, i_0 \cdots \widehat{i_j} \cdots i_l} & \text{for $j>0$,}\\
		\hsauto{k}_{\alpha\beta, \widehat{i_0}  \cdots i_l} \bchprod \hautoij{k}_{\alpha\beta, i_0 i_1} & \text{for $j=0$,}
		\end{dcases}
		\end{equation}
		where $\hautoij{k}_{\alpha\beta, i_0 i_1} \in \mathcal{A}^0(\simplex_1) \otimes \bva{k}_{\beta}^{-1}(U_{i_0 i_1})$ is the unique element such that
		$\exp([\hautoij{k}_{\alpha\beta,i_0i_1}, \cdot]) \circ \gluehom{k}_{\alpha \beta, i_0} = \gluehom{k}_{\alpha \beta, i_1},$
		and the relation
		\begin{equation}\label{eqn:homotopy_alpha_beta_gluing_explicit_form_relation_on_interval}
		\mathtt{r}_j^*(\hsauto{k}_{\alpha\beta, i_0 \cdots i_l}) = \begin{dcases}
		\sauto{k}_{\alpha\beta,i_0 \cdots i_l}(0)& \text{for $j=0$,} \\
		\sauto{k}_{\alpha \beta ,i_0 \cdots i_l}(1)& \text{for $j=1$,} \\
		\end{dcases}
		\end{equation}
		where $\sauto{k}_{\alpha\beta,i_0 \cdots i_l}(j) \in \mathcal{A}^0(\simplex_l) \otimes \bva{k}^{-1}_\beta(U_{i_0 \cdots i_l})$ is the element associated to $\glue{k}_{\alpha\beta,i_0 \cdots i_l}(j)$ as in \eqref{eqn:alpha_beta_gluing_explicit_form_on_simplex} for $j=0,1$.
	\end{enumerate}
\end{condition}

\begin{definition}\label{def:homotopy_compatible_gluing_morphism}
	A homotopy $\{\gluehom{k}_{\alpha \beta}\}$ from $\{\glue{k}_{\alpha\beta}(0)\}$ to $\{\glue{k}_{\alpha\beta}(1)\}$ is said to be {\em compatible} if in addition the following conditions are satisfied:
	\begin{enumerate}
		\item $\gluehom{0}_{\alpha\beta} = \text{id}$ for all $\alpha, \beta$;
		\item (compatibility between different orders) for each $k \in \inte_{\geq 0}$ and any pair $V_\alpha, V_\beta \in \mathcal{V}$,
		\begin{equation}\label{eqn:homotopy_g_alpha_beta_order_compatibility}
		\gluehom{k}_{\alpha \beta} \circ \rest{k+1,k}_{\alpha} = \rest{k+1,k}_{\beta} \circ \gluehom{k+1}_{\alpha \beta};
		\end{equation}
		
		\item (cocycle condition) for each $k \in \inte_{\geq 0}$ and any triple $V_\alpha, V_\beta, V_\gamma \in \mathcal{V}$,
		\begin{equation}\label{eqn:homotopy_g_alpha_beta_cocycle}
		\gluehom{k}_{\gamma \alpha} \circ \gluehom{k}_{\beta \gamma} \circ \gluehom{k}_{\alpha \beta} = \text{id}
		\end{equation}
		when $\gluehom{k}_{\alpha \beta}$, $\gluehom{k}_{\beta \gamma}$, $\gluehom{k}_{\gamma \alpha}$ are restricted respectively to $\twc{k}^{*,*}_{\alpha;\alpha \beta \gamma} (\simplex_1)$, $\twc{k}^{*,*}_{\beta;\alpha \beta \gamma} (\simplex_1)$, $\twc{k}^{*,*}_{\gamma;\alpha \beta \gamma} (\simplex_1)$.
	\end{enumerate}
\end{definition}

The same induction argument as in Theorem \ref{lem:existence_compatible_gluing_data} proves the following:

\begin{prop}\label{lem:homotopy_compatible_gluing_morphism}
	Given any two sets of compatible gluing morphisms $\{\glue{k}_{\alpha\beta}(0)\}$ and $\{\glue{k}_{\alpha\beta}(1)\}$, there exists a compatible homotopy $\{\gluehom{k}_{\alpha\beta}\}$ from $\{\glue{k}_{\alpha\beta}(0)\}$ to $\{\glue{k}_{\alpha\beta}(1)\}$.
\end{prop}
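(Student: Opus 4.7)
The plan is to follow exactly the double induction scheme used in the proofs of Theorem \ref{lem:existence_compatible_gluing_data} and its Key Lemma (Lemma \ref{lem:compatible_gluing_data}), primary in the order $k$ and secondary in the simplicial dimension $l$ of a multi-index $(i_0,\ldots,i_l) \in \mathcal{I}$. The only substantive change is systematic: wherever the earlier argument worked on $\simplex_l$ (with coefficients in $\bva{k+1}^{-1}_\beta(U_{i_0\cdots i_l})$), the present argument works on $\hsimplex_l = \simplex_1 \times \simplex_l$, with Lemma \ref{lem:simplicial_de_rham_extension} replaced by Lemma \ref{lem:homotopy_simplicial_de_rham_extension} and Lemma \ref{lem:simplex_lifting_lemma} used in its $\hsimplex_l$/$\hsimplexbdy_l$ form. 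The base $k=0$ is handled by declaring $\gluehom{0}_{\alpha\beta} = \text{id}$, which trivially satisfies Condition \ref{cond:induction_hypothesis_for_homotopy} together with \eqref{eqn:homotopy_g_alpha_beta_order_compatibility} and \eqref{eqn:homotopy_g_alpha_beta_cocycle} since $\glue{0}_{\alpha\beta}(0)=\glue{0}_{\alpha\beta}(1)=\text{id}$.

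For the induction step, I would assume $\gluehom{k}_{\alpha\beta}$ constructed and, for each pair $\alpha,\beta$, produce elements $\hsauto{k+1}_{\alpha\beta,i_0\cdots i_l} \in \mathcal{A}^0(\hsimplex_l) \otimes \bva{k+1}^{-1}_\beta(U_{i_0\cdots i_l})$ by induction on $l$. In the case $l=0$, I prescribe the boundary datum on $\hsimplexbdy_0 = \{\mathtt t_0 = 1\} \sqcup \{\mathtt t_1 = 1\}$ by $\iauto{k+1}_{\alpha\beta,i}(0)$ and $\iauto{k+1}_{\alpha\beta,i}(1)$ coming from $\{\glue{k+1}_{\alpha\beta}(0)\}$ and $\{\glue{k+1}_{\alpha\beta}(1)\}$. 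Surjectivity of $\rest{k+1,k}_\beta$ combined with Lemma \ref{lem:simplex_lifting_lemma} for $\hsimplex_0$ produces a lift $\reallywidetilde{\hsauto{k+1}_{\alpha\beta,i}}$ of $\hsauto{k}_{\alpha\beta,i}$ with the required boundary values, and the obstruction to the triple cocycle condition \eqref{eqn:homotopy_g_alpha_beta_cocycle} sits in $\mathcal{A}^0_0(\hsimplex_0) \otimes \bva{0}^{-1}(U_i) \otimes (\mathbf{m}^{k+1}/\mathbf{m}^{k+2})$ via the identification \eqref{eqn:locally_free_identification}. As in the Key Lemma, its vanishing on $\hsimplexbdy_0$ follows from the injectivity $\bva{k+1}^{-1}_\alpha \hookrightarrow \text{Der}(\bva{k+1}^0_\alpha)$ (both sets $\glue{k+1}_{\alpha\beta}(0)$ and $\glue{k+1}_{\alpha\beta}(1)$ already satisfy the cocycle condition), and then Lemma \ref{lem:cech_complex_of_a_point} applied to the index set $I_{U_i}$ furnishes a coboundary correction $\prescript{k+1}{}{\mathsf c}_{\alpha\beta,i}$, whose Baker–Campbell–Hausdorff product with $\reallywidetilde{\hsauto{k+1}_{\alpha\beta,i}}$ gives the desired $\hsauto{k+1}_{\alpha\beta,i}$.

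For $l > 0$, the candidate boundary datum on $\hsimplexbdy_l$ has the form
$$
\partial\bigl(\reallywidetilde{\hsauto{k+1}_{\alpha\beta,i_0\cdots i_l}}\bigr) = \bigl(\mathtt k_0,\ldots,\mathtt k_l,\mathtt b_0,\mathtt b_1\bigr),
$$
where the $\mathtt k_j$'s are the facets dictated by \eqref{eqn:homotopy_alpha_beta_gluing_explicit_form_on_simplex_boundary_relation} and the $\mathtt b_j$'s are $\sauto{k+1}_{\alpha\beta,i_0\cdots i_l}(0)$ and $\sauto{k+1}_{\alpha\beta,i_0\cdots i_l}(1) \bchprod \autozeroone{k+1}_{\alpha\beta,i_0}$. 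The key bookkeeping step is to check that this datum actually lies in $\mathcal{A}^0(\hsimplexbdy_l) \otimes \bva{k+1}^{-1}_\beta(U_{i_0\cdots i_l})$, i.e. that the $\mathtt d^*_{i,l-1}$ and $\mathtt r^*_j$ compatibility relations of Notation \ref{not:de_rham_form_on_homotopy_simplex_boundary} hold. The $\mathtt d^*$ relations are verbatim those in the Key Lemma; the mixed $\mathtt r^*\mathtt d^*$ identities reduce, by injectivity $\bva{k+1}^{-1}_\alpha \hookrightarrow \text{Der}(\bva{k+1}^0_\alpha)$, to the identity $\autozeroone{k+1}_{\alpha\beta,i_1} \bchprod \autoij{k+1}_{\alpha\beta,i_0 i_1}(0) = \autoij{k+1}_{\alpha\beta,i_0 i_1}(1) \bchprod \autozeroone{k+1}_{\alpha\beta,i_0}$, which follows from expanding the defining relations of $\autozeroone{k+1}_{\alpha\beta,i}$ and $\autoij{k+1}_{\alpha\beta,i_0 i_1}(\epsilon)$. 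Once compatibility is verified, Lemma \ref{lem:homotopy_simplicial_de_rham_extension} together with Lemma \ref{lem:simplex_lifting_lemma} in the $\hsimplex_l$ form produces a lift $\reallywidetilde{\hsauto{k+1}_{\alpha\beta,i_0\cdots i_l}}$ extending $\hsauto{k}_{\alpha\beta,i_0\cdots i_l}$ modulo $\mathbf m^{k+1}$ and realising the prescribed boundary.

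The cocycle obstruction at level $l$ is then an element of $\mathcal A^0_0(\hsimplex_l) \otimes \bva{0}^{-1}(U_{i_0\cdots i_l}) \otimes (\mathbf m^{k+1}/\mathbf m^{k+2})$ (its restriction to $\hsimplexbdy_l$ vanishes by the induction hypothesis at level $l-1$ together with the cocycle condition for $\glue{k+1}_{\alpha\beta}(0)$, $\glue{k+1}_{\alpha\beta}(1)$), and Lemma \ref{lem:cech_complex_of_a_point} applied to the index set $I_{U_{i_0\cdots i_l}}$ supplies the coboundary correction $\prescript{k+1}{}{\mathsf c}_{\alpha\beta,i_0\cdots i_l}$ whose $\bchprod$-product with $\reallywidetilde{\hsauto{k+1}_{\alpha\beta,i_0\cdots i_l}}$ yields $\hsauto{k+1}_{\alpha\beta,i_0\cdots i_l}$. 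I expect the main obstacle to be purely notational: keeping the $\mathtt r^*_j$ restrictions and the $\mathtt d^*$ facet restrictions consistent at the $l=1$ step, because this is where the terms $\autozeroone{k+1}_{\alpha\beta,i}$ (absent in Theorem \ref{lem:existence_compatible_gluing_data}) interact nontrivially with the $\autoij{k+1}_{\alpha\beta,ij}(\epsilon)$; once this base of the secondary induction is handled, the higher $l$ steps are a mechanical adaptation of the proof of Lemma \ref{lem:compatible_gluing_data}.
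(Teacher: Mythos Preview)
Your proposal is correct and follows essentially the same approach as the paper, which states only that ``the same induction argument as in Theorem \ref{lem:existence_compatible_gluing_data}'' proves the result; the double induction on $k$ and $l$, the replacement of $\simplex_l$ by $\hsimplex_l$ via Lemma \ref{lem:homotopy_simplicial_de_rham_extension}, and the cocycle-obstruction/\v{C}ech-correction step you outline are precisely the intended adaptation. Your explicit identification of the mixed compatibility identity $\autozeroone{k+1}_{\alpha\beta,i_1} \bchprod \autoij{k+1}_{\alpha\beta,i_0 i_1}(0) = \autoij{k+1}_{\alpha\beta,i_0 i_1}(1) \bchprod \autozeroone{k+1}_{\alpha\beta,i_0}$ as the one nontrivial new check is exactly right.
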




\subsection{The \v{C}ech-Thom-Whitney complex}\label{sec:the_Cech_thom_whitney_complex}

The goal of this subsection is to construct a \v{C}ech-Thom-Whitney complex $\cech{k}^*(\twc{}, \glue{})$ for each $k\in \inte_{\geq 0}$ from a given set $\glue{} = \{\glue{k}_{\alpha\beta}\}$ of compatible gluing morphisms.



\begin{definition}\label{def:cech_thom_whitney_complex}
 	For $\ell \in \inte_{\geq 0}$, we let 
 	$$\twc{k}^{*,*}_{\alpha_0 \cdots \alpha_{\ell}}(\glue{}) \subset \bigoplus_{i=0}^\ell \twc{k}^{*,*}_{\alpha_i;\alpha_0 \cdots \alpha_{\ell}}$$
 	be the set of elements $(\varphi_0,\cdots, \varphi_{\ell})$ such that $ \varphi_{j} = \glue{k}_{\alpha_i \alpha_j} (\varphi_i)$. Then the {\em $k^{\text{th}}$-order \v{C}ech-Thom-Whitney complex over $X$}, $\cech{k}^*(\twc{}, \glue{})$ is defined by setting
 	$\cech{k}^\ell(\twc{}^{p,q},\glue{}) := \prod_{\alpha_0 \cdots \alpha_\ell} \twc{k}^{p,q}_{\alpha_0 \cdots \alpha_{\ell}}(\glue{})$ and $\cech{k}^\ell(\twc{},\glue{}):= \bigoplus_{p,q} \cech{k}^\ell(\twc{}^{p,q},\glue{})$ 
 	for each $k \in \inte_{\geq 0}$.
 	
 	This is equipped with the {\em \v{C}ech differential} $\cechd{k}_\ell := \sum_{j =0}^{\ell+1} (-1)^j\restmap_{j,\ell+1}: \cech{k}^{\ell} (\twc{},\glue{}) \to \cech{k}^{\ell+1}(\twc{},\glue{})$, where $\restmap_{j,\ell} : \cech{k}^{\ell-1}(\twc{},\glue{}) \rightarrow \cech{k}^{\ell}(\twc{},\glue{})$ is the natural restriction map defined componentwise by the map $\restmap_{j,\ell}: \twc{k}^{*,*}_{\alpha_0 \cdots \widehat{\alpha_j} \cdots  \alpha_{\ell}}(\glue{}) \rightarrow \twc{k}^{*,*}_{\alpha_0 \cdots \alpha_{\ell}}(\glue{})$ which in turn comes from \eqref{eqn:thom_whitney_restriction_map}.
 
 	We define the {\em $k^{\text{th}}$-order complex of polyvector fields over $X$} by
 	$$\polyv{k}^{*,*}(\glue{}) := \text{Ker}(\cechd{k}_{0})$$
 	and denote the natural inclusion $\polyv{k}^{*,*}(\glue{}) \rightarrow \cech{k}^0(\twc{},\glue{})$ by $\cechd{k}_{-1}$, so we have the following sequence of maps
 	\begin{multline}\label{eqn:cech_thom_whitney_complex}
 	0 \rightarrow \polyv{k}^{p,q}(\glue{}) \rightarrow \cech{k}^0(\twc{}^{p,q},\glue{}) \rightarrow\\ \cech{k}^1(\twc{}^{p,q},\glue{}) \rightarrow \cdots \rightarrow \cech{k}^{\ell}(\twc{}^{p,q},\glue{}) \rightarrow \cdots.
 	\end{multline}
\end{definition}

For $\ell\in \inte_{\geq 0}$ and $k\geq l$, there is a natural map $\rest{k,l}: \cech{k}^\ell(\twc{}^{p,q},\glue{})  \rightarrow \cech{l}^{\ell}(\twc{}^{p,q},\glue{})$ defined componentwise by the map $\rest{k,l}_{\alpha_j}: \twc{k}^{p,q}_{\alpha_j;\alpha_0 \cdots \alpha_{\ell}} \rightarrow \twc{l}^{p,q}_{\alpha_j;\alpha_0 \cdots \alpha_{\ell}}$ obtained from $\rest{k,l}_{\alpha}: \bva{k}_{\alpha}^* \rightarrow \bva{l}_{\alpha}^*$ (see Definition \ref{def:higher_order_data}). Similarly, we have the natural maps $\rest{k,l}:\polyv{k}^{p,q}(\glue{}) \rightarrow \polyv{l}^{p,q}(\glue{})$.

\begin{definition}\label{def:inverse_limit_cech_thom_whitney_complex}
	The {\em \v{C}ech-Thom-Whitney complex} $\cech{}^{l}(\twc{},\glue{}) = \bigoplus_{p,q} \cech{}^\ell(\twc{}^{p,q},\glue{})$ is defined by taking inverse limit $\cech{}^\ell(\twc{}^{p,q},\glue{}) :=\varprojlim_k \cech{k}^{\ell}(\twc{}^{p,q},\glue{})$ along the sequence of maps $\rest{k+1,k}: \cech{k+1}^\ell(\twc{}^{p,q},\glue{})  \to \cech{k}^{\ell}(\twc{}^{p,q},\glue{})$.
	
	The {\em complex of polyvector fields} $\polyv{}^{*,*}(\glue{}) = \bigoplus_{p,q} \polyv{}^{p,q}(\glue{})$ is defined by taking the inverse limit $\polyv{}^{p,q}(\glue{}):= \varprojlim_k \polyv{k}^{p,q}(\glue{})$ along the maps $\rest{k+1,k}: \polyv{k+1}^{p,q}(\glue{}) \rightarrow \polyv{k}^{p,q}(\glue{})$
	
	The maps $\rest{k,l}$'s commute with the \v{C}ech differentials $\cechd{k}_{\ell}$'s and $\cechd{l}_{\ell}$'s, so we have the following sequence of maps 
	\begin{multline}\label{eqn:cech_thom_whitney_complex_all_order}
	0 \rightarrow \polyv{}^{p,q}(\glue{}) \rightarrow \cech{}^0(\twc{}^{p,q},\glue{}) \rightarrow\\
	 \cech{}^1(\twc{}^{p,q},\glue{}) \rightarrow \cdots \rightarrow \cech{}^{\ell}(\twc{}^{p,q},\glue{}) \rightarrow \cdots.
	\end{multline}
\end{definition}

\begin{lemma}\label{lem:exactness_of_cech_thom_whitney_complex}
	Given $\prescript{k+1}{}{\mathtt{w}} \in  \cech{k+1}^{\ell+1}(\twc{},\glue{})$ with $\cechd{k+1}_{\ell+1} (\prescript{k+1}{}{\mathtt{w}} ) = 0$ and $\prescript{k}{}{\mathtt{v}} \in \cech{k}^{\ell}(\twc{},\glue{})$ satisfying $\cechd{k}_{\ell}( \prescript{k}{}{\mathtt{v}}) = \prescript{k+1}{}{\mathtt{w}} \ \text{(mod $\mathbf{m}^{k+1}$)}$, there exists a lifting $\prescript{k+1}{}{\mathtt{v}} \in \cech{k+1}^{\ell}(\twc{},\glue{})$ such that $\cechd{k+1}_{\ell} (\prescript{k+1}{}{\mathtt{v}}) = \prescript{k+1}{}{\mathtt{w}}$. 
	As a consequence, both \eqref{eqn:cech_thom_whitney_complex} and \eqref{eqn:cech_thom_whitney_complex_all_order} are exact sequences.
\end{lemma}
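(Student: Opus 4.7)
The plan is to reduce the main lifting statement, via the local-freeness identification \eqref{eqn:locally_free_identification}, to the assertion that the order-zero \v{C}ech-Thom-Whitney complex $\cech{0}^*(\twc{},\glue{})$ is acyclic in positive degree. As a preliminary, I would first check that the restriction map $\rest{k+1,k}: \cech{k+1}^\ell \to \cech{k}^\ell$ is surjective for every $\ell$. Given $(\varphi_0, \ldots, \varphi_\ell) \in \twc{k}^{*,*}_{\alpha_0 \cdots \alpha_\ell}(\glue{})$, the surjectivity of $\rest{k+1,k}_{\alpha_0}: \bva{k+1}_{\alpha_0}^* \to \bva{k}_{\alpha_0}^*$, combined with the Stein property of each $U_{i_0 \cdots i_l}$ (Theorem \ref{thm:cartan_theorem_B}), produces a Thom-Whitney lift $\widetilde{\varphi}_0$ of $\varphi_0$ in $\twc{k+1}^{*,*}_{\alpha_0;\alpha_0 \cdots \alpha_\ell}$; setting $\widetilde{\varphi}_j := \glue{k+1}_{\alpha_0 \alpha_j}(\widetilde{\varphi}_0)$ and invoking \eqref{eqn:g_alpha_beta_order_compatibility} and \eqref{eqn:g_alpha_beta_cocycle} shows that $(\widetilde{\varphi}_0, \ldots, \widetilde{\varphi}_\ell) \in \twc{k+1}^{*,*}_{\alpha_0 \cdots \alpha_\ell}(\glue{})$ is a valid lift.

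Given the data in the lemma, I would next pick any lift $\widetilde{\mathtt{v}} \in \cech{k+1}^\ell$ of $\prescript{k}{}{\mathtt{v}}$ and set $\mathtt{z} := \prescript{k+1}{}{\mathtt{w}} - \cechd{k+1}_{\ell}(\widetilde{\mathtt{v}})$. By construction $\rest{k+1,k}(\mathtt{z}) = 0$, so via \eqref{eqn:locally_free_identification} we may view $\mathtt{z} \in (\mathbf{m}^{k+1}/\mathbf{m}^{k+2}) \otimes_\comp \cech{0}^{\ell+1}(\twc{},\glue{})$; applying $\cechd{k+1}_{\ell+1}$ and using $(\cechd{k+1})^2 = 0$ together with the hypothesis $\cechd{k+1}_{\ell+1}(\prescript{k+1}{}{\mathtt{w}}) = 0$ shows that $\mathtt{z}$ is in fact a $\cechd{0}_{\ell+1}$-cocycle. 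Once the order-zero complex is known to be acyclic in positive degree, any primitive $\mathtt{u}$ of $\mathtt{z}$ lifts back to $\cech{k+1}^\ell$ via \eqref{eqn:locally_free_identification}, and the required lift is $\prescript{k+1}{}{\mathtt{v}} := \widetilde{\mathtt{v}} + \mathtt{u}$. The exactness of \eqref{eqn:cech_thom_whitney_complex} will then follow by induction on $k$, with the order-zero exactness as base case, and exactness of \eqref{eqn:cech_thom_whitney_complex_all_order} will follow from the surjectivity of $\rest{k+1,k}$ (a Mittag-Leffler condition) upon passing to $\varprojlim$.

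The main obstacle is therefore the order-zero acyclicity of $\cech{0}^*(\twc{},\glue{})$. Since $\glue{0}_{\alpha\beta} = \text{id}$, an element of $\twc{0}^{*,*}_{\alpha_0 \cdots \alpha_\ell}(\glue{})$ is determined by any single component; consequently, for each fixed multi-index $I = (i_0, \ldots, i_l) \in \mathcal{I}$ the corresponding slice of $\cech{0}^{*}(\twc{}^{p,q},\glue{})$ is canonically $\mathcal{A}^q(\simplex_l) \otimes_\comp \check{\mathcal{C}}^{*}(I_{i_0\cdots i_l}, \bva{0}^p)$, which by Lemma \ref{lem:cech_complex_of_a_point} is acyclic in positive degree with $H^0 = \mathcal{A}^q(\simplex_l) \otimes_\comp \bva{0}^p(U_{i_0\cdots i_l})$. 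Given an order-zero cocycle $\mathtt{z}$ of \v{C}ech degree $\geq 1$, I would solve $\cechd{0}_\ell \mathtt{u} = \mathtt{z}$ by induction on $l = |I|-1$: the base case $l = 0$ is Lemma \ref{lem:cech_complex_of_a_point} applied directly, and in the inductive step the already-constructed $\mathtt{u}_{i_0 \cdots \widehat{i_j} \cdots i_l}$ form the prescribed boundary datum on $\simplexbdy_l$, to which one applies the same technique as in Lemma \ref{lem:simplex_lifting_lemma} — first extend the boundary datum to $\simplex_l$ via Lemma \ref{lem:simplicial_de_rham_extension}, then correct by a boundary-vanishing primitive supplied by Lemma \ref{lem:cech_complex_of_a_point} — to obtain $\mathtt{u}_{i_0 \cdots i_l}$ compatible with the Thom-Whitney simplicial structure. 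This simplicial bookkeeping, carried out in parallel with the surjectivity/\v{C}ech-cocycle arguments, is the technical heart of the proof.
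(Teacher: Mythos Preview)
Your approach is correct and uses the same key ingredients as the paper (Lemma~\ref{lem:cech_complex_of_a_point}, the Lifting Lemma~\ref{lem:simplex_lifting_lemma}, and the identification~\eqref{eqn:locally_free_identification}), but the organization differs. The paper runs a \emph{single} induction on the simplicial index $l$: at each step it lifts the $\alpha_0$-component of $\prescript{k}{}{\mathtt{v}}$ directly via freeness (base case $l=q$) or the Lifting Lemma (induction step), transports to the other components via $\glue{k+1}_{\alpha_0\alpha_j}$, and then kills the resulting \v{C}ech obstruction---which lies in $(\mathbf{m}^{k+1}/\mathbf{m}^{k+2}) \otimes \bva{0}$ and vanishes on $\simplexbdy_l$---using Lemma~\ref{lem:cech_complex_of_a_point}. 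You instead factor the argument as (i) surjectivity of $\rest{k+1,k}$ at the \v{C}ech--Thom--Whitney level, (ii) reduction to the order-zero complex via the global obstruction $\mathtt{z}$, and (iii) acyclicity of that order-zero complex. This is conceptually cleaner, at the cost of running the induction on $l$ twice; note also that the paper actually derives surjectivity (Corollary~\ref{cor:corollary_to_exactness_of_cech_thom_whitney}) \emph{from} the present lemma rather than the other way around.

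Two small points to tighten. First, your claim that Stein-ness plus sheaf-level surjectivity ``produces a Thom--Whitney lift'' $\widetilde{\varphi}_0$ is not immediate: lifting each component $\varphi_{0,I}$ independently will not respect the simplicial compatibility $\mathtt{d}_{j,l}^*(\varphi_{i_0\cdots i_l}) = \varphi_{i_0\cdots\hat{i}_j\cdots i_l}|_{U_{i_0\cdots i_l}}$, so step~(i) itself already requires the induction-on-$l$ argument via Lemma~\ref{lem:simplex_lifting_lemma} that you describe only later for step~(iii). Second, the base of your induction on $l$ should be $l = q$ rather than $l = 0$, since $\mathcal{A}^q(\simplex_l) = 0$ for $l < q$; this is the stage at which Lemma~\ref{lem:cech_complex_of_a_point} applies directly (as in the paper's initial case).
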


\begin{proof}
	We only need to prove the first statement of the lemma because the second statement follows by induction on $k$ (note that the initial case for this induction is $k=-1$ where we take the trivial sequence whose terms are all zero).
	
	Without loss of generality, we assume that $\prescript{k+1}{}{\mathtt{w}} \in \cech{k+1}^{\ell+1}(\twc{}^{p,q}, \glue{})$ and $\prescript{k}{}{\mathtt{v}} \in \cech{k}^{\ell}(\twc{}^{p,q},\glue{})$ for some fixed $p$ and $q$. We need to construct $\prescript{k+1}{}{\mathtt{v}}_{\alpha_0 \cdots \alpha_\ell} \in \twc{k+1}^{p,q}_{\alpha_0 \cdots \alpha_{\ell}}(\glue{})$ for every multi-index $(\alpha_0, \dots , \alpha_\ell)$ which, by Definition \ref{def:cech_thom_whitney_complex}, can be written as
	$$\prescript{k+1}{}{\mathtt{v}}_{\alpha_0 \cdots \alpha_\ell} = \left(\prescript{k+1}{}{\mathtt{v}}_{\alpha_0;\alpha_0 \cdots \alpha_\ell},\cdots, \prescript{k+1}{}{\mathtt{v}}_{\alpha_\ell;\alpha_0 \cdots \alpha_\ell}\right)$$ 
	satisfying $\prescript{k+1}{}{\mathtt{v}}_{\alpha_j;\alpha_0 \cdots \alpha_\ell} = \glue{k+1}_{\alpha_i\alpha_j}(\prescript{k+1}{}{\mathtt{v}}_{ \alpha_i;\alpha_0 \cdots \alpha_\ell})$, and each component $\prescript{k+1}{}{\mathtt{v}}_{\alpha_j;\alpha_0 \cdots \alpha_\ell}$ is of the form 
	$\prescript{k+1}{}{\mathtt{v}}_{\alpha_j;\alpha_0 \cdots \alpha_\ell} = (\prescript{k+1}{}{\mathtt{v}}_{\alpha_j;\alpha_0 \cdots \alpha_\ell; i_0\cdots i_l})_{i_0\cdots i_l}$,
	where $U_{i_r} \subset V_{\alpha_0 \cdots \alpha_\ell}$ and $\prescript{k+1}{}{\mathtt{v}}_{\alpha_j;\alpha_0 \cdots \alpha_\ell; i_0\cdots i_l} \in \mathcal{A}^q(\simplex_l) \otimes \bva{k+1}^{p}_{\alpha_j}(U_{i_0 \cdots i_l})$, such that $\mathtt{d}_{j,l}^*\left(\prescript{k+1}{}{\mathtt{v}}_{\alpha_j;\alpha_0 \cdots \alpha_\ell; i_0\cdots i_l}\right) = \prescript{k+1}{}{\mathtt{v}}_{\alpha_j;\alpha_0 \cdots \alpha_\ell; i_0 \cdots \hat{i}_j \cdots i_l} |_{U_{i_0 \cdots i_l}}$ (see Definition \ref{def:thom_whitney_general}). We will use induction on $l$ to prove the existence of such an element.
	
	The initial case is $l = q$. We fix $U_{i_0\cdots i_q}$ and consider all the multi-indices $(\alpha_0,\cdots, \alpha_\ell)$ such that $U_{i_{r}} \subset V_{\alpha_0 \cdots \alpha_{\ell}}$ for $r = 0, \ldots, q$. Since $\bva{k+1}^{p}_{\alpha_0}$ is free over $\cfrk{k+1}$, we can take a lifting $\reallywidetilde{\prescript{k+1}{}{\mathtt{v}}_{\alpha_0;\alpha_0 \cdots \alpha_\ell; i_0\cdots i_l}} \in \mathcal{A}^q(\simplex_q) \otimes \bva{k+1}^{p}_{\alpha_0}(U_{i_0 \cdots i_q})$ of $\prescript{k}{}{\mathtt{v}}_{\alpha_0;\alpha_0 \cdots \alpha_\ell; i_0\cdots i_q}$. Then we let $$\reallywidetilde{\prescript{k+1}{}{\mathtt{v}}_{\alpha_j;\alpha_0 \cdots \alpha_\ell; i_0\cdots i_q}}:= \glue{k+1}_{\alpha_0 \alpha_j,i_0 \cdots i_q}( \reallywidetilde{\prescript{k+1}{}{\mathtt{v}}_{\alpha_0;\alpha_0 \cdots \alpha_\ell; i_0\cdots i_q}})$$
	for $j = 1, \ldots, \ell$ and set
	$$\reallywidetilde{\prescript{k+1}{}{\mathtt{v}}_{\alpha_0 \cdots \alpha_\ell;i_0 \cdots i_q}} := (\reallywidetilde{\prescript{k+1}{}{\mathtt{v}}_{\alpha_0;\alpha_0 \cdots \alpha_\ell;i_0 \cdots i_q}},\cdots, \reallywidetilde{\prescript{k+1}{}{\mathtt{v}}_{\alpha_\ell;\alpha_0 \cdots \alpha_\ell;i_0\cdots i_q}}).$$ 
	Now the element
	\begin{multline*}
	\reallywidetilde{\prescript{k+1}{}{\mathtt{w}}_{\alpha_0 \cdots \alpha_{\ell+1}; i_0 \cdots i_q}} := \reallywidetilde{\prescript{k+1}{}{\mathtt{v}}_{\widehat{\alpha_0} \cdots \alpha_{\ell+1}; i_0 \cdots i_q}} - \reallywidetilde{\prescript{k+1}{}{\mathtt{v}}_{\alpha_0 \widehat{\alpha_1} \cdots \alpha_{\ell+1}; i_0 \cdots i_q}} \\
	+ \cdots + (-1)^j (\reallywidetilde{\prescript{k+1}{}{\mathtt{v}}_{\alpha_0 \cdots \widehat{\alpha_j} \cdots \alpha_{\ell+1}; i_0 \cdots i_q}}) + \cdots + (-1)^{\ell+1} (\reallywidetilde{\prescript{k+1}{}{\mathtt{v}}_{\alpha_0 \cdots \widehat{\alpha_{\ell+1}}; i_0 \cdots i_q}})\\ - \prescript{k+1}{}{\mathtt{w}}_{\alpha_0 \cdots \alpha_{\ell+1}; i_0 \cdots i_q},
	\end{multline*}
	satisfies the condition that $\reallywidetilde{\prescript{k+1}{}{\mathtt{w}}_{\alpha_0 \cdots \alpha_{\ell+1}; i_0 \cdots i_l}} = 0 \ \text{(mod $\mathbf{m}^{k+1}$)}$.
	
	Under the identification \eqref{eqn:locally_free_identification}, we treat $\reallywidetilde{\prescript{k+1}{}{\mathtt{w}}_{\alpha_0 \cdots \alpha_{\ell+1}; i_0 \cdots i_q}} \in \mathcal{A}^q(\simplex_q) \otimes \bva{0}^{p}(U_{i_0\cdots i_q}) \otimes (\mathbf{m}^{k+1}/\mathbf{m}^{k+2})$. So the collection $(\reallywidetilde{\prescript{k+1}{}{\mathtt{w}}_{\alpha_0 \cdots \alpha_{\ell+1}; i_0 \cdots i_q}})_{\alpha_0 \cdots \alpha_{\ell+1}}$ is an $(\ell+1)$-cocycle in the \v{C}ech complex $\check{\mathcal{C}}^{\ell+1}(I_{i_0\cdots i_q}, \bva{0}^p) \otimes \mathcal{A}^q(\simplex_q) \otimes (\mathbf{m}^{k+1}/\mathbf{m}^{k+2})$. By Lemma \ref{lem:cech_complex_of_a_point}, there exists $(\prescript{k+1}{}{\mathtt{c}}_{\alpha_0 \cdots \alpha_{\ell}; i_0 \cdots i_q})_{\alpha_0 \cdots \alpha_{\ell}} \in \check{\mathcal{C}}^{\ell}(I_{i_0\cdots i_q}, \bva{0}^p) \otimes \mathcal{A}^q(\simplex_q) \otimes (\mathbf{m}^{k+1}/\mathbf{m}^{k+2})$ whose image under the \v{C}ech differential is precisely $(\reallywidetilde{\prescript{k+1}{}{\mathtt{w}}_{\alpha_0 \cdots \alpha_{\ell+1}; i_0 \cdots i_q}})_{\alpha_0 \cdots \alpha_{\ell+1}}$. Therefore if we let
	$$
	\prescript{k+1}{}{\mathtt{v}}_{\alpha_0 \cdots \alpha_\ell; i_0\cdots i_q}:= \reallywidetilde{\prescript{k+1}{}{\mathtt{v}}_{\alpha_0 \cdots \alpha_\ell; i_0\cdots i_q}} - \prescript{k+1}{}{\mathtt{c}}_{\alpha_0 \cdots \alpha_{\ell}; i_0 \cdots i_q},
	$$
	then its image under the \v{C}ech differential is $(\prescript{k+1}{}{\mathtt{w}}_{\alpha_0 \cdots \alpha_{\ell+1}; i_0 \cdots i_q})_{\alpha_0 \cdots \alpha_{\ell+1}}$ as desired.
	
	Next we suppose that we are given $\prescript{k+1}{}{\mathtt{v}}_{\alpha_0 \cdots \alpha_\ell; i_0\cdots i_l}$ for some $l \geq q$. Then we need to construct $\prescript{k+1}{}{\mathtt{v}}_{\alpha_0 \cdots \alpha_\ell; i_0\cdots i_{l+1}}$ for any $U_{i_0\cdots i_{l+1}}$ and $V_{\alpha_0\cdots \alpha_\ell}$ such that $U_{i_r} \subset V_{\alpha_0\cdots \alpha_\ell}$ for $r = 1, \ldots, l+1$. We fixed $U_{i_0 \cdots i_{l+1}}$ and consider one such $V_{\alpha_0\cdots \alpha_\ell}$. Letting 
	$$
	\partial(\reallywidetilde{\prescript{k+1}{}{\mathtt{v}}_{\alpha_0 ;\alpha_0 \cdots \alpha_\ell; i_0\cdots i_{l+1}}} ):= \left(\prescript{k+1}{}{\mathtt{v}}_{\alpha_0 ;\alpha_0 \cdots \alpha_\ell; \widehat{i_0}\cdots i_{l+1}},\cdots ,\prescript{k+1}{}{\mathtt{v}}_{\alpha_0 ;\alpha_0 \cdots \alpha_\ell; i_0\cdots \widehat{i_{l+1}}} \right)
	$$
	gives an element in $\mathcal{A}^q(\simplexbdy_{l+1}) \otimes \bva{k+1}^{p}_{\alpha_0}(U_{i_0\cdots i_{l+1}})$. Using Lemma \ref{lem:simplex_lifting_lemma}, we construct
	an element $\reallywidetilde{\prescript{k+1}{}{\mathtt{v}}_{\alpha_0 ;\alpha_0 \cdots \alpha_\ell; i_0\cdots i_{l+1}}} \in \mathcal{A}^q(\simplex_{l+1}) \otimes \bva{k+1}^{p}_{\alpha_0}(U_{i_0\cdots i_{l+1}})$ such that
	\begin{align*}
	{\prescript{k+1}{}{\mathtt{v}}_{\alpha_0 ;\alpha_0 \cdots \alpha_\ell; i_0\cdots i_{l+1}}} & = \prescript{k}{}{\mathtt{v}}_{\alpha_0 ;\alpha_0 \cdots \alpha_\ell; i_0\cdots i_{l+1}} \ \text{(mod $\mathbf{m}^{k+1}$)},\\ \reallywidetilde{\prescript{k+1}{}{\mathtt{v}}_{\alpha_0 ;\alpha_0 \cdots \alpha_\ell; i_0\cdots i_{l+1}}}|_{\simplexbdy_{l+1}} & = \partial(\reallywidetilde{\prescript{k+1}{}{\mathtt{v}}_{\alpha_0 ;\alpha_0 \cdots \alpha_\ell; i_0\cdots i_{l+1}}} ).
	\end{align*}
	We then let $\reallywidetilde{\prescript{k+1}{}{\mathtt{v}}_{\alpha_j ;\alpha_0 \cdots \alpha_\ell; i_0\cdots i_{l+1}}} := \glue{k+1}_{\alpha_0 \alpha_j, i_0 \cdots i_{l+1}} (\reallywidetilde{\prescript{k+1}{}{\mathtt{v}}_{\alpha_0 ;\alpha_0 \cdots \alpha_\ell; i_0\cdots i_{l+1}}})$ for $j = 1, \ldots, \ell$ and set $$\reallywidetilde{\prescript{k+1}{}{\mathtt{v}}_{\alpha_0 \cdots \alpha_\ell; i_0\cdots i_{l+1}}}:=(\reallywidetilde{\prescript{k+1}{}{\mathtt{v}}_{\alpha_0 ;\alpha_0 \cdots \alpha_\ell; i_0\cdots i_{l+1}}},\cdots ,\reallywidetilde{\prescript{k+1}{}{\mathtt{v}}_{\alpha_\ell ;\alpha_0 \cdots \alpha_\ell; i_0\cdots i_{l+1}}}),$$
	The elements $\prescript{k+1}{}{\delta}_\ell (\reallywidetilde{\prescript{k+1}{}{\mathtt{v}}_{\alpha_j ;\alpha_0 \cdots \alpha_\ell; i_0\cdots i_{l+1}}})$ and $\prescript{k+1}{}{\mathtt{w}}_{\alpha_0 \cdots \alpha_{\ell+1}; i_0 \cdots i_{l+1}}$ agree modulo $\mathbf{m}^{k+1}$ and on the boundary $\simplexbdy_{l+1}$ of the simplex $\simplex_{l+1}$, so the rest of the proof of this induction step would be the same as the initial case $l = q$. 
\end{proof}

\begin{corollary}\label{cor:corollary_to_exactness_of_cech_thom_whitney}
	For $k, \ell \in \inte_{\geq 0}$, the map $\rest{k+1,k}: \cech{k+1}^\ell(\twc{}^{p,q},\glue{}) \rightarrow \cech{k}^\ell(\twc{}^{p,q},\glue{})$ and the induced map $\rest{\infty,k}: \cech{}^\ell(\twc{}^{p,q},\glue{}) \rightarrow \cech{k}^\ell(\twc{}^{p,q},\glue{})$ are surjective; in particular, both $\rest{k+1,k} : \polyv{k+1}^{p,q}(\glue{}) \rightarrow \polyv{k}^{p,q}(\glue{})$ and  $\rest{\infty,k} : \polyv{}^{p,q}(\glue{}) \rightarrow \polyv{k}^{p,q}(\glue{})$ are surjective. 
\end{corollary}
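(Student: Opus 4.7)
My plan is to observe that the first (liftability) claim of the corollary is essentially a simpler version of Lemma \ref{lem:exactness_of_cech_thom_whitney_complex}---the proof of that lemma already contains all the required content---and then derive the remaining claims by formal arguments.

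For the surjectivity of $\rest{k+1,k}: \cech{k+1}^\ell(\twc{}^{p,q},\glue{}) \to \cech{k}^\ell(\twc{}^{p,q},\glue{})$, I will fix $\prescript{k}{}{\mathtt{v}} \in \cech{k}^\ell$ and construct a lift $\prescript{k+1}{}{\mathtt{v}}$ directly, following the strategy in the proof of Lemma \ref{lem:exactness_of_cech_thom_whitney_complex} but omitting the final \v{C}ech correction (which is unnecessary here since there is no coboundary constraint to match). For each multi-index $(\alpha_0, \dots, \alpha_\ell)$ it suffices to produce the components $\prescript{k+1}{}{\mathtt{v}}_{\alpha_0;\alpha_0 \cdots \alpha_\ell;i_0 \cdots i_l}$ for all $(i_0, \dots, i_l) \in \mathcal{I}$ satisfying the simplicial boundary relations $\mathtt{d}_{j,l}^*$; the remaining components are then defined by $\prescript{k+1}{}{\mathtt{v}}_{\alpha_j;\alpha_0 \cdots \alpha_\ell} := \glue{k+1}_{\alpha_0 \alpha_j}(\prescript{k+1}{}{\mathtt{v}}_{\alpha_0;\alpha_0 \cdots \alpha_\ell})$, and these automatically lift $\prescript{k}{}{\mathtt{v}}_{\alpha_j;\alpha_0 \cdots \alpha_\ell}$ thanks to the order-compatibility \eqref{eqn:g_alpha_beta_order_compatibility}. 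I build the $\alpha_0$-components by induction on $l$: for the base case $l = q$, I use the flatness of $\bva{k+1}^p_{\alpha_0}$ over $\cfrk{k+1}$ (equivalently, the splitting \eqref{eqn:locally_free_identification}) to lift $\prescript{k}{}{\mathtt{v}}_{\alpha_0;\alpha_0 \cdots \alpha_\ell;i_0 \cdots i_q}$ to $\mathcal{A}^q(\simplex_q) \otimes \bva{k+1}^p_{\alpha_0}(U_{i_0 \cdots i_q})$; for the inductive step, I assemble the lifts already constructed on the lower-dimensional facets into boundary data on $\simplexbdy_{l+1}$ and apply the Lifting Lemma (Lemma \ref{lem:simplex_lifting_lemma}) with the surjection $\rest{k+1,k}_{\alpha_0}$ to extend across the interior of $\simplex_{l+1}$.

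For the surjectivity of $\rest{k+1,k}: \polyv{k+1}^{p,q}(\glue{}) \to \polyv{k}^{p,q}(\glue{})$, I will apply Lemma \ref{lem:exactness_of_cech_thom_whitney_complex} directly, taking $\ell = 0$ and $\prescript{k+1}{}{\mathtt{w}} := 0$. Any $\prescript{k}{}{\mathtt{v}} \in \polyv{k}^{p,q}(\glue{}) = \text{Ker}(\cechd{k}_0)$ satisfies $\cechd{k}_0(\prescript{k}{}{\mathtt{v}}) = 0 = \prescript{k+1}{}{\mathtt{w}} \ \text{(mod $\mathbf{m}^{k+1}$)}$, so the lemma produces $\prescript{k+1}{}{\mathtt{v}} \in \cech{k+1}^0(\twc{}^{p,q},\glue{})$ lifting $\prescript{k}{}{\mathtt{v}}$ with $\cechd{k+1}_0(\prescript{k+1}{}{\mathtt{v}}) = 0$, i.e.\ lying in $\polyv{k+1}^{p,q}(\glue{})$. (The fact that $\prescript{k+1}{}{\mathtt{v}}$ is indeed a lift is visible from the construction in the proof of that lemma, which starts from an arbitrary lift of $\prescript{k}{}{\mathtt{v}}$ and only modifies it by elements of $\mathbf{m}^{k+1}/\mathbf{m}^{k+2}$.)

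Finally, the surjectivity of the maps $\rest{\infty,k}$ onto $\cech{k}^\ell(\twc{}^{p,q},\glue{})$ and $\polyv{k}^{p,q}(\glue{})$ follows from the surjectivity of each $\rest{j+1,j}$ by iterated lifting: starting from $\prescript{k}{}{\mathtt{v}}$, successively choose lifts $\prescript{k+1}{}{\mathtt{v}}, \prescript{k+2}{}{\mathtt{v}}, \dots$ to build a coherent sequence in the inverse limit $\varprojlim_j \cech{j}^\ell(\twc{}^{p,q},\glue{})$ restricting to $\prescript{k}{}{\mathtt{v}}$. The main (mild) technical point, confined to the simplicial induction in the first step, is verifying that the assembled faces really form an element of $\mathcal{A}^q(\simplexbdy_{l+1}) \otimes \bva{k+1}^p_{\alpha_0}(U_{i_0 \cdots i_{l+1}})$, i.e.\ that the codimension-one pieces agree on their codimension-two overlaps---this is exactly the same combinatorial check as carried out in the proof of Lemma \ref{lem:exactness_of_cech_thom_whitney_complex} and presents no new difficulty.
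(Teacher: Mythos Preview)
Your proposal is correct, and for the $\polyv{}^{p,q}$ part (applying Lemma \ref{lem:exactness_of_cech_thom_whitney_complex} with $\prescript{k+1}{}{\mathtt{w}}=0$) and the inverse-limit part it coincides with the paper's argument.

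For the surjectivity of $\rest{k+1,k}$ on $\cech{k}^\ell(\twc{}^{p,q},\glue{})$, however, you take a different route than the paper. You re-open the simplicial machinery and construct a lift by induction on $l$ using the Lifting Lemma, essentially rerunning a stripped-down version of the proof of Lemma \ref{lem:exactness_of_cech_thom_whitney_complex} (without the \v{C}ech correction step). The paper instead treats Lemma \ref{lem:exactness_of_cech_thom_whitney_complex} as a black box and bootstraps: given a general $\prescript{k}{}{\mathtt{v}} \in \cech{k}^\ell$, it sets $\prescript{k}{}{\mathtt{w}} := \cechd{k}_\ell(\prescript{k}{}{\mathtt{v}})$, observes that $\prescript{k}{}{\mathtt{w}}$ is a \v{C}ech cocycle at level $\ell+1$, lifts $\prescript{k}{}{\mathtt{w}}$ to some $\prescript{k+1}{}{\mathtt{w}}$ (this is the special case $\cechd{k}_{\ell+1}(\cdot)=0$, handled by Lemma \ref{lem:exactness_of_cech_thom_whitney_complex} with target $0$), and then applies Lemma \ref{lem:exactness_of_cech_thom_whitney_complex} once more at level $\ell$ with this $\prescript{k+1}{}{\mathtt{w}}$ to produce the desired lift of $\prescript{k}{}{\mathtt{v}}$. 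The paper's argument is shorter and purely formal; yours is more self-contained and makes explicit that the \v{C}ech-cocycle obstruction in Lemma \ref{lem:exactness_of_cech_thom_whitney_complex} was irrelevant for this particular statement.
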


\begin{proof}
	It suffices to show that for any $\prescript{k}{}{\mathtt{v}} \in  \cech{k}^{\ell}(\twc{},\glue{})$, there exists $\prescript{k+1}{}{\mathtt{v}} \in  \cech{k+1}^{\ell}(\twc{},\glue{})$ such that $\rest{k+1,k}(\prescript{k+1}{}{\mathtt{v}}) = \prescript{k}{}{\mathtt{v}}$. If  $\cechd{k}_{\ell}(\prescript{k}{}{\mathtt{v}}) = 0$, then applying Lemma \ref{lem:exactness_of_cech_thom_whitney_complex} with $\prescript{k+1}{}{\mathtt{w}} = 0$ gives the desired $\prescript{k+1}{}{\mathtt{v}}$. For a general $\prescript{k}{}{\mathtt{v}}$, we let $\prescript{k}{}{\mathtt{w}} = \prescript{k}{}{\delta}(\prescript{k}{}{\mathtt{v}})$. Since $\cechd{k}_{\ell}(\prescript{k}{}{\mathtt{w}} )= 0$, we can find a lifting $\prescript{k+1}{}{\mathtt{w}}$ such that $\rest{k+1,k}(\prescript{k+1}{}{\mathtt{w}}) = \prescript{k}{}{\mathtt{w}}$. Applying Lemma \ref{lem:exactness_of_cech_thom_whitney_complex} again, we obtain $\prescript{k+1}{}{\mathtt{v}}$ satisfying $\rest{k+1,k}(\prescript{k+1}{}{\mathtt{v}}) = \prescript{k}{}{\mathtt{v}}$. 
	\end{proof}

\begin{definition}\label{def:homotopy_cech_thom_whitney_complex}
	Let $\glue{}(0) = \{\glue{k}_{\alpha\beta}(0)\}$ and $\glue{}(1) = \{\glue{k}_{\alpha\beta}(1)\}$ be two sets of compatible gluing morphisms, and $h = \{\gluehom{k}_{\alpha\beta}\}$ be a compatible homotopy from $\glue{}(0)$ to $\glue{}(1)$. 
	For $\ell \geq 0$, we let $\twc{k}^{p,q}_{\alpha_0 \cdots \alpha_{\ell}}(\gluehom{}) \subset \bigoplus_{i=0}^\ell  \twc{k}^{p,q}_{\alpha_i;\alpha_0 \cdots \alpha_{\ell}}(\simplex_1)$ be the set of elements $(\varphi_0,\cdots, \varphi_{\ell})$ such that $ \varphi_{j} = \gluehom{k}_{\alpha_i \alpha_j} (\varphi_i)$. 
	Then,  for each $k \in \inte_{\geq 0}$, the {\em $k^{\text{th}}$-order homotopy \v{C}ech-Thom-Whitney complex} is defined by setting $\cech{k}^\ell(\twc{}^{p,q},\gluehom{}) := \prod_{\alpha_0 \cdots \alpha_\ell} \twc{k}^{p,q}_{\alpha_0 \cdots \alpha_{\ell}}(\gluehom{})$ and $\cech{k}^\ell(\twc{},\gluehom{}) = \bigoplus_{p,q}  \cech{k}^\ell(\twc{}^{p,q},\gluehom{})$. We have the natural restriction map $ \restmap_{j,\ell} : \cech{k}^{\ell-1}(\twc{},\gluehom{}) \rightarrow \cech{k}^{\ell}(\twc{},\gluehom{})$ as in Definition \ref{def:cech_thom_whitney_complex}.
	
	Let $\cechd{k}_\ell:= \sum_{j =0}^{\ell+1}(-1)^j \restmap_{j,\ell+1} : \cech{k}^{\ell} (\twc{},\gluehom{}) \rightarrow \cech{k}^{\ell+1}(\twc{},\gluehom{})$ be the {\em \v{C}ech differential} acting on $\cech{k}^*(\twc{}, \gluehom{})$. Then the {\em $k^{\text{th}}$ order homotopy polyvector field on $X$} is defined as $\polyv{k}^{*,*}(\gluehom{}):= \text{Ker}(\cechd{k}_{0})$. So we have the following sequences
	\begin{align}
	0 \rightarrow \polyv{k}^{p,q}(\gluehom{}) \rightarrow \cech{k}^0(\twc{}^{p,q},\gluehom{}) \rightarrow \cdots \rightarrow \cech{k}^{\ell}(\twc{}^{p,q},\gluehom{}) \rightarrow \cdots  \label{eqn:homotopy_cech_thom_whitney_complex},\\
	0 \rightarrow  \polyv{}^{p,q}(\gluehom{}) \rightarrow \cech{}^0(\twc{}^{p,q},\gluehom{})\rightarrow \cdots \rightarrow \cech{}^{\ell}(\twc{}^{p,q},\gluehom{}) \rightarrow \cdots \label{eqn:homotopy_cech_thom_whitney_complex_all_order},
	\end{align}
	where \eqref{eqn:homotopy_cech_thom_whitney_complex_all_order} is obtained from \eqref{eqn:homotopy_cech_thom_whitney_complex} by taking the inverse limit. We also write $\cech{}^{\ell} (\twc{},\gluehom{}) := \bigoplus_{p,q} \cech{}^\ell (\twc{}^{p,q},\gluehom{}) $. 
	
	We further let $\prescript{k}{}{\mathtt{r}}_j^* : \cech{k}^\ell(\twc{}^{p,q},\gluehom{}) \rightarrow \cech{k}^{\ell}(\twc{}^{p,q},\glue{}(j))$ and $\prescript{k}{}{\mathtt{r}}_j^* : \polyv{k}^{p,q}(\gluehom{}) \rightarrow \polyv{k}^{p,q}(\glue{}(j))$ be the maps induced by $\mathtt{r}_j^* : \mathcal{A}^*(\hsimplex_l) \rightarrow \mathcal{A}^*(\simplex_l)$ for $j=0,1$, and let $\mathtt{r}_j^* := \varprojlim_k \prescript{k}{}{\mathtt{r}}_j^*$. Then we have the following diagram
	$$
	\xymatrix@1{ 			0  \ar[r] &  \polyv{}^{*,*}(\glue{}(0)) \ar[r]  & \cech{}^0( \twc{},\glue{}(0)) \ar[r] & \cdots \ar[r] &   \cech{}^{\ell}( \twc{},\glue{}(0)) \ar[r] &  \cdots \\
		0  \ar[r] & \polyv{}^{*,*}(\gluehom{}) \ar[r] \ar[u]_{\mathtt{r}_0^*} \ar[d]^{\mathtt{r}_1^*} & \cech{}^0(\twc{},\gluehom{}) \ar[r] \ar[u]_{\mathtt{r}_0^*} \ar[d]^{\mathtt{r}_1^*} & \cdots \ar[r] &   \cech{}^{\ell}(\twc{},\gluehom{}) \ar[r] \ar[u]_{\mathtt{r}_0^*} \ar[d]^{\mathtt{r}_1^*} &  \cdots \\
		0  \ar[r] &  \polyv{}^{*,*}(\glue{}(1)) \ar[r]  & \cech{}^0( \twc{},\glue{}(1)) \ar[r] & \cdots \ar[r] &   \cech{}^{\ell}( \twc{},\glue{}(1)) \ar[r] &  \cdots
	}.
	$$
\end{definition}

Similar proofs as those of Lemma \ref{lem:exactness_of_cech_thom_whitney_complex} and Corollary \ref{cor:corollary_to_exactness_of_cech_thom_whitney} yield the following lemma:
\begin{lemma}\label{lem:homotopy_exactness_of_cech_thom_whitney_complex}
	Given $\prescript{k+1}{}{\mathtt{w}} \in  \cech{k+1}^{\ell+1}(\twc{},\gluehom{})$ with $\cechd{k+1}_{\ell+1} (\prescript{k+1}{}{\mathtt{w}} ) = 0$, $\prescript{k+1}{}{\mathtt{a}}_j \in \cech{k+1}^{\ell}(\twc{},\glue{}(j))$ satisfying $\cechd{k+1}_{\ell}(\prescript{k+1}{}{\mathtt{a}}_j) = \prescript{k+1}{}{\mathtt{r}}_j^*(\prescript{k+1}{}{\mathtt{w}})$ and $\prescript{k}{}{\mathtt{v}} \in \cech{k}^{\ell}(\twc{},\gluehom{})$ such that $\cechd{k}_{\ell}( \prescript{k}{}{\mathtt{v}}) = \prescript{k+1}{}{\mathtt{w}} \ \text{(mod $\mathbf{m}^{k+1}$)}$ and $\prescript{k}{}{\mathtt{r}}_j^*(\prescript{k}{}{\mathtt{v}}) = \prescript{k+1}{}{\mathtt{a}}_j \ \text{(mod $\mathbf{m}^{k+1}$)}$, there exists $\prescript{k+1}{}{\mathtt{v}} \in \cech{k+1}^{\ell}(\twc{},\gluehom{})$ such that\\ $\rest{k+1,k}(\prescript{k+1}{}{\mathtt{v}} ) = \prescript{k}{}{\mathtt{v}} $, $\prescript{k+1}{}{\mathtt{r}}_j^*(\prescript{k+1}{}{\mathtt{v}}) = \prescript{k+1}{}{\mathtt{a}}_j$ and $\cechd{k+1}_{\ell} (\prescript{k+1}{}{\mathtt{v}}) = \prescript{k+1}{}{\mathtt{w}}$. 
	As a consequence, both \eqref{eqn:homotopy_cech_thom_whitney_complex} and \eqref{eqn:homotopy_cech_thom_whitney_complex_all_order} are exact sequences.
	
	Furthermore, the maps $\rest{\infty,k} : \cech{}^{\ell}(\twc{}^{p,q},\gluehom{}) \rightarrow \cech{k}^{\ell}(\twc{}^{p,q},\gluehom{})$ and $\rest{\infty,k} :  \polyv{}^{p,q}(\gluehom{}) \rightarrow \polyv{k}^{p,q}(\gluehom{})$, as well as $\prescript{k}{}{\mathtt{r}}_j^* :  \polyv{k}^{*,*}(\gluehom{}) \rightarrow  \polyv{k}^{*,*}(\glue{}(j))$ and $\mathtt{r}_j^* :  \polyv{}^{*,*}(\gluehom{}) \rightarrow  \polyv{}^{*,*}(\glue{}(j))$ are all surjective.
\end{lemma}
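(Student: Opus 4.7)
The plan is to adapt the proof of Lemma \ref{lem:exactness_of_cech_thom_whitney_complex} essentially verbatim, replacing the standard simplex $\simplex_l$ everywhere by the homotopy simplex $\hsimplex_l = \simplex_1 \times \simplex_l$ and the extension lemma \ref{lem:simplicial_de_rham_extension} (equivalently, the lifting lemma \ref{lem:simplex_lifting_lemma}) by its $\hsimplex$-version encoded in Lemma \ref{lem:homotopy_simplicial_de_rham_extension}. The extra subtlety compared to Lemma \ref{lem:exactness_of_cech_thom_whitney_complex} is that now the ``boundary data'' of the homotopy simplex $\hsimplexbdy_l$ have two kinds of faces: the $\mathtt{d}^*_{j,l}$-faces (the \v{C}ech direction in $\mathcal{I}$) and the two endpoint faces $\mathtt{r}^*_j$ at $\mathtt{t}_j=1$, and the latter are now prescribed by the data $\prescript{k+1}{}{\mathtt{a}}_0, \prescript{k+1}{}{\mathtt{a}}_1$.

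Concretely, after reducing to homogeneous degree $\prescript{k+1}{}{\mathtt{w}}\in\cech{k+1}^{\ell+1}(\twc{}^{p,q},\gluehom{})$, I would run an induction on the simplicial dimension $l\ge q$. In the initial step $l=q$, for each fixed $(i_0,\dots,i_q)$ and $(\alpha_0,\dots,\alpha_\ell)$ with $U_{i_r}\subset V_{\alpha_0\cdots\alpha_\ell}$, use freeness of $\bva{k+1}^p_{\alpha_0}$ over $\cfrk{k+1}$ together with the $\hsimplex$-version of Lemma \ref{lem:simplex_lifting_lemma} to produce a local lift of $\prescript{k}{}{\mathtt{v}}_{\alpha_0;\alpha_0\cdots\alpha_\ell;i_0\cdots i_q}$ whose $\mathtt{r}_j^*$-restrictions coincide with the components of $\prescript{k+1}{}{\mathtt{a}}_j$; then propagate the lift to every $\alpha_r$ via $\gluehom{k+1}_{\alpha_0\alpha_r,i_0\cdots i_q}$. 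In the induction step $l-1\Rightarrow l$, assemble the required boundary datum in $\mathcal{A}^q(\hsimplexbdy_l)\otimes \bva{k+1}^p_{\alpha_0}(U_{i_0\cdots i_l})$ out of the previously constructed terms (on the $\simplexbdy_l$-faces) together with the appropriate components of $\prescript{k+1}{}{\mathtt{a}}_0,\prescript{k+1}{}{\mathtt{a}}_1$ (on the two endpoint faces); the compatibility conditions of Notation \ref{not:de_rham_form_on_homotopy_simplex_boundary} needed to invoke Lemma \ref{lem:homotopy_simplicial_de_rham_extension} are exactly the hypotheses $\prescript{k}{}{\mathtt{r}}_j^*(\prescript{k}{}{\mathtt{v}}) = \prescript{k+1}{}{\mathtt{a}}_j \ \text{(mod $\mathbf{m}^{k+1}$)}$ together with $\cechd{k+1}_{\ell}(\prescript{k+1}{}{\mathtt{a}}_j) = \prescript{k+1}{}{\mathtt{r}}_j^*(\prescript{k+1}{}{\mathtt{w}})$.

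After propagating the local lift by the homotopy gluing $\gluehom{k+1}$, the failure of $\cechd{k+1}_{\ell}$-closedness of the resulting preliminary element is an obstruction $\reallywidetilde{\prescript{k+1}{}{\mathtt{w}}}$ lying in $\check{\mathcal{C}}^{\ell+1}(I_{i_0\cdots i_l},\bva{0}^p)\otimes\mathcal{A}^q(\hsimplex_l)\otimes(\mathbf{m}^{k+1}/\mathbf{m}^{k+2})$ via the local-freeness identification \eqref{eqn:locally_free_identification}; moreover a direct check using the prescribed endpoints forces its restriction to $\hsimplexbdy_l$ to vanish. Hence Lemma \ref{lem:cech_complex_of_a_point}, applied coefficientwise in $\mathcal{A}^q_0(\hsimplex_l)\otimes(\mathbf{m}^{k+1}/\mathbf{m}^{k+2})$, yields a $\check{\mathcal{C}}^\ell$-correction that vanishes on $\hsimplexbdy_l$, so it fixes the cocycle obstruction without disturbing any of the already-matched boundary or endpoint conditions. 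The exact sequence claims then follow by induction on $k$ starting from the trivial case $k=-1$, exactly as in Corollary \ref{cor:corollary_to_exactness_of_cech_thom_whitney}.

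The main technical obstacle is purely bookkeeping: at every step one must simultaneously ensure (a) $\mathtt{r}_j^*$-restriction to $\prescript{k+1}{}{\mathtt{a}}_j$, (b) matching of the $\mathtt{d}_{j,l}^*$-faces, (c) $\gluehom{k+1}$-equivariance across different $\alpha_r$, and (d) reduction modulo $\mathbf{m}^{k+1}$ back to $\prescript{k}{}{\mathtt{v}}$. These four conditions are mutually consistent precisely because of the stated cocycle identities together with the homotopy-compatibility relations of Definition \ref{def:homotopy_compatible_gluing_morphism}; the argument is therefore a careful but routine diagram chase once the correct extension lemma is in place. Finally, surjectivity of $\rest{\infty,k}$ is immediate by iterating the lemma with $\prescript{k+1}{}{\mathtt{w}}=0$, as in Corollary \ref{cor:corollary_to_exactness_of_cech_thom_whitney}; surjectivity of $\prescript{k}{}{\mathtt{r}}_j^*$ follows by the same inductive scheme where one feeds $\prescript{k+1}{}{\mathtt{w}}=0$, a prescribed $\prescript{k+1}{}{\mathtt{a}}_j$, and the trivial $\prescript{k+1}{}{\mathtt{a}}_{1-j}=0$ into the lemma, the required compatibilities being automatic in this degenerate situation.
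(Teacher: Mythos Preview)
Your proposal is essentially the paper's own approach: the paper says only that ``similar proofs as those of Lemma~\ref{lem:exactness_of_cech_thom_whitney_complex} and Corollary~\ref{cor:corollary_to_exactness_of_cech_thom_whitney} yield'' the result, and what you describe is exactly the expected adaptation, replacing $\simplex_l$ by $\hsimplex_l$, using Lemma~\ref{lem:homotopy_simplicial_de_rham_extension} (and the $\hsimplex$-case of Lemma~\ref{lem:simplex_lifting_lemma}) in place of Lemma~\ref{lem:simplicial_de_rham_extension}, and incorporating the two endpoint faces $\mathtt{r}_0^*,\mathtt{r}_1^*$ into the boundary data.

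One small point in your final sentence: for the surjectivity of $\prescript{k}{}{\mathtt{r}}_j^*$, the trick of feeding $\prescript{k+1}{}{\mathtt{a}}_{1-j}=0$ into the main statement does not quite close, because the hypothesis $\prescript{k}{}{\mathtt{r}}_{1-j}^*(\prescript{k}{}{\mathtt{v}}) = 0 \pmod{\mathbf{m}^{k+1}}$ is not supplied by the induction hypothesis on $k$. The clean fix is either to weaken the lemma so that only one endpoint is prescribed (the same proof works, since the $\hsimplex$-lifting lemma allows free choice on any unspecified face), or---as the paper's commented-out argument does---to run the same induction on $l$ directly: at $l=q$ take any lift in $\mathcal{A}^0(\simplex_1)\otimes\mathcal{A}^q(\simplex_l)\otimes\bva{k}^p_{\alpha_0}(U_{i_0\cdots i_l})$ matching the prescribed $\mathtt{r}_j^*$-value, and in the induction step use Lemma~\ref{lem:homotopy_simplicial_de_rham_extension} with only the $\mathtt{d}_{r,l}^*$-faces and the single $\mathtt{r}_j^*$-face prescribed. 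Either route is routine once you have the main argument.
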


\subsection{An almost dgBV algebra structure}\label{sec:construction_of_differentials}
The complex $\polyv{}^{*,*}(\glue{})$ (as well as $ \polyv{}^{*,*}(\gluehom{})$) constructed in \S \ref{sec:the_Cech_thom_whitney_complex} is only a graded vector space. In this subsection, we equip it with two differential operators $\bar{\partial}$ and $\bvd{}$, turning it into an almost dgBV algebra. 

We fix a set of compatible gluing morphisms $\glue{} = \{\glue{k}_{\alpha \beta}\}$ consisting of isomorphisms $\glue{k}_{\alpha\beta} : \twc{k}^{*,*}_{\alpha,\alpha\beta} \rightarrow \twc{k}^{*,*}_{\beta,\alpha\beta}$ for each $k \in \inte_{\geq 0}$ and pair $V_\alpha, V_\beta \subset \mathcal{V}$. Both $\twc{k}^{*,*}_{\alpha,\alpha\beta}$ and $\twc{k}^{*,*}_{\beta,\alpha\beta}$ are dgBV algebras with differentials and BV operators given by $\prescript{k}{}{\pdb}_{\alpha}$, $\prescript{k}{}{\pdb}_\beta$ and $\bvd{k}_{\alpha}$, $\bvd{k}_{\beta}$ respectively. 

To simplify notations, we introduce the power series
\begin{equation}\label{eqn:power_series_in_proof_gluing_morphism_BV_operator_comparison}
\mathtt{T}(x): = \frac{e^{-x}-1}{x} = \sum_{k=0}^{\infty} \frac{(-1)^{k+1} x^{k}}{(k+1)!}. 
\end{equation}

\begin{lemma}\label{lem:gluing_morphism_partial_bar_comparison}
	For each $k \in \inte_{\geq 0}$ and pair $V_\alpha, V_\beta \subset \mathcal{V}$, there exists $\prescript{k}{}{\mathtt{w}}_{\alpha;\alpha\beta} \in \twc{k}^{-1,1}_{\alpha,\alpha\beta}$ such that $\prescript{k}{}{\mathtt{w}}_{\alpha;\alpha\beta} = 0 \ \text{(mod $\mathbf{m}$)}$, $\prescript{k+1}{}{\mathtt{w}}_{\alpha;\alpha\beta} = \prescript{k}{}{\mathtt{w}}_{\alpha;\alpha\beta}  \ \text{(mod $\mathbf{m}^{k+1}$)}$ and
	$
	\glue{k}_{\beta \alpha} \circ \prescript{k}{}{\pdb}_{\beta} \circ \glue{k}_{\alpha\beta} - \prescript{k}{}{\pdb}_{\alpha} = [\prescript{k}{}{\mathtt{w}}_{\alpha;\alpha\beta}, \cdot].
	$
	Furthermore, if we let $\prescript{k}{}{\mathtt{w}}_{\alpha\beta}:=(\prescript{k}{}{\mathtt{w}}_{\alpha;\alpha\beta}, \glue{k}_{\alpha\beta}(\prescript{k}{}{\mathtt{w}}_{\alpha;\alpha\beta}))$, then $(\prescript{k}{}{\mathtt{w}}_{\alpha\beta})_{\alpha\beta}$ is a \v{C}ech $1$-cocycle in $\cech{k}^1(\twc{}^{-1,1},\glue{})$.
\end{lemma}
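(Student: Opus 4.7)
The strategy is to derive an explicit local formula for $\prescript{k}{}{\mathtt{w}}_{\alpha;\alpha\beta}$ on each simplex $\simplex_l$ associated to a multi-index $(i_0,\ldots,i_l)$, then use an abstract uniqueness argument to check all the required global compatibilities.

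\emph{Step 1 (local formula).} Using Condition \ref{assum:induction_hypothesis}, I write
$\glue{k}_{\alpha\beta, i_0 \cdots i_l} = \exp([\sauto{k}_{\alpha\beta, i_0 \cdots i_l},\cdot]) \circ \exp([\iauto{k}_{\alpha\beta,i_0},\cdot]) \circ \patch{k}_{\alpha\beta,i_0}.$
Since $\patch{k}_{\alpha\beta,i_0}$ is $\mathcal{A}^*(\simplex_l)$-linear (it acts only on the $\bva{k}$-factor) and $\pdb$ acts only on the simplex factor, $\patch{k}_{\alpha\beta,i_0}$ commutes with $\pdb$, and similarly $\pdb_\beta(\iauto{k}_{\alpha\beta,i_0}) = 0$ because $\iauto{k}_{\alpha\beta,i_0}$ is independent of the simplex coordinates. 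Hence $\glue{k}_{\beta\alpha,i_0}\circ \pdb_\beta \circ \glue{k}_{\alpha\beta,i_0} = \pdb_\alpha$. Applying Lemma \ref{lem:gauge_action_on_differential} to the outer gauge factor $\exp([\sauto{k}_{\alpha\beta, i_0 \cdots i_l},\cdot])$ (with $d = \pdb_\beta$, $\xi = 0$) yields
\[
\glue{k}_{\beta\alpha, i_0 \cdots i_l}\circ \pdb_\beta \circ \glue{k}_{\alpha\beta, i_0 \cdots i_l} - \pdb_\alpha = \bigl[\prescript{k}{}{\mathtt{w}}_{\alpha;\alpha\beta, i_0 \cdots i_l}, \cdot\,\bigr],
\]
where
\[
\prescript{k}{}{\mathtt{w}}_{\alpha;\alpha\beta, i_0 \cdots i_l} := \glue{k}_{\beta\alpha, i_0}\left(\frac{1 - e^{-\text{ad}_{\sauto{k}_{\alpha\beta, i_0 \cdots i_l}}}}{\text{ad}_{\sauto{k}_{\alpha\beta, i_0 \cdots i_l}}}\bigl(\pdb_\beta \sauto{k}_{\alpha\beta, i_0 \cdots i_l}\bigr)\right) \in \mathcal{A}^1(\simplex_l) \otimes \bva{k}^{-1}_\alpha(U_{i_0 \cdots i_l}).
\]
Degree counts give $\prescript{k}{}{\mathtt{w}}_{\alpha;\alpha\beta,i_0 \cdots i_l} \in \mathcal{A}^1(\simplex_l)\otimes \bva{k}^{-1}_{\alpha}(U_{i_0 \cdots i_l})$, since $\text{ad}_{\sauto{k}}$ has bidegree $(0,0)$.

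\emph{Step 2 (simplicial coherence and inner-derivation uniqueness).} To assemble these local elements into a genuine element $\prescript{k}{}{\mathtt{w}}_{\alpha;\alpha\beta} \in \twc{k}^{-1,1}_{\alpha;\alpha\beta}$, I invoke uniqueness. The operator $D := \glue{k}_{\beta\alpha}\circ \pdb_\beta \circ \glue{k}_{\alpha\beta} - \pdb_\alpha$ is a well-defined degree-$(0,1)$ derivation on $\twc{k}^{*,*}_{\alpha;\alpha\beta}$ (since $\glue{k}_{\alpha\beta}$ maps Thom-Whitney elements to Thom-Whitney elements by Lemma \ref{lem:preserving_thom_whitney_condition}, and is an algebra isomorphism). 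In particular $D$ commutes with $\mathtt{d}_{j,l}^*$. Testing the identity $D(\varphi)_{i_0\cdots i_l} = [\prescript{k}{}{\mathtt{w}}_{\alpha;\alpha\beta,i_0\cdots i_l},\varphi_{i_0\cdots i_l}]$ against $\mathtt{d}_{j,l}^*$ and using the injectivity of $\bva{k}^{-1}_\alpha \to \text{Der}(\bva{k}^0_\alpha)$ (extended $\comp$-linearly by $\mathcal{A}^*(\simplex_{l-1})$, which is a free $\comp$-module) forces $\mathtt{d}_{j,l}^*(\prescript{k}{}{\mathtt{w}}_{\alpha;\alpha\beta,i_0\cdots i_l}) = \prescript{k}{}{\mathtt{w}}_{\alpha;\alpha\beta,i_0\cdots \widehat{i_j}\cdots i_l}|_{U_{i_0\cdots i_l}}$, which is the defining coherence of $\twc{k}^{-1,1}_{\alpha;\alpha\beta}$.

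\emph{Step 3 (vanishing mod $\mathbf{m}$ and compatibility between orders).} At $k=0$ we have $\glue{0}_{\alpha\beta} = \text{id}$ and $\sauto{0} = 0$, so $\prescript{0}{}{\mathtt{w}}_{\alpha;\alpha\beta} = 0$; this gives $\prescript{k}{}{\mathtt{w}}_{\alpha;\alpha\beta} = 0 \ \text{(mod $\mathbf{m}$)}$ for all $k$. Next, the compatibility $\rest{k+1,k}_\beta\circ\glue{k+1}_{\alpha\beta} = \glue{k}_{\alpha\beta}\circ \rest{k+1,k}_\alpha$ from \eqref{eqn:g_alpha_beta_order_compatibility}, applied to the derivation $D$, shows that $\rest{k+1,k}_\alpha$ of the order-$(k+1)$ derivation equals the order-$k$ derivation. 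By the uniqueness argument of Step 2, $\rest{k+1,k}_{\alpha}(\prescript{k+1}{}{\mathtt{w}}_{\alpha;\alpha\beta}) = \prescript{k}{}{\mathtt{w}}_{\alpha;\alpha\beta}$, i.e., $\prescript{k+1}{}{\mathtt{w}}_{\alpha;\alpha\beta} = \prescript{k}{}{\mathtt{w}}_{\alpha;\alpha\beta} \ \text{(mod $\mathbf{m}^{k+1}$)}$.

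\emph{Step 4 (\v{C}ech cocycle).} For a triple $V_\alpha,V_\beta,V_\gamma$, the cocycle condition \eqref{eqn:g_alpha_beta_cocycle} rewrites as $\glue{k}_{\alpha\gamma} = \glue{k}_{\beta\gamma}\circ\glue{k}_{\alpha\beta}$ and $\glue{k}_{\gamma\alpha} = \glue{k}_{\beta\alpha}\circ\glue{k}_{\gamma\beta}$. Therefore, on $\twc{k}^{*,*}_{\alpha;\alpha\beta\gamma}$,
\[
\glue{k}_{\gamma\alpha}\circ\pdb_\gamma\circ\glue{k}_{\alpha\gamma} - \pdb_\alpha
= \glue{k}_{\beta\alpha}\circ\bigl(\pdb_\beta + [\prescript{k}{}{\mathtt{w}}_{\beta;\beta\gamma},\cdot]\bigr)\circ\glue{k}_{\alpha\beta} - \pdb_\alpha
= \bigl[\prescript{k}{}{\mathtt{w}}_{\alpha;\alpha\beta} + \glue{k}_{\beta\alpha}(\prescript{k}{}{\mathtt{w}}_{\beta;\beta\gamma}),\cdot\bigr].
\]
Comparing with $[\prescript{k}{}{\mathtt{w}}_{\alpha;\alpha\gamma},\cdot]$ and invoking uniqueness once more gives $\prescript{k}{}{\mathtt{w}}_{\alpha;\alpha\gamma} = \prescript{k}{}{\mathtt{w}}_{\alpha;\alpha\beta} + \glue{k}_{\beta\alpha}(\prescript{k}{}{\mathtt{w}}_{\beta;\beta\gamma})$, which is the \v{C}ech $1$-cocycle identity for $(\prescript{k}{}{\mathtt{w}}_{\alpha\beta})_{\alpha\beta}$ in $\cech{k}^1(\twc{}^{-1,1},\glue{})$.

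The main technical obstacle is Step 2: one needs the simplicial coherence of the locally defined elements $\prescript{k}{}{\mathtt{w}}_{\alpha;\alpha\beta,i_0\cdots i_l}$, and a direct computation using the coherence relations \eqref{eqn:alpha_beta_gluing_explicit_form_on_simplex_boundary_relation} for the $\sauto{k}_{\alpha\beta,i_0\cdots i_l}$'s is somewhat involved. Bypassing this via the uniqueness of inner-derivation representatives (using injectivity of $\bva{k}^{-1}_\alpha\to\text{Der}(\bva{k}^0_\alpha)$) turns this into a short argument and simultaneously takes care of all the compatibility conditions in Steps 3 and 4.
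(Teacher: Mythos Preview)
Your proof is correct and follows essentially the same approach as the paper: both derive the explicit local formula
\[
\prescript{k}{}{\mathtt{w}}_{\alpha;\alpha\beta, i_0\cdots i_l} = -\glue{k}_{\beta \alpha,i_0}\!\left( \tfrac{e^{-\text{ad}_{\sauto{k}_{\alpha\beta,i_0\cdots i_l}}}-1}{\text{ad}_{\sauto{k}_{\alpha\beta,i_0\cdots i_l}}} (d \sauto{k}_{\alpha\beta,i_0\cdots i_l}) \right)
\]
via Lemma~\ref{lem:gauge_action_on_differential}, and both use the injectivity $\bva{k}^{-1}_\alpha \hookrightarrow \text{Der}(\bva{k}^0_\alpha)$ to verify simplicial coherence and the \v{C}ech cocycle identity. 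The only minor difference is that the paper reads the order compatibility $\prescript{k+1}{}{\mathtt{w}} = \prescript{k}{}{\mathtt{w}} \ (\text{mod }\mathbf{m}^{k+1})$ directly from this formula (using $\sauto{k+1}=\sauto{k}\ (\text{mod }\mathbf{m}^{k+1})$), whereas you route it through the uniqueness principle; both are equally valid.
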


\begin{proof}
	Applying Lemma \ref{lem:gauge_action_on_differential} to the dgLa $\mathcal{A}^*(\simplex_l)\otimes \bva{k}^{*}_{\beta}(U_{i_0\cdots i_l})[-1]$ by taking $\xi = 0$, we get
	\begin{multline*}
	\exp(-[\sauto{k}_{\alpha\beta,i_0 \cdots i_l}, \cdot]) \circ d  \circ \exp([\sauto{k}_{\alpha\beta,i_0 \cdots i_l}, \cdot]) =\\d - \Big[\mathtt{T}([\sauto{k}_{\alpha\beta,i_0\cdots i_l}, \cdot ]) (d \sauto{k}_{\alpha\beta,i_0\cdots i_l}), \cdot\Big],
	\end{multline*}
	where $d$ is the de Rham differential acting on $\mathcal{A}^*(\simplex_l)$ (recall that $\prescript{k}{}{\pdb}_{\beta}$ is induced by the de Rham differential on $\mathcal{A}^*(\simplex_l)$).
	Then using \eqref{eqn:alpha_beta_gluing_explicit_form_on_simplex} in Condition \ref{assum:induction_hypothesis} (i.e. $\glue{k}_{\alpha\beta,i_0 \cdots i_l} = \exp([\sauto{k}_{\alpha\beta,i_0 \cdots i_l},\cdot]) \circ (\glue{k}_{\alpha\beta,i_0}|_{U_{i_0 \cdots i_l}})$), we obtain 
	\begin{multline*}
	\glue{k}_{\beta\alpha,i_0 \cdots i_l} \circ d \circ  \glue{k}_{\alpha\beta, i_0 \cdots i_l} = d - \Big[\glue{k}_{\beta \alpha,i_0} \Big( \mathtt{T}([\sauto{k}_{\alpha\beta,i_0\cdots i_l}, \cdot ]) (d \sauto{k}_{\alpha\beta,i_0\cdots i_l}) \Big), \cdot\Big].
	\end{multline*}
	
	Now we put
	$$
	\prescript{k}{}{\mathtt{w}}_{\alpha;\alpha\beta, i_0\cdots i_l} : = -\glue{k}_{\beta \alpha,i_0} \Big( \mathtt{T}([\sauto{k}_{\alpha\beta,i_0\cdots i_l}, \cdot ] ) (d \sauto{k}_{\alpha\beta,i_0\cdots i_l}) \Big).
	$$
	Then $\prescript{k+1}{}{\mathtt{w}}_{\alpha;\alpha\beta, i_0\cdots i_l} = \prescript{k}{}{\mathtt{w}}_{\alpha;\alpha\beta, i_0\cdots i_l} \ \text{(mod $\mathbf{m}^{k+1}$)}$. To check that it is well-defined as an element in $\twc{k}^{-1,1}_{\alpha;\alpha\beta}$, we need to show $\mathtt{d}_{r,l}^*(\prescript{k}{}{\mathtt{w}}_{\alpha;\alpha\beta, i_0\cdots i_l}) = \prescript{k}{}{\mathtt{w}}_{\alpha; \alpha\beta,i_0 \cdots \widehat{i_r} \cdots i_l}$, and by injectivity of $\bva{k}^{-1}_{\alpha} \hookrightarrow \text{Der}(\bva{k}^{0}_\alpha)$ it is enough to show 
	 $
	d + [\mathtt{d}_{r,l}^*(\prescript{k}{}{\mathtt{w}}_{\alpha;\alpha\beta, i_0\cdots i_l}), \cdot ] = d + [\prescript{k}{}{\mathtt{w}}_{\alpha; \alpha\beta,i_0 \cdots \widehat{i_r} \cdots i_l}, \cdot].
	$  We compute the only non-trivial term (i.e. $r=0$) as
	\begin{multline*}
	d + [\mathtt{d}_{0,l}^*(\prescript{k}{}{\mathtt{w}}_{\alpha;\alpha\beta, i_0\cdots i_l}), \cdot ]=\\
	  d  -[\glue{k}_{\beta \alpha,i_0} \big( \mathtt{T}([\sauto{k}_{\alpha\beta,\widehat{i_0}\cdots i_l} \bchprod \autoij{k}_{\alpha\beta, i_0 i_1}, \cdot ] ) d (\sauto{k}_{\alpha\beta,i_0\cdots i_l} \bchprod \autoij{k}_{\alpha\beta, i_0 i_1})  \big) ,\cdot ]\\
	\stackrel{\text{Lemma }\ref{lem:gauge_action_on_differential}}{=}   d - [\glue{k}_{\beta \alpha,i_0} \exp(-[\autoij{k}_{\alpha\beta, i_0 i_1}, \cdot ]) \big( \mathtt{T}([\sauto{k}_{\alpha\beta,\widehat{i_0}\cdots i_l}, \cdot ] ) (d \sauto{k}_{\alpha\beta,\widehat{i_0}\cdots i_l}) \big), \cdot]\\
\stackrel{\eqref{eqn:alpha_beta_gluing_relation_between_i_0_and_i_1}}{=}  d - [\glue{k}_{\beta \alpha,i_1}  \big( \mathtt{T}([\sauto{k}_{\alpha\beta,\widehat{i_0}\cdots i_l}, \cdot ] )(d \sauto{k}_{\alpha\beta,\widehat{i_0}\cdots i_l}) \big), \cdot]
	=  d - [\prescript{k}{}{\mathtt{w}}_{\alpha; \alpha\beta, \widehat{i_0} \cdots i_l}, \cdot],
	\end{multline*}
	where we use the first formula in Lemma \ref{lem:gauge_action_on_differential} and $d (\autoij{k}_{\alpha\beta, i_0 i_1}) =0$ to achieve the equality.

	To see that it is a \v{C}ech cocycle, we deduce from its definition that $[\glue{k}_{\alpha\beta} ( \prescript{k}{}{\mathtt{w}}_{\alpha;\alpha\beta}), \cdot] = \prescript{k}{}{\pdb}_{\beta} - \glue{k}_{\alpha\beta} \circ  \prescript{k}{}{\pdb}_{\alpha} \circ \glue{k}_{\beta \alpha}$, or equivalently 	$
	\glue{k}_{\beta \alpha} \circ \prescript{k}{}{\pdb}_{\beta} \circ \glue{k}_{\alpha\beta} - \prescript{k}{}{\pdb}_{\alpha} = [\prescript{k}{}{\mathtt{w}}_{\alpha;\alpha\beta}, \cdot]
	$. Thus, by a similar computation we have $\glue{k}_{\gamma \alpha} \circ \prescript{k}{}{\pdb}_{\gamma} \circ \glue{k}_{\alpha\gamma} - \prescript{k}{}{\pdb}_{\alpha} = [\prescript{k}{}{\mathtt{w}}_{\alpha;\alpha\gamma},\cdot]$ and $[\glue{k}_{\beta \alpha} ( \prescript{k}{}{\mathtt{w}}_{\beta;\beta\gamma}), \cdot] =\glue{k}_{\gamma \alpha} \circ  \prescript{k}{}{\pdb}_{\gamma} \circ \glue{k}_{\alpha\gamma} - \glue{k}_{\beta \alpha} \circ  \prescript{k}{}{\pdb}_{\beta} \circ \glue{k}_{\alpha\beta} $, we get
	$
	[\prescript{k}{}{\mathtt{w}}_{\alpha;\alpha\beta} - \prescript{k}{}{\mathtt{w}}_{\alpha;\alpha\gamma} + \glue{k}_{\beta \alpha}( \prescript{k}{}{\mathtt{w}}_{\beta;\beta\gamma}),\cdot]
	=  0
	$
	and can conclude that $\prescript{k}{}{\mathtt{w}}_{\alpha;\alpha\beta} - \prescript{k}{}{\mathtt{w}}_{\alpha;\alpha\gamma} + \glue{k}_{\beta \alpha}( \prescript{k}{}{\mathtt{w}}_{\beta;\beta\gamma}) = 0$. 
\end{proof}

We have a similar result concerning the difference between the BV operators $\bvd{k}_{\alpha}$ and $\bvd{k}_{\beta}$:

\begin{lemma}\label{lem:gluing_morphism_BV_operator_comparison}
	For each $k \in \inte_{\geq 0}$ and pair $V_\alpha, V_\beta \subset \mathcal{V}$, there exists $\prescript{k}{}{\mathtt{f}}_{\alpha;\alpha\beta} \in \twc{k}^{0,0}_{\alpha,\alpha\beta}$ such that $\prescript{k}{}{\mathtt{f}}_{\alpha;\alpha\beta} = 0 \ \text{(mod $\mathbf{m}$)}$, $\prescript{k+1}{}{\mathtt{f}}_{\alpha;\alpha\beta} = \prescript{k}{}{\mathtt{f}}_{\alpha;\alpha\beta} \ \text{(mod $\mathbf{m}^{k+1}$)}$, and
	$
	\glue{k}_{\beta \alpha} \circ \bvd{k}_{\beta} \circ \glue{k}_{\alpha\beta} - \bvd{k}_{\alpha} = [\prescript{k}{}{\mathtt{f}}_{\alpha;\alpha\beta}, \cdot].
	$
	Furthermore, if we let $\prescript{k}{}{\mathtt{f}}_{\alpha\beta}:=(\prescript{k}{}{\mathtt{f}}_{\alpha;\alpha\beta}, \glue{k}_{\alpha\beta}(\prescript{k}{}{\mathtt{f}}_{\alpha;\alpha\beta}))$, then $(\prescript{k}{}{\mathtt{f}}_{\alpha\beta})_{\alpha\beta}$ is a \v{C}ech $1$-cocycle in $\cech{k}^1(\twc{}^{0,0},\glue{})$.
\end{lemma}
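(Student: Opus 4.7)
\medskip

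\noindent\textbf{Proof plan.} The argument is completely parallel to that of Lemma~\ref{lem:gluing_morphism_partial_bar_comparison}: we produce $\prescript{k}{}{\mathtt{f}}_{\alpha;\alpha\beta}$ by an explicit formula obtained from Lemma~\ref{lem:gauge_action_on_differential}, now applied to the dgLa $(\bva{k}^{*}_{\beta}[-1],\bvd{k}_{\beta},[\cdot,\cdot])$, rather than to the simplicial de~Rham differential. The only genuinely new ingredient in the base case is equation \eqref{eqn:higher_order_bvd_different}, which supplies the initial discrepancy $[\bvdobs{k}_{\alpha\beta,i_0},\cdot]$ between $\bvd{k}_{\alpha}$ and $\patch{k}_{\beta\alpha,i_0}\circ\bvd{k}_{\beta}\circ\patch{k}_{\alpha\beta,i_0}$.

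\smallskip

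\noindent\emph{Step 1 (local formula).} For a single $U_{i_0}\subset V_{\alpha\beta}$, combine \eqref{eqn:alpha_beta_gluing_explicit_form_at_point} with Lemma~\ref{lem:gauge_action_on_differential} (taking $\vartheta = -\iauto{k}_{\alpha\beta,i_0}$, $\xi=0$ and $d=\bvd{k}_{\beta}$) to conjugate $\bvd{k}_{\beta}$ through the exponential factor $\exp([\iauto{k}_{\alpha\beta,i_0},\cdot])$, producing a correction of the form $[\tfrac{e^{-\text{ad}_{\iauto{k}_{\alpha\beta,i_0}}}-1}{-\text{ad}_{\iauto{k}_{\alpha\beta,i_0}}}(\bvd{k}_{\beta}\iauto{k}_{\alpha\beta,i_0}),\cdot]$. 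Combining with \eqref{eqn:higher_order_bvd_different} gives the candidate
\[
\prescript{k}{}{\mathtt{f}}_{\alpha;\alpha\beta,i_0}\;:=\;\bvdobs{k}_{\alpha\beta,i_0}\;-\;\patch{k}_{\beta\alpha,i_0}\!\left(\tfrac{e^{-\text{ad}_{\iauto{k}_{\alpha\beta,i_0}}}-1}{\text{ad}_{\iauto{k}_{\alpha\beta,i_0}}}\bigl(\bvd{k}_{\beta}\iauto{k}_{\alpha\beta,i_0}\bigr)\right),
\]
which lies in $\bva{k}^{0}_{\alpha}(U_{i_0})$ and vanishes modulo $\mathbf{m}$. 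The order-compatibility $\prescript{k+1}{}{\mathtt{f}}=\prescript{k}{}{\mathtt{f}}\ (\mathrm{mod}\ \mathbf{m}^{k+1})$ is inherited from that of $\iauto{\bullet}_{\alpha\beta,i_0}$ and $\bvdobs{\bullet}_{\alpha\beta,i_0}$.

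\smallskip

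\noindent\emph{Step 2 (extension to simplices and Thom--Whitney compatibility).} For $U_{i_0\cdots i_l}\subset V_{\alpha\beta}$, use \eqref{eqn:alpha_beta_gluing_explicit_form_on_simplex} and apply Lemma~\ref{lem:gauge_action_on_differential} a second time to conjugate through $\exp([\sauto{k}_{\alpha\beta,i_0\cdots i_l},\cdot])$, obtaining
\[
\prescript{k}{}{\mathtt{f}}_{\alpha;\alpha\beta,i_0\cdots i_l}\;:=\;\prescript{k}{}{\mathtt{f}}_{\alpha;\alpha\beta,i_0}\bigl|_{U_{i_0\cdots i_l}}\;-\;\glue{k}_{\beta\alpha,i_0}\!\left(\tfrac{e^{-\text{ad}_{\sauto{k}_{\alpha\beta,i_0\cdots i_l}}}-1}{\text{ad}_{\sauto{k}_{\alpha\beta,i_0\cdots i_l}}}\bigl(\bvd{k}_{\beta}\sauto{k}_{\alpha\beta,i_0\cdots i_l}\bigr)\right)\in\mathcal{A}^{0}(\simplex_l)\otimes\bva{k}^{0}_{\alpha}(U_{i_0\cdots i_l}).
\]
The simplicial boundary relations $\mathtt{d}_{j,l}^{*}(\prescript{k}{}{\mathtt{f}}_{\alpha;\alpha\beta,i_0\cdots i_l})=\prescript{k}{}{\mathtt{f}}_{\alpha;\alpha\beta,i_0\cdots\widehat{i_j}\cdots i_l}$ then follow from \eqref{eqn:alpha_beta_gluing_explicit_form_on_simplex_boundary_relation}: the case $j>0$ is immediate, while the case $j=0$ uses the defining identity $\exp([\autoij{k}_{\alpha\beta,i_0i_1},\cdot])\circ\glue{k}_{\alpha\beta,i_0}=\glue{k}_{\alpha\beta,i_1}$ to absorb the extra BCH factor. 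This shows $\prescript{k}{}{\mathtt{f}}_{\alpha;\alpha\beta}\in\twc{k}^{0,0}_{\alpha;\alpha\beta}$.

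\smallskip

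\noindent\emph{Step 3 (\v{C}ech cocycle).} This is where the argument deviates mildly from Lemma~\ref{lem:gluing_morphism_partial_bar_comparison}: applying \eqref{eqn:g_alpha_beta_cocycle} and the defining relation of $\prescript{k}{}{\mathtt{f}}$ on $V_{\alpha\beta\gamma}$ gives $[\prescript{k}{}{\mathtt{f}}_{\alpha;\alpha\beta}-\prescript{k}{}{\mathtt{f}}_{\alpha;\alpha\gamma}+\glue{k}_{\beta\alpha}(\prescript{k}{}{\mathtt{f}}_{\beta;\beta\gamma}),\cdot]=0$, but here the element has degree $0$, so the injectivity assumption $\bva{k}^{-1}_{\alpha}\hookrightarrow\text{Der}(\bva{k}^{0}_{\alpha})$ is not available to conclude vanishing. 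The main obstacle is therefore to \emph{identify} this residual ambiguity. I propose to resolve it by pushing the identity into the de Rham complex via the isomorphism $\lrcorner\volf{k}_{\alpha}:\bva{k}^{*}_{\alpha}[-d]\xrightarrow{\sim}\rdr{k}{}_{\alpha}^{*}$ of Definition~\ref{def:higher_order_data_module}: the relation $\bvd{k}_{\alpha}(v)\lrcorner\volf{k}_{\alpha}=\dpartial{k}_{\alpha}(v\lrcorner\volf{k}_{\alpha})$ together with the compatibility of the volume form under $\hpatch{k}_{\alpha\beta,i}$ via \eqref{eqn:higher_order_volume_form_different} translates the bracket identity into one in $\rdr{k}{}_{\alpha}^{d}$; there the isomorphism $\lrcorner\volf{k}_{\alpha}$ is injective, forcing the sum itself to vanish. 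This is the step I expect to be the most technically delicate, since it requires carefully matching the explicit formula above with the volume-form cocycle built into Definition~\ref{def:higher_order_module_patching}.
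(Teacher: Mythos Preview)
Your explicit formulas in Steps 1 and 2 match the paper's exactly (the paper writes them using the shorthand $\mathtt{T}(x)=(e^{-x}-1)/x$), and you correctly identify in Step 3 that the degree-$0$ ambiguity must be resolved via the volume form. However, you have localized the difficulty in the wrong place: the same ambiguity already bites in Steps 1 and 2, and the paper uses the volume form comparison and Lemma~\ref{lem:BV_gauge_action_computation} for \emph{all} of the properties, not just the \v{C}ech cocycle.

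Concretely, your Step 1 claim that order compatibility ``is inherited from that of $\iauto{\bullet}_{\alpha\beta,i_0}$ and $\bvdobs{\bullet}_{\alpha\beta,i_0}$'' is false. Neither datum is directly compatible across orders: by \eqref{eqn:bva_different_order_comparison} the patching maps $\patch{k+1}_{\alpha\beta,i_0}$ and $\patch{k}_{\alpha\beta,i_0}$ differ by $\exp([\resta{k+1,k}_{\alpha\beta,i_0},\cdot])$, so $\iauto{k+1}_{\alpha\beta,i_0}$ and $\iauto{k}_{\alpha\beta,i_0}$ are related only through a BCH product involving $\resta{k+1,k}_{\alpha\beta,i_0}$; and Definition~\ref{def:higher_order_patching} imposes \emph{no} direct relation between $\bvdobs{k+1}_{\alpha\beta,i_0}$ and $\bvdobs{k}_{\alpha\beta,i_0}$. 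The needed relation comes only from the volume form identity \eqref{eqn:higher_order_volume_form_different}, combined with Lemma~\ref{lem:BV_gauge_action_computation} to translate $\exp(\mathcal{L}_{\resta{k+1,k}})(\volf{k}_\beta)$ into an explicit function times $\volf{k}_\beta$. The paper carries this out in detail; it is the main content of the proof.

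The $j=0$ case of Step 2 has the same defect: matching $\prescript{k}{}{\mathtt{f}}_{\alpha;\alpha\beta,i_0}$ with $\prescript{k}{}{\mathtt{f}}_{\alpha;\alpha\beta,i_1}$ requires relating $\bvdobs{k}_{\alpha\beta,i_0}$ with $\bvdobs{k}_{\alpha\beta,i_1}$, and again only \eqref{eqn:higher_order_volume_form_different} together with \eqref{eqn:module_higher_order_U_i_U_j_difference} provides that. Your Step 3 instinct is therefore correct, but the tool you are missing throughout is Lemma~\ref{lem:BV_gauge_action_computation}: it is precisely the bridge that converts gauge-action identities (which only pin down $[\prescript{k}{}{\mathtt{f}},\cdot]$) into element-level identities in $\bva{k}^0$ by reading them off the volume form.
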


\begin{proof}
	Similar to the previous proof, we use Lemma \ref{lem:gauge_action_on_differential} for the dgLa $(\bva{k}_{\beta}(U_{i_0\cdots i_l}), \bvd{k}^*_{\beta}[-1],[\cdot,\cdot])$ by taking $\xi = 0$ and get
	\begin{align*}
	& \exp(-[\sauto{k}_{\alpha\beta, i_0\cdots i_l},\cdot]) \circ \bvd{k}_{\beta} \circ \exp([\sauto{k}_{\alpha\beta, i_0\cdots i_l},\cdot])\\
	=& \bvd{k}_{\beta} - [(\mathtt{T}([\sauto{k}_{\alpha\beta, i_0\cdots i_l},\cdot])\circ \bvd{k}_{\beta})(\sauto{k}_{\alpha\beta,i_0\cdots i_l}),\cdot].
	\end{align*}
	 We take $\xi = -(\mathtt{T}([\sauto{k}_{\alpha\beta, i_0\cdots i_l},\cdot])\circ \bvd{k}_{\beta})(\sauto{k}_{\alpha\beta,i_0\cdots i_l})$ and $\vartheta = -\iauto{k}_{\alpha\beta, i_0 }$ in Lemma \ref{lem:gauge_action_on_differential} and get
	\begin{align*}
	& \exp(-[\iauto{k}_{\alpha\beta, i_0 },\cdot])\circ \exp(-[\sauto{k}_{\alpha\beta, i_0\cdots i_l},\cdot]) \circ\\
	& \bvd{k}_{\beta} \circ \exp([\sauto{k}_{\alpha\beta, i_0\cdots i_l},\cdot]) \circ \exp([\iauto{k}_{\alpha\beta, i_0 },\cdot])\\
	= &  \exp(-[\iauto{k}_{\alpha\beta, i_0 },\cdot])\circ \big(\bvd{k}_{\beta} - [(\mathtt{T}([\sauto{k}_{\alpha\beta, i_0\cdots i_l},\cdot])\circ \bvd{k}_{\beta})(\sauto{k}_{\alpha\beta,i_0\cdots i_l}),\cdot]\big)\\
	& \circ \exp([\iauto{k}_{\alpha\beta, i_0 },\cdot])\\
	= & \bvd{k}_{\beta} - [ ( \exp(-[\iauto{k}_{\alpha\beta,i_0},\cdot ]) \circ \mathtt{T}([\sauto{k}_{\alpha\beta,i_0\cdots i_l},\cdot]) \circ \bvd{k}_{\beta}) (\sauto{k}_{\alpha\beta,i_0\cdots i_l}), \cdot] \\
	& \qquad\qquad\qquad\qquad - [(\mathtt{T}([\iauto{k}_{\alpha\beta,i_0},\cdot ]) \circ \bvd{k}_\beta) (\iauto{k}_{\alpha\beta,i_0}),\cdot ].
	\end{align*}
	
	By \eqref{eqn:alpha_beta_gluing_explicit_form_at_point} in Condition \ref{assum:induction_hypothesis}, we can write $\glue{k}_{\alpha\beta, i} = \exp([ \iauto{k}_{\alpha\beta,i}, \cdot ]) \circ \patch{k}_{\alpha\beta,i}$, and from \eqref{eqn:higher_order_bvd_different} in Definition \ref{def:higher_order_patching}, we have $\patch{k}_{\beta\alpha,i} \circ \bvd{k}_{\beta} \circ \patch{k}_{\alpha\beta,i} - \bvd{k}_{\alpha} = [\bvdobs{k}_{\alpha\beta,i}, \cdot]$, so
	\begin{multline*}
	\glue{k}_{\beta\alpha,i_0 \cdots i_l} \circ \bvd{k}_{\beta} \circ  \glue{k}_{\alpha\beta, i_0 \cdots i_l} - \bvd{k}_{\alpha} = \\
	 - [ (\glue{k}_{\beta \alpha, i_0} \circ \mathtt{T}([\sauto{k}_{\alpha\beta,i_0\cdots i_l},\cdot ]) \circ \bvd{k}_{\beta}) (\sauto{k}_{\alpha\beta,i_0\cdots i_l}), \cdot] \\
	 - [(\patch{k}_{\beta \alpha ,i_0} \circ \mathtt{T}([\iauto{k}_{\alpha\beta,i_0},\cdot ]) \circ\bvd{k}_\beta) (\iauto{k}_{\alpha\beta,i_0}),\cdot] + [\bvdobs{k}_{\alpha\beta,i_0}, \cdot].
	\end{multline*}
	
	Now we put
	\begin{equation}\label{eqn:proof_in_lemma_gluing_morphism_BV_operator_comparison}
	\begin{split}
	\prescript{k}{}{\mathtt{f}}_{\alpha;\alpha\beta,i_0 \cdots i_l} := &
	- ( \glue{k}_{\beta \alpha, i_0} \circ \mathtt{T}([\sauto{k}_{\alpha\beta,i_0\cdots i_l},\cdot ]) \circ \bvd{k}_{\beta} ) (\sauto{k}_{\alpha\beta,i_0\cdots i_l})\\ 
	& \qquad - (\patch{k}_{\beta \alpha ,i_0} \circ \mathtt{T}([\iauto{k}_{\alpha\beta,i_0},\cdot ]) \circ \bvd{k}_\beta )(\iauto{k}_{\alpha\beta,i_0}) + \bvdobs{k}_{\alpha\beta,i_0}\\
	= & - ( \glue{k}_{\beta \alpha, i_0} \circ \mathtt{T}([\sauto{k}_{\alpha\beta,i_0\cdots i_l},\cdot ]) \circ \bvd{k}_{\beta} ) (\sauto{k}_{\alpha\beta,i_0\cdots i_l})\\
	&+ 1_{\simplex_l} \otimes \prescript{k}{}{\mathtt{f}}_{\alpha;\alpha\beta,i_0}|_{U_{i_0 \cdots i_l}},
	\end{split}
	\end{equation}
	where $1_{\simplex_l}$ denotes the constant function with value $1$ on $\simplex_l$.
	We need to check the following conditions for the elements $\prescript{k}{}{\mathtt{f}}_{\alpha;\alpha\beta,i_0 \cdots i_l}$'s:
	\begin{enumerate}
		\item $\prescript{k+1}{}{\mathtt{f}}_{\alpha;\alpha\beta,i_0\cdots i_l} = \prescript{k}{}{\mathtt{f}}_{\alpha;\alpha\beta,i_0\cdots i_l} \ \text{(mod $\mathbf{m}^{k+1}$)}$;
		
		\item $\prescript{k}{}{\mathtt{f}}_{\alpha;\alpha\beta} := (\prescript{k}{}{\mathtt{f}}_{\alpha;\alpha\beta,i_0 \cdots i_l})_{(i_0, \cdots, i_l) \in \mathcal{I}} \in \twc{k}^{0,0}_{\alpha;\alpha\beta}$ (see Definition \ref{def:thom_whitney_general});
		
		\item letting $\prescript{k}{}{\mathtt{f}}_{\alpha\beta} := (\prescript{k}{}{\mathtt{f}}_{\alpha; \alpha\beta} , \glue{k}_{\alpha\beta}(\prescript{k}{}{\mathtt{f}}_{\alpha; \alpha\beta}) )$, we have
		\begin{multline}\label{eqn:proof_of_gluing_morphism_BV_operator_comparison_alpha_beta_relation}
		\glue{k}_{\alpha\beta, i_0 \cdots i_l} (\prescript{k}{}{\mathtt{f}}_{\alpha;\alpha\beta,i_0 \cdots i_l }) = \\
		(\glue{k}_{\alpha\beta,i_0} \circ \mathtt{T}([\sauto{k}_{\beta\alpha,i_0 \cdots i_l},\cdot]) \circ \bvd{k}_{\alpha}) (\sauto{k}_{\beta\alpha,i_0 \cdots i_l}) + 1_{\simplex_l} \otimes \prescript{k}{}{\mathtt{f}}_{\beta;\alpha\beta,i_0} ,
		\end{multline}
		where $\prescript{k}{}{\mathtt{f}}_{\beta;\alpha\beta,i_0}  = (\patch{k}_{\alpha\beta,i_0} \circ \mathtt{T}([\iauto{k}_{\beta\alpha,i_0},\cdot]) \circ \bvd{k}_{\alpha}) (\iauto{k}_{\beta\alpha,i_0} ) - \bvdobs{k}_{\beta\alpha,i_0}$; and
		
		\item that $(\prescript{k}{}{\mathtt{f}}_{\alpha\beta})_{\alpha\beta}$ is a \v{C}ech $1$-cocycle in $\cech{k}^1(\twc{}^{0,0},\glue{})$.
	\end{enumerate}
	The properties (1)-(4) are proven by applying the comparison \eqref{eqn:higher_order_volume_form_different} of the volume forms in Definition \ref{def:higher_order_module_patching} (which can be regarded as a more refined piece of information than the comparison of BV operators in \eqref{eqn:higher_order_bvd_different}) and Lemma \ref{lem:BV_gauge_action_computation} in the {\em same} manner, together with some rather tedious (at least notationally) calculations. For simplicity, 
	we shall only present the proof of (1) here.
	
	To prove (1), first notice that the term $(\glue{k}_{\beta \alpha, i_0} \circ \mathtt{T}([\sauto{k}_{\alpha\beta,i_0\cdots i_l},\cdot ]) \circ \bvd{k}_{\beta}) (\sauto{k}_{\alpha\beta,i_0\cdots i_l})$ already satisfies the equality, so we only need to consider the case for $l=0$.
	In the rest of this proof, we shall work $\text{(mod $\mathbf{m}^{k+1}$)}$, meaning that all equalities hold $\text{(mod $\mathbf{m}^{k+1}$)}$.
	First of all, the equation $\rest{k+1,k}_{\beta} \circ \glue{k+1}_{\alpha\beta,i_0} = \glue{k}_{\alpha\beta, i_0} \circ \rest{k+1,k}_{\alpha} \ \text{(mod $\mathbf{m}^{k+1}$)}$ can be rewritten as
	\begin{multline*}
	\exp(-[\iauto{k}_{\alpha\beta,i_0},\cdot]) \circ \exp([\iauto{k+1}_{\alpha\beta,i_0} ,\cdot]) \circ \rest{k+1,k}_{\beta} =\\ \patch{k}_{\alpha\beta,i_0} \circ \rest{k+1,k}_{\alpha} \circ  \patch{k+1}_{\beta\alpha,i_0} = \exp([\resta{k+1,k}_{\beta\alpha,i_0} ,\cdot]) \circ \rest{k+1,k}_{\beta}
	\end{multline*}
	using \eqref{eqn:alpha_beta_gluing_explicit_form_at_point} and \eqref{eqn:bva_different_order_comparison}, so we have
	$$
	-\iauto{k+1}_{\alpha\beta,i_0} =  (-\resta{k+1,k}_{\beta\alpha,i_0}) \bchprod (-\iauto{k}_{\alpha\beta,i_0}) 
	$$
	by the injectivity of $\bva{k}^{-1}_\beta \hookrightarrow \text{Der}(\bva{k}^0_\beta)$.
	
	Applying Lemma \ref{lem:gauge_action_on_differential} to the dgLa $(\bva{k}^*_{ \beta}, [\cdot,\cdot], \bvd{k}_{\beta})$, we get\\ $(\exp(-\resta{k+1,k}_{\beta\alpha,i_0}) \star \exp(-\iauto{k}_{\alpha\beta,i_0} )) \star 0 = \exp(-\iauto{k+1}_{\alpha\beta,i_0}) \star 0 $, which can be expanded as
	\begin{align*}
	0 =& - (\exp(-[\resta{k+1,k}_{\beta\alpha,i_0},\cdot]) \circ \mathtt{T}([\iauto{k}_{\alpha\beta,i_0},\cdot]) \circ \bvd{k}_\beta) (\iauto{k}_{\alpha\beta,i_0} ) \\
	& \qquad - (\mathtt{T}([\resta{k+1,k}_{\beta\alpha,i_0},\cdot]) \circ \bvd{k}_{\beta})(\resta{k+1,k}_{\beta\alpha,i_0}) \\
	&+ (\mathtt{T}([\iauto{k+1}_{\alpha\beta,i_0},\cdot]) \circ \bvd{k}_{\beta}) (\iauto{k+1}_{\alpha\beta,i_0}).
	\end{align*}
	Applying $\patch{k+1}_{\beta\alpha,i_0}$ to both sides (note that $\gamma \in \bva{k}_{\beta}^*$, so when we write $\patch{k+1}_{\beta\alpha,i_0}(\gamma)$, we mean $\patch{k+1}_{\beta\alpha,i_0}(\tilde{\gamma})$ where $\tilde{\gamma} \in \bva{k+1}_{\beta}^*$ is an arbitrary lifting of $\gamma$; as we are working $\text{(mod $\mathbf{m}^{k+1}$)}$, $\patch{k+1}_{\beta\alpha,i_0}(\tilde{\gamma})$ is independent of the choice of the lifting), we obtain
	\begin{align*}
	0 = & - (\patch{k}_{\beta\alpha, i_0} \circ \mathtt{T}([\iauto{k}_{\alpha\beta,i_0},\cdot]) \circ \bvd{k}_\beta) (\iauto{k}_{\alpha\beta,i_0} ) \\
	&- (\patch{k+1}_{\beta\alpha,i_0} \circ \mathtt{T}([\resta{k+1,k}_{\beta\alpha,i_0},\cdot]) \circ \bvd{k}_{\beta}) (\resta{k+1,k}_{\beta\alpha,i_0}) \\
	& \qquad\qquad\qquad + (\patch{k+1}_{\beta\alpha,i_0} \circ \mathtt{T}([\iauto{k+1}_{\alpha\beta,i_0},\cdot]) \circ \bvd{k}_{\beta}) (\iauto{k+1}_{\alpha\beta,i_0}) \\
	= & \prescript{k}{}{\mathtt{f}}_{\alpha;\alpha\beta,i_0} - \bvdobs{k}_{\alpha\beta,i_0}  - \prescript{k+1}{}{\mathtt{f}}_{\alpha;\alpha\beta,i_0} + \bvdobs{k+1}_{\alpha\beta,i_0} \\
	& - 
	(\patch{k}_{\beta\alpha,i_0} \circ \mathtt{T}(-[\resta{k+1,k}_{\beta\alpha,i_0},\cdot]) \circ \bvd{k}_{\beta}) (\resta{k+1,k}_{\beta\alpha,i_0}).
	\end{align*}
	
	From \eqref{eqn:higher_order_volume_form_different}, we learn that $\patch{k}_{\alpha\beta,i_0} ( \bvdobs{k}_{\alpha\beta,i_0}) = - \bvdobs{k}_{\beta\alpha,i_0}$. Hence it remains to show that
	$$
	\bvdobs{k}_{\beta\alpha,i_0} + \patch{k}_{\alpha\beta,i_0}( \bvdobs{k+1}_{\alpha\beta,i_0} )  = (\mathtt{T}(-[\resta{k+1,k}_{\beta\alpha,i_0},\cdot]) \circ \bvd{k}_{\beta})(\resta{k+1,k}_{\beta\alpha,i_0}),
	$$
	which follows from the relation
	\begin{align*}
	&\exp( \bvdobs{k}_{\beta\alpha,i_0} + \patch{k}_{\alpha\beta,i_0}(\bvdobs{k+1}_{\alpha\beta,i_0})) \lrcorner \volf{k}_{\beta}\\
	 =& (\hpatch{k}_{\alpha\beta,i_0} \circ \rest{k+1,k}_{\alpha}\circ \hpatch{k+1}_{\beta\alpha,i_0}) (\volf{k+1}_\beta)  = \exp(\mathcal{L}_{\resta{k+1,k}_{\beta\alpha,i_0}}) (\volf{k}_{\beta})\\
	= &\exp([\dpartial{k}_{\beta},(-\resta{k+1,k}_{\beta\alpha,i_0}) \lrcorner ]) (\volf{k}_{\beta}),
	\end{align*}
	coming from Definition \ref{def:higher_order_module_patching} and using Lemma \ref{lem:BV_gauge_action_computation}.
\end{proof}

The same results hold with the same proofs for the homotopy \v{C}ech-Thom-Whitney complex with gluing morphisms
$
\gluehom{k}_{\alpha\beta} :  \twc{k}^{*,*}_{\alpha;\alpha\beta}(\simplex_1) \rightarrow   \twc{k}^{*,*}_{\beta;\alpha\beta}(\simplex_1),
$
where $ \twc{k}^{*,*}_{\alpha;\alpha\beta}(\simplex_1)$ and $\twc{k}^{*,*}_{\beta;\alpha\beta}(\simplex_1)$ are equipped with the differentials $\prescript{k}{}{\mathbf{D}}_{\alpha}:= d_{\simplex_1} \otimes 1 + 1 \otimes \prescript{k}{}{\pdb}_{\alpha}$ and $\prescript{k}{}{\mathbf{D}}_{\beta}:= d_{\simplex_1} \otimes 1 + 1 \otimes \prescript{k}{}{\pdb}_{\beta} $ and BV operators $\bvd{k}_{\alpha}$ and $\bvd{k}_\beta$ respectively.  Such results are summarized in the following Lemma.
\begin{lemma}\label{lem:homotopy_differentials_comparsion}
	There exist $\prescript{k}{}{\mathtt{W}}_{\alpha;\alpha\beta} \in \twc{k}^{-1,1}_{\alpha,\alpha\beta}(\simplex_1)$ and $\prescript{k}{}{\mathtt{F}}_{\alpha;\alpha\beta} \in  \twc{k}^{0,0}_{\alpha,\alpha\beta}(\simplex_1)$ such that $\prescript{k}{}{\mathtt{W}}_{\alpha;\alpha\beta} = 0 \ \text{(mod $\mathbf{m}$)}$ and $\prescript{k}{}{\mathtt{F}}_{\alpha;\alpha\beta} = 0 \ \text{(mod $\mathbf{m}$)}$, and
	$$
	\gluehom{k}_{\beta \alpha} \circ \prescript{k}{}{\mathbf{D}}_{\beta} \circ \gluehom{k}_{\alpha\beta} - \prescript{k}{}{\mathbf{D}}_{\alpha}  = [\prescript{k}{}{\mathtt{W}}_{\alpha;\alpha\beta}, \cdot], \quad
	\gluehom{k}_{\beta \alpha} \circ \bvd{k}_{\beta} \circ \gluehom{k}_{\alpha\beta} - \bvd{k}_{\alpha} = [\prescript{k}{}{\mathtt{F}}_{\alpha;\alpha\beta}, \cdot]. 
	$$
	Furthermore, if we let 
	$$
	\prescript{k}{}{\mathtt{W}}_{\alpha\beta}  :=(\prescript{k}{}{\mathtt{W}}_{\alpha;\alpha\beta}, \gluehom{k}_{\alpha\beta}(\prescript{k}{}{\mathtt{W}}_{\alpha;\alpha\beta})), \quad
	\prescript{k}{}{\mathtt{F}}_{\alpha\beta}  :=(\prescript{k}{}{\mathtt{F}}_{\alpha;\alpha\beta}, \gluehom{k}_{\alpha\beta}(\prescript{k}{}{\mathtt{F}}_{\alpha;\alpha\beta})), 
	$$
	then $(\prescript{k}{}{\mathtt{W}}_{\alpha\beta})_{\alpha\beta}$ and $(\prescript{k}{}{\mathtt{F}}_{\alpha\beta})_{\alpha\beta}$ are \v{C}ech $1$-cocycles in $ \cech{k}^*(\twc{},\gluehom{})$.
\end{lemma}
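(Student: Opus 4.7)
The plan is to follow the proofs of Lemmas \ref{lem:gluing_morphism_partial_bar_comparison} and \ref{lem:gluing_morphism_BV_operator_comparison} almost verbatim, exploiting the fact that by Condition \ref{cond:induction_hypothesis_for_homotopy} the homotopy gluing morphisms have exactly the same expanded form as the ordinary ones, namely
$$
\gluehom{k}_{\alpha\beta,i_0\cdots i_l} = \exp([\hsauto{k}_{\alpha\beta,i_0\cdots i_l},\cdot]) \circ \exp([\hiauto{k}_{\alpha\beta,i_0},\cdot]) \circ \patch{k}_{\alpha\beta,i_0},
$$
the only difference being that $\hsauto{k}_{\alpha\beta,i_0\cdots i_l}$ and $\hiauto{k}_{\alpha\beta,i_0}$ now carry an additional $\mathcal{A}^0(\simplex_1)$-dependence. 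The crucial observation is that the operator $\prescript{k}{}{\mathbf{D}}_{\alpha} = d_{\simplex_1}\otimes 1 + 1 \otimes \prescript{k}{}{\pdb}_\alpha$ is a degree $1$ derivation on $\mathcal{A}^*(\simplex_1)\otimes\mathcal{A}^*(\simplex_l) \otimes \bva{k}^*_\alpha(U_I)$ whose restriction to $\bva{k}^*_\alpha(U_I)$ vanishes, so the gauge action formula in Lemma \ref{lem:gauge_action_on_differential} applies equally well with $d$ replaced by $\prescript{k}{}{\mathbf{D}}$.

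First I would apply Lemma \ref{lem:gauge_action_on_differential} to conjugate $\prescript{k}{}{\mathbf{D}}_\beta$ and $\bvd{k}_\beta$ by $\gluehom{k}_{\alpha\beta,i_0\cdots i_l}$. Concretely, using the power series $\mathtt{T}(x) = (e^{-x}-1)/x$ from \eqref{eqn:power_series_in_proof_gluing_morphism_BV_operator_comparison}, I would set
$$
\prescript{k}{}{\mathtt{W}}_{\alpha;\alpha\beta,i_0\cdots i_l} := -\gluehom{k}_{\beta\alpha,i_0}\bigl(\mathtt{T}([\hsauto{k}_{\alpha\beta,i_0\cdots i_l},\cdot])(\prescript{k}{}{\mathbf{D}}_\beta \hsauto{k}_{\alpha\beta,i_0\cdots i_l})\bigr) - \patch{k}_{\beta\alpha,i_0}\bigl(\mathtt{T}([\hiauto{k}_{\alpha\beta,i_0},\cdot])(d_{\simplex_1}\hiauto{k}_{\alpha\beta,i_0})\bigr),
$$
and similarly define $\prescript{k}{}{\mathtt{F}}_{\alpha;\alpha\beta,i_0\cdots i_l}$ by repeating the formula \eqref{eqn:proof_in_lemma_gluing_morphism_BV_operator_comparison} with $\sauto{k}$, $\iauto{k}$, $\glue{k}$ replaced by their hatted counterparts.

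Next I would verify that these components assemble into elements of the Thom-Whitney complex $\twc{k}^{*,*}_{\alpha;\alpha\beta}(\simplex_1)$. The simplicial compatibility $\mathtt{d}_{r,l}^*(\prescript{k}{}{\mathtt{W}}_{\alpha;\alpha\beta,i_0\cdots i_l}) = \prescript{k}{}{\mathtt{W}}_{\alpha;\alpha\beta,i_0\cdots \widehat{i_r}\cdots i_l}$ (and likewise for $\mathtt{F}$) follows exactly as in the proof of Lemma \ref{lem:gluing_morphism_partial_bar_comparison}: apply Lemma \ref{lem:gauge_action_on_differential} to both sides and then invoke the injectivity of $\bva{k}^{-1}_\alpha \hookrightarrow \mathrm{Der}(\bva{k}^0_\alpha)$. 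The cocycle property $\prescript{k}{}{\mathtt{W}}_{\alpha;\alpha\beta} - \prescript{k}{}{\mathtt{W}}_{\alpha;\alpha\gamma} + \gluehom{k}_{\beta\alpha}(\prescript{k}{}{\mathtt{W}}_{\beta;\beta\gamma}) = 0$ is then deduced from the identity $[\gluehom{k}_{\alpha\beta}(\prescript{k}{}{\mathtt{W}}_{\alpha;\alpha\beta}),\cdot] = \prescript{k}{}{\mathbf{D}}_\beta - \gluehom{k}_{\alpha\beta}\circ \prescript{k}{}{\mathbf{D}}_\alpha \circ \gluehom{k}_{\beta\alpha}$ together with the cocycle condition \eqref{eqn:homotopy_g_alpha_beta_cocycle} for $\gluehom{k}$, by the same injectivity argument.

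The main nuisance will be the $\prescript{k}{}{\mathtt{F}}$ case: as in Lemma \ref{lem:gluing_morphism_BV_operator_comparison}, the identity $\gluehom{k}_{\beta\alpha}\circ\bvd{k}_\beta\circ\gluehom{k}_{\alpha\beta} - \bvd{k}_\alpha = [\prescript{k}{}{\mathtt{F}}_{\alpha;\alpha\beta},\cdot]$ requires comparing the BV operators via the volume forms $\volf{k}_\alpha$ and $\volf{k}_\beta$ using \eqref{eqn:higher_order_volume_form_different} and Lemma \ref{lem:BV_gauge_action_computation}. The calculation is tedious but carries over without modification, since $\hpatch{k}_{\alpha\beta,i}$ does not depend on $\mathcal{A}^*(\simplex_1)$ and the volume forms themselves are not deformed. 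The compatibility $\prescript{k+1}{}{\mathtt{F}}_{\alpha;\alpha\beta} = \prescript{k}{}{\mathtt{F}}_{\alpha;\alpha\beta}\ \text{(mod $\mathbf{m}^{k+1}$)}$ follows from the $(\exp \star)$ argument in the last paragraph of the proof of Lemma \ref{lem:gluing_morphism_BV_operator_comparison} applied to $\hiauto{k}$ in place of $\iauto{k}$, since all the $\simplex_1$-dependence is killed by the restriction $\rest{k+1,k}$ working modulo $\mathbf{m}^{k+1}$. As a sanity check, one observes that the pullbacks $\mathtt{r}_j^*(\prescript{k}{}{\mathtt{W}}_{\alpha;\alpha\beta})$ and $\mathtt{r}_j^*(\prescript{k}{}{\mathtt{F}}_{\alpha;\alpha\beta})$ recover exactly the $\prescript{k}{}{\mathtt{w}}_{\alpha;\alpha\beta}$ and $\prescript{k}{}{\mathtt{f}}_{\alpha;\alpha\beta}$ attached to the gluing data $\glue{}(j)$ from Lemmas \ref{lem:gluing_morphism_partial_bar_comparison}--\ref{lem:gluing_morphism_BV_operator_comparison}, which is automatic from the construction.
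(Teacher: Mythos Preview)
Your proposal is correct and follows exactly the approach the paper intends: the paper simply states that ``the same results hold with exactly the same proofs'' as Lemmas \ref{lem:gluing_morphism_partial_bar_comparison} and \ref{lem:gluing_morphism_BV_operator_comparison}, replacing $\glue{k}$, $\sauto{k}$, $\iauto{k}$, $\prescript{k}{}{\pdb}$ by $\gluehom{k}$, $\hsauto{k}$, $\hiauto{k}$, $\prescript{k}{}{\mathbf{D}}$. You correctly spotted the one genuine new feature, namely the extra term $-\patch{k}_{\beta\alpha,i_0}\bigl(\mathtt{T}([\hiauto{k}_{\alpha\beta,i_0},\cdot])(d_{\simplex_1}\hiauto{k}_{\alpha\beta,i_0})\bigr)$ in $\prescript{k}{}{\mathtt{W}}$ arising because $\hiauto{k}_{\alpha\beta,i_0}$ depends on $\simplex_1$ (unlike $\iauto{k}_{\alpha\beta,i_0}$), which the paper leaves implicit.
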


We conclude this subsection by the following theorem:
\begin{theorem} \label{thm:construction_of_differentials}
	There exist elements $\dbtwist{}=(\dbtwist{}_{\alpha})_{\alpha} = \varprojlim_{k} (\dbtwist{k}_{\alpha})_{\alpha} \in \cech{}^0(\twc{}^{-1,1},\glue{})$ and $\volftwist{}=(\volftwist{}_{\alpha})_{\alpha} = \varprojlim_{k} (\volftwist{k}_{\alpha})_{\alpha} \in \cech{}^0(\twc{}^{0,0},\glue{})$ such that 
	$$
	\glue{}_{\beta\alpha} \circ (\pdb_\beta + [\dbtwist{}_{\beta}, \cdot])  \circ \glue{}_{\alpha\beta}  = \pdb_{\alpha} + [\dbtwist{}_{\alpha},\cdot],\quad
	\glue{}_{\beta\alpha} \circ (\bvd{}_{\beta} + [\volftwist{}_{\beta}, \cdot]) \circ \glue{}_{\alpha\beta}  = \bvd{}_{\alpha} + [\volftwist{}_{\alpha},\cdot].
	$$
	Also, $(\pdb_{\alpha} + [\dbtwist{}_{\alpha},\cdot])_{\alpha}$ and $(\bvd{}_{\alpha} + [\volftwist{}_{\alpha},\cdot])_{\alpha}$ glue to give operators $\pdb$ and $\bvd{}$ on $\polyv{}^{*,*}(\glue{})$ such that
	\begin{enumerate}
		\item 
		$\pdb$ is a derivation of $[\cdot,\cdot]$ and $\wedge$ in the sense that 
		$$
		\pdb [u,v] = [\pdb u, v] + (-1)^{|u|+1}[u,\pdb v],\quad
		\pdb (u \wedge v) = (\pdb u) \wedge v +(-1)^{|u|} u \wedge (\pdb v),
		$$
		where $|u|$ and $|v|$ denote respectively the total degrees (i.e. $|u| = p+q$ if $u \in \polyv{}^{p,q}(\glue{})$) of the homogeneous elements $u,v \in \polyv{}^{*}(\glue{})$;
		
		\item
		the BV operator $\bvd{}$ satisfies the BV equation and is a derivation for the bracket $[\cdot,\cdot]$, i.e.
		\begin{align*}
		\bvd{}[u,v] &= [\bvd{}u, v] + (-1)^{|u|+1} [ u ,\bvd{}v],\\
		\bvd{}(u \wedge v) & = (\bvd{}u) \wedge v + (-1)^{|u|} u \wedge (\bvd{}v) + (-1)^{|u|} [u,v],
		\end{align*}
		for homogeneous elements $u,v \in \polyv{}^{*}(\glue{})$; and
		
		\item
		we have $\bvd{}^2 = 0$ and
		$$\pdb^2  = 0 = \pdb \bvd{} + \bvd{} \pdb \ \text{(mod $\mathbf{m}$)},$$
		so $(\polyv{}^{*,*} , \wedge, \pdb , \bvd{}) \  \text{(mod $\mathbf{m}$)}$ is an almost dgBV algebra.
	\end{enumerate}
	
	Moreover, if $(\dbtwist{}', \volftwist{}')$ is another pair of such elements defining operators $\pdb'$ and $\bvd{}'$, then we have
	$$
	\pdb' - \pdb = [\mathfrak{v}_1 , \cdot],\quad
	\bvd{}' - \bvd{} = [\mathfrak{v}_2,\cdot],
	$$
	for some $\mathfrak{v}_1 \in \polyv{}^{-1,1}(\glue{})$ and $\mathfrak{v}_2 \in \polyv{}^{0,0}(\glue{})$. 
\end{theorem}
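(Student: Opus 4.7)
The plan is to produce $\dbtwist{}$ and $\volftwist{}$ by trivializing the $1$-cocycles $(\prescript{k}{}{\mathtt{w}}_{\alpha\beta})$ and $(\prescript{k}{}{\mathtt{f}}_{\alpha\beta})$ furnished by Lemmas \ref{lem:gluing_morphism_partial_bar_comparison} and \ref{lem:gluing_morphism_BV_operator_comparison}, and then to check the dgBV-type identities componentwise on each $\twc{k}^{*,*}_\alpha$. The first step is to reformulate the desired twisted compatibility: because $\glue{}_{\alpha\beta}$ preserves $[\cdot,\cdot]$ and $\bva{k}^{-1}_\alpha\hookrightarrow\mathrm{Der}(\bva{k}^0_\alpha)$ is injective (Definition \ref{def:higher_order_data}), the identity $\glue{}_{\beta\alpha}\circ(\pdb_\beta+[\dbtwist{}_\beta,\cdot])\circ\glue{}_{\alpha\beta}=\pdb_\alpha+[\dbtwist{}_\alpha,\cdot]$ is equivalent to the \v{C}ech equation $\dbtwist{}_\alpha-\glue{}_{\beta\alpha}(\dbtwist{}_\beta)=\prescript{}{}{\mathtt{w}}_{\alpha;\alpha\beta}$, i.e.\ $\cechd{k}_0\dbtwist{k}=\prescript{k}{}{\mathtt{w}}$ in $\cech{k}^1(\twc{}^{-1,1},\glue{})$, and similarly $\cechd{k}_0\volftwist{k}=\prescript{k}{}{\mathtt{f}}$. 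Since $\glue{0}_{\alpha\beta}=\mathrm{id}$ and both cocycles vanish at order zero, I would start with $\dbtwist{0}=\volftwist{0}=0$ and invoke Lemma \ref{lem:exactness_of_cech_thom_whitney_complex} inductively in $k$ to produce simultaneous lifts $\dbtwist{k+1},\volftwist{k+1}$ solving the cocycle equations. Passing to the inverse limit yields the required $\dbtwist{}\in\cech{}^0(\twc{}^{-1,1},\glue{})$ and $\volftwist{}\in\cech{}^0(\twc{}^{0,0},\glue{})$.

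With these in hand I would define $\pdb$ and $\bvd{}$ componentwise on $\polyv{}^{*,*}(\glue{})=\ker(\cechd{}_0)$; the twisted compatibility guarantees that the kernel is preserved. The derivation properties then decompose into two pieces on each $\twc{k}^{*,*}_\alpha$. The local $\pdb_\alpha$ and $\bvd{}_\alpha$ satisfy the dgBV identities by the Thom-Whitney construction of Definition \ref{def:thom_whitney_general} together with Definition \ref{def:dgBV}, while the inner operators $[\dbtwist{}_\alpha,\cdot]$ and $[\volftwist{}_\alpha,\cdot]$ are derivations of $[\cdot,\cdot]$ by graded Jacobi, and are derivations of $\wedge$ via the defining BV relation of Definition \ref{def:abstract_BV_algebra} applied to the degree-zero elements $\dbtwist{}_\alpha,\volftwist{}_\alpha$. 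Combining these gives derivation for $\pdb$ and the BV identity for $\bvd{}$ with the unchanged bracket.

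The mod-$\mathbf{m}$ identities $\pdb^2=0=\pdb\bvd{}+\bvd{}\pdb$ are immediate: at order zero the twists and gluings are trivial, so $\pdb,\bvd{}$ restrict to $\pdb_\alpha,\bvd{}_\alpha$ on $\twc{0}^{*,*}$, which form a genuine dgBV algebra. The exact identity $\bvd{}^2=0$ is the principal obstacle, since the naive computation yields $\bvd{}^2=[\bvd{}_\alpha\volftwist{}_\alpha+\tfrac{1}{2}[\volftwist{}_\alpha,\volftwist{}_\alpha],\cdot]$, which need not vanish for an arbitrary lift. The key idea for the refinement is to interpret $\volftwist{}_\alpha$ as the gauge parameter for a globally patched Thom-Whitney volume element $\omega_\alpha=\exp(\volftwist{}_\alpha\lrcorner)\volf{}_\alpha$: by Lemma \ref{lem:BV_gauge_action_computation}, $\bvd{}_\alpha+[\volftwist{}_\alpha,\cdot]$ is then the BV operator determined by $\omega_\alpha$ through $(\bvd{}v)\lrcorner\omega_\alpha=\dpartial{k}_\alpha(v\lrcorner\omega_\alpha)$, which squares to zero because $(\dpartial{k}_\alpha)^2=0$. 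The existence of such a compatible global $\omega$ is an exactness question on the de Rham module side, handled by the same inductive machinery as above and powered by the stronger volume-form gluing condition \eqref{eqn:higher_order_volume_form_different} of Definition \ref{def:higher_order_module_patching} (which is precisely the extra structure beyond the BV-operator comparison used in Lemma \ref{lem:gluing_morphism_BV_operator_comparison}).

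Finally, the uniqueness assertion is a direct consequence of the exact sequence \eqref{eqn:cech_thom_whitney_complex_all_order}: if $(\dbtwist{}',\volftwist{}')$ is a second valid pair, subtracting the twisted compatibility relations gives $\cechd{}_0(\dbtwist{}-\dbtwist{}')=0=\cechd{}_0(\volftwist{}-\volftwist{}')$, so $\mathfrak{v}_1:=\dbtwist{}-\dbtwist{}'\in\polyv{}^{-1,1}(\glue{})$ and $\mathfrak{v}_2:=\volftwist{}-\volftwist{}'\in\polyv{}^{0,0}(\glue{})$, yielding $\pdb'-\pdb=[\mathfrak{v}_1,\cdot]$ and $\bvd{}'-\bvd{}=[\mathfrak{v}_2,\cdot]$ up to signs.
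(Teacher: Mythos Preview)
Your overall strategy matches the paper's: use Lemmas \ref{lem:gluing_morphism_partial_bar_comparison} and \ref{lem:gluing_morphism_BV_operator_comparison} to produce the \v{C}ech 1-cocycles, trivialize them via Lemma \ref{lem:exactness_of_cech_thom_whitney_complex} to obtain $\dbtwist{}$ and $\volftwist{}$ (with $\dbtwist{}=\volftwist{}=0$ mod $\mathbf{m}$), check the derivation identities componentwise, and read off the uniqueness clause from exactness at degree $0$. All of that is fine.

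Where you go astray is in treating $\bvd{}^2=0$ as ``the principal obstacle''. You correctly compute
\[
(\bvd{}_\alpha+[\volftwist{}_\alpha,\cdot])^2=\bigl[\bvd{}_\alpha\volftwist{}_\alpha+\tfrac{1}{2}[\volftwist{}_\alpha,\volftwist{}_\alpha],\,\cdot\,\bigr],
\]
but then overlook that both terms inside the bracket vanish \emph{for degree reasons alone}: $\volftwist{}_\alpha\in\twc{k}^{0,0}_\alpha$, the operator $\bvd{}_\alpha$ raises the first index by one, and the bracket raises the total first index by one, so both $\bvd{}_\alpha\volftwist{}_\alpha$ and $[\volftwist{}_\alpha,\volftwist{}_\alpha]$ land in $\twc{k}^{1,0}_\alpha$. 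Since $\bva{k}^*_\alpha$ is concentrated in degrees $-d\le *\le 0$ (Definition \ref{def:higher_order_data}), we have $\twc{k}^{1,0}_\alpha=0$. This is exactly the one-line argument the paper gives, and it holds for \emph{any} choice of lift $\volftwist{}$; no refinement is needed. Your proposed workaround via a globally patched volume element $\omega_\alpha=\exp(\volftwist{}_\alpha\lrcorner)\volf{}_\alpha$ is interesting and in fact anticipates Proposition \ref{prop:gluing_volume_form}, but that construction is used later for the de Rham module side, not to secure $\bvd{}^2=0$ here.
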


\begin{proof}
	In view of Lemmas \ref{lem:gluing_morphism_partial_bar_comparison} and \ref{lem:gluing_morphism_BV_operator_comparison}, we have a \v{C}ech $1$-cocycle $\mathtt{w} = (\mathtt{w}_{\alpha\beta})_{\alpha\beta} = \varprojlim_{k} ( \prescript{k}{}{\mathtt{w}}_{\alpha\beta})_{\alpha\beta}$ and $\mathtt{f} = (\mathtt{f}_{\alpha\beta})_{\alpha\beta} = \varprojlim_{k} (\prescript{k}{}{\mathtt{f}}_{\alpha\beta})_{\alpha\beta}$. Using the exactness of the \v{C}ech-Thom-Whitney complex in Lemma \ref{lem:exactness_of_cech_thom_whitney_complex}, we obtain $\dbtwist{} \in \cech{}^0(\twc{}^{-1,1},\glue{})$ and $\volftwist{} \in \cech{}^0(\twc{}^{0,0},\glue{})$ such that the images of $-\dbtwist{}$ and $-\volftwist{}$ under the \v{C}ech differential $\cechd{}_0$ are $\mathtt{w}$ and $\mathtt{f}$ respectively, and also $\dbtwist{} = 0 \  \text{(mod $\mathbf{m}$)}$ and $\volftwist{} = 0 \  \text{(mod $\mathbf{m}$)}$. Therefore we obtain the identities 
	$$
	\glue{}_{\beta\alpha} \circ (\pdb_\beta + [\dbtwist{}_{\beta}, \cdot])  \circ \glue{}_{\alpha\beta}  = \pdb_{\alpha} + [\dbtwist{}_{\alpha},\cdot],\quad
	\glue{}_{\beta\alpha} \circ (\bvd{}_{\beta} + [\volftwist{}_{\beta}, \cdot]) \circ \glue{}_{\alpha\beta}  = \bvd{}_{\alpha} + [\volftwist{}_{\alpha},\cdot].
	$$
	Also notice that if we have another choice of $\dbtwist{}'$ and $\volftwist{}'$ such that the images of $-\dbtwist{}'$ and $-\volftwist{}'$ under the \v{C}ech differential $\cechd{}_0$ are $\mathtt{w}$ and $\mathtt{f}$ respectively, then we must have $\dbtwist{} ' - \dbtwist{} = \cechd{}_{-1}(\mathfrak{v}_1)$ and $\volftwist{}' - \volftwist{} = \cechd{}_{-1}(\mathfrak{v}_2)$ for some elements $\mathfrak{v}_1 \in \polyv{}^{-1,1}(\glue{})$ and $\mathfrak{v}_2 \in \polyv{}^{0,0}(\glue{})$. 
	
	It remains to show that the operators $\pdb$ and $\bvd{}$ defined by $\dbtwist{}$ and $\volftwist{}$ satisfying the desired properties. First note that we have an injection
	$
	\cechd{}_{-1}: \polyv{}^{p,q}(\glue{}) \hookrightarrow \cech{}^0(\twc{}^{p,q},\glue{}) = \prod_{\alpha} \twc{}^{p,q}_{\alpha},
	$
	where we write $\twc{}^{p,q}_\alpha := \varprojlim_k \twc{k}^{p,q}_\alpha$. Also the product $\wedge$ and the Lie bracket $[\cdot,\cdot]$ on $\polyv{}^{*,*}(\glue{})$ are induced by those on each $\twc{}^{*,*}_{\alpha}$. 
	Since $\pdb$ and $\bvd{}$ are defined by gluing the operators $(\prescript{}{}{\pdb}_\alpha + [\dbtwist{}_\alpha,\cdot])_\alpha$ and $(\bvd{}_\alpha + [\volftwist{}_\alpha,\cdot])_\alpha$, we only have to check the required identities on each $\twc{}^{*,*}_{\alpha}$, which hold because both $[\dbtwist{}_\alpha,\cdot]$'s and $[\volftwist{}_\alpha,\cdot]$ are derivations of degree $1$ and $\dbtwist{} = 0  = \volftwist{} \  \text{(mod $\mathbf{m}$)}$. Also, $(\bvd{}_\alpha + [\volftwist{}_\alpha,\cdot])^2 = \bvd{}_\alpha^2 + [\bvd{}_\alpha(\volftwist{}_\alpha),\cdot] = 0$ ($\bvd{}_\alpha(\volftwist{}_\alpha) = 0$ for degree reason), so we have $\bvd{}^2 = 0$.
\end{proof}

For the homotopy \v{C}ech-Thom-Whitney complex we have the following proposition which is parallel to Theorem \ref{thm:construction_of_differentials}:

\begin{prop}\label{prop:construction_of_homotopy_thom_whitney_differential}
	There exist elements $\mathfrak{D}=(\mathfrak{D}_{\alpha})_{\alpha} = \varprojlim_{k} (\prescript{k}{}{\mathfrak{D}}_{\alpha})_{\alpha} \in \cech{}^0(\twc{}^{-1,1},\gluehom{})$ and $\mathfrak{F}=(\mathfrak{F}_{\alpha})_{\alpha} = \varprojlim_{k} (\prescript{k}{}{\mathfrak{F}}_{\alpha})_{\alpha} \in \cech{}^0(\twc{}^{0,0},\gluehom{})$ such that 
	\begin{align*}
	\gluehom{}_{\beta\alpha} \circ (\mathbf{D}_\beta + [\mathfrak{D}_{\beta}, \cdot])  \circ \gluehom{}_{\alpha\beta}  &= \mathbf{D}_{\alpha} + [\mathfrak{D}_{\alpha},\cdot],\\
	\gluehom{}_{\beta\alpha} \circ (\bvd{}_{\beta} + [\mathfrak{F}_{\beta}, \cdot]) \circ \gluehom{}_{\alpha\beta}  &= \bvd{}_{\alpha} + [\mathfrak{F}_{\alpha},\cdot].
	\end{align*}
	Furthermore, $(\mathbf{D}_{\alpha} + [\mathfrak{D}_{\alpha},\cdot])_{\alpha}$ and $(\bvd{}_{\alpha} + [\mathfrak{F}_{\alpha},\cdot])_{\alpha}$ glue to give operators $\mathbf{D}$ and $\bvd{}$ on $\polyv{}^{*,*}(\gluehom{})$ so that $(\polyv{}^{*,*}(\gluehom{}), \wedge , \mathbf{D}, \bvd{})$ satisfies $(1)-(3)$ of Theorem \ref{thm:construction_of_differentials} (with $\mathbf{D}$ playing the role of $\pdb$).
\end{prop}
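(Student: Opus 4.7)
The plan is to mirror the proof of Theorem~\ref{thm:construction_of_differentials}, using Lemma~\ref{lem:homotopy_differentials_comparsion} as the input and Lemma~\ref{lem:homotopy_exactness_of_cech_thom_whitney_complex} as the key cohomological tool. First I would take the \v{C}ech $1$-cocycles $\mathtt{W} = (\mathtt{W}_{\alpha\beta})_{\alpha\beta} = \varprojlim_k(\prescript{k}{}{\mathtt{W}}_{\alpha\beta})_{\alpha\beta} \in \cech{}^1(\twc{}^{-1,1},\gluehom{})$ and $\mathtt{F} = (\mathtt{F}_{\alpha\beta})_{\alpha\beta} = \varprojlim_k(\prescript{k}{}{\mathtt{F}}_{\alpha\beta})_{\alpha\beta} \in \cech{}^1(\twc{}^{0,0},\gluehom{})$ provided by Lemma~\ref{lem:homotopy_differentials_comparsion}. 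Since the sequence \eqref{eqn:homotopy_cech_thom_whitney_complex_all_order} is exact by Lemma~\ref{lem:homotopy_exactness_of_cech_thom_whitney_complex}, I would produce $\mathfrak{D} \in \cech{}^0(\twc{}^{-1,1},\gluehom{})$ and $\mathfrak{F} \in \cech{}^0(\twc{}^{0,0},\gluehom{})$ with $\cechd{}_0(-\mathfrak{D}) = \mathtt{W}$ and $\cechd{}_0(-\mathfrak{F}) = \mathtt{F}$, arranging along the way that $\mathfrak{D} = 0 = \mathfrak{F} \ \text{(mod $\mathbf{m}$)}$ (this is possible because $\mathtt{W}$ and $\mathtt{F}$ vanish modulo $\mathbf{m}$, so the order-by-order lifting in Lemma~\ref{lem:homotopy_exactness_of_cech_thom_whitney_complex} can start from the zero solution in the zeroth order).

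Next I would rewrite the condition $\cechd{}_0(-\mathfrak{D}) = \mathtt{W}$ componentwise as $\gluehom{}_{\alpha\beta}(\mathfrak{D}_\alpha) - \mathfrak{D}_\beta = \mathtt{W}_{\beta;\alpha\beta}$, which combined with Lemma~\ref{lem:homotopy_differentials_comparsion} gives the first desired conjugation identity $\gluehom{}_{\beta\alpha} \circ (\mathbf{D}_\beta + [\mathfrak{D}_\beta,\cdot]) \circ \gluehom{}_{\alpha\beta} = \mathbf{D}_\alpha + [\mathfrak{D}_\alpha,\cdot]$. The same manipulation yields the analogous statement for $\mathfrak{F}$ and $\bvd{}$. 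Consequently, the local operators $(\mathbf{D}_\alpha + [\mathfrak{D}_\alpha,\cdot])_\alpha$ and $(\bvd{}_\alpha + [\mathfrak{F}_\alpha,\cdot])_\alpha$ descend to well-defined operators $\mathbf{D}$ and $\bvd{}$ on the kernel $\polyv{}^{*,*}(\gluehom{}) = \ker(\cechd{}_0)$.

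It remains to verify the identities in (1)--(3) of Theorem~\ref{thm:construction_of_differentials} with $\mathbf{D}$ replacing $\pdb$. Since the product $\wedge$ and bracket $[\cdot,\cdot]$ on $\polyv{}^{*,*}(\gluehom{})$ are induced componentwise from those on each $\twc{}^{*,*}_{\alpha}(\simplex_1) := \varprojlim_k \twc{k}^{*,*}_{\alpha;\alpha}(\simplex_1)$, and $\cechd{}_{-1} : \polyv{}^{*,*}(\gluehom{}) \hookrightarrow \cech{}^0(\twc{},\gluehom{})$ is injective, it suffices to check everything on each $\twc{}^{*,*}_{\alpha}(\simplex_1)$. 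There the local operators $\mathbf{D}_\alpha$ and $\bvd{}_\alpha$ already satisfy the required derivation and BV identities, and adding the inner derivations $[\mathfrak{D}_\alpha,\cdot]$ and $[\mathfrak{F}_\alpha,\cdot]$ preserves these properties since brackets with elements of degree $0$ (for $\mathfrak{F}_\alpha$) and of total degree $0$ (for $\mathfrak{D}_\alpha$) act as derivations of the appropriate parity. For the square identities, $(\bvd{}_\alpha + [\mathfrak{F}_\alpha,\cdot])^2 = \bvd{}_\alpha^2 + [\bvd{}_\alpha\mathfrak{F}_\alpha,\cdot] = 0$ since $\bvd{}_\alpha\mathfrak{F}_\alpha = 0$ for degree reasons, while the identities $\mathbf{D}^2 = 0 = \mathbf{D}\bvd{} + \bvd{}\mathbf{D}$ modulo $\mathbf{m}$ hold because $\mathfrak{D}$ and $\mathfrak{F}$ both vanish modulo $\mathbf{m}$ and the zeroth-order operators $\mathbf{D}_\alpha$ and $\bvd{}_\alpha$ satisfy the dgBV identities.

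The only subtle point I anticipate is that Lemma~\ref{lem:homotopy_exactness_of_cech_thom_whitney_complex} must be invoked with care in order to ensure compatibility of the chosen primitives $\mathfrak{D}$ and $\mathfrak{F}$ across all orders $k$; however, this is exactly the content of the inverse-limit version of that lemma, and the surjectivity of $\rest{k+1,k} : \cech{k+1}^0 \to \cech{k}^0$ (Corollary~\ref{cor:corollary_to_exactness_of_cech_thom_whitney} in the straight case, and its homotopy analogue built into Lemma~\ref{lem:homotopy_exactness_of_cech_thom_whitney_complex}) allows one to construct $(\prescript{k}{}{\mathfrak{D}})_k$ and $(\prescript{k}{}{\mathfrak{F}})_k$ as compatible systems in the inverse limit. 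Once this bookkeeping is done, no genuinely new computation beyond that of Theorem~\ref{thm:construction_of_differentials} is needed.
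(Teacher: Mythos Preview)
Your proposal is correct and follows precisely the approach the paper intends: the paper presents this proposition as ``parallel to Theorem~\ref{thm:construction_of_differentials}'' without giving a separate proof, and your argument is exactly the expected translation of that proof to the homotopy setting, replacing Lemmas~\ref{lem:gluing_morphism_partial_bar_comparison}--\ref{lem:gluing_morphism_BV_operator_comparison} by Lemma~\ref{lem:homotopy_differentials_comparsion} and Lemma~\ref{lem:exactness_of_cech_thom_whitney_complex} by Lemma~\ref{lem:homotopy_exactness_of_cech_thom_whitney_complex}. Your final paragraph on compatibility across orders is a fair caution, but as you note this is precisely what Lemma~\ref{lem:homotopy_exactness_of_cech_thom_whitney_complex} is designed to provide.
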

\section{Abstract construction of the de Rham differential complex}\label{sec:differentials}
	
\subsection{The de Rham complex}\label{sec:construction_of_de_rham_complex}

Given a set of compatible gluing morphisms $\glue{} = \{\glue{k}_{\alpha\beta}\}$, the goal of this subsection is to glue the local filtered de Rham modules $\tbva{k}{}_{\alpha}$ over $V_\alpha$ to form a global differential graded algebra over $X$. Similar to \S \ref{sec:local_thom_whitney_complex}, we consider a sheaf of filtered de Rham modules $(\tbva{}{}^*,\tbva{}{\bullet}^*, \wedge, \dpartial{})$ over a sheaf of BV algebras $(\bva{}^* , \wedge ,\bvd{})$ on $V$ and a countable acyclic cover $\mathcal{U} = \{U_i\}_{i \in \inte_+}$ of $V$ which satisfies the condition that $H^{>0}(U_{i_0\cdots i_l},\tbva{}{r}^j) = 0$ for all $j$, $r$ and and all finite intersections $U_{i_0 \cdots i_l}:= U_{i_0}\cap \cdots \cap U_{i_l}$. 
	
\begin{definition}\label{def:local_thom_whitney_de_rham_module}
	We let 
	\begin{multline*}
	\twc{}^{p,q}(\tbva{}{}):= \{ (\eta_{i_0 \cdots i_l})_{(i_0,\dots,  i_l) \in \mathcal{I}} \mid \\ \eta_{i_0 \cdots i_l} \in \mathcal{A}^q(\simplex_l) \otimes_\comp \tbva{}{}^p(U_{i_0\cdots i_l}), \
	\mathtt{d}_{j,l}^* (\eta_{i_0 \cdots i_l}) = \eta_{i_0 \cdots \hat{i}_j \cdots i_l} |_{U_{i_0 \cdots i_l}} \},
	\end{multline*}
	and $\twc{}^{*,*}(\tbva{}{}) := \bigoplus_{p,q} 	\twc{}^{p,q}(\tbva{}{})$. It is equipped with a natural filtration $\twc{}^{*,*}(\tbva{}{\bullet})$ inherited from $\tbva{}{\bullet}$ and the structures $(\wedge,\pdb, \dpartial{})$ defined componentwise by
	\begin{align*}
	(\alpha_{I} \otimes u_I) \wedge ( \beta_I \otimes w_I) &:= (-1)^{|u_I||\beta_I|} (\alpha_I \wedge \beta_I) \otimes (u_I \wedge w_I),\\
	\pdb (\alpha_I \otimes u_I) := (d\alpha_I) \otimes u_I ,&\quad
	\dpartial{}(\alpha_I \otimes u_I) := (-1)^{|\alpha_I|} \alpha_I \otimes (\dpartial{} u_I) ,
	\end{align*}
	for $\alpha_I, \beta_I \in \mathcal{A}^*(\simplex_l)$ and $u_I, w_I \in \tbva{}{}^*(U_I)$ where $l = |I| - 1$. Furthermore, there is an action $\iota_\varphi = \varphi \lrcorner : \twc{}^{*}(\tbva{}{\bullet}) \rightarrow \twc{}^{* + |\varphi|}(\tbva{}{\bullet})$ defined componentwise by
	$$
	(\alpha_I \otimes v_I) \lrcorner (\beta_I \otimes u_I) := (-1)^{|\beta_I| |v_I|} (\alpha_I \wedge \beta_I ) \otimes (v_I \lrcorner u_I),
	$$
	for $\alpha_I, \beta_I \in \mathcal{A}^*(\simplex_l)$, $v_I \in \bva{}^*(U_I)$ and $u_I \in \tbva{}{}^*(U_I)$, where $|\varphi| = p+q $ for $\varphi \in \twc{}^{p,q}(\tbva{}{\bullet})$ and $l = |I| - 1$. 
\end{definition}

Direct computation shows that $(\twc{}^{*}(\tbva{}{\bullet}), \wedge , \dpartial{})$ is a filtered de Rham module over the BV algebra $( \twc{}^{*}(\bva{}), \wedge ,\bvd{})$ with the identity $\mathcal{L}_{\alpha_I \otimes v_I} ( \beta_I \otimes u_I) = (-1)^{(|v_I|+1) |\beta_I|} (\alpha_I \wedge\beta_I) \otimes (\mathcal{L}_{v_I} u_I)$ for $\alpha_I, \beta_I \in \mathcal{A}^*(\simplex_l)$, $v_I \in \bva{}^*(U_I)$ and $u_I \in \tbva{}{}^*(U_I)$ where $l = |I| - 1$. Also, $(\twc{}^{*}(\tbva{}{\bullet}), \wedge , \pdb{})$ is a dga with the relation $\pdb{}(\varphi \lrcorner \eta) = \pdb{}(\varphi) \lrcorner \eta + (-1)^{|\varphi|} \varphi \lrcorner (\pdb{} \eta)$ for $\varphi \in \twc{}^{*}(\bva{})$ and $\eta \in \twc{}^{*}(\tbva{}{\bullet}))$.

\begin{prop}\label{prop:thom_whitney_de_Rham_module_action_properties}
	There is an exact sequence
	$$
	0 \rightarrow \twc{}^{p,q}(\tbva{}{r+1}) \rightarrow \twc{}^{p,q} (\tbva{}{r})  \rightarrow  \twc{}^{p,q} ( \tbva{}{r} / \tbva{}{r +1 }) \rightarrow 0
	$$
	induced naturally by the exact sequence $ 0 \rightarrow \tbva{}{r+1} \rightarrow \tbva{}{r} \rightarrow \tbva{}{r}/ \tbva{}{r+1} \rightarrow 0$. 
\end{prop}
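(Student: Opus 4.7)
The plan is to verify exactness of the sequence position by position; all three positions reduce to a componentwise statement plus a simplicial compatibility check, and the content is concentrated at the rightmost map. Throughout I write $\rest{} : \tbva{}{r} \to \tbva{}{r}/\tbva{}{r+1}$ for the quotient map of sheaves, which is surjective because $\tbva{}{\bullet}$ is a filtration by construction.

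Injectivity of $\twc{}^{p,q}(\tbva{}{r+1}) \to \twc{}^{p,q}(\tbva{}{r})$ is immediate: for each multi-index $I = (i_0,\ldots,i_l) \in \mathcal{I}$, the componentwise map $\mathcal{A}^q(\simplex_l) \otimes_\comp \tbva{}{r+1}^p(U_I) \hookrightarrow \mathcal{A}^q(\simplex_l) \otimes_\comp \tbva{}{r}^p(U_I)$ is injective (by flatness of $\mathcal{A}^q(\simplex_l)$ over $\comp$), and these inclusions commute with the face pullbacks $\mathtt{d}_{j,l}^*$. Exactness at the middle follows by the same reduction: if $(\eta_I)_I \in \twc{}^{p,q}(\tbva{}{r})$ maps to zero in $\twc{}^{p,q}(\tbva{}{r}/\tbva{}{r+1})$, then each $\eta_I$ already lies in $\mathcal{A}^q(\simplex_l) \otimes_\comp \tbva{}{r+1}^p(U_I)$, and the simplicial compatibility relations are inherited from those of $(\eta_I)_I$ in $\twc{}^{p,q}(\tbva{}{r})$.

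Surjectivity of $\twc{}^{p,q}(\tbva{}{r}) \to \twc{}^{p,q}(\tbva{}{r}/\tbva{}{r+1})$ is the main step, and I intend to establish it by induction on $l = |I| - 1$ for a compatible family $(\bar{\eta}_I)_I$ on the right. Two ingredients will be used: first, because every $U_I$ is Stein (Notation \ref{not:open_stein_covers}) and both $\tbva{}{r}$ and $\tbva{}{r+1}$ are coherent (Definition \ref{def:higher_order_data_module}), Cartan's Theorem B (Theorem \ref{thm:cartan_theorem_B}) gives surjectivity of $\rest{}: \tbva{}{r}^p(U_I) \to (\tbva{}{r}/\tbva{}{r+1})^p(U_I)$; second, the Lifting Lemma (Lemma \ref{lem:simplex_lifting_lemma}) packages the extension of polynomial forms from $\simplexbdy_l$ together with a lift through $\rest{}$. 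For the base case $l = 0$ the first ingredient alone supplies $\eta_{i_0}$. For the inductive step with $I = (i_0,\ldots,i_{l+1})$, the previously constructed $\eta_{I\setminus\{i_j\}}$'s assemble into an element $\partial\eta_I \in \mathcal{A}^q(\simplexbdy_{l+1}) \otimes_\comp \tbva{}{r}^p(U_I)$ satisfying $\rest{}(\partial\eta_I) = \bar{\eta}_I|_{\simplexbdy_{l+1}}$, where the boundary-compatibility between different faces follows from the simplicial identities for $(\bar{\eta}_I)_I$ combined with the induction hypothesis; then Lemma \ref{lem:simplex_lifting_lemma} produces the desired $\eta_I$ with $\eta_I|_{\simplexbdy_{l+1}} = \partial\eta_I$ and $\rest{}(\eta_I) = \bar{\eta}_I$.

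The main obstacle is precisely this surjectivity step, where one must simultaneously lift sheaf-valued coefficients through $\rest{}$ and extend polynomial differential forms from the boundary of a simplex while preserving the global simplicial cocycle conditions on $\mathcal{I}$; happily Lemma \ref{lem:simplex_lifting_lemma} was designed exactly for this purpose, so once the inductive framework has been set up, the verification is formal.
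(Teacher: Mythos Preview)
Your proposal is correct and follows essentially the same approach as the paper: both observe that injectivity and middle-exactness are componentwise trivialities, and both prove surjectivity by induction on $l = |I|-1$, using surjectivity of $\rest{}$ on Stein opens for the base case and the Lifting Lemma (Lemma \ref{lem:simplex_lifting_lemma}) for the induction step. The only cosmetic difference is that the paper begins the induction at $l = q$ (since $\mathcal{A}^q(\simplex_l) = 0$ for $l < q$), whereas you start at $l = 0$; this is harmless since the intermediate cases are vacuous.
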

	
\begin{proof}
	The only nontrivial part is the surjectivity of the map $\mathtt{p} : \twc{}^{p,q} (\tbva{}{r})  \rightarrow  \twc{}^{p,q} ( \tbva{}{r} / \tbva{}{r +1 })$, which is induced from surjective maps $\mathtt{p} : \tbva{}{r}^p(U_{i_0\cdots i_l}) \rightarrow (\tbva{}{r}^p/\tbva{}{r+1}^p)(U_{i_0\cdots i_l})$. We fix $\eta = (\eta_{i_0\cdots i_l})_{(i_0, \ldots, i_l)} \in \twc{}^{p,q} ( \tbva{}{r} / \tbva{}{r +1})$ with $\eta_{i_0 \cdots i_l } \in \mathcal{A}^q(\simplex_l) \otimes (\tbva{}{r}^p/\tbva{}{r+1}^p)(U_{i_0 \cdots i_l})$, and show by induction on $l$ that there exists a lifting $ \reallywidetilde{\eta_{i_0 \cdots i_{l'}}} \in \mathcal{A}^q(\simplex_{l'}) \otimes \tbva{}{}^p(U_{i_0 \cdots i_{l'}})$ for any $ l'\leq l$ satisfying $\mathtt{p}(\reallywidetilde{\eta_{i_0 \cdots i_{l'}}}) = \eta_{i_0 \cdots i_{l'}}$ and $\mathtt{d}_{j,l}^* (\reallywidetilde{\eta_{i_0 \cdots i_{l'}}}) = \reallywidetilde{\eta_{i_0 \cdots \hat{i}_j \cdots i_{l'}}} |_{U_{i_0 \cdots i_{l'}}} $ for all $0 \leq j \leq l'$. 
	The initial case $l = q$ follows from the surjectivity of the map $ \mathtt{p} : \tbva{}{r}^p(U_{i_0\cdots i_q}) \rightarrow (\tbva{}{r}^p/\tbva{}{r+1}^p)(U_{i_0 \cdots i_q})$ over the Stein open subset $U_{i_0 \cdots i_q}$. For the induction step, we set
	$
	\partial(\reallywidehat{\eta_{i_0 \cdots i_l}}) := (\reallywidetilde{\eta_{\widehat{i_0} \cdots i_l}},\dots , \reallywidetilde{\eta_{i_0 \cdots \widehat{i_l}}}) \in \mathcal{A}^q(\simplexbdy_l) \otimes \tbva{}{r}^p (U_{i_0 \cdots i_l})
	$.
	Then the Lifting Lemma \ref{lem:simplex_lifting_lemma} gives an extension $\reallywidetilde{\eta_{i_0 \cdots i_l}} \in \mathcal{A}^q(\simplex_l) \otimes \tbva{}{r}^p (U_{i_0 \cdots i_l})$ such that $(\reallywidetilde{\eta_{i_0 \cdots i_l}})|_{\simplexbdy_l} = \partial(\reallywidehat{\eta_{i_0 \cdots i_l}})$ and $\mathtt{p}_{i_0 \cdots i_l}(\reallywidetilde{\eta_{i_0 \cdots i_l}}) = \eta_{i_0 \cdots i_l}$, as desired. 
\end{proof}
	
\begin{notation}\label{not:derham_complex_bullet_notation}
	We will write $\tbva{k}{r_1:r_2}^*_{\alpha}:= \tbva{k}{r_1}^*_\alpha / \tbva{k}{r_2}^*_\alpha$ for any $r_1 \leq r_2$, and extend the notation $\tbva{k}{\bullet}^*_{\alpha}$ by allowing $\bullet = r$ or $r_1:r_2$. We also write $\tbva{k}{}^*_\alpha = \tbva{k}{0}^*_\alpha$ as before.  
\end{notation}
	
Given a set of compatible gluing morphisms $\glue{} = \{\glue{k}_{\alpha\beta}\}$ as in \S \ref{sec:the_Cech_thom_whitney_complex}, we can extend them to gluing morphisms acting on $\ktwc{k}{\bullet}^{*,*}_{\alpha_j;\alpha_0 \cdots \alpha_\ell} := \twc{}^{*,*}(\tbva{k}{\bullet}_{\alpha_j}|_{V_{\alpha_0 \cdots \alpha_\ell}})$. 
	
\begin{definition}\label{def:de_rham_module_induced_gluing_morphism}
	For each pair $V_\alpha, V_\beta \subset \mathcal{V}$, the morphism $\drglue{k}_{\alpha\beta} = (\drglue{k}_{\alpha\beta, i_0 \cdots i_l})_{(i_0, \ldots, i_l)\in \mathcal{I}} : \ktwc{k}{\bullet}^{*,*}_{\alpha;\alpha\beta} \rightarrow \ktwc{k}{\bullet}^{*,*}_{\beta;\alpha\beta}$ is defined componentwise by
	$$
	\drglue{k}_{\alpha\beta,i_0 \cdots i_l}(\eta_{i_0 \cdots i_l}) := (\exp(\mathcal{L}_{\sauto{k}_{\alpha\beta,i_0 \cdots i_l}}) \circ \exp(\mathcal{L}_{\iauto{k}_{\alpha\beta,i_0}})  \circ \hpatch{k}_{\alpha\beta,i_0})(\eta_{i_0 \cdots i_l})
	$$
	for any multi-index $(i_0, \ldots, i_l) \in \mathcal{I}$ such that $U_{i_j} \subset V_{\alpha\beta}$ for every $ 0 \leq j \leq l$.
\end{definition}

From Definition \ref{def:higher_order_module_patching}, we see that the differences between the morphisms $\hpatch{k}_{\alpha\beta,i}$'s are captured by taking Lie derivatives of the same elements $\resta{k,l}_{\alpha\beta,i}$'s, $\patchij{k}_{\alpha\beta,i j}$'s and $\cocyobs{k}_{\alpha\beta\gamma,i}$'s as for the morphisms $\patch{k}_{\alpha\beta,i}$'s. So the morphisms $\drglue{k}_{\alpha\beta}:\ktwc{k}{\bullet}^{*,*}_{\alpha;\alpha\beta} \to \ktwc{k}{\bullet}^{*,*}_{\beta;\alpha\beta}$'s are well-defined and satisfying $\drglue{k+1}_{\alpha\beta} = \drglue{k}_{\alpha\beta} \ (\text{mod \ $\mathbf{m}^{k+1}$})$, $\drglue{k}_{\gamma\alpha} \circ \drglue{k}_{\beta \gamma}  \circ \drglue{k}_{\alpha\beta} = \text{id}$. As a result, we can define the \v{C}ech-Thom-Whitney complex $\cech{}^*(\ktwc{}{\bullet}, \drglue{})$ as in \S \ref{sec:the_Cech_thom_whitney_complex}.

\begin{definition}\label{def:construction_total_de_rham}
	For $\ell \geq 0$, we let $\ktwc{k}{\bullet}^{*,*}_{\alpha_0 \cdots \alpha_{\ell}}(\drglue{}) \subset \bigoplus_{i=0}^\ell \ktwc{k}{\bullet}^{*,*}_{\alpha_i;\alpha_0 \cdots \alpha_{\ell}}$ be the set of elements $(\eta_0,\cdots, \eta_{\ell})$ such that $ \eta_{j} = \drglue{k}_{\alpha_i \alpha_j} (\eta_i)$.Then we set $\cech{k}^\ell(\ktwc{}{\bullet}^{p,q},\drglue{}) := \prod_{\alpha_0 \cdots \alpha_\ell} \ktwc{k}{\bullet}^{p,q}_{\alpha_0 \cdots \alpha_{\ell}}(\drglue{})$ for each $k \in \inte_{\geq 0}$ and $\cech{k}^\ell(\ktwc{}{\bullet},\drglue{}) := \bigoplus_{p,q}\cech{k}^\ell(\ktwc{}{\bullet}^{p,q},\drglue{})$, which is equipped with the natural restriction maps $ \restmap_{j,\ell} : \cech{k}^{\ell-1}(\ktwc{}{\bullet},\drglue{}) \rightarrow \cech{k}^{\ell}(\ktwc{}{\bullet},\drglue{})$.
		
	We let $\cechd{k}_\ell:= \sum_{j =0}^{\ell+1} (-1)^{j}\restmap_{j,\ell+1} : \cech{k}^{\ell} (\ktwc{}{\bullet},\drglue{}) \rightarrow \cech{k}^{\ell+1}(\ktwc{}{\bullet},\drglue{})$ be the \v{C}ech differential. The {\em $k^{\text{th}}$ order total de Rham complex over $X$} is then defined to be $\totaldr{k}{\bullet}^{*,*}(\drglue{}):= \text{Ker}(\cechd{k}_{0})$. Denoting the natural inclusion $\totaldr{k}{\bullet}^{*,*}(\drglue{}) \rightarrow \cech{k}^0(\ktwc{}{\bullet},\drglue{})$ by $\cechd{k}_{-1}$, we obtain the following sequence of maps
	\begin{align}
	\totaldr{k}{\bullet}^{p,q}(\drglue{}) \hookrightarrow \cech{k}^0(\ktwc{}{\bullet}^{p,q},\drglue{}) \rightarrow \cech{k}^1(\ktwc{}{\bullet}^{p,q},\drglue{}) \rightarrow \cdots \rightarrow \cech{k}^{\ell}(\ktwc{}{\bullet}^{p,q},\drglue{}) \rightarrow \cdots, \label{eqn:de_rham_cech_thom_whitney_complex}\\
	 \totaldr{}{\bullet}^{p,q}(\drglue{}) \hookrightarrow \cech{}^0(\ktwc{}{\bullet}^{p,q},\drglue{}) \rightarrow \cech{}^1(\ktwc{}{\bullet}^{p,q},\drglue{}) \rightarrow \cdots \rightarrow \cech{}^{\ell}(\ktwc{}{\bullet}^{p,q},\drglue{}) \rightarrow \cdots,
	\end{align}
	where the second sequence is obtained by taking inverse limits $\varprojlim_k$ of the first sequence.
		
	Furthermore, we let $\prescript{k}{}{\pdb}$ and $\dpartial{k}$ be the operators on $\totaldr{k}{\bullet}^{*,*}(\drglue{})$ obtained by gluing of the operators $(\prescript{k}{}{\pdb}_{\alpha} + \mathcal{L}_{\dbtwist{k}_\alpha})_\alpha$'s (where $(\dbtwist{k}_{\alpha})_\alpha \in \cech{k}^1(\twc{}^{-1,1},\glue{})$ is the element obtained from Theorem \ref{thm:construction_of_differentials}) and $\dpartial{k}_{\alpha}$'s on $\ktwc{k}{\bullet}_{\alpha;\alpha}$, and $\pdb:= \varprojlim_k \prescript{k}{}{\pdb} $ and $\dpartial{}:= \varprojlim_k \dpartial{k}$ be the corresponding inverse limits.
\end{definition}

\begin{prop}\label{prop:de_rham_module_algebraic_structures_summary}
	Let $\totaldr{}{\bullet}^{*,*}(\drglue{})$ be the filtration inherited from that of $\tbva{}{\bullet}$ for $\bullet =0,\dots,s$. Then $(\totaldr{}{\bullet}^{*,*}(\drglue{}), \wedge, \dpartial{})$ is a filtered de Rham module over the BV algebra $(\polyv{}^{*,*}(\glue{}), \wedge, \bvd{})$, and we have $\pdb^2 = 0= \pdb\dpartial{} + \dpartial{} \pdb \ (\text{mod \ $\mathbf{m}$})$ as well as the relations
	$$
	\pdb(\eta \wedge \mu) = \pdb(\eta) \wedge \mu + (-1)^{|\eta|} \eta \wedge(\pdb \mu),\quad
	\pdb(\varphi \lrcorner \eta) = (\pdb \varphi) \lrcorner \eta + (-1)^{|\varphi|} \varphi \lrcorner (\pdb \eta),
	$$
	for $\varphi \in \polyv{}^{*,*}(\glue{})$ and $\eta, \mu \in \totaldr{}{}^{*,*}(\drglue{})$. Furthermore, the filtration $\totaldr{}{\bullet}^{*,*}(\drglue{})$ satisfies the relation
	$$
	\totaldr{}{r}^{*,*}(\drglue{}) / \totaldr{}{r+1}^{*,*}(\drglue{})  = \totaldr{}{r:r+1}^{*,*}(\drglue{}).
	$$
\end{prop}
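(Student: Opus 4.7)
The plan is to verify each claim locally on the pieces $\ktwc{}{\bullet}^{*,*}_{\alpha;\alpha}$ and then check that all structures glue. On each local piece, the wedge product, contraction $\lrcorner$, filtration $\ktwc{}{\bullet}^{*,*}_{\alpha;\alpha}$, and differential $\dpartial{}_{\alpha}$ are inherited componentwise from the corresponding data on $\tbva{}{\bullet}_{\alpha}^*$ tensored with $\mathcal{A}^*(\simplex_{\bullet})$, as in Definition \ref{def:local_thom_whitney_de_rham_module}. The filtered de Rham module axioms for $(\tbva{}{\bullet}_{\alpha}^*, \wedge, \dpartial{}_{\alpha})$ over $(\bva{}^*_{\alpha}, \wedge, \bvd{}_{\alpha})$ (Definition \ref{def:abstract_derham_module}) propagate to $(\ktwc{}{\bullet}^{*,*}_{\alpha;\alpha}, \wedge, \dpartial{}_{\alpha})$ over $(\twc{}^{*,*}_{\alpha;\alpha}, \wedge, \bvd{}_{\alpha})$ by direct computation using the sign conventions in Definitions \ref{def:thom_whitney_general} and \ref{def:local_thom_whitney_de_rham_module}.

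Next I would verify that the gluing morphisms $\drglue{}_{\alpha\beta}$ are compatible with the contraction and the filtered dga structure. This reduces, via the explicit form $\drglue{k}_{\alpha\beta,i_0\cdots i_l} = \exp(\mathcal{L}_{\sauto{k}_{\alpha\beta,i_0\cdots i_l}}) \circ \exp(\mathcal{L}_{\iauto{k}_{\alpha\beta,i_0}}) \circ \hpatch{k}_{\alpha\beta,i_0}$, to two checks: (i) $\hpatch{k}_{\alpha\beta,i}$ preserves these structures, which is exactly Definition \ref{def:higher_order_module_patching}, and (ii) each $\mathcal{L}_{v}$ is a derivation of $\wedge$ on $\tbva{}{}^*_{\alpha}$ and intertwines with contraction via $[\mathcal{L}_v, w\lrcorner] = [v,w]\lrcorner$, both of which are part of the de Rham module axioms. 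Since $\drglue{}_{\alpha\beta}$ is paired with the corresponding $\glue{}_{\alpha\beta}$ on $\polyv{}^{*,*}(\glue{})$ via the same local sections, the kernel $\totaldr{}{\bullet}^{*,*}(\drglue{})$ of $\cechd{}_0$ inherits a well-defined action of $\polyv{}^{*,*}(\glue{})$ and all local filtered de Rham module identities globalize.

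For the differential identities involving $\pdb$ and $\dpartial{}$, I will argue that the locally defined operators $\pdb_\alpha + \mathcal{L}_{\dbtwist{}_\alpha}$ and $\dpartial{}_\alpha$ satisfy the Leibniz rule over $\wedge$ and the compatibility $\pdb(\varphi \lrcorner \eta) = (\pdb\varphi) \lrcorner \eta + (-1)^{|\varphi|} \varphi \lrcorner (\pdb\eta)$ with $\pdb_\alpha + [\dbtwist{}_\alpha,\cdot]$ on $\polyv{}^{*,*}(\glue{})$; both identities follow because $d_{\simplex}$ is a derivation of the dga structure on $\mathcal{A}^*(\simplex_\bullet)$ and $\mathcal{L}_{\dbtwist{}_\alpha}$ is a derivation satisfying $[\mathcal{L}_{\dbtwist{}_\alpha}, v\lrcorner] = [\dbtwist{}_\alpha, v] \lrcorner$ — so both sides of the contraction-Leibniz identity match termwise. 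For the vanishing statements $\pdb^2 = 0 = \pdb\dpartial{} + \dpartial{}\pdb \pmod{\mathbf{m}}$, I would use that $\dbtwist{}_\alpha \equiv 0 \pmod{\mathbf{m}}$ (by Theorem \ref{thm:construction_of_differentials}), so modulo $\mathbf{m}$ these operators reduce to the $0^{\text{th}}$-order operators $\pdb_\alpha$ and $\dpartial{0}_\alpha$ on the Thom-Whitney complex, where $\pdb_\alpha^2 = 0$ holds because $d_{\simplex}^2 = 0$, and $\pdb_\alpha \dpartial{0}_\alpha + \dpartial{0}_\alpha \pdb_\alpha = 0$ holds by the sign convention in Definition \ref{def:local_thom_whitney_de_rham_module} and the fact that $d_{\simplex}$ commutes with $\dpartial{0}_\alpha$ on decomposable tensors up to the standard sign.

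Finally, the quotient identity $\totaldr{}{r}^{*,*}/\totaldr{}{r+1}^{*,*} = \totaldr{}{r:r+1}^{*,*}$ will follow from Proposition \ref{prop:thom_whitney_de_Rham_module_action_properties}, which produces a short exact sequence $0 \to \ktwc{}{r+1}^{*,*}_{\alpha;\alpha} \to \ktwc{}{r}^{*,*}_{\alpha;\alpha} \to \ktwc{}{r:r+1}^{*,*}_{\alpha;\alpha} \to 0$, combined with the exactness results of Lemma \ref{lem:exactness_of_cech_thom_whitney_complex} adapted to the de Rham setting (the same inductive simplex-lifting argument applies because the gluing morphisms $\drglue{}_{\alpha\beta}$ preserve the filtration, and the local sheaves $\tbva{k}{r}^*_\alpha$ are coherent on Stein opens). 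The main obstacle I anticipate is keeping track of signs and verifying that the twisting elements $\dbtwist{}_\alpha$ constructed in Theorem \ref{thm:construction_of_differentials} interact consistently with the dga product on the de Rham side (as opposed to only the Lie bracket on the polyvector side), which essentially amounts to reusing the compatibility $[\varphi \lrcorner, \alpha \wedge] = 0$ from Definitions \ref{def:higher_order_data_module} and \ref{def:abstract_derham_module}.
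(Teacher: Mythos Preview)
Your proposal is correct and follows essentially the same approach as the paper: verify the algebraic identities locally on each $\ktwc{k}{\bullet}^{*,*}_{\alpha;\alpha}$ (where $\pdb$ becomes $\pdb_\alpha + \mathcal{L}_{\dbtwist{}_\alpha}$ and $\dpartial{}$ becomes $\dpartial{}_\alpha$), use $\dbtwist{}_\alpha \equiv 0 \pmod{\mathbf{m}}$ for the vanishing statements, and derive the quotient identity from the short exact sequence of \v{C}ech--Thom--Whitney complexes coming from Proposition~\ref{prop:thom_whitney_de_Rham_module_action_properties} together with the exactness/surjectivity results of Lemma~\ref{lem:exactness_of_cech_thom_whitney_complex}. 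Your write-up is considerably more explicit than the paper's terse proof---in particular your verification that $\drglue{}_{\alpha\beta}$ respects the filtered de Rham module structures via the derivation properties of $\mathcal{L}_v$ is a useful spelling-out of what the paper leaves implicit in the construction.
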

	
\begin{proof}
	Since $\pdb$ and $\dpartial{}$ are constructed from the operators  $(\prescript{k}{}{\pdb}_{\alpha} + \mathcal{L}_{\dbtwist{k}_\alpha})_\alpha$'s and $\dpartial{k}_{\alpha}$'s on $\ktwc{k}{\bullet}_{\alpha;\alpha}$, we only have to check the relations for each $(\ktwc{k}{\bullet}_{\alpha;\alpha},\wedge,\dpartial{k})$, which is a filtered de Rham module over the BV algebra $(\twc{k}_{\alpha;\alpha}, \wedge, \bvd{k})$. 
	To see the last relation, note that there is an exact sequence of \v{C}ech-Thom-Whitney complexes
	$
	0 \rightarrow \cech{k}^*(\ktwc{}{r+1}^{p,q},\drglue{}) \rightarrow \cech{k}^*(\ktwc{}{r}^{p,q},\drglue{}) \rightarrow \cech{k}^*(\ktwc{}{r:r+1}^{p,q},\drglue{}) \rightarrow 0
	$
	using Proposition \ref{prop:thom_whitney_de_Rham_module_action_properties}. Taking the kernel and inverse limits then gives the exact sequence
	$$
	0\rightarrow \totaldr{}{r+1}^{p,q}(\drglue{}) \rightarrow \totaldr{}{r}^{p,q}(\drglue{}) \rightarrow \totaldr{}{r:r+1}^{p,q}(\drglue{}) \rightarrow 0. 
	$$
	The result follows.
\end{proof}
	
\begin{notation}\label{not:total_de_rham_and_relative_de_rham_simplify_notation}
	We will simplify notations by writing $\totaldr{}{\bullet}^{*,*} = \totaldr{}{\bullet}(\drglue{})$ and $\polyv{}^{*,*} = \polyv{}^{*,*}(\glue{})$ if the gluing morphisms $\drglue{} = \{\drglue{k}_{\alpha\beta}\}$ and $\glue{} = \{\glue{k}_{\alpha\beta}\}$ are clear from the context. We will further denote the relative de Rham complex (over $\text{spf}(\hat{\cfr})$) as $\totaldr{}{\parallel}^{*,*}:= \totaldr{}{0}^{*,*}/ \totaldr{}{1}^{*,*} = \totaldr{}{0:1}^{*,*}$. 
\end{notation}
	
\begin{prop}\label{prop:gluing_volume_form}
	Using the element $(\volftwist{k}_{\alpha})_\alpha \in \cech{k}^0(\twc{}^{0,0},\glue{})$ obtained from Theorem \ref{thm:construction_of_differentials}, we obtain the element $(\exp(\volftwist{k}_{\alpha} \lrcorner) \volf{k}_{\alpha})_{\alpha} \in \cech{k}^{0}(\ktwc{}{0:1}^{d,0},\drglue{})$ whose components glue to form a global element $\volf{k} \in \totaldr{k}{\parallel}^{d,0} $, i.e. we have $(\drglue{k}_{\alpha\beta} \circ \exp(\volftwist{k}_{\alpha} \lrcorner)) (\volf{k}_{\alpha}) = \exp(\volftwist{k}_{\beta} \lrcorner) (\volf{k}_{\beta})$. Furthermore, we have $\volf{k+1} = \volf{k} \ (\text{mod \ $\mathbf{m}^{k+1}$})$. In view of this, we define the {\em relative volume form} to be $\volf{} := \varprojlim_k \volf{k}$.  
\end{prop}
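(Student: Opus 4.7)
The plan is to establish the componentwise gluing identity $(\drglue{k}_{\alpha\beta} \circ \exp(\volftwist{k}_{\alpha} \lrcorner)) (\volf{k}_{\alpha}) = \exp(\volftwist{k}_{\beta} \lrcorner) (\volf{k}_{\beta})$ over every multi-index $(i_0,\dots,i_l)$ with $U_{i_j} \subset V_{\alpha\beta}$; once this is shown, the tuple $(\exp(\volftwist{k}_\alpha \lrcorner)\volf{k}_\alpha)_\alpha$ automatically lies in $\mathrm{Ker}(\cechd{k}_0) \subset \cech{k}^0(\ktwc{}{0:1}^{d,0},\drglue{})$, which by Definition \ref{def:construction_total_de_rham} equals $\totaldr{k}{\parallel}^{d,0}$ and thereby defines $\volf{k}$. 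The order compatibility $\volf{k+1} = \volf{k} \ (\text{mod } \mathbf{m}^{k+1})$ and the well-definedness of the inverse limit $\volf{} := \varprojlim_k \volf{k}$ then follow immediately from the order compatibilities $\volftwist{k+1}_\alpha = \volftwist{k}_\alpha$ and $\volf{k+1}_\alpha = \volf{k}_\alpha$ mod $\mathbf{m}^{k+1}$ already recorded in Theorem \ref{thm:construction_of_differentials} and Definition \ref{def:higher_order_data_module}.

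The conceptual content of the identity is that $e^{\volftwist{k}_\alpha \lrcorner}\volf{k}_\alpha$ is precisely the volume element whose induced BV-type operator is the modified one $\bvd{k}_\alpha + [\volftwist{k}_\alpha,\cdot]$ glued in Theorem \ref{thm:construction_of_differentials}. Indeed, since $\volftwist{k}_\alpha$ sits in $\bva{k}^0_\alpha$ and $\bva{k}^*_\alpha$ vanishes above degree $0$, one has $\bvd{k}_\alpha\volftwist{k}_\alpha = 0$ for degree reasons; combining this with the defining relation $\bvd{k}_\alpha(\varphi)\lrcorner\volf{k}_\alpha = \dpartial{k}_\alpha(\varphi\lrcorner\volf{k}_\alpha)$ from Definition \ref{def:higher_order_data_module} and the fact that a function contracts a form by ordinary multiplication yields a short direct computation $(\bvd{k}_\alpha + [\volftwist{k}_\alpha,\cdot])(\varphi) \lrcorner (e^{\volftwist{k}_\alpha}\volf{k}_\alpha) = \dpartial{k}_\alpha(\varphi\lrcorner e^{\volftwist{k}_\alpha}\volf{k}_\alpha)$. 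Because $\lrcorner\volf{k}_\beta$ is an isomorphism (Definition \ref{def:higher_order_data_module}), the gluing of modified BV operators $\glue{k}_{\beta\alpha}\circ(\bvd{k}_\beta + [\volftwist{k}_\beta,\cdot])\circ\glue{k}_{\alpha\beta} = \bvd{k}_\alpha + [\volftwist{k}_\alpha,\cdot]$ should force the desired gluing of the corresponding volume elements under $\drglue{k}_{\alpha\beta}$.

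To convert this outline into a rigorous componentwise verification I would expand $\drglue{k}_{\alpha\beta, i_0\cdots i_l} = \exp(\mathcal{L}_{\sauto{k}_{\alpha\beta, i_0\cdots i_l}})\, \exp(\mathcal{L}_{\iauto{k}_{\alpha\beta, i_0}})\, \hpatch{k}_{\alpha\beta, i_0}$ as in Definition \ref{def:de_rham_module_induced_gluing_morphism}, then apply Lemma \ref{lem:BV_gauge_action_computation} --- together with the identity $\mathcal{L}_v = -[\dpartial{k}_\beta, v\lrcorner]$ for $v \in \bva{k}^{-1}_\beta$ coming from Definition \ref{def:abstract_BV_module} --- to convert each Lie-derivative exponential acting on $\volf{k}_\beta$ into contraction by an explicit polyvector field built from $\bvd{k}_\beta\iauto{k}_{\alpha\beta, i_0}$ and $\bvd{k}_\beta\sauto{k}_{\alpha\beta, i_0\cdots i_l}$. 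Simultaneously I would replace $\hpatch{k}_{\alpha\beta, i_0}(\volf{k}_\alpha)$ by $\exp(-\bvdobs{k}_{\beta\alpha, i_0}\lrcorner)\volf{k}_\beta$ using \eqref{eqn:higher_order_volume_form_different}, and match the resulting contraction of $\volf{k}_\beta$ with $e^{\volftwist{k}_\beta\lrcorner}\volf{k}_\beta$ by invoking the cocycle relation $\glue{k}_{\alpha\beta}(\volftwist{k}_\alpha) - \volftwist{k}_\beta = \prescript{k}{}{\mathtt{f}}_{\beta;\alpha\beta}$ from Theorem \ref{thm:construction_of_differentials} together with the explicit formulas \eqref{eqn:proof_in_lemma_gluing_morphism_BV_operator_comparison} and \eqref{eqn:proof_of_gluing_morphism_BV_operator_comparison_alpha_beta_relation} for $\prescript{k}{}{\mathtt{f}}_{\beta;\alpha\beta, i_0\cdots i_l}$; the contributions localized at $i_0$ should match those coming from $\iauto{k}$, while the $\sauto{k}$-contributions propagate the identity across $\simplex_l$.

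The principal obstacle will be the combinatorial bookkeeping in this last step --- tracking signs in the conversions $\mathcal{L}_v \leftrightarrow \pm[\dpartial{k},v\lrcorner]$ and lining up the coefficients of the auxiliary series $\mathtt{T}(x) = (e^{-x}-1)/x$ that appear both when applying Lemma \ref{lem:BV_gauge_action_computation} and in the construction of $\prescript{k}{}{\mathtt{f}}$ in Lemma \ref{lem:gluing_morphism_BV_operator_comparison}. A cleaner alternative which avoids this bookkeeping is to argue by uniqueness: any two volume elements in $\ktwc{k}{0:1}^{d,0}_{\beta;\alpha\beta}$ whose contractions intertwine the same operator on $\bva{k}^*_\beta$ differ by multiplication by a $\dpartial{k}$-closed function, and this ambiguity can be pinned down to be trivial order by order in $\mathbf{m}$ using the normalizations $\volftwist{k}_\alpha = 0 \ (\text{mod } \mathbf{m})$ from Theorem \ref{thm:construction_of_differentials} and $\volf{0}_\alpha = \volf{0}|_{V_\alpha}$ from Definition \ref{def:higher_order_data_module}.
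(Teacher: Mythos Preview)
Your primary approach is correct and essentially identical to the paper's: both verify the identity componentwise over each $(i_0,\dots,i_l)$ by expanding $\drglue{k}_{\alpha\beta,i_0\cdots i_l}$ via Definition \ref{def:de_rham_module_induced_gluing_morphism}, pushing the Lie-derivative exponentials through to $\volf{k}_\beta$, and then matching the resulting contraction against $\exp(\volftwist{k}_\beta\lrcorner)\volf{k}_\beta$ using the explicit formulas \eqref{eqn:proof_in_lemma_gluing_morphism_BV_operator_comparison}--\eqref{eqn:proof_of_gluing_morphism_BV_operator_comparison_alpha_beta_relation} for $\prescript{k}{}{\mathtt{f}}_{\beta;\alpha\beta}$ together with the volume comparison \eqref{eqn:higher_order_volume_form_different}. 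The bookkeeping you flag as the principal obstacle is handled in the paper not by Lemma \ref{lem:BV_gauge_action_computation} directly but by first invoking the simpler intertwining identity $\exp(\mathcal{L}_{\vartheta})(v\lrcorner w) = (\exp([\vartheta,\cdot])v)\lrcorner(\exp(\mathcal{L}_\vartheta)w)$ for $\vartheta \in \bva{k}^{-1}_\beta$, which moves all contractions outside the Lie exponentials in one stroke and reduces the computation to evaluating $\exp(\mathcal{L}_{\iauto{k}_{\alpha\beta,i_0}})(\volf{k}_\beta)$ and $\exp(\mathcal{L}_{\sauto{k}_{\alpha\beta,i_0\cdots i_l}})(\volf{k}_\beta)$ separately; this shortcut is worth adopting. (Minor point: from \eqref{eqn:higher_order_volume_form_different} one gets $\hpatch{k}_{\alpha\beta,i_0}(\volf{k}_\alpha) = \exp(\bvdobs{k}_{\beta\alpha,i_0}\lrcorner)\volf{k}_\beta$, without the minus sign you wrote.) Your alternative uniqueness argument is not pursued in the paper.
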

	
\begin{proof}
	Similar to the proof of Lemma \ref{lem:gluing_morphism_BV_operator_comparison}, we use the power series $\mathtt{T}(x)$ in \eqref{eqn:power_series_in_proof_gluing_morphism_BV_operator_comparison} to simplify notations. We fix $V_{\alpha\beta}$ and $U_{i_0 \cdots i_l}$ such that $U_{i_j} \subset V_{\alpha \beta}$ for all $0 \leq j \leq l$. We need to show that 
	$$
	(\exp(\mathcal{L}_{\sauto{k}_{\alpha\beta, i_0 \cdots i_l}}) \circ \glue{k}_{\alpha\beta, i_0 } \circ \exp( \volftwist{k}_{\alpha, i_0 \cdots i_l} \lrcorner)) (\volf{k}_{\alpha}) = \exp( \volftwist{k}_{\beta, i_0 \cdots i_l} \lrcorner) (\volf{k}_{\beta}).
	$$
	We begin with the case $l=0$. Making use of the identities
	\begin{align*}
	\exp([\sauto{}, \cdot ]) (u \wedge v) & = (\exp([\sauto{}, \cdot ])(u)) \wedge (\exp([\sauto{}, \cdot ])(v)),\\
	\exp(\mathcal{L}_{\sauto{}}) (v \lrcorner w) & = (\exp([\sauto{}, \cdot ])(v)) \lrcorner (\exp(\mathcal{L}_{\sauto{}}) (w))
	\end{align*}
	for $\sauto{} \in \bva{k}^{-1}_\beta(U_{i_0 \cdots i_l})$, $u, v \in \bva{k}^{0}_\beta(U_{i_0 \cdots i_l})$ and $w \in \tbva{k}{0:1}^d_{\beta}(U_{i_0 \cdots i_l})$, we have 
	\begin{align*}
	 	& (\drglue{k}_{\alpha\beta,i_0} \circ \exp( \volftwist{k}_{\alpha, i_0} \lrcorner)) (\volf{k}_{\alpha})\\
	 	= & (\exp(\mathcal{L}_{\iauto{k}_{\alpha\beta, i_0}}) \circ \hpatch{k}_{\alpha\beta, i_0 } \circ \exp( \volftwist{k}_{\alpha, i_0} \lrcorner)) (\volf{k}_{\alpha})\\
		= & \exp(\glue{k}_{\alpha\beta,i_0}(\volftwist{k}_{\alpha,i_0})+\exp([\iauto{k}_{\alpha\beta, i_0 }, \cdot])(\bvdobs{k}_{\beta\alpha,i_0}) ) \lrcorner \exp(\mathcal{L}_{\iauto{k}_{\alpha\beta, i_0 }}) (\volf{k}_{\beta})\\
		= &  \exp\Big(\volftwist{k}_{\beta, i_0} + \prescript{k}{}{\mathtt{f}}_{\beta;\alpha\beta,i_0} +\\
		&\glue{k}_{\alpha\beta,i_0} ( - \bvdobs{k}_{\alpha\beta,i_0}  + (\patch{k}_{\beta\alpha,i_0} \circ \mathtt{T}([\iauto{k}_{\alpha\beta,i_0},\cdot]) \circ \bvd{k}_\beta) (\iauto{k}_{\alpha\beta,i_0}) ) \Big) \lrcorner \volf{k}_\beta \\ 
		= & \exp(\volftwist{k}_{\beta, i_0}) \lrcorner \volf{k}_\beta,
	\end{align*}
	where $\prescript{k}{}{\mathtt{f}}_{\beta;\alpha\beta,i_0} = \glue{k}_{\alpha\beta,i_0} (\prescript{k}{}{\mathtt{f}}_{\alpha;\alpha\beta,i_0}) = \glue{k}_{\alpha\beta,i_0}(- (\patch{k}_{\beta\alpha,i_0} \circ \mathtt{T}([\iauto{k}_{\alpha\beta,i_0},\cdot]) \circ \bvd{k}_\beta) (\iauto{k}_{\alpha\beta,i_0}) +  \bvdobs{k}_{\alpha\beta,i_0})$ is the component of the term $\prescript{k}{}{\mathtt{f}}_{\beta;\alpha\beta}$ obtained in Lemma \ref{lem:gluing_morphism_BV_operator_comparison}.
		
	The general case $l \geq 0$ is similar, as we have
	\begin{align*}
		& (\drglue{k}_{\alpha\beta,i_0 \cdots i_l} \circ \exp( \volftwist{k}_{\alpha, i_0 \cdots i_l} \lrcorner)) (\volf{k}_{\alpha})\\
		= & (\exp(\mathcal{L}_{\sauto{k}_{\alpha\beta, i_0 \cdots i_l}}) \circ \glue{k}_{\alpha\beta, i_0 } \circ \exp( \volftwist{k}_{\alpha, i_0 \cdots i_l} \lrcorner)) (\volf{k}_{\alpha})\\
		= & \exp(\volftwist{k}_{\beta,i_0 \cdots i_l} + \prescript{k}{}{\mathtt{f}}_{\beta;\alpha\beta,i_0\cdots i_l} -  \exp([\sauto{k}_{\alpha\beta, i_0\cdots i_l }, \cdot])(\prescript{k}{}{\mathtt{f}}_{\beta;\alpha\beta,i_0})\\ &+(\mathtt{T}(-[\sauto{k}_{\alpha\beta,i_0 \cdots i_l},\cdot]) \circ \bvd{k}_\beta) (\sauto{k}_{\alpha\beta,i_0 \cdots i_l}) )\lrcorner \volf{k}_\beta \\
		= & \exp\Big(\volftwist{k}_{\beta,i_0 \cdots i_l} + \prescript{k}{}{\mathtt{f}}_{\beta;\alpha\beta,i_0\cdots i_l} +\\
		& \glue{k}_{\alpha\beta,i_0 \cdots i_l} ( (\glue{k}_{\beta\alpha,i_0} \circ \mathtt{T}([\sauto{k}_{\alpha\beta,i_0 \cdots i_l},\cdot]) \circ \bvd{k}_\beta) (\sauto{k}_{\alpha\beta,i_0 \cdots i_l}) -\prescript{k}{}{\mathtt{f}}_{\alpha;\alpha\beta,i_0} ) \Big) \lrcorner \volf{k}_\beta \\
		= & \exp(\volftwist{k}_{\beta,i_0 \cdots i_l}) \lrcorner \volf{k}_\beta.
	\end{align*}
\end{proof}
	
\begin{lemma}\label{lem:proving_globalness_for_construction_of_total_de_rham_differential}
	The elements $\dbobs{k}_{\alpha}:= \prescript{k}{}{\pdb}_{\alpha} (\dbtwist{k}_{\alpha}) + \half [\dbtwist{k}_{\alpha}, \dbtwist{k}_{\alpha}]$ and $\dvolfobs{k}_{\alpha} := \bvd{k}_{\alpha}( \dbtwist{k}_{\alpha}) + \prescript{k}{}{\pdb}_{\alpha}(\volftwist{k}_{\alpha}) + [\dbtwist{k}_{\alpha}, \volftwist{k}_{\alpha}]$ glue to give global elements $\dbobs{k} = (\dbobs{k}_{\alpha})_{\alpha} \in \polyv{k}^{-1,2}$ and $\dvolfobs{k}= (\dvolfobs{k}_{\alpha})_{\alpha} \in \polyv{k}^{0,1}$ respectively.
\end{lemma}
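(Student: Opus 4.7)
The natural way to prove this gluing statement is to reinterpret $\dbobs{k}_\alpha$ and $\dvolfobs{k}_\alpha$ as the failures of the twisted operators from Theorem \ref{thm:construction_of_differentials}, namely $D_\alpha := \prescript{k}{}{\pdb}_\alpha + [\dbtwist{k}_\alpha, \cdot]$ and $\Delta_\alpha := \bvd{k}_\alpha + [\volftwist{k}_\alpha, \cdot]$, to square to zero and to anticommute, respectively. My plan is first to establish on $\twc{k}^{*,*}_\alpha$ the local identities
$$
D_\alpha^2 = [\dbobs{k}_\alpha, \cdot], \qquad D_\alpha \Delta_\alpha + \Delta_\alpha D_\alpha = [\dvolfobs{k}_\alpha, \cdot],
$$
and then transport them across the gluing isomorphisms $\glue{k}_{\alpha\beta}$ to deduce the required \v{C}ech $0$-cocycle conditions $\glue{k}_{\alpha\beta}(\dbobs{k}_\alpha) = \dbobs{k}_\beta$ and $\glue{k}_{\alpha\beta}(\dvolfobs{k}_\alpha) = \dvolfobs{k}_\beta$, which by Definition \ref{def:cech_thom_whitney_complex} are exactly what it means for these elements to live in $\polyv{k}^{*,*}$.

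For $D_\alpha^2$, I would expand into $\prescript{k}{}{\pdb}_\alpha^2$, two mixed terms, and $[\dbtwist{k}_\alpha, [\dbtwist{k}_\alpha, \cdot]]$. The first vanishes because $\prescript{k}{}{\pdb}_\alpha$ is induced by the square-zero polynomial de Rham differential on the simplex. The two mixed terms collapse to $[\prescript{k}{}{\pdb}_\alpha \dbtwist{k}_\alpha, \cdot]$ by the derivation property $\prescript{k}{}{\pdb}_\alpha[u,v] = [\prescript{k}{}{\pdb}_\alpha u, v] + (-1)^{|u|+1}[u, \prescript{k}{}{\pdb}_\alpha v]$ (which follows directly from the Thom-Whitney bracket \eqref{eqn:Thom_whitney_Lie_bracket_formula}) applied to the degree-zero element $\dbtwist{k}_\alpha$; the remaining term equals $\half[[\dbtwist{k}_\alpha, \dbtwist{k}_\alpha], \cdot]$ by the graded Jacobi identity. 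The analogous computation for $D_\alpha \Delta_\alpha + \Delta_\alpha D_\alpha$ uses three ingredients: $\prescript{k}{}{\pdb}_\alpha \bvd{k}_\alpha + \bvd{k}_\alpha \prescript{k}{}{\pdb}_\alpha = 0$ by direct sign-tracking on generators $\alpha_I \otimes v_I$; the two cross-terms producing $[\prescript{k}{}{\pdb}_\alpha \volftwist{k}_\alpha, \cdot]$ and $[\bvd{k}_\alpha \dbtwist{k}_\alpha, \cdot]$ via the derivation property of $\prescript{k}{}{\pdb}_\alpha$ and of $\bvd{k}_\alpha$ for $[\cdot,\cdot]$; and finally $\{[\dbtwist{k}_\alpha, \cdot], [\volftwist{k}_\alpha, \cdot]\} = [[\dbtwist{k}_\alpha, \volftwist{k}_\alpha], \cdot]$ by graded Jacobi applied to two degree-zero entries.

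Once these local identities are in hand, I would square (respectively anticommute) the gluing identities $\glue{k}_{\beta\alpha} \circ D_\beta \circ \glue{k}_{\alpha\beta} = D_\alpha$ and $\glue{k}_{\beta\alpha} \circ \Delta_\beta \circ \glue{k}_{\alpha\beta} = \Delta_\alpha$ furnished by Theorem \ref{thm:construction_of_differentials}, and invoke that $\glue{k}_{\alpha\beta}$ preserves the Lie bracket (Condition \ref{assum:induction_hypothesis} together with Definition \ref{def:compatible_gluing_morphism}), to obtain $[\glue{k}_{\beta\alpha}(\dbobs{k}_\beta) - \dbobs{k}_\alpha, \cdot] = 0$ on $\twc{k}^{0,0}_{\alpha;\alpha\beta}$, and likewise for $\dvolfobs{k}$. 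The injectivity hypothesis $\bva{k}^{-1}_\alpha \hookrightarrow \text{Der}(\bva{k}^0_\alpha)$ of Definition \ref{def:higher_order_data} extends to the Thom-Whitney level because $\mathcal{A}^*(\simplex_l)$ is a free $\comp$-module: expanding a putative kernel element in a basis of $\mathcal{A}^*(\simplex_l)$ and testing the bracket against pure elements $1 \otimes u$ with $u \in \bva{k}^0_\alpha$ forces each local coefficient to vanish. This produces the required cocycle identities.

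The most delicate point will be the sign bookkeeping in the computation of $\{D_\alpha, \Delta_\alpha\}$, where one must commute three odd-degree operators past one another while simultaneously tracking the extra sign $(-1)^{(|v_I|+1)|\beta_I|}$ in the Thom-Whitney bracket \eqref{eqn:Thom_whitney_Lie_bracket_formula} and the sign $(-1)^{|\alpha_I|}$ appearing in the action of $\bvd{k}_\alpha$ on the simplicial factor; verifying that these signs conspire so that the three cross-terms collapse precisely into $[\dvolfobs{k}_\alpha, \cdot]$ with no leftover boundary contributions is the main technical hurdle, whereas the final injectivity-and-gluing step is then essentially formal.
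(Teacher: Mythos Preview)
Your approach for $\dbobs{k}$ is correct and matches the paper's proof exactly. However, your approach for $\dvolfobs{k}$ has a genuine gap at the final injectivity step. The element $\dvolfobs{k}_\alpha$ lies in $\twc{k}^{0,1}_{\alpha}$, i.e.\ in polyvector degree $0$ (it is built from functions, not from vector fields). The injectivity hypothesis of Definition~\ref{def:higher_order_data} asserts only that $\bva{k}^{-1}_\alpha \hookrightarrow \text{Der}(\bva{k}^0_\alpha)$; there is no analogous statement for degree $0$, and indeed it fails: any constant $c \in \cfrk{k} \subset \bva{k}^0_\alpha$ satisfies $[c,\cdot] = 0$. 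So from your (correctly derived) operator identity $D_\alpha\Delta_\alpha + \Delta_\alpha D_\alpha = [\dvolfobs{k}_\alpha,\cdot]$ and the intertwining under $\glue{k}_{\alpha\beta}$ you can only conclude $[\glue{k}_{\alpha\beta}(\dvolfobs{k}_\alpha) - \dvolfobs{k}_\beta,\cdot] = 0$, not the cocycle identity itself.

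The paper circumvents this by taking a completely different route for $\dvolfobs{k}$: it computes $\glue{k}_{\alpha\beta}(\dvolfobs{k}_\alpha)$ directly from the defining formula, using the explicit relations $\glue{k}_{\alpha\beta}(\dbtwist{k}_\alpha) = \dbtwist{k}_\beta + \prescript{k}{}{\mathtt{w}}_{\beta;\alpha\beta}$, $\glue{k}_{\alpha\beta}(\volftwist{k}_\alpha) = \volftwist{k}_\beta + \prescript{k}{}{\mathtt{f}}_{\beta;\alpha\beta}$ and the intertwining relations of Lemmas~\ref{lem:gluing_morphism_partial_bar_comparison} and~\ref{lem:gluing_morphism_BV_operator_comparison}. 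This reduces the claim to the vanishing of $\bvd{k}_\beta(\prescript{k}{}{\mathtt{w}}_{\beta;\alpha\beta}) + \prescript{k}{}{\pdb}_\beta(\prescript{k}{}{\mathtt{f}}_{\beta;\alpha\beta}) - [\prescript{k}{}{\mathtt{w}}_{\beta;\alpha\beta}, \prescript{k}{}{\mathtt{f}}_{\beta;\alpha\beta}]$, which is then checked componentwise by recognizing the terms (via the explicit formulas \eqref{eqn:proof_in_lemma_gluing_morphism_BV_operator_comparison}) as pieces of the Maurer--Cartan identity for the gauge transform $\exp(\sauto{k}_{\alpha\beta,i_0\cdots i_l}) \star 0$ in the dgLa $(\mathcal{A}^*(\simplex_l)\otimes \bva{k}^*_\beta(U_{i_0\cdots i_l}),\, \bvd{k}_\beta + \prescript{k}{}{\pdb}_\beta,\, [\cdot,\cdot])$. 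No injectivity argument is invoked for $\dvolfobs{k}$ at any stage.
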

	
\begin{proof}
	For the element $\dbobs{k}_{\alpha}$, we have
	$$
	(\prescript{k}{}{\pdb}_{\alpha} + [\dbtwist{k}_{\alpha}, \cdot ])^2 = [ \dbobs{k}_{\alpha}, \cdot]
	$$
	on $\twc{k}^{*,*}_{\alpha}$ and since $\glue{k}_{\beta\alpha} \circ (\prescript{k}{}{\pdb}_{\beta} + [\dbtwist{k}_{\beta}, \cdot ])  \circ \glue{k}_{\alpha\beta} = \prescript{k}{}{\pdb}_{\alpha} + [\dbtwist{k}_{\alpha}, \cdot ]$, we deduce that $[\glue{k}_{\alpha\beta} (\dbobs{k}_{\alpha}), \cdot ] = [\dbobs{k}_{\beta}, \cdot]$ and hence $\glue{k}_{\alpha\beta} (\dbobs{k}_{\alpha}) = \dbobs{k}_{\beta}$ by the injectivity of $\bva{k}^{-1} \hookrightarrow \text{Der}(\bva{k}^{0})$. 
		
	For the element $\dvolfobs{k}_{\alpha}$, we have $\glue{k}_{\alpha\beta} (\dbtwist{k}_{\alpha}) = \dbtwist{k}_{\beta} + \prescript{k}{}{\mathtt{w}}_{\beta;\alpha\beta}$ and $\glue{k}_{\alpha\beta} \volftwist{k}_{\alpha} = \volftwist{k}_{\beta} + \prescript{k}{}{\mathtt{f}}_{\beta;\alpha\beta}$ from the constructions in Theorem \ref{thm:construction_of_differentials}. Making use of the relations $\prescript{k}{}{\pdb}_\beta \circ \glue{k}_{\alpha\beta} = \glue{k}_{\alpha\beta} \circ \prescript{k}{}{\pdb}_{\alpha} + [\prescript{k}{}{\mathtt{w}}_{\beta;\alpha\beta}, \cdot ] \circ \glue{k}_{\alpha\beta}$ and $\bvd{k}_\beta \circ \glue{k}_{\alpha\beta} = \glue{k}_{\alpha\beta} \circ \bvd{k}_{\alpha} + [\prescript{k}{}{\mathtt{f}}_{\beta;\alpha\beta}, \cdot ] \circ \glue{k}_{\alpha\beta}$ from Lemmas \ref{lem:gluing_morphism_partial_bar_comparison} and \ref{lem:gluing_morphism_BV_operator_comparison}, we have
	\begin{align*}
		& \glue{k}_{\alpha\beta} ( \bvd{k}_{\alpha}(\dbtwist{k}_{\alpha}) + \prescript{k}{}{\pdb}_{\alpha} (\volftwist{k}_{\alpha}) + [\dbtwist{k}_{\alpha}, \volftwist{k}_{\alpha}]) \\
		= & \glue{k}_{\alpha\beta}(\bvd{k}_{\alpha} (\dbtwist{k}_{\alpha})) + [\prescript{k}{}{\mathtt{f}}_{\beta;\alpha\beta}, \dbtwist{k}_{\beta}] + \glue{k}_{\alpha\beta}(\prescript{k}{}{\pdb}_{\alpha} (\volftwist{k}_{\alpha}) ) \\
		&+ [\prescript{k}{}{\mathtt{w}}_{\beta;\alpha\beta} , \volftwist{k}_{\beta}] + [\prescript{k}{}{\mathtt{w}}_{\beta;\alpha\beta},\prescript{k}{}{\mathtt{f}}_{\beta;\alpha\beta} ] + [\dbtwist{k}_{\beta}, \volftwist{k}_{\beta}]\\
		= & ( \bvd{k}_{\beta} (\dbtwist{k}_{\beta}) + \prescript{k}{}{\pdb}_{\beta} (\volftwist{k}_{\beta}) + [\dbtwist{k}_{\beta}, \volftwist{k}_{\beta}]) + \bvd{k}_{\beta}(\prescript{k}{}{\mathtt{w}}_{\beta;\alpha\beta}) \\
		&+ \prescript{k}{}{\pdb}_{\beta}( \prescript{k}{}{\mathtt{f}}_{\beta;\alpha\beta}) - [\prescript{k}{}{\mathtt{w}}_{\beta;\alpha\beta},\prescript{k}{}{\mathtt{f}}_{\beta;\alpha\beta} ].
	\end{align*}
	Hence it remains to show $\bvd{k}_{\beta}(\prescript{k}{}{\mathtt{w}}_{\beta;\alpha\beta})+\prescript{k}{}{\pdb}_{\beta}( \prescript{k}{}{\mathtt{f}}_{\beta;\alpha\beta}) - [\prescript{k}{}{\mathtt{w}}_{\beta;\alpha\beta},\prescript{k}{}{\mathtt{f}}_{\beta;\alpha\beta} ] = 0$ in $\twc{k}^{*,*}_{\beta}$. 
	
	We fix a multi-index $(i_0, \ldots, i_l) \in \mathcal{I}$, and recall from the proofs of Lemmas \ref{lem:gluing_morphism_partial_bar_comparison} and \ref{lem:gluing_morphism_BV_operator_comparison} the formulas:
	\begin{align*}
		\prescript{k}{}{\pdb}_{\beta} (\prescript{k}{}{\mathtt{f}}_{\beta;\alpha\beta, i_0\cdots i_l} )  =& - \prescript{k}{}{\pdb}_{\beta} ( (\mathtt{T}(-[\sauto{k}_{\alpha\beta, i_0 \cdots i_l}, \cdot ]) \circ \bvd{k}_{\beta}) (\sauto{k}_{\alpha\beta,i_0 \cdots i_l}) ) \\
		&+ (\prescript{k}{}{\pdb}_{\beta} \circ \glue{k}_{\alpha\beta,i_0 \cdots i_l}) ( \prescript{k}{}{\mathtt{f}}_{\alpha;\alpha\beta,i_0}) \\
		\bvd{k}_{\beta } (\prescript{k}{}{\mathtt{w}}_{\beta;\alpha\beta,i_0 \cdots i_l})  = &- (\bvd{k}_{\beta} \circ \mathtt{T}(-[\sauto{k}_{\alpha\beta,i_0 \cdots i_l},\cdot ]) \circ \prescript{k}{}{\pdb}_\beta) (\sauto{k}_{\alpha\beta, i_0 \cdots i_l}),
	\end{align*}
	where $\mathtt{T}$ is the formal series introduced in \eqref{eqn:power_series_in_proof_gluing_morphism_BV_operator_comparison}. 
	Now we consider the dgLa $(\mathcal{A}^*(\simplex_l) \otimes \bva{k}^*_\beta(U_{i_0 \cdots i_l}), \bvd{k}_{\beta} + \prescript{k}{}{\pdb}_\beta, [\cdot,\cdot])$. We apply Lemma \ref{lem:gauge_action_on_differential} and notice that
	$$
	\mathtt{A}:= \exp(\sauto{k}_{\alpha\beta,i_0 \cdots i_l}) \star 0 = (\mathtt{T}(-[\sauto{k}_{\alpha\beta, i_0 \cdots i_l},\cdot]) \circ (\bvd{k}_\beta + \prescript{k}{}{\pdb}_\beta)) (\sauto{k}_{\alpha\beta, i_0 \cdots i_l}). 
	$$
	Since $\mathtt{A}$ is gauge equivalent to $0$ in the above dgLa, we have the equation $(\bvd{k}_\beta + \prescript{k}{}{\pdb}_\beta) \mathtt{A} + \half [\mathtt{A},\mathtt{A}] = 0$ whose component in $\mathcal{A}^1(\simplex_l) \otimes \bva{k}^0_\beta(U_{i_0 \cdots i_l})$ can be extracted as
	\begin{multline*}
		\prescript{k}{}{\pdb}_{\beta} ( (\mathtt{T}(-[\sauto{k}_{\alpha\beta, i_0 \cdots i_l}, \cdot ]) \circ \bvd{k}_{\beta}) (\sauto{k}_{\alpha\beta,i_0 \cdots i_l}) ) +\\
		 \bvd{k}_{\beta} ( (\mathtt{T}(-[\sauto{k}_{\alpha\beta,i_0 \cdots i_l},\cdot ]) \circ \prescript{k}{}{\pdb}_\beta) (\sauto{k}_{\alpha\beta, i_0 \cdots i_l}) )+ \\
		 [ (\mathtt{T}(-[\sauto{k}_{\alpha\beta, i_0 \cdots i_l}, \cdot ]) \circ \bvd{k}_{\beta}) (\sauto{k}_{\alpha\beta,i_0 \cdots i_l}), (\mathtt{T}(-[\sauto{k}_{\alpha\beta,i_0 \cdots i_l},\cdot ]) \circ \prescript{k}{}{\pdb}_\beta) (\sauto{k}_{\alpha\beta, i_0 \cdots i_l})] \\
		= 0.
	\end{multline*}
	Therefore, the $(i_0,\ldots,i_l)$-component of $\bvd{k}_{\beta}(\prescript{k}{}{\mathtt{w}}_{\beta;\alpha\beta})+\prescript{k}{}{\pdb}_{\beta}( \prescript{k}{}{\mathtt{f}}_{\beta;\alpha\beta}) - [\prescript{k}{}{\mathtt{w}}_{\beta;\alpha\beta},\prescript{k}{}{\mathtt{f}}_{\beta;\alpha\beta} ] $ is given by
	\begin{align*}
		& \bvd{k}_{\beta}(\prescript{k}{}{\mathtt{w}}_{\beta;\alpha\beta, i_0 \cdots i_l})+\prescript{k}{}{\pdb}_{\beta}( \prescript{k}{}{\mathtt{f}}_{\beta;\alpha\beta, i_0 \cdots i_l}) - [\prescript{k}{}{\mathtt{w}}_{\beta;\alpha\beta, i_0 \cdots i_l},\prescript{k}{}{\mathtt{f}}_{\beta;\alpha\beta, i_0 \cdots i_l} ] \\
		= & \prescript{k}{}{\pdb}_{\beta} (\glue{k}_{\alpha\beta,i_0 \cdots i_l} ( \prescript{k}{}{\mathtt{f}}_{\alpha;\alpha\beta,i_0})) - [\prescript{k}{}{\mathtt{w}}_{\beta;\alpha\beta, i_0 \cdots i_l},  \glue{k}_{\alpha\beta,i_0 \cdots i_l} ( \prescript{k}{}{\mathtt{f}}_{\alpha;\alpha\beta,i_0})]\\
		=&  \glue{k}_{\alpha\beta,i_0 \cdots i_l}  (\prescript{k}{}{\pdb}_\alpha (\prescript{k}{}{\mathtt{f}}_{\alpha;\alpha\beta,i_0}) ) = 0.
	\end{align*}
\end{proof}
	
\begin{definition}\label{def:total_de_rham_differential}
	We let $\dbobs{}:= \varprojlim_k \dbobs{k} \in \polyv{}^{-1,2}$ and $\dvolfobs{}:= \varprojlim_k \dvolfobs{k} \in \polyv{}^{0,1}$. The operator $\drd{}{} = \pdb + \dpartial{} + \dbobs{} \lrcorner$, which acts on $\totaldr{}{\bullet}^{*,*}$ preserves the filtration, is called the {\em total de Rham differential}. We also denote the pull back of $\drd{}{}$ to $\polyv{}^{*,*}$ under the isomorphism $\lrcorner \volf{} : \polyv{}^{*,*} \rightarrow \totaldr{}{\parallel}^{d+*,*}$ by $\ptd{}$.
\end{definition}
	
\begin{prop}\label{prop:checking_d_square_equal_zero}
	The pair $(\totaldr{}{\bullet}^{*} , \drd{}{})$ forms a filtered complex, i.e. $\drd{}{}^2 = 0$ and $\drd{}{}$ preserves the filtration. We also have $\ptd{} = \pdb + \bvd{} + (\dbobs{}  + \dvolfobs{} ) \wedge$ on $\polyv{}^{*}$.
\end{prop}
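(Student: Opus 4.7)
The plan is to verify all three claims locally on each $V_\alpha$ and then globalize using the compatibility of the gluing data. I would first deal with the preservation of the filtration, which is essentially built into the construction: on $V_\alpha$ we have
$$
\drd{}{}|_{V_\alpha} = (\prescript{k}{}{\pdb}_\alpha + \mathcal{L}_{\dbtwist{k}_\alpha}) + \dpartial{k}_\alpha + \dbobs{k}_\alpha \lrcorner,
$$
and each summand sends $\tbva{k}{r}^*_\alpha$ to itself: $\prescript{k}{}{\pdb}_\alpha$ affects only the simplicial factor and commutes with the $\logsdrk{k}{*}$-action, $\dpartial{k}_\alpha$ preserves the filtration by Definition \ref{def:higher_order_data_module}, while $\mathcal{L}_{\dbtwist{k}_\alpha}$ and $\dbobs{k}_\alpha \lrcorner$ preserve the filtration by the identity $[\varphi \lrcorner, \alpha \wedge] = 0$ from the same definition.

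For $\drd{}{}^2 = 0$ I would expand the square of $\drd{}{}|_{V_\alpha}$ and collect terms using the standard de Rham module identities $[\dpartial{k}_\alpha, \mathcal{L}_v] = 0$, $[\mathcal{L}_{v_1}, v_2 \lrcorner] = [v_1, v_2] \lrcorner$, and $(-1)^{|v|}\mathcal{L}_v = [\dpartial{k}_\alpha, v\lrcorner]$ from \S\ref{sec:abstract_bv_algebra_1}. The same bookkeeping that gave $(\prescript{k}{}{\pdb}_\alpha + [\dbtwist{k}_\alpha, \cdot])^2 = [\dbobs{k}_\alpha, \cdot]$ in the proof of Lemma \ref{lem:proving_globalness_for_construction_of_total_de_rham_differential} yields its de Rham module analog, the curvature formula $(\prescript{k}{}{\pdb}_\alpha + \mathcal{L}_{\dbtwist{k}_\alpha})^2 = \mathcal{L}_{\dbobs{k}_\alpha}$. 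Combined with the third module identity above, this gives $(\prescript{k}{}{\pdb}_\alpha + \mathcal{L}_{\dbtwist{k}_\alpha})^2 + \{\dpartial{k}_\alpha, \dbobs{k}_\alpha \lrcorner\} = 0$; all remaining cross terms in the expansion collapse to $(\pdb \dbobs{k}_\alpha) \lrcorner$, and this vanishes by the Bianchi-type identity $\pdb \dbobs{} = 0$, obtained by differentiating $\dbobs{k}_\alpha = \prescript{k}{}{\pdb}_\alpha \dbtwist{k}_\alpha + \half[\dbtwist{k}_\alpha, \dbtwist{k}_\alpha]$ and invoking graded Jacobi.

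Finally, for the formula for $\ptd{}$, I would use the isomorphism $\lrcorner \volf{}$ and compute $\drd{}{}(\varphi \lrcorner \volf{}) = \ptd{}(\varphi) \lrcorner \volf{}$ via the Leibniz rule for $\pdb$ from Proposition \ref{prop:de_rham_module_algebraic_structures_summary}, the global BV/de Rham intertwining $\dpartial{}(\varphi \lrcorner \volf{}) = \bvd{}(\varphi) \lrcorner \volf{}$ inherited from the local CY relation, and the evident $\dbobs{} \lrcorner (\varphi \lrcorner \volf{}) = (\dbobs{} \wedge \varphi) \lrcorner \volf{}$. The main technical obstacle, and where I expect to spend the most effort, is establishing $\pdb \volf{} = \dvolfobs{} \lrcorner \volf{}$: on $V_\alpha$ with the local formula $\volf{}|_{V_\alpha} = \exp(\volftwist{k}_\alpha \lrcorner)\volf{k}_\alpha$ of Proposition \ref{prop:gluing_volume_form}, one applies $\prescript{k}{}{\pdb}_\alpha + \mathcal{L}_{\dbtwist{k}_\alpha}$, uses $\bvd{k}_\alpha(v) \lrcorner \volf{k}_\alpha = \dpartial{k}_\alpha(v \lrcorner \volf{k}_\alpha)$ together with Lemma \ref{lem:BV_gauge_action_computation} to simplify, and collects the three resulting contributions into $(\prescript{k}{}{\pdb}_\alpha \volftwist{k}_\alpha + \bvd{k}_\alpha \dbtwist{k}_\alpha + [\dbtwist{k}_\alpha, \volftwist{k}_\alpha]) \lrcorner \volf{}|_{V_\alpha}$, which is exactly $\dvolfobs{k}_\alpha \lrcorner \volf{}|_{V_\alpha}$. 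Once this identity is in place, the sign manipulation $\varphi \wedge \dvolfobs{} = (-1)^{|\varphi|} \dvolfobs{} \wedge \varphi$ combines the Leibniz contribution $(-1)^{|\varphi|} \varphi \lrcorner \pdb \volf{}$ into $(\dvolfobs{} \wedge \varphi) \lrcorner \volf{}$, yielding the desired formula.
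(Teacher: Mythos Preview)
Your proposal is correct and follows essentially the same local-to-global strategy as the paper, computing on each $\twc{}_{\alpha;\alpha}^{*,*}$ and using the BV/de Rham module identities. The only difference is organizational: for the formula $\ptd{} = \pdb + \bvd{} + (\dbobs{}+\dvolfobs{})\wedge$, the paper does not isolate the intermediate identities $\pdb\volf{} = \dvolfobs{}\lrcorner\volf{}$ and $\dpartial{}(\varphi\lrcorner\volf{}) = \bvd{}(\varphi)\lrcorner\volf{}$ as you do, but instead rewrites $\gamma\lrcorner\volf{}|_{V_\alpha} = (\gamma\wedge e^{\volftwist{}_\alpha})\lrcorner\volf{}_\alpha$ and applies $\pdb_\alpha + \mathcal{L}_{\dbtwist{}_\alpha} + \dpartial{}_\alpha + \dbobs{}_\alpha\lrcorner$ to this expression in one shot, reading off all four terms $\pdb_\alpha\gamma + [\dbtwist{}_\alpha,\gamma] + \bvd{}_\alpha\gamma + [\volftwist{}_\alpha,\gamma] + (\dbobs{}_\alpha+\dvolfobs{}_\alpha)\wedge\gamma$ simultaneously (this is slightly more compact and avoids invoking Lemma~\ref{lem:BV_gauge_action_computation}). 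For $\drd{}{}^2=0$ the paper likewise packages the computation as $(\pdb+\dpartial{}+\dbobs{}\lrcorner)^2 = (\pdb+\dpartial{})^2 - \mathcal{L}_{\dbobs{}}$ and then shows $(\pdb+\dpartial{})^2 = \mathcal{L}_{\dbobs{}}$ locally; the Bianchi identity $\pdb\dbobs{}=0$ that you make explicit is used implicitly in the first step.
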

	
\begin{proof}
	From the discussion right before Proposition \ref{prop:thom_whitney_de_Rham_module_action_properties}, we compute $\drd{}{}^2 = (\pdb + \dpartial{} + \dbobs{} \lrcorner)^2 = (\pdb + \dpartial{})^2 - \mathcal{L}_{\dbobs{}} $. If we compute $(\pdb + \dpartial{})^2$ locally on $\ktwc{}{}^{*,*}_{\alpha;\alpha}$, we obtain $(\pdb_\alpha + \mathcal{L}_{\dbtwist{}_\alpha} + \dpartial{}_\alpha)^2 = \mathcal{L}_{\pdb_\alpha(\dbtwist{}_\alpha)} + \mathcal{L}_{\dbtwist{}_\alpha}^2 = \mathcal{L}_{\pdb_\alpha \dbtwist{}_\alpha + \half[\dbtwist{}_\alpha,\dbtwist{}_\alpha]} = \mathcal{L}_{\dbobs{}_\alpha}$. So we get $(\pdb+\dpartial{})^2 = \mathcal{L}_{\dbobs{}}$ and hence $\drd{}{}^2= 0$. 	
	As for $\ptd{}$, we compute locally on $\twc{}_{\alpha;\alpha}^{*,*}$. Taking $\gamma \in \twc{}_{\alpha;\alpha}^{*,*}$, we have 
	\begin{align*}
		& \drd{}{}( \gamma \lrcorner \exp(\volftwist{}_\alpha \lrcorner) (\volf{}_\alpha) )  = (\pdb_\alpha + \mathcal{L}_{\dbtwist{}_\alpha}+ \dpartial{}_\alpha + \dbobs{}_\alpha \lrcorner ) \left( (\gamma \wedge  \exp(\volftwist{}_\alpha) ) \lrcorner \volf{}_\alpha  \right)\\
		 =& \left( \pdb_{\alpha}(\gamma) + [\dbtwist{}_\alpha,\gamma] + \bvd{}_\alpha ( \gamma) + [\volftwist{}_\alpha,\gamma] + \dbobs{}_\alpha \wedge \gamma + \dvolfobs{}_\alpha \wedge \gamma  \right) \lrcorner (\exp(\volftwist{}_\alpha) \lrcorner \volf{}_{\alpha}),
	\end{align*}
	which gives the identity $\ptd{} = \pdb + \bvd{} + (\dbobs{}  + \dvolfobs{} ) \wedge$. 
\end{proof}
	
\subsection{The Gauss-Manin connection}\label{sec:GM_construction}
Using the natural isomorphisms $\gmiso{k}{1}_\alpha : (\tbva{k}{1:2}^*_\alpha, \dpartial{k}_\alpha) \cong \blat \otimes_\inte (\tbva{k}{0:1}^*_\alpha[-1], \dpartial{k}_\alpha)$ from Definition \ref{def:higher_order_data_module}, we obtain isomorphisms 
$$
\gmiso{k}{1}_\alpha : \ktwc{k}{1:2}^{*,*}_{\alpha;\alpha} \rightarrow \blat \otimes_\inte \ktwc{k}{0:1}^{*,*}_{\alpha;\alpha}[-1],
$$
which can be patched together to give an isomorphism of complexes $\gmiso{k}{1} : \totaldr{k}{1:2}^{*,*} =\totaldr{k}{1}^{*,*}/\totaldr{k}{2}^{*,*} \rightarrow \blat \otimes_\inte \totaldr{k}{\parallel}^{*,*}[-1]$ which is equipped with the differential $\drd{k}{}$. This produces an exact sequence of complexes:
\begin{equation}\label{eqn:short_exact_sequence_for_GM_connection}
	0 \rightarrow \blat \otimes_\inte \totaldr{k}{\parallel}^{*,*}[-1] \rightarrow \totaldr{k}{0}^{*,*}/\totaldr{k}{2}^{*,*}  \rightarrow \totaldr{k}{\parallel}^{*,*} \rightarrow 0,
\end{equation}
which we use to define the Gauss-Manin connection (cf. Definition \ref{def:0_th_order_GM_connection}). 
	
\begin{definition}\label{def:Gauss_Manin_connection}
	Taking the long exact sequence associated to \eqref{eqn:short_exact_sequence_for_GM_connection}, we get the map 
	\begin{equation}\label{eqn:GM_connection}
	\gmc{k}: H^*(\totaldr{k}{\parallel},\drd{k}{}) \rightarrow \blat \otimes_\inte H^*(\totaldr{k}{\parallel},\drd{k}{}),
	\end{equation}
	which is called the {\em $k^{\text{th}}$-order Gauss-Manin (abbrev. GM) connection} over $\cfrk{k}$. Taking inverse limit over $k$ gives the {\em formal Gauss-Manin connection} over $\hat{\cfr}$:
	$$
	\gmc{} : \reallywidehat{H^*(\totaldr{}{\parallel},\drd{}{})} \rightarrow \blat \otimes_\inte \reallywidehat{H^*(\totaldr{}{\parallel},\drd{}{})}.
	$$
	The modules $H^*(\totaldr{k}{\parallel},\drd{k}{})$ and $\reallywidehat{H^*(\totaldr{}{\parallel},\drd{}{})}$ over $\cfrk{k}$ and $\hat{\cfr}$ are respectively called the {\em $k^{\text{th}}$-order Hodge bundle} and the {\em formal Hodge bundle}. 
\end{definition}
	
\begin{remark}\label{rem:Gauss_Manin_relation_with_zero_order}
	By its construction, the complex $(\totaldr{0}{\parallel}^{*,*}, \drd{0}{})$ serves as a resolution of the complex $(\rdr{0}{}^*, \dpartial{0})$, and the cohomology $H^*(\totaldr{0}{\parallel}, \drd{0}{})$ computes the hypercohomology $\mathbb{H}^*(\rdr{0}{}, \dpartial{0})$. So the $0^{\text{th}}$-order Gauss-Manin connection $\gmc{0}$ agrees with the one introduced in Definition \ref{def:0_th_order_GM_connection}.  
\end{remark}
	
\begin{prop}\label{prop:flatness_of_gauss_manin}
	The Gauss-Manin connection $\gmc{}$ defined in Definition \ref{def:Gauss_Manin_connection} is a flat connection, i.e. the map $\gmc{}^2 : \reallywidehat{H^*(\totaldr{}{\parallel},\drd{}{})} \rightarrow \wedge^2(\blat_\comp) \otimes_\comp \reallywidehat{H^*(\totaldr{}{\parallel},\drd{}{})}$ is a zero map. 
\end{prop}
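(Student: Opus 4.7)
The plan is to reduce $\gmc{}^2 = 0$ to the identity $\drd{}{}^2 = 0$ of Proposition \ref{prop:checking_d_square_equal_zero} via a standard double connecting-homomorphism argument. Since $\gmc{} = \varprojlim_k \gmc{k}$, it suffices to establish $\gmc{k}^2 = 0$ at each finite order $k$ and then pass to the inverse limit.

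The key ingredient is the identification
\[
\totaldr{k}{r:r+1}^{*,*} \;\cong\; \wedge^r \blat_\comp \otimes_\comp \totaldr{k}{\parallel}^{*,*}[-r]
\]
of complexes (with the induced differential), coming from Definition \ref{def:higher_order_data_module} via the isomorphisms $\gmiso{k}{r}_\alpha$, together with Proposition \ref{prop:de_rham_module_algebraic_structures_summary}. For each $r \geq 0$, this yields a short exact sequence of complexes
\[
0 \to \totaldr{k}{r+1:r+2}^{*,*} \to \totaldr{k}{r:r+2}^{*,*} \to \totaldr{k}{r:r+1}^{*,*} \to 0.
\]
For $r = 0$, the associated connecting homomorphism is $\gmc{k}$ by Definition \ref{def:Gauss_Manin_connection}. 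For $r = 1$, a short diagram chase using the above identifications shows that the connecting homomorphism agrees, up to a sign, with the canonical tensor-extension $\mathrm{id} \wedge \gmc{k}$ of the Gauss-Manin connection: indeed, a class $m \otimes [\omega] \in \blat_\comp \otimes H^*(\totaldr{k}{\parallel})$ lifts through $m \wedge \tilde\omega$ for any lift $\tilde\omega \in \totaldr{k}{0}^{*,*}$ of $\omega$, and $\drd{k}{}(m \wedge \tilde\omega) = (-1)\, m \wedge \drd{k}{}\tilde\omega$ represents $-m \wedge \gmc{k}[\omega]$ modulo $\totaldr{k}{3}$. Consequently $\gmc{k}^2$ equals, up to this sign, the composition of these two successive connecting homomorphisms.

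Granted the identification above, flatness is essentially tautological. Given $[\eta] \in H^*(\totaldr{k}{\parallel})$, I would choose a lift $\tilde\eta \in \totaldr{k}{0}^{*,*}$ and set $\zeta := \drd{k}{}\tilde\eta$, which lies in $\totaldr{k}{1}^{*,*}$ (because $\tilde\eta$ is $\drd{k}{}$-closed modulo $\totaldr{k}{1}$) and satisfies $\drd{k}{}\zeta = 0$ by Proposition \ref{prop:checking_d_square_equal_zero}. The image of $\zeta$ in $\totaldr{k}{1:3}^{*,*}$ is then a $\drd{k}{}$-closed lift of $\gmc{k}[\eta] \in H^{*+1}(\totaldr{k}{1:2})$, so the $r=1$ connecting homomorphism sends $\gmc{k}[\eta]$ to zero. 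By the identification of the preceding paragraph this output is (up to sign) $\gmc{k}^2[\eta]$, so $\gmc{k}^2 = 0$, and passing to the inverse limit over $k$ yields $\gmc{}^2 = 0$. The main, though mild, obstacle is precisely the $r=1$ identification of the connecting map with $\mathrm{id} \wedge \gmc{k}$: this is a diagram chase through the Koszul splittings $\gmiso{k}{r}_\alpha$ that has to be done with care for signs coming from the shifts $[-r]$ and from the graded Leibniz rule for $\drd{k}{}$.
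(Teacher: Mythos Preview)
Your proposal is correct and follows essentially the same approach as the paper: both reduce to finite order $k$, lift a class to $\totaldr{k}{0}$, and use $\drd{k}{}^2=0$ to show that the image under the second connecting homomorphism is exact (equivalently, that $\drd{k}{}\tilde\eta$ already provides a $\drd{k}{}$-closed lift in $\totaldr{k}{1}/\totaldr{k}{3}$). The only cosmetic difference is that the paper leaves the identification of the $r=1$ connecting map with $\mathrm{id}\wedge\gmc{k}$ implicit, writing directly that $(\gmc{k})^2[\eta]$ is represented by $\drd{k}{}(\sum_i\alpha_i\otimes\xi_i)$, whereas you spell this out as a separate step.
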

	
\begin{proof}
	It suffices to show the $k^{\text{th}}$-order Gauss-Manin connection $\gmc{k}$ is flat for every $k$. Consider the short exact sequence \eqref{eqn:short_exact_sequence_for_GM_connection}, and take a cohomology class $[\eta] \in H^*(\totaldr{k}{\parallel},\drd{k}{})$ represented by an element $\eta \in \totaldr{k}{\parallel}^*$. Then we take a lifting $\widetilde{\eta} \in \totaldr{k}{0}^*$ so that $\gmc{k}([\eta])$ is represented by the element $\drd{k}{}(\widetilde{\eta}) \in \totaldr{k}{0}^*/\totaldr{k}{2}^*$. We write $\gmc{k}([\eta]) = \sum_{i} \alpha_i \otimes [\xi_i]$ for $\alpha_i \in \logsdrk{k}{1}$ and $[\xi_i] \in H^*(\totaldr{k}{\parallel},\drd{k}{})$. 
	Once again we take a representative $\xi_i \in \totaldr{k}{\parallel}^*$ for $[\xi_i]$ and by our construction we have an element $\mathsf{e} \in \totaldr{k}{2}^*$ such that $ \sum_{i} \alpha_i \otimes \xi_i = \drd{k}{}(\widetilde{\eta}) + \mathsf{e}$. Therefore if we consider the exact sequence of complexes
	$$
	0 \rightarrow  \totaldr{k}{2}^{*}/\totaldr{k}{3}^{*} \rightarrow \totaldr{k}{1}^{*}/\totaldr{k}{3}^{*}  \rightarrow \totaldr{k}{1}^{*}/\totaldr{k}{2}^* \rightarrow 0,
	$$
	we have $\drd{k}{} \left ( \sum_{i} \alpha_i \otimes \xi_i \right) = \drd{k}{}(\mathsf{e}) \in \totaldr{k}{2}^{*}/\totaldr{k}{3}^{*}$. Note that $(\gmc{k})^2([\eta])$ is represented by the cohomology class of the element $\drd{k}{} \left( \sum_{i} \alpha_i \otimes \xi_i\right) \in \totaldr{k}{2}^*/\totaldr{k}{3}^* \cong \logsdrk{k}{2}\otimes_{(\cfrk{k})} \totaldr{k}{\parallel}^*[-2]$ using the isomorphism induced by $\gmiso{k}{2}_{\alpha}$'s from Definition \ref{def:higher_order_data_module}. Hence we have $[\drd{k}{} \left( \sum_{i} \alpha_i \otimes \xi_i\right)] = [\drd{k}{}(\mathsf{e})] = 0$ in $\logsdrk{k}{2} \otimes_{(\cfrk{k})} H^*(\totaldr{k}{\parallel},\drd{k}{})[-2]$. 
\end{proof}

\subsection{Freeness of the Hodge bundle from a local criterion}\label{sec:triviality_of_hodge_bundle}

To prove the desired unobstructedness result, we need freeness of the Hodge bundle; in geometric situations, this has been established in various cases \cite{katz1970nilpotent, steenbrink1976limits, kawamata1994logarithmic, Gross-Siebert-logII}. In this subsection, we generalize the techniques in \cite{Gross-Siebert-logII, kawamata1994logarithmic, steenbrink1976limits} to prove the freeness of the $k^{\text{th}}$-order Hodge bundle $H^*(\totaldr{k}{\parallel}^*,\drd{k}{})$ over $\cfrk{k}$ in our abstract setting (Lemma \ref{lem:triviality_of_hodge_bundle}) under a local criterion (Assumption \ref{assum:local_assumption_for_triviality_of_hodge_bundle}).

\subsubsection{A local condition}\label{sec:assumption_on_de_rham_module_for_locally_free}

Recall from Notation \ref{not:universal_monoid} that we have a strictly convex polyhedral cone $Q_\real \subset \blat_\real$, the coefficient ring $\cfr = \comp[Q]$, and the log space $\logs$ (or the formal log space $\logsf$) parametrizing the moduli space near the degenerate Calabi-Yau variety $(X,\mathcal{O}_X)$.
For every primitive element $\mathtt{n} \in \text{int}(Q^\vee_\real) \cap \blat^\vee$, we have a natural ring homomorphism $i_{\mathtt{n}}:\comp[Q] \rightarrow \comp[q]$, $q^{m} \mapsto q^{(m,\mathtt{n})}$, where $(\cdot ,\cdot )$ denotes the natural pairing between $\blat$ and $\blat^\vee$, and then taking spectra gives a map $i_{\mathtt{n}} : \mathbb{A}^{1,\dagger} \rightarrow \logs$ (or $\prescript{k}{}{i}_{\mathtt{n}} : \prescript{k}{}{\mathbb{A}}^{1,\dagger} \rightarrow \logsk{k}$ for each $k \in \inte_{\geq 0}$), where $\mathbb{A}^{1,\dagger}$ is the log space associated to the log ring $\comp[q]^\dagger$ which is equipped with the monoid homomorphism $\mathbb{N}\rightarrow \comp[q]$, $k \mapsto q^k$.

Geometrically, taking base change with the map $\mathbb{A}^{1,\dagger} \rightarrow \logs$ should be viewed as restricting the family to the $1$-dimensional family determined by $\mathtt{n}$. In our abstract setting, we consider the tensor product $\bva{k}_{\mathtt{n}:\alpha}^*:= \bva{k}_{\alpha}^* \otimes_{\cfrk{k}} (\comp[q]/(q^{k+1}))$. Then tensoring the maps $\patch{k}_{\alpha\beta,i}$'s with $\comp[q]/(q^{k+1})$ give patching morphisms for the $\bva{k}_{\mathtt{n}:\alpha}^*$'s which will be denoted as $\patch{k}_{\mathtt{n}:\alpha\beta,i}$. Similarly we use $\resta{k,l}_{\mathtt{n}:\alpha\beta,i}$'s, $\patchij{k}_{\mathtt{n}:\alpha\beta,ij}$'s, $\cocyobs{k}_{\mathtt{n}:\alpha\beta\gamma,i}$'s and $\bvdobs{k}_{\mathtt{n}:\alpha\beta,i}$'s to denote the tensor products of the corresponding terms appearing in Definition \ref{def:higher_order_patching} with $\comp[q]/(q^{k+1})$. Note that all the relations in Definition \ref{def:higher_order_patching} still hold after taking the tensor products.

In view of the isomorphism $\lrcorner \volf{k}: \polyv{k}^{*,*} \rightarrow \totaldr{k}{\parallel}^{d+*,*}$ in Definition \ref{def:total_de_rham_differential} and the fact that the complex $(\polyv{k}^{*,*}, \ptd{k})$ is free over $\cfrk{k}$ (meaning that the differential is $\cfrk{k}$-linear), we see that $(\totaldr{k}{\parallel}^{d+*,*},\drd{k}{})$ is also free over $\cfrk{k}$. Then taking tensor product with $\comp[q]/(q^{k+1})$ (for a fixed $\mathtt{n}$), we obtain the relative de Rham complex $(\totaldr{}{\parallel}^{*} \otimes_{\cfrk{k}} (\comp[q]/(q^{k+1})),\drd{}{})$ over $\comp[q]/(q^{k+1})$. 

Now the filtered de Rham module $\tbva{k}{\bullet}_\alpha^*$ plays the role of the sheaf of holomorphic de Rham complex on the thickening of $V_\alpha$. 
We need to consider restrictions of these holomorphic differential forms to the $1$-dimensional family $\text{Spec}(\comp[q]/(q^{k+1}))$, but na\"ively taking tensor product with $\comp[q]/(q^{k+1})$ does not give the desired answer.
In our abstract setting, the existence of such restrictions can be formulated as the following assumption (which is motivated by the proof of \cite[Theorem 4.1]{Gross-Siebert-logII}):

\begin{assum}\label{assum:local_assumption_for_triviality_of_hodge_bundle}
	For each $\mathtt{n} \in \text{int}(Q_\real^\vee) \cap \blat^\vee$, $k \in \inte_{\geq 0}$ and $V_\alpha \in \mathcal{V}$, we assume there exists a coherent sheaf of dga's 
	$$(\tbva{k}{\bullet}^*_{\mathtt{n}:\alpha}, \wedge, \dpartial{k}_{\mathtt{n}:\alpha})$$
	equipped with a dg module structure over $\prescript{k}{}{\Omega}^*_{\mathbb{A}^{1,\dagger}}$, the natural filtration $$\tbva{k}{}_{\mathtt{n}:\alpha}^*=\tbva{k}{0}^*_{\mathtt{n}:\alpha} \supset \tbva{k}{1}^*_{\mathtt{n}:\alpha} \supset \tbva{k}{2}^*_{\mathtt{n}:\alpha} = \{0\}$$
	where $\tbva{k}{1}^*_{\mathtt{n}:\alpha} = d\log(q) \wedge \tbva{k}{0}^*_{\mathtt{n}:\alpha}[1]$, and a de Rham module structure over $(\bva{k}^*_{\mathtt{n}:\alpha},\wedge, \bvd{k}_{\mathtt{n}:\alpha})$ satisfying all the conditions in Definitions  \ref{def:higher_order_data_module} and \ref{def:higher_order_module_patching} (in particular, we have surjective morphisms $\rest{k,l}_{\mathtt{n}:\alpha}: \tbva{k}{\bullet}^*_{\mathtt{n}:\alpha} \rightarrow \tbva{l}{\bullet}^*_{\mathtt{n}:\alpha}$ for $k \geq l$, a volume element $\volf{k}_{\mathtt{n}:\alpha} \in \tbva{k}{0}_{\mathtt{n}:\alpha}^d/\tbva{k}{1}_{\mathtt{n}:\alpha}^d$, an isomorphism $\gmiso{k}{}_{\mathtt{n}:\alpha}:  (\tbva{k}{1}^*_{\mathtt{n}:\alpha}/ \tbva{k}{2}^*_{\mathtt{n}:\alpha}, \dpartial{k}_{\mathtt{n}:\alpha}) \cong  (\tbva{k}{0}^*_{\mathtt{n}:\alpha}/ \tbva{k}{1}^*_{\mathtt{n}:\alpha}[-1], \dpartial{k}_{\mathtt{n}:\alpha})$, and patching isomorphisms $\hpatch{k}_{\mathtt{n}:\alpha\beta, i} : \tbva{k}{\bullet}_{\mathtt{n}:\alpha}^*|_{U_i} \rightarrow \tbva{k}{\bullet}_{\mathtt{n}:\beta}^*|_{U_i}$ for triples $(U_i; V_\alpha, V_\beta)$ with $U_i \subset V_{\alpha\beta}$ fulfilling all the required conditions).
	We further assume that the complex $(\tbva{k}{}_{\mathtt{n}:\alpha}^*[u], \widetilde{\dpartial{k}_{\mathtt{n}:\alpha}})$, where $$\widetilde{\dpartial{k}_{\mathtt{n}:\alpha}}(\sum_{s=0}^l \nu_s u^s) := \sum_{s} (\dpartial{k}_{\mathtt{n}:\alpha} \nu_s) u^s + s d\log(q) \wedge \nu_s u^{s-1},$$
	satisfies {\em the holomorphic Poincar\'{e} Lemma} in the sense that
	for each Stein open subset $U$ and any $\sum_{s} \nu_s u^s \in \tbva{k}{}_{\mathtt{n}:\alpha}^*(U)[u]$ with $\widetilde{\dpartial{k}_{\mathtt{n}:\alpha}} (\nu) = 0$, we have $\sum_{s}\eta_s u^s \in \tbva{k}{}_{\mathtt{n}:\alpha}^*(U)[u]$ satisfying $\widetilde{\dpartial{k}_{\mathtt{n}:\alpha}}(\sum_{s}\eta_s u^s) = \sum_{s} \nu_s u^s$ on $U$, and if in addition $\rest{k,0}_{\mathtt{n}:\alpha}(\nu_0) = 0$ in $(\tbva{0}{0}_{\mathtt{n}:\alpha}/ \tbva{0}{1}_{\mathtt{n}:\alpha})(U)$, then $\sum_{s}\eta_s u^s$ can be chosen so that $\rest{k,0}_{\mathtt{n}:\alpha}(\eta_0) = 0$ in $(\tbva{0}{0}_{\mathtt{n}:\alpha}/ \tbva{0}{1}_{\mathtt{n}:\alpha})(U)$.
\end{assum}

Assumption \ref{assum:local_assumption_for_triviality_of_hodge_bundle} allows us to construct the total de Rham complex $(\totaldr{}{\bullet}^{*,*}_{\mathtt{n}}, \drd{}{}_{\mathtt{n}} = \pdb_{\mathtt{n}} + \dpartial{}_{\mathtt{n}}+\dbobs{}_{\mathtt{n}} \lrcorner )$ as a dg-module over $\hat{\Omega}^*_{\mathbb{A}^{1,\dagger}}$ such that $\totaldr{}{\parallel}^{*} \otimes_{\hat{\cfr}} (\comp[q]/(q^{k+1})) = \totaldr{k}{0}^{*,*}_{\mathtt{n}}/\totaldr{k}{1}^{*,*}_{\mathtt{n}} =: \totaldr{k}{\parallel}^{*,*}_{\mathtt{n}}$.

\begin{example}\label{ex:log-smooth-VI}
	In the log smooth case (see Examples \ref{ex:log-smooth-I} and \ref{ex:log-smooth-II}), Assumption \ref{assum:local_assumption_for_triviality_of_hodge_bundle}, which is local in nature, can be checked by simply taking base change of the family $\pi: \mathbf{V}_{\alpha}^{\dagger} \rightarrow \logsk{k}$ with $i_{\mathtt{n}}: \mathbb{A}^{1,\dagger} \rightarrow \logs$, and working on $\tbva{k}{}^*_{\mathtt{n}:\alpha}$ using the local computations from \cite[\S 2]{steenbrink1976limits}. Alternatively, and more conveniently, one can use (analytification of) the local computations in \cite[proof of Theorem 4.1]{Gross-Siebert-logII} (or more appropriately \cite[Theorem 1.10]{Felten-Filip-Ruddat}; see footnote \ref{fn:gap-gross-siebert}). Actually Assumption \ref{assum:local_assumption_for_triviality_of_hodge_bundle} is motivated from \cite[proof of Theorem 4.1]{Gross-Siebert-logII}, for which the $d$-semistable log smooth local model $\prescript{k}{}{\mathbb{V}}_{\alpha}$ is included as a special case.
\end{example}

\subsubsection{Freeness of the Hodge bundle}\label{sec:proof_of_triviality_of_hodge_bundle} 
We consider a general monomial ideal $I$ of $\cfr$ such that $\mathbf{m}^{k} \subset I$ for some integer $k \in \inte_+$, and we let $\totaldr{I}{\parallel}^* := \totaldr{k}{\parallel}^* \otimes_{\cfrk{k}} (\cfr/ I)$ equipped with the differential by taking tensor product which is also denoted by $\drd{I}{}$. We should omit the dependence of our notation on the operator $\drd{I}{}$ when it is clear from the contents. We consider two such ideals $I \subset J$ such that $\mathbf{m} \cdot J \subset I$ and the following exact sequence of complexes
\begin{equation}\label{eqn:short_exact_sequence_of_complexes_for_locally_triviality_of_hodge_bundles}
\xymatrix@1{
	0 \ar[r] &  \totaldr{0}{\parallel}^* \otimes_\comp(J/I) \ar[r] & \totaldr{I}{\parallel}^* \ar[r]^{\rest{I,J}} &  \totaldr{J}{\parallel}^* \ar[r] &0}. 
\end{equation}
Then we consider the long exact sequence associated to \eqref{eqn:short_exact_sequence_of_complexes_for_locally_triviality_of_hodge_bundles} and let
\begin{equation}\label{eqn:connecting_homomorphism_for_triviality_of_hodge_bundle}
\prescript{I,J}{}{\delta} : H^*(\totaldr{J}{\parallel}^*) \rightarrow  H^{*}(\totaldr{0}{\parallel}^{*})[1] \otimes_\comp(J/I)
\end{equation}
as in the proof of \cite[Lemma 4.1]{kawamata1994logarithmic}. 

\begin{lemma}\label{lem:ideal_filtration_lemma_for_triviality_of_hodge_bundle}
	Suppose we have a filtration $I = I_l \subset I_{l-1} \subset \cdots \subset I_0 = J$ of monomial ideals. Then the connecting homomorphism $\prescript{I,J}{}{\delta}$ in \eqref{eqn:connecting_homomorphism_for_triviality_of_hodge_bundle} is zero if and only if the corresponding connecting homomorphism $\prescript{I_{j+1},I_{j}}{}{\delta}$ (abbrev. by $\prescript{j+1,j}{}{\delta}$) is zero for each $j = 0,\dots,l-1$. 
\end{lemma}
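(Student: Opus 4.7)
The plan is to deduce both implications from naturality of the connecting homomorphism associated to \eqref{eqn:short_exact_sequence_of_complexes_for_locally_triviality_of_hodge_bundles} applied to two comparison maps between such short exact sequences. Given monomial ideals $I\subset I'\subset I''$ with $\mathbf{m}\cdot I''\subset I$, first, the inclusion $I'/I\hookrightarrow I''/I$ on the left together with the identity on $\totaldr{I'}{\parallel}^*$ on the right induces a morphism between the short exact sequences yielding $\prescript{I,I'}{}{\delta}$ and $\prescript{I,I''}{}{\delta}$; second, the projection $I''/I\twoheadrightarrow I''/I'$ on the left together with the projection $\totaldr{I}{\parallel}^*\twoheadrightarrow\totaldr{I'}{\parallel}^*$ in the middle induces a morphism between the short exact sequences yielding $\prescript{I,I''}{}{\delta}$ and $\prescript{I',I''}{}{\delta}$. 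Each produces a commutative square of connecting homomorphisms in cohomology.

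For the ``if'' direction I will lift a closed representative step by step along the filtration. Given $[\eta_0]\in H^*(\totaldr{I_0}{\parallel},\drd{I_0}{})$ represented by a closed $\eta_0 \in \totaldr{I_0}{\parallel}^*$, I construct inductively closed lifts $\eta_j \in \totaldr{I_j}{\parallel}^*$ restricting to $\eta_{j-1}$: choosing any lift $\tilde\eta_j\in\totaldr{I_{j+1}}{\parallel}^*$ of $\eta_j$ (which exists by surjectivity of the restriction), the element $\drd{I_{j+1}}{}\tilde\eta_j$ vanishes modulo $I_j$ and hence lies in the subcomplex $\totaldr{0}{\parallel}^*\otimes(I_j/I_{j+1})$; it is closed there, and represents the class $\prescript{j+1,j}{}{\delta}([\eta_j])$. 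By hypothesis this class vanishes, so $\drd{I_{j+1}}{}\tilde\eta_j = \drd{I_{j+1}}{}\gamma$ for some $\gamma\in\totaldr{0}{\parallel}^*\otimes(I_j/I_{j+1})$, and then $\eta_{j+1}:=\tilde\eta_j-\gamma$ is a closed lift of $\eta_j$. After $l$ iterations I reach a closed lift $\eta_l\in \totaldr{I}{\parallel}^*$ of $\eta_0$, which shows $\prescript{l,0}{}{\delta}([\eta_0])=0$.

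For the ``only if'' direction, assume $\prescript{l,0}{}{\delta}=0$ and fix $j \in \{0,\dots,l-1\}$. The first naturality comparison applied to $I_l\subset I_j\subset I_0$ gives
$$\iota\circ\prescript{l,j}{}{\delta} \;=\; \prescript{l,0}{}{\delta}\circ\rho_*,$$
where $\rho\colon \totaldr{I_j}{\parallel}^*\to\totaldr{I_0}{\parallel}^*$ is the canonical restriction and $\iota \colon H^{*+1}(\totaldr{0}{\parallel})\otimes(I_j/I_l)\hookrightarrow H^{*+1}(\totaldr{0}{\parallel})\otimes(I_0/I_l)$ is the injection induced by the inclusion $I_j/I_l \hookrightarrow I_0/I_l$ (injectivity being preserved by tensoring over $\comp$ with the $\comp$-vector space $H^{*+1}(\totaldr{0}{\parallel})$). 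Hence $\iota\circ\prescript{l,j}{}{\delta}=0$, and therefore $\prescript{l,j}{}{\delta}=0$. The second naturality comparison, applied to $I_l\subset I_{j+1}\subset I_j$ with the identity on $\totaldr{I_j}{\parallel}^*$, then yields $\prescript{j+1,j}{}{\delta}=\pi\circ\prescript{l,j}{}{\delta}$ for the projection $\pi$ onto $H^{*+1}(\totaldr{0}{\parallel})\otimes(I_j/I_{j+1})$, so $\prescript{j+1,j}{}{\delta}=0$ as desired. I do not anticipate a serious obstacle --- both directions are essentially formal --- but the point that requires care is the verification of naturality of $\delta$ with respect to these two comparison maps, which itself reduces to checking compatibility of the restriction/inclusion maps with the structure morphisms of \eqref{eqn:short_exact_sequence_of_complexes_for_locally_triviality_of_hodge_bundles} and with $\drd{}{}$.
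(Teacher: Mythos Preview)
Your argument is correct. One small slip in your description of the first comparison: you write ``identity on $\totaldr{I'}{\parallel}^*$ on the right'', but in fact both short exact sequences have $\totaldr{I}{\parallel}^*$ in the middle (carrying the identity), while the right-hand map is the projection $\totaldr{I'}{\parallel}^*\twoheadrightarrow\totaldr{I''}{\parallel}^*$. This does not affect the conclusion you draw, since the naturality square $\iota\circ\prescript{l,j}{}{\delta}=\prescript{l,0}{}{\delta}\circ\rho_*$ you write down is the correct one.

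The paper's proof rests on the same tool (naturality of the connecting homomorphism under morphisms of the short exact sequences) but is organized by contrapositive in both directions and packaged into a single three-row commutative diagram. For the ``if'' direction, instead of your explicit step-by-step lifting, the paper assumes $\prescript{I,J}{}{\delta}\neq 0$, takes the minimal $m$ for which the projection of $\prescript{I,J}{}{\delta}$ to $H^{*+1}(\totaldr{0}{\parallel})\otimes(J/I_m)$ is non-zero, and uses the diagram to force $\prescript{m,m-1}{}{\delta}\neq 0$; the ``only if'' direction is read off from the same diagram. Your direct lifting for the ``if'' direction is a bit more transparent and avoids an implicit lifting step in the paper's minimality argument (one must first lift the offending class from $H^*(\totaldr{J}{\parallel})$ to $H^*(\totaldr{I_{m-1}}{\parallel})$ before the bottom row applies). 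Both routes are standard and essentially equivalent.
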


\begin{proof}
	First assume that the homomorphism $\prescript{I,J}{}{\delta}$ is non-zero. Let $m$ be the minimum of those $j = 1,\dots,l$ such that the composition 
	$$
	\xymatrix@1{
		H^*(\totaldr{J}{\parallel}^*) \ar[rr]^{\prescript{I,J}{}{\delta}} & &  H^{*+1}(\totaldr{0}{\parallel}^{*}) \otimes_\comp(J/I) \ar[r] & H^{*+1}(\totaldr{0}{\parallel}^{*}) \otimes_\comp(J/I_j)
	}
	$$
	is non-zero. Then we consider the commutative diagram 
	$$
	\xymatrix@1{
		H^*(\totaldr{I}{\parallel}^*)  \ar[r]\ar[d]& H^*(\totaldr{J}{\parallel}^*) \ar[d] \ar[rrr]^{\prescript{I,J}{}{\delta}}  & & & H^{*+1}(\totaldr{0}{\parallel}^{*}) \otimes_\comp(J/I) \ar[d]\\
		H^*(\totaldr{I_m}{\parallel}^*)  \ar[r]& H^*(\totaldr{J}{\parallel}^*) \ar[rrr]^{\prescript{I_m,J}{}{\delta}}  & & & H^{*+1}(\totaldr{0}{\parallel}^{*}) \otimes_\comp(J/I_m)\\
		H^*(\totaldr{I_m}{\parallel}^*)  \ar[u] \ar[r]& H^*(\totaldr{I_{m-1}}{\parallel}^*) \ar[rrr]^{\prescript{m,m-1}{}{\delta}} \ar[u] & & & H^{*+1}(\totaldr{0}{\parallel}^{*}) \otimes_\comp(I_{m-1}/I_m) \ar[u].
	}
	$$
	Notice that the connecting homomorphism $\prescript{I_m,J}{}{\delta}$ is non-zero with its image lying in the subspace $H^{*+1}(\totaldr{0}{\parallel}^{*}) \otimes_\comp(I_{m-1}/I_m)$. So $\prescript{m,m-1}{}{\delta}$ is non-zero. 
	
	Conversely, using the above commutative diagram, we observe that if $\prescript{m,m-1}{}{\delta}$ is non-zero for some $ 0\leq m \leq l-1$ then the connecting homomorphism $\prescript{I,J}{}{\delta}$ cannot be zero. 
\end{proof}

To prove triviality of the Hodge bundle, we need a decreasing tower of ideals $\mathbf{m} = J_1 \supset J_2 \supset \cdots $ such that $\mathbf{m}\cdot J_{i} \subset J_{i+1} $ for each $i$, $J_i/ J_{i+1}$ is at most one-dimensional and $\hat{\cfr} = \varprojlim_{i} (\cfr/J_i)$. We should show that the connecting homomorphism 
$$
\prescript{J_{i+1},J_{i}}{}{\delta} : H^*(\totaldr{J_i}{\parallel}^*) \rightarrow  H^{*+1}(\totaldr{0}{\parallel}^{*}) \otimes_\comp(J_i/J_{i+1}) 
$$
is zero for $i = 1,2,\dots$. 

To construct such a tower, we take an element $\mathtt{n}_0 \in \text{int}(Q_\real^\vee)\cap \blat^\vee$ and define the monomial ideal $\tilde{J}_i:= \langle q^{m} \mid m \in Q, \ (m, \mathtt{n}_0) \geq i \rangle$, giving a sequence $\tilde{J}_1 \supset \tilde{J}_2 \supset \cdots$, which should be further refined. For each $i$, notice that the finite dimensional vector space $\tilde{J}_i/\tilde{J}_{i+1}$ has a basis $q^m$ given by the lattice points $m \in Q$ with $(m,\mathtt{n}_0) = i$. We take a generic element $e \in \text{int}(Q_\real^\vee) \cap \blat^{\vee}$ such that $(m_1,e) \neq (m_2, e)$, for all $m_1\neq m_2$ lying in the set $\{m \in Q \ | \ (m,\mathtt{n}_0) = i \} $ (this can be done since there are only finitely many such $m$'s). We further take $L$ large enough so that if we let $\mathtt{n} = L \mathtt{n}_0 + e$, there is an integer $l$ such that $(m,\mathtt{n}) \geq l$ for $m\in Q$ with $(m,\mathtt{n}_0) \geq i+1$, and $ (m',\mathtt{n}) \leq l-1$ for $m' \in Q$ with $(m',\mathtt{n}_0) = i$. We can therefore define the refined filtration $\tilde{J}_{i+1} = I_{j,l} \subset I_{j,l-1} \subset \cdots \subset I_{j,0} = \tilde{J_{i}}$ such that $I_{j,s}$ is the monomial ideal generated by those $q^{m}$ with $m \in Q$, $(m,\mathtt{n}_0) \geq i$ and $(m, \mathtt{n}) \geq s$. 
Such a choice ensures that there is at most one $m \in Q$ such that $(m,\mathtt{n}_0) = i$ and $(m,\mathtt{n}) = s$ for a fixed $s$, and hence $I_{j,s-1}/I_{j,s}$ is at most one-dimensional. Making such a refinement for each $\tilde{J}_i \supset \tilde{J}_{i+1}$ and possibly renumbering the sequence, we obtain the desired sequence $J_i \supset J_{i+1} \supset \cdots $. For each pair $J_j / J_{j+1} \cong \comp$, we notice that there is an $\mathtt{n}$ together with $i_{\mathtt{n}}: \cfr \rightarrow \comp[q]$ and some $k\in \inte_+$ such that $i_{\mathtt{n}}^{-1}(q^{k}) = J_j$ and $i_{\mathtt{n}}^{-1}(q^{k+1}) = J_{j+1}$ with $i_{\mathtt{n}} : J_{j}/J_{j+1} \cong \comp \cdot q^{k}$. 

We have the following commutative diagram of complexes:
$$
\xymatrix@1{
	0 \ar[r] &  \totaldr{0}{\parallel}^* \otimes_\comp(J_{j}/J_{j+1}) \ar[r] \ar[d]^{i_{\mathtt{n}}^*}& \totaldr{J_{j+1}}{\parallel}^* \ar[rr]^{\rest{J_{j+1},J_{j}}} \ar[d]^{i_{\mathtt{n}}^*}& &  \totaldr{J_{j}}{\parallel}^* \ar[r] \ar[d]^{i_{\mathtt{n}}^*}&0 \\
	0 \ar[r] & \totaldr{0}{\parallel}^* \otimes_\comp \comp \cdot q^{k} \ar[r]&  \totaldr{k+1}{\parallel}^*_{\mathtt{n}} \ar[rr] &&\totaldr{k}{\parallel}^*_{\mathtt{n}} \ar[r]& 0
}
$$
such that the induced map $i_{\mathtt{n}}^* : H^*(\totaldr{0}{\parallel}^*) \otimes_\comp(J_{j}/J_{j+1}) \rightarrow H^*(\totaldr{0}{\parallel}^*) \otimes_\comp \comp \cdot q^{k}$ is an isomorphism. Therefore it remains to show that $H^*(\totaldr{k}{\parallel}^*_{\mathtt{n}})$ is a free $\comp[q]/(q^{k+1})$ module for each $k$. 

\begin{lemma}\label{lem:triviality_of_hodge_bundle}
	Under Assumption \ref{assum:local_assumption_for_triviality_of_hodge_bundle}, $H^*(\totaldr{K}{\parallel})$ is a free $\cfr/K$ module for any ideal $K \subset \mathbf{m}$ satisfying $\mathbf{m}^L \subset K$ for some $L$. 
\end{lemma}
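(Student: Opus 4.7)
The reductive step has already been carried out in the paragraphs preceding the lemma: combining Lemma \ref{lem:ideal_filtration_lemma_for_triviality_of_hodge_bundle} with the refined tower of monomial ideals $\mathbf{m} = J_1 \supset J_2 \supset \cdots$ and the commutative diagram involving $i_{\mathtt{n}}^*$, freeness of $H^*(\totaldr{K}{\parallel}, \drd{K}{})$ over $\cfr/K$ is reduced to the assertion that $H^*(\totaldr{k}{\parallel}^*_{\mathtt{n}}, \drd{k}{}_{\mathtt{n}})$ is free over $\comp[q]/(q^{k+1})$ for every primitive $\mathtt{n} \in \text{int}(Q_\real^\vee) \cap \blat^{\vee}$ and every $k \in \inte_{\geq 0}$. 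It is this one-parameter statement that the rest of the plan addresses.

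I would prove the one-parameter statement by induction on $k$; the case $k = 0$ is trivial. For the inductive step it suffices to show that the connecting homomorphism of
$$
0 \to q^{k+1} \totaldr{0}{\parallel}^*_{\mathtt{n}} \to \totaldr{k+1}{\parallel}^*_{\mathtt{n}} \to \totaldr{k}{\parallel}^*_{\mathtt{n}} \to 0
$$
vanishes. Concretely, given a closed $\eta \in \totaldr{k}{\parallel}^*_{\mathtt{n}}$, any lift $\tilde\eta \in \totaldr{k+1}{\parallel}^*_{\mathtt{n}}$ satisfies $\drd{k+1}{}_{\mathtt{n}}\tilde\eta = q^{k+1}\mathsf{o}$ for some closed $\mathsf{o} \in \totaldr{0}{\parallel}^{*+1}$, and the goal is to show that by modifying $\tilde\eta$ by a term in $q^{k+1}\totaldr{0}{\parallel}$ one can arrange $\drd{k+1}{}_{\mathtt{n}}\tilde\eta = 0$; equivalently, that $[\mathsf{o}] = 0$ in $H^{*+1}(\totaldr{0}{\parallel}, \drd{0}{})$.

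The key and hardest step is to construct the required primitive for $\mathsf{o}$ using the formal variable $u$ from Assumption \ref{assum:local_assumption_for_triviality_of_hodge_bundle}. Globalizing the local dga's $\tbva{k+1}{}^*_{\mathtt{n}:\alpha}[u]$ with differential $\widetilde{\dpartial{k+1}_{\mathtt{n}:\alpha}}$ via the Čech-Thom-Whitney construction of \S \ref{sec:construction_of_de_rham_complex} produces a global extended complex carrying the differential $\widetilde{\drd{k+1}{}_{\mathtt{n}}} = \drd{k+1}{}_{\mathtt{n}} + d\log(q)\wedge\partial_u$. The holomorphic Poincaré lemma of Assumption \ref{assum:local_assumption_for_triviality_of_hodge_bundle} together with a standard Čech-hypercohomology argument of the kind used in the proofs of \cite[Lemma 4.1]{kawamata1994logarithmic} and \cite[Theorem 4.1]{Gross-Siebert-logII} then yields a $\widetilde{\drd{k+1}{}_{\mathtt{n}}}$-closed extension $\hat\eta = \xi_0 + u\xi_1 + u^2\xi_2 + \cdots$ of a chosen total lift $\xi_0$ of $\tilde\eta$, where the "vanishing leading term" clause of the assumption is used to arrange that $\xi_0$ lifts $\tilde\eta$ modulo $q^{k+1}\totaldr{0}{\parallel}$. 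Extracting the $u^0$ coefficient of $\widetilde{\drd{k+1}{}_{\mathtt{n}}}\hat\eta = 0$ and projecting to the relative quotient $\totaldr{k+1}{0:1}^{*+1}_{\mathtt{n}}$ forces $\drd{k+1}{}_{\mathtt{n}}(\xi_0|_{\text{rel}}) = 0$, so after the allowed modification $\mathsf{o}$ becomes $\drd{0}{}_{\mathtt{n}}$-exact, giving $[\mathsf{o}] = 0$.

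The main obstacle is the bookkeeping in Step~3: one must verify that the local Poincaré lemma globalizes cleanly across the Čech-Thom-Whitney resolution, that the "vanishing leading term" refinement of Assumption \ref{assum:local_assumption_for_triviality_of_hodge_bundle} propagates correctly through the gluing data of \S \ref{sec:solving_for_gluing}, and that the extracted primitive sits in the right filtration level of $\totaldr{k+1}{\bullet}$ so that dividing by $q^{k+1}$ and reducing modulo $\totaldr{}{1}$ produces a genuine primitive of $\mathsf{o}$ inside $\totaldr{0}{\parallel}$. This is essentially the delicate Steenbrink-Kawamata-Namikawa-Gross-Siebert argument transplanted into the abstract framework developed in \S \ref{sec:abstract_framework}--\S \ref{sec:differentials}.
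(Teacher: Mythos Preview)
Your proposal is correct and uses the same ingredients as the paper's proof: the reduction to a one-parameter family via the tower $J_j$, the globalized $u$-extended complex $(\widetilde{\totaldr{k}{}^*_{\mathtt{n}}},\widetilde{\drd{k}{}_{\mathtt{n}}})$ built from the sheaves $\tbva{k}{}^*_{\mathtt{n}:\alpha}[u]$, and the holomorphic Poincar\'e lemma (together with its ``vanishing leading term'' refinement) pushed through the \v{C}ech--Thom--Whitney resolution. The paper packages the argument slightly differently: rather than running an induction on $k$ and chasing a primitive for the obstruction class $\mathsf{o}$, it shows in one stroke, via a two-step spectral-sequence comparison on the \v{C}ech--Thom--Whitney complexes, that $\widetilde{\rest{k,0}_{\mathtt{n}}}\colon H^*(\widetilde{\totaldr{k}{}^*_{\mathtt{n}}})\to H^*(\totaldr{0}{\parallel}^*_{\mathtt{n}})$ is a quasi-isomorphism, from which surjectivity of $\rest{k,0}_{\mathtt{n}}$ (and hence freeness) is immediate. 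This sidesteps exactly the bookkeeping you flag as the main obstacle: in your inductive formulation the closed element $\hat\eta$ produced by the Poincar\'e lemma naturally lifts the level-$0$ restriction $\eta|_0$ rather than $\eta$ itself at level $k$, so you must invoke the induction hypothesis (freeness of $H^*(\totaldr{k}{\parallel}_{\mathtt{n}})$) to conclude that every level-$k$ class lifts---a step not made explicit in your sketch but easily supplied.
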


\begin{proof}
We first consider the case $K = J_j$ for some $j$.
Similar to \cite[p. 404]{kawamata1994logarithmic} and the proof of \cite[Theorem 4.1]{Gross-Siebert-logII}, it suffices to show that the map
$\rest{k,0}_{\mathtt{n}} : H^*(\totaldr{k}{\parallel}^*_{\mathtt{n}}, \drd{k}{}_{\mathtt{n}}) \rightarrow H^*(\totaldr{0}{\parallel}^*_{\mathtt{n}}, \drd{0}{}_{\mathtt{n}})$,
which is induced by the maps $\rest{k,0}_{\mathtt{n}:\alpha}$'s in Assumption \ref{assum:local_assumption_for_triviality_of_hodge_bundle}, is surjective for all $k \in \inte_{\geq 0}$. 
Following the proof of \cite[Theorem 4.1]{Gross-Siebert-logII}, we consider the complex $(\widetilde{\totaldr{k}{}^*_{\mathtt{n}}}, \widetilde{\drd{k}{}_{\mathtt{n}}})$ constructed from the complexes $(\tbva{k}{}_{\mathtt{n}:\alpha}^*[u], \widetilde{\dpartial{k}_{\mathtt{n}:\alpha}})$'s as in Definition \ref{def:total_de_rham_differential}. There is a natural restriction map $\widetilde{\rest{k,0}_{\mathtt{n}}}: \widetilde{\totaldr{k}{}^*_{\mathtt{n}}}  \rightarrow \totaldr{0}{\parallel}_{\mathtt{n}}$ defined by $\widetilde{\rest{k,0}_{\mathtt{n}:\alpha}} ( \sum_{s=0}^l \eta_s u^s ) = \rest{k,0}_{\mathtt{n}:\alpha}(\eta_0)$ on $\tbva{k}{}_{\mathtt{n}:\alpha}^*[u]$ for each $\alpha$. Since  $\widetilde{\rest{k,0}_{\mathtt{n}}}$ (and hence the induced map on cohomology) factors through $\rest{k,0}_{\mathtt{n}}$, 
we only need to show that the map $\widetilde{\rest{k,0}_{\mathtt{n}}}: H^*(\widetilde{\totaldr{k}{}^*_{\mathtt{n}}}, \widetilde{\drd{k}{}_{\mathtt{n}}}) \rightarrow H^{*}(\totaldr{0}{\parallel}_{\mathtt{n}}, \drd{0}{}_{\mathtt{n}})$ is an isomorphism.

By gluing the sheaves $\tbva{k}{}_{\mathtt{n}:\alpha}^*$'s (resp. $\tbva{k}{}_{\mathtt{n}:\alpha}^*[u]$'s) as in 
Definition \ref{def:construction_total_de_rham}, we can construct the \v{C}ech-Thom-Whitney complexes $\cech{k}^*(\ktwc{}{}^*_{\mathtt{n}},\drglue{}_\mathtt{n})$ (resp. $\cech{k}^*(\prescript{}{}{\mathbf{B}}_{\mathtt{n}}^*,\drglue{}_\mathtt{n})$) and obtain the exact sequences
\begin{align*}
	0 \rightarrow \totaldr{k}{\bullet}_{\mathtt{n}}^{*,*}(\drglue{}_{\mathtt{n}}) \rightarrow \cech{k}^0(\ktwc{}{\bullet}^{*,*}_{\mathtt{n}},\drglue{}_{\mathtt{n}}) \rightarrow  \cdots \rightarrow \cech{k}^{\ell}(\ktwc{}{\bullet}^{*,*}_{\mathtt{n}},\drglue{}_{\mathtt{n}}) \rightarrow \cdots,\\
 0 \rightarrow	\widetilde{\totaldr{k}{\bullet}^{*,*}_{\mathtt{n}}}(\drglue{}_{\mathtt{n}}) \rightarrow \cech{k}^0(\prescript{}{\bullet}{\mathbf{B}}^{*,*}_{\mathtt{n}},\drglue{}_{\mathtt{n}})  \rightarrow \cdots \rightarrow \cech{k}^{\ell}(\prescript{}{\bullet}{\mathbf{B}}^{*,*}_{\mathtt{n}},\drglue{}_{\mathtt{n}}) \rightarrow \cdots.
\end{align*}	
Now we have a commutative diagram
$$
\xymatrix@1{
\widetilde{\totaldr{k}{}^*_\mathtt{n}} \ar[rrr]^{\prescript{k}{}{\delta}_{-1}} \ar[d]^{\widetilde{\rest{k,0}_{\mathtt{n}}}}& & & \cech{k}^*(\mathbf{B}_{\mathtt{n}}^*,\drglue{}_\mathtt{n}) \ar[d]^{\widetilde{\rest{k,0}_{\mathtt{n}}} }\\
\totaldr{0}{\parallel}^*_{\mathtt{n}} \ar[rrr]^{\prescript{0}{}{\delta}_{-1}}& & & \cech{0}^*(\ktwc{}{0:1}^*_{\mathtt{n}},\drglue{}_\mathtt{n}) 
} 
$$
where the horizontal arrows are quasi-isomorphisms. So what we need is to show that
$\widetilde{\rest{k,0}_{\mathtt{n}}}: \cech{k}^*(\prescript{}{}{\mathbf{B}}^*_{\mathtt{n}},\drglue{}_\mathtt{n}) \rightarrow \cech{0}^*(\ktwc{}{0:1}^*_\mathtt{n},\drglue{}_\mathtt{n})$
is a quasi-isomorphism. 
	
The decreasing filtrations 
\begin{align*}
\mathtt{F}^{\geq l} \left(\cech{k}^*(\prescript{}{}{\mathbf{B}}^*_{\mathtt{n}},\drglue{}_\mathtt{n}) \right) &:= \cech{k}^{*}(\prescript{}{}{\mathbf{B}}^{*, \geq l}_{\mathtt{n}},\drglue{}_\mathtt{n}),\\
\mathtt{F}^{\geq l} \left(\cech{0}^*(\ktwc{}{0:1}^*_{\mathtt{n}},\drglue{}_\mathtt{n}) \right) &:= \cech{0}^*(\ktwc{}{0:1}^{*,\geq l}_{\mathtt{n}},\drglue{}_\mathtt{n})
\end{align*}
induce spectral sequences with
\begin{align*}
E_0^{\mathtt{r} \mathtt{q}} \left( \cech{k}^*(\prescript{}{}{\mathbf{B}}^*_{\mathtt{n}},\drglue{}_\mathtt{n})\right) &=  \bigoplus_{ p + \ell = \mathtt{r} } \cech{k}^{\ell}(\prescript{}{}{\mathbf{B}}^{p,  \mathtt{q}}_{\mathtt{n}},\drglue{}_\mathtt{n}), \\
E_0^{\mathtt{r} \mathtt{q} }  \left(\cech{0}^*(\ktwc{}{0:1}^*_{\mathtt{n}},\drglue{}_\mathtt{n}) \right)  &=\bigoplus_{ p + \ell = \mathtt{r} }  \cech{0}^{\ell}(\ktwc{}{0:1}^{p,\mathtt{q}}_{\mathtt{n}},\drglue{}_\mathtt{n})
\end{align*}
respectively converging to their cohomologies.
Therefore it remains to prove that the map 
$$
\widetilde{\rest{k,0}_{\mathtt{n}}} : \bigoplus_{ p + \ell = \mathtt{r} } \cech{k}^{\ell}(\prescript{}{}{\mathbf{B}}^{p,  \mathtt{q}}_{\mathtt{n}},\drglue{}_\mathtt{n}) \rightarrow \bigoplus_{ p + \ell = \mathtt{r} }  \cech{0}^{\ell}(\ktwc{}{0:1}^{p,\mathtt{q}}_{\mathtt{n}},\drglue{}_\mathtt{n})
$$
induces an isomorphism on the cohomology of the $E_0$-page for each fixed $\mathtt{q}$.
	
If we further consider the filtrations
$\displaystyle \bigoplus_{ \substack{p + \ell = \mathtt{r}\\ \ell \geq l}} \cech{k}^{\ell}(\prescript{}{}{\mathbf{B}}^{p,  \mathtt{q}}_{\mathtt{n}},\drglue{}_\mathtt{n})$ and \\
$\displaystyle \bigoplus_{ \substack{ p + \ell = \mathtt{r}\\ \ell \geq l} }  \cech{0}^{\ell}(\ktwc{}{0:1}^{p,\mathtt{q}}_{\mathtt{n}},\drglue{}_\mathtt{n})$,
then we only need to show that the induced map 
$$
\xymatrix@1{
\cech{k}^{\ell}(\prescript{}{}{\mathbf{B}}^{*,  \mathtt{q}}_{\mathtt{n}},\drglue{}_\mathtt{n}) \ar[rr]^{	\widetilde{\rest{k,0}_{\mathtt{n}}} } \ar@{=}[d]& &  \cech{0}^{\ell}(\ktwc{}{0:1}^{*,\mathtt{q}}_{\mathtt{n}},\drglue{}_\mathtt{n}) \ar@{=}[d]\\
\bigoplus_{p \geq 0} \prod_{\alpha_0 \cdots \alpha_\ell} \prescript{k}{}{\mathbf{B}}^{p,\mathtt{q}}_{\mathtt{n}:\alpha_0 \cdots \alpha_\ell} (\drglue{}_{\mathtt{n}}) \ar[rr]^{	\widetilde{\rest{k,0}_{\mathtt{n}}} } & & \bigoplus_{p \geq 0} \prod_{\alpha_0 \cdots \alpha_\ell}  \ktwc{0}{0:1}^{p,\mathtt{q}}_{\mathtt{n}:\alpha_0 \cdots \alpha_\ell} ( \drglue{}_{\mathtt{n}}) }
$$
on the corresponding $E_0$-page is a quasi-isomorphism for any fixed $c$ and $\mathtt{q}$, where $\prescript{k}{}{\mathbf{B}}^{p,\mathtt{q}}_{\mathtt{n}:\alpha_0 \cdots \alpha_\ell} (\drglue{}_{\mathtt{n}})$ is constructed by gluing together the sheaves $\tbva{k}{}^*_{\mathtt{n}:\alpha}[u]$'s as in Definition \ref{def:construction_total_de_rham}.
	
Note that the differential on $\bigoplus_{p \geq 0} \prod_{\alpha_0 \cdots \alpha_\ell} \prescript{k}{}{\mathbf{B}}^{p,\mathtt{q}}_{\mathtt{n}:\alpha_0 \cdots \alpha_\ell} ( \drglue{}_{\mathtt{n}})$ is given componentwise by the differential $\widetilde{\dpartial{k}_{\mathtt{n}:\alpha_j}} : \prescript{k}{}{\mathbf{B}}^{p,\mathtt{q}}_{\mathtt{n}:\alpha_j;\alpha_0 \cdots \alpha_\ell}  \rightarrow \prescript{k}{}{\mathbf{B}}^{p+1,\mathtt{q}}_{\mathtt{n}:\alpha_j;\alpha_0 \cdots \alpha_\ell}$ (where the term $\prescript{k}{}{\mathbf{B}}^{p,\mathtt{q}}_{\mathtt{n}:\alpha_0 \cdots \alpha_\ell} (\drglue{}_{\mathtt{n}}) \subset \bigoplus_{j=0}^\ell \prescript{k}{}{\mathbf{B}}^{p,\mathtt{q}}_{\mathtt{n}:\alpha_j;\alpha_0 \cdots \alpha_\ell} $ is defined as in Definition \ref{def:total_de_rham_differential} using $\tbva{k}{}^*_{\mathtt{n}:\alpha}[u]$'s). Using similar argument as in Lemma \ref{lem:exactness_of_cech_thom_whitney_complex}, we can see that the bottom horizontal map $\widetilde{\rest{k,0}_{\mathtt{n}}}$ in the above diagram is surjective. Finally, the kernel complex of this map is acyclic by the holomorphic Poincar\'{e} Lemma in Assumption \ref{assum:local_assumption_for_triviality_of_hodge_bundle} and arguments similar to Lemma \ref{lem:exactness_of_cech_thom_whitney_complex}. 

For a general ideal $K \subset \mathbf{m}$, one can argue that $H^*(\totaldr{K}{\parallel}^*)$ is a free $R/K$ module as follows. We consider the sequence of ideals $ \mathbf{m} = J_1 + K \supset J_2 + K \supset \cdots J_l +K = K$ for some $l$. Then one can prove that $H^*(\totaldr{J_{j+1}+K}{\parallel}^*) \rightarrow H^*(\totaldr{J_j+K}{\parallel}^*)$ is surjective by induction on $j$. Details are left to the readers.
\end{proof}

\section{An abstract unobstructedness theorem}\label{sec:abstract_theorem}

Theorem \ref{thm:construction_of_differentials} produces an {\em almost} differential graded Batalin-Vilkovisky (abbrev. dgBV) algebra $(\polyv{}^{*,*},\pdb, \bvd{},\wedge)$ (where ``almost'' means $(\pdb + \bvd{})^2$ is zero only at $0^{\text{th}}$-order), together with an {\em almost} de Rham module $(\totaldr{}{\parallel}^{*,*}, \pdb, \dpartial{},\wedge)$ (where ``almost'' means $(\pdb + \dpartial{})^2$ is zero only at $0^{\text{th}}$-order) and the volume element $\volf{} \in \totaldr{}{\parallel}^{d,0}$. From these we can prove an unobstructedness theorem, using the techniques from \cite{Barannikov99, kontsevichgeneralized, KKP08, terilla2008smoothness}.

\subsection{Solving the Maurer-Cartan equation from the almost dgBV algebra structure}\label{sec:algebraic_solving_maurer_cartan}

We first introduce some notations, following Barannikov \cite{Barannikov99}:

\begin{notation}\label{not:formal_variable_t}
	Let $t$ be a formal variable.
	We consider the spaces of formal power series or Laurent series in $t$ or $t^{\half}$ with values in polyvector fields $$\polyv{k}^{p,q}[[t]],\ \polyv{k}^{p,q}[[t^{\half}]],\ \polyv{k}^{p,q}[[t^{\half},t^{-\half}],$$
	together with a scaling morphism $\mathsf{l}_t : \polyv{k}^{p,q}[[t^{\half},t^{-\half}] \rightarrow \polyv{k}^{p,q}[[t^{\half},t^{-\half}]$ induced by $\mathsf{l}_t ( \varphi) = t^{\frac{q-p-2}{2}} \varphi$ for $\varphi \in \polyv{k}^{p,q}[[t^{\half},t^{-\half}]$. We have the identification 
	$\ptd{k}_t := t^{\half }\mathsf{l}_t^{-1}\circ \ptd{k} \circ \mathsf{l}_t  = \prescript{k}{}{\pdb} + t (\bvd{k}) + t^{-1} ( \dbobs{k} + t (\dvolfobs{k})) \wedge $.
	We also consider spaces of formal power series or Laurent series in $t$ or $t^{\half}$ with values in the relative de Rham module $$\totaldr{k}{\parallel}^{p,q}[[t^{\half}]], \quad \totaldr{k}{\parallel}^{p,q}[[t^{\half},t^{-\half}],$$ 
	together with the rescaling $\mathsf{l}_t : \totaldr{k}{}^{p,q}[[t^{\half},t^{-\half}] \rightarrow \totaldr{k}{}^{p,q}[[t^{\half},t^{-\half}]$ given by $\mathsf{l}_t(\alpha) = t^{\frac{d-p+q-2}{2}} \alpha$ which preserves the filtration on $\totaldr{k}{\bullet}$, and gives $\mathsf{l}_t(\varphi) \lrcorner (\volf{k} ) = \mathsf{l}_t ( \varphi \lrcorner \volf{k})$ and $\drd{k}{}_t:= t^{\half} \mathsf{l}_t^{-1} \circ \drd{k}{} \circ \mathsf{l}_t = \prescript{k}{}{\pdb} + t(\dpartial{k}) +t^{-1}(\dbobs{k} \lrcorner)$.
		
	For the purpose of constructing log Frobenius structures in the next section, we consider a finite-dimensional graded vector space $\mathbb{V}^*$ and the associated graded symmetric algebra $\ecfr{} := \text{Sym}^*(\mathbb{V}^\vee)$, equipped with the maximal ideal $\mathbf{I}$ generated by $\mathbb{V}^\vee$. We will abuse notations by using $\mathbf{m}$ and $\mathbf{I}$ again to denote the respective ideals of $\cfr_{\ecfr{}} := \cfr \otimes_\comp \ecfr{}$, where  $\cfr$ is the coefficient ring introduced in Notation \ref{not:universal_monoid}. We also let $\mathcal{I}:= \mathbf{m} + \mathbf{I}$ be the ideal generated by $\mathbf{m} \otimes \ecfr{}+\cfr \otimes \mathbf{I}$ and write $\cfrk{k}_{\ecfr{}}:= (\cfr_{\ecfr{}}/\mathcal{I}^{k+1})$.
	We write $\polyv{k}_{\ecfr{}}:= \polyv{k}\otimes_\comp \ecfr{} \otimes_{\cfr_{\ecfr{}}} (\cfr_{\ecfr{}} / \mathcal{I}^{k+1})$ and $\totaldr{k}{\parallel}_{\ecfr{}} := \totaldr{k}{\parallel} \otimes_\comp \ecfr{} \otimes_{\cfr_{\ecfr{}}} (\cfr_{\ecfr{}} / \mathcal{I}^{k+1})$, and let $ \polyv{k}_{\ecfr{}}^*[[t]] $, $ \polyv{k}_{\ecfr{}}^*[[t^{\half},t^{-\half}]$ and $ \totaldr{k}{\parallel}_{\ecfr{}}^*[[t^{\half},t^{-\half}] $ be the complexes of formal series or Laurent series in $t^{\half}$ or $t$ with values in those coefficient rings. 
	\end{notation}

\begin{remark}\label{rem:triviality_of_hodge_bundle_with_coefficient}
	We can also define the Hodge bundle $\reallywidehat{H^*(\totaldr{}{\parallel}^*,\drd{}{})} \hat{\otimes} \hat{\ecfr{}}$ over the formal power series ring $\hat{\cfr}_{\ecfr{}}:=\varprojlim_k \cfrk{k}_{\ecfr{}}$, which is equipped with the Gauss-Manin connection $\gmc{}$ defined as in Definition \ref{def:Gauss_Manin_connection}. Then Lemma \ref{lem:triviality_of_hodge_bundle} implies that the Hodge bundle $\reallywidehat{H^*(\totaldr{}{\parallel}^*,\drd{}{})} \hat{\otimes} \hat{\ecfr{}}$ is free over $\hat{\cfr}_{\ecfr{}}$, or equivalently, $H^*(\polyv{k}^*_{\ecfr{}}, \ptd{k})$ is free over $\cfrk{k}_{\ecfr{}}$ for each $k \in \inte_{\geq 0}$. 
\end{remark}

\begin{definition}\label{def:Maurer_Cartan_equation_unobstructedness}
	An element $\prescript{k}{}{\varphi} \in \polyv{k}_{\ecfr{}}^0[[t]]$ with $\prescript{k}{}{\varphi} =  0 \ (\text{mod $\mathbf{m} + \mathbf{I}$})$ is called a {\em Maurer-Cartan element} over $\cfr_{\ecfr{}} / \mathcal{I}^{k+1}$ if it satisfies the {\em Maurer-Cartan equation}
	\begin{equation}\label{eqn:Maurer_Cartan_equation_unobstructedness}
	(\prescript{k}{}{\pdb} + t (\bvd{k})) \prescript{k}{}{\varphi}+ \half [\prescript{k}{}{\varphi},\prescript{k}{}{\varphi}] + (\dbobs{k}+ t(\dvolfobs{k}))= 0,
	\end{equation}
	or equivalently, $(\prescript{k}{}{\pdb} + t (\bvd{k}) + [\prescript{k}{}{\varphi}, \cdot ])^2 = 0$. 
\end{definition}

Notice that the MC equation \eqref{eqn:Maurer_Cartan_equation_unobstructedness} is also equivalent to $\drd{k}{} ( e^{\mathsf{l}_{t}(\prescript{k}{}{\varphi})} \lrcorner \volf{}) = 0 $, which can in turn be rewritten as $$\ptd{k}(e^{\mathsf{l}_t(\prescript{k}{}{\varphi})})  =  (\prescript{k}{}{\pdb} + \bvd{k} + (\dbobs{k} + \dvolfobs{k}) \wedge ) (e^{\mathsf{l}_t(\prescript{k}{}{\varphi})}) = 0.$$ In order to solve \eqref{eqn:Maurer_Cartan_equation_unobstructedness} using algebraic techniques as in \cite{KKP08}, we need Assumption \ref{assum:local_assumption_for_triviality_of_hodge_bundle}, which guarantees freeness of the Hodge bundle, as well as a suitable version of the Hodge-to-de Rham degeneracy; recall that these are also the essential conditions for unobstructedness of smoothing of log smooth Calabi-Yau varieties in \cite{kawamata1994logarithmic}. 

Remark \ref{rem:Gauss_Manin_relation_with_zero_order} said that $H^*(\totaldr{0}{\parallel}, \drd{0}{})$ computes the hypercohomology $\mathbb{H}^*(\rdr{0}{}^*,\dpartial{0})$, so the Hodge filtration $\mathcal{F}^{\geq p}\mathbb{H}^* = H^*(\totaldr{0}{\parallel}^{\geq p,*}, \drd{0}{})$ (where $\drd{0}{} = \pdb + \dpartial{0}$) is induced by the filtration $\mathcal{F}^{\geq p} (\totaldr{0}{\parallel}) :=\totaldr{0}{\parallel}^{\geq p,*}$ on the complex $(\totaldr{0}{\parallel}^{*}, \drd{0}{})$.  

\begin{assum}[Hodge-to-de Rham degeneracy]\label{assum:Hodge_de_rham_degeneracy}
	We assume that the spectral sequence associated to the decreasing filtration $\mathcal{F}^{\geq \bullet} (\totaldr{0}{\parallel})$ degenerates at the $E_1$ term. 
\end{assum}

Assumption \ref{assum:Hodge_de_rham_degeneracy} is equivalent to the condition that $H^*\big(\polyv{0}[[t]], \ptd{0} = \pdb + t (\bvd{0}) \big)$ (or equivalently, that $H^*\big(\totaldr{0}{\parallel}[[t]], \pdb + t (\dpartial{0}) \big)$) is a finite rank free $\comp[[t]]$-module (cf. \cite{KKP08}).

\begin{example}\label{ex:log-smooth-VII}
	For the log smooth case (Example \ref{ex:log-smooth-I}), a cohomological mixed Hodge complex of sheaves $( \mathsf{A}_{\inte},(\mathsf{A}^{*}_{\mathbb{Q}},\mathsf{W}), (\mathsf{A}_{\comp}^{*},\mathsf{W},\mathsf{F}) )$, in the sense of \cite[Definition 3.13]{peters2008mixed}, is constructed in \cite[proof of Lemma 4.1, p.406]{kawamata1994logarithmic}, where
	\begin{align*}
		\mathsf{A}_{\comp}^k & = \bigoplus_{p+q = k} \mathsf{A}^{p,q}_{\comp} := \bigoplus_{p+q=k} \big(\Omega^{p+q+1}_{X^{\dagger}/\comp} / W_{q} \Omega^{p+q+1}_{X^{\dagger}/\comp}\big),\\
		\mathsf{F}^{\geq r} \mathsf{A}^*_{\comp} & := \bigoplus_{p}\bigoplus_{q \geq r}\mathsf{A}^{p,q}_{\comp} \text{ and } \mathsf{W}_{\leq r} A^{p,q}_{\comp} :=  W_{r+2p+1}\Omega^{p+q+1}_{X^{\dagger}/\comp} / W_{q} \Omega^{p+q+1}_{X^{\dagger}/\comp};
	\end{align*}
	here $W_{q} $ refers to the subsheaf with at most $q$ log poles.
	There is a natural quasi-isomorphism $\mu : (\Omega^*_{X^{\dagger}/\logsk{0}},\mathsf{F}^{\geq r}:=\Omega^{\geq r}_{X^{\dagger}/\logsk{0}} ) \rightarrow (\mathsf{A}^*_{\comp},\mathsf{F})$ preserving the Hodge filtration $\mathsf{F}$. Applying \cite[Theorem 3.18]{peters2008mixed} gives a mixed Hodge structure $(\mathbb{H}^*(\mathsf{A}^*_{\inte}) , (\mathbb{H}^*(\mathsf{A}^*_{\mathbb{Q}}), \mathsf{W}), (\mathbb{H}^*(\mathsf{A}^*_{\comp}),\mathsf{W},\mathcal{F}))$, as well as the Hodge-to-de Rham degeneracy, i.e. Assumption \ref{assum:Hodge_de_rham_degeneracy}, as in \cite[proof of Lemma 4.1]{peters2008mixed}.
\end{example}

\begin{theorem}\label{thm:unobstructedness_of_MC_equation}
	Suppose Assumptions \ref{assum:local_assumption_for_triviality_of_hodge_bundle} and \ref{assum:Hodge_de_rham_degeneracy} hold. Then for any degree $0$ element $\psi \in \polyv{0}[[t]] \otimes_\comp (\mathbf{I}/\mathbf{I}^2)$ with $\big(\prescript{0}{}{\pdb} + t(\bvd{0})\big) \psi = 0$, there exists a Maurer-Cartan element $\prescript{k}{}{\varphi} \in \polyv{k}_{\ecfr{}}^0[[t]]$ over $\cfr_{T}/\mathcal{I}^{k+1}$ for each $k \in \inte_{\geq 0}$ such that $\prescript{k+1}{}{\varphi} = \prescript{k}{}{\varphi} \ (\text{mod $\mathcal{I}^{k+1}$})$ and $\prescript{k}{}{\varphi} = \psi \ (\text{mod $\mathbf{m} + \mathbf{I}^2$})$. 
\end{theorem}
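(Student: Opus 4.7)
The plan is to solve the Maurer-Cartan equation \eqref{eqn:Maurer_Cartan_equation_unobstructedness} by induction on $k$, following the dgBV techniques of Barannikov-Kontsevich, Katzarkov-Kontsevich-Pantev, and Terilla, adapted to the almost dgBV framework of Theorem \ref{thm:construction_of_differentials}. The base case takes $\prescript{0}{}{\varphi} := 0$: the equation reduces mod $\mathcal{I}$ to $\dbobs{} + t\dvolfobs{} \equiv 0$, which holds because Theorem \ref{thm:construction_of_differentials}(3) together with injectivity of $\bva{0}^{-1} \hookrightarrow \mathrm{Der}(\bva{0}^0)$ forces $\dbobs{} \equiv 0 \equiv \dvolfobs{} \pmod{\mathbf{m}}$. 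The first-order lift is $\prescript{1}{}{\varphi} := \psi$, for which the only nontrivial contribution to \eqref{eqn:Maurer_Cartan_equation_unobstructedness} modulo $\mathcal{I}^2$ is $(\pdb + t\bvd{0})\psi$, vanishing by the hypothesis on $\psi$.

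For the inductive step, assume $\prescript{k}{}{\varphi}$ satisfies \eqref{eqn:Maurer_Cartan_equation_unobstructedness} mod $\mathcal{I}^{k+1}$ and pick any lift $\tilde{\varphi} \in \polyv{k+1}_{\ecfr{}}^0[[t]]$. Define the obstruction
$$
\mathcal{O} := (\pdb + t\bvd{})\tilde{\varphi} + \tfrac{1}{2}[\tilde{\varphi}, \tilde{\varphi}] + \dbobs{} + t\dvolfobs{} \in \polyv{0}^1[[t]] \otimes_\comp (\mathcal{I}^{k+1}/\mathcal{I}^{k+2}).
$$
Writing $L := \pdb + t\bvd{} + [\tilde{\varphi}, \cdot]$, the identities $\pdb^2 = [\dbobs{}, \cdot]$, $\pdb\bvd{} + \bvd{}\pdb = [\dvolfobs{}, \cdot]$ and $\bvd{}^2 = 0$ (which follow from $\ptd{}^2 = 0$ in Proposition \ref{prop:checking_d_square_equal_zero} together with Theorem \ref{thm:construction_of_differentials}) give $L^2 = [\mathcal{O}, \cdot]$ on $\polyv{k+1}_{\ecfr{}}$. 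A generalized Bianchi calculation using associativity $L \circ L^2 = L^2 \circ L$ and the derivation property of $L$ with respect to $[\cdot,\cdot]$ then forces $[L\mathcal{O}, \cdot] = 0$, hence $L\mathcal{O} = 0$ by injectivity. Reducing mod $\mathcal{I}^{k+2}$, and noting that $[\tilde{\varphi}, \mathcal{O}] \in \mathcal{I}^{k+2}$ together with $\bvd{} \equiv \bvd{0} \pmod{\mathbf{m}}$, one obtains $(\pdb + t\bvd{0})\mathcal{O} = 0$.

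The remaining and principal task is to show that $\mathcal{O}$ is $(\pdb + t\bvd{0})$-exact, so that a primitive $\chi \in \polyv{k+1}_{\ecfr{}}^0[[t]] \otimes_\comp (\mathcal{I}^{k+1}/\mathcal{I}^{k+2})$ yields $\prescript{k+1}{}{\varphi} := \tilde{\varphi} + \chi$. Here Assumptions \ref{assum:local_assumption_for_triviality_of_hodge_bundle} and \ref{assum:Hodge_de_rham_degeneracy} enter together: Lemma \ref{lem:triviality_of_hodge_bundle} (extended to $\ecfr{}$-coefficients via Remark \ref{rem:triviality_of_hodge_bundle_with_coefficient}) gives freeness of the Hodge bundle $H^*(\totaldr{k}{\parallel}^*, \drd{k}{})$ over $\cfrk{k}_{\ecfr{}}$, and combining with Hodge-to-de Rham degeneracy yields freeness of $H^*(\polyv{k}_{\ecfr{}}[[t]], \pdb + t\bvd{})$ as a $\cfrk{k}_{\ecfr{}}[[t]]$-module. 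The plan is to recast \eqref{eqn:Maurer_Cartan_equation_unobstructedness} via the isomorphism $\lrcorner\volf{}$ as $\drd{k+1}{}(e^{\mathsf{l}_t\tilde{\varphi}} \lrcorner \volf{k+1}) = 0$, exploiting that $\drd{}{}^2 = 0$ holds exactly on the total de Rham complex (Proposition \ref{prop:checking_d_square_equal_zero}): freeness of the Hodge bundle lifts a zero-th order closed representative to a genuine $\drd{k+1}{}$-closed element, and since $\volf{k+1}$ is nowhere vanishing in $\totaldr{k+1}{0:1}^{d,0}$, inverting the exponential-contraction map reads off $\chi$ directly, exhibiting $\mathcal{O}$ as a coboundary. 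The main obstacle is making this lifting-and-inversion argument rigorous in the almost (rather than genuine) dgBV setting; the trick is to carry out all homological bookkeeping on the exact complex $(\totaldr{}{\parallel}^*, \drd{}{})$ and transport the result back to $\polyv{}^{*,*}$ via the volume element, so that the failure of $\pdb^2$ and $\pdb\bvd{} + \bvd{}\pdb$ to vanish beyond zeroth order is absorbed into the twists $\dbobs{}, \dvolfobs{}$.
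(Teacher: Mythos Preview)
Your first-order step contains a genuine error. Setting $\prescript{1}{}{\varphi} := \psi$ does \emph{not} solve \eqref{eqn:Maurer_Cartan_equation_unobstructedness} modulo $\mathcal{I}^2$: the terms $\dbobs{} + t\,\dvolfobs{}$ lie in $\mathbf{m}$ but not in $\mathbf{m}^2$, hence not in $\mathcal{I}^2$. (You correctly note that they vanish mod $\mathbf{m}$, but that is only the $k=0$ statement.) So the equation at order $1$ reads $(\pdb + t\bvd{0})\,\prescript{1}{}{\varphi} + (\dbobs{} + t\,\dvolfobs{}) \equiv 0$ in $\mathcal{I}/\mathcal{I}^2$, and you must produce a correction $\zeta \in \polyv{0}^0[[t]] \otimes (\mathbf{m}/\mathcal{I}^2)$ with $(\pdb + t\bvd{0})\zeta = -(\dbobs{} + t\,\dvolfobs{})$. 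That such a $\zeta$ exists is exactly where the two assumptions first bite: the paper observes $\ptd{}(1) = \dbobs{} + \dvolfobs{}$, so the class vanishes in $H^*(\polyv{1}_{\ecfr{}}^*,\ptd{1})$; freeness of the Hodge bundle (Lemma~\ref{lem:triviality_of_hodge_bundle}) then gives the short exact sequence $0 \to H^*(\polyv{0}^*)\otimes(\mathcal{I}/\mathcal{I}^2) \to H^*(\polyv{1}_{\ecfr{}}^*) \to H^*(\polyv{0}^*) \to 0$, forcing the class to vanish already in the leftmost term; finally Terilla's trick (via Assumption~\ref{assum:Hodge_de_rham_degeneracy}) removes the negative powers of $t$ from the primitive. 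Only after this is $\psi$ brought in, by adjusting $\prescript{1}{}{\varphi}$ by a closed element to match $\psi$ modulo $\mathbf{m} + \mathbf{I}^2$.

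Your inductive step has the right ingredients but the mechanism for killing $[\mathcal{O}]$ is not the one you sketch. The point is not to lift a $\drd{0}{}$-closed element to a $\drd{k+1}{}$-closed one and invert (which would be circular, since the existence of such a lift of the correct exponential shape \emph{is} the Maurer-Cartan equation). Rather, one works with the exponential form of the obstruction: $\prescript{k+1}{}{\mathsf{O}} := \ptd{k+1}\bigl(e^{\mathsf{l}_t(\tilde{\varphi})}\bigr)$ is by construction $\ptd{k+1}$-\emph{exact} in $\polyv{k+1}_{\ecfr{}}^*[[t^{1/2}]][t^{-1/2}]$, while lying in $\polyv{0}^*[[t^{1/2}]][t^{-1/2}] \otimes (\mathcal{I}^{k+1}/\mathcal{I}^{k+2})$. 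Freeness of the Hodge bundle again yields a short exact sequence on cohomology which forces the class to vanish in the latter, and Hodge-to-de Rham plus Terilla's argument upgrades the primitive to lie in $\polyv{0}^0[[t]] \otimes (\mathcal{I}^{k+1}/\mathcal{I}^{k+2})$. Your Bianchi computation showing $(\pdb + t\bvd{0})\mathcal{O} = 0$ is correct but insufficient on its own: closedness does not give exactness, and the missing step---that the class is zero because the obstruction is visibly a coboundary one level up---is precisely the ``exponential trick'' the paper uses.
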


\begin{proof}

	We will consider the surjective map $\rest{k+1,k} : \polyv{k+1}^{p,q}[[t]] \rightarrow \polyv{k}^{p,q}[[t]]$ obtained from Corollary \ref{cor:corollary_to_exactness_of_cech_thom_whitney}, and inductively solve for $\prescript{k}{}{\varphi}\in \polyv{k}^{0}_{\ecfr{}}[[t]]$ for each $k \in \inte_{\geq 0}$ so that we have $\rest{k+1,k} \left( \prescript{k+1}{}{\varphi} \right) = \prescript{k}{}{\varphi}$, $\prescript{k}{}{\varphi} = \psi \ (\text{mod $\mathbf{m} + \mathbf{I}^2$})$ and $\prescript{k}{}{\varphi}$ satisfies the Maurer-Cartan equation \eqref{eqn:Maurer_Cartan_equation_unobstructedness} in $\polyv{k}_{\ecfr{}}^0[[t]]$. 

	We begin with $\prescript{0}{}{\varphi} = 0$ and try to solve for $\prescript{1}{}{\varphi}$. As the operator $\ptd{} = \pdb +  \bvd{} +  ( \dbobs{} + \dvolfobs{}) \wedge $ satisfies $\ptd{}^2 = 0$, we have $\ptd{}( \dbobs{} + \dvolfobs{}) =(\pdb +  \bvd{} )( \dbobs{} + \dvolfobs{})  = 0 \ \text{(mod $\mathcal{I}^2$)}$ (where $= 0$ (mod $\mathcal{I}^2$) means being mapped to zero under $\rest{\infty,1}$). Together with the fact that $(\dbobs{} + \dvolfobs{}) = 0 \ \text{(mod $\mathcal{I}$)}$, we see that $[(\dbobs{}+\dvolfobs{})]$ represents a cohomology class in $(\polyv{0}^*, \ptd{0}) \otimes_\comp (\mathcal{I}/\mathcal{I}^2)$. Since $\ptd{}(1) = (\dbobs{} + \dvolfobs{})$, we deduce that $[\dbobs{} + \dvolfobs{}] = 0$ in $(\polyv{1}^{*}_{T}, \ptd{1})$. 
	Now applying Lemma \ref{lem:triviality_of_hodge_bundle} or Remark \ref{rem:triviality_of_hodge_bundle_with_coefficient} (freeness of the Hodge bundle) to $(\totaldr{1}{\dagger}^*, \drd{1}{})$ gives the short exact sequence
	$$
	0\rightarrow H^* (\polyv{0}^*)\otimes_\comp (\mathcal{I}/\mathcal{I}^2)  \rightarrow H^*(\polyv{1}_{T}^*)  \rightarrow H^*(\polyv{0}^*) \rightarrow 0
	$$
	under the identification by the volume element $\volf{}$. We conclude that the class $[\dbobs{} + \dvolfobs{}]$ is zero in $H^* (\polyv{0}^*)\otimes (\mathcal{I}/\mathcal{I}^2)$ which means that $(\dbobs{} + \dvolfobs{}) = (\pdb + \bvd{}) (-\tilde{\zeta}) \ \text{(mod $\mathcal{I}^2$)}$ for some $\tilde{\zeta} \in \polyv{0}^0 \otimes (\mathcal{I}/\mathcal{I}^2)$, and we have $(\dbobs{} + t \dvolfobs{}) = (\pdb + t \bvd{})(-\zeta)$ for some $\zeta  \in \polyv{0}^0[[t,t^{-1}] \otimes (\mathcal{I}/\mathcal{I}^2)$.
	
	Applying Assumption \ref{assum:Hodge_de_rham_degeneracy} and using the technique from \cite[Proof of Theorem 2]{terilla2008smoothness}, we can modify $\zeta$ to satisfy $\zeta\in \polyv{0}^0[[t]] \otimes (\mathcal{I}/\mathcal{I}^2)$ (i.e. removing all the negative powers in $t$), and then we can take $\prescript{1}{}{\varphi}$ to be the image of $\zeta$ in $\polyv{1}^0_{\ecfr{}}[[t]]$. 
	We further observe the Maurer-Cartan element $\prescript{1}{}{\varphi}$ can be modified by adding any $\xi \in \polyv{1}^0_{\ecfr{}}[[t]]$ with $\xi = 0 \ \text{(mod $\mathcal{I}$)}$ and $\ptd{1}_t \xi = 0 \ (\text{mod $\mathcal{I}^2$})$. Therefore we can always achieve $\prescript{1}{}{\varphi} + \xi = \psi \ (\text{mod $\mathbf{m} + \mathbf{I}^2$})$ by choosing a suitable $\xi$ and letting $\prescript{1}{}{\varphi} + \xi$ be the new $\prescript{1}{}{\varphi}$. 

	Suppose $\prescript{k-1}{}{\varphi}$ satisfying the Maurer-Cartan equation $\ptd{} \left( e^{\mathsf{l}_t (\prescript{k-1}{}{\varphi})} \right) = 0 \ \text{(mod $\mathcal{I}^k$)}$ up to order $k-1$ has been constructed. Take an arbitrary lifting $\widetilde{\prescript{k-1}{}{\varphi}}$ in $\polyv{k}^0_T[[t]]$ and let 
	$$\prescript{k}{}{\mathsf{O}} := \ptd{k} \left( e^{\mathsf{l}_t (\widetilde{\prescript{k-1}{}{\varphi}})} \right) = t^{\half}\mathsf{l}_t \left( \ptd{k}_t (e^{\widetilde{\prescript{k-1}{}{\varphi}}/t}) \right) \ \text{(mod $\mathcal{I}^{k+1}$)}.$$
	$[\prescript{k}{}{\mathsf{O}}]$ represents a cohomology class in $(\polyv{0}^1[[t^{\half},t^{-\half}] \otimes (\mathcal{I}^{k}/\mathcal{I}^{k+1}), \ptd{0})$. We again apply Lemma \ref{lem:triviality_of_hodge_bundle} to obtain a short exact sequence
	\begin{multline*}
	0 \to H^*(\polyv{0}^*[[t^\half,t^{-\half}] \otimes (\mathcal{I}^{k}/\mathcal{I}^{k+1})) \rightarrow \\ H^*(\polyv{k}^*[[t^\half,t^{-\half}]) \rightarrow H^*(\polyv{k-1}^*[[t^{\half},t^{-\half}]) \to 0,
	\end{multline*}
	which forces $[\prescript{k}{}{\mathsf{O}}]=0$ as in the initial case. Hence, applying Assumption \ref{assum:Hodge_de_rham_degeneracy} and using the technique from \cite[Proof of Theorem 2]{terilla2008smoothness} again, we can find $\zeta \in \polyv{0}^0_T[[t]] \otimes (\mathcal{I}^{k}/\mathcal{I}^{k+1})$ such that $(\prescript{0}{}{\pdb} + t (\bvd{0}))(-\zeta) = \mathsf{l}_t^{-1} (\prescript{k}{}{\mathsf{O}})$ and then set $\prescript{k}{}{\varphi} := \widetilde{\prescript{k-1}{}{\varphi}} + \zeta$ to solve the equation.
	\end{proof}

\subsection{Homotopy between Maurer-Cartan elements for different sets of gluing morphisms}\label{sec:equivalent_between_maurer_cartan_for_homotopy}

Theorem \ref{thm:unobstructedness_of_MC_equation} is proven for a fixed set of compatible gluing morphisms $\glue{} = \{\glue{k}_{\alpha \beta}\}$. In this subsection, we study how Maurer-Cartan elements for two different sets of compatible gluing morphisms $\glue{}(0) = \{\glue{k}_{\alpha\beta}(0)\}$ and $\glue{}(1) = \{\glue{k}_{\alpha\beta}(1)\}$ are related through a fixed homotopy $\gluehom{} = \{\gluehom{k}_{\alpha\beta}\}$.

We begin by assuming that
the data $\mathfrak{D} = (\mathfrak{D}_{\alpha})_{\alpha} \in \cech{}^0(\twc{}^{-1,1},\gluehom{})$ and $\mathfrak{F} = (\mathfrak{F}_{\alpha})_{\alpha}  \in \cech{}^0(\twc{}^{0,0},\gluehom{})$ for the construction of the operators $\mathbf{D}$ and $\bvd{}$ in Proposition \ref{prop:construction_of_homotopy_thom_whitney_differential}
are related to 
the data $\dbtwist{}_j$ and $\volftwist{}_j$ for the construction of the operators $\pdb_{j}$ and $\bvd{}_{j}$ in Theorem \ref{thm:construction_of_differentials}
by the relations
$$
\mathtt{r}_j^* ( \mathfrak{D}) = \dbtwist{}_j,\quad
\mathtt{r}_j^* (\mathfrak{F}) = \volftwist{}_j
$$
for $j=0,1$,
where $\mathtt{r}_j : \polyv{}^{*,*}(\gluehom{}) \rightarrow \polyv{}^{*,*}(\glue{}(j))$ is the map introduced in Definition \ref{def:homotopy_cech_thom_whitney_complex}. 

\begin{notation}
	Similar to Lemma \ref{lem:proving_globalness_for_construction_of_total_de_rham_differential}, we let $\mathfrak{L}_{\alpha} := \mathbf{D}_{\alpha}(\mathfrak{D}_{\alpha}) + \half[\mathfrak{D}_{\alpha}, \mathfrak{D}_{\alpha}]$ and $\mathfrak{E}_{\alpha}:= \bvd{}_{\alpha}(\mathfrak{D}_{\alpha}) + \mathbf{D}_{\alpha}(\mathfrak{F}_{\alpha}) + [\mathfrak{D}_{\alpha}, \mathfrak{F}_{\alpha}]$; $(\mathfrak{L}_{\alpha})_{\alpha}$ and $(\mathfrak{E}_{\alpha})_{\alpha}$ glue to give global terms $\mathfrak{L} \in \polyv{}^{2,-1}(\gluehom{})$ and $\mathfrak{E} \in \polyv{}^{1,0}(\gluehom{})$ respectively. 

	We set $\breve{\mathcal{D}}:= \mathbf{D} + \bvd{} + (\mathfrak{L} + \mathfrak{E}) \wedge$, which defines an operator acting on $\polyv{}^{*}(\gluehom{})$ (and we will use $\prescript{k}{}{ \breve{\mathcal{D}}}$ to denote the corresponding operator acting on $\polyv{k}^{*}(\gluehom{})$). We have $\breve{\mathcal{D}}^2 = 0$ as in Proposition \ref{prop:checking_d_square_equal_zero}. 

	We introduce a scaling $\mathsf{l}_t : \polyv{k}^{p,q}(\gluehom{})[[t^{\half},t^{-\half}] \rightarrow \polyv{k}^{p,q}(\gluehom{})[[t^{\half},t^{-\half}]$ defined by $\mathsf{l}_t (\varphi) = t^{\frac{q-p-2}{2}} \varphi$ for $\varphi \in \polyv{k}^{p,q}(\gluehom{})$. Then we have the identity $\prescript{k}{}{\breve{\mathcal{D}}}_t:= t^{\half} \mathsf{l}_{t}^{-1} \circ \prescript{k}{}{ \breve{\mathcal{D}}} \circ \mathsf{l}_t = \prescript{k}{}{\mathbf{D}} + t (\bvd{k}) + t^{-1} ( \prescript{k}{}{\mathfrak{L}} + t (\prescript{k}{}{\mathfrak{E}})) \wedge$ as in Notation \ref{not:formal_variable_t}. 

	Similar to Notation \ref{not:formal_variable_t}, we consider the complex $\polyv{k}^{*}_T(\gluehom{})[[t]]$ (or formal power series or Laurent series in $t$ or $t^{\half}$) for any graded ring $\ecfr{} = \comp[\mathbb{V}^*]$.
\end{notation}

\begin{lemma}\label{lem:homotopy_complex_cohomology_lemma}
	 The natural restriction map 
	 $\mathtt{r}_j^* : (\polyv{k}^*(\gluehom{}), \prescript{k}{}{\breve{\mathcal{D}}})  \rightarrow (\polyv{k}^*(\glue{}(j)), \ptd{k})$ is a quasi-isomorphism for $j = 0,1$ and all $k \in \inte_{\geq 0}$. 
\end{lemma}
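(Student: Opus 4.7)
The plan is to prove the lemma by induction on the order $k$, reducing everything to the classical fact that the evaluation map $\mathtt{r}_j^* : (\mathcal{A}^*(\simplex_1), d) \to (\comp, 0)$ at $\mathtt{t}_j = 1$ is a quasi-isomorphism of dgas.

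For the base case $k=0$, the normalizations $\gluehom{0}_{\alpha\beta} = \text{id}$ and $\glue{0}_{\alpha\beta}(j) = \text{id}$ reduce the cocycle condition defining $\polyv{0}^*(\gluehom{})$ to the standard Thom-Whitney gluing condition with coefficients extended by $\mathcal{A}^*(\simplex_1)$. I would use this to canonically identify $\polyv{0}^*(\gluehom{}) \cong \mathcal{A}^*(\simplex_1) \otimes_{\comp} \polyv{0}^*$ as bigraded vector spaces. Since $\dbtwist{}_j$, $\volftwist{}_j$, $\mathfrak{D}$, $\mathfrak{F}$ all vanish modulo $\mathbf{m}$, the operators $\dbobs{}_j$, $\dvolfobs{}_j$, $\mathfrak{L}$, $\mathfrak{E}$ do too, so under this identification $\prescript{0}{}{\breve{\mathcal{D}}}$ becomes $d_{\simplex_1} \otimes 1 + 1 \otimes \ptd{0}$, $\ptd{0}$ stays as $\ptd{0}$, and $\mathtt{r}_j^*$ becomes $\mathtt{r}_j^* \otimes \text{id}$. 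The K\"{u}nneth formula then finishes the base case.

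For the inductive step from $k-1$ to $k$, I would employ the surjectivity supplied by Lemma \ref{lem:homotopy_exactness_of_cech_thom_whitney_complex} and Corollary \ref{cor:corollary_to_exactness_of_cech_thom_whitney} to produce, for each $j \in \{0,1\}$, a commutative diagram with exact rows whose top row reads
$$
0 \to \mathcal{A}^*(\simplex_1) \otimes_{\comp} \polyv{0}^* \otimes_{\comp} (\mathbf{m}^k/\mathbf{m}^{k+1}) \to \polyv{k}^*(\gluehom{}) \to \polyv{k-1}^*(\gluehom{}) \to 0
$$
and whose bottom row is the corresponding sequence for $\glue{}(j)$, with all vertical maps induced by $\mathtt{r}_j^*$. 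The identification of the kernel term exploits the facts that $\sauto{k}, \iauto{k}, \hsauto{k}, \hiauto{k}$ all vanish modulo $\mathbf{m}$, so that on any section already living in $\mathbf{m}^k \cdot \bva{k}$ the gluings reduce to the identity; similarly the twists in the differentials degenerate to their $0^{\text{th}}$-order counterparts on the kernel. The base case then gives a quasi-isomorphism on the kernel (after tensoring with the finite-dimensional space $\mathbf{m}^k/\mathbf{m}^{k+1}$), the induction hypothesis gives a quasi-isomorphism on the quotient, and the five lemma applied to the associated long exact sequences in cohomology yields the required conclusion on the middle term.

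The main obstacle I anticipate is the bookkeeping needed to check that, on the kernel subcomplex $\mathcal{A}^*(\simplex_1) \otimes \polyv{0}^* \otimes (\mathbf{m}^k/\mathbf{m}^{k+1})$, the operator $\prescript{k}{}{\breve{\mathcal{D}}}$ truly coincides with $d_{\simplex_1} \otimes 1 \otimes 1 + 1 \otimes \ptd{0} \otimes 1$ and intertwines $\mathtt{r}_j^*$ with $\ptd{k}$. Each twisting datum entering the construction of $\mathbf{D}$, $\bvd{}$, and the wedge by $\mathfrak{L} + \mathfrak{E}$ (cf.\ Theorem \ref{thm:construction_of_differentials} and Proposition \ref{prop:construction_of_homotopy_thom_whitney_differential}) sits in $\mathbf{m}$, so when acting on a section in $\mathbf{m}^k$ it lands in $\mathbf{m}^{k+1}$ and is killed in the kernel; the intertwining relation with $\mathtt{r}_j^*$ then follows directly from the hypotheses $\mathtt{r}_j^*(\mathfrak{D}) = \dbtwist{}_j$ and $\mathtt{r}_j^*(\mathfrak{F}) = \volftwist{}_j$. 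Once this degeneration on the associated graded is established, the induction closes without further difficulty.
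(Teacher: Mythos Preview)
Your proposal is correct and follows essentially the same approach as the paper: induction on $k$ via a short exact sequence relating orders $k$ and $k-1$, with the base case handled by the identification $\polyv{0}^*(\gluehom{}) \cong \mathcal{A}^*(\simplex_1) \otimes_\comp \polyv{0}^*(\glue{}(j))$ since all gluing data and twists vanish modulo $\mathbf{m}$. The only cosmetic difference is that the paper dispatches the base case by writing down the explicit homotopy operator $\int_0^s$ on $\mathcal{A}^*(\simplex_1)$ rather than invoking K\"unneth, and the paper's exact sequence is written (apparently with a typographical slip) with the roles of kernel and quotient indices swapped relative to your version; your sequence $0 \to \mathcal{A}^*(\simplex_1)\otimes\polyv{0}^*\otimes(\mathbf{m}^k/\mathbf{m}^{k+1}) \to \polyv{k}^*(\gluehom{}) \to \polyv{k-1}^*(\gluehom{}) \to 0$ is the correct one.
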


\begin{proof}
	We will only give a proof of the case $j=0$ because the other case is similar. We first consider the following diagram 
	$$
	\xymatrix@1{
	 \polyv{k-1}^*(\gluehom{}) \otimes_\comp (\mathbf{m}/\mathbf{m}^2) \ar@{^{(}->}[r] \ar[d]^{\mathtt{r}_{0}^*} & \polyv{k}^*(\gluehom{}) \ar[r] \ar[d]^{\mathtt{r}_{0}^*} & \polyv{0}^*(\gluehom{}) \ar[r] \ar[d]^{\mathtt{r}_{0}^*}& 0 \\
  	\polyv{k-1}^*(\glue{}(0)) \otimes_\comp (\mathbf{m}/\mathbf{m}^2) \ar@{^{(}->}[r] & \polyv{k}^*(\glue{}(0)) \ar[r]& \polyv{0}^*(\glue{}(0)) \ar[r]& 0
	}
	$$
	with exact horizontal rows. By passing to the corresponding long exact sequence, we see that it suffices to prove that $\mathtt{r}_0^* :(\polyv{0}^*(\gluehom{}), \prescript{0}{}{\mathbf{D}} + \bvd{0}) \rightarrow (\polyv{0}^*(\glue{}(0)), \prescript{0}{}{\pdb} +  \bvd{0})$ is a quasi-isomorphism. 
	In this case we have $\gluehom{0} = id \ (\text{mod $ \mathbf{m}$})$ and $\glue{0}(0) = id \ (\text{mod $ \mathbf{m}$})$, from which we deduce that $\polyv{0}^*(\gluehom{}) = \mathcal{A}^*(\simplex_1) \otimes_\comp \polyv{0}^*(\glue{}(0))$ in which the operators are related by $\prescript{0}{}{\mathbf{D}} + \bvd{0} = \prescript{0}{}{\pdb} +  \bvd{0}  + d_{\mathtt{s}}$,
	where $s$ is the coordinate function on the $1$-simplex $\simplex_1$ and $d_\mathtt{s}$ is the usual de Rham differential acting on $\mathcal{A}^*(\simplex_1)$.
	The quasi-isomorphism is then obtained using the homotopy operator constructed by integration $\int_{0}^s$ along the $1$-simplex. Details are left to the readers. 
\end{proof}

The following proposition relates Maurer-Cartan elements $\varphi_0$ of the almost dgBV $\polyv{}^*_T(\glue{}(0))[[t]]$ and those of $\polyv{}^*_T(\gluehom{})[[t]]$.

\begin{prop}\label{prop:Maurer-Cartan_element_homotopy_relation}
	Given any Maurer-Cartan element \\$\prescript{k}{}{\varphi}_0 \in \polyv{k}^0_T(\glue{}(0))[[t]]$ as in Theorem \ref{thm:unobstructedness_of_MC_equation}, there exists a lifting $\prescript{k}{}{\varphi} \in \polyv{k}^0_T(\gluehom{})[[t]]$ which is a Maurer-Cartan element for each $k$ such that $\prescript{k+1}{}{\varphi} = \prescript{k}{}{\varphi} \ (\text{mod $\mathcal{I}^{k+1}$})$ and $\mathtt{r}_0^* (\prescript{k}{}{\varphi}) = \prescript{k}{}{\varphi}_0$.
	If there are two liftings $(\prescript{k}{}{\varphi})_k$ and $(\prescript{k}{}{\psi})_k$ of $(\prescript{k}{}{\varphi}_0)_k$, then there exists a gauge element $\prescript{k}{}{\vartheta} \in \polyv{k}^{-1}_T(\gluehom{})[[t]]$ for each $k$ such that $\mathtt{r}_0^*(\prescript{k}{}{\vartheta}) =0$, $\prescript{k+1}{}{\vartheta} = \prescript{k}{}{\vartheta} \ (\text{mod $\mathcal{I}^{k+1}$})$ and $e^{\prescript{k}{}{\vartheta}} \star \prescript{k}{}{\varphi} = \prescript{k}{}{\psi}$. 
\end{prop}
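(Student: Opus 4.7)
The approach is to reproduce the inductive construction from Theorem~\ref{thm:unobstructedness_of_MC_equation} inside the homotopy complex $(\polyv{k}^*_T(\gluehom{})[[t]], \prescript{k}{}{\breve{\mathcal{D}}}_t)$, while at each step controlling the image under $\mathtt{r}_0^*$. The key additional inputs I would use are: surjectivity of the restriction $\rest{k,k-1}$ and of $\mathtt{r}_0^*$ on the homotopy \v{C}ech--Thom--Whitney complex, from Lemma~\ref{lem:homotopy_exactness_of_cech_thom_whitney_complex}; the quasi-isomorphism $\mathtt{r}_0^*:(\polyv{k}^*(\gluehom{}),\prescript{k}{}{\breve{\mathcal{D}}})\to(\polyv{k}^*(\glue{}(0)),\ptd{k})$ of Lemma~\ref{lem:homotopy_complex_cohomology_lemma}, whose explicit chain homotopy is integration along the interval $\simplex_1$ and is hence $\comp[[t]]$-linear; and Assumption~\ref{assum:Hodge_de_rham_degeneracy}, which transfers to the homotopy side through this quasi-isomorphism, ensuring that the splitting trick from \cite{terilla2008smoothness} for removing negative powers of $t$ can be performed inside $\ker(\mathtt{r}_0^*)$.

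For existence, I would induct on $k$, starting from $\prescript{0}{}{\varphi} = 0$. Given $\prescript{k-1}{}{\varphi}$, first pick a lifting $\widetilde{\prescript{k-1}{}{\varphi}}\in \polyv{k}^0_T(\gluehom{})[[t]]$ via surjectivity of $\rest{k,k-1}$, then adjust it by an element of $\polyv{0}^0_T(\gluehom{})[[t]]\otimes(\mathcal{I}^k/\mathcal{I}^{k+1})$ (using surjectivity of $\mathtt{r}_0^*$) so that $\mathtt{r}_0^*(\widetilde{\prescript{k-1}{}{\varphi}}) = \prescript{k}{}{\varphi}_0$ modulo $\mathcal{I}^{k+1}$. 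The obstruction $\prescript{k}{}{\mathsf{O}} := \prescript{k}{}{\breve{\mathcal{D}}}(e^{\mathsf{l}_t(\widetilde{\prescript{k-1}{}{\varphi}})})$ modulo $\mathcal{I}^{k+1}$ is $\prescript{0}{}{\breve{\mathcal{D}}}_t$-closed, lies in $\polyv{0}^*(\gluehom{})[[t^{\half}]][t^{-\half}]\otimes(\mathcal{I}^k/\mathcal{I}^{k+1})$, and since $\mathtt{r}_0^*$ is a chain map into $(\polyv{}^*_T(\glue{}(0)),\ptd{}_0)$, we get $\mathtt{r}_0^*(\prescript{k}{}{\mathsf{O}}) = \ptd{0}_t(e^{\mathsf{l}_t(\prescript{k}{}{\varphi}_0)}) = 0$ on the nose from the Maurer--Cartan equation for $\prescript{k}{}{\varphi}_0$. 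Thus $\prescript{k}{}{\mathsf{O}}$ lies in the acyclic subcomplex $\ker(\mathtt{r}_0^*)$, and combining this with the Hodge-to-de Rham splitting argument applied inside $\ker(\mathtt{r}_0^*)$ yields $\zeta\in\polyv{0}^0_T(\gluehom{})[[t]]\otimes(\mathcal{I}^k/\mathcal{I}^{k+1})$ with $\mathtt{r}_0^*(\zeta) = 0$ and $(\prescript{0}{}{\mathbf{D}} + t\bvd{0})(-\zeta) = \mathsf{l}_t^{-1}(\prescript{k}{}{\mathsf{O}})$. Setting $\prescript{k}{}{\varphi}:=\widetilde{\prescript{k-1}{}{\varphi}} + \zeta$ closes the induction.

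For the gauge equivalence statement, I would induct similarly with $\prescript{0}{}{\vartheta} = 0$. Given $\prescript{k-1}{}{\vartheta}$, lift to $\widetilde{\prescript{k-1}{}{\vartheta}}\in\polyv{k}^{-1}_T(\gluehom{})[[t]]$ with $\mathtt{r}_0^*(\widetilde{\prescript{k-1}{}{\vartheta}}) = 0$ by surjectivity. The gauge error $\mu := (e^{\widetilde{\prescript{k-1}{}{\vartheta}}}\star\prescript{k}{}{\varphi}) - \prescript{k}{}{\psi}$ lies in $\polyv{0}^0_T(\gluehom{})[[t]]\otimes(\mathcal{I}^k/\mathcal{I}^{k+1})$, is $(\prescript{0}{}{\mathbf{D}} + t\bvd{0})$-closed because both $\prescript{k}{}{\varphi}$ and $\prescript{k}{}{\psi}$ are Maurer--Cartan, and satisfies $\mathtt{r}_0^*(\mu) = 0$ since both lift the same $\prescript{k}{}{\varphi}_0$. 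By the same acyclic-kernel plus Hodge-to-de Rham argument, one finds $\eta\in\polyv{0}^{-1}_T(\gluehom{})[[t]]\otimes(\mathcal{I}^k/\mathcal{I}^{k+1})$ with $\mathtt{r}_0^*(\eta) = 0$ and $(\prescript{0}{}{\mathbf{D}} + t\bvd{0})(\eta) = -\mu$, and sets $\prescript{k}{}{\vartheta} := \widetilde{\prescript{k-1}{}{\vartheta}} \bchprod \eta$; the infinitesimal gauge-action formula from Lemma~\ref{lem:gauge_action_on_differential} then yields $e^{\prescript{k}{}{\vartheta}}\star\prescript{k}{}{\varphi} = \prescript{k}{}{\psi}$ modulo $\mathcal{I}^{k+1}$.

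The main technical obstacle is carrying out the $t$-positivity trick of \cite{terilla2008smoothness} inside $\ker(\mathtt{r}_0^*)$: one needs the Hodge-to-de Rham splitting $(\alpha,\beta)$ on the homotopy side to commute with $\mathtt{r}_0^*$, so that a primitive produced by $\beta$ of a cycle in $\ker(\mathtt{r}_0^*)$ again lies in $\ker(\mathtt{r}_0^*)$. This compatibility follows because the homotopy operator coming from integration along $\simplex_1$ (used in the proof of Lemma~\ref{lem:homotopy_complex_cohomology_lemma}) is both $\comp[[t]]$-linear and respects the $\mathtt{r}_0^*$-decomposition, so one can choose a $\comp[[t]]$-linear section of $\mathtt{r}_0^*$ compatible with the Hodge filtration; once this relative splitting is in place, the iterative removal of negative powers of $t$ from \cite{terilla2008smoothness} goes through verbatim in the kernel.
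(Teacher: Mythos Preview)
Your inductive scheme is exactly the one the paper uses: lift $\prescript{k-1}{}{\varphi}$ along $\rest{k,k-1}$, adjust by an element of $\mathcal{I}^k/\mathcal{I}^{k+1}$ so that $\mathtt{r}_0^*$ hits $\prescript{k}{}{\varphi}_0$, observe that the Maurer--Cartan obstruction lands in the kernel of $\mathtt{r}_0^*$, and solve inside that kernel using its acyclicity; the gauge half is handled identically.

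The one point where you diverge from the paper is the $t$-positivity step, and there you make life harder than necessary. The paper does not pass through the $\mathsf{l}_t$-scaled operator $\prescript{k}{}{\breve{\mathcal{D}}}$; it takes the obstruction to be the bare Maurer--Cartan defect
\[
\prescript{k}{}{\mathsf{O}} \;=\; (\prescript{k}{}{\mathbf{D}}+t\,\bvd{k})\,\widehat{\prescript{k}{}{\varphi}} \;+\; \tfrac12[\widehat{\prescript{k}{}{\varphi}},\widehat{\prescript{k}{}{\varphi}}] \;+\; (\prescript{k}{}{\mathfrak{L}}+t\,\prescript{k}{}{\mathfrak{E}}),
\]
which is manifestly in $\polyv{k}^1_T(\gluehom{})[[t]]$ and is $\prescript{k}{}{\breve{\mathcal{D}}}_t$-closed. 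Since $\prescript{k}{}{\varphi}_0$ is Maurer--Cartan one has $\mathtt{r}_0^*(\prescript{k}{}{\mathsf{O}})=0$, so $\prescript{k}{}{\mathsf{O}}$ lies in the kernel subcomplex $\mathcal{K}^*=\ker\big(\mathtt{r}_0^*:\polyv{k}^*_T(\gluehom{})[[t]]\to\polyv{k}^*_T(\glue{}(0))[[t]]\big)$. The very observation you make in your last paragraph --- that the integration homotopy $\int_0^s$ on $\mathcal{A}^*(\simplex_1)$ is $\comp[[t]]$-linear --- already shows that $(\mathcal{K}^*,\prescript{k}{}{\breve{\mathcal{D}}}_t)$ is acyclic as a complex of $\comp[[t]]$-modules, so a primitive $\zeta\in\mathcal{K}^0\subset\polyv{k}^0_T(\gluehom{})[[t]]$ with $\zeta=0\ (\text{mod }\mathcal{I}^k)$ exists directly. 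There is no need for the Terilla splitting trick, and in particular the paper's proof of this proposition does not invoke Assumption~\ref{assum:Hodge_de_rham_degeneracy} (or Assumption~\ref{assum:local_assumption_for_triviality_of_hodge_bundle}) at all.
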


\begin{proof}
	We construct $\prescript{k}{}{\varphi}$ by induction on $k$. Given a Maurer-Cartan element $\prescript{k-1}{}{\varphi} \in \polyv{k-1}_T^0[[t]]$ such that $\mathtt{r}_0^*(\prescript{k-1}{}{\varphi}) = \prescript{k-1}{}{\varphi}_0$, our goal is to construct a lifting $\prescript{k}{}{\varphi}$ of $\prescript{k-1}{}{\varphi}$ with $\mathtt{r}_0^*(\prescript{k}{}{\varphi}) = \prescript{k}{}{\varphi}_0$.
	
	By surjectivity of $\rest{k,k-1}: \polyv{k}_T(\gluehom{})[[t]] \rightarrow \polyv{k-1}_T(\gluehom{})[[t]]$, we get a lifting $\widetilde{\prescript{k-1}{}{\varphi}}$ of $\prescript{k-1}{}{\varphi}$.
	From the surjectivity of $\mathtt{r}_0^* : \polyv{k}^0_T(\gluehom{})[[t]] \rightarrow \polyv{k}^0_T(\glue{}(0))[[t]]$ for any $k$ from Lemma \ref{lem:homotopy_exactness_of_cech_thom_whitney_complex}, we further obtain a lifting $\eta$ of $\prescript{k}{}{\varphi}_0 - \mathtt{r}_0^* (\widetilde{\prescript{k-1}{}{\varphi}})$ such that $\eta = 0 \ (\text{mod $\mathcal{I}^k$})$ in $\polyv{k}^0_T(\gluehom{})[[t]]$.
	Then we set $\widehat{\prescript{k}{}{\varphi}} :=  \widetilde{\prescript{k-1}{}{\varphi}} + \eta$ so that $\mathtt{r}_0^* (\widehat{\prescript{k}{}{\varphi}}) =  \prescript{k}{}{\varphi}_0$.
	Similar to the proof of Theorem \ref{thm:unobstructedness_of_MC_equation}, we define the obstruction class 
	$$
	\prescript{k}{}{\mathsf{O}} := \prescript{k}{}{\breve{\mathcal{D}}}_t ( t e^{\widehat{\prescript{k}{}{\varphi}} / t }) = (\prescript{k}{}{\mathbf{D}} + t (\bvd{k})) \widehat{\prescript{k}{}{\varphi}} + \half [\widehat{\prescript{k}{}{\varphi}},\widehat{\prescript{k}{}{\varphi}}] + ( \prescript{k}{}{\mathfrak{L}} + t (\prescript{k}{}{\mathfrak{E}})) 
	$$ 
 	in $\polyv{k}^1_T[[t]]$ which satisfies $\rest{k,k-1}(\prescript{k}{}{\mathsf{O}} ) = 0$ and $ \prescript{k}{}{\breve{\mathcal{D}}}_t (	\prescript{k}{}{\mathsf{O}} ) = 0$.
 	
 	Considering the short exact sequences
	$$
	\xymatrix@1{
	\mathcal{K}^* \ar@{^{(}->}[r]  \ar@{^{(}->}[d] & (\polyv{k}^*_T(\gluehom{})[[t]], \prescript{k}{}{\breve{\mathcal{D}}}_t ) \ar[r]^{\mathtt{r}_0^*} \ar@{^{(}->}[d]^{\mathsf{l}_t^{-1}} & (\polyv{k}^*_T(\glue{}(0))[[t]], \ptd{k}_t)  \ar[r]  \ar@{^{(}->}[d]^{\mathsf{l}_t^{-1}}  &  0\\
	 \widehat{\mathcal{K}}^* \ar@{^{(}->}[r] & (\polyv{k}^*_T(\gluehom{})[[t^{\half},t^{-\half}], \prescript{k}{}{\breve{\mathcal{D}}} ) \ar[r]^{\mathtt{r}_0^*} & (\polyv{k}^*_T(\glue{}(0))[[t^{\half},t^{-\half}], \ptd{k})  \ar[r] & 0
	},
	$$
	and observing that $(\mathcal{K}^* , \prescript{k}{}{\breve{\mathcal{D}}}_t )$ is acyclic, we conclude that $\prescript{k}{}{\mathsf{O}}  \in \mathcal{K}^1$. Hence we can find $\zeta \in \mathcal{K}^0$ such that $\prescript{k}{}{\breve{\mathcal{D}}}_t ( \zeta ) = \prescript{k}{}{\mathsf{O}} $ and $\zeta= 0 \ (\text{mod $\mathcal{I}^{k}$})$. Then $\prescript{k}{}{\varphi} := \widehat{\prescript{k}{}{\varphi}} + \zeta$ is the desired lifting of $\prescript{k-1}{}{\varphi}$. 
	
	The gauge $(\prescript{k}{}{\vartheta})$ can be constructed by a similar inductive process. Given $\prescript{k-1}{}{\vartheta}$, we need to construct a lifting $\prescript{k}{}{\vartheta} \in \polyv{k}^{-1}_T(\gluehom{})[[t]]$ which serves as a homotopy from $\prescript{k}{}{\varphi}$ to $\prescript{k}{}{\psi}$. Again we take a lifting $\widehat{\prescript{k}{}{\vartheta}}$ satisfying $\rest{k,k-1} (\widehat{\prescript{k}{}{\vartheta}}) = \prescript{k-1}{}{\vartheta}$ and $\mathtt{r}_0^*(\widehat{\prescript{k}{}{\vartheta}}) = 0$, and consider the obstruction
	$$
	\prescript{k}{}{\mathtt{O}} := \prescript{k}{}{\psi} - \exp( [\widehat{\prescript{k}{}{\vartheta}}, \cdot ]) (\prescript{k}{}{\varphi}) + \frac{\exp([ \widehat{\prescript{k}{}{\vartheta}}, \cdot]) -1}{[ \widehat{\prescript{k}{}{\vartheta}}, \cdot]} ((\prescript{k}{}{\mathbf{D}} + t (\bvd{k})) \widehat{\prescript{k}{}{\vartheta}},
	$$
	which satisfies $\rest{k,k-1} (\prescript{k}{}{\mathtt{O}}) = 0$ and $\mathtt{r}_0^* (\prescript{k}{}{\mathtt{O}}) = 0$. We can find $\zeta \in \polyv{0}^{-1}_T[[t]] \otimes (\mathcal{I}^k / \mathcal{I}^{k-1})$ with $\mathtt{r}_0^* ( \zeta) = 0$ such that $ - (\prescript{0}{}{\mathbf{D}} + t ( \bvd{0})) \zeta = \prescript{k}{}{\mathtt{O}}$ and letting $\prescript{k}{}{\vartheta} := \widehat{\prescript{k}{}{\vartheta}} + \zeta$ gives the desired gauge element.
\end{proof}

Given a homotopy $\gluehom{}$, we define a map $\mathtt{F}_{\gluehom{}}$ from the set of Maurer-Cartan elements modulo gauge equivalence with respect to $\glue{}(0)$ to that with respect to $\glue{}(1)$ by $\mathtt{F}_{\gluehom{}} \left( (\prescript{k}{}{\varphi}_0)_k \right) := (\mathtt{r}_1^*(\prescript{k}{}{\varphi}))_k$ with $\prescript{k}{}{\varphi} \in \polyv{k}^0_{\ecfr{}}[[t]]$. Proposition \ref{prop:Maurer-Cartan_element_homotopy_relation} says that this map is well-defined, and its inverse $\mathtt{F}_{\gluehom{}}^{-1}$ is given by reversing the roles of $\glue{}(0)$ and $\glue{}(1)$, so $\mathtt{F}_{\gluehom{}}$ is a bijection.

Next we consider the situation where we have a fixed set of compatible gluing morphisms $\glue{} = \{\glue{k}_{\alpha \beta}\}$ but the complex $\polyv{k}^*$ is equipped with two different choices of operators $\prescript{k}{}{\pdb}$ and $\bvd{k}$, $\prescript{k}{}{\pdb}'$ and $\bvd{k}'$,
whose differences are captured by elements $\mathfrak{v}_1 \in \polyv{}^{-1,1}(\glue{})$ and $\mathfrak{v}_2 \in \polyv{}^{0,0}(\glue{})$, as in Theorem \ref{thm:construction_of_differentials}. 
We write $\mathfrak{v} = \mathfrak{v}_1 + \mathfrak{v}_2$ and consider the complex $\mathcal{A}^*(\simplex_1) \otimes_\comp \polyv{k}^*$ equipped with the differential 
$$
\prescript{k}{}{\breve{\mathsf{D}}} := \ptd{k}+ d_{\simplex_1} +   \mathtt{t}_1 [\mathfrak{v}, \cdot ] + (\mathtt{t}_1 (\prescript{k}{}{\pdb} + \bvd{k}) \mathfrak{v} + \frac{\mathtt{t}_1^2}{2} [\mathfrak{v},\mathfrak{v}] )\wedge  + (d\mathtt{t}_1 \wedge  \mathfrak{v} )\wedge, 
$$
where $\mathtt{t}_1$ is the coordinate function on the $1$-simplex $\simplex_1$ and $d_{\simplex_1}$ is the de Rham differential for $\mathcal{A}^*(\simplex_1)$. We let $\prescript{k}{}{\mathsf{O}}_{\mathtt{t}_1 \mathfrak{v}} : = (\mathtt{t}_1 (\prescript{k}{}{\pdb} + \bvd{k}) \mathfrak{v} + \frac{\mathtt{t}_1^2}{2} [\mathfrak{v},\mathfrak{v}] ) + (\dbobs{k} + \dvolfobs{k})$ and compute 
\begin{align*}
	(\prescript{k}{}{\breve{\mathsf{D}}})^2  
	 =& (\prescript{k}{}{\pdb} + \bvd{k} + \mathtt{t}_1 [\mathfrak{v}, \cdot ]  )^2  - [\prescript{k}{}{\mathsf{O}}_{\mathtt{t}_1 \mathfrak{v}} , \cdot  ] +  \\
	&d\mathtt{t}_1 \wedge \dd{\mathtt{t}_1} \big(\prescript{k}{}{\mathsf{O}}_{\mathtt{t}_1 \mathfrak{v}}  \big) \wedge - d\mathtt{t}_1 \wedge ((\prescript{k}{}{\pdb} + \bvd{k} )(\mathfrak{v})+ \mathtt{t}_1 [\mathfrak{v}, \mathfrak{v} ]  ) \wedge   \\
	 =& [\prescript{k}{}{\mathsf{O}}_{\mathtt{t}_1 \mathfrak{v}} , \cdot  ]  - [\prescript{k}{}{\mathsf{O}}_{\mathtt{t}_1 \mathfrak{v}} , \cdot  ]  + d\mathtt{t}_1 \wedge ((\prescript{k}{}{\pdb} + \bvd{k} )(\mathfrak{v})+ \mathtt{t}_1 [\mathfrak{v}, \mathfrak{v} ]  ) \wedge  \\
	 &-d\mathtt{t}_1 \wedge ((\prescript{k}{}{\pdb} + \bvd{k} )(\mathfrak{v})+ \mathtt{t}_1 [\mathfrak{v}, \mathfrak{v} ]  ) \wedge \\
	 =& 0. \\
\end{align*}
Repeating the argument in this subsection but replacing $(\polyv{k}^*(\gluehom{}), \prescript{k}{}{\breve{\mathcal{D}}})$ by $(\mathcal{A}^* \otimes_\comp \polyv{k}^*, \prescript{k}{}{\breve{\mathsf{D}}})$ and arguing as in the proof of Proposition \ref{prop:Maurer-Cartan_element_homotopy_relation} yields the following:

\begin{prop}\label{prop:Maurer-Cartan_element_homotopy_between_operators}
	Given any Maurer-Cartan element\\ $\prescript{k}{}{\varphi}_0 \in \polyv{k}^0_T[[t]]$ with respect to the operators $ \prescript{k}{}{\pdb}$ and $\bvd{k}$ as in Theorem \ref{thm:unobstructedness_of_MC_equation}, there exists a lifting $\prescript{k}{}{\varphi} \in \mathcal{A}^*(\simplex_1) \otimes \polyv{k}^*_T[[t]]$ which is a Maurer-Cartan element with respect to the operators $(\prescript{k}{}{\pdb} + d_{\simplex_1} + \mathtt{t}_1 [ \mathfrak{v}_1 ,\cdot])$ and $\bvd{k} +[ \mathfrak{v}_2 ,\cdot]$ (meaning that $ \left( (\prescript{k}{}{\pdb} + d_{\simplex_1} + \mathtt{t}_1 [ \mathfrak{v}_1 ,\cdot]) + t (\bvd{k} +[ \mathfrak{v}_2 ,\cdot]) + [\prescript{k}{}{\varphi}, \cdot] \right)^2 = 0$) for each $k$ satisfying $\prescript{k+1}{}{\varphi} = \prescript{k}{}{\varphi} \ (\text{mod $\mathcal{I}^{k+1}$})$ and $\mathtt{r}_0^* (\prescript{k}{}{\varphi}) = \prescript{k}{}{\varphi}_0$.
	If there are two liftings $(\prescript{k}{}{\varphi})_k$ and $(\prescript{k}{}{\psi})_k$ of $(\prescript{k}{}{\varphi}_0)_k$, then there exists a gauge element $\prescript{k}{}{\vartheta} \in \mathcal{A}^*(\simplex_1) \otimes \polyv{k}^{*}_T[[t]]$ for each $k$ such that $\mathtt{r}_0^*(\prescript{k}{}{\vartheta}) =0$, $\prescript{k+1}{}{\vartheta} = \prescript{k}{}{\vartheta} \ (\text{mod $\mathcal{I}^{k+1}$})$ and $e^{\prescript{k}{}{\vartheta}} \star \prescript{k}{}{\varphi} = \prescript{k}{}{\psi}$. 
\end{prop}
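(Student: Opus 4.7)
The plan is to transcribe the inductive obstruction-theoretic argument of Proposition \ref{prop:Maurer-Cartan_element_homotopy_relation} almost verbatim, with the complex $(\polyv{k}^{*}(\gluehom{}), \prescript{k}{}{\breve{\mathcal{D}}})$ systematically replaced by $(\mathcal{A}^{*}(\simplex_{1}) \otimes_{\comp} \polyv{k}^{*}, \prescript{k}{}{\breve{\mathsf{D}}})$. Three inputs will be needed: surjectivity of the order-reduction map $\rest{k,k-1}$ on $\mathcal{A}^{*}(\simplex_{1}) \otimes \polyv{k}^{*}_{T}[[t]]$ (which follows from Corollary \ref{cor:corollary_to_exactness_of_cech_thom_whitney} by flatness of $\mathcal{A}^{*}(\simplex_{1})$), componentwise surjectivity of the evaluation $\mathtt{r}_{0}^{*}$, and an analogue of Lemma \ref{lem:homotopy_complex_cohomology_lemma} asserting that
$$
\mathtt{r}_{0}^{*}\colon (\mathcal{A}^{*}(\simplex_{1}) \otimes_{\comp} \polyv{k}^{*},\, \prescript{k}{}{\breve{\mathsf{D}}}) \longrightarrow (\polyv{k}^{*},\, \ptd{k})
$$
is a quasi-isomorphism for every $k$.

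The quasi-isomorphism will be proved by induction on $k$ via the exact sequence $0 \to \mathcal{I}^{k}/\mathcal{I}^{k+1} \to \cfrk{k}_{T} \to \cfrk{k-1}_{T} \to 0$, reducing the question to the situation at order zero. At $0^{\text{th}}$ order $\dbtwist{0}, \volftwist{0}, \dbobs{0}$ and $\dvolfobs{0}$ all vanish, and the uniqueness clause of Theorem \ref{thm:construction_of_differentials} forces $\mathfrak{v}_{1}$ and $\mathfrak{v}_{2}$ to vanish modulo $\mathbf{m}$. Consequently $\prescript{0}{}{\breve{\mathsf{D}}}$ collapses on $\mathcal{A}^{*}(\simplex_{1}) \otimes_{\comp} \polyv{0}^{*}$ to $\ptd{0} + d_{\simplex_{1}}$, and the fibre-integration operator $\int_{0}^{\mathtt{t}_{1}}$ along $\simplex_{1}$ provides a homotopy precisely as in the proof of Lemma \ref{lem:homotopy_complex_cohomology_lemma}.

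With these tools in hand, the construction of $\prescript{k}{}{\varphi}$ will proceed inductively in $k$ in parallel with Theorem \ref{thm:unobstructedness_of_MC_equation} and Proposition \ref{prop:Maurer-Cartan_element_homotopy_relation}: starting from a lifting $\widehat{\prescript{k}{}{\varphi}}$ of $\prescript{k-1}{}{\varphi}$ with $\mathtt{r}_{0}^{*}(\widehat{\prescript{k}{}{\varphi}}) = \prescript{k}{}{\varphi}_{0}$, I would form the obstruction $\prescript{k}{}{\mathsf{O}} := \prescript{k}{}{\breve{\mathsf{D}}}_{t}\bigl(t\, e^{\widehat{\prescript{k}{}{\varphi}}/t}\bigr)$, where $\prescript{k}{}{\breve{\mathsf{D}}}_{t} := t^{\half}\mathsf{l}_{t}^{-1} \circ \prescript{k}{}{\breve{\mathsf{D}}} \circ \mathsf{l}_{t}$, check that $\rest{k,k-1}(\prescript{k}{}{\mathsf{O}}) = 0$, $\mathtt{r}_{0}^{*}(\prescript{k}{}{\mathsf{O}}) = 0$ and $\prescript{k}{}{\breve{\mathsf{D}}}_{t}(\prescript{k}{}{\mathsf{O}}) = 0$, and then use the quasi-isomorphism together with Assumption \ref{assum:Hodge_de_rham_degeneracy} to produce a primitive $\zeta$ lying in the $t$-polynomial part, sandwiching the kernel complex of $\mathtt{r}_{0}^{*}$ between its $t$-polynomial and $t$-Laurent versions in the same way as in Proposition \ref{prop:Maurer-Cartan_element_homotopy_relation}. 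Setting $\prescript{k}{}{\varphi} := \widehat{\prescript{k}{}{\varphi}} + \zeta$ yields the desired Maurer-Cartan element, and the gauge $\prescript{k}{}{\vartheta}$ between two liftings will be built by the identical scheme with the Maurer-Cartan obstruction replaced by the Baker-Campbell-Hausdorff obstruction from the second half of Proposition \ref{prop:Maurer-Cartan_element_homotopy_relation}.

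The main obstacle is the verification of the quasi-isomorphism at $0^{\text{th}}$ order, which requires a careful check that all the extra terms in $\prescript{0}{}{\breve{\mathsf{D}}}$ carrying a factor of $\mathtt{t}_{1}$ or $d\mathtt{t}_{1}$ genuinely die modulo $\mathbf{m}$; once this is granted, the remainder of the argument is a direct transcription of the earlier obstruction-theoretic machinery, with the only bookkeeping change being that the homotopy parameter $\mathtt{t}_{1}$ now lives in $\mathcal{A}^{*}(\simplex_{1})$ rather than in the simplicial resolution $\polyv{}^{*}(\gluehom{})$.
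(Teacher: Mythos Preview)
Your proposal is correct and follows exactly the approach the paper indicates: replace $(\polyv{k}^{*}(\gluehom{}),\prescript{k}{}{\breve{\mathcal{D}}})$ by $(\mathcal{A}^{*}(\simplex_{1})\otimes_{\comp}\polyv{k}^{*},\prescript{k}{}{\breve{\mathsf{D}}})$ and rerun the machinery of Lemma \ref{lem:homotopy_complex_cohomology_lemma} and Proposition \ref{prop:Maurer-Cartan_element_homotopy_relation}. Your explicit identification of the key $0^{\text{th}}$-order check---that $\mathfrak{v}_{1},\mathfrak{v}_{2}\equiv 0\ (\text{mod }\mathbf{m})$ so that $\prescript{0}{}{\breve{\mathsf{D}}}$ reduces to $\ptd{0}+d_{\simplex_{1}}$---is precisely the point the paper leaves implicit when it writes ``Repeating the argument in this subsection\ldots\ yields the following.''
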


Propositions \ref{prop:Maurer-Cartan_element_homotopy_relation} and \ref{prop:Maurer-Cartan_element_homotopy_between_operators} together show that the set of gauge equivalence classes of Maurer-Cartan elements is independent of the choice of the gluing morphisms $\glue{} = \{\glue{k}_{\alpha \beta}\}$ and the choices of the operators $\pdb$ and $\bvd{}$ in the construction of $\polyv{k}_{\ecfr{}}^*[[t]]$. 

\subsection{From Maurer-Cartan elements to geometric \v{C}ech gluings}\label{sec:Maurer_Cartan_element_give_consistent_gluing}
In this subsection, we show that a Maurer-Cartan (MC) element $\varphi = (\prescript{k}{}{\varphi})_{k \in \inte{\geq 0}}$ as defined in Definition \ref{def:Maurer_Cartan_equation_unobstructedness} contains the data for gluing the sheaves $\bva{k}^*_\alpha$'s consistently.

We fix a set of gluing morphisms $\glue{} = \{\glue{k}_{\alpha \beta}\}$ and consider a MC element $\varphi = (\prescript{k}{}{\varphi})_{k \in \inte{\geq 0}}$ (where we take $T = \comp$) obtained in Theorem \ref{thm:unobstructedness_of_MC_equation}. Setting $t=0$, we have the element $\prescript{k}{}{\phi}:=\prescript{k}{}{\varphi}|_{t=0}$ which satisfies the following extended MC equation \eqref{eqn:Maurer_Cartan_equation_without_formal_variable_t}.

\begin{definition}\label{def:extended_Maurer_cartan_and_classical_maurer_Cartan}
	An element $\prescript{k}{}{\phi} \in \polyv{k}^0$ is said to be a {\em Maurer-Cartan element in $\polyv{k}^*$} if it satisfies the extended Maurer-Cartan equation:
	\begin{equation}\label{eqn:Maurer_Cartan_equation_without_formal_variable_t}
	\prescript{k}{}{\pdb} (\prescript{k}{}{\phi}) + \half [\prescript{k}{}{\phi},\prescript{k}{}{\phi} ] + \dbobs{k} = 0.
	\end{equation}
	Note that $(\polyv{k}^{-1,*}[-1], \prescript{k}{}{\pdb}, [\cdot,\cdot])$ forms a dgLa, and an element $\prescript{k}{}{\psi} \in  \polyv{k}^{-1,1}$ is called a {\em classical Maurer-Cartan element} if it satisfies \eqref{eqn:Maurer_Cartan_equation_without_formal_variable_t}. 
\end{definition}

\begin{lemma}\label{lem:from_descendant_maurer_cartan_to_classical_maurer_cartan}
	In the proof of Theorem \ref{thm:unobstructedness_of_MC_equation}, the Maurer-Cartan element $\prescript{k}{}{\varphi} = \prescript{k}{}{\phi}_0 + \prescript{k}{}{\phi}_1 t^1 + \cdots + \prescript{k}{}{\phi}_j t^j + \cdots \in \polyv{k}^0[[t]]$, where $\prescript{k}{}{\phi}_0  = \prescript{k}{}{\psi}_0 + \prescript{k}{}{\psi}_1 + \cdots + \prescript{k}{}{\psi}_d$ 
	with $\prescript{k}{}{\psi}_i \in \polyv{k}^{-i,i}$, can be constructed so that $\prescript{k}{}{\psi}_0 = 0$. In particular, $\prescript{k}{}{\psi}_1 \in \polyv{k}^{-1,1}$ is a classical Maurer-Cartan element. 
\end{lemma}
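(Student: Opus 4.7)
I would proceed by induction on $k$, refining the construction of Theorem \ref{thm:unobstructedness_of_MC_equation} so that at each stage the $(0,0)$-component of $\prescript{k}{}{\varphi}|_{t=0}$ is made to vanish. The base case $k=0$ is immediate, since $\prescript{0}{}{\varphi}=0$.

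For the inductive step, suppose $\prescript{k-1}{}{\varphi}$ with $\prescript{k-1}{}{\psi}_0=0$ has been produced, and let $\prescript{k}{}{\varphi}$ be any lifting supplied by Theorem \ref{thm:unobstructedness_of_MC_equation}. Its $(0,0)$-component $\prescript{k}{}{\psi}_0\in\polyv{k}^{0,0}$ vanishes modulo $\mathbf{m}^k$ by the induction hypothesis, so its leading class $\breve{\psi}_0$ lives in $\polyv{0}^{0,0}\otimes(\mathbf{m}^k/\mathbf{m}^{k+1})$. The key observation is that $\pdb_0\breve{\psi}_0=0$, obtained by extracting the bidegree $(0,1)$ part of the classical MC equation \eqref{eqn:Maurer_Cartan_equation_without_formal_variable_t} at $t=0$: since $\dbobs{k}\in\polyv{k}^{-1,2}$ has no $(0,1)$-component, and since each bracket $[\psi_i,\psi_j]$ landing in bidegree $(-i-j+1,i+j)=(0,1)$ requires $i+j=1$, every such bracket vanishes modulo $\mathbf{m}^{k+1}$ (one factor is $\psi_0\equiv 0\pmod{\mathbf{m}^k}$, the other is $\psi_1\equiv 0\pmod{\mathbf{m}}$). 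Hence $\pdb(\prescript{k}{}{\psi}_0)\equiv 0\pmod{\mathbf{m}^{k+1}}$, which at leading order yields $\pdb_0\breve{\psi}_0=0$.

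Now modify $\prescript{k}{}{\varphi}$ by replacing it with $\prescript{k}{}{\varphi}-\xi$, where $\xi:=\prescript{k}{}{\psi}_0\in\polyv{k}^{0,0}$. Since $\xi\equiv 0\pmod{\mathbf{m}^k}$, this does not disturb the lifting $\rest{k,k-1}\prescript{k}{}{\varphi}=\prescript{k-1}{}{\varphi}$. To check that the new element still solves MC modulo $\mathbf{m}^{k+1}$, expand to see that the only extra contributions are $-(\pdb+t\bvd{})\xi-[\prescript{k}{}{\varphi},\xi]+\tfrac12[\xi,\xi]$; the $\bvd{}\xi$-term is automatically zero because $\bvd{}$ maps $\polyv{k}^{0,0}$ into $\polyv{k}^{1,0}=0$, the $\pdb\xi$-term vanishes mod $\mathbf{m}^{k+1}$ by the previous paragraph, and the bracket contributions lie in $\mathbf{m}^{k+1}$ for order reasons ($\prescript{k}{}{\varphi}\equiv 0\pmod{\mathbf{m}}$ and $\xi\equiv 0\pmod{\mathbf{m}^k}$, while $[\xi,\xi]\in\mathbf{m}^{2k}$). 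The new $(0,0)$-component at $t=0$ is now zero modulo $\mathbf{m}^{k+1}$, completing the induction.

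The in-particular statement follows by extracting the bidegree $(-1,2)$ part of \eqref{eqn:Maurer_Cartan_equation_without_formal_variable_t} applied to $\prescript{k}{}{\phi}_0=\prescript{k}{}{\psi}_1+\cdots+\prescript{k}{}{\psi}_d$: the bracket pairings contributing to $(-1,2)$ come from $(i,j)\in\{(0,2),(1,1),(2,0)\}$, and once $\psi_0=0$ only $\tfrac12[\psi_1,\psi_1]$ survives. The equation then reads $\pdb\prescript{k}{}{\psi}_1+\tfrac12[\prescript{k}{}{\psi}_1,\prescript{k}{}{\psi}_1]+\dbobs{k}=0$, which is precisely the classical Maurer-Cartan equation in the sense of Definition \ref{def:extended_Maurer_cartan_and_classical_maurer_Cartan}. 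The only delicate point in the whole argument is the bidegree bookkeeping yielding $\pdb_0\breve{\psi}_0=0$; beyond that, no cohomological input past what is already invoked in Theorem \ref{thm:unobstructedness_of_MC_equation} is required.
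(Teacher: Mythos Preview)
Your proof is correct and follows essentially the same approach as the paper's: induction on $k$, subtract off the $(0,0)$-component, and justify via bidegree bookkeeping that $\pdb\psi_0 = \bvd{}\psi_0 = 0$. The only cosmetic difference is that the paper performs the subtraction on the correction term $\hat{\eta}$ inside the construction of Theorem~\ref{thm:unobstructedness_of_MC_equation} (where the governing equation is linear, so no bracket terms appear), whereas you subtract from the final output $\prescript{k}{}{\varphi}$ and therefore need the extra observation that $[\psi_0,\psi_1]\equiv 0\pmod{\mathbf{m}^{k+1}}$.
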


\begin{proof}
	We prove by induction on $k$.
	Recall from the initial step of the inductive proof of Theorem \ref{thm:unobstructedness_of_MC_equation} that $\prescript{1}{}{\varphi} \in \polyv{1}^0[[t]]$ was constructed so that $(\prescript{1}{}{\pdb} + t (\bvd{1})) (\prescript{1}{}{\varphi}) = \dbobs{1} + t \dvolfobs{1}$. As $\dbobs{1} \in \polyv{1}^{-1,2}$ and $\dvolfobs{1} \in \polyv{1}^{0,1}$, we have $\prescript{1}{}{\pdb} ( \prescript{1}{}{\psi}_0) = 0$. Also, we know $\bvd{1} ( \prescript{1}{}{\psi}_0) = 0$ by degree reasons, so we obtain the equation $(\prescript{1}{}{\pdb} + t (\bvd{1})) (\prescript{1}{}{\varphi} - \prescript{1}{}{\psi}_0) = \dbobs{1} + t \dvolfobs{1}$. Hence we can replace $\prescript{1}{}{\varphi}$ by $\prescript{1}{}{\varphi} - \prescript{1}{}{\psi}_0$ in the construction so that the desired condition is satisfied.
	
	For the induction step, suppose that $\prescript{k-1}{}{\varphi} = \prescript{k-1}{}{\phi}_0 +  \prescript{k-1}{}{\phi}_1 t + \cdots  \in \polyv{k-1}^0$ with $\prescript{k-1}{}{\psi}_0 = 0$ has been constructed. Again recall from the construction in Theorem \ref{thm:unobstructedness_of_MC_equation} that we have solved the equation 
	$$
	(\prescript{k}{}{\pdb} + t ( \bvd{k})) (\hat{\eta}) = \ptd{k}_t \left( t e^{\prescript{k-1}{}{\varphi}/t} \right)
	$$
	for $\hat{\eta} \in \polyv{k}^0[[t]]$. We are only interested in the coefficient of $t^0$ of the component lying in $\polyv{k}^{0,1}$ on the RHS of the above equation, which we denote as $\left\lbrack \ptd{k}_t \left( t e^{\prescript{k-1}{}{\varphi}/t} \right) \right\rbrack_0$. By writing $\prescript{k-1}{}{\phi}_0 = \prescript{k-1}{}{\psi}_1 + \cdots + \prescript{k-1}{}{\psi}_d$ using the induction hypothesis, we have
	\begin{multline*}
	\left\lbrack \ptd{k}_t \left( t e^{\prescript{k-1}{}{\varphi}/t} \right) \right\rbrack_0  =\\  \left\lbrack \left(\prescript{k}{}{\pdb} (\prescript{k-1}{}{\varphi}) + t (\bvd{k}) (\prescript{k-1}{}{\varphi}) + \half[\prescript{k-1}{}{\varphi},\prescript{k-1}{}{\varphi}] + \dbobs{k} +  t (\dvolfobs{k} ) \right) \wedge  \exp( \prescript{k-1}{}{\varphi}/t)  \right\rbrack_0 \\
	 = 0.
	\end{multline*}
	Therefore by writing $\hat{\eta} = \zeta_0 + \zeta_0 t^1 + \cdots $, and $\zeta_0 = \xi_0 + \cdots + \xi_d$ with $\xi_i \in \polyv{k}^{-i,i}$, we conclude that $\prescript{k}{}{\pdb}(\xi_0) = 0$ and hence $(\prescript{k}{}{\pdb}+ t(\bvd{k}))(\xi_0) = 0$. As a result, if we replace $\hat{\eta}$ by $\hat{\eta} - \xi_0$ in the construction, we get the desired element $\prescript{k}{}{\varphi}$ for the induction step.
	
	The second statement follows from the first because $\prescript{k}{}{\phi}_0 = \prescript{k}{}{\psi}_1 + \cdots + \prescript{k}{}{\psi}_d$ satisfies the extended MC equation \eqref{eqn:Maurer_Cartan_equation_without_formal_variable_t}. Then by degree reasons, we conclude that
	$
	\prescript{k}{}{\pdb}(\prescript{k}{}{\psi}_1) + \half [\prescript{k}{}{\psi}_1,\prescript{k}{}{\psi}_1] + \dbobs{k} = 0.
	$
\end{proof}

In view of Lemma \ref{lem:from_descendant_maurer_cartan_to_classical_maurer_cartan}, we restrict ourself to the dgLa $(\polyv{k}^{-1,*}[-1])$ and a classical Maurer-Cartan element $\prescript{k}{}{\psi} \in \polyv{k}^{-1,1}$. We write $\prescript{k}{}{\psi} = (\prescript{k}{}{\psi}_\alpha)_{\alpha}$ where $\prescript{k}{}{\psi}_\alpha \in \twc{k}^{-1,1}_{\alpha;\alpha}$ with regard to the \v{C}ech-Thom-Whitney complexes in Definition \ref{def:cech_thom_whitney_complex}.

Since $V_\alpha$ is Stein and $\bva{k}^*_\alpha$ is a coherent sheaf over $V_\alpha$, we have $H^{>0}(\twc{k}^{p,*}_{\alpha;\alpha}[p],\prescript{k}{}{\pdb}_\alpha)= 0$ for any $p$ (here $[p]$ is the degree shift so that $\twc{k}^{p,0}_{\alpha;\alpha}$ is at degree $0$). In particular, the operator $\prescript{k}{}{\pdb}_\alpha +[\dbtwist{k}_\alpha, \cdot ] + [\prescript{k}{}{\psi}_\alpha,\cdot]$ is gauge equivalent to $\prescript{k}{}{\pdb}_{\alpha}$ via a gauge element $\prescript{k}{}{\vartheta}_\alpha \in \twc{k}^{-1,0}_{\alpha;\alpha}$. As $\prescript{k+1}{}{\psi}_{\alpha} = \prescript{k}{}{\psi}_{\alpha} \ (\text{mod $\mathbf{m}^{k+1}$})$, we can further construct $\prescript{k}{}{\vartheta}_\alpha$ via induction on $k$ so that $\prescript{k+1}{}{\vartheta}_{\alpha} = \prescript{k}{}{\vartheta}_{\alpha} \ (\text{mod $\mathbf{m}^{k+1}$})$.

Given any open subset $W \subset V_{\alpha\beta}$, we use the restrictions $\prescript{k}{}{\vartheta}_{\alpha}|_W \in \twc{k}^{-1,0}(\bva{k}_{\alpha}|_{W})$, $\prescript{k}{}{\vartheta}_{\beta} \in \twc{k}^{-1,0}(\bva{k}_{\beta}|_{W})$ to define an isomorphism $\prescript{k}{}{\mathbf{g}}_{\alpha\beta} : \twc{k}^{*,*}(\bva{k}_{\alpha}|_{W})\rightarrow \twc{k}^{*,*}(\bva{k}_{\beta}|_{W})$ which fits into the following commutative diagram 
$$
\xymatrix@1{
 \twc{k}^{*,*}(\bva{k}_{\alpha}|_{W})\ar[rr]^{\glue{k}_{\alpha\beta}} \ar[d]^{\exp([\prescript{k}{}{\vartheta}_\alpha,\cdot])} & &  \twc{k}^{*,*}(\bva{k}_{\beta}|_{W}) 
 \ar[d]^{\exp([\prescript{k}{}{\vartheta}_\beta,\cdot])}\\
( \twc{k}^{*,*}(\bva{k}_{\alpha}|_{W}), \prescript{k}{}{\pdb}_{\alpha}) \ar[rr]^{\prescript{k}{}{\mathbf{g}}_{\alpha\beta}} & &  (\twc{k}^{*,*}(\bva{k}_{\beta}|_{W}),\prescript{k}{}{\pdb}_{\beta})}
$$
here we emphasis that $\prescript{k}{}{\mathbf{g}}_{\alpha\beta}$ identifies the differentials $\prescript{k}{}{\pdb}_{\alpha}$ and $\prescript{k}{}{\pdb}_{\beta}$.

There is an identification $\bva{k}^p_{\alpha}(W)  = H^0 (\twc{k}^{p,*}(\bva{k}_{\alpha}|_{W})[p],\prescript{k}{}{\pdb}_\alpha)$, enabling us to treat $\prescript{k}{}{\mathbf{g}}_{\alpha\beta} : \bva{k}^*_{\alpha}(W) \rightarrow \bva{k}^*_{\beta}(W)$ as an isomorphism of Gerstenhaber algebras.\footnote{We thank Simon Felten for pointing out that this should be an isomorphism of Gerstenhaber algebras, instead of just an isomorphism of graded Lie algebras.} These isomorphisms can then be put together to give an isomorphism of sheaves of Gerstenhaber algebras $\prescript{k}{}{\mathbf{g}}_{\alpha\beta}  : \bva{k}^*_{\alpha}|_{V_{\alpha\beta}} \rightarrow \bva{k}^*_{\beta}|_{V_{\alpha\beta}}$.
Furthermore, the cocycle condition for the gluing morphisms $\glue{k}_{\alpha\beta}$ (see Definition \ref{def:compatible_gluing_morphism}) implies the cocycle condition $\prescript{k}{}{\mathbf{g}}_{\gamma\alpha} \circ \prescript{k}{}{\mathbf{g}}_{\beta \gamma}\circ \prescript{k}{}{\mathbf{g}}_{\alpha\beta} = \text{id}$.

\begin{definition}\label{def:geometric_gluing_data}
	A set of {\em $k$-th order geometric gluing morphisms} $\prescript{k}{}{\mathbf{g}}$ consists of, for any pair $V_\alpha, V_\beta \in \mathcal{V}$, an isomorphism of sheaves of Gerstenhaber algebras $\prescript{k}{}{\mathbf{g}}_{\alpha\beta} : \bva{k}^*_{\alpha}|_{V_{\alpha\beta}} \rightarrow \bva{k}^*_{\beta}|_{V_{\alpha\beta}}$ satisfying $\prescript{k}{}{\mathbf{g}}_{\alpha\beta} = \text{id} \ (\text{mod $\mathbf{m}$})$, and the cocycle condition $\prescript{k}{}{\mathbf{g}}_{\gamma\alpha} \circ \prescript{k}{}{\mathbf{g}}_{\beta \gamma}\circ \prescript{k}{}{\mathbf{g}}_{\alpha\beta} = \text{id}$. 
 	Two such sets of $k$-th order geometric gluing morphisms $\prescript{k}{}{\mathbf{g}}$ and $\prescript{k}{}{\mathbf{h}}$ are said to be {\em equivalent} if there exists a set of isomorphisms of sheaves of Gerstenhaber algebras $\prescript{k}{}{a}_{\alpha} :\bva{k}^*_{\alpha} \rightarrow \bva{k}^*_{\alpha}$ with $\prescript{k}{}{a}_{\alpha} = \text{id} \ (\text{mod $\mathbf{m}$})$ fitting into the following commutative diagram 
	$$
	\xymatrix@1{
	\bva{k}^*_{\alpha}|_{V_{\alpha\beta}} \ar[rr]^{\prescript{k}{}{\mathbf{g}}_{\alpha\beta}} \ar[d]^{\prescript{k}{}{a}_{\alpha}}& &  \bva{k}^*_{\beta}|_{V_{\alpha\beta}} \ar[d]^{\prescript{k}{}{a}_{\beta}}\\
	\bva{k}^*_{\alpha}|_{V_{\alpha\beta}}\ar[rr]^{\prescript{k}{}{\mathbf{h}}_{\alpha\beta}} & &  \bva{k}^*_{\beta}|_{V_{\alpha\beta}}.
	}
	$$
\end{definition}

If we have two classical Maurer-Cartan elements $\prescript{k}{}{\psi}$ and $\prescript{k}{}{\tilde{\psi}}$ which are gauge equivalent via $\prescript{k}{}{\theta} = (\prescript{k}{}{\theta}_\alpha)_{\alpha}$, then we can construct an isomorphism $\exp(-[\prescript{k}{}{\tilde{\vartheta}}_\alpha,\cdot]) \circ \exp([\prescript{k}{}{\theta}_\alpha,\cdot]) \circ \exp([\prescript{k}{}{\vartheta}_\alpha,\cdot]) : (\twc{k}^{*,*}_{\alpha},\prescript{k}{}{\pdb}_\alpha) \rightarrow (\twc{k}^{*,*}_{\alpha},\prescript{k}{}{\pdb}_\alpha)$ inducing an isomorphism $\prescript{k}{}{a}_{\alpha} :\bva{k}^*_{\alpha}(V_{\alpha}) \rightarrow \bva{k}^*_{\alpha}(V_{\alpha})$ by taking $H^0(\twc{k}^{*,*}_{\alpha},\prescript{k}{}{\pdb}_\alpha)$, so that the two sets of $k$-th order geometric gluing morphisms $\prescript{k}{}{\mathbf{g}}$ and $\prescript{k}{}{\tilde{\mathbf{g}}}$ associated to  $\prescript{k}{}{\psi}$ and $\prescript{k}{}{\tilde{\psi}}$ respectively are equivalent via $\prescript{k}{}{a} = (\prescript{k}{}{a}_{\alpha})_\alpha$. This gives the following:

\begin{prop}\label{prop:Maurer_Cartan_give_consistent_gluing}
	Given classical Maurer-Cartan elements $\prescript{k}{}{\psi} \in \polyv{k}^{-1,1}$ such that $\prescript{k+1}{}{\psi} = \prescript{k}{}{\psi} \ (\text{mod $\mathbf{m}^{k+1}$})$ and $\prescript{k}{}{\psi} = 0 \ (\text{mod $\mathbf{m}$})$, there exists an associated set of geometric gluing morphisms $\prescript{k}{}{\mathbf{g}}$ for each $k$ satisfying $\prescript{k+1}{}{\mathbf{g}} = \prescript{k}{}{\mathbf{g}} \ (\text{mod $\mathbf{m}^{k+1}$})$. 
	For two classical Maurer-Cartan elements $\prescript{k}{}{\psi}$ and $\prescript{k}{}{\tilde{\psi}}$ which are gauge equivalent via $\prescript{k}{}{\theta}$ such that $\prescript{k+1}{}{\theta} = \prescript{k}{}{\theta} \ (\text{mod $\mathbf{m}^{k+1}$})$, there exists an equivalence $\prescript{k}{}{a}$, satisfying $\prescript{k+1}{}{a} = \prescript{k}{}{a} \ (\text{mod $\mathbf{m}^{k+1}$})$ between the associated geometric gluing data $\prescript{k}{}{\mathbf{g}}$ and $\prescript{k}{}{\tilde{\mathbf{g}}}$. 
\end{prop}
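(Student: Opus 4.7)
The plan is to carry out in detail the construction sketched in the paragraphs immediately preceding the proposition, paying particular attention to order-by-order compatibility. First, for each $V_\alpha$ and each $k$, I would inductively produce local gauges $\prescript{k}{}{\vartheta}_\alpha \in \twc{k}^{-1,0}_{\alpha;\alpha}$ with $\prescript{k}{}{\vartheta}_\alpha = 0 \ (\text{mod } \mathbf{m})$ and $\rest{k+1,k}(\prescript{k+1}{}{\vartheta}_\alpha) = \prescript{k}{}{\vartheta}_\alpha$ such that
\[
\exp([\prescript{k}{}{\vartheta}_\alpha,\cdot]) \star \bigl(\prescript{k}{}{\pdb}_\alpha + [\dbtwist{k}_\alpha + \prescript{k}{}{\psi}_\alpha,\cdot]\bigr) = \prescript{k}{}{\pdb}_\alpha.
\]
Existence at each order uses that $V_\alpha$ is Stein and $\bva{k}^p_\alpha$ is coherent, so Cartan's Theorem B (Theorem \ref{thm:cartan_theorem_B}) plus Lemma \ref{lem:thom_whitney_construction_quasi_iso} give $H^{>0}(\twc{k}^{p,*}_{\alpha;\alpha}[p], \prescript{k}{}{\pdb}_\alpha) = 0$. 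The inductive step from order $k-1$ to $k$ is a standard deformation argument: lift $\prescript{k-1}{}{\vartheta}_\alpha$ arbitrarily to $\widetilde{\prescript{k}{}{\vartheta}_\alpha}$, note that the failure of the gauge identity is a $\prescript{k}{}{\pdb}_\alpha$-closed element of $\twc{k}^{-1,1}_{\alpha;\alpha} \otimes (\mathbf{m}^k/\mathbf{m}^{k+1})$ by the classical Maurer-Cartan equation \eqref{eqn:Maurer_Cartan_equation_without_formal_variable_t}, and correct it using a primitive, which exists by the $H^1 = 0$ vanishing just noted.

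Second, I would use the commutative diagram displayed in the excerpt to define $\prescript{k}{}{\mathbf{g}}_{\alpha\beta}$ on the Thom-Whitney complex; since it intertwines $\prescript{k}{}{\pdb}_\alpha$ and $\prescript{k}{}{\pdb}_\beta$, taking $H^0$ descends it to an isomorphism of sheaves. That it preserves $\wedge$ and $[\cdot,\cdot]$ follows because $\glue{k}_{\alpha\beta}$ and the gauge conjugations both do so, giving the desired isomorphism of sheaves of Gerstenhaber algebras. The cocycle identity $\prescript{k}{}{\mathbf{g}}_{\gamma\alpha} \circ \prescript{k}{}{\mathbf{g}}_{\beta\gamma} \circ \prescript{k}{}{\mathbf{g}}_{\alpha\beta} = \text{id}$ follows from \eqref{eqn:g_alpha_beta_cocycle} upon observing that the inner gauges $\exp([\prescript{k}{}{\vartheta}_\beta,\cdot])$ and $\exp(-[\prescript{k}{}{\vartheta}_\beta,\cdot])$ cancel telescopically around the triple $V_{\alpha\beta\gamma}$. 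The conditions $\prescript{k}{}{\mathbf{g}}_{\alpha\beta} = \text{id} \ (\text{mod } \mathbf{m})$ and $\rest{k+1,k}(\prescript{k+1}{}{\mathbf{g}}_{\alpha\beta}) = \prescript{k}{}{\mathbf{g}}_{\alpha\beta}$ are inherited from the analogous properties of $\glue{k}_{\alpha\beta}$ (see Condition \ref{assum:induction_hypothesis} and \eqref{eqn:g_alpha_beta_order_compatibility}) and of $\prescript{k}{}{\vartheta}_\alpha$.

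For the equivalence statement, given gauge-equivalent classical Maurer-Cartan elements $\prescript{k}{}{\psi}$ and $\prescript{k}{}{\tilde{\psi}}$ via $\prescript{k}{}{\theta} = (\prescript{k}{}{\theta}_\alpha)_\alpha$, with corresponding local gauges $\prescript{k}{}{\vartheta}_\alpha$ and $\prescript{k}{}{\tilde{\vartheta}}_\alpha$, I would set
\[
\prescript{k}{}{a}_\alpha := \exp(-[\prescript{k}{}{\tilde{\vartheta}}_\alpha,\cdot]) \circ \exp([\prescript{k}{}{\theta}_\alpha,\cdot]) \circ \exp([\prescript{k}{}{\vartheta}_\alpha,\cdot])
\]
as an automorphism of $(\twc{k}^{*,*}_{\alpha;\alpha},\prescript{k}{}{\pdb}_\alpha)$, and descend to a Gerstenhaber-algebra automorphism of $\bva{k}^*_\alpha$ by taking $H^0$. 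The diagram in Definition \ref{def:geometric_gluing_data} then commutes because the equation $\glue{k}_{\alpha\beta}(\prescript{k}{}{\theta}_\alpha) = \prescript{k}{}{\theta}_\beta$ encoded in the global nature of $\prescript{k}{}{\theta} \in \polyv{k}^{-1,0}$ makes the two sides compute the same conjugation of $\glue{k}_{\alpha\beta}$. The compatibility $\rest{k+1,k}(\prescript{k+1}{}{a}_\alpha) = \prescript{k}{}{a}_\alpha$ follows from the analogous properties of $\vartheta, \tilde{\vartheta}, \theta$. The main obstacle throughout is precisely the inductive lifting of the local gauges $\prescript{k}{}{\vartheta}_\alpha$ (and of the connecting gauge in the second statement) while preserving compatibility modulo $\mathbf{m}^{k+1}$; this is exactly the place where the Stein hypothesis on each $V_\alpha$ is used in an essential way to supply the required primitives at each order.
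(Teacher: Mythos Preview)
Your proposal is correct and follows essentially the same approach as the paper: the paper's proof is the discussion in the paragraphs immediately preceding the proposition, and you have faithfully expanded that sketch, supplying the order-by-order inductive construction of the local gauges $\prescript{k}{}{\vartheta}_\alpha$ (which the paper only asserts) and spelling out the telescoping cancellation in the cocycle check. Your formula for $\prescript{k}{}{a}_\alpha$ coincides verbatim with the one the paper gives just before the proposition.
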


Lemma \ref{lem:from_descendant_maurer_cartan_to_classical_maurer_cartan} together with Proposition \ref{prop:Maurer_Cartan_give_consistent_gluing} produces a geometric \v{C}ech gluing of the sheaves $\bva{k}^*_\alpha$'s, unique up to equivalence, from a gauge equivalence class of the MC elements obtained in Theorem \ref{thm:unobstructedness_of_MC_equation}.

In the log smooth case, Example \ref{ex:log-smooth-VI}, Example \ref{ex:log-smooth-VII}, Theorem \ref{thm:unobstructedness_of_MC_equation}, Lemma \ref{lem:from_descendant_maurer_cartan_to_classical_maurer_cartan} and Proposition \ref{prop:Maurer_Cartan_give_consistent_gluing} together imply the following:
\begin{corollary}\label{cor:Kawamata-Namikawa}
	In the log smooth case (i.e. in the setting of Example \ref{ex:log-smooth-I}), the complex analytic space $(X,\mathcal{O}_X)$ is smoothable, i.e. there exists a $k^{\text{th}}$-order thickening $(\prescript{k}{}{X},\prescript{k}{}{\mathcal{O}})$ over $\logsk{k}$ locally modeled on $\prescript{k}{}{\mathbf{V}}_{\alpha}$ (which is $d$-semistable) for each $k \in \inte_{\geq 0}$, and these thickenings are compatible.
\end{corollary}
\section{Abstract semi-infinite variation of Hodge structures}\label{sec:abstract_semi_infinite_VHS}

In this section, we apply techniques developed in \cite{barannikov-kontsevich98, Barannikov99, KKP08, li2013variation} to our abstract framework. Under Assumptions \ref{assum:weighted_filtration_assumption} (existence of opposite filtration) and \ref{assum:poincare_pairing_assumption} (nondegeneracy of pairing), this constructs the structure of a logarithmic Frobenius manifold (introduced in \cite{reichelt2009construction}) on the formal neighborhood of the singular Calabi-Yau variety $X$ in the extended moduli space.

\subsection{Brief review of the relevant structures}\label{sec:logarithmic_semi_infinite_VHS}

\begin{notation}\label{not:abstract_germ_of_moduli_space_and_differentials}
Following Notation \ref{not:formal_variable_t}, let $\hat{\cfr}_{\ecfr{}} : =\varprojlim_{k} \cfrk{k}_{\ecfr{}}$ be the completion of $\cfr \otimes \ecfr{}$. We will abuse notations and use $\mathbf{m}$, $\mathbf{I}$ and $\mathcal{I}$ to denote the corresponding ideals in $\hat{\cfr}_{\ecfr{}}$. As in Notation \ref{not:universal_monoid}, we have a monoid homomorphism $Q \rightarrow \cfrk{k}_{\ecfr{}}$ sending $m \mapsto q^m $ \footnote{Here we abuse notations by writing $q^m$ in place of $q^m \otimes 1$.}, which equips $\cfrk{k}_{\ecfr{}}$ with the structure of a (graded) log ring (whose grading is inherited from that on $\ecfr{} =\text{Sym}^*(\mathbb{V}^\vee)$ associated to graded vector space $\mathbb{V}$). We also denote the corresponding formal germ of log spaces by $\logsck{k}$.

We define the {\em module of log differentials} over $\logsc$ by
$$
\logscdrk{}{l}:= \bigoplus_{l_1+l_2 = l}\logsdrk{}{l_1} \otimes_\comp \ecfr{} \otimes_{\comp} \text{Sym}^{l_2}(\mathbb{V}^\vee[-1]) ,
$$
and write element in $\text{Sym}^*(\mathbb{V}^\vee[-1])$ as $dz_{I}$ for some multi-index $I$. It is equipped with a de Rham differential $d$ satisfying the graded Leibniz rule, the relations $d(q^m) = q^m (d\log q^m)$ for $m \in Q$ and $d(z) = dz$ for $z \in \mathbb{V}^\vee$. We also define the $k^{\text{th}}$-order module of log differentials $\logscdrk{k}{*}$ by quotienting out ideal $\mathcal{J}_k$ generated by $\bigcup_{k_1 + k_2 + k_3 \geq k+1} \mathbf{m}^{k_1} \otimes \mathbf{I}^{k_2} \otimes \text{Sym}^{\geq k_3}(\mathbb{V}^\vee[-1])$. In particular we have $\logscdrk{k}{1}= (\cfrk{k}_{\ecfr{}} \otimes_{\comp}  \blat_\comp[-1] )\oplus (\cfrk{k-1}_{\ecfr{}} \otimes_{\comp}\mathbb{V}^\vee [-1])$ as $\cfrk{k}_{\ecfr{}}$ module. 

We also let $\logscvfk{}:= \cfrk{}_{\ecfr{}} \otimes_{\comp}(\blat_\comp^\vee \oplus \mathbb{V} )[1]$ be the space of log derivations and $\logscvfk{k}:= (\cfrk{k}_{\ecfr{}} \otimes_{\comp} \blat_\comp^\vee[1]) \oplus (\cfrk{k-1}_{\ecfr{}}\otimes \mathbb{V}[1]) $ be the {\em space of $k^{\text{th}}$-order log derivations}, which is equipped with a Lie bracket $[\cdot,\cdot]$ and a natural pairing between $X \in \logscvfk{k}$ and $\alpha \in \logscdrk{k}{1}$. 

Similarly we can talk about $\logscdrf{*}$ and $\logscvff$ by taking inverse limits.
\end{notation}

\begin{definition}\label{def:log_semi_infinite_VHS}
	A {\em log semi-infinite variation of Hodge structure (abbrev. $\frac{\infty}{2}$-LVHS) over $\hat{\cfr}_{\ecfr{}}$} consists of triples $(\prescript{k}{}{\mathcal{H}},\prescript{k}{}{\nabla},\prescript{k}{}{\langle} \cdot,\cdot\rangle )$ for each $k\in \inte_{\geq 0}$ and $\cfrk{k}_{\ecfr{}}$-linear maps $\rest{k,l}: \prescript{k}{}{\mathcal{H}} \rightarrow \prescript{l}{}{\mathcal{H}}$ for $k\geq l$, where 
	\begin{enumerate}
		\item $\prescript{k}{}{\mathcal{H}} = \prescript{k}{}{\mathcal{H}}^*$ is a graded free $\cfrk{k}_{\ecfr{}}[[t]]$ module, called the (sections of the) {\em Hodge bundle};
		\item $\prescript{k}{}{\nabla}$ is the {\em Gauss-Manin (partial) connection} of the form
		\begin{equation}\label{eqn:abstract_GM_connection_definition}
		\prescript{k}{}{\nabla} : \prescript{k}{}{\mathcal{H}} \rightarrow \frac{1}{t} (\logscdrk{k}{1}) \otimes_{\cfrk{k}_{\ecfr{}}} \prescript{k}{}{\mathcal{H}}
		\end{equation}
		which is flat and compatible with the maps $\rest{k,l}$'s;
		\item $\prescript{k}{}{\langle} \cdot,\cdot\rangle: \prescript{k}{}{\mathcal{H}}\times \prescript{k}{}{\mathcal{H}} \rightarrow \cfrk{k}_{\ecfr{}}[[t]][-2d]$ is a degree preserving pairing which is compatible with the maps $\rest{k,l}$'s,
		\end{enumerate}
	satisfying the following conditions (when there is no danger of confusion, we will omit the dependence on $k$ and simply write $\nabla$ and $\langle \cdot,\cdot \rangle$ instead of $\prescript{k}{}{\nabla}$ and $\prescript{k}{}{\langle} \cdot,\cdot\rangle$):
	\begin{enumerate}
		\item $\langle s_1,s_2 \rangle(t) = (-1)^{|s_1||s_2|}\langle s_2,s_1\rangle (-t)$,\footnote{Here $f(-t) \in \cfrk{k}_{\ecfr{}}[[t]]$ is the element obtained from $f(t)$ by substituting $t$ with $-t$.} where $|s_i|$ is the degree of the homogeneous element $s_i$;
		\item $\langle f(t)s_1,s_2\rangle =(-1)^{|s_1||f|} \langle s_1,f(-t)s_2\rangle  = f(t)\langle s_1,s_2\rangle $ for $s_i \in \prescript{k}{}{\mathcal{H}}$ and $f(t) \in \cfrk{k}_{\ecfr{}}[[t]]$;
		\item $\nabla_X \langle s_1,s_2\rangle  = \langle \nabla_X s_1,s_2\rangle + (-1)^{|s_1|(|X|+1)} \langle s_1,\nabla_X s_2\rangle$ for $X \in \logscvfk{k}$;
		\item the induced pairing $g(\cdot,\cdot):(\prescript{k}{}{\mathcal{H}}/t\prescript{k}{}{\mathcal{H}}) \times (\prescript{k}{}{\mathcal{H}}/t\prescript{k}{}{\mathcal{H}}) \rightarrow \cfrk{k}_{\ecfr{}}[-2d]$ is non-degenerate. 
	\end{enumerate}
\end{definition}

\begin{definition}\label{def:grading_on_semi_infinite_VHS}
	Given a $\frac{\infty}{2}$-LVHS $(\prescript{k}{}{\mathcal{H}}^*, \nabla, \langle \cdot,\cdot\rangle)$, a {\em grading structure} is an extension of the Gauss-Manin connection $\gmc{}$ along the $t$-coordinate 
	$$
	\gmc{}_{t \dd{t}} : \prescript{k}{}{\mathcal{H}} \rightarrow t^{-1} (\prescript{k}{}{\mathcal{H}}),
	$$
	which is compatible with the maps $\rest{k,l}$'s and such that $[\gmc{}_X,\gmc{}_{t\dd{t}}] = 0$, i.e. it is a flat connection on $\logsck{k} \times (\comp,0)$. We further require the pairing $\langle \cdot,\cdot \rangle$ to be flat with respect to $\gmc{}_{t \dd{t}}$ in the sense that $t \dd{t} \langle s_1, s_2 \rangle = \langle \gmc{}_{t \dd{t}} s_1 ,s_2 \rangle + \langle s_1, \gmc{}_{t\dd{t}} s_2 \rangle$. 
\end{definition}

\begin{notation}\label{not:abstract_hodge_bundle_inverse_limit}
	Let $\prescript{k}{}{\mathcal{H}}_{\pm} := \prescript{k}{}{\mathcal{H}} \otimes_{\comp[[t]]} \comp[[t,t^{-1}]$ be a module over $\cfrk{k}_{\ecfr{}}[[t,t^{-1}]$ equipped with the natural submodule $\prescript{k}{}{\mathcal{H}}_+ := \prescript{k}{}{\mathcal{H}} \subset \prescript{k}{}{\mathcal{H}}_{\pm}$ which is closed under multiplication by $\cfrk{k}_{\ecfr{}}[[t]]$. There is a natural symplectic structure $\prescript{k}{}{\mathbf{w}}(\cdot,\cdot) : \prescript{k}{}{\mathcal{H}}_{\pm} \times\prescript{k}{}{\mathcal{H}}_{\pm} \rightarrow \cfrk{k}_{\ecfr{}}$ defined by $\prescript{k}{}{\mathbf{w}}(\alpha,\beta) = \text{Res}_{t=0} \langle \alpha,\beta\rangle dt$. 
	
	Also let $\mathcal{H}_{\pm}:= \varprojlim_k \prescript{k}{}{\mathcal{H}}_{\pm} $, which is a module over $\reallywidehat{\cfr_{\ecfr{}}[[t,t^{-1}]}:= \varprojlim_k \cfrk{k}_{\ecfr{}}[[t,t^{-1}]$, equipped with a $\reallywidehat{\cfr_{\ecfr{}}[[t]]}$ submodule $\mathcal{H}_{+}:= \varprojlim_k \prescript{k}{}{\mathcal{H}}_{+}$ and the symplectic structure $\mathbf{w}:= \varprojlim_k \prescript{k}{}{\mathbf{w}}$. We also write $\reallywidehat{\cfr_{\ecfr{}}[t^{-1}]} := \varprojlim_k \cfrk{k}_{\ecfr{}}[t^{-1}]$. 
\end{notation}

\begin{definition}\label{def:opposite_filtration}
	An {\em opposite filtration} is a choice of $\cfrk{k}_{\ecfr{}}[t^{-1}]$ submodule $\prescript{k}{}{\mathcal{H}}_- \subset \prescript{k}{}{\mathcal{H}}_{\pm}$ for each $k\in \inte_{\geq 0}$, compatible with the maps $\rest{k,l}$'s and satisfying the following conditions for each $k$:
	\begin{enumerate}
		\item $\prescript{k}{}{\mathcal{H}}_+ \oplus \prescript{k}{}{\mathcal{H}}_- = \prescript{k}{}{\mathcal{H}}_\pm$;
		\item $\nabla_X \prescript{k}{}{\mathcal{H}}_- \subset \prescript{k}{}{\mathcal{H}}_-$ (resp. $\nabla_X \prescript{k}{}{\mathcal{H}}_- \subset \prescript{k-1}{}{\mathcal{H}}_-$) for $X \in \cfrk{k}_{\ecfr{}} \otimes \blat_{\comp}^{\vee}$ (resp. $X \in \cfrk{k-1}_{\ecfr{}} \otimes \mathbb{V}$); 
		\item $\prescript{k}{}{\mathcal{H}}_-$ is isotropic with respect to the symplectic structure $\prescript{k}{}{\mathbf{w}}(\cdot,\cdot)$;
		\item $\prescript{k}{}{\mathcal{H}}_-$ is preserved by the $\gmc{}_{t \dd{t}}$. 
	\end{enumerate}
	We also write $\mathcal{H}_- := \varprojlim_{k}\prescript{k}{}{\mathcal{H}}_- $. 
\end{definition}

Given an opposite filtration $\mathcal{H}_-$, we have a natural isomorphism (cf. \cite{Gross_book}, \cite{li2013primitive})
\begin{equation}\label{eqn:opposite_filtration_isomorphism}
\prescript{k}{}{\mathcal{H}}_+/t(\prescript{k}{}{\mathcal{H}}_+) \cong \prescript{k}{}{\mathcal{H}}_+ \cap t (\prescript{k}{}{\mathcal{H}}_-) \cong t(\prescript{k}{}{\mathcal{H}}_-)/ \prescript{k}{}{\mathcal{H}}_-,
\end{equation}
for each $k$, giving identifications
\begin{align}
\tau_+ &: \prescript{k}{}{\mathcal{H}}_+ \cap t (\prescript{k}{}{\mathcal{H}}_-) \otimes_{\comp} \comp[[t]] \rightarrow \prescript{k}{}{\mathcal{H}}_+, \label{eqn:abstract_hodge_bundle_plus_trivialization}\\
\tau_- &: \prescript{k}{}{\mathcal{H}}_+ \cap t (\prescript{k}{}{\mathcal{H}}_-) \otimes_{\comp} \comp[t^{-1}] \rightarrow t (\prescript{k}{}{\mathcal{H}}_-) \label{eqn:abstract_hodge_bundle_minus_trivialization}.
\end{align}
Using arguments from \cite{li2013primitive}, we see that
$$\langle \prescript{k}{}{\mathcal{H}}_+ \cap t (\prescript{k}{}{\mathcal{H}}_-) , \prescript{k}{}{\mathcal{H}}_+ \cap t (\prescript{k}{}{\mathcal{H}}_-) \rangle  \in \cfrk{k}_{\ecfr{}} \text{ and } \langle \prescript{k}{}{\mathcal{H}}_-,\prescript{k}{}{\mathcal{H}}_- \rangle \in \cfrk{k}_{\ecfr{}}[t^{-1}]t^{-2}.$$

Morally speaking, a choice of an opposite filtration $\mathcal{H}_-$ gives rise to a (trivial) bundle $\mathbf{H}$ over $ \logscf \times \mathbb{P}^1 $, where $t$ is a coordinate on $\mathbb{P}^1$, as follows. We let the sections of germ of $\prescript{k}{}{\mathbf{H}}$ near $\logsck{k} \times (\comp,0)$ be given by $\prescript{k}{}{\mathcal{H}}_+$, and that of $\prescript{k}{}{\mathbf{H}}$ over $\logsck{k} \times (\mathbb{P}^1 \setminus \{0\})$ be given by $t(\prescript{k}{}{\mathcal{H}}_-)$. Then \eqref{eqn:abstract_hodge_bundle_plus_trivialization} and \eqref{eqn:abstract_hodge_bundle_minus_trivialization} give a trivialization of the bundle $\prescript{k}{}{\mathbf{H}}$ over $\logsck{k} \times \mathbb{P}^1$ whose global sections are $\prescript{k}{}{\mathcal{H}}_+ \cap t (\prescript{k}{}{\mathcal{H}}_-)$.

The pairing $\langle \cdot,\cdot\rangle $ can be extended to $\logscf \times \mathbb{P}^1$ using the trivialization in \eqref{eqn:abstract_hodge_bundle_minus_trivialization}, and $\gmc{}$ extends to give a flat connection on $\logscf \times \mathbb{P}^1$ which preserves the pairing $\langle \cdot,\cdot\rangle $, has an order $2$ irregular singularity at $t=0$ and an order $1$ regular pole at $t= \infty$ (besides those coming from the log structure on $\logsf$). The extended pairing and the extended connection give a so-called {\em (logD-trTLEP(w))-structure} \cite{reichelt2009construction}. Finally, let us recall the notion of a miniversal element from \cite{reichelt2009construction}. 

\begin{definition}\label{def:abstract_primitive_form}
	A {\em miniversal section} $\xi = (\prescript{k}{}{\xi})_{k\in \inte_{\geq 0}}$ is an element $\prescript{k}{}{\xi} \in \prescript{k}{}{\mathcal{H}}_+ \cap t (\prescript{k}{}{\mathcal{H}}_-)$ such that
	\begin{enumerate}
		\item $\prescript{k+1}{}{\xi} = \prescript{k}{}{\xi} \ (\text{mod $\mathcal{I}^{k+1}$})$; 
		\item $\nabla_X \prescript{k}{}{\xi} = 0$ in $t (\prescript{k}{}{\mathcal{H}}_-)/\prescript{k}{}{\mathcal{H}}_-$ (resp. $t (\prescript{k-1}{}{\mathcal{H}}_-)/\prescript{k-1}{}{\mathcal{H}}_-$) for $X \in  \cfrk{k}_{\ecfr{}} \otimes \blat_{\comp}^{\vee}$ (resp. $X \in \cfrk{k-1}_{\ecfr{}} \otimes \mathbb{V}$) for each $k$;
		\item $\nabla_{t\dd{t}} \prescript{k}{}{\xi} = r (\prescript{k}{}{\xi}) $ for each $k$ on $t (\prescript{k}{}{\mathcal{H}}_-)/\prescript{k}{}{\mathcal{H}}_-$, with the same $r \in \comp$;
		\item the Kodaira-Spencer map $KS: \blat_{\comp}^{\vee} \oplus \mathbb{V} \rightarrow \prescript{0}{}{\mathcal{H}}_+/t(\prescript{0}{}{\mathcal{H}}_+)$ given by 
		$
		KS(X):= t\nabla_X \xi 
		$
		is an isomorphism.
	\end{enumerate}
\end{definition} 

By \cite[Proposition 1.11]{reichelt2009construction}, an opposite filtration $\mathcal{H}_-$ together with a miniversal element $\xi$ give the structure of a (germ of a) logarithmic Frobenius manifold.

\subsection{Construction of a $\frac{\infty}{2}$-LVHS}\label{sec:construction_of_semi_infinite_VHS}

Following \cite{barannikov-kontsevich98, Barannikov99, KKP08, li2013variation}, we will construct a $\frac{\infty}{2}$-LVHS from the dgBV algebra $\polyv{k}^{*}_{\ecfr{}}[[t]]$ in Notation \ref{not:formal_variable_t} and its unobstructed deformations.

\begin{condition}\label{cond:choice_of_moduli_coefficient_ring}
	For the $0^{\text{th}}$-order Kodaira-Spencer map $\gmc{0}([\volf{0}]) : \blat^\vee_\comp \rightarrow \mathcal{F}^{\geq d-1}\mathbb{H}^0$ defined after Proposition \ref{prop:0_th_order_griffith_transversality}, we assume that the induced map $\gmc{0}([\volf{0}]) : \blat^\vee_\comp \rightarrow \mathcal{F}^{\geq d-1}\mathbb{H}^0/\mathcal{F}^{\geq d-\half} \mathbb{H}^0$ is injective. We fix the choice of the graded vector space $\mathbb{V}^* := \text{Gr}_{\mathcal{F}}(\mathbb{H}^*) / \text{Im}(\gmc{0}([\volf{0}]))$. 
\end{condition}

For example, in the log smooth case (Example \ref{ex:log-smooth-I}), injectivity of $\gmc{0}([\volf{0}])$ can easily be verified using the cohomological mixed Hodge complex $(\mathsf{A}_{\comp},\mathsf{F},\mathsf{W})$ in Example \ref{ex:log-smooth-VII}.

\begin{notation}\label{not:grading_of_element}
From Lemma \ref{lem:triviality_of_hodge_bundle} and Remark \ref{rem:triviality_of_hodge_bundle_with_coefficient}, we define the {\em relative de Rham complex with coefficient in $\ecfr{}$} as $\totaldr{k}{\parallel}_{\ecfr{}}^* := (\totaldr{k}{\parallel} \otimes_{\comp} \ecfr{}) \otimes_{\cfr_{\ecfr{}}} (\cfr_{\ecfr{}}/\mathcal{I}^{k+1})$ and, for each $k$, consider $H^*(\totaldr{k}{\parallel}_{\ecfr{}})[[t^{\half},t^{-\half}]$  which is free over $\cfrk{k}_{\ecfr{}}[[t^{\half},t^{-\half}]$.

Since the ring $\ecfr{}$ is itself graded, for an element $ \varphi \in (\polyv{k}^{p,q} \otimes \ecfr{}) / \mathcal{I}^{k+1} \subset \polyv{k}_{\ecfr{}}$ (resp. $ \alpha \in (\totaldr{k}{\parallel}^{p,q} \otimes \ecfr{}) / \mathcal{I}^{k+1} \subset \totaldr{k}{\parallel}_{\ecfr{}}$), we define the {\em index} of $\varphi$ ($\alpha$ resp.) as $p+q$ and denoted by $\bar{\varphi}$ (resp. $\bar{\alpha}$). 
\end{notation}

\subsubsection{Construction of $\mathcal{H}_+$}

\begin{definition}\label{def:construction_of_mathcalH_+}
	We consider the scaling morphism 
	$$
	\mathsf{l}_t : H^*(\polyv{k}_T[[t,t^{-1}], \ptd{k}_t) \rightarrow H^*(\totaldr{k}{\parallel}_{\ecfr{}})[[t^{\half},t^{-\half}], 
	$$
	and define (the sections of) the Hodge bundle over $\logsk{0}$ to be $\prescript{0}{}{\mathcal{H}}_+ := \mathsf{l}_t (H^*(\polyv{0}[[t]], \ptd{0}_t))$, as a submodule of $\cfrk{0}_{\ecfr{}}[[t]] = \comp[[t]]$. 
	We further take 
\begin{align*}
	\prescript{k}{}{\mathcal{H}}_{\pm} &:= \text{Im}(\mathsf{l}_t)[t^{-1}] \\
	&= \bigoplus \left( H^{d+ev}(\totaldr{k}{\parallel})[[t,t^{-1}] \oplus H^{d+odd}(\totaldr{k}{\parallel})[[t,t^{-1}] t^{\half} \right) \otimes_{\cfrk{k}} \cfrk{k}_{\ecfr{}}
\end{align*}
	(resp. $\mathcal{H}_\pm :=\varprojlim_k \prescript{k}{}{\mathcal{H}}_{\pm}$), as a module over $\cfrk{k}_{\ecfr{}}[[t,t^{-1}]$ (resp. $\reallywidehat{\cfr_{\ecfr{}}[[t,t^{-1}]}$). 

	Given a Maurer-Cartan element $\varphi = (\prescript{k}{}{\varphi})_k$ as in Theorem \ref{thm:unobstructedness_of_MC_equation}, note that the cohomology $H^*(\polyv{k}_{\ecfr{}}[[t]], \prescript{k}{}{\pdb} + t (\bvd{k}) + [\prescript{k}{}{\varphi},\cdot])$ is again a free module over $\cfrk{k}_{\ecfr{}}[[t]]$. We define 
	$$
	\prescript{k}{}{\mathcal{H}}_+ :=\{  ( \mathsf{l}_{t} (\alpha) \wedge e^{\mathsf{l}_t(\prescript{k}{}{\varphi})} ) \lrcorner \volf{k} \ | \ \alpha \in H^*(\polyv{k}_{\ecfr{}}[[t]], \prescript{k}{}{\pdb} + t (\bvd{k}) + [\prescript{k}{}{\varphi},\cdot]) \}
	$$
	as the $\cfrk{k}_{\ecfr{}}[[t]]$ submodule of $\prescript{k}{}{\mathcal{H}}_\pm$. Similarly, we let $\mathcal{H}_+:= \varprojlim_k \prescript{k}{}{\mathcal{H}}_+$ be the $\hat{\cfr}_{\ecfr{}}[[t]]$ module. 
\end{definition}

\begin{remark}\label{rem:hodge_submodule_related_to_hodge_filtration}
	Notice that \begin{align*}
	\prescript{0}{}{\mathcal{H}}^{d + ev}_+ &= \bigoplus_{r = 0}^{d} \left( \mathcal{F}^{\geq r} \cap H^{d+ev}(\totaldr{0}{\parallel}) \right) \comp[[t]]t^{-r + d-1}\\
	\prescript{0}{}{\mathcal{H}}^{d + odd}_+ &= \bigoplus_{r = 0}^{d-1} \left( \mathcal{F}^{\geq r + \half} \cap H^{d+odd}(\totaldr{0}{\parallel}) \right)  \comp[[t]] t^{-(r+\half) + d-1}
	\end{align*} in relation to the Hodge filtration given in Definition \ref{def:hodge_filtration}, and hence $\prescript{0}{}{\mathcal{H}}/t(\prescript{0}{}{\mathcal{H}}) = \text{Gr}_{\mathcal{F}}(H^*(\totaldr{0}{\parallel}))$ as vector spaces. 
\end{remark}

We define the Gauss-Manin connection by taking $\gmc{k}_X$ as in Definition \ref{def:Gauss_Manin_connection} for $X \in \cfrk{k}_{\ecfr{}} \otimes \blat_{\comp}^{\vee}$, and extend it to $\logscvfk{k}$ by setting $\gmc{k}_{f\dd{z}}= f\dd{z}$ for $f\dd{z} \in \cfrk{k-1}_{\ecfr{}}\otimes  \mathbb{V}$. 

\begin{lemma}\label{lem:Hodge_filtration_preserved_by_Gauss_manin}
	The $\cfrk{k}_{\ecfr{}}[[t]]$ submodule $\prescript{k}{}{\mathcal{H}}_+$ is preserved under the operation $t (\gmc{k}_{X})$ for any $X \in \logscvfk{k}$. Therefore, we obtain $\gmc{}: \prescript{k}{}{\mathcal{H}}_+ \rightarrow \frac{1}{t} (\logscdrk{k}{1}) \otimes_{\cfrk{k}_{\ecfr{}}} \prescript{k}{}{\mathcal{H}}_+$. 
\end{lemma}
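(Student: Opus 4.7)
The plan is to use the Cartan-type description of $\gmc{k}$ from Definition \ref{def:Gauss_Manin_connection}: for a class $[\eta] \in H^*(\totaldr{k}{\parallel}, \drd{k}{})$, lift $\eta$ to $\tilde\eta \in \totaldr{k}{0}$, apply $\drd{k}{}$, and read off the class of $\drd{k}{}\tilde\eta \in \totaldr{k}{1}$ in $\totaldr{k}{1}/\totaldr{k}{2} \cong \logsdrk{k}{1}\otimes \totaldr{k}{\parallel}[-1]$ via $\gmiso{k}{1}$. Pairing against $X\in\logscvfk{k}$ then yields $t\gmc{k}_X[\eta]$. I would begin by decomposing $\logscvfk{k} = \cfrk{k}_{\ecfr{}}\otimes(\blat_\comp^\vee\oplus\mathbb{V})[1]$ into log directions along $\logsk{k}$ and directions along $\mathrm{Spf}(\ecfr{})$; in the $\mathbb{V}$-direction, $\gmc{k}_{\partial_z}$ simply differentiates the coefficients in $\ecfr{}$, and the claim reduces to observing that $\partial_z \prescript{k}{}{\varphi}$ is again closed for the deformed differential (by applying $\partial_z$ to the MC equation, since $\partial_z$ commutes with $\prescript{k}{}{\pdb}+t\bvd{k}$ and the coefficient ring enters only through $\varphi$ and the $q^m$).

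For the $\blat_\comp^\vee$-direction, I would work locally on each $V_\alpha$ by picking a lift $\tilde{\volf{k}}_\alpha\in\tbva{k}{0}^d_\alpha$ of $\volf{k}_\alpha$ (possible by surjectivity of $\tbva{k}{0}^d_\alpha \to \tbva{k}{0}^d_\alpha/\tbva{k}{1}^d_\alpha$), promoting the global expression $\eta=(\mathsf{l}_t(\alpha)\wedge e^{\mathsf{l}_t(\prescript{k}{}{\varphi})})\lrcorner\volf{k}$ to a local lift $\tilde\eta\in \totaldr{k}{0}$. Applying $\drd{k}{}$ and invoking (a) the Maurer-Cartan equation $\ptd{k}_t(e^{\prescript{k}{}{\varphi}/t})=0$ (which after contracting with $\volf{k}$ becomes $\drd{k}{}_t(e^{\prescript{k}{}{\varphi}/t}\lrcorner\volf{k})=0$ in $\totaldr{k}{\parallel}$) and (b) the closedness of $\alpha$ under the deformed differential $\pdb+t\bvd{}+[\prescript{k}{}{\varphi},\cdot]$, every term not involving the log differential $d\log q^m$ should collapse modulo $\totaldr{k}{1}$, leaving a residue in $\totaldr{k}{1}/\totaldr{k}{2}$ of the expected form
\[
t\gmc{k}_X[\eta] \;=\; \bigl(\mathsf{l}_t(\alpha')\wedge e^{\mathsf{l}_t(\prescript{k}{}{\varphi})}\bigr)\lrcorner \volf{k},
\]
where $\alpha'$ is built from $t\,\partial_X\alpha$ and $\alpha\wedge\partial_X \prescript{k}{}{\varphi}$ (with $\partial_X$ acting via $X$ on the coefficient ring $\cfrk{k}_{\ecfr{}}$ and through any explicit $q^m$ dependence).

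I would then verify that $\alpha'$ lies in $H^*(\polyv{k}_{\ecfr{}}[[t]],\prescript{k}{}{\pdb}+t\bvd{k}+[\prescript{k}{}{\varphi},\cdot])$ by differentiating both the MC equation \eqref{eqn:Maurer_Cartan_equation_unobstructedness} and the closedness equation for $\alpha$ in the direction of $X$; the derivation property of the bracket and of the operators $\pdb,\bvd{}$ (established in Theorem \ref{thm:construction_of_differentials}) then yields the required closure. Independence of $[\alpha']$ on the local choice of lift $\tilde{\volf{k}}_\alpha$ reduces to showing that two lifts differ by an element of $\tbva{k}{1}^d_\alpha$ whose $\drd{k}{}$-image contributes only exact terms modulo $\totaldr{k}{2}$ in the relative complex; this in turn follows from the closedness of $\volf{k}$ together with Proposition \ref{prop:de_rham_module_algebraic_structures_summary}. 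Finally, the factor of $\tfrac{1}{t}$ on the right-hand side of \eqref{eqn:abstract_GM_connection_definition} is accounted for by the indicial shifts in $\mathsf{l}_t$ coming from the Koszul degree of $d\log q^m$.

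The main obstacle is controlling the interaction of the twisting terms $\dbtwist{k}$, $\volftwist{k}$ and the obstruction terms $\dbobs{k}$, $\dvolfobs{k}$ appearing in the global operators $\ptd{k}$ and $\drd{k}{}=\pdb+\dpartial{k}+\dbobs{k}\lrcorner$ from Theorem \ref{thm:construction_of_differentials} and Lemma \ref{lem:proving_globalness_for_construction_of_total_de_rham_differential}: locally these appear as corrections to the naive Cartan computation, and one must check they either cancel against corresponding pieces of $\drd{k}{}\tilde{\volf{k}}_\alpha$ or land in $\totaldr{k}{2}$ after using the MC equation. At the level of cohomology, everything should reduce to the classical smooth-Calabi--Yau formula for the Gauss-Manin on a twisted de Rham complex, but the bookkeeping required to make that rigorous in the present almost-dgBV setting is where the genuine work lies.
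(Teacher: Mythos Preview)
Your overall strategy---lift $\volf{k}$ to some $\mathsf{w}\in\totaldr{k}{0}/\totaldr{k}{2}$, apply $\drd{k}{}$, and read off the residue in $\totaldr{k}{1}/\totaldr{k}{2}$---is exactly what the paper does, and your handling of the $\mathbb{V}$-direction agrees with the paper's observation that $\ecfr{}$ does not couple with $\drd{k}{}$, so one may restrict to the coefficient ring $\cfrk{k}$. The obstruction-term bookkeeping you flag as the ``main obstacle'' is a genuine issue for your plan as written, and your proposed formula for $\alpha'$ in terms of ``$\partial_X\alpha$'' and ``$\partial_X\prescript{k}{}{\varphi}$'' is delicate: for $X\in\blat^\vee_\comp$ there is no canonical chain-level derivation $\partial_X$ on the glued Thom-Whitney complex $\polyv{k}^*$ (the elements are not power series in $q^m$ that one can differentiate termwise), so ``differentiating the MC equation in the direction of $X$'' does not have an obvious meaning.

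The paper sidesteps all of this with a trick you are missing. It treats only the case $\alpha=1$ by direct computation: the lift is $\mathsf{l}_t(e^{\prescript{k}{}{\varphi}/t}\lrcorner\mathsf{w})$, and one checks that $\drd{k}{}$ of this lands in $\totaldr{k}{1}/\totaldr{k}{2}$ in the shape $\sum_i d\log q^{m_i}\otimes t^{-1}(\mathsf{l}_t(\psi_i)\wedge e^{\mathsf{l}_t(\prescript{k}{}{\varphi})})\lrcorner\volf{k}$ for some $\psi_i\in\polyv{k}^*[[t]]$, which lies in $t^{-1}\prescript{k}{}{\mathcal{H}}_+$ by definition. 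For general closed $\alpha$, introduce a formal parameter $\epsilon$ of degree $-|\alpha|$ with $\epsilon^2=0$; since $\alpha$ is closed for $\prescript{k}{}{\pdb}+t\bvd{k}+[\prescript{k}{}{\varphi},\cdot]$, the element $\prescript{k}{}{\varphi}+\epsilon\alpha$ is again a Maurer-Cartan element over $\cfrk{k}[\epsilon]/(\epsilon^2)$, and applying the $\alpha=1$ case to this enlarged MC element and extracting the $\epsilon$-coefficient yields the statement for $(\mathsf{l}_t(\alpha)\wedge e^{\mathsf{l}_t(\prescript{k}{}{\varphi})})\lrcorner\volf{k}$. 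This packages both the MC equation and the closedness of $\alpha$ into a single MC equation, so the interaction with $\dbobs{k}$, $\dvolfobs{k}$, $\dbtwist{k}$, $\volftwist{k}$ never has to be unwound by hand.
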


\begin{proof}
	It suffices to prove the first statement of lemma. We begin by considering the case $\alpha = 1$ and restricting the Maurer-Cartan element $\prescript{k}{}{\varphi}$ to the coefficient ring $\cfrk{k}$ (because the extra coefficient $\ecfr{}$ is not involved in the differential $\drd{k}{}$ for defining the Gauss-Manin connection in Definition \ref{def:Gauss_Manin_connection}). 
	Note that $\mathsf{l}_t(1) \wedge e^{\mathsf{l}_t(\prescript{k}{}{\varphi})} \lrcorner (\volf{k})  = \mathsf{l}_t (  e^{\prescript{k}{}{\varphi}/t} \lrcorner \volf{k})$. Take a lifting $\mathsf{w} \in \totaldr{k}{0}^{d,0}/\totaldr{k}{2}^{d,0}$ of the element $\volf{k}$ for computing the connection $\gmc{k}$ via the sequence \eqref{eqn:short_exact_sequence_for_GM_connection}. Direct computation shows that 
	\begin{align*} &\drd{k}{} \left( \mathsf{l}_t (e^{\prescript{k}{}{\varphi}/t} \lrcorner \mathsf{w}) \right)  = t^{-\half} \mathsf{l}_t \left( \sum_{i} d\log q^{m_i} \otimes  e^{\prescript{k}{}{\varphi}/t} \lrcorner  (\psi_i \lrcorner \volf{k}) \right) \\
	=&\sum_i d\log q^{m_i} \otimes t^{-1} (\mathsf{l}_t ( \psi_i ) \wedge e^{\mathsf{l}_t (\prescript{k}{}{\varphi})}) \lrcorner \volf{k}
	\end{align*} for some $m_i \in \blat$ and $\psi_i \in \polyv{k}^*[[t]]$. Since $(\mathsf{l}_t ( \psi_i ) \wedge e^{\mathsf{l}_t (\prescript{k}{}{\varphi})}) \lrcorner \volf{k} \in \prescript{k}{}{\mathcal{H}}_+$, we have $\gmc{k} \left(e^{\mathsf{l}_t(\prescript{k}{}{\varphi})} \lrcorner (\volf{k}) \right) \in \frac{1}{t} (\logsdrk{k}{1}) \otimes_{(\cfrk{k})} (\prescript{k}{}{\mathcal{H}}_+)$. 
	For the case of $\left( \mathsf{l}_t(\alpha) \wedge e^{\mathsf{l}_t (\prescript{k}{}{\varphi})}\right) \lrcorner \volf{k}$, we may simply introduce a formal parameter $\epsilon$ of degree $-|\alpha|$ such that $\epsilon^2 = 0$, and repeat the above argument for the Maurer-Cartan element $\prescript{k}{}{\varphi} + \epsilon \alpha$ over the ring $\cfrk{k}[\epsilon]/(\epsilon^2)$. 
\end{proof}

\subsubsection{Construction of $\mathcal{H}_-$}
The $0^{\text{th}}$-order Gauss-Manin connection in Definition \ref{def:0_th_order_GM_connection} induces an endomorphism $N_{\nu} := \gmc{0}_{\nu} :  H^*(\totaldr{0}{\parallel}) \rightarrow H^*(\totaldr{0}{\parallel})$ for every element $\nu \in \blat^{\vee}$. The flatness of the Gauss-Manin connection (Proposition \ref{prop:flatness_of_gauss_manin}) then implies that these are commuting operators: $N_{\nu_1} N_{\nu_2} = N_{\nu_2} N_{\nu_1}$.

\begin{assum}\label{assum:weighted_filtration_assumption}
	There is an increasing filtration $\mathcal{W}_{\leq \bullet} H^*(\totaldr{0}{\parallel})$ 
	$$
	\{0\} \subset \mathcal{W}_{\leq 0 } \subset \cdots  \subset \mathcal{W}_{\leq r} \subset \cdots \subset \mathcal{W}_{d} = H^*(\totaldr{0}{\parallel})
	$$
	indexed by $r \in \frac{1}{2}\inte_{\geq 0}$,\footnote{We follow Barannikov \cite{Barannikov99} in using half integers $r \in \frac{1}{2}\inte$ as the weights for the filtration $\mathcal{W}_{\leq r}$; multiplying by $2$ gives the usual indices $\mathcal{W}_{\leq0} \subset \cdots \subset \mathcal{W}_{\leq r} \cdots \subset \mathcal{W}_{\leq 2d}$ with $r \in \inte$.} which is
	\begin{itemize} 
		\item
		preserved by the $0^{\text{th}}$-order Gauss-Manin connection in the sense that $N_{\nu} \mathcal{W}_{r} \subset \mathcal{W}_{r-1}$ for any $\nu \in \blat^{\vee}$, and
		\item
		an opposite filtration to the Hodge filtration $\mathcal{F}^{\geq \bullet}$ in the sense that $\mathcal{F}^{\geq r} \oplus \mathcal{W}_{\leq r-\half} = H^*(\totaldr{0}{\parallel})$.
	\end{itemize}
\end{assum}

\begin{example}\label{ex:log-smooth-VIII}
	In the log smooth case (continuing Example \ref{ex:log-smooth-VII}), we have Deligne's splitting $\mathbb{H}^l(\mathsf{A}^*_{\comp}) = \bigoplus_{s,t} I^{s,t}$ on each $\mathbb{H}^l(\mathsf{A}^*_{\comp})$ satisfying $\mathsf{W}_{\leq r - l} = \bigoplus_{s+t \leq r} I^{s,t}$ and $\mathsf{F}^{\geq r} = \bigoplus_{s \geq r} I^{s,t}$. Since the nilpotent operator $N_\nu$ is defined over $\mathbb{Q}$ such that $N_{\nu} \mathsf{W}_{\leq r} \subset \mathsf{W}_{\leq r-2}$, we deduce that $N_{\nu}I^{s,t} \subset  I^{s-1,t-1}$. As the Hodge filtration $\mathcal{F}^{\geq \bullet} \mathbb{H}^l(\mathsf{A}^*_{\comp})$ in Definition \ref{def:hodge_filtration} is related to $\mathsf{F}^{\geq \bullet}\mathbb{H}^l(\mathsf{A}^*_{\comp})$ by a shift: $\mathcal{F}^{\geq r -\frac{l-d}{2} } = \mathsf{F}^{ \geq r }$, letting $\mathcal{W}_{\leq r - \frac{l-d}{2}} := \bigoplus_{s \leq r} I^{s,t}$ and $\mathcal{W}_{\leq r}(\mathbb{H}^*(\mathsf{A}^*_{\comp})):= \bigoplus_{l} \mathcal{W}_{\leq r}(\mathbb{H}^l(\mathsf{A}^*_{\comp}))$ gives an opposite filtration satisfying Assumption \ref{assum:weighted_filtration_assumption}. 
\end{example}

Under Assumption \ref{assum:weighted_filtration_assumption}, the commuting operators $N_{\nu}$'s are nilpotent. 

\begin{lemma}\label{lem:existence_of_deligne_basis}
	Under Assumption \ref{assum:weighted_filtration_assumption}, there exists an index (introduced in Notation \ref{not:grading_of_element}) and degree preserving trivialization 
	$$
	\varkappa : H^*(\totaldr{0}{\parallel}) \otimes_\comp \hat{\cfr}_{\ecfr{}} \rightarrow \reallywidehat{H^*(\totaldr{}{\parallel}_{\ecfr{}})}
	$$
	which identifies the connection form of the Gauss-Manin connection $\gmc{}$ with the nilpotent operator $N$, i.e. for any $\nu \in \blat_\comp^{\vee}$, we have $\gmc{}_\nu (s \otimes 1) = N_{\nu} (s)\otimes 1$ for $s \in H^*(\totaldr{0}{\parallel})$.
\end{lemma}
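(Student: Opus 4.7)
The plan is to construct $\varkappa$ order by order. Set $\mathsf{H} := H^*(\totaldr{0}{\parallel},\drd{0}{})$ and $\prescript{k}{}{\mathsf{H}} := H^*(\totaldr{k}{\parallel}_{\ecfr{}}, \drd{k}{})$; by Lemma \ref{lem:triviality_of_hodge_bundle} and Remark \ref{rem:triviality_of_hodge_bundle_with_coefficient} the latter is free over $\cfrk{k}_{\ecfr{}}$ of the same rank as $\mathsf{H}$. I will inductively build compatible, degree- and index-preserving $\cfrk{k}_{\ecfr{}}$-linear isomorphisms $\prescript{k}{}{\varkappa} : \mathsf{H} \otimes_\comp \cfrk{k}_{\ecfr{}} \to \prescript{k}{}{\mathsf{H}}$ such that, after pullback by $\prescript{k}{}{\varkappa}$, the Gauss-Manin connection $\gmc{k}$ has connection form equal to $\omega_0 := \sum_i N_{e_i} \, d\log q^{m_i}$ modulo $\mathcal{I}^{k+1}$, where $\{e_i\}$ is a basis of $\blat^\vee_\comp$ dual to $\{m_i\} \subset \blat_\comp$. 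The limit $\varkappa := \varprojlim_k \prescript{k}{}{\varkappa}$ will then satisfy the lemma.

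For the base case $k = 0$, set $\prescript{0}{}{\varkappa} = \mathrm{id} \otimes 1$. Since $\cfrk{0}_{\ecfr{}} = \comp$, the $\mathbb{V}^\vee$-components of $\gmc{0}$ are automatically zero (the order-zero data is independent of $\mathbb{V}$), and the $\blat^\vee$-components reproduce $\omega_0$ by the very definition of $N_\nu = \gmc{0}_\nu$ (Definition \ref{def:0_th_order_GM_connection}). For the inductive step, suppose $\prescript{k}{}{\varkappa}$ has been constructed. Using freeness of $\prescript{k+1}{}{\mathsf{H}}$, pick any degree- and index-preserving $\cfrk{k+1}_{\ecfr{}}$-linear lifting $\widetilde{\varkappa}$. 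Pulling back $\gmc{k+1}$ through $\widetilde{\varkappa}$ produces a connection form $\omega_0 + \eta$ with $\eta \in \logscdrk{k+1}{1} \otimes \mathrm{End}(\mathsf{H}) \otimes (\mathcal{I}^{k+1}/\mathcal{I}^{k+2})$. A gauge transformation $g = \mathrm{id} + u$, with $u \in \mathrm{End}(\mathsf{H}) \otimes (\mathcal{I}^{k+1}/\mathcal{I}^{k+2})$, replaces the connection form, modulo $\mathcal{I}^{k+2}$, by $\omega_0 + \eta - (du + \mathrm{ad}(\omega_0)\, u)$. Hence the step reduces to solving the linear equation
\begin{equation*}
du + \mathrm{ad}(\omega_0)\, u = \eta,
\end{equation*}
after which one sets $\prescript{k+1}{}{\varkappa} := \widetilde{\varkappa} \circ g$.

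The principal obstacle is the solvability of this equation. Flatness of $\gmc{k+1}$ (Proposition \ref{prop:flatness_of_gauss_manin}), read modulo $\mathcal{I}^{k+2}$, yields the integrability identity $d\eta + \mathrm{ad}(\omega_0)\, \eta = 0$, so the real question is whether the twisted de Rham complex
\begin{equation*}
\bigl(\mathrm{End}(\mathsf{H}) \otimes_\comp \logscdrf{*} \otimes (\mathcal{I}^{k+1}/\mathcal{I}^{k+2}),\ d + \mathrm{ad}(\omega_0)\bigr)
\end{equation*}
is acyclic in positive degree. Here Assumption \ref{assum:weighted_filtration_assumption} enters decisively: the weight filtration $\mathcal{W}_{\leq\bullet}$ induces a filtration on $\mathrm{End}(\mathsf{H})$ for which each $\mathrm{ad}(N_{e_i})$ strictly decreases the weight; the associated spectral sequence has $E_1$-page equal to the untwisted polynomial de Rham complex on the formal polydisk $\logscf$ with values in the finite-dimensional graded piece $\mathrm{Gr}^\mathcal{W} \mathrm{End}(\mathsf{H}) \otimes (\mathcal{I}^{k+1}/\mathcal{I}^{k+2})$. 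Because $\mathcal{I}^{k+1}/\mathcal{I}^{k+2}$ has strictly positive polynomial weight in the $(\blat \oplus \mathbb{V}^\vee)$-variables, the standard Euler-operator contracting homotopy kills cohomology in positive degrees, the spectral sequence collapses, and a solution $u$ exists.

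The solution $u$ is unique only up to a $(d + \mathrm{ad}(\omega_0))$-closed element of degree zero, i.e.\ a flat endomorphism-valued function. To pin $u$ down canonically, and thus to guarantee compatibility of the $\prescript{k}{}{\varkappa}$'s across $k$, I use the splitting $\mathrm{End}(\mathsf{H}) = \ker(\mathrm{ad}(\omega_0)) \oplus \mathcal{C}$ coming from the opposite filtration of Assumption \ref{assum:weighted_filtration_assumption} (the weight grading provides a canonical complement $\mathcal{C}$ to the kernel because $\mathrm{ad}(\omega_0)$ strictly decreases weight, so is injective on the positive-weight part). Requiring $u \in \mathcal{C} \otimes (\mathcal{I}^{k+1}/\mathcal{I}^{k+2})$ fixes a unique solution. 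Degree- and index-preservation is preserved at each step since $\omega_0$, $\eta$, and the chosen splitting all respect the bigrading. The inverse limit $\varkappa$ then has all the required properties.
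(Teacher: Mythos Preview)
Your inductive gauge-transformation strategy is exactly the paper's, and your use of the weight filtration to control the $\mathrm{ad}(\omega_0)$-twist is the right idea: the paper does this by an explicit iteration (repeatedly replacing the error $c_m$ by $\frac{1}{(m,\nu_m)}[c_m,N_{\nu_m}]$, which terminates after $2d$ steps because $N$ is nilpotent on the weight filtration), and that is precisely your spectral-sequence argument unwound.

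There is, however, a genuine gap in your handling of the $\ecfr{}$-coefficients. The acyclicity you assert for the twisted complex with coefficients in $\mathcal{I}^{k+1}/\mathcal{I}^{k+2}$ is false. On the graded piece $q^m z^I$, the log de~Rham differential (in the $\blat_\comp$-direction, which is all the Gauss--Manin connection involves) acts as exterior multiplication by $m$; the $dz$-contributions lower the $z$-degree and hence vanish in $\mathcal{I}^{k+1}/\mathcal{I}^{k+2}$. So your $E_1$-page differential is $(m\wedge)$, and on pure $\mathbf{I}$-monomials ($m=0$) it is identically zero --- no Euler-type homotopy helps there. Consequently, for an \emph{arbitrary} lift $\widetilde{\varkappa}$ the error $\eta$ may have $m=0$-components, and the equation $du+\mathrm{ad}(\omega_0)u=\eta$ then reduces on those pieces to $\eta_0 = -[\omega_0,u_0]$, which is not solvable in general (the image of $\mathrm{ad}(N)$ is a proper subspace). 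The paper avoids this entirely by first observing that $\ecfr{}$ does not couple with $\drd{}{}$: the Hodge bundle and its Gauss--Manin connection over $\cfrk{k}_{\ecfr{}}$ are base-changed from $\cfrk{k}$, so one may run the induction purely over $\cfrk{k}$. There every monomial in $\mathbf{m}^{k}/\mathbf{m}^{k+1}$ has $m\neq 0$, Koszul exactness of $(\bigwedge^*\blat_\comp,\, m\wedge)$ applies, and your spectral-sequence argument goes through cleanly. After this reduction your final uniqueness/splitting discussion is also unnecessary: the correction $u$ lies in $\mathbf{m}^{k+1}$ by construction, so $\prescript{k+1}{}{\varkappa}\equiv \prescript{k}{}{\varkappa}$ modulo $\mathbf{m}^{k+1}$ automatically.
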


\begin{proof}
	Since the extra coefficient ring $\ecfr{}$ does not couple with the differential $\drd{}{}$, we only need to construct inductively a trivialization $\prescript{k}{}{\varkappa} : H^*(\totaldr{0}{\parallel}) \otimes_\comp \cfrk{k} \rightarrow H^*(\totaldr{k}{\parallel})$ for every $k$ such that $\prescript{k+1}{}{\varkappa} = \prescript{k}{}{\varkappa} \ (\text{mod \ $\mathbf{m}^{k+1}$})$ and which identifies $\gmc{k}$ with the nilpotent operator $N$. 
	
	To prove the induction step, we assume that $\prescript{k-1}{}{\varkappa}$ has been constructed and the aim is to construct its lifting $\prescript{k}{}{\varkappa}$. We first choose an arbitary lifting $\widetilde{\prescript{k}{}{\varkappa}}$ and a {\em filtered basis} $e_1, \dots , e_m$ of the finite dimensional vector space $H^*(\totaldr{0}{\parallel})$, meaning that it is a lifting of a basis in the associated quotient $\text{Gr}_{\mathcal{W}}(H^*(\totaldr{0}{\parallel}))$. We also write $\tilde{e}_i$ for $\widetilde{\prescript{k}{}{\varkappa}}(e_i \otimes 1)$. With respect to the frame $\tilde{e}_i$'s of $H^*(\totaldr{k}{\parallel})$, we define a connection $\widehat{\nabla}$ with $\widehat{\nabla}_{\nu} (\tilde{e}_i) =\sum_j (N_{\nu})_i^j (\tilde{e}_j)$ for $\nu \in \blat_\comp^{\vee}$, where $(N_\nu)_i^j$'s are the matrix coefficients of the operator $N_\nu$ with respect to the basis $\{e_i\}$. We may also treat $N = (N_{i}^j)$ as $\blat_\comp$-valued endomorphims on $H^*(\totaldr{0}{\parallel})$.  
	
	From the induction hypothesis, we have $\gmc{k} - \widehat{\nabla} = 0 \ (\text{mod \ $\mathbf{m}^{k}$})$ and hence $(\gmc{k} - \widehat{\nabla}) (\tilde{e}_i) = \sum_{m} \sum_{j} \alpha_{mi}^j  e_j  q^m    \in \logsdrk{0}{1} \otimes_\comp H^*(\totaldr{0}{\parallel}) \otimes_{\comp} (\mathbf{m}^{k}/\mathbf{m}^{k+1})$. From the flatness of both $\gmc{k}$ and $\widehat{\nabla}$, we notice that $(d \log q^m) \wedge \alpha_{mi}^j = 0$ and hence $\alpha_{mi}^j = c_{mi}^j d \log q^m$ for some constant $c_{mi}^j \in \comp$ for every $m$ and $j$. We will use $c_m$ to denote the endomorphism on $H^*(\totaldr{0}{\parallel})$ whose matrix coefficients are given by $c_m = (c_{mi}^j)$ with respect to the basis $e_i$'s.
	
	As a result, if we define a new frame $\tilde{e}^{(0)}_i:= \tilde{e}_i - \sum_{m,j}  c_{mi}^j  e_j  q^m$ and a new connection $\widehat{\nabla}^{(0)}$ by  $\widehat{\nabla}^{(0)}_{\nu}(\tilde{e}_i^{(0)}) = \sum_j (N_{\nu})_i^j (\tilde{e}_j^{(0)})$, then
	$$
	(\gmc{k} - \widehat{\nabla}^{(0)}) (\tilde{e}_i^{(0)}) = \sum_{j,m} [c_m,N]_{i}^j (e_j) q^m,
	$$
	where $[c_m,N]$ is the usual Lie bracket with its $\blat_\comp$-valued matrix coefficient given by $([c_m,N]_i^j) = c_{ml}^{j}N_{i}^l - c_{mi}^l N_{l}^j$. Once again using flatness of both $\gmc{k}$ and $ \widehat{\nabla}^{(0)}$, we get some constant $c^{(1)j}_{mi}$ such that $[c_m,N]_i^j = c^{(1)j}_{mi} d \log q^m$. Taking an element $\nu_m \in \blat_\comp^\vee$ with $(m,\nu_m) \neq 0$, we obtain $c^{(1)}_{m}  = \frac{1}{(m,\nu_m)} [c_m,N_{\nu_m}]$. Now if we define a new frame $\tilde{e}^{(1)}_i:= \tilde{e}^{(0)}_i - \sum_{j,m} c^{(1)j}_{mi} e^i_j q^m$ and a new connection $\nabla^{(1)}(\tilde{e}^{(1)}_i) := \sum_j N^j_i \tilde{e}^{(1)}_j$, then we have $c^{(2)}_{m} = \frac{1}{(m,\nu_m)} [c_m^{(1)},N_{\nu_m}] = \frac{1}{(m,\nu_m)^2} [[c_m,N_{\nu_m}],N_{\nu_m}]$ such that $c^{(2)}_{m} d \log q^m = [c_m^{(1)},N_{\nu_m}] $. 
	
	Repeating this process we get a frame $\{\tilde{e}^{(2d)}_i\}$. Setting $\nabla^{(2d)} (\tilde{e}^{(2d)}_i) =\sum_j N_i^j \tilde{e}^{(2d)}_j$, we get $c_m^{(2d+1)} = \frac{1}{(m,\nu_m)^{2d+1}} (-[N_{\nu_m},\cdot])^{2d+1} (c_m) = 0$. Therefore letting $\prescript{k}{}{\varkappa}(e_i \otimes 1) = \tilde{e}^{(2d)}_i$ gives the desired trivialization for the Hodge bundle. 
\end{proof}

With Assumption \ref{assum:weighted_filtration_assumption}, we can take a filtered basis $\{e_{r;i}\}_{\substack{0 \leq 2r \leq 2d \\ 0\leq i \leq m_r}}$ of the vector space $H^*(\totaldr{0}{\parallel})$ such that $e_{r;i} \in \mathcal{W}_{\leq r} \cap (H^{d+ev}(\totaldr{0}{\parallel}))$ if $r \in \inte$ and $e_{r;i} \in \mathcal{W}_{\leq r} \cap (H^{d+odd}(\totaldr{0}{\parallel}))$ if $r \in \inte + \half$, and $\{e_{r;i}\}_{0\leq i\leq m_r}$ forms a basis of $\mathcal{W}_{\leq r}/\mathcal{W}_{\leq r-\half}$. 

\begin{definition}\label{def:elementary_sections}
	Using the trivialization $\varkappa$ in Lemma \ref{lem:existence_of_deligne_basis}, we let $\mathfrak{e}_{r;i}:= \varkappa(e_{r;i} \otimes 1)$, as a section of the Hodge bundle $\reallywidehat{H^*(\totaldr{0}{\parallel}_{\ecfr{}})}$. The collection $\{\mathfrak{e}_{r;i}\}$, which forms a frame of the Hodge bundle, is called {\em the set of elementary sections} (cf. Deligne's canonical extension \cite{deligne1997local}).
\end{definition}

Note that the index of $\mathfrak{e}_{r;i}$, introduced in Notation \ref{not:grading_of_element}, is the same as that of $e_{r;i}$.

\begin{lemma}\label{lem:uniqueness_of_opposite_filtration}
	If we let 
	\begin{align*}
	\prescript{0}{}{\mathcal{H}}_{-}^{d+ev} & := \bigoplus_{r=0}^d \left(\mathcal{W}_{\leq r} \cap H^{d+ev}(\totaldr{0}{\parallel}) \right) \comp[t^{-1}]t^{-r +d-2} \subset \prescript{0}{}{\mathcal{H}}_{\pm}^{d+ev},\\
	\prescript{0}{}{\mathcal{H}}_{-}^{d+odd} & := \bigoplus_{r=0}^{d-1} \left(\mathcal{W}_{\leq r +\half} \cap H^{d+odd}(\totaldr{0}{\parallel}) \right) \comp[t^{-1}]t^{-(r+\half) +d-2} \subset \prescript{0}{}{\mathcal{H}}_{\pm}^{d+odd},
	\end{align*}
	and use $\prescript{0}{}{\mathcal{H}}_- :=\prescript{0}{}{\mathcal{H}}_{-}^{d+ev} \oplus \prescript{0}{}{\mathcal{H}}_{-}^{d+odd} $ as the $\comp[t^{-1}]$ submodule of $\prescript{0}{}{\mathcal{H}}_{\pm} $, then there exists a unique free $\reallywidehat{\cfr_{\ecfr{}}[t^{-1}]}$ submodule $\mathcal{H}_{-} = \varprojlim_k \prescript{k}{}{\mathcal{H}}_-$ of $\mathcal{H}_{\pm}$, which is preserved by $\gmc{}_X$ for any $X \in \logscvff$ and satisfies $\prescript{k+1}{}{\mathcal{H}}_{-}  = \prescript{k}{}{\mathcal{H}}_- \ (\text{mod $\mathcal{I}^{k+1}$})$. 
\end{lemma}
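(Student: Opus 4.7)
The plan is to transport the problem into a concrete matrix setting using the trivialization $\varkappa$ from Lemma \ref{lem:existence_of_deligne_basis}. In this frame each element of $\mathcal{H}_\pm$ has a unique expansion $\sum_{r;i} h_{r;i}(t)\,\mathfrak{e}_{r;i}$ with $h_{r;i}(t) \in \reallywidehat{\cfr_{\ecfr{}}[[t]][t^{-1}]}$, and the Gauss-Manin connection decomposes as $\gmc{} = d + N + \Gamma$, where $N$ is the nilpotent residue along $\blat_\comp^\vee$ (strictly decreasing the weight $r$ by $1$) and $\Gamma$ is a matrix-valued $1$-form along $\mathbb{V}$ whose entries all lie in $\mathcal{I}$. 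Setting $e(r) := -r + d - 2$ with the obvious half-integer shift in the odd part, I would define the candidate
\[
\mathcal{H}_-^{(0)} := \bigoplus_{r;i} \mathfrak{e}_{r;i}\cdot t^{e(r)} \cdot \reallywidehat{\cfr_{\ecfr{}}[t^{-1}]},
\]
together with the direct-sum complement $\mathcal{M}^{(0)} := \bigoplus_{r;i} \mathfrak{e}_{r;i}\cdot t^{e(r)+1}\cdot \reallywidehat{\cfr_{\ecfr{}}[[t]]}$, so that $\mathcal{H}_\pm = \mathcal{H}_-^{(0)} \oplus \mathcal{M}^{(0)}$. The identity $e(r) = e(r-1) - 1$, combined with the weight-lowering property of $N$ from Assumption \ref{assum:weighted_filtration_assumption}, immediately shows that $\mathcal{H}_-^{(0)}$ is preserved by $\gmc{}_\nu$ for every $\nu \in \blat_\comp^\vee$ and matches $\prescript{0}{}{\mathcal{H}}_-$ modulo $\mathcal{I}$.

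For existence, I would modify the generators $\mathfrak{e}_{r;i}\cdot t^{e(r)}$ inductively in the order of $\mathcal{I}$. Assume that generators $\prescript{k-1}{}{f}_{r;i}$ and connection coefficients $\prescript{k-1}{}{c}_{r;i,X}^{r';j}(t) \in \cfrk{k-1}_{\ecfr{}}[t^{-1}]$ have been fixed with $\gmc{}_X(\prescript{k-1}{}{f}_{r;i}) = \sum_{r';j} \prescript{k-1}{}{c}_{r;i,X}^{r';j}(t) \prescript{k-1}{}{f}_{r';j}$ modulo $\mathcal{I}^{k}$. Pick arbitrary liftings $\prescript{k}{}{\tilde f}_{r;i}$ and $\prescript{k}{}{\tilde c}_{r;i,X}^{r';j}(t)$, and project the defect onto $\mathcal{M}^{(0)}$ to obtain an obstruction $\mathsf{O}_{r;i,X} \in \mathcal{M}^{(0)} \otimes (\mathcal{I}^k / \mathcal{I}^{k+1})$. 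Then look for a correction $\delta_{r;i} \in \mathcal{M}^{(0)} \otimes (\mathcal{I}^k / \mathcal{I}^{k+1})$ such that replacing $\prescript{k}{}{\tilde f}_{r;i}$ by $\prescript{k}{}{\tilde f}_{r;i} + \delta_{r;i}$ (and absorbing any $\mathcal{H}_-^{(0)}$-part into the $\prescript{k}{}{\tilde c}$'s) annihilates $\mathsf{O}_{r;i,X}$. Expanding $\delta_{r;i} = \sum_{s\geq 1,\,r';j} \delta_{r;i;s}^{r';j}\, \mathfrak{e}_{r';j} \cdot t^{e(r')+s}$ and writing the invariance constraint along $\blat_\comp^\vee$-directions, one obtains a linear system on the coefficients $\delta_{r;i;s}^{r';j}$ that is triangular with respect to the lexicographic pair $(s, r')$: the effect of $N$ is to shift $s \mapsto s-1$ while decreasing $r'$ by $1$, so iterating eventually falls into the boundary $s=0$ which is absorbed by $\mathcal{H}_-^{(0)}$. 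The flatness identities $[\gmc{}_{X_1}, \gmc{}_{X_2}] = 0$ guarantee that the obstructions $\mathsf{O}_{r;i,X}$ for different $X$ are mutually consistent, so a single choice of $\delta$ simultaneously handles all vector fields.

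For uniqueness, the same inductive setup applies. Given two submodules $\mathcal{H}_-$ and $\tilde{\mathcal{H}}_-$ satisfying the hypotheses and agreeing modulo $\mathcal{I}^k$, the difference of generators lies in $\mathcal{M}^{(0)} \otimes (\mathcal{I}^k / \mathcal{I}^{k+1})$ and satisfies the homogeneous version of the above triangular system; triangularity then forces it to vanish. The main obstacle is verifying nondegeneracy of this triangular system, i.e. showing that the map sending a correction in $\mathcal{M}^{(0)} \otimes (\mathcal{I}^k / \mathcal{I}^{k+1})$ to its induced $\mathcal{M}^{(0)}$-projected Gauss-Manin defect is a bijection. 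This ultimately rests on the interplay between the weight-lowering by $N$ on elementary sections (which is nilpotent on each finite-dimensional weight piece), the induced shift in $t$-degree via $t^{e(r)} = t^{e(r-1)-1}$, and the fact that $\mathcal{M}^{(0)}$ and $\mathcal{H}_-^{(0)}$ are separated by the sign of the $t$-power relative to the weight-dependent threshold $e(r)$; once this bijectivity is established, both existence and uniqueness follow in a uniform manner from the inductive scheme.
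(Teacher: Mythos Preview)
Your approach via the $\varkappa$-frame of Lemma \ref{lem:existence_of_deligne_basis} matches the paper's, but you miss a major simplification in the existence half and do not isolate the decisive step in the uniqueness half.

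For existence, the term $\Gamma$ you posit is identically zero. The content of Lemma \ref{lem:existence_of_deligne_basis} is precisely that in the frame $\{\mathfrak{e}_{r;i}\}$ the Gauss--Manin connection along $\blat_\comp^\vee$ is \emph{exactly} $d + N$; and along the $\mathbb{V}$-directions the elementary sections are $\ecfr{}$-constant by construction (the paper's proof notes that the coefficient ring $\ecfr{}$ does not couple with $\drd{}{}$), so $\gmc{}_v(\mathfrak{e}_{r;i}) = 0$. Hence your candidate $\mathcal{H}_-^{(0)}$ is already $\gmc{}_X$-invariant for every $X \in \logscvff$, and the paper simply takes it as $\prescript{k}{}{\mathcal{H}}_-$ with no inductive correction at all.

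For uniqueness, the bijectivity you flag as the main obstacle is not a consequence of the three ingredients you list (nilpotency of $N$, the $t$-degree shift, and the $\mathcal{M}^{(0)}/\mathcal{H}_-^{(0)}$ splitting): for instance a nonzero $\delta$ supported on the bottom piece $\mathcal{W}_{\leq 0}$ is annihilated by every $N_\nu$, so triangularity in $(s,r')$ alone cannot force $\delta = 0$. The missing mechanism is that the \emph{base differential} $d$ is injective on $\mathcal{I}^k/\mathcal{I}^{k+1}$ for $k \geq 1$. The paper makes this transparent by dropping the $t$-variable entirely and comparing the induced weight filtrations $\prescript{k}{}{\mathcal{W}}_{\leq \bullet}$ and $\prescript{k}{}{\widetilde{\mathcal{W}}}_{\leq \bullet}$ on $H^*(\totaldr{k}{\parallel})$ directly: writing $\tilde{\mathfrak{e}}_{r;i} = \sum f_{l;j}\,\mathfrak{e}_{l;j}$ and passing to the top graded piece $\prescript{k}{}{\mathcal{W}}_{\leq l_r}/\prescript{k}{}{\mathcal{W}}_{\leq l_r - \half}$ (with $l_r > r$ maximal), the invariance equation collapses to $\partial(f_{l_r;j}) = 0$; since $f_{l_r;j} \in \mathbf{m}^k$ this forces $f_{l_r;j} = 0$, contradicting the choice of $l_r$.
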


\begin{proof}
	For existence, we take a set of elementary sections $\{\mathfrak{e}_{r;i}\}_{\substack{0 \leq 2r \leq 2d \\ 0\leq i \leq m_r}}$ as in Definition \ref{def:elementary_sections}. We can take the free submodules $\prescript{k}{}{\mathcal{W}}_{\leq r}^{d+ev} = \bigoplus_{l\leq r} \bigoplus_{0 \leq i \leq m_r}\cfrk{k}_{\ecfr{}} \cdot \mathfrak{e}_{l;i}$, $\prescript{k}{}{\mathcal{W}}_{\leq r+\half}^{d+odd} = \bigoplus_{l\leq r} \bigoplus_{0\leq i \leq m_{r+\half}} \cfrk{k}_{\ecfr{}} \cdot \mathfrak{e}_{l+\half;i}$ of $H^*(\totaldr{k}{\parallel}_{\ecfr{}})$ and let 
	$$
	\prescript{k}{}{\mathcal{H}}_- := \bigoplus_{0\leq r \leq d} \prescript{k}{}{\mathcal{W}}_{\leq r}^{d+ev}   \comp[t^{-1}]t^{-r +d-2}
	\oplus  \bigoplus_{0\leq r \leq d-1} \prescript{k}{}{\mathcal{W}}_{\leq r+\half}^{d+odd} \comp[t^{-1}]  t^{-(r+\half) +d-2} 
	$$
	be the desired $\gmc{}_X$ invariant subspace. 
	
	For uniqueness, we will prove the uniqueness of $\cfrk{k}_{\ecfr{}}[t^{-1}]$ submodule $\prescript{k}{}{\mathcal{H}}_{-}$ of $\prescript{k}{}{\mathcal{H}}_{\pm}$ for each $k$ by induction. Again since the coefficient ring $\ecfr{}$ does not couple with the differential $\drd{}{}$, we only need to consider the corresponding statement for Hodge bundle over $\cfrk{k}$. For the induction step we assume that there is another increasing filtration $\prescript{k}{}{\widetilde{\mathcal{W}}}_{\leq \bullet}$ of $H^*(\totaldr{k}{\parallel})$ satisfying the desired properties such that they agree when passing to $H^*(\totaldr{k-1}{\parallel})$. We should prove that $\prescript{k}{}{\widetilde{\mathcal{W}}}_{\leq r} \subset \prescript{k}{}{\mathcal{W}}_{\leq r}$ for each $r$. 
	
	We take $r\in \frac{1}{2}\inte$ with $\prescript{k}{}{\widetilde{\mathcal{W}}}_{\leq r} \neq 0$. The proof of Lemma \ref{lem:existence_of_deligne_basis} gives a trivialization $\prescript{0}{}{\widetilde{\mathcal{W}}}_{ \leq r} \otimes_\comp \cfrk{k} \rightarrow  \prescript{k}{}{\widetilde{\mathcal{W}}}_{ \leq r}$ using the frame $\{ \tilde{\mathfrak{e}}_{r;i} \}$ which identifies $\gmc{k}$ with $N$; in particular we must have $r \geq 0$. Let $l_{r} \geq r$ be the minimum half integer such that $\prescript{k}{}{\widetilde{\mathcal{W}}}_{\leq r} \subset \prescript{k}{}{\mathcal{W}}_{\leq l_{r}}$, and  take the frame $\{\mathfrak{e}_{l;i}\}_{\substack{ 0\leq 2l \leq 2l_r\\ 0 \leq i \leq m_{l}}}$ for the submodule $\prescript{k}{}{\mathcal{W}}_{\leq l_{r}}$. Then we can write 
	$$
	\tilde{\mathfrak{e}}_{r;i} = \sum_{\substack{0\leq 2l \leq 2r \\ 0 \leq i \leq m_l}} f_{l;i} \mathfrak{e}_{l;i} +\sum_{\substack{2r+1\leq 2l \leq 2l_r \\0\leq i \leq m_{l} }} f_{l;i} \mathfrak{e}_{l;i}
	$$
	for some $f_{l;i} \in \cfrk{k}$ with $f_{l;i} = 0 \ (\text{mod $\mathbf{m}^{k}$})$ for $r+\half \leq l$. 
	
	We start with $r=0$ and assume on the contrary that $l_0 >0$. As $\tilde{\mathfrak{e}}_{0;i} = \sum_{ 0 \leq i \leq m_0} f_{0;i} \mathfrak{e}_{0;i} +\sum_{\substack{1 \leq 2l \leq 2l_0 \\0\leq i \leq m_{l} }} f_{l;i} \mathfrak{e}_{l;i}$, applying the connection $\gmc{k}$ gives
	$$
	0 = \sum_{ 0 \leq i \leq m_0} \partial (f_{0;i})  \mathfrak{e}_{0;i} +\sum_{\substack{1 \leq 2l \leq 2l_0 \\0\leq i \leq m_{l} }} \partial (f_{l;i}) \mathfrak{e}_{l;i} + \sum_{\substack{1 \leq 2l \leq 2l_0 \\0\leq i \leq m_{l} }} f_{l;i} N(\mathfrak{e}_{l;i}).
	$$
	Passing to the quotient $\prescript{k}{}{\mathcal{W}}_{\leq l_0} / \prescript{k}{}{\mathcal{W}}_{\leq l_0 -\half}$ yields $ 0 = \sum_{\substack{1 \leq l \leq l_0 \\0\leq i \leq m_{l} }} \partial (f_{l;i}) \mathfrak{e}_{l;i} $ which implies that $f_{l;i} = 0$ for $ l \geq 1$ and hence $l_0 = 0$. By induction on $r$, we have $\prescript{k}{}{\widetilde{\mathcal{W}}}_{\leq r-\half} \subset  \prescript{k}{}{\mathcal{W}}_{\leq r -\half }$ by the induction hypothesis. We assume on the contrary that $l_r >r$ and consider 
	$$	N(\tilde{\mathfrak{e}}_{r;i}) = \sum_{\substack{0\leq 2l \leq 2r \\ 0 \leq i \leq m_l}} \gmc{k} (f_{l;i} \mathfrak{e}_{l;i}) +\sum_{\substack{2r+1\leq 2l \leq 2l_r \\0\leq i \leq m_{l} }} \partial(f_{l;i}) \mathfrak{e}_{l;i} + \sum_{\substack{2r+1\leq 2l \leq2 l_r \\0\leq i \leq m_{l} }} f_{l;i} N(\mathfrak{e}_{l;i}).
	$$
	This gives $0 = \sum_{\substack{2r+1\leq 2l \leq 2l_r \\0\leq i \leq m_{l} }} \partial(f_{l;i}) \mathfrak{e}_{l;i} $ when passing to the quotient $\prescript{k}{}{\mathcal{W}}_{\leq l_r} / \prescript{k}{}{\mathcal{W}}_{\leq l_r -\half}$ and thus $f_{l;i} = 0$ for $l \geq r+\half$. This gives $l_r = r$, which is a contradiction and hence completes the induction step and the proof of the lemma.
	\end{proof}

\begin{remark}\label{rem:uniqueness_of_opposite_filtration}
	Lemma \ref{lem:uniqueness_of_opposite_filtration} says that the opposite filtration $\mathcal{H}_-$ is determined uniquely by $\prescript{0}{}{\mathcal{H}}_-$ which is given by the opposite filtration in Assumption \ref{assum:weighted_filtration_assumption}.  In the case of maximally degenerate log Calabi-Yau varieties in \S \ref{sec:application_to_gross_siebert_program}, we expect the {\em weight filration} in \cite[Remark 5.7]{Gross-Siebert-logII} determined by the nilpotent operators $N_{\nu}$'s to be the only oppostie filtration satisfying Assumption \ref{assum:weighted_filtration_assumption}.
\end{remark}

\subsubsection{Construction of the pairing $\langle \cdot,\cdot \rangle$}

The next assumption concerns the existence of the pairing $\langle \cdot,\cdot \rangle$ in Definition \ref{def:log_semi_infinite_VHS}. 
\begin{assum}\label{assum:poincare_pairing_assumption}
	We assume that 
	\begin{itemize}
		\item
		$H^*(\totaldr{0}{\parallel},\drd{0}{})$ is nontrivial only when $0 \leq * \leq 2d$;
		\item
		$H^{p,>d}(\totaldr{0}{\parallel},\prescript{0}{}{\pdb}) = 0$ for all $ 0 \leq p \leq d$;
		\item
		there is a trace map $\trace : H^{d,d}(\totaldr{0}{\parallel},\prescript{0}{}{\pdb}) = H^{2d}(\totaldr{0}{\parallel},\drd{0}{})  \rightarrow \comp$, so that we can define a pairing $\prescript{0}{}{\mathsf{p}}(\cdot,\cdot)$ on $H^{*}(\totaldr{0}{\parallel}, \drd{0}{})$ by
		$$
		\prescript{0}{}{\mathsf{p}}(\alpha,\beta):= \trace \left( \alpha \wedge \beta \right)
		$$
		for $\alpha,\beta \in H^{*}(\totaldr{0}{\parallel},\drd{0}{})$;
		\item  the filtration $\mathcal{W}_{\leq \bullet}$ is isotropic with respect to $\prescript{0}{}{\mathsf{p}}(\cdot,\cdot)$, that means $\prescript{0}{}{\mathsf{p}}( \mathcal{W}_{\leq s},\mathcal{W}_{\leq r}) = 0$ when $r+s < d$;
		\item
		the trace map $\trace$ and the corresponding pairing $\prescript{0}{}{\mathsf{p}}(\cdot,\cdot)$, when descended to $\text{Gr}_{\mathcal{F}}(H^*(\totaldr{0}{\parallel}))$, are non-degenerate.
	\end{itemize}
\end{assum}

\begin{example}\label{ex:log-smooth-IX}
	In the log smooth case (continuing Example \ref{ex:log-smooth-VII} and Example \ref{ex:log-smooth-VIII}), we use the trace map $\trace$ defined in \cite[Definition 7.11]{fujisawa2014polarizations}, which induces a pairing $\prescript{0}{}{\mathtt{p}}: \mathbb{H}^*(\Omega^*_{X^{\dagger}/\logsk{0}}) \otimes \mathbb{H}^*(\Omega^*_{X^{\dagger}/\logsk{0}}) \rightarrow \comp$ by the product structure on $\Omega^*_{X^{\dagger}/\logsk{0}}$; this was denoted by $Q_K$ in \cite[Definition 7.13]{fujisawa2014polarizations}. The pairing is compatible with the weight filtration $\mathsf{W}_{\leq r}$ on $\mathbb{H}^*(\Omega^*_{X^{\dagger}/\logsk{0}})$ by \cite[Lemma 7.18]{fujisawa2014polarizations} and is defined over $\mathbb{Q}$ by \cite[Lemma 7.14]{fujisawa2014polarizations}, which implies that the opposite filtration $\mathcal{W}_{\leq \bullet}$ is isotropic with respect to $\prescript{0}{}{\mathtt{p}}$. Furthermore,  non-degeneracy of $\prescript{0}{}{\mathtt{p}}$ follows from that of the induced pairing $\langle \cdot , \cdot \rangle$ on $L_{\comp} = \text{Gr}_{\mathsf{W}}(\mathbb{H}^*(\Omega^*_{X^{\dagger}/\logsk{0}}))$ defined in \cite[Definition 8.10]{fujisawa2014polarizations}, which in turn is a consequence of \cite[Theorem 8.11]{fujisawa2014polarizations} (where projectivity of $X$ was used).
\end{example}

We will denote by $\trace :H^{*}(\totaldr{0}{\parallel})  \rightarrow \comp$ the map which extends $\trace : H^{2d}(\totaldr{0}{\parallel}) \rightarrow \comp$ and is trivial on $H^{<2d}(\totaldr{0}{\parallel})$.
Note that by definition $\mathcal{F}^{\geq \bullet}$ is isotropic with respect to the pairing $\prescript{0}{}{\mathsf{p}}(\cdot,\cdot)$, i.e. $\prescript{0}{}{\mathsf{p}} (\mathcal{F}^{\geq r}, \mathcal{F}^{\geq s})  = 0$ when $r+s >d$.
We have $\trace(N_\nu(\alpha)) = 0$ for any $\alpha \in H^{*}(\totaldr{0}{\parallel})$ and $\nu \in \blat_\comp^\vee$.

\begin{lemma}\label{lem:tuncation_of_dolbeault_cohomology}
	Suppose we take the  subcomplex $\totaldr{k}{\parallel}^{*,>d} \hookrightarrow \totaldr{k}{\parallel}^{*,*}$, then natural induced map between the cohomology group $H^*(\totaldr{k}{\parallel}^{*,>d}) \rightarrow H^*(\totaldr{k}{\parallel}^{*,*})$ is a zero map for each $k$. 
\end{lemma}

\begin{proof}
	We prove by induction on $k$ that the induced map on cohomology is zero. For $k=0$ it follows from second requirement in Assumption \ref{assum:poincare_pairing_assumption} together with the Hodge-de Rham degeneracy Assumption \ref{assum:Hodge_de_rham_degeneracy}. For induction step we consider the commutative diagram
	$$
	\xymatrix@1{
		H^*(\totaldr{0}{\parallel}^{*,>d}) \otimes \mathbf{m}^{k}/\mathbf{m}^{k-1} \ar[r]	\ar[d]	&H^*(\totaldr{k}{\parallel}^{*,>d}) \ar[d] \ar[r] & H^*(\totaldr{k-1}{\parallel}^{*,>d}) \ar[d]\\
	H^*(\totaldr{0}{\parallel}^{*,*})\otimes \mathbf{m}^{k}/\mathbf{m}^{k-1} \ar[r] 	&H^*(\totaldr{k}{\parallel}^{*,*})  \ar[r]&		H^*(\totaldr{k-1}{\parallel}^{*,*}) 
	}.
	$$ Exactly the same argument as in \S \ref{sec:proof_of_triviality_of_hodge_bundle} tells us that the cohomology $H^*( \totaldr{k}{\parallel}^{*,>d})$ is also a free $\cfrk{k}$ module, which means the first row is exact.  Therefore the two rows in the above diagram are exact. We conclude that the vertical map in the center is zero from induction hypothesis that the other two vertical maps being zero. 
\end{proof}

\begin{definition}\label{def:construction_of_higher_residue_pairing}
	Using the elementary sections $\{\mathfrak{e}_{r;i}\}$ in Definition \ref{def:elementary_sections}, we extend the trace map $\trace$ to the Hodge bundle $H^*(\totaldr{k}{\parallel})[[t^{\half},t^{-\half}]$ by the formula
	$$
	\trace(f \mathfrak{e}_{r;i})  := f \trace(e_{r;i})
	$$
	for $f \in \cfrk{k}_{\ecfr{}}[[t^{\half},t^{-\half}]$, and extending linearly. 
	
	We also extend the pairing $\prescript{0}{}{\mathsf{p}}$ to $H^*(\totaldr{k}{\parallel})[[t^{\half},t^{-\half}]$ by the formula
	$$
	\mathsf{p}(f(t)\mathfrak{e}_{r;i},g(t)\mathfrak{e}_{l;j}) : = (-1)^{|g||e_{r;i}|} f(t)g(t) \prescript{0}{}{\mathsf{p}}(e_{r;i}, e_{l;j}),
	$$
	for $f(t) , g(t) \in \cfrk{k}_{\ecfr{}}[[t^{\half},t^{-\half}]$, and extending linearly.
	
	Finally we define a pairing $\langle \cdot,\cdot \rangle$ on $H^*(\totaldr{k}{\parallel})[[t^{\half},t^{-\half}]$ by the formula\footnote{This expression is motivated by \cite{Barannikov99}.}
	$$
	\langle \alpha(t),\beta(t) \rangle := (-1)^{d-1 - d|\beta| + \half (\bar{\beta}-d)} t^{2-d}\mathsf{p}(\alpha(t), \beta(-t) ),
	$$
	for $\alpha(t), \beta(t) \in H^*(\totaldr{k}{\parallel})[[t^{\half},t^{-\half}]$, where $\bar{\beta}$ is the index of \\
	$\beta \in H^*(\totaldr{k}{\parallel})[[t^{\half},t^{-\half}]$ (see Notation \ref{not:grading_of_element}). This naturally induces a pairing on $\prescript{k}{}{\mathcal{H}}_{\pm} \subset H^*(\totaldr{k}{\parallel})[[t^{\half},t^{-\half}]$. 
\end{definition}

\begin{lemma}\label{lem:flatness_of_higher_residue_pairing}
	We have the identification $\mathsf{p}(\alpha,\beta) = \trace(\alpha\wedge \beta)$ between the pairing and the trace map in Definition \ref{def:construction_of_higher_residue_pairing}. Furthermore, the pairing $\mathsf{p}$ is flat, i.e. 
	$$
	X(\mathsf{p}(\alpha,\beta)) = \mathsf{p} ( \gmc{}_X \alpha,\beta)  +\mathsf{p} (\alpha,\gmc{}_X \beta)
	$$
 	for any $\alpha,\beta \in H^*(\totaldr{k}{\parallel})$ and $X \in \logscvfk{k}$.
\end{lemma}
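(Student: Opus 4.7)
The plan is to reduce both statements to a single key vanishing identity involving the trace map and the residue operator, and then to establish this vanishing using the structure of the de Rham differential $\drd{0}{}$.

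First, since both $\mathsf{p}(\cdot,\cdot)$ and $\trace(\cdot\wedge\cdot)$ are $\cfrk{k}_{\ecfr{}}[[t^{\half}]][t^{-\half}]$-bilinear with the same graded signs, the identification $\mathsf{p}(\alpha,\beta) = \trace(\alpha\wedge\beta)$ reduces, by the frame property of $\{\mathfrak{e}_{r;i}\}$ (Definition \ref{def:elementary_sections}), to the statement
\[
\trace(\mathfrak{e}_{r;i}\wedge\mathfrak{e}_{l;j}) = \trace(e_{r;i}\wedge e_{l;j}),
\]
since $\mathsf{p}(\mathfrak{e}_{r;i},\mathfrak{e}_{l;j}) = \prescript{0}{}{\mathsf{p}}(e_{r;i},e_{l;j}) = \trace(e_{r;i}\wedge e_{l;j})$ by Definition \ref{def:construction_of_higher_residue_pairing} and Assumption \ref{assum:poincare_pairing_assumption}. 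I would show the LHS is constant in moduli by expanding $\mathfrak{e}_{r;i}\wedge\mathfrak{e}_{l;j} = \sum_{s,k}c^{s,k}\mathfrak{e}_{s;k}$ in the elementary frame and differentiating along $\nu\in\blat_\comp^{\vee}\oplus\mathbb{V}$. The Gauss-Manin connection $\gmc{k}$ is a derivation of $\wedge$, since $\drd{k}{}$ is a derivation of $\wedge$ (Proposition \ref{prop:de_rham_module_algebraic_structures_summary}) and the Koszul filtration $\totaldr{k}{\bullet}^*$ is multiplicative. Combined with the defining property $\gmc{k}_\nu\mathfrak{e}_{s;k} = N_\nu\mathfrak{e}_{s;k}$ (Lemma \ref{lem:existence_of_deligne_basis}), this yields
\[
\partial_\nu\trace(\mathfrak{e}_{r;i}\wedge\mathfrak{e}_{l;j}) = \trace\big(N_\nu\mathfrak{e}_{r;i}\wedge\mathfrak{e}_{l;j}\big) + \trace\big(\mathfrak{e}_{r;i}\wedge N_\nu\mathfrak{e}_{l;j}\big) - \sum_{s,k}c^{s,k}\trace(N_\nu e_{s;k}),
\]
where I have used $\trace(N_\nu\mathfrak{e}_{s;k}) = \trace(N_\nu e_{s;k})$ because $N_\nu$ has constant matrix coefficients in the elementary frame.

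Granting the key vanishing $\trace\circ N_\nu = 0$ on $H^*(\totaldr{0}{\parallel},\drd{0}{})$, the last sum vanishes; applying the same vanishing to the elementary expansions of $N_\nu\mathfrak{e}_{r;i}\wedge\mathfrak{e}_{l;j}$ and $\mathfrak{e}_{r;i}\wedge N_\nu\mathfrak{e}_{l;j}$ annihilates the remaining two terms, since by further expansion every contribution reduces to $\trace\circ N_\nu$ on some class in $H^*(\totaldr{0}{\parallel},\drd{0}{})$. Hence $\partial_\nu\trace(\mathfrak{e}_{r;i}\wedge\mathfrak{e}_{l;j}) = 0$, which gives the first claim. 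For the second claim, once the first is established, Leibniz for $\gmc{}$ on $\wedge$, which reads
\[
\gmc{}_X(\alpha\wedge\beta) = \gmc{}_X\alpha\wedge\beta + (-1)^{(|X|+1)|\alpha|}\alpha\wedge\gmc{}_X\beta,
\]
reduces flatness to $X(\trace(\alpha\wedge\beta)) = \trace(\gmc{}_X(\alpha\wedge\beta))$, which is again the key vanishing after one elementary expansion.

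The main obstacle will be establishing the key vanishing $\trace\circ N_\nu = 0$ on $H^*(\totaldr{0}{\parallel},\drd{0}{})$. Since $\trace$ vanishes on $H^{<2d}$ by Assumption \ref{assum:poincare_pairing_assumption}, the content is $\trace(N_\nu\gamma) = 0$ for $\gamma\in H^{2d}\cong H^{d,d}(\prescript{0}{}{\pdb})$. My approach is to compute $N_\nu\gamma$ at the level of representatives: lift $\gamma$ to $\tilde\gamma\in\totaldr{0}{0}^{2d}/\totaldr{0}{2}^{2d}$, whence $N_\nu\gamma$ is the image under contraction by $\nu$ and the isomorphism $\gmiso{0}{1}$ of the class $[\drd{0}{}\tilde\gamma]\in H^{2d+1}(\totaldr{0}{1}/\totaldr{0}{2})\cong\logsdrk{0}{1}\otimes H^{2d}(\rdr{0}{}^*,\dpartial{0})$. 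The element $\drd{0}{}\tilde\gamma$ sits in total degree $2d+1$, and invoking the Dolbeault vanishing $H^{*,>d}(\totaldr{0}{\parallel},\prescript{0}{}{\pdb}) = 0$ together with Griffiths transversality (Proposition \ref{prop:0_th_order_griffith_transversality}) to control the Hodge filtration of the lift should force $N_\nu\gamma$ to be represented by a class that is cohomologically exact in top degree, so its trace vanishes. This compatibility calculation between the total de Rham differential, the Koszul filtration, and the trace map is the heart of the argument.
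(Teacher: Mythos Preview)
Your overall framework is correct and close to the paper's: the Leibniz rule for $\gmc{}$ on $\wedge$, the flatness of the extended trace, and the reduction to the key vanishing $\trace\circ N_\nu = 0$ are all the right ingredients. However, there is a genuine gap in your displayed identity. After you kill the last sum by $\trace\circ N_\nu = 0$, you are left with
\[
\partial_\nu\trace(\mathfrak{e}_{r;i}\wedge\mathfrak{e}_{l;j}) \;=\; \trace\big(N_\nu\mathfrak{e}_{r;i}\wedge\mathfrak{e}_{l;j}\big) + \trace\big(\mathfrak{e}_{r;i}\wedge N_\nu\mathfrak{e}_{l;j}\big),
\]
and your claim that ``further expansion reduces every contribution to $\trace\circ N_\nu$ on some class'' is not correct as stated. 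Expanding $N_\nu\mathfrak{e}_{r;i}\wedge\mathfrak{e}_{l;j} = \sum d^{s;k}\mathfrak{e}_{s;k}$ gives $\trace = \sum d^{s;k}\trace(e_{s;k})$, where no $N_\nu$ appears; and rewriting the sum of the two terms as $\trace(\gmc{}_\nu(\mathfrak{e}_{r;i}\wedge\mathfrak{e}_{l;j}))$ just brings you back to the left-hand side. What makes the argument close is an \emph{induction on the weight index} (e.g.\ on $r+l$, or lexicographically on $(r,l)$), exploiting that $N_\nu$ maps $\mathcal{W}_{\leq r}$ into $\mathcal{W}_{\leq r-1}$: since $N_\nu\mathfrak{e}_{r;i}$ is a \emph{constant}-coefficient combination of $\mathfrak{e}_{r';i'}$ with $r'<r$, the induction hypothesis gives $\trace(\mathfrak{e}_{r';i'}\wedge\mathfrak{e}_{l;j}) = \trace(e_{r';i'}\wedge e_{l;j})$, whence the two remaining terms sum to $\trace\big(N_\nu(e_{r;i}\wedge e_{l;j})\big) = 0$. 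This weight induction is exactly the mechanism in the paper's proof; the paper in fact proves the stronger statement that $\mathfrak{e}_{r;i}\wedge\mathfrak{e}_{l;j}$ has \emph{constant} coefficients in the elementary frame, via the same lexicographic induction on $(r,l)$ combined with an induction on the order $k$.

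A minor point: your proposed proof of $\trace\circ N_\nu = 0$ via lifting, $\drd{0}{}$, and Dolbeault vanishing is more elaborate than needed. The paper records this vanishing immediately after Assumption~\ref{assum:poincare_pairing_assumption}, and it follows directly from Assumption~\ref{assum:weighted_filtration_assumption}: on $\mathbb{H}^{2d}$ the Hodge filtration jumps at $r=d/2$, so the opposite condition forces $\mathbb{H}^{2d}\subset\mathcal{W}_{\leq d/2}$ and $\mathcal{W}_{\leq d/2-1/2}\cap\mathbb{H}^{2d}=0$, whence $N_\nu|_{\mathbb{H}^{2d}}=0$.
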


\begin{proof}
	Using the short exact sequence
	$$
	0 \rightarrow \blat_\comp \otimes_\comp \totaldr{k}{\parallel}^*[-1] \rightarrow \totaldr{k}{0}^*/\totaldr{k}{2}^* \rightarrow \totaldr{k}{\parallel}^* \rightarrow 0
	$$
	which defines the $k^{\text{th}}$-order Gauss-Manin connection $\gmc{k}$, we have the flatness of the product:
	$$
	\gmc{k}(\alpha\wedge \beta) = (\gmc{k}\alpha) \wedge \beta + (-1)^{|\alpha|} \alpha \wedge (\gmc{k} \beta). 
	$$
	To prove the identity $ \mathsf{p}(\alpha,\beta) = \trace(\alpha\wedge \beta)$, we choose a basis $\{e_{r;i}\}$ of $H^*(\totaldr{0}{})$ and the corresponding elementary sections $\{\mathfrak{e}_{r;i}\}$ as in Definition \ref{def:elementary_sections}. We claim that the relation $\mathfrak{e}_{r;i} \wedge \mathfrak{e}_{l;j} = \sum_{s;k} c_{r,l;i,j}^{s;k} \mathfrak{e}_{s;k}$ holds for some constant $c_{r,l;i,j}^{s;k} \in \comp$. This can be proved by an induction on the order $k$, followed by an induction on the lexicographical order: $(r,l) < (r',l')$ if $r < r'$, or $r = r'$ and $l<l'$ for each fixed $k$. 
	So we fix $r,l$ and $i,j$, consider the product $\mathfrak{e}_{r;i} \wedge \mathfrak{e}_{l;j} $, and assume that the above relation holds for any $(r',l')< (r,l)$. 
	Writing $\mathfrak{e}_{r;i} \wedge \mathfrak{e}_{l;j} = \sum_{s;k} c^{s;k} \mathfrak{e}_{s;k} + \sum_{s;k} \sum_{q^m \in \mathbf{m}^{k}} b^{s;k}_m e_{s;k} q^{m}$ and applying the Gauss-Manin connection gives $\gmc{k}_{\nu} (\mathfrak{e}_{r;i} \wedge \mathfrak{e}_{l;j} ) = (N_{\nu} \mathfrak{e}_{r;i}) \wedge \mathfrak{e}_{l;j} + \mathfrak{e}_{r;i} \wedge (N_
	{\nu}\mathfrak{e}_{l;j} )$. The induction hypothesis then forces $\sum_{s;k} \sum_{q^m \in \mathbf{m}^{k}} b^{s;k}_m e_{s;k} q^{m} = 0$. As a result we have $ \mathsf{p}(\mathfrak{e}_{r;i},\mathfrak{e}_{l;j}) = \trace(\mathfrak{e}_{r;i} \wedge \mathfrak{e}_{l;j})$ and the general relation $ \mathsf{p}(\alpha,\beta) = \trace(\alpha\wedge \beta)$ follows. Flatness of the pairing $\mathsf{p}$ now follows from that of $\trace$. 
\end{proof}

\begin{lemma}\label{lem:pairing_for_semi_infinite_VHS}
	The pairing in Definition \ref{def:construction_of_higher_residue_pairing} satisfies $\langle s_1,s_2\rangle  \in \cfrk{k}_{\ecfr{}}[[t]]$ for any $s_1, s_2 \in \prescript{k}{}{\mathcal{H}}_+$, and $\langle s_1,s_2 \rangle  \in \cfrk{k}_{\ecfr{}}[t^{-1}]t^{-2}$ for any $s_1,s_2 \in \prescript{k}{}{\mathcal{H}}_-$. 
	Furthermore, it decends to give a non-degenerate pairing $g(\cdot,\cdot) : \prescript{k}{}{\mathcal{H}}_+/t(\prescript{k}{}{\mathcal{H}}_+) \times \prescript{k}{}{\mathcal{H}}_+/t(\prescript{k}{}{\mathcal{H}}_+) \rightarrow \cfrk{k}_{\ecfr{}}[-2d]$. 
	\end{lemma}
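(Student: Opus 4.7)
The statement breaks into three parts: the containments $\langle \prescript{k}{}{\mathcal{H}}_+, \prescript{k}{}{\mathcal{H}}_+\rangle \subset \cfrk{k}_{\ecfr{}}[[t]]$ and $\langle \prescript{k}{}{\mathcal{H}}_-, \prescript{k}{}{\mathcal{H}}_-\rangle \subset \cfrk{k}_{\ecfr{}}[t^{-1}]t^{-2}$, and the non-degeneracy of the descended pairing $g$. My overall strategy for the first two is to reduce each to the computation of $\mathsf{p}$ on pairs of elementary sections $\mathfrak{e}_{r;i}$, which by Definition \ref{def:construction_of_higher_residue_pairing} (and consistency with $\mathsf{p}(\alpha,\beta) = \trace(\alpha\wedge\beta)$ from Lemma \ref{lem:flatness_of_higher_residue_pairing}) are $q$-independent scalars equal to $\prescript{0}{}{\mathsf{p}}(e_{r;i},e_{l;j})$. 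The isotropy built into the two opposite filtrations then forces the predicted $t$-power bounds.

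I would begin with the $\prescript{k}{}{\mathcal{H}}_-$ claim, since $\prescript{k}{}{\mathcal{H}}_-$ has the explicit elementary-section frame from the proof of Lemma \ref{lem:uniqueness_of_opposite_filtration}. A general section is a $\cfrk{k}_{\ecfr{}}[t^{-1}]$-linear combination of generators $\mathfrak{e}_{r;i}\,t^{-r+d-2}$ (with half-integer shifts in the odd parity part). Inserting two such generators into the pairing formula yields, up to sign and parity, $t^{2-d}\cdot t^{-r+d-2}\cdot(-t)^{-l+d-2}\cdot\prescript{0}{}{\mathsf{p}}(e_{r;i},e_{l;j})$, i.e.\ something proportional to $t^{d-r-l-2}$. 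The opposite-filtration property in Assumption \ref{assum:weighted_filtration_assumption} combined with the non-degeneracy in Assumption \ref{assum:poincare_pairing_assumption} implies $\prescript{0}{}{\mathsf{p}}(\mathcal{W}_{\leq r},\mathcal{W}_{\leq l})=0$ whenever $r+l<d$, so only $r+l\geq d$ contributes, giving the desired $t^{-2}\cfrk{k}_{\ecfr{}}[t^{-1}]$ bound.

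For the $\prescript{k}{}{\mathcal{H}}_+$ claim, at order $0$ the submodule $\prescript{0}{}{\mathcal{H}}_+$ is described in Remark \ref{rem:hodge_submodule_related_to_hodge_filtration} as generated by classes of the form $\alpha\cdot t^{-r+d-1}$ with $\alpha\in\mathcal{F}^{\geq r}$. The analogous computation produces pairings proportional to $t^{d-r-l}$, and the Hodge isotropy $\prescript{0}{}{\mathsf{p}}(\mathcal{F}^{\geq r},\mathcal{F}^{\geq l})=0$ for $r+l>d$ (immediate from Assumption \ref{assum:poincare_pairing_assumption}) leaves only non-negative $t$-powers. To propagate to higher orders, I would use freeness of the Hodge bundle (Lemma \ref{lem:triviality_of_hodge_bundle} together with Assumption \ref{assum:Hodge_de_rham_degeneracy}) to lift a $\comp[[t]]$-basis of $\prescript{0}{}{\mathcal{H}}_+$ to a $\cfrk{k}_{\ecfr{}}[[t]]$-basis of $\prescript{k}{}{\mathcal{H}}_+$, and then argue by $\mathcal{I}$-adic induction. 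The key inputs at the inductive step are Lemma \ref{lem:Hodge_filtration_preserved_by_Gauss_manin}, which guarantees that $t\gmc{}_X$ preserves $\prescript{k}{}{\mathcal{H}}_+$, and flatness of $\mathsf{p}$ from Lemma \ref{lem:flatness_of_higher_residue_pairing}; together these imply that any correction to the lifted basis transports the pairing without lowering its $t$-adic valuation, so the order-$0$ bound persists.

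Finally, for non-degeneracy of $g$, at order $0$ Remark \ref{rem:hodge_submodule_related_to_hodge_filtration} gives $\prescript{0}{}{\mathcal{H}}_+/t\prescript{0}{}{\mathcal{H}}_+ \cong \text{Gr}_{\mathcal{F}}(H^*(\totaldr{0}{\parallel}))$, and the case $r+l=d$ of the calculation above shows that the leading term of $\langle\cdot,\cdot\rangle$ coincides, up to parity sign, with the pairing induced by $\prescript{0}{}{\mathsf{p}}$ between $\text{Gr}_{\mathcal{F}}^r$ and $\text{Gr}_{\mathcal{F}}^{d-r}$, which is non-degenerate by the last clause of Assumption \ref{assum:poincare_pairing_assumption}. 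Since $\prescript{k}{}{\mathcal{H}}_+/t\prescript{k}{}{\mathcal{H}}_+$ and its dual are both free $\cfrk{k}_{\ecfr{}}$-modules of the same rank (again by Lemma \ref{lem:triviality_of_hodge_bundle}), non-degeneracy at $k=0$ lifts to all orders by Nakayama's lemma in the $\mathcal{I}$-adic topology. The main obstacle throughout is the careful bookkeeping of signs and $t$-powers introduced by $\mathsf{l}_t$, the substitution $\beta(-t)$, and the Dolbeault bi-grading; these are nontrivial at order $0$ but, once aligned there, persist automatically through the flat transport described above.
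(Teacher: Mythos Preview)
Your treatments of $\prescript{k}{}{\mathcal{H}}_-$ and of non-degeneracy are correct and essentially coincide with the paper's: the $\mathcal{H}_-$ bound follows from the $\mathcal{W}$-isotropy recorded just after Assumption~\ref{assum:poincare_pairing_assumption}, and non-degeneracy reduces to order $0$ by Nakayama.

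For $\prescript{k}{}{\mathcal{H}}_+$, however, the paper takes a different route and your inductive argument has a gap. The paper passes to the truncated complex $\tundr{k}$ (Lemma~\ref{lem:tuncation_of_dolbeault_cohomology}) and invokes Barannikov's identity \eqref{eqn:barannikov_trace_map_equation},
\[
(-1)^{-1-d|\beta|+\tfrac{1}{2}\bar\beta}\bigl(\mathsf{l}_t(\alpha(t))\lrcorner\volf{k}\bigr)\wedge\bigl(\mathsf{l}_{-t}(\beta(-t))\lrcorner\volf{k}\bigr)
=\bigl((\mathsf{l}_t(\alpha(t))\wedge\mathsf{l}_t(\beta(-t)))\lrcorner\volf{k}\bigr)\wedge\volf{k},
\]
whose right-hand side is manifestly in $\cfrk{k}_{\ecfr{}}[[t]]$ because $\alpha,\beta\in\polyv{k}_{\ecfr{}}[[t]]$ and the $e^{\varphi/t}$-factors cancel against $e^{\varphi(-t)/(-t)}$. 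This handles all orders at once without induction.

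Your alternative relies on the claim that flatness of $\mathsf{p}$ (Lemma~\ref{lem:flatness_of_higher_residue_pairing}) together with $t\gmc{}_X(\prescript{k}{}{\mathcal{H}}_+)\subset\prescript{k}{}{\mathcal{H}}_+$ forces the $t$-adic valuation of $\langle u_i,u_j\rangle$ not to drop as the order increases. But writing $\gmc{}_X u_i = t^{-1}\sum_l A^X_{il}u_l$ with $A^X\in\cfrk{k}_{\ecfr{}}[[t]]$, the flatness relation at $\mathcal{I}$-order $j$ reads schematically
\[
X M_j \;-\; t^{-1}\bigl((A^X)_0 M_j + M_j (B^X)_0\bigr) \;=\; (\text{terms in }t^{-1}\cfrk{0}_{\ecfr{}}[[t]]\text{ from lower orders}),
\]
and the order-$0$ connection matrices $(A^X)_0,(B^X)_0$ are nilpotent (built from the residues $N_\nu$). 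Hence the operator $M_j\mapsto (A^X)_0 M_j + M_j (B^X)_0$ has nontrivial kernel, and the system does not by itself exclude a negative leading $t$-power in $M_j$. Concretely, the $e^{\varphi/t}$ twist in the definition of $\prescript{k}{}{\mathcal{H}}_+$ introduces genuine $t^{-1}$-factors at each new $\mathcal{I}$-order, and it is precisely the cancellation exhibited by Barannikov's formula that shows these do not survive in the pairing. Without that identity (or an equivalent direct computation on the truncated complex), the induction does not close.
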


\begin{proof}
	From the description in Definition \ref{def:construction_of_mathcalH_+} for $\prescript{k}{}{\mathcal{H}}_{\pm}$ we first notice that the pairing $\langle \cdot,\cdot \rangle$ takes value in $\cfrk{k}_{\ecfr{}}[[t,t^{-1}]$ when restricted to $\prescript{k}{}{\mathcal{H}}_{\pm}$. The statement for $\prescript{k}{}{\mathcal{H}}_-$ follows from flatness of pairing with respect to Gauss-Manin connection in the above Lemma \ref{lem:flatness_of_higher_residue_pairing}, the fourth item in Assumption \ref{assum:poincare_pairing_assumption} and the explicit description of $\prescript{0}{}{\mathcal{H}}_-$ from Lemma \ref{lem:uniqueness_of_opposite_filtration}. 
	
	For the statement on $\prescript{k}{}{\mathcal{H}}_+$, we take two classes of $H^*(\totaldr{k}{\parallel}_{\ecfr{}})[[t^{\half},t^{-\half}]$ represented by $\mathsf{l}_t(\alpha e^{\prescript{k}{}{\varphi}/t}) \lrcorner \volf{k}, \mathsf{l}_t(\beta e^{\prescript{k}{}{\varphi}/t}) \lrcorner \volf{k} \in \totaldr{k}{\parallel}_{\ecfr{}}[[t^{\half},t^{-\half}]$ for some elements $\alpha,\beta \in \polyv{k}^*_{\ecfr{}}[[t]]$. We consider the expression
	$$
		(-1)^{\clubsuit} t^{2-d} \trace \left( (\mathsf{l}_t(\alpha(t)e^{\prescript{k}{}{\varphi}(t)/t})  \lrcorner \volf{k}) \wedge (\mathsf{l}_{-t}(\beta(-t)e^{-\prescript{k}{}{\varphi}(-t)/t}) \lrcorner \volf{k}) \right),
	$$
	where $\clubsuit$ refers to the suitable exponent of $(-1)$ appearing in the third formula in Definition \ref{def:construction_of_higher_residue_pairing}. For computing the trace we can write 
	\begin{multline*}
	(-1)^{\clubsuit}t^{2-d}\left( (\mathsf{l}_t(\alpha(t)e^{\prescript{k}{}{\varphi}(t)/t})  \lrcorner \volf{k}) \wedge (\mathsf{l}_{-t}(\beta(-t)e^{-\prescript{k}{}{\varphi}(-t)/t}) \lrcorner \volf{k}) \right)_{2d} \\= \mu_+(t) + \mu_-(t^{-1}), 
	\end{multline*}
	where the subscript $\left( \cdot \right)_{2d}$ refering to the index $2d$ part, $\mu_+(t) \in \totaldr{k}{\parallel}_{\ecfr{}}[[t^{\half}]]$ and $\mu_-(t) \in \totaldr{k}{\parallel}_{\ecfr{}}[t^{-\half}]$. We claim that $\mu_-(t) \in \totaldr{k}{\parallel}^{*,>d}_{\ecfr{}}[t^{-\half}]$ and it is zero in the cohomology group $H^*(\totaldr{k}{\parallel}_{\ecfr{}})[[t^{\half},t^{-\half}]$ using Lemma \ref{lem:tuncation_of_dolbeault_cohomology}. If the claim is true then we can conclude that 
	$$
	\langle \mathsf{l}_t(\alpha e^{\prescript{k}{}{\varphi}/t}) \lrcorner \volf{k}, \mathsf{l}_t(\beta e^{\prescript{k}{}{\varphi}/t}) \lrcorner \volf{k}\rangle = \trace( \mu_+(t)) \in \cfrk{k}_{\ecfr{}}[[t]]. 
	$$
	
	\cite[Proof of Proposition 5.9.4]{Barannikov99} gives the following formula 
	\begin{multline}\label{eqn:barannikov_trace_map_equation}
		(-1)^{\clubsuit} t^{2-d}\left( (\mathsf{l}_t(\alpha(t)e^{\prescript{k}{}{\varphi}(t)/t})  \lrcorner \volf{k}) \wedge (\mathsf{l}_{-t}(\beta(-t)e^{-\prescript{k}{}{\varphi}(-t)/t}) \lrcorner \volf{k})\right)_{2d} = \\  \left( (\alpha(t)\wedge \beta(-t) e^{(\prescript{k}{}{\varphi}(t) - \prescript{k}{}{\varphi}(-t))/t}) \lrcorner \volf{k} ) \wedge \volf{k} \right)_{2d} + \mathtt{r}(t),
	\end{multline}
	where $\mathtt{r}(t) \in\totaldr{k}{\parallel}^{*,>d}_{\ecfr{}}[[t^{\half},t^{-\half}]$. Since we have 
	$$ \left( (\alpha(t)\wedge \beta(-t) e^{(\prescript{k}{}{\varphi}(t) - \prescript{k}{}{\varphi}(-t))/t}) \lrcorner \volf{k} ) \wedge \volf{k} \right)_{2d} \in \totaldr{k}{\parallel}_{\ecfr{}}[[t]]$$ we prove the claim. 

	For non-degeneracy it suffices to consider the pairing for $\prescript{0}{}{\mathcal{H}}_+$, which follows from the non-degeneracy condition in Assumption \ref{assum:poincare_pairing_assumption}. 
\end{proof}

The above constructions give a $\frac{\infty}{2}$-LVHS $(\mathcal{H}_+, \gmc{}, \langle \cdot,\cdot \rangle)$ together with an opposite filtration $\mathcal{H}_-$ satisfying (1)-(3) in Definition \ref{def:opposite_filtration}. It remains to construct the grading structure.

\subsubsection{Construction of the grading structure}

\begin{definition}
	For each $k$, we define the {\em extended connection} $\gmc{}_{t\dd{t}}$
	acting on $H^*(\totaldr{k}{\parallel}_{\ecfr{}})[[t^{\half},t^{-\half}]$ by the rule that $\gmc{}_{t\dd{t}}(s) = \frac{2-d}{2} s$ for $s \in H^*(\totaldr{k}{\parallel}_{\ecfr{}})$ and $\gmc{}_{t\dd{t}}(fs) = t\dd{t}(f) s + f (\gmc{}_{t\dd{t}}(s))$.
	\end{definition}

\begin{prop}\label{prop:checking_grading_structure}
	The extended connection $\gmc{}$ is a flat connection acting on $\prescript{k}{}{\mathcal{H}}_\pm$, i.e. we have $[\gmc{}_{t\dd{t}},\gmc{}_X] = 0$ for any $X \in \logscvfk{k}$. The submodule $\prescript{k}{}{\mathcal{H}}_-$ is preserved by $\gmc{}_{t\dd{t}}$ and we have $\gmc{}_{t\dd{t}} (\mathcal{H}_+) \subset t^{-1} (\prescript{k}{}{\mathcal{H}}_+)$. Furthermore, the pairing $\langle \cdot,\cdot \rangle$ is flat with respect to $\gmc{}_{t\dd{t}}$.  
	\end{prop}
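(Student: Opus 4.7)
The plan is to use the frame of elementary sections $\{\mathfrak{e}_{r;i}\}$ from Definition \ref{def:elementary_sections}, which forms a $\cfrk{k}_{\ecfr{}}$-basis of $H^*(\totaldr{k}{\parallel}_{\ecfr{}},\drd{k}{})$ and hence a $\cfrk{k}_{\ecfr{}}[[t^{\half}]][t^{-\half}]$-frame for $\mathcal{H}_{\pm}$. The essential observation is that the extended connection splits as $\gmc{}_{t\dd{t}} = t\dd{t} + \tfrac{2-d}{2}\,\mathrm{id}$ on $H^*(\totaldr{k}{\parallel}_{\ecfr{}})[[t^{\half}]][t^{-\half}]$, while the Gauss-Manin connection $\gmc{}_X$ from Definition \ref{def:Gauss_Manin_connection} is $\logscdrk{k}{1}$-valued with no explicit $t$-dependence on pure cohomology classes (the $1/t$ factor appearing in Lemma \ref{lem:Hodge_filtration_preserved_by_Gauss_manin} only arises through the $\mathsf{l}_t$-scaling of the Maurer-Cartan element).

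To establish the flatness $[\gmc{}_{t\dd{t}}, \gmc{}_X] = 0$, I would first evaluate on the frame: since $\gmc{}_X \mathfrak{e}_{r;i}$ is again a $t$-independent combination of cohomology classes, both $\gmc{}_{t\dd{t}}\gmc{}_X \mathfrak{e}_{r;i}$ and $\gmc{}_X\gmc{}_{t\dd{t}}\mathfrak{e}_{r;i}$ reduce to $\tfrac{2-d}{2}\gmc{}_X \mathfrak{e}_{r;i}$, so the commutator vanishes on the frame. I would then extend to a general element $f\,\mathfrak{e}_{r;i}$ with $f \in \cfrk{k}_{\ecfr{}}[[t^{\half}]][t^{-\half}]$ via the Leibniz rules for both operators, using the fact that $[t\dd{t}, X] = 0$ for $X \in \logscvfk{k}$ (which acts only on the $\logsck{k}$-coordinates and involves no $t$-derivative).

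For the preservation of $\mathcal{H}_{-}$, I would use the explicit description from the proof of Lemma \ref{lem:uniqueness_of_opposite_filtration} as $\bigoplus \prescript{k}{}{\mathcal{W}}_{\leq r}\,\comp[t^{-1}]\,t^{-r+d-2}$ (plus the odd analogue). A typical generator $f(t^{-1})\mathfrak{e}_{r;i}$ is sent by $\gmc{}_{t\dd{t}}$ to $\bigl(t\dd{t}f + \tfrac{2-d}{2}f\bigr)\mathfrak{e}_{r;i}$, which preserves the $t^{-1}$-power structure and hence remains in $\mathcal{H}_{-}$. For $\mathcal{H}_{+}$, the key point is that $\mathcal{H}_{+}$ is a $\cfrk{k}_{\ecfr{}}[[t]]$-submodule of $H^*(\totaldr{k}{\parallel}_{\ecfr{}})[[t^{\half}]][t^{-\half}]$ whose natural grading by $t$-powers is preserved by $t\dd{t}$ (each $t$-eigenvector is simply multiplied by a scalar); combined with the uniform scalar action of $\tfrac{2-d}{2}$, this yields the stronger inclusion $\gmc{}_{t\dd{t}}\mathcal{H}_{+} \subset \mathcal{H}_{+} \subset t^{-1}\mathcal{H}_{+}$.

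Finally, flatness of the pairing would be checked first on elementary sections: by Definition \ref{def:construction_of_higher_residue_pairing}, $\langle \mathfrak{e}_{r;i}, \mathfrak{e}_{l;j}\rangle$ equals (up to signs) $t^{2-d}\mathsf{p}(e_{r;i}, e_{l;j})$ with constant $\mathsf{p}$-value, giving $t\dd{t}\langle \mathfrak{e}_{r;i}, \mathfrak{e}_{l;j}\rangle = (2-d)\langle \mathfrak{e}_{r;i}, \mathfrak{e}_{l;j}\rangle$, which matches $\langle \gmc{}_{t\dd{t}}\mathfrak{e}_{r;i}, \mathfrak{e}_{l;j}\rangle + \langle \mathfrak{e}_{r;i}, \gmc{}_{t\dd{t}}\mathfrak{e}_{l;j}\rangle = 2\cdot\tfrac{2-d}{2}\langle \mathfrak{e}_{r;i}, \mathfrak{e}_{l;j}\rangle$. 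The extension to general arguments $s_1 = f s, s_2 = g\tilde{s}$ uses the $t\to -t$ sesquilinearity built into $\langle\cdot,\cdot\rangle$ (so that $\langle fs_1, gs_2\rangle$ is, up to signs, $t^{2-d}f(t)g(-t)\mathsf{p}(s_1,s_2)$) together with the identity $t\dd{t}(g(-t)) = (t\dd{t}g)(-t)$, after which a Leibniz expansion matches both sides term by term. The main subtlety throughout is carefully reconciling the $\mathsf{l}_t$-rescaling conventions with the various grading operators on $H^*[[t^{\half}]][t^{-\half}]$; once one sees that each $t$-power subspace is independently preserved by $t\dd{t}$, the remaining verifications are routine on the elementary-section frame.
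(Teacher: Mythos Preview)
Your treatment of the flatness $[\gmc{}_{t\partial_t},\gmc{}_X]=0$, the preservation of $\mathcal{H}_-$, and the flatness of the pairing is fine and matches the paper's remark that these ``simply follow from definitions.'' The genuine issue is your argument for $\gmc{}_{t\partial_t}(\mathcal{H}_+)\subset t^{-1}\mathcal{H}_+$, where you in fact claim the stronger inclusion $\gmc{}_{t\partial_t}(\mathcal{H}_+)\subset\mathcal{H}_+$. This stronger claim is false, and the reasoning behind it confuses two different $t$-gradings.

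Recall that $\prescript{k}{}{\mathcal{H}}_+$ consists of elements of the form $(\mathsf{l}_t(\alpha)\wedge e^{\mathsf{l}_t(\prescript{k}{}{\varphi})})\lrcorner\volf{k}$ with $\alpha\in H^*(\polyv{k}_{\ecfr{}}[[t]],\,\prescript{k}{}{\pdb}+t\bvd{k}+[\prescript{k}{}{\varphi},\cdot])$. The operator $\gmc{}_{t\partial_t}=t\partial_t+\tfrac{2-d}{2}$ is defined relative to the ambient decomposition $H^*(\totaldr{k}{\parallel}_{\ecfr{}})[[t^{\half}]][t^{-\half}]$, \emph{not} relative to the $\comp[[t]]$-module structure that $\mathcal{H}_+$ inherits from $\alpha$. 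These are different: the embedding $\alpha\mapsto(\mathsf{l}_t(\alpha)e^{\mathsf{l}_t(\varphi)})\lrcorner\volf{k}$ is itself $t$-dependent through $\mathsf{l}_t$ and the exponential of the Maurer--Cartan element. Thus $\mathcal{H}_+$ is \emph{not} a sum of $t$-homogeneous subspaces of the ambient space, and your sentence ``each $t$-eigenvector is simply multiplied by a scalar'' has no obvious meaning for $\mathcal{H}_+$. The paper's computation makes the obstruction explicit: writing $\gmc{}_{t\partial_t}\mathsf{l}_t(\alpha)=\mathsf{l}_t(\beta)$ and $\gmc{}_{t\partial_t}\mathsf{l}_t(\varphi)=\mathsf{l}_t(\gamma)$ with $\beta,\gamma\in\polyv{k}_{\ecfr{}}[[t]]$, one obtains
\[
\gmc{}_{t\partial_t}\bigl((\mathsf{l}_t(\alpha)e^{\mathsf{l}_t(\varphi)})\lrcorner\volf{k}\bigr)=\bigl((\mathsf{l}_t(\beta)+t^{-1}\mathsf{l}_t(\alpha\wedge\gamma))e^{\mathsf{l}_t(\varphi)}\bigr)\lrcorner\volf{k},
\]
using $\mathsf{l}_t(\alpha)\wedge\mathsf{l}_t(\gamma)=t^{-1}\mathsf{l}_t(\alpha\wedge\gamma)$. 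The factor $t^{-1}$ is unavoidable because $\alpha\wedge\gamma$ generally has a nonzero constant term in $t$; this is exactly why only $t^{-1}\mathcal{H}_+$, and not $\mathcal{H}_+$ itself, is the correct target. You need this computation (or an equivalent one tracking the effect of $\mathsf{l}_t$ on products) to close the argument.
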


\begin{proof}
	Beside $\gmc{}_{t\dd{t}} (\mathcal{H}_+) \subset t^{-1} (\prescript{k}{}{\mathcal{H}}_+)$, the other properties simply follow from definitions. Take $\alpha \in \polyv{k}_{\ecfr{}}[[t]]$ and consider $( \mathsf{l}_{t}(\alpha) \wedge e^{\mathsf{l}_t(\prescript{k}{}{\varphi})} ) \lrcorner \volf{k}$. Then
	\begin{multline*}
 	\gmc{}_{t\dd{t}}( \mathsf{l}_{t}(\alpha) \wedge e^{\mathsf{l}_t(\prescript{k}{}{\varphi})} ) \lrcorner \volf{k} =\\ ( (\gmc{}_{t\dd{t}} \mathsf{l}_{t}(\alpha)) \wedge e^{\mathsf{l}_t(\prescript{k}{}{\varphi})} + \mathsf{l}_{t}(\alpha) \wedge (\gmc{}_{t\dd{t}}\mathsf{l}_t(\prescript{k}{}{\varphi}))\wedge e^{\mathsf{l}_t(\prescript{k}{}{\varphi})} ) \lrcorner \volf{k}.
	\end{multline*}
	Since we can write both $\gmc{}_{t\dd{t}} \mathsf{l}_{t}(\alpha) = \mathsf{l}_t(\beta)$ and $\gmc{}_{t\dd{t}}\mathsf{l}_t(\prescript{k}{}{\varphi}) = \mathsf{l}_t(\gamma)$ for some $\beta,\gamma\in \polyv{k}_{\ecfr{}}[[t]]$, we may rewrite 
	$$
	\gmc{}_{t\dd{t}}( \mathsf{l}_{t}(\alpha) \wedge e^{\mathsf{l}_t(\prescript{k}{}{\varphi})} ) \lrcorner \volf{k}  = ( (\mathsf{l}_t(\beta) + t^{-1} \mathsf{l}_t (\alpha\wedge \gamma)) e^{\mathsf{l}_t(\prescript{k}{}{\varphi})} ) \lrcorner \volf{k},
	$$
	which gives the desired result.
	\end{proof}

\subsection{Construction of a miniversal section}\label{sec:construction_of_miniversal_section}

\begin{notation}\label{not:flat_extension_of_volume_form}
Consider the cohomology class $[\volf{0}] \in \mathcal{F}^{\geq d} \cap \mathcal{W}_{\leq d}$. We let $\prescript{k}{}{\mu}$ be the extension of the cohomology classes $[\volf{0}] \in  t(\prescript{0}{}{\mathcal{H}}_+) \cap  t^2(\prescript{0}{}{\mathcal{H}}_-)$ by first expressing it as a linear combination of the filtered basis $[\volf{0}] = \sum_{r;i} c_{r;i} e_{r;i}$, and then extend it by elementary sections in Definition \ref{def:elementary_sections} to $ t^2 ( \prescript{k}{}{\mathcal{H}}_-)$ using the formula $\prescript{k}{}{\mu} = \sum_{r;i} c_{r;i} \mathfrak{e}_{r;i}$ for each $k$.
\end{notation}

\begin{notation}\label{not:choice_of_first_order_in_miniversal_element}
By our choice of the graded vector space $\mathbb{V}^* = \text{Gr}_{\mathcal{F}}(H^*(\totaldr{0}{\parallel})) / \text{Im}(\gmc{0}([\volf{0}]))$ in Condition \ref{cond:choice_of_moduli_coefficient_ring}, we further make a choice of a degree $0$ element $\psi \in \polyv{0}[[t]] \otimes \mathbb{V}^{\vee}$ such that its cohomology class $[\mathsf{l}_{t}(\psi)]|_{t=0} \in \left( \prescript{0}{}{\mathcal{H}}_+/ t(\prescript{0}{}{\mathcal{H}}_+) \right) \otimes \mathbb{V}^\vee = \text{Gr}_{\mathcal{F}}(H^*(\totaldr{0}{\parallel})) \otimes \mathbb{V}^\vee$ maps to the identity element $\text{id} \in \mathbb{V} \otimes \mathbb{V}^\vee$ under the natural quotient $\text{Gr}_{\mathcal{F}}(H^*(\totaldr{0}{\parallel})) \otimes \mathbb{V}^\vee \rightarrow \mathbb{V} \otimes \mathbb{V}^\vee$. 
\end{notation}

\begin{definition}\label{def:miniversal_section}
	For the $\psi$ chosen in Notation \ref{not:choice_of_first_order_in_miniversal_element}, let $\varphi = (\prescript{k}{}{\varphi})_k$ be the corresponding Maurer-Cartan element  constructed in Theorem \ref{thm:unobstructedness_of_MC_equation}. Then $\left( t^{-1} e^{\mathsf{l}_t ( \prescript{k}{}{\varphi})}  \lrcorner \volf{k} \right)_{k}$ is called a {\em primitive section} if it further satisfies the condition
	$$
	 t^{-1} \big(  e^{\mathsf{l}_t ( \prescript{k}{}{\varphi})}  \lrcorner \volf{k} -  \prescript{k}{}{\mu} \big) \in \prescript{k}{}{\mathcal{H}}_-
	$$
	for each $k$, where $\volf{k}$ is the element constructed in Proposition \ref{prop:gluing_volume_form}.
\end{definition}

\begin{prop}\label{prop:existence_of_miniversal_section}
	We can modify the Maurer-Cartan element $\varphi = (\prescript{k}{}{\varphi})_k$ constructed in Theorem \ref{thm:unobstructedness_of_MC_equation} by $\varphi \mapsto \varphi + t \zeta$ for some $\zeta =(\prescript{k}{}{\zeta})_k  \in \varprojlim_{k}\polyv{k}^0_{\ecfr{}}[[t]]$ to get a primitive section. Furthermore, $\mathcal{H}_+$ is unchanged under this modification.  
\end{prop}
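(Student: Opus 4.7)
I would proceed by order-by-order induction on $k$, constructing compatible corrections $\prescript{k}{}{\zeta} \in \polyv{k}^0_{\ecfr{}}[[t]]$ with $\prescript{k+1}{}{\zeta} \equiv \prescript{k}{}{\zeta} \pmod{\mathcal{I}^{k+1}}$. The base case $k=0$ is immediate: since $\prescript{0}{}{\varphi} = 0$, we have $e^{\mathsf{l}_t(0)} \lrcorner \volf{0} = \volf{0}$, whose cohomology class equals $\prescript{0}{}{\mu}$ by Notation \ref{not:flat_extension_of_volume_form}, so one may take $\prescript{0}{}{\zeta} := 0$.

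For the inductive step, assuming the primitive section condition has been achieved modulo $\mathcal{I}^k$, I would take an arbitrary Maurer-Cartan lifting $\widetilde{\prescript{k}{}{\varphi}}$ from Theorem \ref{thm:unobstructedness_of_MC_equation} and define the obstruction
$$\prescript{k}{}{\mathcal{O}} := t^{-1}\bigl(e^{\mathsf{l}_t(\widetilde{\prescript{k}{}{\varphi}})} \lrcorner \volf{k} - \prescript{k}{}{\mu}\bigr) \pmod{\prescript{k}{}{\mathcal{H}}_-}.$$
Via the splitting $\mathcal{H}_\pm = \mathcal{H}_+ \oplus \mathcal{H}_-$ from Lemma \ref{lem:uniqueness_of_opposite_filtration}, this represents an element of $\prescript{k}{}{\mathcal{H}}_+$ that vanishes modulo $\mathcal{I}^k$ by the induction hypothesis, and hence lies in $\prescript{0}{}{\mathcal{H}}_+ \otimes_\comp (\mathcal{I}^k/\mathcal{I}^{k+1})$. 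For any $\prescript{k}{}{\zeta} \equiv 0 \pmod{\mathcal{I}^k}$, the expansion $e^{\mathsf{l}_t(\widetilde{\prescript{k}{}{\varphi}} + t\prescript{k}{}{\zeta})} \equiv e^{\mathsf{l}_t(\widetilde{\prescript{k}{}{\varphi}})}\bigl(1 + t\mathsf{l}_t(\prescript{k}{}{\zeta})\bigr) \pmod{\mathcal{I}^{k+1}}$ would yield, modulo $\prescript{k}{}{\mathcal{H}}_-$,
$$t^{-1}\bigl(e^{\mathsf{l}_t(\widetilde{\prescript{k}{}{\varphi}} + t\prescript{k}{}{\zeta})} \lrcorner \volf{k} - \prescript{k}{}{\mu}\bigr) \equiv \prescript{k}{}{\mathcal{O}} + \bigl(\mathsf{l}_t(\prescript{k}{}{\zeta}) \wedge e^{\mathsf{l}_t(\widetilde{\prescript{k}{}{\varphi}})}\bigr) \lrcorner \volf{k} \pmod{\mathcal{I}^{k+1}}.$$
By Definition \ref{def:construction_of_mathcalH_+}, the map $\alpha \mapsto (\mathsf{l}_t(\alpha) \wedge e^{\mathsf{l}_t(\widetilde{\prescript{k}{}{\varphi}})}) \lrcorner \volf{k}$ furnishes an isomorphism $H^*(\polyv{k}_{\ecfr{}}[[t]], \pdb + t\bvd{k} + [\widetilde{\prescript{k}{}{\varphi}},\cdot]) \cong \prescript{k}{}{\mathcal{H}}_+$, so I would choose $\prescript{k}{}{\zeta}$ to be a cocycle representative of minus the cohomology class corresponding to $\prescript{k}{}{\mathcal{O}}$, cancelling the obstruction.

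The main obstacle will be verifying two compatibility conditions. First, the Maurer-Cartan equation must be preserved: using the cocycle property of $\prescript{k}{}{\zeta}$ together with $\prescript{k}{}{\zeta} \equiv 0 \pmod{\mathcal{I}^k}$, direct expansion of $(\pdb + t\bvd{k})(\widetilde{\prescript{k}{}{\varphi}} + t\prescript{k}{}{\zeta}) + \half[\widetilde{\prescript{k}{}{\varphi}} + t\prescript{k}{}{\zeta}, \widetilde{\prescript{k}{}{\varphi}} + t\prescript{k}{}{\zeta}] + \dbobs{k} + t\dvolfobs{k}$ should reduce modulo $\mathcal{I}^{k+1}$ to $t(\pdb + t\bvd{k} + [\widetilde{\prescript{k}{}{\varphi}},\cdot])\prescript{k}{}{\zeta}$, which vanishes. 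Second, to conclude that $\prescript{k}{}{\mathcal{H}}_+$ is unchanged under the modification, the degree identity $\mathsf{l}_t(u) \wedge \mathsf{l}_t(v) = t^{-1}\mathsf{l}_t(u \wedge v)$ gives $\mathsf{l}_t(\alpha) \wedge e^{t\mathsf{l}_t(\zeta)} = \mathsf{l}_t(\alpha \wedge e^\zeta)$, so every section for the deformed Maurer-Cartan element takes the form $(\mathsf{l}_t(\alpha \wedge e^\zeta) \wedge e^{\mathsf{l}_t(\widetilde{\prescript{k}{}{\varphi}})}) \lrcorner \volf{k}$; the assignment $\alpha \mapsto \alpha \wedge e^\zeta$, which is a gauge-type isomorphism between the cohomologies for the two differentials stemming from $\zeta$ being a cocycle, will then show that the submodule $\prescript{k}{}{\mathcal{H}}_+$ of $\mathcal{H}_\pm$ is invariant under the modification.
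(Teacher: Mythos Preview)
Your approach is correct and is precisely what the paper's one-line proof (``a refinement of that of Theorem~\ref{thm:unobstructedness_of_MC_equation} by the same argument as in \cite[Theorem~1]{Barannikov99}'') points to: rerun the order-by-order construction of Theorem~\ref{thm:unobstructedness_of_MC_equation}, and at each step use the residual freedom of adding a $(\prescript{0}{}{\pdb}+t\bvd{0})$-cocycle to force the primitive-section condition.

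One small sharpening for the $\mathcal{H}_+$-invariance: the assertion that $\alpha \mapsto \alpha\wedge e^{\zeta}$ intertwines the differentials $\prescript{k}{}{\pdb}+t\bvd{k}+[\prescript{k}{}{\varphi}+t\prescript{k}{}{\zeta},\cdot]$ and $\prescript{k}{}{\pdb}+t\bvd{k}+[\prescript{k}{}{\varphi},\cdot]$ does not follow from $\zeta$ being a cocycle alone; what one actually needs is $(\prescript{k}{}{\pdb}+t\bvd{k}+[\prescript{k}{}{\varphi},\cdot])\,\zeta+\tfrac{t}{2}[\zeta,\zeta]=0$, i.e.\ exactly the Maurer--Cartan-preserving condition for $\varphi+t\zeta$ (use the BV identity to compute $\bvd{}(e^{\zeta})=(\bvd{}\zeta+\tfrac12[\zeta,\zeta])e^{\zeta}$). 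In your order-by-order scheme this \emph{does} reduce to the linear cocycle condition modulo $\mathcal{I}^{k+1}$, so the argument goes through at each finite order and hence in the limit, but the justification you give should be phrased as ``stemming from $\varphi+t\zeta$ being Maurer--Cartan'' rather than ``from $\zeta$ being a cocycle''.
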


\begin{proof}
	The proof is a refinement of that of Theorem \ref{thm:unobstructedness_of_MC_equation} by the same argument as in \cite[Theorem 1]{Barannikov99}.
\end{proof}

The following theorem concludes this section:

\begin{theorem}\label{thm:construction_of_Frobenius_manifold}
	The triple $(\prescript{k}{}{\mathcal{H}}_+,\gmc{},\langle \cdot,\cdot \rangle )$ is a $\frac{\infty}{2}$-LVHS, and $\prescript{k}{}{\mathcal{H}}_-$ is an opposite filtration. Furthermore, the element $\prescript{k}{}{\xi}:=t^{-1} e^{\mathsf{l}_t(\prescript{k}{}{\varphi})} \lrcorner (\volf{k})$ constructed in Proposition \ref{prop:existence_of_miniversal_section} is a miniversal section in the sense of Definition \ref{def:abstract_primitive_form}. 
\end{theorem}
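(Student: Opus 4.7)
The plan is to assemble the statement from the preparatory material in \S\ref{sec:construction_of_semi_infinite_VHS}--\S\ref{sec:construction_of_miniversal_section}, verifying the defining axioms of Definitions \ref{def:log_semi_infinite_VHS}, \ref{def:opposite_filtration}, and \ref{def:abstract_primitive_form} one by one. For the $\frac{\infty}{2}$-LVHS structure, freeness of $\prescript{k}{}{\mathcal{H}}_+$ over $\cfrk{k}_{\ecfr{}}[[t]]$ follows from Lemma \ref{lem:triviality_of_hodge_bundle} together with Assumption \ref{assum:Hodge_de_rham_degeneracy}, the latter of which ensures that the complex $(\polyv{0}[[t]],\pdb+t\bvd{0})$ has free cohomology and hence that conjugating by $e^{\mathsf{l}_t(\prescript{k}{}{\varphi})/t}$ yields a free $\cfrk{k}_{\ecfr{}}[[t]]$-module. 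The Gauss-Manin connection property $\gmc{}:\prescript{k}{}{\mathcal{H}}_+ \to \frac{1}{t}\logscdrk{k}{1}\otimes \prescript{k}{}{\mathcal{H}}_+$ is Lemma \ref{lem:Hodge_filtration_preserved_by_Gauss_manin}. The symmetry and sesquilinearity of $\langle\cdot,\cdot\rangle$ are direct from the formula in Definition \ref{def:construction_of_higher_residue_pairing} once one unwinds the sign conventions; flatness with respect to $\gmc{}_X$ follows from Lemma \ref{lem:flatness_of_higher_residue_pairing} (plus the fact that $\mathsf{l}_t$ intertwines $\ptd{}_t$ with $\drd{}{}$), and non-degeneracy of the induced pairing on $\prescript{k}{}{\mathcal{H}}_+/t\prescript{k}{}{\mathcal{H}}_+$ is the last assertion of Lemma \ref{lem:pairing_for_semi_infinite_VHS}.

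For the opposite filtration, the decomposition $\prescript{k}{}{\mathcal{H}}_+ \oplus \prescript{k}{}{\mathcal{H}}_- = \prescript{k}{}{\mathcal{H}}_\pm$ will be checked order-by-order: at $k=0$ it is the splitting in Assumption \ref{assum:weighted_filtration_assumption} combined with Remark \ref{rem:hodge_submodule_related_to_hodge_filtration}, and the induction step uses Nakayama together with the flatness of $\prescript{k}{}{\mathcal{H}}_\pm$ over $\cfrk{k}_{\ecfr{}}$. Invariance under $\gmc{}_X$ for $X\in \logscvfk{k}$ is already Lemma \ref{lem:uniqueness_of_opposite_filtration}, and invariance under $\gmc{}_{t\partial_t}$ is part of Proposition \ref{prop:checking_grading_structure}. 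The isotropy condition $\mathbf{w}(\prescript{k}{}{\mathcal{H}}_-,\prescript{k}{}{\mathcal{H}}_-) = 0$ reduces to showing $\langle \prescript{k}{}{\mathcal{H}}_-,\prescript{k}{}{\mathcal{H}}_-\rangle \in \cfrk{k}_{\ecfr{}}[t^{-1}]t^{-2}$, which is the second assertion of Lemma \ref{lem:pairing_for_semi_infinite_VHS}; indeed $\mathrm{Res}_{t=0}\langle \alpha,\beta\rangle\, dt$ then vanishes for $\alpha,\beta \in \prescript{k}{}{\mathcal{H}}_-$.

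For the miniversal section, the compatibility $\prescript{k+1}{}{\xi} = \prescript{k}{}{\xi}\ (\mathrm{mod}\ \mathcal{I}^{k+1})$ is built into the construction of $\varphi$ in Theorem \ref{thm:unobstructedness_of_MC_equation} and preserved by the modification in Proposition \ref{prop:existence_of_miniversal_section}. The conditions $\gmc{}_X \prescript{k}{}{\xi} = 0$ and $\gmc{}_{t\partial_t}\prescript{k}{}{\xi} = \tfrac{-d}{2}\prescript{k}{}{\xi}$ modulo $\prescript{k}{}{\mathcal{H}}_-$ will be verified by computing these connections on $t^{-1}(e^{\mathsf{l}_t(\prescript{k}{}{\varphi})}\lrcorner \volf{k} - \prescript{k}{}{\mu})$, which lies in $\prescript{k}{}{\mathcal{H}}_-$ by the definition of primitive section, and on $t^{-1}\prescript{k}{}{\mu}$ separately, the latter being flat with respect to $\gmc{}_X$ by construction of the elementary sections $\mathfrak{e}_{r;i}$ (Lemma \ref{lem:existence_of_deligne_basis}) and an eigenvector of $\gmc{}_{t\partial_t}$ with eigenvalue $\tfrac{-d}{2}$ from the definition of the grading structure. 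The Kodaira-Spencer isomorphism is the crux: by Notation \ref{not:choice_of_first_order_in_miniversal_element} we chose $\psi \in \polyv{0}[[t]]\otimes \mathbb{V}^{\vee}$ so that its class at $t=0$ maps to the identity in $\mathbb{V}\otimes\mathbb{V}^\vee$ under the quotient $\mathrm{Gr}_{\mathcal{F}}(H^*) \to \mathrm{Gr}_{\mathcal{F}}(H^*)/\mathrm{Im}(\gmc{0}([\volf{0}]))=\mathbb{V}$; combined with the fact that $\gmc{0}([\volf{0}])$ gives the Kodaira-Spencer direction for $X \in \blat_\comp^\vee$, the map $KS: \logscvfk{k}\to \prescript{k}{}{\mathcal{H}}_+/t\prescript{k}{}{\mathcal{H}}_+$ is an isomorphism modulo $\mathcal{I}$, and hence is an isomorphism by Nakayama.

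The main obstacle will be the bookkeeping for the miniversal section: carefully tracking the $t$-powers and the splitting $\prescript{k}{}{\mathcal{H}}_+ \oplus \prescript{k}{}{\mathcal{H}}_-$ when computing $\gmc{}_X \prescript{k}{}{\xi}$ and $\gmc{}_{t\partial_t}\prescript{k}{}{\xi}$ modulo $\prescript{k}{}{\mathcal{H}}_-$, and especially verifying that the Kodaira-Spencer map surjects onto all of $\mathbb{V}$-directions together with the $\blat_\comp^\vee$-directions in a consistent way across orders. The remaining axioms are essentially formal consequences of the preparatory lemmas once one has chosen the correct sign and grading conventions.
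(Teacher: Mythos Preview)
Your proposal is correct and follows essentially the same route as the paper. The paper's own proof is much terser: it opens with ``It remains to check that $\xi$ is a miniversal section,'' treating the $\frac{\infty}{2}$-LVHS axioms and the opposite filtration properties as already established by the preparatory lemmas (exactly the ones you catalogue), and then verifies conditions (2)--(4) of Definition~\ref{def:abstract_primitive_form} via the same splitting $\prescript{k}{}{\xi} = t^{-1}\prescript{k}{}{\mu} + (\text{element of }\prescript{k}{}{\mathcal{H}}_-)$ that you propose, reducing the Kodaira--Spencer isomorphism to $k=0$ by Nakayama.

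One small correction to your wording: $t^{-1}\prescript{k}{}{\mu}$ is not flat for $\gmc{}_\nu$ with $\nu \in \blat_\comp^\vee$; rather $\gmc{}_\nu(t^{-1}\prescript{k}{}{\mu}) = t^{-1}N_\nu(\prescript{k}{}{\mu})$, and it is the weight-lowering property $N_\nu(\mathcal{W}_{\leq r}) \subset \mathcal{W}_{\leq r-1}$ from Assumption~\ref{assum:weighted_filtration_assumption} that forces this into $\prescript{k}{}{\mathcal{H}}_-$. This is exactly how the paper argues. Also, your eigenvalue $-d/2$ for $\gmc{}_{t\partial_t}$ differs from the paper's stated $(1-d)$; your value is what one obtains directly from the definition $\gmc{}_{t\partial_t}(s) = \tfrac{2-d}{2}s$ applied to $t^{-1}\prescript{k}{}{\mu}$, and in any case Definition~\ref{def:abstract_primitive_form}(3) only requires some constant $r \in \comp$, so neither discrepancy affects the argument.
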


\begin{proof}
	It remains to check that $\xi$ is a miniversal section. We write $\xi = \varprojlim_k \prescript{k}{}{\xi}$ and prove the condition for each $k$. First of all, we have $\prescript{k}{}{\xi} \in \prescript{k}{}{\mathcal{H}}_+ \cap t(\prescript{k}{}{\mathcal{H}}_-)$ from its construction, and that $\prescript{k}{}{\xi}  = t^{-1} (\prescript{k}{}{\mu})$ in $t (\prescript{k}{}{\mathcal{H}}_-)/\prescript{k}{}{\mathcal{H}}_-$. So $\gmc{}_\nu (\prescript{k}{}{\xi}) = t^{-1} (\gmc{}_\nu)(\prescript{k}{}{\mu}) = t^{-1} N_\nu (\prescript{k}{}{\mu}) \in \prescript{k}{}{\mathcal{H}}_-$ for any $\nu \in \blat_\comp^\vee$. We have computed the action of $\gmc{}_{t\dd{t}}$ in the proof of Proposition \ref{prop:checking_grading_structure}, and the formula gives $\gmc{}_{t\dd{t}} ( t^{-1} e^{\mathsf{l}_t(\prescript{k}{}{\varphi})} ) \in (1-d) e^{\mathsf{l}_t(\prescript{k}{}{\varphi})} + (\prescript{k}{}{\mathcal{H}}_-)$. Therefore we have $\gmc{}_{t\dd{t}} (\prescript{k}{}{\xi}) = (1-d) (\prescript{k}{}{\xi})$ in $(\prescript{k}{}{\mathcal{H}}_-)/\prescript{k}{}{\mathcal{H}}_-$. Finally, to check that the Kodaira-Spencer map is an isomorphism, we only need to show this for $\logsck{0}$, which follows from our choice of the input $\psi$ for solving the Maurer-Cartan equation \eqref{eqn:Maurer_Cartan_equation_unobstructedness} in Theorem \ref{thm:unobstructedness_of_MC_equation}.
\end{proof}

Example \ref{ex:log-smooth-VIII}, Example \ref{ex:log-smooth-IX} and Theorem \ref{thm:construction_of_Frobenius_manifold} together gives the following corollary. 
\begin{corollary}\label{cor:Kawamata-Namikawa-2}
	There exists a structure of logarithmic Frobenius manifold on the formal extended moduli $\logscf$ near the log smooth Calabi-Yau variety $(X,\mathcal{O}_X)$ in Example \ref{ex:log-smooth-I}. 
\end{corollary}

\begin{remark}
	Following \cite{Barannikov99,li2013variation}, we can define the semi-infinite period map 
	$
	\Phi : \logscf \rightarrow t\mathcal{H}_-/\mathcal{H}_-
	$
	as $\Phi(s) : = [e^{\mathsf{l}_t(\varphi(s,t))} \lrcorner \volf{} - \mu]$. In the case of maximally degenerate log Calabi-Yau varieties studied in \cite{Gross-Siebert-logII}, we expect this gives the canonical coordinates on the (extended) moduli space. 
\end{remark}
\section{Smoothing of maximally degenerate log Calabi-Yau varieties}\label{sec:application_to_gross_siebert_program}

In this section, we apply our results to the case of maximally degenerate log Calabi-Yau varieties studied by Kontsevich-Soibelman \cite{kontsevich-soibelman04} and Gross-Siebert in \cite{Gross-Siebert-logI,Gross-Siebert-logII, gross2011real}. We will mainly follow \cite{Gross-Siebert-logI,Gross-Siebert-logII} and assume the reader is familiar with these papers.

\begin{notation}\label{not:gross_siebert_affine_manifold_polytope_notation}
	The characteristic 0 algebraically closed field $\Bbbk$ in \cite{Gross-Siebert-logI} is always chosen to be $\comp$. 
	We work with a $d$-dimensional integral affine manifold $B$ with holonomy in $\inte^d \rtimes \text{SL}_d(\inte)$ and codimension $2$ singularities $\Delta$ as in \cite[Definition 1.15]{Gross-Siebert-logI}, together with a toric polyhedral decomposition $\mathcal{P}$ of $B$ into lattice polytopes as in \cite[Definition 1.22]{Gross-Siebert-logI}. 
	Following \cite{Gross-Siebert-logI}, we take $Q = \mathbb{N}$ for simplicity. We also fix an open gluing data $\mathbf{s}$ as in Definition \cite[Definition 2.25]{Gross-Siebert-logI} for the pair $(B,\mathcal{P})$ satisfying the lifting condition in \cite[Proposition 4.25]{Gross-Siebert-logI}.
\end{notation}

\begin{assum}\label{assum:gross_siebert_simple_and_positive}
	We assume that $(B,\mathcal{P})$ satisfies the assumption in \cite[Theorem 3.21]{Gross-Siebert-logII} (in order to get Hodge-to-de Rham degeneracy using results from \cite{Gross-Siebert-logII}). 
\end{assum}

\begin{definition}\label{def:0_order_space_from_gross_siebert}
	Given $(B,\mathcal{P},\mathbf{s})$, we let $(X,\mathcal{O}_X)$ be the $d$-dimensional complex analytic space given by the analytification of the log scheme $X_0(B,\mathcal{P},\mathbf{s})$ constructed in \cite[Theorem 5.2]{Gross-Siebert-logI}. It is  equipped with a log structure over the $Q$-log point $\logsk{0}$.
\end{definition}

We denote the log-space by $X^{\dagger}$ if we want to emphasize the log-structure. Let $Z \subset X$ be the codimension $2$ singular locus of the log-structure (i.e. $X^{\dagger}$ is log-smooth away from $Z$) and $j : X\setminus Z \rightarrow X$ be the inclusion as in \cite{Gross-Siebert-logII}.

\subsection{The $0^{\text{th}}$-order deformation data}\label{sec:0_order_data_from_gross_siebert}

Following the notations from \cite{Gross-Siebert-logI, Gross-Siebert-logII}, the $0^{\text{th}}$-order deformation data in Definition \ref{def:0_order_data} is described as follows: 
\begin{definition}\label{def:0_order_data_from_gross_siebert}
	\begin{itemize}
		\item
		the $0^{\text{th}}$-order complex of polyvector fields is given by the pushforward of the analytic sheaf of relative polyvector fields
		$\bva{0}^* = j_* ( \bigwedge^{-*} \Theta_{X^{\dagger}/\logsk{0}}) $ equipped with the natural wedge product;
		
		\item
		the $0^{\text{th}}$-order de Rham complex is given by the pushforward of the analytic sheaf of de Rham differential forms $\tbva{0}{}^*:= j_* (\Omega^*_{X^{\dagger}/\comp})$, equipped with the de Rham differential $\dpartial{0} = \dpartial{}$ as in \cite[first paragraph of \S 3.2]{Gross-Siebert-logII};
		
		\item
		the volume element $\volf{0}$ is given via the trivialization $j_*(\Omega^d_{X^{\dagger}/\logsk{0}}) \cong \mathcal{O}_X$ by \cite[Theorem 3.23]{Gross-Siebert-logII}, and then the BV operator is defined by $\bvd{0}(\varphi) \lrcorner \volf{0}:= \dpartial{0} (\varphi \lrcorner \volf{0})$.
	\end{itemize}
\end{definition}

The map $\gmiso{0}{r}^{-1} : \logsdrk{0}{r} \otimes_\comp (\tbva{0}{0}^*/ \tbva{0}{1}^*[-r] ) \rightarrow  \tbva{0}{r}^*/ \tbva{0}{r+1}^*$ given by taking wedge product in $ j_* (\Omega^*_{X^{\dagger}/\comp}) $ is a morphism of sheaves of BV modules.

To show that the data in Definition \ref{def:0_order_data_from_gross_siebert} satisfies all the conditions in Definition \ref{def:0_order_data}, we need to verify that $\bva{0}^*$ and $\tbva{0}{}^*$ are coherent, $\gmiso{0}{r}$ is an isomorphism and there is an identification $\rdr{0}{}^* = \tbva{0}{0}^*/ \tbva{0}{1}^* \cong  j_*(\Omega^*_{X^{\dagger}/\logsk{0}})$. Let us briefly explain how to obtain all these from \cite{Gross-Siebert-logII}. 

\begin{notation}\label{not:local_model}
Following \cite[Construction 2.1]{Gross-Siebert-logII}, we consider the monoids $\mathcal{Q}, P$, the corresponding toric varieties $\mathbb{V} = \text{Spec}(\comp[P])$ and $\mathsf{V} = \text{Spec}(\comp[\mathcal{Q}])$ and the associated analytic spaces $\mathbf{V} = \mathbb{V}^{an}$ and $\mathscr{V} = \mathsf{V}^{an}$ respectively. $\mathbb{V}$ is equipped with a divisorial log structure induced from the divisor $\mathsf{V}$, and $\mathsf{V}$ is equipped with the pull-back of the log structure from $\mathbb{V}$.  
\cite[Theorem 2.6]{Gross-Siebert-logII} shows that for every geometric point $\bar{x} \in X_0(B,\mathcal{P},\mathbf{s})$, there is an \'{e}tale neighborhood $\mathsf{W}$ of $\bar{x}$ which can be identified with an \'{e}tale neighborhood of $\mathsf{V}$ as a log scheme in the sense that there are \'{e}tale maps
$$
\xymatrix@1{ & \mathsf{W} \ar[dr] \ar[dl] & \\
	\mathsf{V} & & X_0(B,\mathcal{P},\mathbf{s}).}
$$
Taking analytification of these maps, we can find an open subset (or a sheet) $W \subset \mathsf{W}^{an}$ mapping homeomorphically to both an open subset in $\mathsf{V}^{an}$ and an open subset in $V \subset X$. 
\end{notation}

The desired statements are local on $X$, and we work on the local model $\mathscr{V}$. As in \cite[proof of Proposition 1.12]{Gross-Siebert-logII}, we let $\tilde{\mathbb{V}}$ be the log scheme equipped with the smooth divisorial log structure induced by the boundary toric divisor $\text{Spec}(\comp[\partial P])$. Then we have $j_*(\Omega^{\text{alg},*}_{\mathsf{V}^{\dagger}/\comp}) \hookrightarrow j_* \left( \Omega^{\text{alg},*}_{\tilde{\mathbb{V}}^{\dagger}/\comp} \right)|_{\mathsf{V} \setminus Z} = \left( \Omega^{\text{alg},*}_{\tilde{\mathbb{V}}^{\dagger}/\comp} \right)|_{\mathsf{V}}$,
where the notation $\Omega^{\text{alg},*}$ refers to algebraic sheaves. From these we obtain the identification $\left\lbrack j_*(\Omega^{\text{alg},*}_{\mathsf{V}^{\dagger}/\comp}) \right\rbrack^{an} = j_*(\Omega^*_{\mathscr{V}^{\dagger}/\comp})$ which globalizes to give $\left\lbrack j_*(\Omega^{\text{alg},*}_{X_0(B,\mathcal{P},\mathbf{s})^{\dagger}/\comp}) \right\rbrack^{an} = j_*(\Omega^*_{X^{\dagger}/\comp})$, and similarly, $\left\lbrack j_*(\Omega^{\text{alg},*}_{X_0(B,\mathcal{P},\mathbf{s})^{\dagger}/\logsk{0}}) \right\rbrack^{an} = j_*(\Omega^*_{X^{\dagger}/\logsk{0}})$.  
Because both $ j_*(\Omega^{\text{alg},*}_{X_0(B,\mathcal{P},\mathbf{s})^{\dagger}/\comp}) $ and $ j_*(\Omega^{\text{alg},*}_{X_0(B,\mathcal{P},\mathbf{s})^{\dagger}/\logsk{0}}) $ are coherent sheaves, so are $\bva{0}^*$ and $\tbva{0}{}^*$ via the analytification functor. Taking analytification of the exact sequence
\begin{multline*}
0 \rightarrow \logsdrk{0}{1} \otimes_{\comp} j_*(\Omega^{\text{alg},*}_{X_0(B,\mathcal{P},\mathbf{s})^{\dagger}/\logsk{0}})[-1] \rightarrow \\ j_*(\Omega^{\text{alg},*}_{X_0(B,\mathcal{P},\mathbf{s})^{\dagger}/\comp}) \rightarrow  j_*(\Omega^{\text{alg},*}_{X_0(B,\mathcal{P},\mathbf{s})^{\dagger}/\logsk{0}}) \rightarrow 0
\end{multline*}
in \cite[line 4 in proof of Theorem 5.1]{Gross-Siebert-logII}, we see that $\gmiso{0}{r}$ is an isomorphism and we also obtain the identification $\rdr{0}{}^* =\tbva{0}{0}^*/ \tbva{0}{1}^* \cong  j_*(\Omega^*_{X^{\dagger}/\logsk{0}})$.

\subsection{The higher order deformation data}\label{sec:higher_order_deformation_data_from_Gross_siebert}

From Notation \ref{not:local_model}, we have, at every point $\bar{x} \in X^{\dagger}$, an analytic neighborhood $V$ together with a log space $\mathbf{V}^{\dagger}$ and a log morphism $\pi : \mathbf{V}^{\dagger} \rightarrow \logs$ such that the diagram
\begin{equation}\label{eqn:local_model}
\xymatrix@1{ V \  \ar@{^{(}->}[rr] \ar[d] & & \mathbf{V}^{\dagger} \ar[d]^{\pi}\\
\logsk{0} \  \ar@{^{(}->}[rr] & &\logs 
}
\end{equation}
is a fiber product of log spaces. We fix an open covering $\mathcal{V}$ by Stein open subsets $V_{\alpha}$'s with local thickening $\mathbf{V}^{\dagger}_\alpha$'s given as above, and write $\prescript{k}{}{\mathbf{V}}_{\alpha}^{\dagger}$ for the $k^{\text{th}}$-order thickening over $\logsk{k}$. We also write $j: V_{\alpha} \setminus Z \rightarrow V_{\alpha}$ for the inclusion.

The higher order deformation data in Definitions \ref{def:higher_order_data} and \ref{def:higher_order_data_module} are described as follows:
\begin{definition}\label{def:higher_order_thickening_data_from_gross_siebert}
	For each $k \in \inte_{\geq 0}$,
	\begin{itemize}
		\item
		the $k^{\text{th}}$-order polyvector fields is given by $\bva{k}_{\alpha}^* := j_* ( \bigwedge^{-*} \Theta_{\prescript{k}{}{\mathbf{V}}_{\alpha}^{\dagger}/\logsk{k}})$ (i.e. polyvector fields on $\prescript{k}{}{\mathbf{V}}_\alpha^{\dagger}$);
		
		\item
		the $k^{\text{th}}$-order de Rham complex is given by $\tbva{k}{}^*_\alpha := j_* (\Omega^*_{\prescript{k}{}{\mathbf{V}}_{\alpha}^{\dagger}/\comp})$ (i.e. the space of log de Rham differentials) equipped with the de Rham differential $\dpartial{k}_{\alpha} = \dpartial{}$ which is naturally a dg module over $\logsdrk{k}{*}$; 
		
		\item
		the local $k^{\text{th}}$-order volume element is given by a lifting $\volf{}_\alpha$ of $\volf{0}$ as an element in $j_*(\Omega^d_{\mathbf{V}_{\alpha}^{\dagger}/\logs})$ and taking $\volf{k}_{\alpha} = \volf{}_{\alpha} \ (\text{mod $\mathbf{m}^{k+1}$})$, and then the BV operator is defined by $\bvd{k}_{\alpha}(\varphi) \lrcorner \volf{k} := \dpartial{k}_{\alpha} (\varphi \lrcorner \volf{k})$;
		
		\item
		the morphism $\gmiso{k}{r}^{-1} :\logsdrk{k}{r} \otimes_{\cfrk{k}} (\tbva{k}{0}^*_\alpha/ \tbva{k}{1}^*_\alpha[-r]) \rightarrow \tbva{k}{r}^*_\alpha/ \tbva{k}{r+1}^*_\alpha$ of sheaves of BV modules is given by taking wedge product.
	\end{itemize}
\end{definition}

For both $\tbva{k}{}_{\alpha}^*$'s and $\bva{k}_{\alpha}^*$'s, the natural restriction map $\rest{k+1,k}_{\alpha}$ is given by the isomorphism $\prescript{k}{}{\mathbf{V}}_{\alpha}^{\dagger} \cong \prescript{k+1}{}{\mathbf{V}}_{\alpha}^{\dagger} \times_{\logsk{k+1}} \logsk{k}$.

Similar to the $0^{\text{th}}$-order case, we need to check that $\bva{k}^*_{\alpha}$ and $\tbva{k}{}^*_{\alpha}$ are coherent sheaves which are free over $\cfrk{k}$ for each $k$, and that $\gmiso{k}{r}$ is an isomorphism which induces an identification $\rdr{k}{}^*_{\alpha} \cong j_*(\Omega^*_{(\prescript{k}{}{\mathbf{V}}_{\alpha}^{\dagger}/\logsk{k})})$. Such verification can be done using \cite[Proposition 1.12 and Corollary 1.13]{Gross-Siebert-logII}, using similar argument as in \S \ref{sec:0_order_data_from_gross_siebert}. 

\subsubsection{Higher order patching data}\label{sec:higher_order_patching_data_from_gross_siebert}

To obtain the patching data we again need to take suitable analytification of statements from \cite{Gross-Siebert-logII}. Given $\bar{x} \in V_{\alpha\beta}$, we consider the following diagram of \'{e}tale neighborhoods
$$
\xymatrix@1{ & & \mathsf{W}_{\alpha} \times_{X_0} \mathsf{W}_{\beta} \ar[dr] \ar[dl] \ar[dd]& & \\
&\mathsf{W}_{\alpha} \ar[dr] \ar[dl]& & \mathsf{W}_{\beta} \ar[dr] \ar[dl] & \\
\prescript{k}{}{\mathbb{V}}_{\alpha} \supset \mathsf{V}_\alpha \ \ \	& & X_0 & & \ \ \ \ \mathsf{V}_{\alpha} \subset \prescript{k}{}{\mathbb{V}}_{\beta},}
$$ 
where $X_0 = X_0(B,\mathcal{P},\mathbf{s})$, and $\prescript{k}{}{\mathbb{V}}_{\alpha}$ (resp. $\prescript{k}{}{\mathbb{V}}_{\beta}$) is the $k^{\text{th}}$-order neighborhood of $\mathsf{V}_{\alpha}$ (resp. $\mathsf{V}_{\beta}$). Using \cite[Lemma 2.15]{Gross-Siebert-logII} on local uniqueness of thickening (see also \cite{ruddat2018local} for a more detailed study on local uniqueness), and further passing to an \'{e}tale cover $\mathsf{W}_{\alpha\beta}$ of $\mathsf{W}_{\alpha} \times_{X_0} \mathsf{W}_{\beta}$, we get an isomorphism 
$$
\prescript{k}{}{\Xi}_{\alpha\beta,i} : \mathsf{W}_{\alpha\beta} \times_{\mathsf{V}_{\alpha}} \prescript{k}{}{\mathbb{V}}_{\alpha} \overset{\cong}{\longrightarrow} \mathsf{W}_{\alpha\beta} \times_{\mathsf{V}_{\beta}} \prescript{k}{}{\mathbb{V}}_{\beta}.
$$
Taking analytification, we can find a (small enough) open subset in $(\mathsf{W}_{\alpha\beta})^{an}$ mapping homeomorphically onto a Stein open neighborhood $U_i \subset V_{\alpha\beta}$ of $\bar{x}$.
	
\begin{definition}\label{def:higher_order_patching_data_from_gross_siebert}
	Restriction of the analytification of $\prescript{k}{}{\Xi}_{\alpha\beta,i}$ on $U_i$ gives the gluing map $\prescript{k}{}{\Psi}_{\alpha\beta,i}$:
	$$
	\xymatrix@1{ \prescript{k}{}{\mathbf{V}}_{\alpha}^{\dagger}|_{U_i} \ar[rr]^{\prescript{k}{}{\Psi}_{\alpha\beta,i}} \ar[d]^{\pi_\alpha}& &  \prescript{k}{}{\mathbf{V}}_{\beta}^{\dagger}|_{U_i} \ar[d]^{\pi_{\beta}}\\
	\logsk{k}	\ar@{=}[rr]& & \logsk{k}
	}
	$$
	The patching isomorphisms $$\patch{k}_{\alpha\beta,i} :  j_* ( \bigwedge^{-*} \Theta_{\prescript{k}{}{\mathbf{V}}_{\alpha}^{\dagger}/\logsk{k}})|_{U_i} \rightarrow  j_* ( \bigwedge^{-*} \Theta_{\prescript{k}{}{\mathbf{V}}_\beta^{\dagger}/\logsk{k}})|_{U_i}$$ and $\hpatch{k}_{\alpha\beta,i}$ are then induced by $\prescript{k}{}{\Psi}_{\alpha\beta,i}$. 
\end{definition}
	
The existence of the vector fields $\resta{k,l}_{\alpha\beta,i}$, $\patchij{k}_{\alpha\beta,ij}$ and $\cocyobs{k}_{\alpha\beta\gamma,i}$ in Definition \ref{def:higher_order_patching} follows from the analytic version of \cite[Theorem 2.11]{Gross-Siebert-logII} which says that any log automorphism of the space $\prescript{k}{}{\mathbf{V}}_{\alpha}^{\dagger}|_{U_i} $ (resp. $\prescript{k}{}{\mathbf{V}}_{\alpha}^{\dagger}|_{U_{ij}}$) fixing $X|_{U_i}$ (or $X|_{U_{ij}}$) is obtained by exponentiating the action of a vector field in $\Theta_{\prescript{k}{}{\mathbf{V}}_{\alpha}^{\dagger}/\logsk{k}}(U_i)$ (resp. $\Theta_{\prescript{k}{}{\mathbf{V}}_{\alpha}^{\dagger}/\logsk{k}}(U_{ij})$). The element $\bvdobs{k}_{\alpha\beta,i}$ in Definition \ref{def:higher_order_module_patching} indeed measures the difference between the volume elements, namely, $\prescript{k}{}{\Psi}_{\alpha\beta,i}^*(\volf{k}_{\beta})= \exp(\bvdobs{k}_{\alpha\beta,i} \lrcorner )\volf{k}_{\alpha}$. 
	
\subsubsection{Criterion for freeness of the Hodge bundle} 
To verify Assumption \ref{assum:local_assumption_for_triviality_of_hodge_bundle}, which is needed for proving the freeness of the Hodge bundle in \S \ref{sec:proof_of_triviality_of_hodge_bundle}, notice that by taking $Q = \mathbb{N}$, we are already in the situation of a $1$-parameter family. The holomorphic Poincar\'{e} Lemma in Assumption \ref{assum:local_assumption_for_triviality_of_hodge_bundle} follows by taking the analytification of the results from \cite[proof of Theorem 4.1]{Gross-Siebert-logII}. (As aforementioned, there was a gap in \cite[proof of Theorem 4.1]{Gross-Siebert-logII} as pointed out and filled by \cite{Felten-Filip-Ruddat}; readers may see \cite[Theorem 1.10]{Felten-Filip-Ruddat} for details.) 
	
\subsection{The Hodge theoretic data}\label{sec:0_th_order_hodge_data_from_gross_siebert}
Since we have
$$\left\lbrack j_*(\Omega^{\text{alg},*}_{X_0(B,\mathcal{P},\mathbf{s})^{\dagger}/\logsk{0}}) \right\rbrack^{an} = j_*(\Omega^*_{X^{\dagger}/\logsk{0}}),$$
the Hodge-to-de Rham degeneracy (Assumption \ref{assum:Hodge_de_rham_degeneracy}) follows by applying Serre's GAGA theorems \cite{serre1956geometrie} to \cite[Theorem 3.26]{Gross-Siebert-logII} using the same argument as in the proof of Grothendieck's algebraic de Rham theorem.
Applying Theorem \ref{thm:unobstructedness_of_MC_equation} and Proposition \ref{prop:Maurer_Cartan_give_consistent_gluing}, we obtain an alternative proof of the following unobstructedness result due to Gross-Siebert \cite{gross2011real}:
\begin{corollary}\label{prop:gs_unobstructedness}
  	Under Assumption \ref{assum:gross_siebert_simple_and_positive}, the complex analytic space $(X,\mathcal{O}_X)$ is smoothable, i.e. there exists a $k^{\text{th}}$-order thickening $(\prescript{k}{}{X},\prescript{k}{}{\mathcal{O}})$ over $\logsk{k}$ locally modeled on $\prescript{k}{}{\mathbf{V}}_{\alpha}$ for each 
  	for each $k \in \inte_{\geq 0}$, and these thickenings are compatible.
\end{corollary}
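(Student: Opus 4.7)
The plan is to verify that $(X,\mathcal{O}_X)$ fits into the abstract framework of Sections \ref{sec:abstract_framework}--\ref{sec:abstract_theorem} and then invoke Theorem \ref{thm:unobstructedness_of_MC_equation} together with Proposition \ref{prop:Maurer_Cartan_give_consistent_gluing}. First I would check that the data set up in \S\ref{sec:0_order_data_from_gross_siebert}--\S\ref{sec:higher_order_patching_data_from_gross_siebert} indeed satisfies all conditions of Definitions \ref{def:0_order_data}, \ref{def:higher_order_data}, \ref{def:higher_order_patching}, \ref{def:higher_order_data_module} and \ref{def:higher_order_module_patching}. Most of this has been sketched already: coherence and flatness of $\bva{k}^*_\alpha$ and $\tbva{k}{}^*_\alpha$ over $\cfrk{k}$, as well as the isomorphism $\gmiso{k}{r}$, follow by taking analytifications of \cite[Proposition 1.12 and Corollary 1.13]{Gross-Siebert-logII}; the patching isomorphisms $\patch{k}_{\alpha\beta,i}$ and $\hpatch{k}_{\alpha\beta,i}$ come from the uniqueness-of-thickening statement \cite[Lemma 2.15]{Gross-Siebert-logII}; and the existence of the vector fields $\resta{k,l}_{\alpha\beta,i}$, $\patchij{k}_{\alpha\beta,ij}$, $\cocyobs{k}_{\alpha\beta\gamma,i}$, together with the holomorphic function $\bvdobs{k}_{\alpha\beta,i}$, follows from the analytic version of \cite[Theorem 2.11]{Gross-Siebert-logII}.

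Second, I would verify the two assumptions needed for the abstract unobstructedness theorem. Assumption \ref{assum:local_assumption_for_triviality_of_hodge_bundle} is local and follows from analytifying the holomorphic Poincar\'e lemma established in \cite[proof of Theorem 4.1]{Gross-Siebert-logII}; in fact this assumption was modeled on exactly that computation, and since $Q=\mathbb{N}$ there is nothing extra to do regarding the $1$-parameter reduction along $\mathtt{n} \in \mathrm{int}(Q^\vee_\real)\cap\blat^\vee$. Assumption \ref{assum:Hodge_de_rham_degeneracy}, i.e.\ Hodge-to-de Rham degeneracy for $(\totaldr{0}{\parallel}^*, \drd{0}{})$, is obtained by applying Serre's GAGA to \cite[Theorem 3.26]{Gross-Siebert-logII}: projectivity of $X = X_0(B,\mathcal{P},\mathbf{s})$, which is precisely what the strictly convex piecewise linear function $\varphi$ in Assumption \ref{assum:gross_siebert_simple_and_positive} ensures, identifies \v{C}ech hypercohomology computed with algebraic affine covers and with analytic Stein covers on the $E_1$-page.

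With these in place, Theorem \ref{thm:unobstructedness_of_MC_equation} (taking $\ecfr{}=\comp$ and any convenient initial $\psi$, e.g.\ $\psi=0$) produces a compatible system of Maurer-Cartan elements $\prescript{k}{}{\varphi}\in\polyv{k}^0[[t]]$. Lemma \ref{lem:from_descendant_maurer_cartan_to_classical_maurer_cartan} then extracts a classical Maurer-Cartan element $\prescript{k}{}{\psi}\in\polyv{k}^{-1,1}$ with $\prescript{k+1}{}{\psi}=\prescript{k}{}{\psi}\ (\mathrm{mod}\ \mathbf{m}^{k+1})$, and Proposition \ref{prop:Maurer_Cartan_give_consistent_gluing} translates this into a compatible system of isomorphisms of sheaves of Gerstenhaber algebras $\prescript{k}{}{\mathbf{g}}_{\alpha\beta}:\bva{k}^*_\alpha|_{V_{\alpha\beta}}\to\bva{k}^*_\beta|_{V_{\alpha\beta}}$ satisfying the cocycle condition, unique up to equivalence.

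Finally, one must upgrade these gluings of sheaves of polyvector fields to genuine geometric gluings of the local thickenings $\prescript{k}{}{\mathbf{V}}^{\dagger}_\alpha$ producing a global analytic space $(\prescript{k}{}{X},\prescript{k}{}{\mathcal{O}})$ over $\logsk{k}$. The degree $0$ component of $\prescript{k}{}{\mathbf{g}}_{\alpha\beta}$ is, away from $Z$, an isomorphism of the relevant sheaves of $\cfrk{k}$-algebras $\mathcal{O}_{\prescript{k}{}{\mathbf{V}}^{\dagger}_\alpha/\logsk{k}}|_{U_i\setminus Z}\to\mathcal{O}_{\prescript{k}{}{\mathbf{V}}^{\dagger}_\beta/\logsk{k}}|_{U_i\setminus Z}$; since both of these sheaves agree with the push-forwards $j_*$ of their restrictions to $V_\alpha\setminus Z$ (by \cite[Proposition 1.12]{Gross-Siebert-logII}), the isomorphism extends uniquely across $Z$, giving genuine isomorphisms of structure sheaves, which in turn can be enhanced with the log structure using the analytic version of \cite[Theorem 2.11]{Gross-Siebert-logII} once more. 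The cocycle condition for $\prescript{k}{}{\mathbf{g}}_{\alpha\beta}$ together with compatibility across orders then allows us to glue the $\prescript{k}{}{\mathbf{V}}^{\dagger}_\alpha$'s into the desired compatible family of thickenings $(\prescript{k}{}{X},\prescript{k}{}{\mathcal{O}})$. The main obstacle is this final step: the passage from the \v{C}ech-Thom-Whitney algebraic gluing to actual scheme-theoretic (or analytic-space-theoretic) gluing, where one has to be careful both about extending isomorphisms across the codimension $2$ locus $Z$ and about carrying along the log structure and the map to $\logsk{k}$.
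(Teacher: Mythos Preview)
Your proposal is correct and follows essentially the same approach as the paper: the paper's proof amounts to the sentence preceding the corollary, namely that after the verifications in \S\ref{sec:0_order_data_from_gross_siebert}--\S\ref{sec:0_th_order_hodge_data_from_gross_siebert} one applies Theorem \ref{thm:unobstructedness_of_MC_equation} and Proposition \ref{prop:Maurer_Cartan_give_consistent_gluing} (with Lemma \ref{lem:from_descendant_maurer_cartan_to_classical_maurer_cartan} implicitly bridging the two, as in the statement of Theorem \ref{thm:theorem_1_intro}). Your write-up is in fact more explicit than the paper about the final passage from the Gerstenhaber-algebra gluings $\prescript{k}{}{\mathbf{g}}_{\alpha\beta}$ to a genuine analytic thickening, a step the paper leaves implicit in the phrase ``geometric gluing morphisms''.
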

  
\subsection{F-manifold structure near a LCSL}
 
Finally we demonstrate how to apply Theorem \ref{thm:construction_of_Frobenius_manifold} to the Gross-Siebert setting.

\subsubsection{The universal monoid $Q$}
First of all, we consider $(B,\mathcal{P})$ as in Notation \ref{not:gross_siebert_affine_manifold_polytope_notation} and work with the cone picture as in \cite{gross2011real}. We also need the notion of an multivalued integral piecewise affine function on $B$ as described before \cite[Remark 1.15]{gross2011real}. Let $\text{MPA}(B,\mathbb{N})$ be the monoid of multivalued convex integral piecewise affine function on $B$, take $Q = \Hom (\text{MPA}(B,\mathbb{N}),\mathbb{N})$ to be the universal monoid and consider the universal multivalued strictly convex integral piecewise affine function $\varphi : B \rightarrow Q$ as in \cite[equation A.2]{GHKS2016theta} (it was denoted as $\breve{\varphi}$ there). Since we work in the cone picture, we fix an open gluing data $\mathbf{s}$ as in \cite[Definition 1.18]{gross2011real} and replace the monodromy polytopes in Assumption \ref{assum:gross_siebert_simple_and_positive} by the dual monodromy polytopes associated to each $\tau \in \mathcal{P}$. 
  
\subsubsection{Construction of $X^{\dagger}=X_0(B,\mathcal{P},\mathbf{s},\varphi)^{\dagger}$}
  
We now take an element $\mathtt{n} \in \text{int}_{re}(Q^{\vee}_{\real}) \cap \blat^{\vee}$ and define a multivalued strictly convex piecewise affine function $\varphi_{\mathtt{n}}:B \rightarrow \real$. The cone picture construction described in \cite[Construction 1.17]{gross2011real} gives a log scheme $X^{\dagger}_{\mathtt{n}}=X_0(B,\mathscr{P},s,\varphi_{\mathtt{n}})^{\dagger}$ over $\comp^{\dagger}$ (here $\comp^{\dagger}$ is the standard $\inte_+$-log point) which is log smooth away from a codimension $2$ locus $ i : Z \hookrightarrow X$. \cite[Construction A.6]{GHKS2016theta} then gives a log scheme $X^{\dagger}$ (with the same underlying scheme as $X^{\dagger}_{\mathtt{n}}$) over $\logsk{0}$. Definition \ref{def:0_order_data_from_gross_siebert} can be carried through.
  
\subsubsection{Local model on thickening of $X^{\dagger}$}
For each $\tau \in \mathcal{P}$, let $\mathscr{Q}_{\tau}$ be the normal lattice as defined in \cite[Definition 1.33]{Gross-Siebert-logI}. We denote by $\Sigma_{\tau}$ the normal fan of $\tau$ defined in \cite[Definition 1.35]{Gross-Siebert-logI} on $\mathscr{Q}_{\tau,\real}$, equipped with the strictly convex piecewise linear function $\varphi : |\Sigma_{\tau}| = \mathscr{Q}_{\tau,\real}  \rightarrow Q_{\real}^{\text{gp}}$ induced by $\varphi$. We let $\check{\Delta}_1,\dots,\check{\Delta}_r$ be the dual monodromy polytopes associated to $\tau$ as defined in \cite[Definitions 1.58 \& 1.60]{Gross-Siebert-logI}, and $\psi_{i}(m):= -\inf \{ \langle m,n \rangle \ | \ m \in \mathscr{Q}_{\tau,\real},\ n \in \Delta_i \}$ be the integral piecewise linear function on $\mathscr{Q}_{\tau,\real}$. 
  
We define monoids $P_\tau$ and $\mathcal{Q}_\tau$ by
\begin{align*}
	P_{\tau}  := &\{  (m,a_0,\dots,a_r) \ | \ m \in \mathscr{Q}_{\tau}, \ a_0 \in Q^{\text{gp}},\ a_i \in \inte, \ a_0 - \varphi(m) \in Q,\\
	& a_i - \psi_i(m) \geq 0 \ for \ 1 \leq i\leq r  \},\\
	\mathcal{Q}_{\tau} :=& \{  (m,a_0,\dots,a_r) \ | \ m \in \mathscr{Q}_{\tau}, \ a_0 \in Q^{\text{gp}},\ a_i \in \inte,\ a_0 = \varphi(m) \} \cup \{\infty\},
\end{align*}
where the monoid structure on $\mathcal{Q}_\tau$ is given as in \cite[p. 22 in Construction 2.1]{Gross-Siebert-logII}. Also let $\mathbb{V}_\tau = \text{Spec}(\comp[P_{\tau}])$ which comes with a natural family $\pi : \mathbb{V}_\tau \rightarrow \text{Spec}(\comp[Q]) = \logs$, $\mathsf{V}_\tau = \pi^{-1}(0) = \text{Spec}(\comp[\mathcal{Q}_{\tau}])$ and $\prescript{k}{}{\mathbb{V}}_\tau = \pi^{-1}(\logsk{k})$ be the $k$-th order thickening of $\mathsf{V}_\tau$ in $\mathbb{V}_\tau$.
  
For $i = 1,\dots,r$ and a vertex $v \in \check{\Delta}_i$, we define a submonoid $\mathcal{D}_{i,v}:=w_{i,v}^{\perp} \cap P_\tau$, where $w_{i,v} = v + e_i^{\vee}$, and let $D_{i,v}$ be the corresponding toric divisor of $\mathbb{V}_\tau$. To simplify notations, we often omit the dependence on $v$ and write $w_i$, $\mathcal{D}_i$, $D_i$ instead of $w_{i,v}$, $\mathcal{D}_i$, $D_i$. 
Let $v_1,\dots,v_l$ be the generators of $1$-dimensional cones in the dual cone $P_{\tau}^{\vee}$ other than the $w_j$'s, with corresponding toric divisors $\mathscr{D}_1, \dots, \mathscr{D}_l$. Writing $\mathscr{D} = \bigcup_{j} \mathscr{D}_j$, we equip $\mathbb{V}_\tau$ with the divisorial log structure induced by the divisor $\mathscr{D} \hookrightarrow \mathbb{V}_\tau$, which is denoted as $\mathbb{V}_{\tau}^{\dagger}$. Pull back the log structure from $\mathbb{V}_\tau$ give the log schemes $\mathsf{V}_{\tau}^{\dagger}$ and $\prescript{k}{}{\mathbb{V}}^{\dagger}_\tau$. \cite[Theorem 2.6]{Gross-Siebert-logII} holds for this setting as described in Notation \ref{not:local_model}, by taking $P = P_{\tau}$ and $\mathcal{Q} = \mathcal{Q}_{\tau}$ for some $\tau \in \mathcal{P}$.
 
As in \S \ref{sec:higher_order_deformation_data_from_Gross_siebert}, analytification of the log schemes $\mathsf{V}_{\tau}^{\dagger}$ and $\prescript{k}{}{\mathbb{V}}^{\dagger}_\tau$ give the log analytic schemes $\mathscr{V}_{\tau}^{\dagger}$ and $\prescript{k}{}{\mathbf{V}}^{\dagger}_{\tau}$ respectively, and Definition \ref{def:higher_order_thickening_data_from_gross_siebert} can be carried through. We can deduce that $\bva{k}^*_{\alpha}$ and $\tbva{k}{}^*_{\alpha}$ are coherent sheaves which are also sheaves of free modules over $\cfrk{k}$, and that $\gmiso{k}{r}^{-1}$ is an isomorphism, by using the following variant of \cite[Proposition 1.12 \& Corollary 1.13]{Gross-Siebert-logII}.
 
\begin{prop}\label{prop:local_model_derham_complex_computation}
	Let $\mathscr{Z}:= \mathsf{V}_\tau \cap D_{\text{sing}} \hookrightarrow |\prescript{k}{}{\mathbb{V}}_{\tau}| = |\mathsf{V}_{\tau}|$ be the inclusion. Then we have the following decomposition into $P_{\tau}$-homogeneous pieces as 
 	\begin{align*}
 		\Gamma(\mathsf{V}_{\tau} \setminus \mathscr{Z},\Omega_{\prescript{k}{}{\mathbb{V}}^{\dagger}_{\tau}}^r) & = \bigoplus_{p \in P_{\tau} \setminus P_{\tau} + kQ^+} z^{p} \cdot  \bigwedge^r \big( \bigcap_{ \{j| p \in \mathcal{D}_{j} \} } \mathcal{D}_j^{\text{gp}} \otimes_{\inte} \comp \big), \\
 		\Gamma(\mathsf{V}_{\tau} \setminus \mathscr{Z},\Omega_{\prescript{k}{}{\mathbb{V}}_{\tau}^{\dagger}/\logsk{k}}^r) & = \bigoplus_{p \in P_{\tau} \setminus P_{\tau} + kQ^+} z^{p} \cdot \bigwedge^r \big( \bigcap_{ \{j| p \in \mathcal{D}_{j} \} } (\mathcal{D}_j^{\text{gp}}/Q^{\text{gp}}) \otimes_{\inte} \comp \big).
 	\end{align*}
\end{prop}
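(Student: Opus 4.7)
My plan is to prove this proposition by directly exploiting the $P_\tau$-grading inherent to the toric scheme $\prescript{k}{}{\mathbb{V}}_\tau$, running the computation in parallel with the argument of \cite[Proposition 1.12 \& Corollary 1.13]{Gross-Siebert-logII} but in our present setup where $Q$ is arbitrary rather than $\mathbb{N}$.

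First, I would reduce to the log smooth locus. The scheme $\prescript{k}{}{\mathbb{V}}_\tau$ is equipped with the divisorial log structure pulled back from $(\mathbb{V}_\tau, \mathscr{D})$, and this log structure is log smooth over $\logsk{k}$ away from codimension $\geq 2$ loci where the $\mathscr{D}_j$'s fail to meet the $D_i$-strata transversally; the locus $\mathscr{Z} = \mathsf{V}_\tau \cap D_{\mathrm{sing}}$ contains all such problematic points under Assumption \ref{assum:gross_siebert_simple_and_positive}. On the open subset $\mathsf{V}_\tau \setminus \mathscr{Z}$ the sheaf $\Omega^r_{\prescript{k}{}{\mathbb{V}}_\tau^\dagger}$ is therefore locally free, and its global sections over this open set inherit the $P_\tau$-grading from the ambient toric structure.

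Next, I would compute the homogeneous pieces. In degree $p \in P_\tau$, a section of $\Omega^1_{\prescript{k}{}{\mathbb{V}}_\tau^\dagger}$ is of the form $z^p \cdot d\log z^q$ for some $q \in P_\tau^{\mathrm{gp}}$, and by the local toric description the extension $z^p d\log z^q$ is regular at the generic point of each toric divisor $D_j$ if and only if either $z^p$ vanishes on $D_j$ (i.e.\ $p \notin \mathcal{D}_j$) or $d\log z^q$ has no pole there (i.e.\ $q \in \mathcal{D}_j^{\mathrm{gp}}$). Intersecting these constraints over the relevant $j$'s forces $q \in \bigcap_{\{j\,:\,p \in \mathcal{D}_j\}} \mathcal{D}_j^{\mathrm{gp}}$, and the analogous analysis for wedge products gives the formula for $\Omega^r$. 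Passing from absolute to relative log differentials amounts to quotienting each $\mathcal{D}_j^{\mathrm{gp}}$ by the image of $Q^{\mathrm{gp}}$, which yields the second displayed formula. The $k$-th order truncation then restricts the monomials $z^p$ to those whose image in $\comp[Q]$ is nonzero modulo $\mathbf{m}^{k+1}$, precisely the indexing set $p \in P_\tau \setminus (P_\tau + kQ^+)$.

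The main obstacle will be verifying that the homogeneous decomposition computed locally on the log smooth locus genuinely yields the sections over $\mathsf{V}_\tau \setminus \mathscr{Z}$ after restriction, and that no further relations come in from the gluing across strata of $\mathsf{V}_\tau$. This is the same subtlety handled in \cite[Prop 1.12]{Gross-Siebert-logII}, and the resolution there carries over: one uses the flatness of $\prescript{k}{}{\mathbb{V}}_\tau \to \logsk{k}$ together with an $S_2$-style argument (or alternatively, explicit checks on the charts of the normal fan $\Sigma_\tau$ where different subsets of the $\mathscr{D}_j$'s are visible) to conclude that each homogeneous piece is exactly as described. The induction on $k$ needed to pass from $k=0$ to higher orders uses the short exact sequence obtained by multiplication with generators of $\mathbf{m}^k/\mathbf{m}^{k+1}$, reducing higher-order statements to the known $k=0$ case. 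I do not anticipate genuinely new difficulties beyond bookkeeping of the $P_\tau$-monoid and the $Q$-action.
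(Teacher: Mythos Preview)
Your proposal is correct and matches the paper's approach: the paper gives no proof of this proposition, merely presenting it as a ``variant of \cite[Proposition 1.12 \& Corollary 1.13]{Gross-Siebert-logII}'' with the universal monoid $Q$ in place of $\mathbb{N}$, which is exactly the adaptation you outline. Your sketch in fact supplies more detail than the paper does.
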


The construction of the higher order patching data described in \S \ref{sec:higher_order_patching_data_from_gross_siebert} can be carried through because divisorial deformations over $\logsk{0}$ can be defined as in \cite[Definition 2.7]{Gross-Siebert-logII}, and \cite[Theorem 2.11 \& Lemma 2.15]{Gross-Siebert-logII} hold accordingly with the local models $\mathbb{V}_{\tau}$'s. 

\subsubsection{Opposite filtration and pairing} 
The weight filtration in Assumption \ref{assum:weighted_filtration_assumption} is taken to be the filtration described in \cite[Remark 5.7]{Gross-Siebert-logII}, which is opposite to the Hodge filtration and preserved by the nilpotent operators $N_{\nu}$'s. The trace map $\trace$ in Assumption \ref{assum:poincare_pairing_assumption} can be defined via the isomorphism 
$
\trace : H^d(X,j_*(\Omega^d_{X^{\dagger}/\logsk{0}})) \cong H^{2d}(X,j_*(\Omega^*_{X^{\dagger}/\logsk{0}})) \cong \comp. 
$
We conjecture that the induced pairing $\prescript{0}{}{\mathtt{p}}$ is non-degenerate. 

\begin{corollary}\label{cor:Gross-Siebert_log_Frobenius}
	There exists a structure of log F-manifold on the formal extended moduli $\logscf$ near the maximally degenerate log Calabi-Yau variety $(X,\mathcal{O}_X)$. If the pairing $\prescript{0}{}{\mathtt{p}}$ is non-degenerate, it can be enhanced to a logarithmic Frobenius manifold structure.
\end{corollary}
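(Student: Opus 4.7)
The plan is to deduce Corollary \ref{cor:Gross-Siebert_log_Frobenius} directly from Theorem \ref{thm:construction_of_Frobenius_manifold} by feeding in the abstract deformation data assembled in the preceding subsections. The $0^{\text{th}}$-order data (Definition \ref{def:0_order_data_from_gross_siebert}), the higher-order thickening data and patching data (Definition \ref{def:higher_order_thickening_data_from_gross_siebert} together with Proposition \ref{prop:local_model_derham_complex_computation}), the Hodge-to-de Rham degeneracy (Assumption \ref{assum:Hodge_de_rham_degeneracy} via Serre's GAGA applied to \cite[Theorem 3.26]{Gross-Siebert-logII}), and the local freeness of the Hodge bundle (Assumption \ref{assum:local_assumption_for_triviality_of_hodge_bundle} via \cite[proof of Theorem 4.1]{Gross-Siebert-logII}) have all been verified earlier. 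Thus only Condition \ref{cond:choice_of_moduli_coefficient_ring}, Assumption \ref{assum:weighted_filtration_assumption}, and (for the Frobenius enhancement) Assumption \ref{assum:poincare_pairing_assumption} remain.

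First I would verify Condition \ref{cond:choice_of_moduli_coefficient_ring}. Since the universal monoid $Q = \Hom(\text{MPA}(B,\mathbb{N}),\mathbb{N})$ is, by construction, designed to parametrize all infinitesimal log deformations coming from rescalings of the piecewise affine function $\varphi$, the residue $N_{\nu}$ of the Gauss-Manin connection in the direction of any $\nu \in \blat^\vee \otimes \real$ acts nontrivially on $[\volf{0}]$, and composing with the quotient $\mathcal{F}^{\geq d-1}/\mathcal{F}^{\geq d-\half}$ remains injective (this is essentially the statement that the natural monomial coordinates on $\text{Spf}(\hat{\cfr})$ give distinct linearly independent log directions in $\mathbb{H}^0$). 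After fixing $\mathbb{V}^*$ as in Condition \ref{cond:choice_of_moduli_coefficient_ring}, one chooses a degree $0$ input $\psi$ as in Notation \ref{not:choice_of_first_order_in_miniversal_element}. For Assumption \ref{assum:weighted_filtration_assumption} I would take the monodromy weight filtration described in \cite[Remark 5.7]{Gross-Siebert-logII} associated to the nilpotent operators $N_{\nu}$'s; invariance under $N_{\nu}$ is built in, and opposition to the Hodge filtration follows from the local $(p,q)$-decomposition in \cite[Theorems 3.26 \& 5.5]{Gross-Siebert-logII}.

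With these inputs, Theorem \ref{thm:unobstructedness_of_MC_equation} gives a Maurer-Cartan element solving \eqref{eqn:maurer_cartan_introduction}; Lemma \ref{lem:uniqueness_of_opposite_filtration} propagates the weight filtration to all orders to form the opposite filtration $\mathcal{H}_-$; and Proposition \ref{prop:existence_of_miniversal_section} then supplies the miniversal section $\xi = t^{-1} e^{\mathsf{l}_t(\varphi)} \lrcorner \volf{}$. The unpaired $\tfrac{\infty}{2}$-VHS data $(\mathcal{H}_+, \gmc{}, \mathcal{H}_-, \xi)$ together with the grading $\gmc{}_{t\partial_t}$ from Proposition \ref{prop:checking_grading_structure} assemble into a (logD-trTLE(w))-structure (i.e.\ the structure of Reichelt \cite{reichelt2009construction} without the pairing), which by \cite[Theorem 1.12]{reichelt2009construction} (or its F-manifold version of \cite[Proposition 1.11]{reichelt2009construction} without the metric) produces the log F-manifold structure on $\logscf$. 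This proves the first part of the corollary.

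For the second part (the Frobenius enhancement) I would invoke Assumption \ref{assum:poincare_pairing_assumption} via the trace $\trace : H^d(X, j_*\Omega^d_{X^\dagger/\logsk{0}}) \cong H^{2d}(X, j_*\Omega^*_{X^\dagger/\logsk{0}}) \cong \comp$ and the pairing $\prescript{0}{}{\mathtt{p}}(\alpha,\beta) = \trace(\alpha \wedge \beta)$, then apply Definition \ref{def:construction_of_higher_residue_pairing} and Lemma \ref{lem:pairing_for_semi_infinite_VHS} to obtain the full $\tfrac{\infty}{2}$-LVHS, and finally apply Theorem \ref{thm:construction_of_Frobenius_manifold} verbatim. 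The main obstacle, as the authors already flag, is establishing non-degeneracy of $\prescript{0}{}{\mathtt{p}}$ in the Gross-Siebert setting: the sheaves $j_*\Omega^*_{X^\dagger/\logsk{0}}$ are not locally free along the singular locus $Z$, so the usual Serre-duality type argument used in the log smooth case (compare \cite[Theorem 8.11]{fujisawa2014polarizations}) does not apply directly, and a substitute (perhaps using the log mixed Hodge theory developed in the simple case or the decomposition into dual monodromy polytope pieces from Proposition \ref{prop:local_model_derham_complex_computation}) would be required; this is why the authors leave it as a conjecture and only assert the Frobenius enhancement conditionally.
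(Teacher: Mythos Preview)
Your proposal is correct and follows the same approach as the paper: the corollary is an immediate consequence of Theorem \ref{thm:construction_of_Frobenius_manifold} once the abstract input data (the $0^{\text{th}}$- and higher-order thickening/patching data, Assumptions \ref{assum:local_assumption_for_triviality_of_hodge_bundle}, \ref{assum:Hodge_de_rham_degeneracy}, \ref{assum:weighted_filtration_assumption}, and conditionally \ref{assum:poincare_pairing_assumption}) have been verified in the preceding subsections, and the paper accordingly states it without further proof. Your unpacking of the F-manifold half via the unpaired (logD-trTLE(w))-structure and \cite{reichelt2009construction} is a useful elaboration the paper leaves implicit; your justification of Condition \ref{cond:choice_of_moduli_coefficient_ring} is somewhat informal, but the paper does not spell this out either.
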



\bibliographystyle{amsplain}
\bibliography{geometry}

\end{document}